\numberwithin{equation}{section}
\newcommand{\C}        {\mathbb{C}}
\newcommand{\N}        {\mathbb{N}}
\newcommand{\cc}[1]      {\overline{{#1}}}
\newcommand{\mac}[1]{{\mathcal {#1}}}
\newcommand{\tensor}[1]   {\mathbin{\otimes_{\scriptscriptstyle{#1}}}}
\newcommand{\id}         {\operatorname{\mathsf{id}}}
\newcommand{\unit}         {\operatorname{\mathbf{1}}}
\newcommand{\rr}         {\rightarrow}
\newcommand{\act}        {\operatorname{\triangleright}}
\newcommand{\actrd}        {\operatorname{\triangleright}^{\#}}
\newcommand{\ractrd}      {\operatorname{\triangleleft}^{\#}}
\newcommand{\actld}        {\leftidx{^{\#}\!\!}{\operatorname{\triangleright}}{}}
\newcommand{\ractld}       {\leftidx{^{\#}\!\!}{\operatorname{\triangleleft}}{}}
\newcommand{\ract}       {\operatorname{\triangleleft}}
\newcommand{\tr}         {\operatorname{\mathsf{tr}}}
\newcommand{\opp}             {{\mathrm{op}}} 
\newcommand{\Cat}[1]         {\operatorname{\mathcal{#1}}}
\newcommand{\Cath}[1]         {\operatorname{\mathcal{\text{h}#1}}}
\newcommand{\op}[1]           {\Cat{#1}^{\operatorname{\mathrm{op}}}}
\newcommand{\rev}[1]           {\Cat{#1}^{\operatorname{\mathrm{rev}}}}
\newcommand{\Fun}         {\operatorname{\mathrm{Fun}}}
\newcommand{\Funbal}         {\operatorname{\mathrm{Fun}}^{\mathrm{bal}}}
\newcommand{\Hom}        {\operatorname{\mathsf{Hom}}}
\newcommand{\IHom}        {\operatorname{\underline{\mathsf{Hom}}}}
\newcommand{\End}        {\operatorname{\mathsf{End}}}
\newcommand{\Obj}        {\operatorname{\mathsf{Obj}}}
\newcommand{\Bimod}[5]{\sideset{^{\scriptscriptstyle{#1}}_{\scriptscriptstyle{#2}}}{^{\scriptscriptstyle{#4}}_{\scriptscriptstyle{#5}}}{\operatorname{#3}}}
\newcommand{\LCC}  {\Bimod{}{\Cat{C}}{\Cat{C}}{}{}}
\newcommand{\LDD}  {\Bimod{}{\Cat{D}}{\Cat{D}}{}{}}
\newcommand{\RDD}  {\Bimod{}{}{\Cat{D}}{}{\Cat{D}}}
\newcommand{\CM}  {\Bimod{}{\Cat{C}}{\Cat{M}}{}{}}
\newcommand{\MC}  {\Bimod{}{}{\Cat{M}}{}{\Cat{C}}}
\newcommand{\MpC}  {\Bimod{}{}{\Cat{M}}{\prime}{\Cat{C}}}
\newcommand{\MppC}  {\Bimod{}{}{\Cat{M}}{\prime \prime}{\Cat{C}}}
\newcommand{\MpppC}  {\Bimod{}{}{\Cat{M}}{\prime \prime \prime}{\Cat{C}}}
\newcommand{\DM}  {\Bimod{}{\Cat{D}}{\Cat{M}}{}{}}
\newcommand{\CN}  {\Bimod{}{\Cat{C}}{\Cat{N}}{}{}}
\newcommand{\CpN}  {\Bimod{}{\Cat{C}}{\Cat{N}}{\prime}{}}
\newcommand{\CppN}  {\Bimod{}{\Cat{C}}{\Cat{N}}{\prime \prime}{}}
\newcommand{\CpppN}  {\Bimod{}{\Cat{C}}{\Cat{N}}{\prime \prime \prime}{}}
\newcommand{\NC}  {\Bimod{}{}{\Cat{N}}{}{\Cat{C}}}
\newcommand{\NE}  {\Bimod{}{}{\Cat{N}}{}{\Cat{E}}}
\newcommand{\DN}  {\Bimod{}{\Cat{D}}{\Cat{N}}{}{}}
\newcommand{\ND}  {\Bimod{}{}{\Cat{N}}{}{\Cat{D}}}
\newcommand{\CY}  {\Bimod{}{\Cat{C}}{\Cat{Y}}{}{}}
\newcommand{\DY}  {\Bimod{}{\Cat{D}}{\Cat{Y}}{}{}}
\newcommand{\CMD}  {\Bimod{}{\Cat{C}}{\Cat{M}}{}{\Cat{D}}}
\newcommand{\DMpC}  {\Bimod{}{\Cat{D}}{\Cat{M}}{\prime}{\Cat{C}}}
\newcommand{\DMC}  {\Bimod{}{\Cat{D}}{\Cat{M}}{}{\Cat{C}}} 
\newcommand{\DMD}  {\Bimod{}{\Cat{D}}{\Cat{M}}{}{\Cat{D}}}
\newcommand{\CND}  {\Bimod{}{\Cat{C}}{\Cat{N}}{}{\Cat{D}}}
\newcommand{\DNC}  {\Bimod{}{\Cat{D}}{\Cat{N}}{}{\Cat{C}}}
\newcommand{\DNE}  {\Bimod{}{\Cat{D}}{\Cat{N}}{}{\Cat{E}}}
\newcommand{\CNE}  {\Bimod{}{\Cat{C}}{\Cat{N}}{}{\Cat{E}}}
\newcommand{\CNpE}  {\Bimod{}{\Cat{C}}{\Cat{N}}{\prime}{\Cat{E}}}
\newcommand{\END}  {\Bimod{}{\Cat{E}}{\Cat{N}}{}{\Cat{D}}}
\newcommand{\ENF}  {\Bimod{}{\Cat{E}}{\Cat{N}}{}{\Cat{F}}}
\newcommand{\ENpD}  {\Bimod{}{\Cat{E}}{\Cat{N}}{\prime}{\Cat{D}}}
\newcommand{\DKD}  {\Bimod{}{\Cat{D}}{\Cat{K}}{}{\Cat{D}}}
\newcommand{\EKD}  {\Bimod{}{\Cat{E}}{\Cat{K}}{}{\Cat{D}}}
\newcommand{\HKE}  {\Bimod{}{\Cat{H}}{\Cat{K}}{}{\Cat{E}}}
\newcommand{\KE}  {\Bimod{}{}{\Cat{K}}{}{\Cat{E}}}
\newcommand{\DYC}  {\Bimod{}{\Cat{D}}{\Cat{Y}}{}{\Cat{C}}}
\newcommand{\DYE}  {\Bimod{}{\Cat{D}}{\Cat{Y}}{}{\Cat{E}}}
\newcommand{\DAE}  {\Bimod{}{\Cat{D}}{\Cat{A}}{}{\Cat{E}}}
\newcommand{\DAC}  {\Bimod{}{\Cat{D}}{\Cat{A}}{}{\Cat{C}}}
\newcommand{\DBE}  {\Bimod{}{\Cat{D}}{\Cat{B}}{}{\Cat{E}}}
\newcommand{\DDD}  {\Bimod{}{\Cat{D}}{\Cat{D}}{}{\Cat{D}}}
\newcommand{\CCC}  {\Bimod{}{\Cat{C}}{\Cat{C}}{}{\Cat{C}}}
\newcommand{\CaCCb}  {\Bimod{}{(\Cat{C},a)}{\Cat{C}}{}{(\Cat{C},b)}}
\newcommand{\CCCc}  {\Bimod{}{\Cat{C}}{\Cat{C}}{}{\overline{\Cat{C}}}}
\newcommand{\CCCcld}  {\Bimod{\#}{\Cat{C}}{\Cat{C}}{}{\overline{\Cat{C}}}}
\newcommand{\CcCC}  {\Bimod{}{\overline{\Cat{C}}}{\Cat{C}}{}{\Cat{C}}}
\newcommand{\CcCCld}  {\Bimod{\#}{\overline{\Cat{C}}}{\Cat{C}}{}{\Cat{C}}}
\newcommand{\DDDc}  {\Bimod{}{\Cat{D}}{\Cat{D}}{}{\cc{\Cat{D}}}}
\newcommand{\DcDDld}  {\Bimod{\#}{\cc{\Cat{D}}}{\Cat{D}}{}{\Cat{D}}}
\newcommand{\CMDrd}  {\Bimod{}{\Cat{C}}{\Cat{M}}{\#}{\Cat{D}}} 
\newcommand{\CMDld}  {\Bimod{\#}{\Cat{C}}{\Cat{M}}{}{\Cat{D}}} 
\newcommand{\CNDrd}  {\Bimod{}{\Cat{C}}{\Cat{N}}{\#}{\Cat{D}}} 
\newcommand{\CNDld}  {\Bimod{\#}{\Cat{C}}{\Cat{N}}{}{\Cat{D}}} 
\newcommand{\DDDld}  {\Bimod{\#}{\Cat{D}}{\Cat{D}}{}{\Cat{D}}} 
\newcommand{\DDDrd}  {\Bimod{}{\Cat{D}}{\Cat{D}}{\#}{\Cat{D}}}
\newcommand{\DMDrd}  {\Bimod{}{\Cat{D}}{\Cat{M}}{\#}{\Cat{D}}} 
\newcommand{\DMDld}  {\Bimod{\#}{\Cat{D}}{\Cat{M}}{}{\Cat{D}}} 
\newcommand{\DKDrd}  {\Bimod{}{\Cat{D}}{\Cat{K}}{\#}{\Cat{D}}} 
\newcommand{\DKDld}  {\Bimod{\#}{\Cat{D}}{\Cat{K}}{}{\Cat{D}}} 
\newcommand{\DMCrrd}  {\Bimod{}{\Cat{D}}{\Cat{M}}{\#\#}{\Cat{C}}} 
\newcommand{\DMDrrd}  {\Bimod{}{\Cat{D}}{\Cat{M}}{\#\#}{\Cat{D}}} 
\newcommand{\DMClld}   {^{ \scriptscriptstyle{\#\#}\!\! \!\! \!\!}\DMC} % {\Bimod{}{\Cat{D} }{^{\# \# \!}\Cat{M}}{}{\Cat{C}}} % {_{\Cat{D} \!\!} ^{\# \# \!\!} \Cat{M}_{\Cat{D}}} 
\newcommand{\DcMClld}   {^{ \scriptscriptstyle{\#\#}\!\! \!\! \!\!}\DcMC} 
\newcommand{\DMCc}  {\Bimod{}{\Cat{D}}{\Cat{M}}{}{\cc{\Cat{C}}}} 
\newcommand{\CcMDld}  {\Bimod{\#}{\cc{\Cat{C}}}{\Cat{M}}{}{\Cat{D}}} 
\newcommand{\CMDcld}  {\Bimod{\#}{\Cat{C}}{\Cat{M}}{}{\cc{\Cat{D}}}} 
\newcommand{\DcMC}  {\Bimod{}{\cc{\Cat{D}}}{\Cat{M}}{}{\Cat{C}}}
\newcommand{\MDrd}  {\Bimod{}{}{\Cat{M}}{\#}{\Cat{D}}} 
\newcommand{\MDld}  {\Bimod{\#}{}{\Cat{M}}{}{\Cat{D}}} 
\newcommand{\CMrd}  {\Bimod{}{\Cat{C}}{\Cat{M}}{\#}{}} 
\newcommand{\CMld}  {\Bimod{\#}{\Cat{C}}{\Cat{M}}{}{}} 
\newcommand{\DMrd}  {\Bimod{}{\Cat{D}}{\Cat{M}}{\#}{}} 
\newcommand{\DMld}  {\Bimod{\#}{\Cat{D}}{\Cat{M}}{}{}} 
\newcommand{\NCld}  {\Bimod{\#}{}{\Cat{N}}{}{\Cat{C}}} 
\newcommand{\Nrd}  {\Bimod{}{}{\Cat{N}}{\#}{}} 
\newcommand{\Nld}  {\Bimod{\#}{}{\Cat{N}}{}{}} 
\newcommand{\Mrd}  {\Bimod{}{}{\Cat{M}}{\#}{}} 
\newcommand{\Mld}  {\Bimod{\#}{}{\Cat{M}}{}{}}
\newcommand{\ENCld}  {\Bimod{\#}{\Cat{E}}{\Cat{N}}{}{\Cat{C}}} 
\newcommand{\ENCrd}  {\Bimod{}{\Cat{E}}{\Cat{N}}{\#}{\Cat{C}}}
\newcommand{\CMstar}  {\Bimod{}{}{\Cat{C}}{*}{\Cat{M}}}
\newcommand{\DMstar}  {\Bimod{}{}{\Cat{D}}{*}{\Cat{M}}}
\newcommand{\Mod}      {\operatorname{\mathsf{Mod}}}
\newcommand{\ModbC}      {{\mathsf{Mod}}^{\mathrm{bal}}_{\scriptscriptstyle{\Cat{C}}}}
\newcommand{\BimodbC}      {{\mathsf{Bimod}}^{\mathrm{bal}}_{\scriptscriptstyle{\Cat{C}}}}
\newcommand{\AModC}    {\Bimod{}{A}{\mathsf{Mod}(\Cat{C})}{}{}}
\newcommand{\ModCB}    {\Bimod{}{}{\mathsf{Mod}(\Cat{C})}{}{B}}
\newcommand{\ModCA}    {\Bimod{}{}{\mathsf{Mod}(\Cat{C})}{}{A}}
\newcommand{\ModDA}    {\Bimod{}{}{\mathsf{Mod}(\Cat{D})}{}{A}}
\newcommand{\AModD}    {\Bimod{}{A}{\mathsf{Mod}(\Cat{D})}{}{}}
\newcommand{\AddModD}    {\Bimod{}{A^{**}}{\mathsf{Mod}(\Cat{D})}{}{}}
\newcommand{\AModCA}    {\Bimod{}{A}{\mathsf{Mod}(\Cat{C})}{}{A}}
\newcommand{\AModCB}    {\Bimod{}{A}{\mathsf{Mod}(\Cat{C})}{}{B}}
\newcommand{\IP}[4]{{\,}_{\scriptscriptstyle{#2}\!\!}\left\langle{{#1}}\right\rangle^{\scriptscriptstyle{#3}}_{\scriptscriptstyle{#4}}}
\newcommand{\IPl}[4]{{\,}_{\scriptscriptstyle{#2}\!\!}^{*}\left\langle{{#1}}\right\rangle^{\scriptscriptstyle{#3}}_{\scriptscriptstyle{#4}}}
\newcommand{\IPs}[4]{{\,}_{\scriptscriptstyle{#2}\!\!}\left\langle{{#1}}\right\rangle^{{#3}}_{\scriptscriptstyle{#4}}}
\newcommand{\igen}[1]    {\IP{{#1}}{}{}{}}
\newcommand{\igenstar}[1]    {\IPs{{#1}}{}{*}{}}
\newcommand{\idgen}[1]    {\IP{{#1}}{\Cat{D}}{}{}}
\newcommand{\icgen}[1]    {\IP{{#1}}{\Cat{C}}{}{}}
\newcommand{\imd}[1]    {\IP{{#1}}{}{\Cat{M}}{\Cat{D}}}
\newcommand{\inc}[1]    {\IP{{#1}}{}{\Cat{N}}{\Cat{C}}}
\newcommand{\igenc}[1]    {\IP{{#1}}{}{{}}{\Cat{C}}}
\newcommand{\icm}[1]    {\IP{{#1}}{\mathcal{C}}{\mathcal{M}}{}}
\newcommand{\icmstar}[1]    {\IPs{{#1}}{\Cat{C}}{*}{}}
\newcommand{\idstar}[1]    {\IPs{{#1}}{\Cat{D}}{*}{}}
\newcommand{\idm}[1]    {\IP{{#1}}{\Cat{D}}{\Cat{M}}{}}
\newcommand{\idn}[1]    {\IP{{#1}}{\Cat{D}}{\Cat{N}}{}}
\newcommand{\idmp}[1]    {\IP{{#1}}{\Cat{D}}{\Cat{M}'}{}}
\newcommand{\idnp}[1]    {\IP{{#1}}{\Cat{D}}{\Cat{N}'}{}}
\newcommand{\idmstar}[1]    {\IP{{#1}}{\Cat{D}}{*}{}}
\newcommand{\idnstar}[1]    {\IP{{#1}}{\Cat{D}}{*}{}}
\newcommand{\imc}[1]    {\IP{{#1}}{}{\mathcal{M}}{\Cat{C}}}
\newcommand{\imcstar}[1]    {\IPl{{#1}}{}{}{\Cat{C}}}
\newcommand{\icstarl}[1]    {\IPl{{#1}}{}{}{\Cat{C}}}
\newcommand{\icnstarl}[1]    {\IPl{{#1}}{}{}{\Cat{C}}}
\newcommand{\imldd}[1]    {\langle {#1} \rangle^{^{\#\!}\Cat{M}_{\Cat{D}}}_{\scriptscriptstyle{\Cat{D}}}}
\newcommand{\icmrd}[1]    {{\,}_{\scriptscriptstyle{\Cat{C}}\!}\langle {#1}\rangle^{_{\Cat{C}\!}\Cat{M}^{\#}}}
\newcommand{\icdualm}[1]     { _{\scriptscriptstyle{\Cat{C}_{\Cat{M}}^{*}}\!}\langle {#1}\rangle^{\Cat{M}}}
\newcommand{\icdualmn}[1]    { _{\scriptscriptstyle{\Cat{C}_{\Cat{M}}^{*}}\!}\langle {#1}\rangle^{\Cat{M} \Box \Cat{N}}}
\newcommand{\icn}[1]    {\IP{{#1}}{\Cat{C}}{\Cat{N}}{}}
\newcommand{\icgenstar}[1]    {\IP{{#1}}{\Cat{C}}{*}{}}
\newcommand{\ine}[1]    {\IP{{#1}}{}{\mathcal{N}}{\Cat{E}}}
\newcommand{\idmn}[1]    {\IP{{#1}}{\Cat{D}}{\Cat{M} \Box \Cat{N}}{}}
\newcommand{\idmpnp}[1]    {\IP{{#1}}{\Cat{D}}{\Cat{M}' \Box \Cat{N}'}{}}
\newcommand{\idmmld}[1]    {\IP{{#1}}{\Cat{D}}{\Cat{M} \Box ^{\scriptscriptstyle{\#}\!\!}\Cat{M}}{}}
\newcommand{\idmmrd}[1]    {\IP{{#1}}{}{\Cat{M} \Box ^{\scriptscriptstyle{\#}\!\!}\Cat{M}}{\Cat{D}}}
\newcommand{\icmrdm}[1]    {\IP{{#1}}{\Cat{C}}{\Cat{M} ^{\scriptscriptstyle{\#}} \Box \Cat{M}}{}}
\newcommand{\imrdmc}[1]    {\IP{{#1}}{}{\Cat{M} ^{\scriptscriptstyle{\#}} \Box \Cat{M}}{\Cat{C}}}
\newcommand{\imrdd}[1]    {\IP{{#1}}{}{\Cat{M} ^{\scriptscriptstyle{\#}}}{\Cat{D}}}
\newcommand{\icmld}[1]    {\IP{{#1}}{\Cat{C}}{ ^{\scriptscriptstyle{\#}\!\!}{\,}_{\sss{\Cat{C}}}\Cat{M}}{}}
\newcommand{\imne}[1]    {\IP{{#1}}{}{\Cat{M} \Box \Cat{N}}{\Cat{E}}}
\newcommand{\Funl}[2]     {\operatorname{\mathrm{Fun}}_{\scriptscriptstyle{#1}}{(#2)}}
\newcommand{\Funball}[2]  {\operatorname{\mathrm{Fun}}^{\mathrm{bal}}_{\scriptscriptstyle{#1}}{(#2)}}
\newcommand{\Funlmul}[3]     {\operatorname{\mathrm{Fun}}_{\scriptscriptstyle{#1}}^{\scriptscriptstyle{#2}}{(#3)}}
\newcommand{\Funrd} [1] {\mathsf{#1}^{\#}}
\newcommand{\Funld} [1] {^{\# \!}\mathsf{#1}}
\newcommand{\Funrtd} [1] {{#1}^{\widetilde{\#}}}
\newcommand{\Funrrtd} [1] {{#1}^{\widetilde{\#}\widetilde{\#}}}
\newcommand{\Funltd} [1] {^{\widetilde{ \#}\!}{#1}}
\newcommand{\Funlltd} [1] {^{\widetilde{ \#}\widetilde{ \#}\!}{#1}}
\newcommand{\BimCat}       {\operatorname{\mathsf{Bimod}}}   %{\mathfrak{B}}%{\underline{\underline{\mathsf{B}}}}  
\newcommand{\BimCattet}       {\operatorname{\mathsf{Bimod}}^{\theta}}   %{\mathfrak{B}}%{\underline{\underline{\mathsf{B}}}}  
\newcommand{\Bm}       {\operatorname{\mathsf{Bimod}^{multi}}}   %{\mathfrak{B}}%{\underline{\underline{\mathsf{B}}}}  
\newcommand{\Bms}       {\operatorname{\mathsf{B}^{m}}}  
\newcommand{\tm}        {\times^{multi}}
\newcommand{\Vect}   {\mathsf{Vect}}
\newcommand{\Catlin}   {\mathsf{Cat}^{\mathrm{lin}}}
\newcommand{\ev}[1]   {\operatorname{\mathsf{ev}}_{#1}}
\newcommand{\coev}[1]   {\operatorname{\mathsf{coev}}_{#1}}
\newcommand{\evp}[1]   {\operatorname{\mathsf{ev}}_{#1}^{\prime}}
\newcommand{\coevp}[1]   {\operatorname{\mathsf{coev}}_{#1}^{\prime}}
\newcommand{\evvect}[1]   {\operatorname{\mathsf{ev}}_{#1}^{\Vect}}
\newcommand{\coevvect}[1]   {\operatorname{\mathsf{coev}}_{#1}^{\Vect}}  
\newcommand{\Rmnum}[1]{\expandafter\@slowromancap\romannumeral #1@}
\newcommand{\sss}[1] {\scriptscriptstyle{#1}}
\newtheorem{lemma}{Lemma}[section]
\newtheorem{proposition}[lemma]{Proposition}
\newtheorem{theorem}[lemma]{Theorem} 
\newtheorem{corollary}[lemma]{Corollary}
\newtheorem{definition}[lemma]{Definition}
\newtheorem{example}[lemma]{Example}
\newtheorem{remark}[lemma]{Remark}
\newenvironment{theorem-1}[1][Theorem 1]{\begin{trivlist}
  \item[\hskip \labelsep {\bfseries #1}]}{\end{trivlist}}
\newenvironment{theorem-2}[1][Theorem 2]{\begin{trivlist}
  \item[\hskip \labelsep {\bfseries #1}]}{\end{trivlist}}
\newenvironment{theorem-3}[1][Theorem 3]{\begin{trivlist}
  \item[\hskip \labelsep {\bfseries #1}]}{\end{trivlist}}
\newenvironment{cor-n}[1][Corollary]{\begin{trivlist}
  \item[\hskip \labelsep {\bfseries #1}]}{\end{trivlist}}
\newcommand{\refitem}[1] {~\textit{\ref{#1})}}
\newcommand\qedsymbol{\hbox{$\boxempty$}}
\newcommand\qed{\relax\ifmmode\boxempty\else
  {\unskip\nobreak\hfil\penalty50\hskip1em\null\nobreak\hfil\qedsymbol
    \parfillskip=\z@\finalhyphendemerits=0\endgraf}\fi}
\newenvironment{proof}[1][{}]{\par\noindent Proof{#1}. }{\qed}
\newenvironment{theoremlist}{\begin{enumerate}}{\end{enumerate}}
\newenvironment{remarklist}{\begin{enumerate}}{\end{enumerate}}
\newenvironment{examplelist}{\begin{enumerate}}{\end{enumerate}}
\newenvironment{lemmalist}{\begin{enumerate}}{\end{enumerate}}
\newenvironment{propositionlist}{\begin{enumerate}}{\end{enumerate}}
\newenvironment{definitionlist}{\begin{enumerate}}{\end{enumerate}}
\newenvironment{corollarylist}{\begin{enumerate}}{\end{enumerate}}
\tikzset{
  doublearrow/.style={draw, thin, double distance=3pt, ->, >=implies},
  ldoublearrow/.style={draw, thin, double distance=3pt, <-, >=implies},
  thirdline/.style={draw, thin, ->, >=implies}, 
  lthirdline/.style={draw, thin, <-, >=implies},
  equality/.style={draw, thin, double distance=3pt, -} 
}
\def\latearrow#1#2#3#4{%
  \toks@\expandafter{\tikzcd@savedpaths\path[/tikz/commutative diagrams/every arrow,#1]}%
  \global\edef\tikzcd@savedpaths{%
    \the\toks@%
    (\tikzmatrixname-#2)% \noexpand\tikzcd@sourceanchor)%
    to%
    node[/tikz/commutative diagrams/every label] {$#4$}
    (\tikzmatrixname-#3)% \noexpand\tikzcd@targetanchor)
;}}
\title{Pivotal tricategories and a categorification of inner-product modules}
 \author{
\textbf{Gregor Schaumann}
\thanks{email:schaumann@mpim-bonn.mpg.de}
\\[0.1cm]
Max Planck Institute for Mathematics, \\
Vivatsgasse 7\\
53111 Bonn\\
Germany
}
\date{}
\begin{document}
 \maketitle

 \abstract{This article investigates duals for bimodule categories over finite tensor categories. We show that finite bimodule categories  form a tricategory and 
discuss the dualities in this tricategory using  inner homs. 
 We consider inner-product bimodule categories over pivotal tensor categories  with additional structure on the inner homs. Inner-product module categories  are 
related to Frobenius algebras and lead to the notion of $*$-Morita equivalence for pivotal tensor categories. 
We  show that inner-product bimodule categories form a tricategory with two  duality operations 
 and  an additional pivotal structure. This is work is motivated by defects in topological field theories.
}

 \section{Introduction}

Tensor categories are intimately linked with low-dimensional topology via 3-dimensional topological field 
theory and  2-dimensional conformal field theories (TFT and CFT). 
There is ongoing work by many researchers to generalize  these theories by constructing on the one hand   non-semisimple theories \cite{Lyu, Runkel} and on the other to include defects in existing semisimple theories 
\cite{DualDef, Kong}.
It is expected by physics and topological reasons 
that defects of all codimension in these theories  form a tricategory with certain dualities. 
In an oriented theory, the dualities should be equipped with additional structures. 
We propose the notion of a pivotal tricategory with duals as  structure of oriented defects in 3-d TFT. 
It is expected that an important class of defects can be obtained from bimodule categories over tensor categories \cite{FuSchwVal, Kong}. 
We define the class of inner-product bimodule categories over pivotal 
finite tensor categories and show that these define a pivotal 
tricategory with duals.
This naturally leads to a notion of $*$-Morita equivalence for pivotal finite tensor categories, in particular it allows to transport pivotal structures to the categories 
of endofunctors. We show that inner-product module categories are  linked with  Frobenius algebras in tensor categories.

\paragraph{Defects in TFT} 

Defects arose first in the physics literature as lower-dimensional regions seperating 
different phases of a statistical mechanical theory. The defects themselves can have defects of lower dimensions. 
 In a 3-dimensional theory, a general defect is located on a surface, 
that itself could contain  line defects which in turn  might exhibit
various point defects.  If the theory is topological and the defects are topological, there should be a notion of a fusion of two defects 
of a given codimension to a defect of the same codimension. Furthermore one requires some sort of associativity and 
unital properties of the fusion. In the case of 2-dimensional theories \cite{DavyRun} considers the bicategory of bimodules over algebras as defects. 
 In three dimensions the relevant notion is that  of a tricategory.  If additionally the 
defects are oriented, orientation reversal corresponds to certain duality operations in the tricategory including a relation between the left and right duals of the defects. These structures are formalized in the notion 
of a pivotal tricategory with duals. 
In particular, in a pivotal tricategory with duals describing defects,  the monoidal category of line defects from a surface defect to itself has duals and  a pivotal structure. 

The dualities in a pivotal tricategory with duals are weak duals in the sense that they are not equipped with fixed coherence structures. In \cite{Schaum} it is shown that a pivotal tricategory with duals 
can be strictified to a Gray category with duals \cite{GrayDuals}, where all coherence structures are fixed such that they suit a 3-dimensional diagrammatic calculus. 

It is a fundamental problem to identify for a given TFT and CFT algebraic input data for defects and to describe the 
corresponding tricategory. In the case of a semisimple TFT, it is argued in \cite{FuSchwVal}, that bimodule 
categories describe a large class of surface defects.  In \cite{KapSau, FuSchwVal}, the close relationship between defects and Frobenius algebras is discussed.
The oriented semisimple TFT of \cite{TurVir, BarWes} uses a spherical fusion category as algebraic input datum. It is natural to expect that the bimodule categories have to be compatible with the spherical structure in order to 
define surface defects. A possible compatibility that is intimately related to Frobenius algebras is provided by the notion of a (bi)module trace \cite{Moduletr}. 
For non-semisimple tensor categories we propose a categorification of inner-product modules as algebraic datum for surface defects.

\paragraph{Inner-product modules}

Inner-product module were first defined by Kaplansky \cite{Kap} in the setting of $C^{*}$-algebras and used by Rieffel \cite{Rief} to define the notion of strong Morita equivalence for $C^{*}$- and $W^{*}$-algebras. 
The theory was developed in an algebraic setting  in \cite{BurWald} for ${*}$-algebras $C$ over $\C$ and more general ordered rings, where 
$*$ is an antilinear involutive antihomomorphism. 
The following is a slight modification of the definition in \cite{BurWald}.  
\begin{definition}
\label{definition:inner-prod-starald}
Let  $C$ be a   ${*}$-algebra over $\C$.
  An inner-product module over $C$  
  is a $C$-module $M$ with a $C$-valued sesquilinear inner product 
$_{C}\langle .,. \rangle^{M}: M \times M \rr C$ that is non-degenerate, $C$-linear 
in the first argument, i.e. $_{C} \langle c \act m, \widetilde{m} \rangle = c \cdot \langle m, \widetilde{m} \rangle$ and  satisfies $_{C}\langle m, \widetilde{m} \rangle^{M} = (_{C}\langle \widetilde{m},m \rangle^{M})^{*}$.

An inner product bimodule $_{D\!\!}M_{C}$ for two $*$-algebras is a bimodule that is both a  left $D$ and right $C$ inner product module, 
such that $_{C}\langle m \ract c, \widetilde{m} \rangle^{M}= _{ \; C \! \! \!}\langle m, \widetilde{m} \ract c^{*} \rangle^{M}$ and 
$\langle d \act m, \widetilde{m} \rangle_{D}^{M}=  \langle m, d^{*} \act \widetilde{m} \rangle _{D}^{M}$.
\end{definition}

One motivation for this definition is that inner-product bimodules 
over $*$-algebras form a bicategory with objects $*$-algebras, 1-morphisms inner-product bimodules and 2-morphisms intertwiners. 
The $D$-valued inner-product of the relative tensor product $_{D}M_{C} \tensor{C} {}_{C}N_{E}$ of two bimodule 
categories is thereby defined by the so-called Rieffel-induction: 
For $m \otimes n, \widetilde{m} \otimes \widetilde{n} \in M \otimes N$,
\begin{equation}
  \label{eq:Rieffel-ind-inn}
  _{D\!}\langle m \ract \; _{C\!}\langle n, \widetilde{n} \rangle^{N}, \widetilde{m} \rangle^{M} 
\end{equation}
is $C$-balanced and induces a $D$-valued inner product on  $_{D}M_{C} \tensor{C} {}_{C}N_{E}$. 
Restricted to invertible inner-product bimodules, this bicategory leads to the $*$-Picard groupoid for 
$*$-algebras and to the notion of
$*$-Morita equivalence \cite{Rief}. 

 We apply these notions to tensor categories using the following observation. 
If $\Cat{C}$ is a pivotal tensor category over $\C$, its complexified Grothendieck ring $Gr(\Cat{C})$ is naturally 
a $*$-algebra, where the $*$-structure is given by the (say left) duality operation on $\Cat{C}$ extended antilinearly to the complexification. The pivotal structure serves to guarantee the identity $**=1$ on $Gr(\Cat{C})$.
Furthermore, for bimodule categories $\DMC$ over finite tensor categories, there exist inner hom functors, that 
provide the Grothendieck group $Gr(\Cat{M})$ with the structure of an $(Gr(\Cat{D}), Gr(\Cat{C}))$-bimodule 
 with two algebra valued inner-products, apart from the condition  $\igenstar{m,n}=\igen{n,m}$.
Our definition of an inner-product bimodule category is such that its Grothendieck group 
satisfies also this relation. 

While in Definition \ref{definition:inner-prod-starald} the inner-products are additional structure on the  bimodules, 
in our categorified version, the inner products are canonically given by the inner-hom for a bimodule category. It is just a coherent isomorphism 
   $I_{m,n}: \igenstar{m,n}\simeq \igen{n,m}$ for objects $m,n \in \Cat{M}$ that appears as additional structure. It might be interesting to consider 
also bimodule categories, where a different inner product is part of the structure. 

\newpage
\paragraph{Results}

Our first result considers bimodule categories over finite tensor categories. 
\begin{theorem}
  Finite tensor categories, finite bimodule categories, right exact bimodule functors and bimodule natural transformations form an algebraic tricategory $\BimCat$ in the sense of \cite{Gurski}. 
\end{theorem}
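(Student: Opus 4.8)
\medskip
\noindent\emph{Proof idea.} The plan is to produce all the data of an algebraic tricategory in the sense of \cite{Gurski} and then to reduce every coherence axiom to a uniqueness statement for universal objects, in the spirit of Mac Lane's coherence theorem one dimension up. First, for finite tensor categories $\Cat{C}$ and $\Cat{D}$ the collection $\BimCat(\Cat{C},\Cat{D})$ of finite $(\Cat{D},\Cat{C})$-bimodule categories, right exact bimodule functors and bimodule natural transformations is a strict $2$-category: horizontal composition of $1$- and $2$-morphisms is ordinary composition and whiskering of functors and natural transformations, which is strictly associative, strictly unital, and satisfies the interchange law on the nose. Horizontal composition of $1$-morphisms of the tricategory is the relative Deligne tensor product: given a finite $(\Cat{E},\Cat{D})$-bimodule category $\Cat{M}$ and a finite $(\Cat{D},\Cat{C})$-bimodule category $\Cat{N}$, one forms $\Cat{M}\boxtimes_{\Cat{D}}\Cat{N}$. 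Here I would first establish the basic package: the Deligne product of finite linear categories is finite; $\boxtimes_{\Cat{D}}$ exists and corepresents right exact $\Cat{D}$-balanced bilinear functors out of $\Cat{M}\times\Cat{N}$; it inherits a canonical $(\Cat{E},\Cat{C})$-bimodule structure; and it is functorial in right exact bimodule functors and bimodule natural transformations, so that it defines a pseudofunctor $\BimCat(\Cat{D},\Cat{E})\times\BimCat(\Cat{C},\Cat{D})\to\BimCat(\Cat{C},\Cat{E})$. The unit at an object $\Cat{C}$ is the regular bimodule category $\Cat{C}$, regarded as a pseudofunctor from the terminal $2$-category.

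The associativity and unit constraints all come from universal properties. For finite bimodule categories $\Cat{M},\Cat{N},\Cat{P}$ both $(\Cat{M}\boxtimes_{\Cat{D}}\Cat{N})\boxtimes_{\Cat{E}}\Cat{P}$ and $\Cat{M}\boxtimes_{\Cat{D}}(\Cat{N}\boxtimes_{\Cat{E}}\Cat{P})$ corepresent the functor sending $\Cat{X}$ to right exact trilinear functors $\Cat{M}\times\Cat{N}\times\Cat{P}\to\Cat{X}$ that are $\Cat{D}$-balanced in the first pair of variables and $\Cat{E}$-balanced in the second; hence there is a canonical equivalence between them, which I would promote to an adjoint equivalence by choosing quasi-inverse and (co)unit data, and which is pseudonatural in all three variables by naturality of the universal property. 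Likewise $\Cat{C}\boxtimes_{\Cat{C}}\Cat{M}\simeq\Cat{M}$ and $\Cat{M}\boxtimes_{\Cat{D}}\Cat{D}\simeq\Cat{M}$ furnish the left and right unitor adjoint equivalences. The pentagonator $\pi$ and the unit modifications $\mu,\lambda,\rho$ are then the unique invertible bimodule natural isomorphisms comparing the two composites of associators and unitors that they must compare; their existence, invertibility, naturality, and the three tricategory axioms of \cite{Gurski} are all instances of the same principle: every object occurring is canonically, and compatibly with all structure maps, a corepresenting object for some right exact multibalanced multilinear functor out of a fixed product of bimodule categories, and a morphism between two corepresenting objects of one such functor is unique. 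Thus any two parallel composites built from $a$ and the unitors are connected by a unique isomorphism, and each coherence equation merely asserts that two descriptions of this canonical isomorphism agree.

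The genuinely substantial part of the argument is the basic package about the relative Deligne tensor product in the finite, not necessarily semisimple, setting: finiteness of $\boxtimes_{\Cat{D}}$, its universal property with respect to \emph{right exact} balanced functors, compatibility with the bimodule structures and with composition, and the naturality of the resulting equivalences --- this is where right exactness has to be used carefully, since the classical arguments are usually phrased for exact module functors over fusion categories. Once this is in place the tricategorical structure is formal. An alternative, more abstract route is to observe that finite tensor categories are pseudomonoids in the monoidal bicategory of finite linear categories under the Deligne product and that bimodules over pseudomonoids in such a monoidal bicategory assemble into an algebraic tricategory, of which $\BimCat$ is the instance in question; carrying this out still requires the same finiteness and right-exactness input.
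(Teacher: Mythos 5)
Your proposal follows essentially the same route as the paper: the relative tensor product is taken with its universal property for right exact (multi)balanced functors, the associator and unitors are induced from the strictly coherent Cartesian/multilinear level, and the coherence axioms are discharged because all the comparison data are uniquely determined by their restrictions along the universal balanced functors. The paper packages your corepresentability-uniqueness argument concretely, via auxiliary 2-categories of multi-module categories and multi-balanced module functors together with a 2-functor $\widehat{(-)}$ applied to pasting diagrams, and it does spend genuine effort on points you call formal (notably the unit adjoint equivalences, whose snake identities use the triangle axiom of the balancing), but the skeleton of the argument is the same.
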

 This result is obtained using the 2-functorial properties of the tensor product of bimodule categories in a systematic way. It extends the work of \cite{Green} and is built heavily on the results of \cite{FinTen, ENOfuhom}. 
For the existence of the tensor product of bimodule categories  we rely on \cite{DSS} and the subsequent work \cite{DSSbal}\footnote{We are grateful to the authors for communicating  early drafts of \cite{DSSbal, DSStri}.}.

 Using the inner hom functors we describe  duals for  the bimodule categories in this tricategory.  These dualities were  obtained by different methods in \cite{DSS} and independently in  the  semisimple case  in  \cite{Schaum}. 
The methods in this article are  generalizations of the methods used in \cite{Schaum}. 
 We show that for separable bimodule categories  \cite{DSS},  the bimodule functors have both adjoints and thus the tricategory of 
separable bimodule categories is a tricategory with (two types of) duals  \cite{FinTen, DSS}.  
The calculus of the inner homs allows furthermore to characterize the Serre equivalences 
between the left and right duals of separable bimodule categories. 

 For a pivotal tricategory with duals we furthermore ask for a pivotal structure for the bimodule functors. 
To this end we define inner-product bimodule categories over pivotal finite tensor category and show the following. 
\begin{theorem}
Inner-product bimodule categories over  pivotal finite tensor categories form a pivotal tricategory with duals. In particular, the category of endofunctors for an inner-product module category is again a pivotal finite tensor category
in a canonical way. 
\end{theorem}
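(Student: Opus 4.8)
The plan is to refine the tricategory $\BimCat$ of Theorem 1 --- or rather its full sub-tricategory of separable bimodule categories, which by the results recalled above already carries the two kinds of weak duals --- by equipping all of its hom-data with inner products and checking that every structure operation preserves them. First I would record the definition of an inner-product bimodule category $\DMC$ over pivotal finite tensor categories: a separable $(\Cat{D},\Cat{C})$-bimodule category together with a family of isomorphisms $I_{m,n}\colon \igenstar{m,n}\xrightarrow{\simeq}\igen{n,m}$ in $\Cat{C}$, together with a similar $\Cat{D}$-valued family on the left, which is natural in $m,n$, intertwines the left and right module structures in the way forced by the sesquilinearity relations of Definition \ref{definition:inner-prod-starald}, and is compatible with the associativity and unit constraints of the inner hom. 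The first check is the base case: the regular bimodule $\Cat{C}$ is inner-product, with $I$ supplied by the pivotal structure of $\Cat{C}$ --- this is precisely where pivotality rather than mere rigidity enters, mirroring the role of the pivotal structure in making $**=1$ on $Gr(\Cat{C})$. It is also worth verifying here that under an equivalence $\Cat{M}\simeq\Mod_{\Cat{C}}(A)$ an $I$-structure corresponds to a Frobenius structure on the algebra $A$, which both motivates the axioms and provides a second description to compute with.

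Next I would prove closure under all the compositions of the tricategory. The essential case is horizontal composition of $1$-morphisms: for inner-product bimodule categories $\DMC$ and $\CNE$, the inner hom of the relative tensor product $\Cat{M}\boxtimes_{\Cat{C}}\Cat{N}$ is computed from those of $\Cat{M}$ and $\Cat{N}$ by the categorified Rieffel induction modelled on \eqref{eq:Rieffel-ind-inn}, and one must show the two $I$-families glue to a single well-defined $I$-family on the tensor product, compatible with the associator of $\BimCat$. I would then carry out the (more routine) checks that the structure descends to right exact bimodule functors and to bimodule natural transformations: here one uses that on separable bimodule categories right exact bimodule functors have both adjoints, so that $I$ identifies the data of a $2$-morphism with that of an ``$I$-adjoint'' $2$-morphism, turning each hom-category $\Funl{\Cat{C}}{\Cat{M},\Cat{N}}$ into a category equipped with a $\dagger$-type involution compatible with whiskering and composition.

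Then I would install the dualities and the pivotal structure. At the level of $1$-morphisms the dual of an inner-product bimodule $\DMC$ is $\op{M}$ with the two actions exchanged, with the $I$-family transported across the canonical identifications of inner homs; one checks that the weak evaluation and coevaluation $1$-morphisms (which already exist in the separable tricategory) together with their adjoints are compatible with the inner products. The pivotal structure is then extracted from the $I$-families themselves: $I$ equips every right exact bimodule functor with a coherent identification of its left and right adjoints, equivalently a trivialization of the Serre equivalence characterized earlier in the paper; for $\Cat{M}$ a module category this trivialization is exactly a pivotal structure on the finite tensor category $\Funl{\Cat{C}}{\Cat{M},\Cat{M}}$ of right exact module endofunctors, which establishes the ``in particular'' clause. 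Running this construction uniformly over the whole tricategory and checking that the resulting trivializations are compatible with horizontal composition and with the $1$-morphism dualities produces the desired pivotal tricategory with duals.

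I expect the main obstacle to be this last step: showing that the pointwise data $I_{m,n}$ genuinely assemble into a \emph{global} pivotal structure on the tricategory, i.e. that all the higher coherence axioms of a pivotal tricategory with duals hold --- the compatibilities of the trivialized double-duals with the three levels of composition and with the $1$- and $2$-morphism dualities. Most individual identities reduce, via the inner hom calculus, to naturality and module-compatibility of $I$ together with the pivotal axioms of $\Cat{C}$ and $\Cat{D}$; the real work is organizing these into the full tricategorical statement while keeping the coherence bookkeeping finite. The second most delicate point is the categorified Rieffel induction of the second step, where one must show that the inner products and their symmetry isomorphisms on a relative tensor product are independent of the chosen presentations and compatible with the associator and unitors of the tricategory.
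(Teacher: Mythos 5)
Your proposal follows the same overall route as the paper: the unit bimodule category gets its inner product from the pivotal structure, the tensor product of inner-product bimodule categories is handled by the categorified Rieffel induction (Propositions \ref{proposition:Rieffel} and \ref{proposition:inner-prod-bimod-tensor}), the pivotal structure on the hom-2-categories comes from the canonical isomorphisms $\mathsf{F}\simeq \mathsf{F}^{ll}$ built from the two $I$-families (Theorem \ref{theorem:module-functors-ambidextrous}), the dual categories $\CMDld$, $\CMDrd$ inherit inner products, and your ``trivialization of the Serre equivalence'' picture is exactly the paper's Remark \ref{remark:inner-prod-ex}. Two deviations are worth flagging. First, you bake separability into the definition; the paper does not, and it is unnecessary: an inner-product structure forces the inner hom functors to be exact, hence the module category is exact, which already gives both adjoints of all bimodule functors and thus the $*$-duals, while the $\#$-duals of Theorem \ref{theorem:hash-duals-bimcat} exist for arbitrary finite bimodule categories. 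Your version therefore proves the statement only for a (possibly strictly) smaller class, and it also quietly assumes separability is preserved by the constructions, a check the paper avoids altogether. Second, the ``main obstacle'' you anticipate is lighter than you fear, because the paper's Definition \ref{definition:pivotal-tricat} of a pivotal tricategory with duals asks only for pivotal hom-bicategories, pivotality of the 2-functors $\Cat{M}\Box-$ and $-\Box\Cat{M}$ (this is Proposition \ref{proposition:piv-2-fun-bimod}, proved by the inner-hom/module-Yoneda calculus), and right duals in the delooped bicategory; no further compatibility between the pivotal data and the 1-morphism dualities is demanded. Finally, your claimed two-way correspondence between $I$-structures and Frobenius structures on $A$ is only established in one direction in general (Theorem \ref{theorem:Frob-then-inner-prod}), with the converse available via module traces in the semisimple case, so it should be treated as motivation rather than a tool.
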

It is shown that a version of the Rieffel-induction on inner homs holds  for finite bimodule categories and this is used in a crucial way to induce the structure of an inner-product bimodule category on 
the tensor product of two inner-product bimodule categories. 
The invertible morphisms in this tricategory lead to notion of $*$-Morita equivalence for pivotal finite tensor categories. 
Examples of inner-product module categories are obtained from  Frobenius algebras and  from  module traces  \cite{Moduletr} in  the semisimple case:
\begin{theorem}
  \begin{theoremlist}
    \item If $A \in \Cat{C}$ is a special symmetric Frobenius algebra in a pivotal finite tensor category $\Cat{C}$, then the category of modules $\ModCA$ is a $\Cat{C}$-inner-product module category. 
\item A semisimple bimodule category $\DMC$ over spherical fusion categories $\Cat{C}$, $\Cat{D}$ with bimodule trace is an inner-product bimodule category. 
  \end{theoremlist}
\end{theorem}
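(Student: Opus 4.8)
The plan is to handle the two parts separately; in each case the strategy is to make the relevant inner homs explicit, to produce the coherence isomorphism $I_{m,n}\colon\icgenstar{m,n}\simeq\icgen{n,m}$ (and its left-handed counterpart in the bimodule case) by hand, and then to verify that it satisfies the axioms of an inner-product module category: naturality, involutivity, and compatibility with the structural isomorphisms of the inner hom supplied by the calculus developed above. The exactness hypotheses are automatic: a special symmetric Frobenius algebra is separable, so $\ModCA$ is a separable module category, and a semisimple module category over fusion categories is separable; hence the tricategorical framework from the preceding sections applies and the inner products are non-degenerate.

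For \emph{(i)}, recall that $\ModCA$ is a finite left $\Cat{C}$-module category via $c\act M=c\otimes M$, and that the calculus of inner homs computes $\icgen{M,N}$ from a relative tensor product over $A$ together with the duality of $\Cat{C}$ (schematically, $\icgen{M,N}\cong\bigl(N\tensor{A}{}^{\ast}M\bigr)^{\ast}$, built from the right $A$-module $N$ and the left $A$-module ${}^{\ast}M$). First I would extract from the symmetric Frobenius structure the isomorphism of $A$-$A$-bimodules $A\cong A^{\ast}$ determined by the Frobenius form $\varepsilon\colon A\to\unit$ and the coproduct; the \emph{symmetry} axiom is precisely the statement that this isomorphism is compatible with the pivotal structure of $\Cat{C}$, that is, that the two distinct bimodule maps $A\to A^{\ast}$ agree. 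Feeding $A\cong A^{\ast}$ into the description above makes the relative tensor product over $A$ self-dual up to interchanging its two arguments, and combined with the canonical $\ast\ast\cong\id$ (valid since $\Cat{C}$ is pivotal) this turns $\icgenstar{M,N}$ into $\icgen{N,M}$; that composite is $I_{M,N}$, and the \emph{special} axiom — equivalently, non-degeneracy of the Frobenius form — is exactly what guarantees it is an isomorphism and not merely a morphism. The main obstacle is the coherence check for $I$: naturality in $M,N$ is formal, but the compatibility of $I$ with the associativity and module constraints of $\icgen{-,-}$, and the involutivity $(I_{N,M})^{\ast}\circ I_{M,N}=\id$, must be reduced by a diagram chase to the Frobenius axiom (the multiplication and comultiplication of $A$ intertwine) and the symmetry axiom — this is where the \emph{symmetry} condition, as opposed to just the Frobenius relation, is used essentially.

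For \emph{(ii)}, semisimplicity lets me expand the right inner hom as a finite direct sum $\icgen{m,n}\cong\bigoplus_{X\in\mathrm{Irr}(\Cat{C})}\Hom_{\Cat{M}}(m\ract X,n)\otimes X^{\ast}$, and the left inner hom analogously over $\mathrm{Irr}(\Cat{D})$. A bimodule trace $\theta=\{\theta_m\colon\End_{\Cat{M}}(m)\to\C\}$ \cite{Moduletr} furnishes, for every simple $X\in\Cat{C}$ and all $m,n$, a non-degenerate pairing $\Hom_{\Cat{M}}(m\ract X,n)\times\Hom_{\Cat{M}}(n\ract X^{\ast},m)\to\C$: compose the two morphisms, turning $X^{\ast}$ into $X$ by the spherical duality of $\Cat{C}$ as in the module-trace compatibility with the partial trace over $\Cat{C}$, land in $\End_{\Cat{M}}(n)$, and apply $\theta_n$; non-degeneracy of this pairing is exactly the defining non-degeneracy of the module trace. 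Dualizing the pairing and using $X^{\ast\ast}\cong X$ from sphericity gives $\Hom_{\Cat{M}}(m\ract X,n)^{\ast}\cong\Hom_{\Cat{M}}(n\ract X^{\ast},m)$, and assembling over $X$ produces $I_{m,n}\colon\icgenstar{m,n}\simeq\icgen{n,m}$; the identical recipe on the $\Cat{D}$-side produces the left-handed isomorphism. It then remains to verify the inner-product axioms: cyclicity of $\theta$ gives the involutivity of $I$; the separate compatibilities of the bimodule trace with the spherical traces on $\Cat{C}$ and on $\Cat{D}$ give the compatibility of $I$ with the right and left actions — the categorified analogue of the identities ${}_{C}\langle m\ract c,\widetilde{m}\rangle={}_{C}\langle m,\widetilde{m}\ract c^{\ast}\rangle$ of Definition~\ref{definition:inner-prod-starald} — and the fact that $\theta$ is simultaneously a left- and a right-module trace makes the two one-sided structures cohere into a bimodule one. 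The delicate point here is to track the spherical duals and the module-adjunction isomorphisms $\Hom_{\Cat{M}}(m\ract X,n)\cong\Hom_{\Cat{M}}(m,n\ract X^{\ast})$ consistently on both sides, so that the resulting $I$ is precisely the coherence isomorphism demanded by the definition rather than one differing from it by the spherical or pivotal scalars.
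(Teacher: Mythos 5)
Your strategy for part \emph{(i)} — compute $\icm{m,\widetilde{m}}=m\tensor{A}\leftidx{^*}{\widetilde{m}}{}$ and use the symmetry of $A$ together with the pivotal structure to identify $\icmstar{\widetilde{m},m}=(\widetilde{m}\tensor{A}\leftidx{^*}{m}{})^{*}$ with $m\tensor{A}\leftidx{^*}{\widetilde{m}}{}$ — is in the same spirit as the paper's Theorem \ref{theorem:Frob-then-inner-prod}, but you misplace where ``special'' enters, and that is exactly the step your sketch leaves open. Non-degeneracy of the Frobenius form (equivalently $A\cong A^{*}$) is part of being Frobenius and has nothing to do with specialness; what specialness (after normalization, which the paper arranges by rescaling) provides is separability, i.e.\ a splitting of the multiplication. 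The paper uses this splitting to build the idempotent $P_{m,\widetilde{m}}$ on $m\otimes\leftidx{^*}{\widetilde{m}}{}$ whose image is $m\tensor{A}\leftidx{^*}{\widetilde{m}}{}$; only then is $(\widetilde{m}\tensor{A}\leftidx{^*}{m}{})^{*}$ — a priori the dual of a coequalizer — identified with the image of the dual idempotent $P^{*}_{\widetilde{m},m}$, and the pivotal structure descends to the desired isomorphism because $P^{*}_{\widetilde{m},m}$ and $P_{m,\widetilde{m}}$ intertwine $1\otimes\leftidx{^*}{a_{\widetilde{m}}}{}$ (Proposition \ref{proposition:crucial-Frob}), which is where symmetry is used (Lemma \ref{lemma:techh-frob}). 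Your assertion that the relative tensor product is ``self-dual up to interchanging its two arguments'' is precisely this unproven identification, and attributing it to non-degeneracy of the form will not close the gap. Note also that Definition \ref{definition:inner-prod-bimodule} asks only for a balanced bimodule natural isomorphism $I$; the involutivity $(I_{N,M})^{*}\circ I_{M,N}=\id$ you propose to verify is not part of the structure, so that item of your checklist is unnecessary (and would in any case depend on a normalization you have not fixed).

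For part \emph{(ii)} your route is genuinely different from the paper's: you decompose the inner homs over simple objects and assemble $I$ from the non-degenerate pairings furnished by the bimodule trace, whereas the paper first proves a characterization (Theorem \ref{theorem:chara-inn-prod-fusion}) stating that an inner-product structure on a semisimple $\DMC$ is the same as a pair of multi-balanced isomorphisms $\Hom_{\Cat{M}}(m,\widetilde{m})\simeq\Hom_{\Cat{M}}(\widetilde{m},m)^{*}$ whose balancing involves the \emph{conjugate} pivotal structures (via the trace pairings of Lemma \ref{lemma:piv-str-conj-mb}), and then observes that sphericity means $a=\cc{a}$, so a bimodule trace is exactly such a datum. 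Your approach can be made to work, but the step you explicitly defer — that the assembled $I$ is balanced for the pivotally twisted actions on $\CMDld$ and $\CMDrd$ — is the decisive point and is precisely where spherical (as opposed to merely pivotal) is used: the trace pairing of a pivotal fusion category is balanced only up to the conjugate pivotal structure, and it is the identity $a=\cc{a}$ that makes the balancing of the bimodule trace match the one demanded by Definition \ref{definition:inner-prod-bimodule}. As written you assert this compatibility rather than derive it (and the appeal to sphericity for $X^{**}\cong X$, which needs only pivotality, obscures where sphericity genuinely enters), so the essential content of \emph{(ii)} is still missing from your argument.
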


TFTs with defects of all  codimension are expected to define also fully-extended TFTs \cite{Lurie}. It is not surprising 
that some of the structures in this work appear also in the work \cite{DSS} on fully dualizable 3-d TFTs. It is shown in \cite{DSStri}  using different methods that finite bimodule categories over finite tensor categories form even a symmetric monoidal tricategory and  in \cite{DSS}  that   finite tensor categories  are 2-dualizable 
in this symmetric monoidal tricategory. Furthermore it is argued that they should lead to a non-compact framed 3-d TFT. The tricategory $\BimCat$ should then constitute defects for this TFT.
It is natural to expect that inner-product bimodule categories are related to a homotopy fixed point for passing from this framed to an oriented theory.   

Our description of the duals in $\BimCat$ uses mainly tools from enriched category theory and might be interesting for other higher categories as well.

\paragraph{Structure of the article}

In Section \ref{sec:Prelim} we recall the basic notions of bimodule categories, bimodule functors, bimodule natural transformations and balanced functors. 
 We then investigate the tensor product of module categories in Section \ref{sec:tricat-bimod}. 
In the remainder of this section we develop the theory of multi-module categories that serves as an important tool in the proof that bimodule categories over finite tensor categories form a tricategory. 
In Section \ref{sec:bimod-categ-as} we define tricategories with duals and  pivotal tricategories. Next we  enhance the existing calculus of the inner hom and use it to define 
the duals for 
 bimodule categories.  Furthermore we discuss the Serre bimodule functors between the left and right duals of a separable bimodule category. 
In the last section we define inner-product bimodule categories and show that they form a pivotal tricategory. 
The example of Frobenius algebras and the relation with bimodule traces for semisimple module categories is discussed. 
The appendix contains definitions and conventions for duals in monoidal categories, bicategories and the definition of an algebraic tricategory. 

Part of this work, especially  some results of Section 3 appeared in the authors PhD thesis \cite{Schaum}.

 \section{Preliminaries on (bi)module categories}
 \label{sec:Prelim}
We  summarize definitions and known results about module categories over finite tensor categories. Let $\Bbbk$ be a  field. %todo check need perfect?  
Throughout this work, all categories are 
assumed to be $\Bbbk$-linear and abelian  and all functors are requested to be linear unless stated otherwise.

\paragraph{Module categories, functors and natural transformations}

The definition of a finite tensor category is recalled in Definition \ref{definition:Tensor-cat}.
The systematic investigation of module categories over finite tensor categories, was initiated in \cite{FinTen}. 

 \begin{definition}[\cite{Ostrik}, \cite{Benabou}]
   \label{definition:mod-cat}
 Let $\Cat{C}$ be a finite tensor category. 
   A (left) $\Cat{C}$-module category is a finite $\Bbbk$-linear abelian category $\Cat{M}$,
   together  with a bilinear  exact  functor  
  $\act : \Cat{C} \times \Cat{M} \rr \Cat{M}$,  called the action of $\Cat{C}$ on $\Cat{M}$, and 
   natural isomorphisms
   \begin{equation}
     \label{eq:structures-module-category}
     \mu_{x,y,m}^{\Cat{M}}: (x\otimes y) \act m \rr x\act (y \act m), \quad
     \lambda_m^{\Cat{M}}: \unit_{\Cat{C}}\act m \rr m, 
   \end{equation}
   for all $x, y \in \Cat{C}$, $m \in \Cat{M}$, called  the module constraints,  such that  the diagrams
   \begin{equation}
     \label{eq:diagramm}
     \begin{tikzcd}
       {}        & ((x \otimes y) \otimes z) \act m \ar{ld}{\omega_{x, y, z}\act \id_m}
       \ar{rd}{\mu^{\Cat{M}}_{x\otimes y,z,m}}& \\
       (x \otimes (y \otimes z)) \act m \ar{d}{\mu^{\Cat{M}}_{x,y\otimes z,m}} & & (x
       \otimes y ) \act (z \act m) \ar{d}{\mu^{\Cat{M}}_{x,y, z\act m}}\\
       x \act (( y \otimes z) \act m)\ar{rr}{\id_x\act \mu^{\Cat{M}}_{y,z,m}}& & x \act (y \act (z \act m)),
     \end{tikzcd}   
   \end{equation}
   and
   \begin{equation} 
     \label{eq:triangle}
     \begin{tikzcd}
       (x\otimes 1) \act m \ar{rr}{ \mu^{\Cat{M}}_{x,1,m}}\ar{dr}{\rho_x \act m} & & x \act (1 \act  m) \ar{dl}{1_x \act \lambda_m^{\Cat{M}}}\\
       & x \act m & 
     \end{tikzcd}
   \end{equation}
   commute for all objects $x,y,z \in \Cat{C}$ and $m \in \Cat{M}$, where the isomorphisms  $\omega_{x,y,z}:(x \otimes y )\otimes z \rightarrow x \otimes( y \otimes z)$ and 
   $\rho_{x}:x \otimes 1 \rightarrow x$ are the constraint  morphisms of $\Cat{C}$ as a monoidal category. To emphasize that $\Cat{M}$ is a left $\Cat{C}$-module category, we  denote it $\CM$. 
   Whenever this is unambiguous, we denote the constraints of $\Cat{M}$ just by $\mu$ and $\lambda$.

\end{definition}

 The definition of a  right $\Cat{C}$-module category $\MC$ is analogously given in terms of  a bilinear  exact functor $\ract: \MC \times 
 \Cat{C} \rr \MC$. We denote the constraint for the unit  of a right module category  by $\rho^{\Cat{M}}_{m}: m \ract \unit_{\Cat{C}} \rightarrow m$ and where it is otherwise ambiguous, we 
 denote a left module action on a category $\Cat{M}$ by $\mu^{\Cat{M},l}$ or just $\mu^{l}$ and the right module action by $\mu^{\Cat{M},r}$ or just $\mu^{l}$.

 It is clear, that a $\Cat{D}$-module category structure on $\Cat{M}$ is the same as a tensor functor 
 $L_{-}: \Cat{D} \rightarrow \Fun(\Cat{M},\Cat{M})$, where $L_{d}(m)= d \act m$ for $d \in \Cat{D}$ and $m \in \Cat{M}$. 

The following is an important subclass of module categories, that is investigated in detail in \cite{FinTen}. 
\begin{definition}[{{\cite[Def. 3.1]{FinTen}}}]  
  A module category $\CM$ is called exact, if for any projective object $P \in \Cat{C}$ and any object
$m \in \Cat{M}$, the object $P \act m$ is projective in $\Cat{M}$. 
\end{definition}
If $\Cat{C}$ is semisimple, a module category $\CM$ is exact if and only if it is semisimple \cite{FinTen}.

 We denote by $\rev{C}$  the category $\Cat{C}$ with the
 reversed monoidal product, but the same source and target map for the
 morphisms. This has to be  distinguished from  $\op{C}$ which is  the category $\Cat{C}$ with reversed order of
 the arrows but with the same monoidal product as $\Cat{C}$.
 It follows directly from the definitions, that a $\Cat{C}$-right module category is the same as a $\rev{C}$-left module
 category.
 \begin{example}
\label{example:unit-module-cat-alg}
We consider some examples of module categories over $\Cat{C}$.   
\begin{examplelist}
\item Let $\Vect$ denote the category of finite dimensional $\Bbbk$-vector spaces regarded as semisimple tensor category. Every finite linear category  $\Cat{M}$ is a $\Vect$-module category 
with action determined by $\Hom_{\Cat{M}}(\widetilde{m}, V \otimes m)= \Hom_{\Cat{M}}(\widetilde{m},m) \tensor{\Bbbk} V$ for $V \in \Vect$, $m,\widetilde{m} \in \Cat{M}$. 
  \item  \label{item:unit-module}  The category $\Cat{C}$ itself is a left $\Cat{C}$- and right $\Cat{C}$-module category  with actions given by the tensor product. It is exact as left $\Cat{C}$- and right $\Cat{C}$-module category.
\item Let $A \in \Cat{C}$ be an algebra object, then the category $\ModCA$ of $A$-right modules in $\Cat{C}$ is naturally a left $\Cat{C}$-module category with module action given by the tensor product. 
   \end{examplelist}
 \end{example}

 \begin{remark}
   \label{remark:clash-biadditve}
 Let $\CM$ be a $\Cat{C}$-module category and $\Cat{N}$ any finite  category. It is clear that the 
 functor $\act \times 1_{\Cat{N}}: \Cat{C} \times \Cat{M} \times \Cat{N} \rightarrow \Cat{M} \times \Cat{N}$ satisfies the properties (\ref{eq:diagramm}) and (\ref{eq:triangle}) of a $\Cat{C}$-module  action 
on $\Cat{M}  \times \Cat{N}$.  We will thus abuse notation and  call the category $\CM \times \Cat{N}$ also a $\Cat{C}$-module category, although the functor $\act \times 1_{\Cat{N}}$ is of course 
only bilinear with respect to  the first argument  but not bilinear as a functor $\Cat{C} \times (\Cat{M} \times \Cat{N}) \rightarrow \Cat{M} \times \Cat{N}$. 
\end{remark}

Module functors between left $\Cat{C}$-module categories $(\CM, \mu^{\Cat{M}}, l^{\Cat{M}})$ and
$(\CN, \mu^{\Cat{N}}, l^{\Cat{N}})$ are  functors  with additional constraint  isomorphisms that relate the two module actions.

\begin{definition}[\cite{Ostrik}]
  A  $\Cat{C}$-module functor ${\mathsf{F}}: \CM \rr\CN$ is a linear
  functor ${\mathsf{F}}$ together with natural isomorphisms $\phi^{\mathsf{F}}_{x,m}: \mathsf{F}(x \act
  m ) \rr x  \act \mathsf{F}(m)$, such that the diagrams 
  \begin{equation}
    \label{eq:Module-functor}
    \begin{tikzcd} 
      {} & \mathsf{F}((x \otimes y) \act m) \ar{dl}{\mathsf{F}(\mu_{x,y,m}^{\Cat{M}})}
      \ar{dr}{\phi^\mathsf{F}_{x\otimes y,m}} & \\
      \mathsf{F}(x \act (y\act  m)) \ar{d}{\phi^\mathsf{F}_{x,y \act m}} & & (x \otimes y) \act \mathsf{F}(m)
      \ar{d}{\mu^{\Cat{N}}_{x,y,\mathsf{F}(m)}} \\
      x \act \mathsf{F}(y \act m) \ar{rr}{\id_x \act \phi^\mathsf{F}_{y,m}}  && x \act (y\act  \mathsf{F}(m))
    \end{tikzcd}
  \end{equation}
  and 
  \begin{equation}
    \label{eq:mod-functor-triangle}
    \begin{xy}
      \xymatrix{
        {} & \mathsf{F}(\unit_{\Cat{C}} \act m) \ar[dl]_{\phi^\mathsf{F}_{\unit_{\Cat{C}},m}} \ar[dr]^{\mathsf{F}(\lambda^{\Cat{M}}_m)} & \\
        \unit_{\Cat{C}} \act \mathsf{F}(m) \ar[rr]^{\lambda^{\Cat{N}}_{\mathsf{F}(m)}}& &\mathsf{F}(m),
      }
    \end{xy}
  \end{equation}
  commute for all $x,y \in \Cat{C}$ and $m \in \Cat{M}$. We sometimes write $(\mathsf{F},\phi^\mathsf{F})$ for a
  module functor and call $\phi^\mathsf{F}$ a left module constraint for $\mathsf{F}$. Whenever this is unambiguous, we denote the constraint just  by $\phi$. There is the analogous definition 
  for module functors between right $\Cat{C}$-module categories.
\end{definition}
Natural transformations between module functors are required to be compatible in the following way. 
\begin{definition} [\cite{Ostrik}]
  Let  $(\mathsf{F},\phi^{\mathsf{F}}) :\CM\rr
  \CN$ and $(\mathsf{G},\phi^{\mathsf{G}}): \CM \rr \CN$ be module functors.  A module natural transformation $ \eta: \mathsf{F} \rr \mathsf{G}$ is a natural
  transformation such that the diagram
  \begin{equation}
    \label{eq:module-nat-transf}
    \begin{tikzcd}
      \mathsf{F}(x \act m) \ar{r}{\eta_{x \act m}} \ar{d}{\phi^{\mathsf{F}}_{x,m}} & \mathsf{G}(x \act m) \ar{d}{\phi^{\mathsf{G}}_{x,m}} \\
      x \act \mathsf{F}(m) \ar{r}{\id_x \act \eta_{m}} & x \act \mathsf{G}(m),
    \end{tikzcd}
  \end{equation}
  commutes for all $ x \in \Cat{C}$ and $m \in \Cat{M}$.
\end{definition}
It is easy to see that the composite of module natural transformations is again a module natural transformation. Hence, for module categories $\CM$ and $\CN$, 
the module functors and module natural transformations from $\CM$ to $\CN$ form a category 
that is denoted  $\Funl{\Cat{C}}{\CM,\CN}$.

\paragraph{Bimodule categories}

When combined, the notions of left and right module categories lead to the notion of bimodule categories. 
First we present a compact definition of a bimodule category that uses the notion of the Deligne product \cite{DelCat}  of abelian 
categories. 
 For finite tensor categories $\Cat{C}$ and $\Cat{D}$, the 
category $\Cat{C} \boxtimes \Cat{D}$ is again a finite tensor category, see also \cite[Section 1.46]{ENO:Notes} 
for more details.

\begin{definition}
  \label{definition:bimodule-cat}
  A $(\Cat{D},\Cat{C})$-bimodule category $\DMC$ is a left $\Cat{D} \boxtimes
  \rev{C}$-module category. A bimodule category $\DMC$ is called exact if it is exact as a  $\Cat{D} \boxtimes
  \rev{C}$-module category.
\end{definition}
If one unpacks this definition, one sees that
 a $(\Cat{D},\Cat{C})$-bimodule category $\DMC$ is the same as  a
 left  $\Cat{D}$-  and right $\Cat{C}$-module category $\DMC$ together  with a family of
  natural isomorphisms $\gamma_{d,m,c}: (d \act m) \ract c \rr d \act
  (m \ract c)$, for $ d \in \Cat{D} $, $c \in \Cat{C}$ and $m \in \Cat{M}$, 
that satisfies pentagon diagrams with respect to the action of $\Cat{C}$, $\Cat{D}$ and the triangle diagram with respect to the units, see e.g. \cite[Proof of Prop. 2.12]{Green}.
This second view on bimodule categories allows for the  notion of biexact bimodule categories.
\begin{definition}
  \label{definition:exact-bimodule-cat}
Let $\Cat{C}$, $\Cat{D}$ be finite tensor categories. A biexact $(\Cat{D},\Cat{C})$-bimodule category $\DMC$ is a finite bimodule category that is exact both as a left $\Cat{C}$-module and as right $\Cat{D}$-module category. 
\end{definition}
This definition differs from the weaker notion of an exact bimodule category in Definition \ref{definition:bimodule-cat}. 
To see the difference consider $\LCC$ as $(\Cat{C},\Vect)$-bimodule category. 
It is an exact left $\Cat{C} \boxtimes \Vect \simeq \Cat{C}$- module category, hence an exact bimodule category in the sense of  Definition \ref{definition:bimodule-cat}. 
However  it is only an exact right $\Vect$-module category if $\Cat{C}$ is semisimple, hence in general not a 
biexact bimodule category. However the converse statement holds in general. 
\begin{lemma}
\label{lemma:biex-then-ex}
  A biexact $(\Cat{D},\Cat{C})$ bimodule category $\DMC$ is an exact $\Cat{D} \boxtimes \rev{C}$-module category. 
\end{lemma}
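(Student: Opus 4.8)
The plan is to reduce exactness of $\DMC$ over $\Cat{D}\boxtimes\rev{C}$ to the two one-sided exactness conditions that make up biexactness, using the standard description of projective objects in a Deligne product. First I would recall that, since $\Cat{C}$ and $\Cat{D}$ are finite, every projective object of the finite tensor category $\Cat{D}\boxtimes\rev{C}$ is a direct summand of a finite direct sum $\bigoplus_i (P_i\boxtimes Q_i)$ with each $P_i\in\Cat{D}$ and each $Q_i\in\rev{C}$ projective; indeed, the simple objects of $\Cat{D}\boxtimes\rev{C}$ are the $S\boxtimes T$ with $S\in\Cat{D}$, $T\in\rev{C}$ simple, and the projective cover of $S\boxtimes T$ is the Deligne product of the projective covers of $S$ and $T$ (see \cite{DelCat}, \cite{ENO:Notes}, or pass to the underlying finite-dimensional algebras, where $\boxtimes$ becomes $\tensor{\Bbbk}$). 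Note also that an object of $\rev{C}$ is projective precisely when it is projective in $\Cat{C}$, since the two categories share the same underlying abelian category.

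Next I would identify how $\Cat{D}\boxtimes\rev{C}$ acts on objects of product form. Unpacking the bimodule structure of $\DMC$ as after Definition \ref{definition:bimodule-cat}, the restriction of the $\Cat{D}\boxtimes\rev{C}$-action along $\Cat{D}\hookrightarrow\Cat{D}\boxtimes\rev{C}$, $d\mapsto d\boxtimes\unit$, is the left $\Cat{D}$-action, and along $\rev{C}\hookrightarrow\Cat{D}\boxtimes\rev{C}$, $c\mapsto\unit\boxtimes c$, is the right $\Cat{C}$-action. Since $P\boxtimes Q\cong(\unit\boxtimes Q)\otimes(P\boxtimes\unit)$ in $\Cat{D}\boxtimes\rev{C}$, the module associativity constraint yields a natural isomorphism $(P\boxtimes Q)\act m\cong(P\act m)\ract Q$ for every $m\in\Cat{M}$. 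Now suppose $P\in\Cat{D}$ and $Q\in\Cat{C}$ are projective. Since $\DMC$ is biexact, it is exact as a left $\Cat{D}$-module category, whence $P\act m$ is projective in $\Cat{M}$; it is also exact as a right $\Cat{C}$-module category, whence, $Q$ being projective, $(P\act m)\ract Q$ is projective in $\Cat{M}$. Hence $(P\boxtimes Q)\act m$ is projective whenever $P$ and $Q$ are.

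Finally I would assemble the general case. For each fixed $m\in\Cat{M}$ the functor $(-)\act m\colon\Cat{D}\boxtimes\rev{C}\rr\Cat{M}$ is additive — in fact exact, by the defining property of the module action — so it preserves finite direct sums and direct summands. Given an arbitrary projective $\Pi\in\Cat{D}\boxtimes\rev{C}$, write it as a direct summand of some $\bigoplus_i (P_i\boxtimes Q_i)$ with all $P_i$, $Q_i$ projective; then $\Pi\act m$ is a direct summand of $\bigoplus_i (P_i\boxtimes Q_i)\act m$, which is a finite direct sum of projective objects of $\Cat{M}$ by the previous step and hence projective. Thus $P\act m$ is projective for every projective $P\in\Cat{D}\boxtimes\rev{C}$ and every $m\in\Cat{M}$, so $\DMC$ is an exact $\Cat{D}\boxtimes\rev{C}$-module category.

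I do not expect a real obstacle: the heart of the argument is just to compose the two one-sided exactness conditions, and the only non-formal ingredient is the structure of projectives in $\Cat{D}\boxtimes\rev{C}$, which is classical. The point needing care is bookkeeping of handedness — that exactness as a right $\Cat{C}$-module category enters exactness over $\Cat{D}\boxtimes\rev{C}$ through the identifications right-$\Cat{C}$-module $=$ left-$\rev{C}$-module and $\rev{C}$ and $\Cat{C}$ having the same projectives — so that both halves of biexactness are genuinely used, as they must be in light of the $(\Cat{C},\Vect)$-bimodule example preceding the lemma.
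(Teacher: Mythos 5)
Your proof is correct and takes essentially the same route as the paper: the key step in both is that $(P\boxtimes Q)\act m\cong(P\act m)\ract Q$ is projective, using exactness as a left $\Cat{D}$-module and as a right $\Cat{C}$-module category, together with the description of projectives in $\Cat{D}\boxtimes\rev{C}$ via the algebra picture $\Cat{D}\boxtimes\rev{C}\simeq\Mod_{A\otimes B}$. The only (minor) difference is that the paper reduces to a set of generating projectives of the form $p_\alpha\boxtimes q_\beta$ by citing the criterion \cite[Lemma 3.3.6]{DSS}, whereas you handle arbitrary projectives directly as summands of sums of box-products of projectives and use additivity of the action; both are fine.
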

\begin{proof}
  According to \cite[Lemma 3.3.6]{DSS}, exactness of a module category $\MC$ is equivalent to the property that for a set of  generating projective objects $P= \{p_{\alpha}\}$, $p_{\alpha} \act m$ is projective for all $m \in \Cat{M}$ and 
all $\alpha$. If $\{p_{\alpha} \}$ and $\{q_{\beta} \}$ are sets of  generating projective objects for $\Cat{C}$ and $\Cat{D}$, then $\{p_{\alpha} \boxtimes q_{\beta} \}$ is a set of generating projective objects of $\Cat{C} \boxtimes \Cat{D}$. This can be seen most easily if we choose $\Vect$-algebras $A$ and $B$, such that $\Cat{C} \simeq \Mod_{A}$, $\Cat{D} \simeq \Mod_{B}$ as linear categories. Then $\{A\}$ and $\{B\}$ are generating projective objects of $\Cat{C}$ and $\Cat{D}$, while $\{A \otimes B\}$ is generating projective for $\Cat{C} \boxtimes \Cat{D} \simeq \Mod_{A \otimes B}$. Assume now that $\DMC$ is a biexact bimodule category and  $\{p_{\alpha} \}$ and $\{q_{\beta} \}$ are sets of  generating projective objects for $\Cat{C}$ and $\Cat{D}$. Then $(p_{\alpha} \boxtimes q_{\beta}) \act m = p_{\alpha} \act m \ract q_{\beta}$ is projective, hence $\DMC$ is an exact bimodule category. 
\end{proof} 
 Clearly, $\Cat{C}$ is a $(\Cat{C},\Cat{C})$-bimodule category with actions given by the tensor product, see Example \ref{example:unit-module-cat-alg}\refitem{item:unit-module}. 
This bimodule category is denoted $\CCC$ and called the unit bimodule category.

The compact definition of a bimodule category  also directly defines bimodule functors and bimodule natural transformations between $(\Cat{D},\Cat{C})$-bimodule categories 
 as $\Cat{D} \boxtimes \rev{C}$-module functors  and $\Cat{D} \boxtimes \rev{C}$-module natural transformation. The following gives a more explicit characterization of bimodule functors,  see \cite{Green}.
\newpage  
\begin{lemma}
  A bimodule functor  $\mathsf{F}: \DMC \rightarrow \DNC$ is the same as a left $\Cat{D}$- and right $\Cat{C}$-module 
functor with module constraints $\phi^{l}$ and 
$\phi^{r}$, respectively, such that 
  \begin{equation}
    \label{eq:bimodule-functor-charact}
    \begin{tikzcd}
      {} & \mathsf{F}((x \act m) \ract y) \ar{dl}{\mathsf{F}(\gamma_{x,m,y})}
      \ar{dr}{\phi^{r}_{x \act m, y} } & \\
      \mathsf{F}(x \act (m \ract y)) \ar{d}{\phi^l_{x, m \ract y}} & & \mathsf{F}(x \act m)
      \ract y \ar{d}{\phi^l_{x,m} \ract 1_y}\\
      x \act \mathsf{F}(m \ract y) \ar{dr}{1_x \act \phi^r_{m,y}} & & (x \act \mathsf{F}(m))
      \ract y \ar{dl}{\gamma_{x,\mathsf{F}(m),y}} \\
      & x \act (\mathsf{F}(m) \ract y) & 
    \end{tikzcd}
  \end{equation}
  commutes for all possible objects.
\end{lemma}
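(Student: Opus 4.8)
The plan is to unwind the compact Definition \ref{definition:bimodule-cat}: a $(\Cat{D},\Cat{C})$-bimodule functor $\mathsf{F}\colon\DMC\to\DNC$ is by definition a $\Cat{D}\boxtimes\rev{C}$-module functor, so it carries a \emph{single} family of natural isomorphisms $\phi^{\mathsf{F}}_{z,m}\colon\mathsf{F}(z\act m)\to z\act\mathsf{F}(m)$, $z\in\Cat{D}\boxtimes\rev{C}$, subject to the pentagon (\ref{eq:Module-functor}) and the unit triangle (\ref{eq:mod-functor-triangle}); I want to show this datum is equivalent to a pair $(\phi^{l},\phi^{r})$ of a left $\Cat{D}$- and a right $\Cat{C}$-module constraint subject to their own pentagons, triangles, and the compatibility hexagon (\ref{eq:bimodule-functor-charact}). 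The structural facts I would use throughout are: $\Cat{D}\boxtimes\rev{C}$ is generated under finite direct sums and cokernels by the objects $d\boxtimes c$, with $\Hom_{\Cat{D}\boxtimes\rev{C}}(d\boxtimes c,d'\boxtimes c')=\Hom_{\Cat{D}}(d,d')\otimes_{\Bbbk}\Hom_{\Cat{C}}(c,c')$; since the module action is exact and $\mathsf{F}$ is right exact, both $\mathsf{F}(z\act m)$ and $z\act\mathsf{F}(m)$ are right exact in $z$, so a natural transformation between them is determined by, and may be freely prescribed on, the generating objects $z=d\boxtimes c$ provided it is natural in $d$ and in $c$ separately; and the $\Cat{D}\boxtimes\rev{C}$-module structure on $\Cat{M}$, restricted to generators via $(d\boxtimes c)\act m = d\act(m\ract c)$, has its constraint $\mu^{\Cat{D}\boxtimes\rev{C}}$ assembled out of $\mu^{l}$, $\mu^{r}$ and the bimodule constraint $\gamma$, exactly as in the unpacking following Definition \ref{definition:bimodule-cat}.

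From a bimodule functor I extract $\phi^{l}$ and $\phi^{r}$ by restriction: $\phi^{l}_{d,m}$ is $\phi^{\mathsf{F}}_{d\boxtimes\unit_{\Cat{C}},m}$ transported along the canonical unit isomorphisms $(d\boxtimes\unit_{\Cat{C}})\act(-)\cong d\act(-)$, and symmetrically $\phi^{r}_{m,c}$ is $\phi^{\mathsf{F}}_{\unit_{\Cat{D}}\boxtimes c,m}$ transported along $(\unit_{\Cat{D}}\boxtimes c)\act(-)\cong(-)\ract c$. That $\phi^{l}$ satisfies its pentagon and triangle is the pentagon (\ref{eq:Module-functor}) of $\phi^{\mathsf{F}}$ specialised to $x=d\boxtimes\unit_{\Cat{C}}$, $y=d'\boxtimes\unit_{\Cat{C}}$ (using $(d\boxtimes\unit)\otimes(d'\boxtimes\unit)=(d\otimes d')\boxtimes\unit$) together with the triangle (\ref{eq:mod-functor-triangle}) at $z=\unit_{\Cat{D}}\boxtimes\unit_{\Cat{C}}=\unit_{\Cat{D}\boxtimes\rev{C}}$; the argument for $\phi^{r}$ is the mirror. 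The compatibility hexagon (\ref{eq:bimodule-functor-charact}) comes out of the pentagon (\ref{eq:Module-functor}) applied with $\{x,y\}=\{d\boxtimes\unit_{\Cat{C}},\ \unit_{\Cat{D}}\boxtimes c\}$ in the two possible orders: both $(d\boxtimes\unit_{\Cat{C}})\otimes(\unit_{\Cat{D}}\boxtimes c)$ and $(\unit_{\Cat{D}}\boxtimes c)\otimes(d\boxtimes\unit_{\Cat{C}})$ are canonically identified with $d\boxtimes c$, and comparing the two resulting expressions for $\phi^{\mathsf{F}}_{d\boxtimes c,m}$ — one reading as ``$\phi^{l}$ then $\phi^{r}$'', the other as ``$\phi^{r}$ then $\phi^{l}$'' — the discrepancy is precisely the difference of the two module-constraint isomorphisms for these two factorisations, namely $\gamma_{d,m,m}$ on the source side and $\gamma_{d,\mathsf{F}(m),c}$ on the target side. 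This is exactly diagram (\ref{eq:bimodule-functor-charact}).

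For the converse, given $(\mathsf{F},\phi^{l},\phi^{r})$ satisfying the three conditions, I define $\phi^{\mathsf{F}}$ on a generator $d\boxtimes c$ by $(1_d\act\phi^{r}_{m,c})\circ\phi^{l}_{d,m\ract c}\colon\mathsf{F}(d\act(m\ract c))\to d\act(\mathsf{F}(m)\ract c)$; naturality in $d$ and in $c$ is clear, so by the generation property this extends uniquely to a natural isomorphism $\phi^{\mathsf{F}}_{z,m}$ for all $z$. It remains to verify the pentagon (\ref{eq:Module-functor}) and the triangle (\ref{eq:mod-functor-triangle}) for this $\phi^{\mathsf{F}}$; since all functors in sight are right exact in the $\Cat{D}\boxtimes\rev{C}$-variable it suffices to test them on generators $x=d\boxtimes c$, $y=d'\boxtimes c'$, where — after substituting the decomposition of $\mu^{\Cat{D}\boxtimes\rev{C}}$ into $\mu^{l}$, $\mu^{r}$ and $\gamma$ — the pentagon pastes together out of the pentagon of $\phi^{l}$, the pentagon of $\phi^{r}$, two instances of the hexagon (\ref{eq:bimodule-functor-charact}), and the pentagon and triangle of the module category $\Cat{M}$ (resp.\ $\Cat{N}$); this is a finite diagram chase, and the triangle (\ref{eq:mod-functor-triangle}) reduces likewise to the triangles of $\phi^{l}$ and $\phi^{r}$. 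Finally I would check that the two passages are mutually inverse (immediate on generators from the unit coherences) and that, under the correspondence, a natural transformation is a $\Cat{D}\boxtimes\rev{C}$-module natural transformation if and only if it is compatible with both $\phi^{l}$ and $\phi^{r}$ — again it suffices to test the square (\ref{eq:module-nat-transf}) on the generators $d\boxtimes c$. This essentially reproduces and extends the argument of \cite{Green}. The only genuinely delicate point is the bookkeeping of the unit and associativity constraints of $\Cat{D}$, $\Cat{C}$, $\Cat{M}$ and $\Cat{N}$ that intervene when passing between the object $d\boxtimes c$ and the iterated actions $d\act(m\ract c)$ and $(d\act m)\ract c$; the conceptual content is just ``decompose $d\boxtimes c$ into $d\boxtimes\unit_{\Cat{C}}$ and $\unit_{\Cat{D}}\boxtimes c$ in both orders.''
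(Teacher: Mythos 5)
The paper itself does not prove this lemma -- it is stated with a pointer to \cite{Green} -- so the comparison is with the standard unpacking argument that the citation stands for, and your proposal follows exactly that route: restrict the $\Cat{D}\boxtimes\rev{C}$-constraint to $d\boxtimes\unit_{\Cat{C}}$ and $\unit_{\Cat{D}}\boxtimes c$ to get $\phi^{l},\phi^{r}$, obtain the hexagon by comparing the pentagon for the two factorisations of $d\boxtimes c$, and reassemble in the converse direction. The forward direction is fine as you describe it (modulo the typo: the source-side constraint should be $\gamma_{d,m,c}$, not $\gamma_{d,m,m}$).

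The substantive issue is in the converse. You assume $\mathsf{F}$ is right exact and use this twice: to extend the prescription $\phi_{d\boxtimes c,m}=(1_{d}\act\phi^{r}_{m,c})\circ\phi^{l}_{d,m\ract c}$ from $\boxtimes$-decomposable objects to a constraint $\phi^{\mathsf{F}}_{z,m}$ for \emph{all} $z\in\Cat{D}\boxtimes\rev{C}$, and to reduce the verification of the pentagon and triangle to generators. But the lemma sits in Section 2, where the blanket hypotheses are only that functors are linear; the ``all functors are right exact'' convention is introduced only at the start of Section 3. Without right exactness of $\mathsf{F}$ the functor $z\mapsto\mathsf{F}(z\act m)$ need not be right exact in $z$, so it is not determined by its values on the objects $d\boxtimes c$, and neither the extension step nor the ``check coherence on generators'' step is justified: mere generation of $\Cat{D}\boxtimes\rev{C}$ under sums and cokernels is not enough, one needs the restriction functor to the bilinear data to be fully faithful on the relevant functors, which is exactly the universal property of the Deligne product for functors right exact in each variable (as in \cite{DSSbal}, \cite{ENOfuhom}). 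So as written your argument proves the lemma only under an added exactness hypothesis on $\mathsf{F}$ (harmless for every later use in the paper, but not what the statement says); to close the gap you must either state that hypothesis, or replace the extension step by an argument that does not reconstruct $\mathsf{F}(z\act m)$ from decomposable $z$ (which is essentially why the paper delegates this point to the cited reference rather than to a two-line density argument). Even in the right exact case, the phrase ``may be freely prescribed on the generating objects provided it is natural in $d$ and in $c$ separately'' should be backed by the precise statement that $\Fun(\Cat{D}\boxtimes\rev{C},\Cat{A})\to\Funbal$-type restriction (here: restriction to bilinear functors) is an equivalence on right exact functors, since fullness of that restriction is what lets you lift a transformation, not generation alone.
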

A bimodule natural transformations in turn, is  the same as a left and right module natural transformation, see \cite[Lemma 2.3.6]{Schaum} for details.

The category of bimodule functors and bimodule natural transformations between two bimodule categories $\DMC$ and $\DNC$ is denoted $\Funl{\Cat{D},\Cat{C}}{\DMC, \DNC}$. 
It is straightforward to see that  for two finite tensor categories $\Cat{C}$ and $\Cat{D}$, 
  the $(\Cat{D},\Cat{C})$-bimodule categories $\DMC$, $\DNC$, $(\Cat{D},\Cat{C})$-bimodule functors $\mathsf{F},\mathsf{G}: \DMC \rightarrow \DNC$ and 
  $(\Cat{D},\Cat{C})$-bimodule natural transformations $\eta: \mathsf{F} \rightarrow \mathsf{G}$ form a 2-category called $\BimCat(\Cat{D},\Cat{C})$
  with the composition of functors as horizontal composition and the composition of natural transformations as vertical composition.

\begin{proposition}
\label{proposition:adj-ex}
Let $\CMD$ and $\CND$ be bimodule categories. 
\begin{propositionlist}
  \item A right (left) exact bimodule functor  $\mathsf{F}: \CMD \rightarrow \CND$ has a right (left) adjoint that is  naturally bimodule functors from $\CND$  to $\CMD$ such that
the  adjunctions consist of bimodule natural isomorphisms. 
\item If  $\CMD$ and $\CND$ are exact bimodule categories, every bimodule functor $\mathsf{F}:\CMD \rightarrow \CND$ is exact, in particular the monoidal category  $\Funl{\Cat{D},\Cat{C}}{\DMC, \DMC}$  has 
left and right duals. 
\end{propositionlist}
\end{proposition}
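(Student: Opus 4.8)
The plan is to treat a $(\Cat{C},\Cat{D})$-bimodule functor as a module functor over the single finite tensor category $\Cat{K}:=\Cat{C}\boxtimes\rev{D}$ (Definition~\ref{definition:bimodule-cat}), prove both statements in that one-sided setting, and then transfer them back to bimodule functors via the characterisation \eqref{eq:bimodule-functor-charact}. As $\Cat{C}$ and $\Cat{D}$ are finite tensor categories, so is $\Cat{K}$; and by the standing hypotheses all categories occurring here are finite $\Bbbk$-linear abelian.

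For part (i), let $\mathsf{F}:\CMD\rr\CND$ be right exact. Since $\Cat{M}$ and $\Cat{N}$ are finite $\Bbbk$-linear abelian categories, a right exact linear functor between them admits a right adjoint $\mathsf{F}^{\mathrm{ra}}$ (Eilenberg--Watts; see \cite{DSS,FinTen}). To lift $\mathsf{F}^{\mathrm{ra}}$ to a $\Cat{K}$-module functor I would exploit the rigidity of $\Cat{K}$: for $k\in\Cat{K}$ the functor $k\act(-)$ on any $\Cat{K}$-module category has a right adjoint $k^{\vee}\act(-)$, with $k^{\vee}$ a dual of $k$, and the adjunction isomorphism $\Hom(k\act-,-)\simeq\Hom(-,k^{\vee}\act-)$ natural in all arguments. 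Composing this (on $\Cat{M}$ and on $\Cat{N}$) with $\mathsf{F}\dashv\mathsf{F}^{\mathrm{ra}}$ and with the invertible constraint $\phi^{\mathsf{F}}$ produces, for all $m\in\Cat{M}$, $n\in\Cat{N}$, $k\in\Cat{K}$, a natural isomorphism $\Hom_{\Cat{M}}(m,\mathsf{F}^{\mathrm{ra}}(k^{\vee}\act n))\simeq\Hom_{\Cat{M}}(m,k^{\vee}\act\mathsf{F}^{\mathrm{ra}}(n))$; since $(-)^{\vee}$ is an equivalence on $\Cat{K}$ this holds with $k^{\vee}$ replaced by an arbitrary object, and the Yoneda lemma converts it into a natural isomorphism $\mathsf{F}^{\mathrm{ra}}(k\act n)\simeq k\act\mathsf{F}^{\mathrm{ra}}(n)$, which I take as the module constraint of $\mathsf{F}^{\mathrm{ra}}$. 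The pentagon and triangle coherences for this constraint, and the fact that the unit $\id_{\Cat{M}}\Rightarrow\mathsf{F}^{\mathrm{ra}}\mathsf{F}$ and counit $\mathsf{F}\mathsf{F}^{\mathrm{ra}}\Rightarrow\id_{\Cat{N}}$ satisfy \eqref{eq:module-nat-transf}, then follow from the corresponding properties of $(\mathsf{F},\phi^{\mathsf{F}})$; the clean way to arrange this is to package the entire construction as a single natural isomorphism of hom bifunctors $\Hom_{\Cat{N}}(\mathsf{F}(-),-)\simeq\Hom_{\Cat{M}}(-,\mathsf{F}^{\mathrm{ra}}(-))$ compatible with the $\Cat{K}$-actions, so that the module-functor and module-natural-transformation axioms become consequences of naturality. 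Unpacking through \eqref{eq:bimodule-functor-charact} then gives the left $\Cat{C}$- and right $\Cat{D}$-module constraints on $\mathsf{F}^{\mathrm{ra}}$ together with their compatibility, and displays $\mathsf{F}\dashv\mathsf{F}^{\mathrm{ra}}$ as an adjunction inside the $2$-category $\BimCat(\Cat{C},\Cat{D})$ --- equivalently, the hom-bifunctor isomorphism above is a bimodule natural isomorphism. The left-exact / left-adjoint statement follows by the evident dualisation (apply the above to $\mathsf{F}^{\mathrm{op}}$, or rerun it with the left adjoints of the action functors).

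For part (ii), I would first invoke the theorem that a module functor between exact module categories is exact \cite{FinTen,DSS}; applied with $\Cat{K}=\Cat{C}\boxtimes\rev{D}$ to the exact $\Cat{K}$-module categories $\CMD$, $\CND$ (Definition~\ref{definition:bimodule-cat}), it shows that every bimodule functor $\mathsf{F}:\CMD\rr\CND$ is exact. Consequently every object of $\Funl{\Cat{D},\Cat{C}}{\DMC,\DMC}$ is exact, hence both left and right exact, hence by part (i) has a right adjoint $\mathsf{F}^{\mathrm{ra}}$ and a left adjoint $\mathsf{F}^{\mathrm{la}}$, both again bimodule endofunctors of $\Cat{M}$, whose adjunction units and counits are bimodule natural transformations. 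In the monoidal category $(\Funl{\Cat{D},\Cat{C}}{\DMC,\DMC},\circ,\id_{\Cat{M}})$ the counit $\mathsf{F}\circ\mathsf{F}^{\mathrm{ra}}\Rightarrow\id_{\Cat{M}}$ and unit $\id_{\Cat{M}}\Rightarrow\mathsf{F}^{\mathrm{ra}}\circ\mathsf{F}$ exhibit $\mathsf{F}^{\mathrm{ra}}$ as a right dual of $\mathsf{F}$, and $\mathsf{F}^{\mathrm{la}}$ is a left dual in the same manner, the triangle identities of the adjunctions being exactly the zig-zag axioms; hence this monoidal category has left and right duals.

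The main obstacle is the coherence bookkeeping in part (i): checking that the module structure transported to $\mathsf{F}^{\mathrm{ra}}$ really obeys its axioms and that the resulting adjunction lives in $\BimCat(\Cat{C},\Cat{D})$ and not merely in the $2$-category of linear categories. Performing these verifications directly on the constraints is error-prone; routing everything through the hom-bifunctor isomorphism and the Yoneda lemma is what keeps it manageable. Rigidity of $\Cat{C}$ and $\Cat{D}$ (hence of $\Cat{K}$) is used essentially, both to produce the adjoints of the action functors and to ensure that the transported constraint points in the correct direction and is invertible.
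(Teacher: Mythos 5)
Your proposal is correct and follows essentially the same route as the paper's (largely citational) proof: adjoints exist because right/left exact functors between finite linear categories admit them, the adjoint carries a canonically induced (bi)module structure making the adjunction live in $\BimCat(\Cat{C},\Cat{D})$ (the paper delegates this to \cite[Sec.~3.3]{FinTen}, which is exactly the rigidity/mate-and-Yoneda argument you spell out over $\Cat{K}=\Cat{C}\boxtimes\rev{D}$), exactness of bimodule functors between exact bimodule categories is \cite[Lemma 3.21]{FinTen}, and the duals in $\Funl{\Cat{D},\Cat{C}}{\DMC,\DMC}$ are the left and right adjoints via the triangle identities. The only difference is that you make explicit the transport of the module constraint that the paper cites.
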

\begin{proof}
It is a well known fact that a functor between finite linear categories has a right (left) adjoint if and only it is right (left) exact, see \cite{DSSbal} for a detailed discussion. 
There is a unique  way to equip the adjoint of a bimodule functor with the structure of a module functor, such that the adjunctions consist of bimodule natural isomorphisms, see e.g. \cite[Sec. 3.3]{FinTen}. 
The second statement is shown in  \cite[Lemma 3.21]{FinTen}.
  The duals in  $\Funl{\Cat{D},\Cat{C}}{\DMC, \DMC}$ are thus given by the 
the left and right adjoint functors $\mathsf{F}^{r}$, and $\mathsf{F}^{l}$.
\end{proof}
 Moreover, for each 
bimodule natural transformation $\eta: \mathsf{F} \rightarrow \mathsf{G}$ between exact bimodule functors, there are canonical bimodule natural transformations $\eta^{l}: \mathsf{G}^{l} \rightarrow \mathsf{F}^{l}$ and 
$\eta^{r}: \mathsf{G}^{r} \rightarrow  \mathsf{F}^{r}$.  It is shown in \cite{FinTen}, that for exact module categories 
$\CM$ and $\CN$, all module functors in  $\Funl{\Cat{C}}{\CM,\CN}$ are exact and thus 
$\Funl{\Cat{C}}{\CM,\CM}$ has left and right duals, moreover it is a again a finite tensor category that is also denoted $\CMstar$.

\paragraph{The dual bimodule categories}

Let $\DMC$ be a $(\Cat{D},\Cat{C})$-bimodule category. We then  define   two $(\Cat{C},\Cat{D})$-bimodule categories $\CMDrd$ and $\CMDld$ as follows:
As categories, they are both $\op{M}$, with actions
\begin{equation}
  \label{eq:rd-action}
  c \actrd m \ractrd d = \leftidx{^*}{d}{} \act m\ract \leftidx{^*}{c}{},
\end{equation}
for $m \in \CMDrd$, and 
\begin{equation}
  \label{eq:ld-action} 
  c \actld m \ractld d = d^{*} \act m \ract c^{*}
\end{equation}
for $m \in \CMDld$.  These conventions  agree   with \cite[Def. 3.4.4]{DSS}.

\paragraph{Balanced functors}

In the sequel we require, in addition to module functors, another notion of compatibility of functors with module structures. The following definition is taken from 
\cite[Def 3.1]{ENOfuhom} with the minor change of adding the obvious compatibility axiom with the units.

\begin{definition} 
  \label{definition:balanced-functors} Let $\Cat{A}$ be a linear category.
  \begin{definitionlist}
  \item   A bilinear  functor $\mathsf{F}: \MC \times \CN \rr \Cat{A}$ is called $\Cat{C}$-balanced with balancing constraint $\beta^{\mathsf{F}}$, if it is equipped with a  family of natural isomorphisms 
    \begin{equation}
      \beta^{\mathsf{F}}_{m,c,n}: \mathsf{F}(m \ract c \times n) \rr \mathsf{F}(m \times c \act n),
    \end{equation}
    such that  the pentagon diagram 
    \begin{equation}
      \label{eq:balanced-functor}
      \begin{tikzcd} 
        {} & \mathsf{F}(m \ract (x \otimes y) \times n) \ar{dl}{\mathsf{F}(\mu_{m,x,y}^{\Cat{M}}\times 1_{n} )}
        \ar{dr}{\beta^{\mathsf{F}}_{m,x\otimes y,n}} & \\
        \mathsf{F}((m \ract x) \ract y \times n) \ar{d}{\beta^{\mathsf{F}}_{ m \ract x, y, n}} & &  \mathsf{F}(m \times (x \otimes y ) \act n)
        \ar{d}{\mathsf{F}(1_{m} \times \mu_{x,y,n}^{\Cat{N}})} \\
        \mathsf{F}(m \ract x \times  y \act n) \ar{rr}{ \beta^{\mathsf{F}}_{m,x, y \act n }}  && \mathsf{F}( m \times x \act (y \act n))
      \end{tikzcd}
    \end{equation}
    and the triangle diagram
    \begin{equation}
      \label{eq:balanced-unit}
      \begin{tikzcd}
        \mathsf{F}(m \ract \unit_{\Cat{C}} \times n) \ar{r}{\beta^{\mathsf{F}}_{m,1,n}} \ar{d}[xshift=-45pt]{\mathsf{F}(\rho^{\Cat{M}}_{m} \times 1_{n})}  & \mathsf{F}( m \times \unit_{\Cat{C}} \act n) \ar{dl}{\mathsf{F}(1_{m} \times\lambda^{\Cat{N}}_{n})} \\
        \mathsf{F}(m \times n) & 
      \end{tikzcd}
    \end{equation}
commute for all possible objects. 
    We  denote the balancing constraint $\beta^{\mathsf{F}}$  simply by $\beta $      if this is unambiguous.
  \item Let  $\mathsf{F},\mathsf{G} \colon \MC \times \CN \rr \Cat{A}$  be balanced functors. 
 A balanced natural transformation $\eta: \mathsf{F} \rr \mathsf{G}$ is a natural transformation $\eta: \mathsf{F} \rr \mathsf{G}$, such that the diagrams 
    \begin{equation}
      \label{eq:balanced-nat}
      \begin{xy}
        \xymatrix{
          \mathsf{F}(m \ract c \times n) \ar[r]^{\eta_{m \ract c \times n}} \ar[d]_{\beta^{\mathsf{F}}_{m,c,n}}  & \mathsf{G}(m \ract c \times n) \ar[d]^{\beta^{\mathsf{G}}_{m,c,n}} \\
          \mathsf{F}(m \times c \act n) \ar[r]_{\eta_{m \times c \act n}} & \mathsf{G}(m \times c \act n)
        }
      \end{xy}
    \end{equation}
    commute for all possible objects. 
  \end{definitionlist}
\end{definition}
It is clear that the identity natural transformation $1_{\mathsf{F}}:\mathsf{F} \rightarrow \mathsf{F}$ for a balanced functor $\mathsf{F}$ is balanced and that the  composition of balanced natural transformations yields a 
balanced natural transformation. Hence the  balanced functors and balanced natural transformations from $\MC \times \CN$ to $\Cat{A}$ form a category that 
is denoted $\Funbal(\MC \times \CN, \Cat{A})$.

As example for a balanced functor, consider a module category $\CM$. It is  straightforward to see that the action $\act: \Cat{C} \times \Cat{M} \rightarrow \Cat{M}$ is a balanced functor. 
The following statement follows directly from the definitions. 
\begin{lemma}
  \label{lemma:balancing-and-composition}
  The  composition of  functors and natural transformations defines functors
  \begin{lemmalist}
  \item \label{item:bal-bimod-ist-bal}
    $  \Funbal(\MpC \times \CpN,\Cat{A}) \times \Funl{\Cat{C},\Cat{C}}{\MC \times \CN, \MpC \times \CpN} \rr \Funbal(\MC \times \CN, \Cat{A}),$
  \item \label{item:beliebig-bal-ist-bal} 
    $     \Fun(\Cat{A}, \Cat{B}) \times \Funbal(\MC \times \CN, \Cat{A}) \rr \Funbal(\MC \times \CN, \Cat{B}).$
  \end{lemmalist}
\end{lemma}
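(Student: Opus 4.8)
The plan is to handle the two assertions of Lemma~\ref{lemma:balancing-and-composition} separately; in each the recipe is the same. First I would exhibit a balancing constraint on the composite, then verify the pentagon (\ref{eq:balanced-functor}) and the triangle (\ref{eq:balanced-unit}) of Definition~\ref{definition:balanced-functors} by pasting together coherence cells that are already available, and finally check that the construction is functorial in both variables — that it respects composition and identities and carries the relevant pairs of natural transformations to \emph{balanced} natural transformations.

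For part\refitem{item:bal-bimod-ist-bal}, let $(\mathsf{F},\beta^{\mathsf{F}})$ be a $\Cat{C}$-balanced functor on $\MpC\times\CpN$, and let $\mathsf{H}$, with left and right $\Cat{C}$-module constraints $\phi^{l},\phi^{r}$, be an object of $\Funl{\Cat{C},\Cat{C}}{\MC\times\CN,\MpC\times\CpN}$, where $\Cat{M}\times\Cat{N}$ carries the $(\Cat{C},\Cat{C})$-bimodule structure with the right action on the $\Cat{M}$-factor and the left action on the $\Cat{N}$-factor. I would equip $\mathsf{F}\circ\mathsf{H}\colon\MC\times\CN\rr\Cat{A}$ with the balancing
\[
  \beta^{\mathsf{F}\mathsf{H}}_{m,c,n} \;:=\; \mathsf{F}\bigl((\phi^{l}_{c,(m,n)})^{\inv}\bigr)\circ \beta^{\mathsf{F}}_{\mathsf{H}(m,n),\,c}\circ \mathsf{F}\bigl(\phi^{r}_{(m,n),c}\bigr),
\]
a composite of natural isomorphisms $\mathsf{F}\mathsf{H}(m\ract c,n)\rr\mathsf{F}\mathsf{H}(m,c\act n)$, where $\beta^{\mathsf{F}}_{\mathsf{H}(m,n),c}$ means $\beta^{\mathsf{F}}$ evaluated at the two factors of $\mathsf{H}(m,n)$ and at $c$. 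The pentagon (\ref{eq:balanced-functor}) for $\beta^{\mathsf{F}\mathsf{H}}$ is then obtained by pasting the pentagon (\ref{eq:balanced-functor}) for $\beta^{\mathsf{F}}$, the two module-functor hexagons (\ref{eq:Module-functor}) for $\phi^{l}$ and for $\phi^{r}$, and the compatibility square (\ref{eq:bimodule-functor-charact}) between $\phi^{l}$ and $\phi^{r}$ — which, for a product bimodule category, records only that the left and right $\Cat{C}$-actions on the two factors commute. The triangle (\ref{eq:balanced-unit}) follows likewise from the triangle (\ref{eq:balanced-unit}) for $\beta^{\mathsf{F}}$ together with the unit axioms (\ref{eq:mod-functor-triangle}) for $\phi^{l},\phi^{r}$. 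On morphisms, given a balanced natural transformation $\eta\colon\mathsf{F}\rr\mathsf{F}'$ and a $(\Cat{C},\Cat{C})$-bimodule natural transformation $\theta\colon\mathsf{H}\rr\mathsf{H}'$, the horizontal composite $\eta\theta\colon\mathsf{F}\mathsf{H}\rr\mathsf{F}'\mathsf{H}'$ satisfies (\ref{eq:balanced-nat}); this is a short chase using naturality of $\eta$, the defining square (\ref{eq:balanced-nat}) for $\eta$, and the left and right module-naturality squares (\ref{eq:module-nat-transf}) for $\theta$. Compatibility with composition and with identities in both variables is then purely formal.

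For part\refitem{item:beliebig-bal-ist-bal}, given an arbitrary linear functor $\mathsf{G}\colon\Cat{A}\rr\Cat{B}$ and a balanced $(\mathsf{F},\beta^{\mathsf{F}})$ on $\MC\times\CN$, I would set $\beta^{\mathsf{G}\mathsf{F}}_{m,c,n}:=\mathsf{G}(\beta^{\mathsf{F}}_{m,c,n})$. Since a functor sends commuting diagrams to commuting diagrams, applying $\mathsf{G}$ to (\ref{eq:balanced-functor}) and (\ref{eq:balanced-unit}) for $\beta^{\mathsf{F}}$ yields the two axioms for $\beta^{\mathsf{G}\mathsf{F}}$. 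For a natural transformation $\zeta\colon\mathsf{G}\rr\mathsf{G}'$ and a balanced $\eta\colon\mathsf{F}\rr\mathsf{F}'$, the horizontal composite $\zeta\eta\colon\mathsf{G}\mathsf{F}\rr\mathsf{G}'\mathsf{F}'$ satisfies (\ref{eq:balanced-nat}) by applying $\mathsf{G}'$ to the square (\ref{eq:balanced-nat}) for $\eta$ and invoking naturality of $\zeta$ along $\beta^{\mathsf{F}}_{m,c,n}$; functoriality is again formal.

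The only step carrying genuine bookkeeping is the pentagon (\ref{eq:balanced-functor}) for $\beta^{\mathsf{F}\mathsf{H}}$ in part\refitem{item:bal-bimod-ist-bal}: one must assemble four coherence cells — two instances of (\ref{eq:Module-functor}), one of (\ref{eq:bimodule-functor-charact}), and one of (\ref{eq:balanced-functor}) — in the correct order while keeping track of which $\Cat{C}$-factor each constraint acts on. Everything else is a direct unwinding of Definition~\ref{definition:balanced-functors}, which is precisely why the statement may be asserted to follow directly from the definitions; I would write out that single diagram chase in detail and dispatch the remaining verifications in one line each.
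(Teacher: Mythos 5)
Your proposal is correct, and it is exactly the direct unwinding of Definition~\ref{definition:balanced-functors} that the paper has in mind when it asserts the lemma "follows directly from the definitions" (no written proof is given there): the balancing on the composite in part \emph{i)} built from $\phi^{r}$, $\beta^{\mathsf{F}}$, $(\phi^{l})^{\inv}$, its coherence via the pentagons, hexagons and the compatibility square, and post-composition with $\mathsf{G}$ in part \emph{ii)} are precisely the intended verifications. Nothing further is needed.
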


\paragraph{Balanced module functors}

The following combines the notion of module functor with the notion of a balanced functor.
\begin{definition}
  \label{definition:balanced-module}
  A balanced (left) $\Cat{D}$-module functor $\mathsf{F}: \DMC \times \CN \rightarrow \DY$ is a balanced functor with balancing structure $b^{\mathsf{F}}$ that is also a module functor with module structure $\phi^{\mathsf{F}}$ such that the diagram
  \begin{equation}
    \label{eq:balanced-module-diag}
    \begin{tikzcd}[column sep=large]
      \mathsf{F}((d\act m) \ract c \times n ) \ar{r}{\beta^{\mathsf{F}}_{d \act m,c,n}} \ar{d}{\mathsf{F}(\gamma_{d,m,c} \times n)} &  \mathsf{F}(d \act m \times c \act n)  \ar{d}{\phi^{\mathsf{F}}_{d,m \times c \act m}} \\
      \mathsf{F}(d\act(m\ract c) \times n) \ar{d}[yshift=3pt]{\phi^{\mathsf{F}}_{d,m \ract c \times n}} &   d \act \mathsf{F}(m \times c \act n)\\
      d\act \mathsf{F}( m \ract c \times n) \ar{ur}[below, right, xshift=3pt, yshift=-2pt]{d \act \beta^{\mathsf{F}}_{m,c,n}} & 
    \end{tikzcd}
  \end{equation}
  commutes for all objects $c \in \Cat{C}$, $d \in \Cat{D}$, $m \in \Cat{M}$ and $n \in \Cat{N}$. 
  Balanced right module functor are defined analogously. 

  A balanced bimodule functor is a bimodule functor that is a balanced left- and a balanced right module functor.
  A balanced module natural transformation between balanced module functors is a natural transformation that is balanced and a module natural transformation.
\end{definition}
It is clear that the balanced bimodule functors from $ \DMC \times \CNE$ to $ \DYE$ together with balanced bimodule natural transformations form a category  $\Funball{\Cat{D},\Cat{E}}{\DMC \times \CNE, \DYE}$.

For example, if $\DMC$ is a bimodule category, the actions $\act: \Cat{D} \times \DMC \rightarrow \DMC$ and $\ract: \DMC \times \Cat{C} \rightarrow \DMC$ are balanced bimodule functors, when we consider $\Cat{C}$ and $\Cat{D}$ as bimodule 
categories. 
The following statements follow directly from the definitions. 
\begin{lemma}
  \label{lemma:characterize-bal-module}
  \begin{lemmalist}
    \item The left action of $\Cat{C}$ on $\CMD \times \DN$  is given by $\Cat{D}$-balanced module functors $L_{c}: \CMD \times \DN \rightarrow \CMD \times \DN$ for all $c \in \Cat{C}$. 
\item 
 A left $\Cat{C}$-module functor $\mathsf{F}: \CMD \times \DN \rightarrow \CY$ that is also $\Cat{D}$-balanced is a balanced 
  module functor if and only if the left module constraints 
  $\phi^{\mathsf{F}}_{c}:\mathsf{F} \circ L_{c}^{\Cat{M} \times \Cat{N}} \rightarrow L^{\Cat{Y}}_{c} \circ \mathsf{G}$ are balanced natural isomorphisms for all $c \in \Cat{C}$. 
  \end{lemmalist}
\end{lemma}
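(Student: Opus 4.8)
The plan is to prove both items by directly unwinding the definitions of balanced functor, module functor and balanced module functor, keeping careful track of all the constraints; beyond this bookkeeping the only inputs are the bimodule coherence of $\CMD$ and Lemma~\ref{lemma:balancing-and-composition}. Throughout I write $L_c := (c\act(-))\times\id_{\Cat{N}}$ for the endofunctor of $\CMD\times\DN$ implementing the left $\Cat{C}$-action of Remark~\ref{remark:clash-biadditve}, and I use that a left $\Cat{C}$-module structure on a functor $\mathsf{F}$ out of $\CMD\times\DN$ is precisely a family of natural isomorphisms $\phi^{\mathsf{F}}_c\colon \mathsf{F}\circ L_c\Rightarrow L_c^{\Cat{Y}}\circ\mathsf{F}$ subject to \eqref{eq:Module-functor} and \eqref{eq:mod-functor-triangle}.

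For the first item I would equip each $L_c$ with the structure of a $\Cat{D}$-balanced module functor $\CMD\times\DN\to\CMD\times\DN$: its right $\Cat{D}$-module constraint on the $\Cat{M}$-factor is $\gamma^{-1}_{c,m,d}\colon c\act(m\ract d)\to(c\act m)\ract d$, where $\gamma$ is the bimodule constraint of $\CMD$, and on the $\Cat{N}$-factor it is the identity. The pentagon \eqref{eq:balanced-functor} and triangle \eqref{eq:balanced-unit} for the resulting balancing are then nothing but the pentagon and triangle axioms of $\gamma$, so $L_c$ is a $\Cat{D}$-balanced module functor; in particular, by Lemma~\ref{lemma:balancing-and-composition}\refitem{item:bal-bimod-ist-bal} (with $\Cat{D}$ in the role of $\Cat{C}$), precomposition with $L_c$ preserves $\Cat{D}$-balanced functors. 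Finally, the associativity and unit isomorphisms $\mu^{\Cat{M}}$, $\lambda^{\Cat{M}}$ of \eqref{eq:structures-module-category}, acting on the first factor, which assemble the $L_c$ into the left $\Cat{C}$-action, are balanced natural transformations in the sense of \eqref{eq:balanced-nat} — again a direct consequence of the bimodule coherence of $\CMD$ — so the entire $\Cat{C}$-action is carried by $\Cat{D}$-balanced module functors.

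For the second item, let $\mathsf{F}\colon\CMD\times\DN\to\CY$ be a $\Cat{D}$-balanced functor with balancing $\beta^{\mathsf{F}}$ which is also a left $\Cat{C}$-module functor with constraints $\phi^{\mathsf{F}}_c$. By Lemma~\ref{lemma:balancing-and-composition}\refitem{item:bal-bimod-ist-bal} the composite $\mathsf{F}\circ L_c$ is $\Cat{D}$-balanced, with balancing $\beta^{\mathsf{F}}_{c\act m,d,n}\circ\mathsf{F}(\gamma^{-1}_{c,m,d}\times\id_n)$, and by Lemma~\ref{lemma:balancing-and-composition}\refitem{item:beliebig-bal-ist-bal} the composite $L_c^{\Cat{Y}}\circ\mathsf{F}$ is $\Cat{D}$-balanced, with balancing $c\act\beta^{\mathsf{F}}_{m,d,n}$. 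Writing out the square \eqref{eq:balanced-nat} that asserts that $\phi^{\mathsf{F}}_c\colon\mathsf{F}\circ L_c\Rightarrow L_c^{\Cat{Y}}\circ\mathsf{F}$ is a balanced natural transformation, with these two balancings substituted in, and then precomposing with the isomorphism $\mathsf{F}(\gamma_{c,m,d}\times\id_n)$, one recovers exactly the compatibility diagram \eqref{eq:balanced-module-diag} (with the roles of $\Cat{D}$ and $\Cat{C}$ interchanged, so that $c\in\Cat{C}$ now occupies the module slot). Hence $\phi^{\mathsf{F}}_c$ is a balanced natural transformation for every $c\in\Cat{C}$ if and only if $\mathsf{F}$ satisfies the defining diagram of a balanced module functor in Definition~\ref{definition:balanced-module}, which is the assertion.

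I expect the only step needing genuine care — the ``hard part'', such as it is — to be the explicit identification of the balancing constraints on the two composites $\mathsf{F}\circ L_c$ and $L_c^{\Cat{Y}}\circ\mathsf{F}$ furnished by Lemma~\ref{lemma:balancing-and-composition}, together with the ensuing comparison of \eqref{eq:balanced-nat} with \eqref{eq:balanced-module-diag} through the isomorphism $\gamma$; once these constraints are written out in terms of $\beta^{\mathsf{F}}$ and the bimodule constraint $\gamma$ of $\CMD$, the two diagrams plainly coincide, and the remainder is routine diagram-chasing with the module and bimodule coherences already recorded in this section.
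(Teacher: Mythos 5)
Your proof is correct and is essentially the paper's own (the paper offers nothing beyond ``the statements follow directly from the definitions''): equipping $L_{c}$ with the right $\Cat{D}$-module constraint $\gamma^{-1}_{c,m,d}$ on the $\Cat{M}$-factor and the identity on the $\Cat{N}$-factor, identifying the induced balancings $\beta^{\mathsf{F}}_{c\act m,d,n}\circ\mathsf{F}(\gamma^{-1}_{c,m,d}\times\id_{n})$ on $\mathsf{F}\circ L_{c}$ and $c\act\beta^{\mathsf{F}}_{m,d,n}$ on $L_{c}^{\Cat{Y}}\circ\mathsf{F}$, and observing that the balanced-naturality square (\ref{eq:balanced-nat}) for $\phi^{\mathsf{F}}_{c}$, composed with the invertible $\mathsf{F}(\gamma_{c,m,d}\times\id_{n})$, is precisely diagram (\ref{eq:balanced-module-diag}) with the roles of $\Cat{C}$ and $\Cat{D}$ interchanged, is exactly the intended verification. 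Two cosmetic points only: the coherences you need for $L_{c}$ itself are the module-functor diagrams (\ref{eq:Module-functor}) and (\ref{eq:mod-functor-triangle}) for the constraint $\gamma^{-1}$ — equivalently, as you say, the pentagon and triangle axioms of $\gamma$ — not (\ref{eq:balanced-functor}) and (\ref{eq:balanced-unit}), and the constraints $\mu^{\Cat{M}},\lambda^{\Cat{M}}$ assembling the $L_{c}$ into an action are $\Cat{D}$-module natural isomorphisms rather than balanced ones (the functors $L_{c\otimes c'}$ and $L_{c}\circ L_{c'}$ are not themselves balanced functors); neither affects the argument.
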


\paragraph{Inner hom objects} 

An important tool in the theory of module categories is the inner hom.  In this work, the inner hom will play a dominant role in the 
construction of duals for bimodule categories. Let $\DM$ be a left $\Cat{D}$-module category. For $m, \widetilde{m} \in \Cat{M}$, the functor $\Hom_{\Cat{M}}( (-) \act m, \widetilde{m}): \op{C} \rightarrow \Vect$ is
left exact and thus representable \cite[Sec. 3.2]{FinTen}.
The following is a  change in convention with regard to the definition used in \cite{Ostrik} as will be explained in detail in Lemma \ref{lemma:convent-inner-hom}.

\begin{definition}
  \label{definition:inner-hom}
  Let $\DM$ be a left $\Cat{D}$-module category. An inner hom  for $\Cat{M}$ is an object 
  $\idm{m,\widetilde{m}} \in \Cat{D}$ for all $m,\widetilde{m} \in \Cat{M}$  together with  natural isomorphisms
  \begin{equation}
    \label{eq:inner-hom}
    \alpha_{d,m,\widetilde{m}}^{\Cat{M}}:  \Hom_{\Cat{M}}( m, d \act \widetilde{m}) \simeq \Hom_{\Cat{D}}( \idm{m,\widetilde{m}},d), 
  \end{equation}
  for all $d \in \Cat{D}$ and $m,\widetilde{m} \in \Cat{M}$.
\end{definition}

We write $\idgen{m, \widetilde{m}}$ for the inner hom objects and omit the labels of $\alpha$, when the relevant module category $\Cat{M}$ is clear from the context.
If they exist inner homs  are unique up to a unique isomorphism and determine a bilinear functor
\begin{equation}
  \label{eq:inner-hom-functor-DM}
  \Cat{M} \times \op{M} \ni(m \times \widetilde{m})  \mapsto \idm{m,\widetilde{m}} \in \Cat{D},
\end{equation}
called the inner hom functor.
Analogously, a right $\Cat{C}$-module category $\NC$ gives rise  to an inner hom 
with natural isomorphisms 
\begin{equation}
  \label{eq:nat-iso-inner-right}
  \alpha_{\widetilde{n},n,c}^{\Cat{N}}: \Hom_{\Cat{C}}(\inc{\widetilde{n},n},c) \simeq \Hom_{\Cat{N}}(n, \widetilde{n} \ract c),
\end{equation}
that yield a functor
\begin{equation}
  \label{eq:inner-hom-functor-MC}
  \op{N} \times \Cat{N} \ni (\widetilde{n} \times n) \mapsto \inc{\widetilde{n},n} \in \Cat{C}.
\end{equation}
Next we show that inner hom objects exist in our setting and compare their definition with the usual definition of inner hom in the literature. 
\begin{lemma}
  \label{lemma:convent-inner-hom}
Let $\DMC$ be a module category.
\begin{lemmalist}
  \item  There exists a left exact functor 
 $$\IHom_{\Cat{D}}: \Cat{M} \times \op{M} \ni (m,\widetilde{m}) \mapsto \IHom_{\Cat{D}}(m,\widetilde{m})  \in \Cat{D}$$ together with natural isomorphisms 
    \begin{equation}
      \label{eq:conv-ihom}
      \Hom_{\Cat{M}}(d \act \widetilde{m}, m) \simeq \Hom_{\Cat{D}}(d, \IHom(\widetilde{m},m)).
    \end{equation}
\item Similarly, there exists a left exact functor 
 $$\IHom_{\Cat{C}}: \op{M} \times \Cat{M} \ni (\widetilde{m},m) \mapsto \IHom_{\Cat{C}}(\widetilde{m},m) \in \Cat{C}$$ 
with natural isomorphisms 
\begin{equation}
  \label{eq:conv-ihomc}
  \Hom_{\Cat{M}}(m \ract c, \widetilde{m}) \simeq \Hom_{\Cat{C}}(c, \IHom_{\Cat{C}}(m,\widetilde{m})).
\end{equation}
\item There are natural isomorphisms 
$$\idm{m,\widetilde{m}} \simeq \leftidx{^*}{ \IHom_{\Cat{D}}(m,\widetilde{m})}{} \quad \text{and} \quad \imc{\widetilde{m},m} \simeq \IHom(m,\widetilde{m})^{*}.$$
In particular, inner hom objects exist for finite tensor categories. Moreover, the functors $\idm{-,-}$ and $\imc{-,-}$ are right exact in each argument. 
\end{lemmalist}
\end{lemma}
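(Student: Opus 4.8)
The plan is to reuse, essentially verbatim, the representability argument that precedes Definition~\ref{definition:inner-hom} in order to produce the functors $\IHom_{\Cat{D}}$ and $\IHom_{\Cat{C}}$, and then to deduce part~(iii) by composing the various defining adjunctions with the adjunctions supplied by rigidity of $\Cat{C}$ and $\Cat{D}$; the whole thing is then wrapped up by the Yoneda lemma.

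For part~(i) I would fix $m,\widetilde m\in\Cat{M}$ and consider the assignment $d\mapsto\Hom_{\Cat{M}}(d\act\widetilde m,m)$. Since the action functor $\act$ is exact, $d\mapsto d\act\widetilde m$ is exact, so composing with the left exact contravariant functor $\Hom_{\Cat{M}}(-,m)$ presents this assignment as a left exact functor $\op{D}\rr\Vect$. As $\op{D}$ is again a finite abelian category, such a functor is representable by \cite{FinTen}, which yields an object $\IHom_{\Cat{D}}(m,\widetilde m)\in\Cat{D}$ together with the asserted natural isomorphism. Uniqueness of representing objects promotes $(m,\widetilde m)\mapsto\IHom_{\Cat{D}}(m,\widetilde m)$ to a bifunctor (Yoneda), and left exactness in each variable is then checked after applying $\Hom_{\Cat{D}}(d,-)$ for all $d$: the right hand side $\Hom_{\Cat{M}}(d\act\widetilde m,m)$ is left exact in $m$ and, again because $d\act-$ is exact, sends cokernels in $\widetilde m$ to kernels. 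Part~(ii) is the mirror argument with $\ract$, $\op{C}$ and $\Cat{C}$ in place of $\act$, $\op{D}$ and $\Cat{D}$.

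For part~(iii) the point is that the (left) module structure of $\Cat{M}$ together with rigidity of $\Cat{D}$ gives, for every $d\in\Cat{D}$, an adjunction $d^{*}\act-\dashv d\act-$, i.e.\ a natural isomorphism $\Hom_{\Cat{M}}(m,d\act\widetilde m)\simeq\Hom_{\Cat{M}}(d^{*}\act m,\widetilde m)$; chaining this with Definition~\ref{definition:inner-hom} and part~(i) produces, naturally in $d$,
\begin{equation*}
\Hom_{\Cat{D}}(\idm{m,\widetilde m},d)\simeq\Hom_{\Cat{M}}(m,d\act\widetilde m)\simeq\Hom_{\Cat{M}}(d^{*}\act m,\widetilde m)\simeq\Hom_{\Cat{D}}(d^{*},\IHom_{\Cat{D}}(m,\widetilde m)).
\end{equation*}
Applying the contravariant equivalence ${}^{*}(-)$ on $\Cat{D}$, whose quasi-inverse is $(-)^{*}$ and which satisfies ${}^{*}(d^{*})\simeq d$, rewrites the last term as $\Hom_{\Cat{D}}({}^{*}\IHom_{\Cat{D}}(m,\widetilde m),d)$, and Yoneda gives $\idm{m,\widetilde m}\simeq{}^{*}\IHom_{\Cat{D}}(m,\widetilde m)$. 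The identical computation for the right action, using the rigidity adjunction $-\ract{}^{*}c\dashv-\ract c$ and the contravariant equivalence $(-)^{*}$ on $\Cat{C}$, yields $\imc{\widetilde m,m}\simeq\IHom_{\Cat{C}}(m,\widetilde m)^{*}$. The existence of inner hom objects over finite tensor categories is then immediate, since left and right duals exist there and $\IHom_{\Cat{D}},\IHom_{\Cat{C}}$ were constructed in parts~(i) and~(ii); and $\idm{-,-}$, $\imc{-,-}$ are right exact in each argument because they are obtained from the left exact functors $\IHom_{\Cat{D}}$, $\IHom_{\Cat{C}}$ by post-composing with the exact contravariant duality equivalences, which interchange kernels and cokernels.

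The only genuinely delicate point is the bookkeeping: keeping track of the variance of the several $\Hom$-bifunctors and of which dual (left or right) sits on which side through the chain of isomorphisms in part~(iii). Beyond the representability input already used before Definition~\ref{definition:inner-hom} and repeated applications of the Yoneda lemma, there is no conceptual obstacle.
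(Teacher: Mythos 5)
Your proposal is correct and follows essentially the same route as the paper: parts (i)--(ii) come from representability/left exactness of the $\Hom$-functor (the paper simply cites \cite{FinTen} here, where you spell out the argument), and part (iii) is the same chain $\Hom_{\Cat{D}}(\idm{m,\widetilde m},d)\simeq\Hom_{\Cat{M}}(m,d\act\widetilde m)\simeq\Hom_{\Cat{M}}(d^{*}\act m,\widetilde m)\simeq\Hom_{\Cat{D}}(d^{*},\IHom_{\Cat{D}}(m,\widetilde m))\simeq\Hom_{\Cat{D}}(\leftidx{^*}{\IHom_{\Cat{D}}(m,\widetilde m)}{},d)$ followed by Yoneda, with right exactness deduced exactly as in the paper from exactness of the (contravariant) duality functors.
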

\begin{proof}
  The objects $\IHom(m,\widetilde{m})$ in the first two parts  are what are  more commonly  called inner hom objects, see \cite[Sec. 3.2]{FinTen}. The existence and the  left exactness of $\IHom(-,-)$ 
follows directly from the left exactness of the 
$\Hom_{}$-functor. For the last part we compute using the duality in $\Cat{D}$
\begin{equation}
  \label{eq:comp-inn-std}
  \begin{split}
     \Hom_{}(\idm{m,\widetilde{m}}, d) &\simeq \Hom_{}(m, d \act \widetilde{m}) \simeq \Hom_{}(d^{*} \act m , \widetilde{m}) \\
&\simeq \Hom_{}(d^{*}, \IHom_{\Cat{D}}(m,\widetilde{m})) \simeq \Hom_{}(\leftidx{^*}{\IHom(m,\widetilde{m})}{} ,d).
  \end{split}
 \end{equation}
All isomorphisms are natural in all arguments and induce a natural isomorphism  $\idm{m,\widetilde{m}} \simeq \leftidx{^*}{ \IHom_{\Cat{D}}(m,\widetilde{m})}{}$ by the Yoneda-Lemma. The second isomorphism 
is obtained similarly. Since the duality functor of a finite tensor category is an exact functor $(-)^{*}: \Cat{D} \rightarrow \op{D}$, it follows that the inner hom functors from Definition \ref{definition:inner-hom}
 and Equation (\ref{eq:nat-iso-inner-right})  are right exact. 
\end{proof}

The inner hom functors are compatible with the module structures in the following way. 
\begin{proposition}
  \label{proposition:properties-inner-hom}
  Let $\Cat{C}$ and $\Cat{D}$ be finite tensor categories and $\DMC$ a  bimodule category.
  \begin{propositionlist}
  \item The $\Cat{D}$-valued inner hom is a $\Cat{C}$-balanced bimodule functor $$\idm{-,-}: \DMC \times \CMDld \rightarrow \DDD,$$ i.e. there are coherent natural  isomorphisms 
    \begin{equation}
      \label{eq:idm-D-module}
      \idm{ d \act m,\widetilde{m}} \simeq d \otimes \idm{m, \widetilde{m}}, \quad \idm{m, \widetilde{m}\; \ractld \; d } \simeq \idm{m,\widetilde{m}} \otimes d
    \end{equation}
and 
\begin{equation}
  \label{eq:idm-C-bal}
  \idm{m\ract c, \widetilde{m}} \simeq \idm{m, c \; \actld \; \widetilde{m}}.
\end{equation}
    
  \item The $\Cat{C}$-valued inner hom is a $\Cat{D}$-balanced bimodule functor $$\imc{-,-}: \CMDrd \times \DMC \rightarrow \CCC,$$ i.e. there are coherent natural isomorphisms
    \begin{equation}
      \label{eq:imc-C-module}
      \imc{\widetilde{m},m \ract c} \simeq \imc{\widetilde{m},m} \otimes c, \quad \imc{c \; \actrd \; \widetilde{m},m} \simeq c\otimes \imc{\widetilde{m},m}
    \end{equation}
and
    \begin{equation}
      \label{eq:imc-D-balanced}
      \imc{\widetilde{m}, d\act  m} \simeq \imc{\widetilde{m} \; \ractrd \: d, m}.
    \end{equation}
  \end{propositionlist}
\end{proposition}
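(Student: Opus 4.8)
The plan is to produce the structure isomorphisms \eqref{eq:idm-D-module}, \eqref{eq:idm-C-bal} (and their counterparts \eqref{eq:imc-C-module}, \eqref{eq:imc-D-balanced}) object-wise via the Yoneda lemma, and then to check naturality and coherence. Part \emph{(ii)} is obtained from part \emph{(i)} by interchanging $\Cat{C}$ and $\Cat{D}$ and passing to the reversed categories, using the defining adjunction \eqref{eq:nat-iso-inner-right} of the $\Cat{C}$-valued inner hom; so it suffices to treat \emph{(i)}. Each of the three isomorphisms in \eqref{eq:idm-D-module}, \eqref{eq:idm-C-bal} is an isomorphism of objects of $\Cat{D}$, which I would establish by exhibiting a natural isomorphism of the functors on $\Cat{D}$ they represent through \eqref{eq:inner-hom}, assembled from the following ingredients: the defining adjunction \eqref{eq:inner-hom} of the inner hom; the rigidity of $\Cat{D}$ in the form $\Hom_{\Cat{D}}(d \otimes e, d') \simeq \Hom_{\Cat{D}}(e, \leftidx{^*}{d}{} \otimes d')$ and its variants; the adjunctions $\Hom_{\Cat{M}}(d \act m, m') \simeq \Hom_{\Cat{M}}(m, \leftidx{^*}{d}{} \act m')$ and $\Hom_{\Cat{M}}(m \ract c, m') \simeq \Hom_{\Cat{M}}(m, m' \ract c^{*})$ induced on $\Cat{M}$ by the rigidity of $\Cat{D}$ and of $\Cat{C}$; the module constraints $\mu^{\Cat{M}}$; and the bimodule constraint $\gamma$. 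For instance, the $\Cat{C}$-balancing \eqref{eq:idm-C-bal} arises from
\begin{align*}
  \Hom_{\Cat{D}}(\idm{m \ract c, \widetilde{m}}, d) &\simeq \Hom_{\Cat{M}}(m \ract c, d \act \widetilde{m}) \simeq \Hom_{\Cat{M}}(m, (d \act \widetilde{m}) \ract c^{*}) \\
  &\simeq \Hom_{\Cat{M}}(m, d \act (\widetilde{m} \ract c^{*})) \simeq \Hom_{\Cat{D}}(\idm{m, c \actld \widetilde{m}}, d),
\end{align*}
the last step using the definition \eqref{eq:ld-action} of $\CMDld$; the two isomorphisms of \eqref{eq:idm-D-module} are obtained the same way, by moving $d$ across with the adjunction and then collapsing $\leftidx{^*}{d}{} \otimes d'$ or $d' \otimes d^{*}$ via rigidity of $\Cat{D}$. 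Naturality of all six isomorphisms in $m, \widetilde{m}$ and in $c, d$ is automatic, since every arrow in these chains is natural.

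It then remains to check that these isomorphisms are \emph{coherent}, i.e.\ that they turn $\idm{-,-}$ into a $\Cat{C}$-balanced bimodule functor: the left-$\Cat{D}$- and right-$\Cat{D}$-module pentagons (of the form \eqref{eq:Module-functor}), the compatibility hexagon \eqref{eq:bimodule-functor-charact} between them, the balancing pentagon \eqref{eq:balanced-functor}, the compatibility diagram \eqref{eq:balanced-module-diag}, and the evident unit triangles must all commute. Since each structure morphism is, by Yoneda, the \emph{unique} arrow inducing the prescribed isomorphism of represented functors, each of these diagrams in $\Cat{D}$ can be tested after applying $\Hom_{\Cat{D}}(-, d)$; it then unfolds into a pasting of the chains above and commutes because it decomposes into the pentagon and triangle axioms \eqref{eq:diagramm}, \eqref{eq:triangle} of $\Cat{M}$, the pentagons and unit triangle of the bimodule constraint $\gamma$, the associativity and unit coherence of $\Cat{C}$ and $\Cat{D}$, and the snake identities for the dualities in $\Cat{C}$ and $\Cat{D}$. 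A more economical route is to transport everything along the isomorphism $\idm{m,\widetilde{m}} \simeq \leftidx{^*}{\IHom_{\Cat{D}}(m,\widetilde{m})}{}$ of Lemma \ref{lemma:convent-inner-hom}: the functor $\IHom_{\Cat{D}}(-,-)$ carries the corresponding module and balancing structures, which are standard (see \cite{FinTen}), and applying the anti-monoidal equivalence $\leftidx{^*}{(-)}{}$ converts them, together with their coherence, into precisely \eqref{eq:idm-D-module}, \eqref{eq:idm-C-bal} (and likewise for \eqref{eq:imc-C-module}, \eqref{eq:imc-D-balanced} through the second isomorphism of Lemma \ref{lemma:convent-inner-hom}).

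I expect the coherence verification to be the only real obstacle. Exhibiting and normalizing the isomorphisms is routine Yoneda manipulation, but the diagrams \eqref{eq:bimodule-functor-charact} and \eqref{eq:balanced-module-diag} involve the simultaneous interplay of four associativity-type constraints — the tensor products of $\Cat{C}$ and of $\Cat{D}$, the module constraint $\mu^{\Cat{M}}$, and the bimodule constraint $\gamma$ — together with the rigid structures, so keeping the bookkeeping under control really calls for committing to one of the two systematic strategies above (the Yoneda reduction or the transport along $\IHom_{\Cat{D}}$) rather than manipulating the diagrams in $\Cat{D}$ directly.
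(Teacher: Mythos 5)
Your proposal is correct and is essentially the argument the paper has in mind: the paper's proof simply cites \cite[Lemma 5]{Ostrik} and notes that all isomorphisms are obtained directly from the defining adjunction of the inner hom together with the duality-induced adjunctions in $\Cat{C}$ and $\Cat{D}$, which is exactly the Yoneda-chain construction you spell out (including the correct chain for \eqref{eq:idm-C-bal} using $c \actld \widetilde{m} = \widetilde{m} \ract c^{*}$). Your more detailed discussion of the coherence diagrams, and the alternative transport along $\idm{m,\widetilde{m}} \simeq \leftidx{^*}{\IHom_{\Cat{D}}(m,\widetilde{m})}{}$, only elaborate what the paper leaves implicit.
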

\begin{proof}
Most of these natural isomorphisms are defined in \cite[Lemma 5]{Ostrik}.  All  natural isomorphisms are obtained directly  from the definitions of the inner hom and the isomorphisms induced by the dualities in the tensor categories.
\end{proof}
For the unit bimodule category $\CCC$, the inner homs are for example given by $\icgen{x, \widetilde{x}}= x \otimes \leftidx{^*}{\widetilde{x}}{}$ and $\igenc{\widetilde{x},x}= \widetilde{x}^{*} \otimes x$.

If we pass from a module category $\DM$ to the Grothendieck ring $Gr(\Cat{M})$, Proposition \ref{proposition:properties-inner-hom}
shows that the inner hom satisfies all requirements of a $Gr(\Cat{D})$-valued inner product except the compatibility with the $*$-involution. This compatibility will be considered in Section \ref{sec:inner-prod}.

The inner hom allows to prove the following theorem.  
\begin{theorem}[\cite{FinTen}]
\label{theorem:all-alg}
Let $\CM$ be an  module category over $\Cat{C}$. Then there exists an algebra object $A \in \Cat{C}$, such 
that $\CM$ is equivalent to $\ModCA$, the category of $A$-right modules in $\Cat{D}$ with $\Cat{D}$-left action 
given by the tensor product.  
 \end{theorem}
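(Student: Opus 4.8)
The plan is to realise $\Cat{M}$ as the category of modules over the internal endomorphism algebra of a projective generator, following \cite{Ostrik, FinTen}. Since $\Cat{M}$ is a finite $\Bbbk$-linear abelian category it has a projective generator $P$. Using the inner hom (Definition \ref{definition:inner-hom}, Lemma \ref{lemma:convent-inner-hom}) one gets an object $A := \IHom_{\Cat{C}}(P,P)\in\Cat{C}$ together with natural isomorphisms $\Hom_{\Cat{M}}(c\act P,n)\simeq\Hom_{\Cat{C}}(c,\IHom_{\Cat{C}}(P,n))$; that is, $H := \IHom_{\Cat{C}}(P,-)\colon\Cat{M}\rightarrow\Cat{C}$ is right adjoint to $G := -\act P\colon\Cat{C}\rightarrow\Cat{M}$. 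Composition of inner homs makes $A$ an algebra in $\Cat{C}$ and each $\IHom_{\Cat{C}}(P,n)$ a right $A$-module (\cite{Ostrik}, \cite[Sec.~3.3]{FinTen}); concretely, the monad $HG$ on $\Cat{C}$ is $-\otimes A$, whose category of Eilenberg--Moore algebras is $\ModCA$. Thus the comparison functor of the adjunction $G\dashv H$ is a functor $F\colon\Cat{M}\rightarrow\ModCA$, $n\mapsto\IHom_{\Cat{C}}(P,n)$ with its right $A$-module structure, and it is a $\Cat{C}$-module functor by the compatibility of the inner hom with the action (the left-module form of Proposition \ref{proposition:properties-inner-hom}, cf.\ \cite[Lemma~5]{Ostrik}). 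It remains to show $F$ is an equivalence of categories, after which a quasi-inverse is automatically a module functor.

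By Beck's monadicity theorem it suffices to verify that $H$ has a left adjoint, reflects isomorphisms, and preserves coequalizers. It has the left adjoint $G$. It is faithful: if $Hf = 0$ for $f\colon n\rightarrow n'$, then by adjunction $f$ annihilates every morphism $c\act P\rightarrow n$, hence $f$ vanishes after an epimorphism $\bigoplus_i c_i\act P\twoheadrightarrow n$ (which exists because $P$ generates $\Cat{M}$), so $f = 0$. And $H$ is exact: it is left exact as a right adjoint, and it preserves epimorphisms because, choosing $X$ to be a projective generator of $\Cat{C}$, the functor $\Hom_{\Cat{C}}(X,H(-))\simeq\Hom_{\Cat{M}}(X\act P,-)$ is exact while $\Hom_{\Cat{C}}(X,-)$ is faithful and exact. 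Being exact, $H$ preserves all coequalizers; being exact and faithful, $H$ is conservative. Hence $F$ is an equivalence of categories, and therefore $\Cat{M}\simeq\ModCA$ as $\Cat{C}$-module categories, which proves the theorem with this $A$.

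The one substantive input, used to obtain exactness of $H$, is that $X\act P$ is projective in $\Cat{M}$ for \emph{every} $X\in\Cat{C}$ whenever $P$ is projective. This follows from the natural isomorphism $\Hom_{\Cat{M}}(X\act P,-)\simeq\Hom_{\Cat{M}}(P,{}^{*}X\act(-))$ provided by the rigidity of $\Cat{C}$: the functor on the right is the composite of $\Hom_{\Cat{M}}(P,-)$, exact since $P$ is projective, with ${}^{*}X\act(-)$, exact since the module action $\act$ is exact by definition. This is the step I expect to be the crux; everything else is formal --- the adjunction defining the inner hom, the identification of the comparison functor with $\Mod$ of the internal endomorphism algebra, Beck's theorem, and the generation properties of $P$ and of a projective generator of $\Cat{C}$. (Alternatively, one can avoid monadicity and argue by hand: $F$ is fully faithful on the objects $c\act P$ via the two defining adjunctions, hence — using exactness of $F$ and two-term presentations of objects of $\Cat{M}$ by such objects, together with the five lemma — fully faithful everywhere, and then essentially surjective by lifting free presentations of $A$-modules and taking cokernels.)
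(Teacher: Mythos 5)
Your proof is correct, but note that the paper does not prove this statement at all: it is quoted from \cite{FinTen} (Etingof--Ostrik), so there is no internal proof to compare against. What you wrote is essentially the standard argument from that literature -- take a projective generator $P$ of $\CM$, form $A=\IHom(P,P)$, and show that $\IHom(P,-)\colon\CM\rightarrow\ModCA$ is an equivalence of $\Cat{C}$-module categories -- with the one presentational difference that you package the final step as Barr--Beck (crude monadicity for the monad $-\otimes A\simeq \IHom(P,-\act P)$) rather than checking full faithfulness and essential surjectivity directly as in \cite{FinTen} and EGNO. The substance is the same in either packaging: the crux is exactness and faithfulness of $\IHom(P,-)$, which you correctly reduce to projectivity of $x\act P$ for all $x\in\Cat{C}$, proved via the adjunction $\Hom_{\Cat{M}}(x\act P,-)\simeq\Hom_{\Cat{M}}(P,\leftidx{^*}{x}{}\act(-))$ coming from rigidity and exactness of the action. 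One point in your favour: \cite{FinTen} states this representability for \emph{exact} module categories (where any nonzero object serves as a generator), while the theorem as quoted here is for arbitrary finite module categories; your projective-generator argument proves exactly the stated version, since nowhere do you need exactness of $\CM$, only finiteness (to get $P$) and exactness of the action. The remaining ingredients you invoke -- existence of the inner hom, the isomorphism $\IHom(P,x\act n)\simeq x\otimes\IHom(P,n)$ making the comparison functor a module functor, and the identification of Eilenberg--Moore algebras for $-\otimes A$ with right $A$-modules -- are standard and are the same ones used in the cited source.
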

Next we recall the inner hom for the module categories obtained from algebras. 
\begin{example}
  \label{example:inner-homs-algebra}
We summarize the computation of  the inner hom objects in the case of $\CN= \ModCB$ and the $\CMstar$-valued inner hom for $\MC= \AModC$ in \cite[Example 3.19]{FinTen}. 
Note that for a right $B$-module in $\ModCB$, $\leftidx{^*}{n}{}$ is naturally a left $B$-module, while for $m \in \AModC$, $m^{*}$ is a right $A$-module.
Using the tensor product over the algebra $B$ in $\Cat{C}$, see \cite[Def. 2.9.22]{ENO: Notes}, we compute
\begin{equation*}
  \Hom_{\Cat{C}}(\icn{n,\widetilde{n}},c)\simeq \Hom_{\Cat{N}}(n, c \otimes \widetilde{n}) = \Hom_{\Cat{C}}(n \tensor{B} \leftidx{^*}{n}{}, c).
\end{equation*}
It follows that  $\icn{n,\widetilde{n}}= n \tensor{B} \leftidx{^*}{\widetilde{n}}{}$. 
It is shown in \cite[Prop. 2.12.2]{ENO:Notes}, that  $$ \CMstar = \Funl{\Cat{C}}{\AModC, \AModC}= \AModCA,$$ where $\AModCA$ is 
 the category of $(A,A)$-bimodules in $\Cat{C}$ with the tensor product $\tensor{A}$ over $A$ as monoidal structure. 
We compute the inner hom of $\AModC$ regarded as left $\AModCA$-module category. Let $x \in \AModCA$, then
\begin{equation}
  \label{eq:A-A-valued-inner-hom}
  \begin{split}
      \Hom_{_{A\!} \mathsf{Mod}(\Cat{C})_{A}}(\icdualm{m,\widetilde{m}},x) &=\Hom_{\Cat{M}}(m, x \tensor{A} \widetilde{m}) \simeq \Hom_{_{A\!} \mathsf{Mod}(\Cat{C})_{A}}(A, x \tensor{A} \widetilde{m} \otimes m^{*}) \\
 &= \Hom_{_{A\!} \mathsf{Mod}(\Cat{C})_{A}}( \leftidx{^*}{(\widetilde{m} \otimes m^{*})}{},x),
  \end{split}
\end{equation}
where the left dual in the last expression is taken in $\AModCA$. It follows that $\icdualm{m,\widetilde{m}}= \leftidx{^*}{(\widetilde{m} \otimes m^{*})}{}$.
\end{example}

\section{The tricategory of bimodule categories}
\label{sec:tricat-bimod}
The goal of this section is to show that finite tensor categories,  bimodule categories, right exact bimodule functors and bimodule natural transformation form an algebraic tricategory \cite{Gurski}, see also Definition
 \ref{definition:tricategory}. The idea of the proof is a direct generalization of the analogous statement in \cite{Schaum} in the semisimple case. 
To simplify notation, we make the following assumptions. 
\emph{In this section we assume that all tensor categories are finite and all module categories are finite unless 
specified otherwise. Furthermore, all functors are assumed to be right exact}.

The results in this section are a generalization of the results in \cite[Sec. 3]{Schaum} to the non-semisimple case. 
\subsection{The tensor product of module categories}
\label{sec:tens-prod-module}

In this subsection we first recall the definition of the tensor product of module categories. The tensor product is defined by an universal property with respect to right exact balanced functors. 
Then we show that the tensor product naturally defines a 2-functor from a suitable 2-category into the 2-category of 
abelian categories.

Let $\MC$ and $\CN$ be left and right  $\Cat{C}$-module categories, respectively. A tensor product $\MC \Box \CN$ of $\MC$ and $\CN$ is a linear abelian category that is defined -up to equivalence 
of categories- by a universal property that can be regarded as the analogue of the universal property of the tensor product of modules over a ring.
The definition uses the  category  $\Funbal(\MC \times \CN, \Cat{A})$  of (right exact) balanced functors and balanced natural transformations from $\MC \times \CN$ to a linear category $\Cat{A}$. 
The following definition is an extension of  \cite[Definition 3.3]{ENOfuhom} in the sense that we require a fixed adjoint equivalence as part of the data of a tensor product.
\begin{definition}
  \label{definition:tensor-prod}
  A tensor product $(\MC \Box \CN, \mathsf{B}_{\Cat{M},\Cat{N}}, \Psi_{\Cat{M},\Cat{N}}, \varphi_{\Cat{M},\Cat{N}}, \kappa_{\Cat{M},\Cat{N}})$ of a right $\Cat{C}$-module category $\MC$ with a left $\Cat{C}$-module category $\CN$ is a finite linear
 abelian category  $\MC \Box  \CN$ together with
  \begin{definitionlist}
  \item 
    a $\Cat{C}$-balanced functor $\mathsf{B}_{\Cat{M},\Cat{N}}:\MC \times \CN \rr \MC \Box \CN$,
    such that the functor 
    \begin{equation}
      \label{eq:univer-prop-Box}
      \begin{split}
        \Phi_{\Cat{M},\Cat{N}}: \Fun(\MC \Box \CN, \Cat{A})  &\rightarrow \Funbal(\Cat{M} \times \Cat{N}, \Cat{A}) \\
        \mathsf{G} &\mapsto \mathsf{G} \circ \mathsf{B}_{\Cat{M},\Cat{N}} 
      \end{split}
    \end{equation}
    is an  equivalence of categories. Here $\Fun(\MC \Box \CN, \Cat{A})$ denotes the category of (right exact) functors $\MC \Box \CN \rightarrow \Cat{A}$ to some linear abelian category $\Cat{A}$.
  \item a choice of a functor 
    \begin{equation}
      \label{eq:Psi}
      \Psi_{\Cat{M},\Cat{N}}: \Funbal(\MC \times \CN, \Cat{A}) \rightarrow \Fun(\MC \Box \CN, \Cat{A})  
    \end{equation}
    together with a specified adjoint equivalence $\varphi_{\Cat{M},\Cat{N}}:1 \rightarrow  \Phi_{\Cat{M},\Cat{N}} \Psi_{\Cat{M},\Cat{N}}$ and $\kappa_{\Cat{M},\Cat{N}}:1 \rightarrow  \Psi_{\Cat{M},\Cat{N}} \Phi_{\Cat{M},\Cat{N}}$
    between $\Phi_{\Cat{M},\Cat{N}}$ and $\Psi_{\Cat{M},\Cat{N}}$.
  \end{definitionlist}
\end{definition}
For simplicity we sometimes write $m \Box n$ instead of $\mathsf{B}(m \times n)$ for $m \times n \in \MC \times \CN$. We record the  existence of the tensor product from the literature. 
\begin{theorem}
\label{theorem:existence-tensor}
The tensor product $\MC \Box \CN$ of finite module categories $\MC$, $\CN$ exists. In particular it has the following descriptions.
  \begin{theoremlist}
    \item  The tensor product is equivalent to the following functor categories $$ \MC \Box \CN \simeq \Funl{\Cat{C}}{\CMrd,\CN} \simeq \Funl{\Cat{C}}{\NCld, \MC}.$$
\item If we choose algebras  $A,B \in \Cat{C}$ such that   $\MC \simeq \AModC$, $\CN \simeq \ModCB$, then 
$\MC \Box \CN \simeq \AModCB$, the category of $(A,B)$-bimodules in $\Cat{C}$. In this case, the universal 
balancing functor is given by the tensor product $\otimes: \AModC \times \ModCB \rightarrow \AModCB$.
  \end{theoremlist}
\end{theorem}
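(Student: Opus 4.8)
The existence of the tensor product is established in \cite{DSS, DSSbal}; the plan is to recover it together with both descriptions by reducing to the algebra presentation of module categories, in the spirit of \cite{ENOfuhom} and of \cite{Schaum} in the semisimple case. First I would invoke Theorem \ref{theorem:all-alg} to fix algebras $A, B \in \Cat{C}$ with equivalences of module categories $\MC \simeq \AModC$ and $\CN \simeq \ModCB$, and take as candidate for $\MC \Box \CN$ the category $\AModCB$ of $(A,B)$-bimodules in $\Cat{C}$ together with the balanced functor
\[
  \mathsf{B}_{\Cat{M},\Cat{N}} = \otimes \colon \AModC \times \ModCB \rr \AModCB, \qquad (m,n) \mapsto m \otimes n,
\]
where $m \otimes n$ carries the evident $(A,B)$-bimodule structure and the balancing isomorphism $(m \ract c) \otimes n \rr m \otimes (c \act n)$ is the associativity constraint of $\Cat{C}$; with this choice the pentagon \eqref{eq:balanced-functor} and triangle \eqref{eq:balanced-unit} for $\otimes$ reduce to the pentagon and triangle of $\Cat{C}$. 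It then remains to verify the universal property of Definition \ref{definition:tensor-prod}, to exhibit a quasi-inverse $\Psi_{\Cat{M},\Cat{N}}$, and to promote the resulting adjunction to the adjoint equivalence $(\varphi_{\Cat{M},\Cat{N}}, \kappa_{\Cat{M},\Cat{N}})$ --- the last step being automatic once one quasi-inverse is available.

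For the universal property I would show that, for every finite linear abelian category $\Cat{A}$, the precomposition functor
\[
  \Phi_{\Cat{M},\Cat{N}} \colon \Fun(\AModCB, \Cat{A}) \rr \Funbal(\AModC \times \ModCB, \Cat{A})
\]
is an equivalence, arguing by generators and right-exactness. The free bimodules $A \otimes c \otimes B$, $c \in \Cat{C}$, generate $\AModCB$, every object $X$ admits a presentation $A \otimes c_{1} \otimes B \rr A \otimes c_{0} \otimes B \rr X \rr 0$ by such objects, and the morphisms between them are computed inside $\Cat{C}$; hence a right exact $\mathsf{G}\colon \AModCB \rr \Cat{A}$ is determined, up to unique natural isomorphism, by the functor $c \mapsto \mathsf{G}(A \otimes c \otimes B)$ on $\Cat{C}$ together with its compatibility with these structure maps. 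This Eilenberg--Watts type reduction is what accommodates the hypothesis that $\Cat{A}$ is merely linear abelian, not a module category. On the other side, a balanced functor $\mathsf{F}$ is, through its balancing $\beta^{\mathsf{F}}$, determined by $c \mapsto \mathsf{F}(A \ract c \times B) \simeq \mathsf{F}(A \times c \act B)$ together with the coherent identifications encoded by \eqref{eq:balanced-functor}, and these data match the former term by term; concretely, $\Psi_{\Cat{M},\Cat{N}}(\mathsf{F})(X)$ should be the reflexive coequalizer computing $X = A \tensor{A} X \tensor{B} B$ out of the values of $\mathsf{F}$. Chasing the two presentations then yields natural isomorphisms $\Phi_{\Cat{M},\Cat{N}}\Psi_{\Cat{M},\Cat{N}} \simeq 1$ and $\Psi_{\Cat{M},\Cat{N}}\Phi_{\Cat{M},\Cat{N}} \simeq 1$. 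I expect this bookkeeping to be the main obstacle: the delicate point is to verify that the balancing constraint corresponds exactly to a module structure over the algebra presenting $\AModCB$ and that all the coherence data line up, which is essentially the content of \cite{DSSbal}.

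Finally, the functor-category descriptions follow from the uniqueness of the tensor product up to equivalence: it suffices to check that $\Funl{\Cat{C}}{\CMrd, \CN}$ (respectively $\Funl{\Cat{C}}{\NCld, \MC}$), equipped with its canonical balanced functor out of $\MC \times \CN$, also satisfies the universal property of Definition \ref{definition:tensor-prod}. The relevant input is the tensor--hom adjunction in the $2$-category of $\Cat{C}$-module categories, which identifies a $\Cat{C}$-balanced functor $\MC \times \CN \rr \Cat{A}$ with a $\Cat{C}$-module functor out of $\CN$ valued in the functors on $\MC$, the latter being corepresented by $\CMrd$. To reconcile this with the second description I would again pass to the algebra presentation: $\CMrd$ is a finite left $\Cat{C}$-module category, hence equivalent to $\Mod(\Cat{C})_{A'}$ for some algebra $A' \in \Cat{C}$ by Theorem \ref{theorem:all-alg}, and $\Funl{\Cat{C}}{\Mod(\Cat{C})_{A'}, \ModCB}$ is a bimodule category over $A'$ and $B$ by the standard computation of module functor categories recalled in Example \ref{example:inner-homs-algebra} (see also \cite{FinTen}); identifying the algebra $A'$ with $A$ via the chosen equivalence $\CMrd \simeq \Mod(\Cat{C})_{A'}$ returns $\AModCB$, and symmetrically for the other factor.
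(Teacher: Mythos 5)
Your proposal is sound, but note that the paper itself does not argue at all: its proof of Theorem \ref{theorem:existence-tensor} consists of citations, namely \cite[Cor.~3.4.11]{DSS} (and \cite{ENOfuhom} in the semisimple case) for the functor-category description and \cite{DSSbal}, see \cite[Thm.~3.2.17]{DSS}, for the bimodule description with universal balancing functor $\otimes$. What you do is reconstruct the content of those references, and with a different logical organization: you first establish the algebra model $\AModCB$ with $\mathsf{B}=\otimes$ by verifying the universal property through free bimodules $A\otimes c\otimes B$, presentations, and a (bar-type) coequalizer construction of $\Psi$, and you then deduce the description $\Funl{\Cat{C}}{\CMrd,\CN}\simeq \Funl{\Cat{C}}{\NCld,\MC}$ from uniqueness of the tensor product, whereas the paper quotes the two descriptions independently. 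Your route buys a more self-contained narrative and makes clear that part (i) follows from part (ii) plus a universal-property check for the functor category; what it does not buy is a shortcut around the two genuinely technical verifications — that the coequalizer construction of $\Psi_{\Cat{M},\Cat{N}}$ is well defined, functorial, and part of an adjoint equivalence with $\Phi_{\Cat{M},\Cat{N}}$ as required by Definition \ref{definition:tensor-prod}, and that $\Funl{\Cat{C}}{\CMrd,\CN}$ with its canonical balanced functor really satisfies the universal property — both of which you, like the paper, ultimately defer to \cite{DSSbal} and \cite{DSS}. Two small cautions: your slogan that balanced functors are ``corepresented by $\CMrd$'' compresses the Eilenberg--Watts-type identification of right exact functor categories that \cite{DSS} uses and would need to be spelled out, and in the final reconciliation the algebra $A'$ presenting $\CMrd$ is only determined up to a double-dual twist of $A$ (compare the use of \cite[Cor.~3.4.14]{DSS} later in the paper), so that step should be phrased as an equivalence of bimodule categories rather than an identification of algebras; neither point is a genuine gap for the theorem as stated.
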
                    
\begin{proof}
The first description of the tensor product  is shown in \cite[Cor. 3.4.11]{DSS} in general and in  \cite{ENOfuhom} in the semisimple case. The second statement is shown in 
\cite{DSSbal}, see \cite[Thm. 3.2.17]{DSS}.
\end{proof}
Concretely, for every balanced functor 
 $\mathsf{F}: \MC \times \CN \rightarrow \Cat{A}$, the tensor product yields a functor  $\widehat{\mathsf{F}}= \Psi_{\Cat{M},\Cat{N}}(\mathsf{F}): \Cat{M} \Box \Cat{N} \rightarrow \Cat{A}$, that is unique up to unique natural isomorphism. 
The following lemma is a  direct consequence of the properties of the  adjoint equivalence in the definition of the tensor product.
 \begin{lemma}
  \label{lemma:action-Psi-bal}
  Let $\mathsf{F},\mathsf{G}: \MC  \times \CN \rightarrow \Cat{A}$ be balanced functors. For every balanced natural transformation $\rho: \mathsf{F} \rightarrow \mathsf{G}$ there exists a unique natural transformation 
  $\widehat{\rho}: \widehat{\mathsf{F}} \rightarrow \widehat{\mathsf{G}}$, such that 
  \begin{equation}
    \label{eq:26}
    (\widehat{\rho} \circ \mathsf{B}) \cdot \varphi(\mathsf{F}) = \varphi(\mathsf{G}) \cdot \rho. 
  \end{equation}
\end{lemma}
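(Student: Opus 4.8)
The plan is to read off the statement as the action on morphisms of the functor $\Psi_{\Cat{M},\Cat{N}}$ together with the properties of the adjoint equivalence $(\varphi_{\Cat{M},\Cat{N}}, \kappa_{\Cat{M},\Cat{N}})$ appearing in Definition \ref{definition:tensor-prod}. Since $\Psi_{\Cat{M},\Cat{N}}$ is a functor $\Funbal(\MC \times \CN,\Cat{A}) \to \Fun(\MC \Box \CN,\Cat{A})$, a balanced natural transformation $\rho\colon \mathsf{F} \to \mathsf{G}$ is sent to a natural transformation $\Psi_{\Cat{M},\Cat{N}}(\rho)\colon \widehat{\mathsf{F}} \to \widehat{\mathsf{G}}$ between $\widehat{\mathsf{F}} = \Psi_{\Cat{M},\Cat{N}}(\mathsf{F})$ and $\widehat{\mathsf{G}} = \Psi_{\Cat{M},\Cat{N}}(\mathsf{G})$. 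The candidate for $\widehat{\rho}$ is precisely $\Psi_{\Cat{M},\Cat{N}}(\rho)$, and the first task is to verify that it satisfies the displayed compatibility (\ref{eq:26}).

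First I would establish the equation. Applying $\Phi_{\Cat{M},\Cat{N}}$ to $\Psi_{\Cat{M},\Cat{N}}(\rho)$, naturality of $\varphi_{\Cat{M},\Cat{N}}\colon 1 \Rightarrow \Phi_{\Cat{M},\Cat{N}}\Psi_{\Cat{M},\Cat{N}}$ applied to the morphism $\rho$ in $\Funbal(\MC \times \CN,\Cat{A})$ gives a commuting square whose two legs are $\Phi_{\Cat{M},\Cat{N}}(\Psi_{\Cat{M},\Cat{N}}(\rho)) \cdot \varphi_{\Cat{M},\Cat{N}}(\mathsf{F}) = \varphi_{\Cat{M},\Cat{N}}(\mathsf{G}) \cdot \rho$; unwinding $\Phi_{\Cat{M},\Cat{N}}(\Psi_{\Cat{M},\Cat{N}}(\rho)) = \Psi_{\Cat{M},\Cat{N}}(\rho) \circ \mathsf{B}_{\Cat{M},\Cat{N}} = \widehat{\rho}\circ \mathsf{B}$ yields exactly (\ref{eq:26}). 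For uniqueness, suppose $\widehat{\rho}'\colon \widehat{\mathsf{F}} \to \widehat{\mathsf{G}}$ also satisfies (\ref{eq:26}). Then $\Phi_{\Cat{M},\Cat{N}}(\widehat{\rho}') \cdot \varphi_{\Cat{M},\Cat{N}}(\mathsf{F}) = \varphi_{\Cat{M},\Cat{N}}(\mathsf{G})\cdot\rho = \Phi_{\Cat{M},\Cat{N}}(\widehat{\rho})\cdot\varphi_{\Cat{M},\Cat{N}}(\mathsf{F})$, and since $\varphi_{\Cat{M},\Cat{N}}(\mathsf{F})$ is an isomorphism (it is a component of the adjoint equivalence), we get $\Phi_{\Cat{M},\Cat{N}}(\widehat{\rho}') = \Phi_{\Cat{M},\Cat{N}}(\widehat{\rho})$; faithfulness of $\Phi_{\Cat{M},\Cat{N}}$, which holds because it is an equivalence of categories, forces $\widehat{\rho}' = \widehat{\rho}$.

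I do not expect a serious obstacle here; the only point requiring care is bookkeeping with the direction and naturality of $\varphi_{\Cat{M},\Cat{N}}$ as a natural transformation between the functors $1$ and $\Phi_{\Cat{M},\Cat{N}}\Psi_{\Cat{M},\Cat{N}}$ on the category $\Funbal(\MC \times \CN,\Cat{A})$, and making sure that the composite $\widehat{\rho}\circ\mathsf{B}$ is indeed what $\Phi_{\Cat{M},\Cat{N}}$ produces from $\widehat{\rho}$. Everything else is formal manipulation with equivalences of categories, so the lemma follows directly from Definition \ref{definition:tensor-prod} and Theorem \ref{theorem:existence-tensor}.
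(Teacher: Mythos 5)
Your proposal is correct and is exactly the argument the paper intends: it states the lemma as a direct consequence of the adjoint equivalence in Definition \ref{definition:tensor-prod}, and your construction $\widehat{\rho}=\Psi_{\Cat{M},\Cat{N}}(\rho)$ with existence from the naturality of $\varphi_{\Cat{M},\Cat{N}}$ at $\rho$ and uniqueness from the invertibility of $\varphi_{\Cat{M},\Cat{N}}(\mathsf{F})$ plus faithfulness of $\Phi_{\Cat{M},\Cat{N}}$ fills in precisely those details.
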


Next we consider the 2-functorial properties of the tensor product. Denote by $\Catlin$ the 2-category of linear abelian categories, (right exact) linear functors and linear natural transformations. 
\begin{proposition}
  \label{proposition:tensor-functors}
 The tensor product defines  a 2-functor 
  \begin{equation}
    \label{eq:explicit-2-fun-Box}
    \Box: \Mod^{r}(\Cat{C}) \times \Mod^{l}(\Cat{C}) \rightarrow \Catlin,
  \end{equation}
  where $\Mod^{r}(\Cat{C})$, $\Mod^{l}(\Cat{C})$ denote the 2-categories of right and left $\Cat{C}$-module categories, respectively. 
This amounts for the following structures: Let $\MC$, $\MpC$ and $\CN$, $\CpN$ be $\Cat{C}$-left- and right module categories, respectively. 
  \begin{propositionlist}
  \item \label{item:first}  For every bimodule functor $\mathsf{F}: \MC \times \CN \rightarrow \MpC \times \CpN$, 
    the tensor product of module categories defines a functor $\Psi( \mathsf{B}_{\Cat{M}',\Cat{N}'} \mathsf{F}): \MC \Box \CN \rightarrow \MpC \Box \CpN$,  called $\widehat{\mathsf{F}}$ in the sequel, 
    and a balanced natural isomorphism 
    \begin{equation}
      \label{eq:Box-2-func}
      \begin{tikzcd}
        \Cat{M} \times \Cat{N} \ar{r}[name=A]{\mathsf{F}} \ar{d}{\mathsf{B}_{\Cat{M},\Cat{N}}} &    \Cat{M}' \times \Cat{N}' \ar{d}{\mathsf{B}_{\Cat{M}',\Cat{N}'}}
 \ar[shorten <= 13pt, shorten >=13pt, Rightarrow]{dl}{\varphi_{\mathsf{F}}} \\  
        \Cat{M} \Box \Cat{N} \ar{r}[name=B, below]{\widehat{\mathsf{F}}} & \Cat{M}' \Box \Cat{N}'.
      \end{tikzcd}
    \end{equation}
  \item \label{item:sec} For every pair of  bimodule  functors  $\mathsf{F},\mathsf{G}: \MC \times \CN \rightarrow \MpC \times \CpN$ and every bimodule natural transformation $\rho: \mathsf{F}\rightarrow  \mathsf{G}$, 
    there is a unique natural transformation $\widehat{\rho} : \widehat{\mathsf{F}} \rightarrow \widehat{\mathsf{G}}$, such that 
    \begin{equation}
      \label{eq:deterimes-Box-of-natural}
      (  \widehat{\rho} \circ \mathsf{B}_{\Cat{M},\Cat{N}}) \cdot \mathsf{B}_{\mathsf{F}} =  \mathsf{B}_{\mathsf{G}} \cdot \rho.
    \end{equation}
    This is equivalent to imposing   the following condition on the associated diagrams
    \begin{equation}
      \label{eq:condition-BMN}
      \begin{tikzcd}[baseline=-0.65ex,scale=0.5, column sep=large, row sep=large]
        \Cat{M} \times \Cat{N} \ar{d}{\mathsf{B}_{\Cat{M},\Cat{N}}} \ar[bend left]{r}[name=A]{ \mathsf{F}} \ar[bend right]{r}[ name=B]{ \mathsf{G}}   & \Cat{M}'\times \Cat{N}' \ar{d}{\mathsf{B}_{\Cat{M}',\Cat{N}'}} 
 \ar[shorten <= 20pt, shorten >=20pt, Rightarrow]{dl}{\varphi_{\mathsf{G}}}\\
        \Cat{M}\Box \Cat{N} \ar{r}[name=C, below]{\widehat{\mathsf{G}}} & \Cat{M}' \Box \Cat{N}' ,
        \arrow[shorten <= 2pt, shorten >=2pt, Rightarrow,to path=(A) -- (B)\tikztonodes]{}{\rho}
      \end{tikzcd}
      =
      \begin{tikzcd}[baseline=-0.65ex,scale=0.5,  column sep=large, row sep=large]
        \Cat{M} \times \Cat{N} \ar{d}{\mathsf{B}_{\Cat{M},\Cat{N}}}  \ar{r}[name=C]{  \mathsf{F}} & \Cat{M}' \times \Cat{N}' \ar{d}{\mathsf{B}_{\Cat{M}',\Cat{N}'}} 
 \ar[shorten <= 25pt, shorten >=25pt, Rightarrow]{dl}[above, yshift=2pt]{\varphi_{\mathsf{F}}} \\
        \Cat{M} \Box \Cat{N}  \ar[bend left]{r}[name=A, below]{ \widehat{\mathsf{F}}} \ar[bend right]{r}[ name=B, below]{ \widehat{\mathsf{G}}}  & \Cat{M}' \Box \Cat{N}' 
        \arrow[shorten <= 4pt, shorten >=2pt, Rightarrow,to path=(A) -- (B)\tikztonodes]{}{\widehat{\rho}}.
      \end{tikzcd}
    \end{equation}
  \item \label{item:third} For any two composable   bimodule functors $\mathsf{F}:\MC \times \CN \rightarrow \MpC \times \CpN $,  $\mathsf{G}:\MpC \times \CpN \rightarrow \MppC \times \CppN$, there is a unique 
    natural isomorphism $\phi_{\mathsf{G},\mathsf{F}}: \widehat{\mathsf{G}}\widehat{\mathsf{F}} \rightarrow \widehat{\mathsf{G}\mathsf{F}}$ such that the following diagram of natural isomorphisms commutes:
    \begin{equation}
      \label{eq:diag-varphiGF-uniqueness}
      \begin{tikzcd}[column sep=large]
        \mathsf{B}_{\Cat{M''},\Cat{N''}}\mathsf{G}\mathsf{F} \ar{r}{\varphi_{\mathsf{G}}\mathsf{F}} \ar{d}{\varphi_{\mathsf{G}\mathsf{F}}} & \widehat{\mathsf{G}}\mathsf{B}_{\Cat{M}',\Cat{N}'}\mathsf{F} \ar{d}{\widehat{\mathsf{G}}\varphi_{\mathsf{F}}} \\
        \widehat{\mathsf{G}\mathsf{F}} \mathsf{B}_{\Cat{M},\Cat{N}} & \widehat{\mathsf{G}}\widehat{\mathsf{F}} \mathsf{B}_{\Cat{M},\Cat{N}} \ar{l}{\phi_{\mathsf{G},\mathsf{F}}\mathsf{B}_{\Cat{M},\Cat{N}}}.
      \end{tikzcd}
    \end{equation}
  \item \label{item:forth}For  three composable bimodule functors $\mathsf{F}:\MC \times \CN \rightarrow \MpC \times \CpN $, $\mathsf{G}:\MpC \times \CpN \rightarrow \MppC \times \CppN $, 
$\mathsf{H}: \MppC \times \CppN \rightarrow \MpppC \times \CpppN$, 
    the following diagram of natural isomorphisms commutes
    \begin{equation}
      \label{eq:49}
      \begin{tikzcd}
        \widehat{\mathsf{H}}\widehat{\mathsf{G}}\widehat{\mathsf{F}} \ar{r}{\phi_{\mathsf{H},\mathsf{G}}\widehat{\mathsf{F}}} \ar{d}{\widehat{\mathsf{H}}\phi_{\mathsf{G},\mathsf{F}}} & \widehat{\mathsf{H}\mathsf{G}} \widehat{\mathsf{F}} \ar{d}{\phi_{\mathsf{H}\mathsf{G},\mathsf{F}}} \\
        \widehat{\mathsf{H}} \widehat{\mathsf{G}\mathsf{F}} \ar{r}{\phi_{\mathsf{H},\mathsf{G}\mathsf{F}}} & \widehat{\mathsf{H}\mathsf{G}\mathsf{F}}.
      \end{tikzcd}
    \end{equation}
  \item \label{item:fith}The natural transformation $\kappa_{\Cat{M},\Cat{N}}$ from Definition \ref{definition:tensor-prod} defines a natural isomorphism 
    \begin{equation}
      \label{eq:51}
      \kappa_{\Cat{M},\Cat{N}}(1_{\Cat{M}\Box \Cat{N}}): \widehat{1_{\Cat{M} \times \Cat{N}}} \rightarrow 1_{\Cat{M}\Box \Cat{N}}, 
    \end{equation}
    such that for all bimodule functors
    $\mathsf{F}: \MC \times \CN \rightarrow \MpC \times \CpN$
    the following diagrams  commute
    \begin{equation}
      \label{eq:52}
      \begin{tikzcd}
        \widehat{\mathsf{F}}\widehat{1_{\Cat{M}\times \Cat{N}}} \ar{d}[xshift=-50pt]{\widehat{\mathsf{F}}\kappa(1_{\Cat{M} \Box \Cat{N}})}  \ar{r}{\phi_{\mathsf{F},1}}  & \widehat{\mathsf{F}}   &  
        \widehat{1_{\Cat{M}\times \Cat{N}}} \widehat{\mathsf{F}} \ar{r}{\phi_{1,\mathsf{F}}} \ar{d}[xshift=-53pt]{\kappa(1_{\Cat{M}' \Box \Cat{N}'} )\widehat{\mathsf{F}}} & \widehat{\mathsf{F}} \\
        \widehat{\mathsf{F}} 1_{\Cat{M} \Box \Cat{N}}, \ar{ur}{\id} &{} & 
        1_{\Cat{M}' \Box\Cat{N}'}\widehat{\mathsf{F}}. \ar{ur}{\id} &
      \end{tikzcd}
    \end{equation}
  \end{propositionlist}
\end{proposition}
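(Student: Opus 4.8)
The plan is to deduce every piece of structure listed in the proposition from the universal property of the tensor product recorded in Definition \ref{definition:tensor-prod}, i.e.\ from the equivalence $\Phi_{\Cat{M},\Cat{N}}$ together with the chosen adjoint equivalence $(\Psi_{\Cat{M},\Cat{N}},\varphi_{\Cat{M},\Cat{N}},\kappa_{\Cat{M},\Cat{N}})$; the only additional ingredients are Lemma \ref{lemma:balancing-and-composition} and Lemma \ref{lemma:action-Psi-bal}. Throughout, a bimodule functor $\mathsf{F}\colon \MC\times\CN \to \MpC\times\CpN$ is read as a module functor for the combined left--right $\Cat{C}$-action on the products in the sense of Remark \ref{remark:clash-biadditve}, which is exactly the structure making $\Cat{C}$-balancedness of functors and natural transformations on $\Cat{M}\times\Cat{N}$ meaningful. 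The first step is that $\mathsf{B}_{\Cat{M}',\Cat{N}'}\circ\mathsf{F}$ is a $\Cat{C}$-balanced right exact functor $\MC\times\CN\to\MpC\Box\CpN$, with balancing constraint assembled from the balancing of $\mathsf{B}_{\Cat{M}',\Cat{N}'}$ and the module constraints of $\mathsf{F}$ as in Lemma \ref{lemma:balancing-and-composition}. One then sets $\widehat{\mathsf{F}}:=\Psi_{\Cat{M},\Cat{N}}(\mathsf{B}_{\Cat{M}',\Cat{N}'}\circ\mathsf{F})$ and lets $\varphi_{\mathsf{F}}\colon \mathsf{B}_{\Cat{M}',\Cat{N}'}\mathsf{F}\to\widehat{\mathsf{F}}\,\mathsf{B}_{\Cat{M},\Cat{N}}$ be the component of $\varphi_{\Cat{M},\Cat{N}}$ at $\mathsf{B}_{\Cat{M}',\Cat{N}'}\circ\mathsf{F}$; since $\varphi_{\Cat{M},\Cat{N}}$ is a natural isomorphism of functors valued in $\Funbal$, this $\varphi_{\mathsf{F}}$ is automatically a balanced natural isomorphism, which gives (\ref{eq:Box-2-func}) and the first assertion.

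For the second assertion, a bimodule natural transformation $\rho\colon\mathsf{F}\to\mathsf{G}$ yields a balanced natural transformation $\mathsf{B}_{\Cat{M}',\Cat{N}'}\rho$, and Lemma \ref{lemma:action-Psi-bal} supplies the unique $\widehat{\rho}\colon\widehat{\mathsf{F}}\to\widehat{\mathsf{G}}$ satisfying (\ref{eq:deterimes-Box-of-natural}); that this is the same as the pasting identity (\ref{eq:condition-BMN}) is just a rereading of the whiskerings involved. For the composition constraint, whiskering $\varphi_{\mathsf{F}}$ by $\widehat{\mathsf{G}}$ and $\varphi_{\mathsf{G}}$ by $\mathsf{F}$ produces a balanced natural isomorphism from $\mathsf{B}_{\Cat{M}'',\Cat{N}''}\circ(\mathsf{G}\mathsf{F})$ to $\widehat{\mathsf{G}}\widehat{\mathsf{F}}\circ\mathsf{B}_{\Cat{M},\Cat{N}}$, namely $(\widehat{\mathsf{G}}\varphi_{\mathsf{F}})\cdot(\varphi_{\mathsf{G}}\mathsf{F})$; here one uses both parts of Lemma \ref{lemma:balancing-and-composition} to see that pre-whiskering a balanced transformation by the module functor $\mathsf{F}$ and post-whiskering it by the plain functor $\widehat{\mathsf{G}}$ remain $\Cat{C}$-balanced. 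Composing this appropriately with $\varphi_{\mathsf{G}\mathsf{F}}$ one obtains a balanced natural isomorphism $\widehat{\mathsf{G}}\widehat{\mathsf{F}}\circ\mathsf{B}_{\Cat{M},\Cat{N}}\to\widehat{\mathsf{G}\mathsf{F}}\circ\mathsf{B}_{\Cat{M},\Cat{N}}$. Since $\Phi_{\Cat{M},\Cat{N}}$ is an equivalence and in particular fully faithful, this isomorphism is the image under $\Phi_{\Cat{M},\Cat{N}}$ of a unique natural isomorphism $\phi_{\mathsf{G},\mathsf{F}}\colon\widehat{\mathsf{G}}\widehat{\mathsf{F}}\to\widehat{\mathsf{G}\mathsf{F}}$, and unwinding this characterization is exactly the commuting diagram (\ref{eq:diag-varphiGF-uniqueness}).

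The two remaining coherences are forced by the same uniqueness argument. For the associativity pentagon (\ref{eq:49}) one whiskers both composites with $\mathsf{B}_{\Cat{M},\Cat{N}}$, rewrites each side using the defining identity (\ref{eq:diag-varphiGF-uniqueness}) of the $\phi$'s together with interchange and the naturality of the $\varphi$'s, checks that the two sides reduce to the same composite of components of $\varphi_{(-)}$, and then invokes faithfulness of $\Phi_{\Cat{M},\Cat{N}}$. For the unit statement one first uses the triangle identities of the adjoint equivalence $(\varphi_{\Cat{M},\Cat{N}},\kappa_{\Cat{M},\Cat{N}})$ to identify $\widehat{1_{\Cat{M}\times\Cat{N}}}=\Psi_{\Cat{M},\Cat{N}}(\mathsf{B}_{\Cat{M},\Cat{N}})$ with $\Psi_{\Cat{M},\Cat{N}}\Phi_{\Cat{M},\Cat{N}}(1_{\Cat{M}\Box\Cat{N}})$, which is the content of the isomorphism (\ref{eq:51}); the triangles (\ref{eq:52}) then follow after whiskering with $\mathsf{B}_{\Cat{M},\Cat{N}}$ by the same faithfulness argument. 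Putting all of this together yields the asserted $2$-functor $\Box\colon \Mod^{r}(\Cat{C})\times\Mod^{l}(\Cat{C})\to\Catlin$.

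I expect the real obstacle to be bookkeeping rather than anything conceptual: one has to carry the correct $\Cat{C}$-balancing structure through every composite and every whiskered natural transformation so that $\Phi_{\Cat{M},\Cat{N}}$ may legitimately be applied, both to transport structure and to compare morphisms, and one must keep the combined left/right $\Cat{C}$-action on the products $\Cat{M}\times\Cat{N}$ straight and handle the two-categorical interchange laws entering (\ref{eq:49}) and (\ref{eq:52}) with care. Each individual verification is routine, but assembling the complete list of $2$-functor coherences is long.
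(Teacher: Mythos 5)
Your proposal is correct and follows essentially the same route as the paper: define $\widehat{\mathsf{F}}=\Psi(\mathsf{B}\mathsf{F})$ with $\varphi_{\mathsf{F}}$ coming from the unit of the adjoint equivalence, get $\widehat{\rho}$ from Lemma \ref{lemma:action-Psi-bal}, obtain $\phi_{\mathsf{G},\mathsf{F}}$ from the composite balanced isomorphism via the universal property (the paper writes this explicitly as $\kappa^{-1}(\widehat{\mathsf{G}\mathsf{F}})\cdot\Psi(\cdots)\cdot\kappa(\widehat{\mathsf{G}}\widehat{\mathsf{F}})$, which is the same as your fully-faithfulness argument), and verify the pentagon and unit coherences by whiskering with $\mathsf{B}$, using interchange, the snake identity for $(\varphi,\kappa)$, and faithfulness of $\Phi$. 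The only cosmetic difference is that the identification $\widehat{1_{\Cat{M}\times\Cat{N}}}=\Psi\Phi(1_{\Cat{M}\Box\Cat{N}})$ needs no triangle identity (it is an equality of functors), the triangle identities entering only in the verification of (\ref{eq:52}), exactly as in the paper.
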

\begin{proof}
  The functor $\mathsf{B}\mathsf{F}$ in the first part is balanced  and hence the functor $\Psi(\mathsf{B}\mathsf{F})=\widehat{\mathsf{F}}$ is well defined.
  From  the natural isomorphism $\varphi:1 \rightarrow \Phi \Psi$   in Definition \ref{definition:tensor-prod}, we obtain the  balanced  natural isomorphism $\varphi_{\mathsf{F}}: \mathsf{B}\mathsf{F} \rightarrow \widehat{ \mathsf{F}}\mathsf{B}$. This shows the first part. 
  Part \refitem{item:sec} follows directly by applying Lemma \ref{lemma:action-Psi-bal}  to the natural transformation $\Psi(\mathsf{B}\rho)$, which is denoted  $\widehat{\rho}$ in the sequel. 
  To show statement \refitem{item:third}, note that from the first part we obtain  balanced natural isomorphisms 
  \begin{equation}
    \label{eq:50}
    \begin{tikzcd}
      \widehat{\mathsf{G}}\widehat{\mathsf{F}}\mathsf{B} \ar{r}{\widehat{\mathsf{G}}\varphi_{\mathsf{F}}^{-1}} & \widehat{\mathsf{G}}\mathsf{B} \mathsf{F} \ar{r}{\varphi_{\mathsf{G}}^{-1} \mathsf{F}} & \mathsf{B}\mathsf{G}\mathsf{F} \ar{r}{\varphi_{\mathsf{G}\mathsf{F}}} & \widehat{\mathsf{G}\mathsf{F}}\mathsf{B},
    \end{tikzcd}
  \end{equation}
  which compose to a balanced natural isomorphism from $ \widehat{\mathsf{G}}\widehat{\mathsf{F}}\mathsf{B}$ to $\widehat{\mathsf{G}\mathsf{F}}\mathsf{B}$. 
  The natural isomorphism $\phi_{\mathsf{G},\mathsf{F}}: \widehat{ \mathsf{G}}\widehat{\mathsf{F}} \rightarrow \widehat{ \mathsf{G}\mathsf{F}}$ is then defined as 
  \begin{equation*}
    \phi_{\mathsf{G},\mathsf{F}}= \kappa^{-1}(\widehat{\mathsf{G}\mathsf{F}}) \cdot \Psi\left(  \varphi_{\mathsf{G}\mathsf{F}} \cdot \varphi_{\mathsf{G}}^{-1}\mathsf{F} \cdot \widehat{\mathsf{G}}\varphi_{\mathsf{F}}^{-1}\right) \cdot \kappa(\widehat{\mathsf{G}}\widehat{\mathsf{F}}).
  \end{equation*}
  This proves  the existence and uniqueness of the natural isomorphism $\phi_{\mathsf{G},\mathsf{F}}$, such that 
  (\ref{eq:diag-varphiGF-uniqueness}) commutes. Hence the third part follows.
  To show the forth part, note that by  definition of $\phi_{\mathsf{F},\mathsf{G}}$  and by the interchange law for 2-categories, the following  diagram commutes
  \begin{equation}
    \label{eq:53}
    \begin{tikzcd}
      {} & \widehat{\mathsf{H}}\mathsf{B}\mathsf{G}\mathsf{F} \ar{r}{\widehat{\mathsf{H}} \varphi_{\mathsf{G}}\mathsf{F}}  & \widehat{\mathsf{H}} \widehat{\mathsf{G}}\mathsf{B}\mathsf{F} \ar{dr}{\widehat{\mathsf{H}}\widehat{\mathsf{G}}\varphi_{\mathsf{F}}} \ar{dl}{\phi_{\mathsf{H},\mathsf{G}}\mathsf{B}\mathsf{F}} &{} \\
      \mathsf{B}\mathsf{H}\mathsf{G}\mathsf{F} \ar{dr}[below,xshift=-6pt]{\varphi_{\mathsf{H}\mathsf{G}\mathsf{F}}} \ar{ur}{\varphi_{\mathsf{H}}\mathsf{G}\mathsf{F}} \ar{r}{\varphi_{\mathsf{H}\mathsf{G}}\mathsf{F}}& \widehat{\mathsf{H}\mathsf{G}}\mathsf{B}\mathsf{F} \ar{dr}{\widehat{\mathsf{H}\mathsf{G}}\varphi_{\mathsf{F}}}& & \widehat{\mathsf{H}}\widehat{\mathsf{G}}\widehat{\mathsf{F}}\mathsf{B} \ar{dl}{\phi_{\mathsf{H},\mathsf{G}}\widehat{\mathsf{F}}\mathsf{B}} \\
      & \widehat{\mathsf{H}\mathsf{G}\mathsf{F}}\mathsf{B}  & \widehat{\mathsf{H}\mathsf{G}} \widehat{\mathsf{F}}\mathsf{B}. \ar{l}{\phi_{\mathsf{H}\mathsf{G},\mathsf{F}}\mathsf{B}}  &
    \end{tikzcd}
  \end{equation}
  Here  the interchange law is used to establish the commutativity of the parallelogram on the right, and part \refitem{item:third} shows the commutativity of the two parallelograms on the left in (\ref{eq:53}). 
  It also follows from \refitem{item:third} that the following diagram commutes
  \begin{equation}
    \label{eq:54}
    \begin{tikzcd}
      {} & \widehat{\mathsf{H}}\mathsf{B}\mathsf{G}\mathsf{F} \ar{r}{\widehat{\mathsf{H}} \varphi_{\mathsf{G}}\mathsf{F}} \ar{ddr}{\widehat{\mathsf{H}}\varphi_{\mathsf{G}\mathsf{F}}}  & \widehat{\mathsf{H}} \widehat{\mathsf{G}}\mathsf{B}\mathsf{F} \ar{dr}{\widehat{\mathsf{H}}\widehat{\mathsf{G}}\varphi_{\mathsf{F}}} &{} \\
      \mathsf{B}\mathsf{H}\mathsf{G}\mathsf{F} \ar{dr}[below,xshift=-6pt]{\varphi_{\mathsf{H}\mathsf{G}\mathsf{F}}} \ar{ur}{\varphi_{\mathsf{H}}\mathsf{G}\mathsf{F}} & & & \widehat{\mathsf{H}}\widehat{\mathsf{G}}\widehat{\mathsf{F}}\mathsf{B} \ar{dl}{\widehat{\mathsf{H}}\phi_{\mathsf{G},\mathsf{F}}\mathsf{B}} \\
      & \widehat{\mathsf{H}\mathsf{G}\mathsf{F}}\mathsf{B}  & \widehat{\mathsf{H}} \widehat{\mathsf{G}\mathsf{F}}\mathsf{B}. \ar{l}{\phi_{\mathsf{H},\mathsf{G}\mathsf{F}}\mathsf{B}}  &
    \end{tikzcd}
  \end{equation}
  Since all outer arrows in the diagrams (\ref{eq:53}) and (\ref{eq:54}) that do not contain  $\phi$ agree and all arrows are labeled by natural isomorphisms,  it follows   that the diagram 
  \begin{equation*}
    \begin{tikzcd}
      \widehat{\mathsf{H}}\widehat{\mathsf{G}}\widehat{\mathsf{F}} \mathsf{B}\ar{r}{\phi_{\mathsf{H},\mathsf{G}}\widehat{\mathsf{F}}\mathsf{B}} \ar{d}{\widehat{\mathsf{H}}\phi_{\mathsf{G},\mathsf{F}}\mathsf{B}} & \widehat{\mathsf{H}\mathsf{G}} \widehat{\mathsf{F}}\mathsf{B} \ar{d}{\phi_{\mathsf{H}\mathsf{G},\mathsf{F}}\mathsf{B}} \\
      \widehat{\mathsf{H}} \widehat{\mathsf{G}\mathsf{F}}\mathsf{B} \ar{r}{\phi_{\mathsf{H},\mathsf{G}\mathsf{F}}\mathsf{B}} & \widehat{\mathsf{H}\mathsf{G}\mathsf{F}}\mathsf{B}
    \end{tikzcd}
  \end{equation*}
  commutes. As the functor $\Phi$ is fully faithful, this shows that (\ref{eq:49}) commutes.
  For the last statement, note that the natural isomorphism $\kappa_{\Cat{M},\Cat{N}}: 1 \rightarrow \Psi_{\Cat{M},\Cat{N}}\Phi_{\Cat{M},\Cat{N}}$ from Definition \ref{definition:tensor-prod} provides a natural isomorphism
  \begin{equation}
    \label{eq:unit-comp-tensor}
    \begin{tikzcd}[column sep=large]
      \widehat{1_{\Cat{M} \times \Cat{N}}}= \Psi(\mathsf{B} \circ 1_{\Cat{M} \times \Cat{N}})=\Psi (1_{\Cat{M} \Box \Cat{N}} \circ \mathsf{B}) \ar{rr}{\kappa_{\Cat{M} \Box \Cat{N}}(1_{\Cat{M} \Box \Cat{N}})} && 1_{\Cat{M} \Box \Cat{N}}.    
    \end{tikzcd}
  \end{equation}
The snake identity (\ref{eq:duality-snake-left}) for the adjoint equivalence then implies that the diagram
  \begin{equation}
    \label{eq:snake-it-unit}
    \begin{tikzcd}
      \Phi(1_{\Cat{M}\times \Cat{N}})=\mathsf{B}  \ar{r}{\Phi \varphi} \ar{dr}[below]{1}&  \Phi \Psi \Phi (1_{\Cat{M}\times \Cat{N}} )= \widehat{1_{\Cat{M} \times \Cat{N}}}\mathsf{B}  \ar{d}{\kappa \Phi} \\
      & \Phi(1_{\Cat{M}\times \Cat{N}})
    \end{tikzcd}
  \end{equation}
  commutes. 
  Hence the diagram 
  \begin{equation}
    \label{eq:needed}
    \begin{tikzcd}
      \widehat{\mathsf{F}} \mathsf{B} \ar{r}{\varphi_{\mathsf{F}}} \ar{d}[xshift=-30pt]{\widehat{\mathsf{F}} \varphi(\mathsf{B})}  \ar{dr}{\id} &\mathsf{B}\mathsf{F} \ar{d}{\varphi_{\mathsf{F}}} \\
      \widehat{\mathsf{F}} \widehat{1}\mathsf{B} \ar{r}{\widehat{\mathsf{F}} \kappa \mathsf{B}} & \widehat{\mathsf{F}}\mathsf{B}
    \end{tikzcd}
  \end{equation}
  commutes. By the unique characterization of the natural isomorphism $\phi_{\widehat{\mathsf{F}},\widehat{1}}: \widehat{\mathsf{F}}\widehat{1} \rightarrow \widehat{\mathsf{F}}$ from part \refitem{item:third}, we deduce that 
  $\phi_{\widehat{\mathsf{F}},\widehat{1}}=\widehat{\mathsf{F}}\kappa(1_{\Cat{M} \Box \Cat{N}})$. The remaining identity is proven analogously using the unique characterization of $\phi_{\widehat{1},\widehat{\mathsf{F}}}$ from part \refitem{item:third}. 
\end{proof}
Note that the notation $\widehat{\mathsf{F}}$ was already used for image of $\Psi$ on balanced functors. It should be clear from the context, whether a functor is balanced or a module functor, 
hence the notation is unambiguous. Moreover, we next unify the notions of balanced functors and module functors, and regard them 
as 1-morphisms in a certain 2-category.
This provides  further justification for   the notation $\widehat{\mathsf{F}}$.

The map $\mathsf{F} \mapsto \widehat{\mathsf{F}}$ from the previous proposition for balanced functors $\mathsf{F}$ 
is compatible with the composition of bimodule functors and balanced functors. To make this statement precise, we define the following 2-category, that 
 combines balanced functors and bimodule functors into a single 2-category. 
\begin{proposition}
  \label{proposition:2-cat-extend-bimod}
  The following data defines a 2-category $\ModbC$ for a finite tensor category $\Cat{C}$. 
  \begin{propositionlist}
  \item  The objects of $\ModbC$ are $(\Cat{C},\Cat{C})$-bimodule categories $\MC \times \CN$ and linear categories $\Cat{A}$.
  \item The  categories of 1- and 2-morphisms between the objects are given by:
    \begin{propositionlist}
    \item For bimodule categories $\MC \times \CN$ and $\MpC \times \CpN$, $\ModbC(\MC \times \CN,\MpC \times \CpN)$ 
      is the category $\BimCat(\MC \times \CN,\MpC \times \CpN)$ of  bimodule functors and bimodule natural transformation between them. 
    \item For a bimodule category $\MC \times \CN$ and a category $\Cat{A}$,  $ \ModbC(\MC \times \CN,\Cat{A})$ is the
       category $\Funbal(\MC \times \CN,\Cat{A})$ of balanced functors and balanced natural transformations between them.
    \item For two categories $\Cat{A}$, and $\Cat{B}$, $\ModbC(\Cat{A},\Cat{B})$ is the 
      category $\Fun(\Cat{A},\Cat{B})$ of  functors and natural transformations between them 
    \item There is just the zero morphism  from a category $\Cat{A}$ to a bimodule category  $\MC \times \CN$.
    \end{propositionlist}
  \item The compositions are induced by the horizontal composition  of functors and the vertical composition of natural transformations.
  \end{propositionlist}
\end{proposition}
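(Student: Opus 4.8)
\noindent\textit{Sketch of strategy.}
The plan is to verify the axioms of a strict $2$-category directly, organising the verification by the four kinds of objects: $(\Cat{C},\Cat{C})$-bimodule categories of the shape $\MC \times \CN$ on the one hand, and plain linear abelian categories $\Cat{A}$ on the other. That each prescribed hom-category is indeed a category has already been established in the text. Because every composition in $\ModbC$ is induced by the ordinary horizontal composition of functors and the ordinary vertical composition of natural transformations, associativity and the unit laws for $1$-morphisms will hold strictly; the content of the proof therefore lies in two places: (a) showing that the prescribed typing of the hom-categories is closed under composition, i.e.\ that a composite of admissible $1$-morphisms is again of the declared type, and (b) showing that horizontal and vertical composition of $2$-morphisms preserve the bimodule- resp.\ balanced-compatibility conditions. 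The key external input for (a) is Lemma \ref{lemma:balancing-and-composition}.

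First I would settle the closure of $1$-morphism composition. A composite of two bimodule functors is again a bimodule functor: its left and right module constraints are the evident whiskered pastes of those of the factors, and the hexagon (\ref{eq:bimodule-functor-charact}) for the composite follows by pasting the two hexagons of the factors together with one naturality square. A composite of a bimodule functor $\mathsf{F}\colon \MC \times \CN \to \MpC \times \CpN$ with a balanced functor $\mathsf{G}\colon \MpC \times \CpN \to \Cat{A}$ is balanced: this is precisely Lemma \ref{lemma:balancing-and-composition}\refitem{item:bal-bimod-ist-bal}. A composite of a balanced functor $\MC \times \CN \to \Cat{A}$ with a plain functor $\Cat{A} \to \Cat{B}$ is balanced by Lemma \ref{lemma:balancing-and-composition}\refitem{item:beliebig-bal-ist-bal}, and composites of plain functors are plain functors. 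All remaining composites factor through a hom-category of the form $\ModbC(\Cat{A},\MC \times \CN)$, which contains only the zero morphism; such composites are thereby forced (to the appropriate zero $1$-morphism, i.e.\ the zero functor when the target is a plain category), so there is nothing to check. The unit $1$-morphisms are $\id_{\MC \times \CN}$ equipped with identity module constraints, $\id_{\Cat{A}}$, and the zero morphisms; one checks immediately that they are strict two-sided units, since whiskering a constraint by an identity functor returns that constraint.

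Next I would dispatch the $2$-categorical data. For threefold composites of $1$-morphisms one must see that the module/balancing constraint assigned to the composite does not depend on the bracketing; but in each admissible case that constraint is given by the unique available expression obtained by iterated whiskering of the constituent constraints, and iterated whiskering of natural transformations is strictly associative, so associativity is automatic, and the unit laws are likewise immediate. Vertical composition of bimodule (resp.\ balanced) natural transformations is again bimodule (resp.\ balanced), as the paper already records; horizontal composition --- both whiskering by a $1$-morphism on either side and the Godement interchange --- preserves these conditions by inspection of the defining squares (\ref{eq:module-nat-transf}) and (\ref{eq:balanced-nat}), since those squares are built only from the functors and their constraints and are transported verbatim along composition. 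The middle-four-interchange law then holds because it already holds for natural transformations between ordinary functors, no structure constraint being involved.

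The main obstacle is organisational rather than conceptual: the statement bundles four kinds of objects, hence a large number of hom-categories, into one $2$-category, so a fully honest proof must march through the corresponding case distinctions for composition, units and associativity, and must check in each balanced or bimodule case that the assigned constraint is functorial in the $1$-morphisms. The closure statements underlying all of this reduce to Lemma \ref{lemma:balancing-and-composition} together with the standard fact that bimodule functors and bimodule natural transformations are closed under composition; granting these, every remaining step is a mechanical pasting argument whose constituent squares commute on the nose.
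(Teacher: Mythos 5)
Your proposal is correct and follows the same route as the paper: the paper's proof likewise invokes Lemma \ref{lemma:balancing-and-composition} to see that all compositions of 1- and 2-morphisms are well defined, and then notes that strict associativity and strict unitality follow by direct computation. Your write-up simply makes explicit the case distinctions and pasting arguments that the paper leaves to the reader.
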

\begin{proof}
  It follows from Lemma \ref{lemma:balancing-and-composition} that the various compositions of 1- and 2-morphisms are well defined. It follows by a direct computation that all compositions are strictly associative and 
  strictly compatible with the units.
\end{proof}
\begin{proposition}
  \label{proposition:Tensor-extends-2-fun}
  The tensor product of module categories defines a 2-functor
  \begin{equation}
    \label{eq:modbal-functor}
    \widehat{(-)}: \ModbC\rightarrow \Catlin.
  \end{equation}
\end{proposition}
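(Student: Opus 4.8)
The plan is to assemble $\widehat{(-)}$ from the constructions of Definition~\ref{definition:tensor-prod} and Proposition~\ref{proposition:tensor-functors}, extend the compositor and unitor cells from the sub-2-category on bimodule categories to all of $\ModbC$, and then check the two coherence axioms of a 2-functor. On objects, $\widehat{(-)}$ sends a bimodule category $\MC\times\CN$ to $\MC\Box\CN$ and a linear category $\Cat{A}$ to itself. On 1- and 2-morphisms it is defined according to the types of source and target: a bimodule functor $\mathsf{F}\colon\MC\times\CN\to\MpC\times\CpN$ and a bimodule natural transformation go to $\widehat{\mathsf{F}}=\Psi_{\Cat{M},\Cat{N}}(\mathsf{B}_{\Cat{M}',\Cat{N}'}\mathsf{F})$ and $\widehat{\rho}$ as in Proposition~\ref{proposition:tensor-functors}\refitem{item:first},\refitem{item:sec}; a balanced functor $\mathsf{F}\colon\MC\times\CN\to\Cat{A}$ and a balanced natural transformation go to $\Psi_{\Cat{M},\Cat{N}}(\mathsf{F})$ and $\Psi_{\Cat{M},\Cat{N}}(\rho)$ (the latter characterised by Lemma~\ref{lemma:action-Psi-bal}); an ordinary functor and natural transformation between linear categories go to themselves; and the zero morphism from $\Cat{A}$ to $\MC\times\CN$ goes to the zero functor. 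The first two cases are compatible, since $\widehat{\mathsf{F}}$ of a bimodule functor is exactly $\widehat{(-)}$ applied to the balanced functor $\mathsf{B}_{\Cat{M}',\Cat{N}'}\mathsf{F}$, which is what makes the notation $\widehat{(-)}$ unambiguous. Because $\Fun$ and $\Funbal$ in Definition~\ref{definition:tensor-prod} refer to right exact functors and all functors here are right exact, these images lie in $\Catlin$. That $\widehat{(-)}$ restricts to a functor on each hom-category $\ModbC(X,Y)\to\Fun(\widehat{X},\widehat{Y})$ follows from $\Psi_{\Cat{M},\Cat{N}}$ being a functor (and Proposition~\ref{proposition:tensor-functors}\refitem{item:sec} in the purely bimodule case), and is trivial in the remaining cases.

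Next I would produce the compositor isomorphisms $\phi_{\mathsf{G},\mathsf{F}}\colon\widehat{\mathsf{G}}\,\widehat{\mathsf{F}}\to\widehat{\mathsf{G}\mathsf{F}}$ for every composable pair in $\ModbC$; Lemma~\ref{lemma:balancing-and-composition} ensures $\mathsf{G}\mathsf{F}$ again has the expected type. For two bimodule functors this is Proposition~\ref{proposition:tensor-functors}\refitem{item:third}; for two ordinary functors it is the identity; and in the two \emph{mixed} cases — $\mathsf{F}$ a bimodule functor and $\mathsf{G}$ a balanced functor into $\Cat{A}$, or $\mathsf{F}$ a balanced functor into $\Cat{A}$ and $\mathsf{G}$ an ordinary functor $\Cat{A}\to\Cat{B}$ — one repeats the argument of Proposition~\ref{proposition:tensor-functors}\refitem{item:third} verbatim: composing the relevant instances of $\varphi_{\mathsf{F}}^{\pm1}$, $\varphi_{\mathsf{G}}^{\pm1}$ and $\varphi_{\mathsf{G}\mathsf{F}}$ yields a balanced natural isomorphism $\widehat{\mathsf{G}}\,\widehat{\mathsf{F}}\circ\mathsf{B}_{\Cat{M},\Cat{N}}\to\widehat{\mathsf{G}\mathsf{F}}\circ\mathsf{B}_{\Cat{M},\Cat{N}}$, and since $\Phi_{\Cat{M},\Cat{N}}$ is fully faithful this is $\Phi_{\Cat{M},\Cat{N}}(\phi_{\mathsf{G},\mathsf{F}})$ for a unique $\phi_{\mathsf{G},\mathsf{F}}$; explicitly $\phi_{\mathsf{G},\mathsf{F}}=\kappa^{-1}(\widehat{\mathsf{G}\mathsf{F}})\cdot\Psi\big(\varphi_{\mathsf{G}\mathsf{F}}\cdot\varphi_{\mathsf{G}}^{-1}\mathsf{F}\cdot\widehat{\mathsf{G}}\varphi_{\mathsf{F}}^{-1}\big)\cdot\kappa(\widehat{\mathsf{G}}\,\widehat{\mathsf{F}})$, with the evident one-sided simplification when $\mathsf{G}$ is an ordinary functor. (Composites involving a zero morphism are zero functors, for which the compositor is the unique isomorphism.) The unitor $\iota_{\MC\times\CN}\colon 1_{\Cat{M}\Box\Cat{N}}\to\widehat{1_{\Cat{M}\times\Cat{N}}}$ is $\kappa_{\Cat{M},\Cat{N}}(1_{\Cat{M}\Box\Cat{N}})^{-1}$ from Proposition~\ref{proposition:tensor-functors}\refitem{item:fith}, and $\iota_{\Cat{A}}=\id$. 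Naturality of $\phi$ in both arguments follows in each case from naturality of $\varphi$ and $\kappa$ together with Proposition~\ref{proposition:tensor-functors}\refitem{item:sec}.

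It then remains to verify the two 2-functor axioms: the associativity hexagon for $\phi$ on three composable 1-morphisms, and the two unit triangles relating $\phi$ with $\iota$. On the full sub-2-category of bimodule categories these are exactly Proposition~\ref{proposition:tensor-functors}\refitem{item:forth} and~\refitem{item:fith}; when all 1-morphisms involved are ordinary they are trivial. In every mixed configuration the proof is the one already given for \refitem{item:forth} and \refitem{item:fith}: precompose the diagram with the universal balanced functor $\mathsf{B}_{\Cat{M},\Cat{N}}$ of the source (legitimate since $\Phi_{\Cat{M},\Cat{N}}$ is fully faithful), so that each edge labelled by a $\phi$ becomes a composite of $\varphi$'s; the resulting two nested diagrams — the analogues of~(\ref{eq:53}) and~(\ref{eq:54}), with the leading bimodule functor or the trailing ordinary functor carried along on the appropriate side — commute by the interchange law for 2-categories, by the already established case~\refitem{item:third}, and, for the unit triangles, by the snake identity~(\ref{eq:duality-snake-left}) for the adjoint equivalence $(\varphi_{\Cat{M},\Cat{N}},\kappa_{\Cat{M},\Cat{N}})$; since all arrows are isomorphisms and the outer boundaries coincide, the target diagram commutes after applying $\Phi_{\Cat{M},\Cat{N}}$, hence before.

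The mathematically substantive input — 2-functoriality of the tensor product on bimodule categories, the type-arithmetic of Lemma~\ref{lemma:balancing-and-composition}, and the fact that precomposition with $\mathsf{B}_{\Cat{M},\Cat{N}}$ is an equivalence with fixed adjoint-equivalence data — is already in hand, so I expect the main obstacle to be organisational: tracking the four kinds of 1-morphisms in $\ModbC$ and setting up the compositor, unitor and coherence diagrams correctly in each mixed configuration, with no genuinely new commutativity to establish beyond what Proposition~\ref{proposition:tensor-functors} supplies.
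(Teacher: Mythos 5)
Your proposal is correct and follows essentially the same route as the paper: the same assignments on objects, 1- and 2-morphisms, the compositors from Proposition \ref{proposition:tensor-functors}\refitem{item:third} in the pure bimodule case and from the unique-factorisation argument (Lemma \ref{lemma:action-Psi-bal}, i.e.\ the diagrams (\ref{eq:65}), (\ref{eq:66})) in the mixed cases, the unitor $\kappa_{\Cat{M},\Cat{N}}(1_{\Cat{M}\Box\Cat{N}})$ (yours is just the inverse direction, which is immaterial), and the coherence axioms verified by precomposing with $\mathsf{B}$ and rerunning the arguments of \refitem{item:forth} and \refitem{item:fith}. No gaps beyond the bookkeeping you already flag.
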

\begin{proof}
  On objects, $\widehat{(-)}$ is defined by $\widehat{\MC \times \CN}=\MC \Box \CN$ and $\widehat{\Cat{A}}=\Cat{A}$. On 1-morphisms, $\widehat{(-)}$ is defined as follows. 
  For a  bimodule functor $\mathsf{F}: \MC \times \CN \rightarrow \MpC \times \CpN$, and for a balanced functor $\mathsf{G}:\MC \times \CN \rightarrow \Cat{A} $, the functors 
  $\widehat{\mathsf{F}}: \MC \Box \CN \rightarrow \MpC \Box \CpN$ and $\widehat{\mathsf{G}}: \MC \Box \CN \rightarrow \Cat{A}$ are already defined in  Definition \ref{definition:tensor-prod} and 
  in Proposition \ref{proposition:tensor-functors}, \refitem{item:first}, respectively. For a functor $\mathsf{H}: \Cat{A} \rightarrow \Cat{B}$, we define $\widehat{\mathsf{H}}=\mathsf{H}$.
  On 2-morphisms, $\widehat{(-)}$ is already defined for bimodule natural transformations and balanced natural transformations. For a natural transformation $\eta: \mathsf{H} \rightarrow \mathsf{H}'$
  between functors $\mathsf{H},\mathsf{H}': \Cat{A} \rightarrow \Cat{B}$, we define $\widehat{ \eta}=\eta$.

  The coherence structures of $\widehat{(-)}$ are the following.
  \begin{enumerate}
  \item For all bimodule categories $\MC \times \CN$, the coherence isomorphism $\widehat{1_{\Cat{M} \times \Cat{N}}} \rightarrow 1_{\Cat{M} \Box \Cat{N}}$ is defined by $\kappa_{\Cat{M},\Cat{N}}(1_{\Cat{M}\Box \Cat{N}})$, 
    as in Proposition \ref{proposition:tensor-functors}, \refitem{item:fith}.
  \item For composable bimodule functors $\mathsf{F},\mathsf{G}$, there is a  natural isomorphism $\phi_{\mathsf{G},\mathsf{F}}: \widehat{ \mathsf{G}} \widehat{\mathsf{F}} \rightarrow \widehat{\mathsf{G}\mathsf{F}}$ that is defined by  Proposition \ref{proposition:tensor-functors}, \refitem{item:third}.
  \item  For composable functors $\mathsf{H}:\Cat{A} \rightarrow \Cat{B}$ and $\mathsf{K}:\Cat{B} \rightarrow \Cat{C}$, we define $\phi_{\mathsf{K},\mathsf{H}}=\id_{\mathsf{K}\mathsf{H}}$.
  \item For a balanced functor $\mathsf{F}: \MC \times \CN \rightarrow \Cat{A}$ and a bimodule functor $\mathsf{G}: \MpC \times \CpN \rightarrow \MC \times \CN$, it follows from Lemma \ref{lemma:action-Psi-bal}, that 
    there exists a unique balanced natural isomorphism $\phi_{\mathsf{F},\mathsf{G}}: \widehat{\mathsf{F}} \widehat{\mathsf{G}} \rightarrow \widehat{\mathsf{F}\mathsf{G}}$, such that the following diagram commutes
    \begin{equation}
      \label{eq:65}
      \begin{tikzcd}
        \mathsf{F}\mathsf{G} \ar{r}{\varphi(\mathsf{F})} \ar{d}{\varphi(\mathsf{F}\mathsf{G})}  &\widehat{\mathsf{F}}\mathsf{B}\mathsf{G} \ar{d}{\widehat{\mathsf{F}} \varphi_{\mathsf{G}}} \\
        \widehat{\mathsf{F}\mathsf{G}}\mathsf{B} & \widehat{\mathsf{F}} \widehat{\mathsf{G}} \mathsf{B}. \ar{l}{\phi_{\mathsf{F},\mathsf{G}}\mathsf{B}}
      \end{tikzcd}
    \end{equation}
  \item For a balanced functor $\mathsf{F}: \MC \times \CN \rightarrow \Cat{A}$ and a functor $\mathsf{H}: \Cat{A} \rightarrow \Cat{B}$ it follows from Lemma \ref{lemma:action-Psi-bal}, that 
    there exists a unique balanced natural isomorphism $\phi_{\mathsf{H},\mathsf{F}}: {\mathsf{H}} \widehat{\mathsf{F}} \rightarrow \widehat{\mathsf{H}\mathsf{F}}$, such that the following diagram commutes
    \begin{equation}
      \label{eq:66}
      \begin{tikzcd}
        \mathsf{H}\mathsf{F} \ar{r}{\id} \ar{d}{\varphi(\mathsf{H}\mathsf{F})}  &\mathsf{H}\mathsf{F} \ar{d}{\mathsf{H} \varphi(\mathsf{F})} \\
        \widehat{\mathsf{H}\mathsf{F}} \mathsf{B} & \mathsf{H} \widehat{\mathsf{F}} \mathsf{B}. \ar{l}{\phi_{\mathsf{F},\mathsf{G}}\mathsf{B}}
      \end{tikzcd}
    \end{equation}
  \end{enumerate}
  The proof that
  for three composable 1-morphisms, the diagram (\ref{2-fun-ax-3-morphism}) commutes is analogous to the proof of Proposition \ref{proposition:tensor-functors}  \refitem{item:forth}, while the compatibility 
  of $\widehat{(-)}$ with the unit from axiom (\ref{eq:2-fun-ax-unit}) follows analogously to the proof of  Proposition \ref{proposition:tensor-functors} \refitem{item:fith}.
\end{proof}

\subsection{The tensor product of bimodule categories}
\label{sec:tens-prod-bimod}

Next we  show that the tensor product of module categories naturally extends to a tensor product of bimodule categories. 
Furthermore we consider the corresponding extension of the tensor product as a 2-functor. 

\begin{proposition}
  \label{proposition:tensor-again-module}
  Let  $\DMC$ and $\CNE$ be bimodule categories. The tensor product $\DMC \Box \CNE$ has a canonical structure of a $(\Cat{D},\Cat{E})$-bimodule category, such that 
  \begin{equation}
    \label{eq:28}
    \mathsf{B}: \DMC \times \CNE \rightarrow \DMC \Box \CNE
  \end{equation}
  is a balanced bimodule functor and for all bimodule categories $\DAE$  the adjoint equivalence from Definition \ref{definition:tensor-prod} restricts  to an adjoint equivalence 
  \begin{equation}
    \label{eq:58}
    \begin{split}
      \Phi &: \Funl{\Cat{D},\Cat{E}}{\DMC \Box \CNE, \DAE} \rightarrow \Funball{\Cat{D},\Cat{E}}{\DMC \times \CNE, \DAE}, \\
      \Psi &:  \Funball{\Cat{D},\Cat{E}}{\DMC \times \CNE, \DAE}\rightarrow \Funl{\Cat{D},\Cat{E}}{\DMC \Box \CNE, \DAE},
    \end{split}
  \end{equation}
where  $ \Funball{\Cat{D},\Cat{E}}{\DMC \times \CNE, \DAE}$ is the category of balanced bimodule functors from  Definition \ref{definition:balanced-module}.
\end{proposition}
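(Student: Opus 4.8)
The plan is to define the $(\Cat{D},\Cat{E})$-bimodule structure on $\DMC \Box \CNE$ by transporting, through the $2$-functor $\widehat{(-)}\colon\ModbC\to\Catlin$ of Proposition \ref{proposition:Tensor-extends-2-fun}, the structure already present on $\DMC\times\CNE$. Forgetting the $\Cat{D}$- and $\Cat{E}$-actions, $\DMC\times\CNE$ is an object of $\ModbC$ with middle category $\Cat{C}$, and its left $\Cat{D}$-action is recorded by the endo-$1$-morphisms $L_{d}:=(d\act^{\Cat{M}}-)\times\id_{\Cat{N}}$, which is $\Cat{C}$-balanced via $\gamma^{\Cat{M}}$, and its right $\Cat{E}$-action by $R_{e}:=\id_{\Cat{M}}\times(-\ract^{\Cat{N}}e)$, which is $\Cat{C}$-balanced via $\gamma^{\Cat{N}}$, for $d\in\Cat{D}$, $e\in\Cat{E}$; compare Lemma \ref{lemma:characterize-bal-module}. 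First I would set $d\act X:=\widehat{L_{d}}(X)$ and $X\ract e:=\widehat{R_{e}}(X)$ for $X\in\DMC\Box\CNE$. Functoriality in $d$ and $e$ comes from applying $\widehat{(-)}$ to the balanced natural transformations $L_{d}\Rightarrow L_{d'}$, $R_{e}\Rightarrow R_{e'}$ induced by morphisms of $\Cat{D}$, $\Cat{E}$; bilinearity follows from additivity of $\widehat{(-)}$; and exactness in the $\DMC\Box\CNE$-variable follows because $\widehat{L_{d}}$ has both a left and a right adjoint, namely the functors attached to the left and right duals of $d$ (the action by an object and by its dual being adjoint, and $2$-functors preserving adjunctions), and symmetrically for $\widehat{R_{e}}$.

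The module constraints are then forced. From $\mu^{\Cat{M}}$ one gets a balanced natural isomorphism $L_{x}L_{y}\cong L_{x\otimes y}$ of $1$-morphisms of $\ModbC$; composing $\widehat{(-)}$ of it with the coherence isomorphism $\phi_{L_{x},L_{y}}\colon\widehat{L_{x}}\,\widehat{L_{y}}\to\widehat{L_{x}L_{y}}$ yields the left constraint $\mu^{\Cat{M}\Box\Cat{N}}$, and dually the right constraint comes from $\mu^{\Cat{N}}$; the unit constraints come from $\lambda^{\Cat{M}}$, $\rho^{\Cat{N}}$ together with $\kappa(1)$, and the bimodule constraint $\gamma^{\Cat{M}\Box\Cat{N}}$ from the strict equality $L_{d}R_{e}=R_{e}L_{d}$ together with $\phi_{L_{d},R_{e}}$ and $\phi_{R_{e},L_{d}}$. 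The pentagon, triangle and compatibility axioms for $\DMC\Box\CNE$ then follow by chasing the $2$-functor axioms for the $\phi_{\mathsf{G},\mathsf{F}}$ and the unit compatibilities of $\kappa$ (Propositions \ref{proposition:tensor-functors} and \ref{proposition:Tensor-extends-2-fun}) against the corresponding axioms for $\Cat{M}$ and $\Cat{N}$. Next, $\mathsf{B}$ is a balanced bimodule functor: the $\Cat{C}$-balancing is part of Definition \ref{definition:tensor-prod}; the left and right module constraints are $\phi^{\mathsf{B},l}:=\varphi_{L_{d}}\colon\mathsf{B}L_{d}\to\widehat{L_{d}}\mathsf{B}$ and $\phi^{\mathsf{B},r}:=\varphi_{R_{e}}$ from Proposition \ref{proposition:tensor-functors}\refitem{item:first}; and the module-functor pentagons, the bimodule compatibility (\ref{eq:bimodule-functor-charact}) and the balanced-module hexagon (\ref{eq:balanced-module-diag}) reduce to the characterizing equations of $\varphi$ and of the coherence isomorphisms $\phi_{\mathsf{G},\mathsf{F}}$, using crucially that each $\varphi_{L_{d}}$, $\varphi_{R_{e}}$ is itself a \emph{balanced} natural isomorphism.

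It then remains to restrict the adjoint equivalence $(\Phi,\Psi,\varphi,\kappa)$ of Definition \ref{definition:tensor-prod}. That $\Phi=(-)\circ\mathsf{B}$ sends a $(\Cat{D},\Cat{E})$-bimodule functor $\DMC\Box\CNE\to\DAE$ to a balanced bimodule functor is immediate, since $\mathsf{B}$ is a balanced bimodule functor and post-composition with a bimodule functor preserves this (Lemma \ref{lemma:balancing-and-composition} and the composition rules of $\ModbC$). For $\Psi$: given a balanced bimodule functor $\mathsf{F}\colon\DMC\times\CNE\to\DAE$, its left module constraint $\phi^{\mathsf{F},l}\colon\mathsf{F}L_{d}\Rightarrow L^{\Cat{A}}_{d}\mathsf{F}$ is a \emph{balanced} natural isomorphism by Lemma \ref{lemma:characterize-bal-module}, hence a $2$-morphism of $\ModbC$; applying $\widehat{(-)}$ and composing with $\phi_{\mathsf{F},L_{d}}$ and $\phi_{L^{\Cat{A}}_{d},\mathsf{F}}$ produces an isomorphism $\widehat{\mathsf{F}}\,\widehat{L_{d}}\cong\widehat{L^{\Cat{A}}_{d}}\,\widehat{\mathsf{F}}=L^{\Cat{A}}_{d}\,\widehat{\mathsf{F}}$, i.e.\ $\widehat{\mathsf{F}}(d\act-)\cong d\act\widehat{\mathsf{F}}(-)$, the left $\Cat{D}$-module constraint of $\widehat{\mathsf{F}}=\Psi(\mathsf{F})$; likewise on the right. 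The module-functor coherence diagrams, the bimodule compatibility and the balanced-module hexagon for $\widehat{\mathsf{F}}$ again follow from $2$-functoriality of $\widehat{(-)}$ and the corresponding diagrams for $\mathsf{F}$. Finally, $\varphi$ and $\kappa$, evaluated on balanced bimodule (resp.\ bimodule) functors, are balanced bimodule (resp.\ bimodule) natural transformations — this is the compatibility of their components with the module constraints just constructed, which holds by the very definition of those constraints — and the two snake identities restrict automatically, being equalities of $2$-morphisms already valid in $\Catlin$.

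I expect the main obstacle to be the last step, namely that $\Psi$ lands in bimodule functors: conceptually this is exactly where Lemma \ref{lemma:characterize-bal-module} is used in an essential way — the module constraints of $\mathsf{F}$ must be known to be \emph{balanced} before they can be pushed through $\widehat{(-)}$ — and the verifications that all the transported structures satisfy the many module- and balanced-module axioms are diagram chases through the $2$-functor coherence of $\widehat{(-)}$: individually routine, but numerous, so that organizing them via $\ModbC$ rather than by hand is what keeps the proof tractable.
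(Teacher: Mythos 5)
Your construction follows the paper's proof essentially verbatim: the $(\Cat{D},\Cat{E})$-actions on $\DMC\Box\CNE$ are obtained by applying the 2-functor $\widehat{(-)}$ to the $\Cat{C}$-balanced bimodule endofunctors $L_{d}$, $R_{e}$ and their coherence data; $\mathsf{B}$ acquires its balanced bimodule structure from the balanced isomorphisms $\varphi_{L_{d}}$, $\varphi_{R_{e}}$ together with $L_{d}R_{e}=R_{e}L_{d}$; and $\Psi$ is shown to land in bimodule functors by pushing the balanced module constraints $\phi^{\mathsf{G}}_{d}$ of Lemma \ref{lemma:characterize-bal-module} through $\widehat{(-)}$. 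Up to and including the statement that $\varphi(\mathsf{G})$ is a balanced bimodule isomorphism, your argument matches the paper's.

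The one place you are too quick is the final claim that $\kappa$ restricts ``by the very definition of those constraints.'' For a bimodule functor $\mathsf{G}\colon\DMC\Box\CNE\to\DAE$, the bimodule constraints of $\Psi\Phi(\mathsf{G})=\widehat{\mathsf{G}\mathsf{B}}$ are not defined through $\kappa$ at all: they are built from $\varphi$ applied to the balanced module constraints of $\mathsf{G}\mathsf{B}$ (which combine $\phi^{\mathsf{G}}_{d}$ with $\varphi_{L_{d}}$). Hence the compatibility of $\kappa(\mathsf{G})$ with these constraints is a genuine verification, not a tautology: in the paper it is the commutativity of the lower rectangle in diagram (\ref{eq:61}), established by combining the definition of $\widehat{\phi^{\mathsf{G}}_{d}}$ with the snake identities relating $\kappa$ and $\varphi$ and the interchange law, and then using that $\Phi$ is fully faithful to descend from an identity after precomposition with $\mathsf{B}$ to the desired identity for $\kappa(\mathsf{G})$ itself. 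Nothing in your outline would fail --- the adjunction identities you invoke at the end are exactly the needed ingredient --- but as written this step is asserted rather than proved, and it is the only part of the proposition that does not follow formally from the 2-functoriality of $\widehat{(-)}$.
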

\begin{proof}
  To define the left $\Cat{D}$-module structure on $\Cat{M}\Box \Cat{N}$, note that for all  $d \in \Cat{D}$, the functors $L_{d}: \Cat{M} \times \Cat{N} \rightarrow \Cat{M} \times \Cat{N}$ 
  provided by the action of $d \in \Cat{D}$ are $(\Cat{C},\Cat{C})$-bimodule  functors and the module constraint for the left action of $\Cat{D}$ consists of $(\Cat{C}, \Cat{C})$-bimodule  natural isomorphisms
  $\mu_{d,d'}: L_{d} \circ L_{d'} \rightarrow L_{d \otimes d'}$ for all $d,d' \in \Cat{D}$. Hence we can apply the 2-functor $\widehat{(-)}$ from Proposition \ref{proposition:Tensor-extends-2-fun} and 
  we obtain for all $d \in \Cat{D}$ functors $\widehat{L_{d}}: \Cat{M} \Box \Cat{N} \rightarrow \Cat{M} \Box\Cat{N}$ and natural isomorphisms $\widehat{\mu_{d,d'}}:\widehat{ L_{d}} \circ\widehat{ L_{d'}} \rightarrow \widehat{L_{d \otimes d'}}$
  for all $d,d' \in \Cat{D}$. The module constraint (\ref{eq:diagramm}) for these natural isomorphisms is obtained by applying the 2-functor $\widehat{(-)}$ to the corresponding module constraint for $\CM$.

  The right $\Cat{E}$-module structure on $\CMD \Box \DNE$ is defined analogously by considering the  $(\Cat{C},\Cat{C})$-bimodule functors $R_{e}: \Cat{M} \times \Cat{N} \rightarrow \Cat{M} \times \Cat{N}$ for all $e \in \Cat{E}$.

It follows that  $\Cat{M}  \Box \Cat{N}$ is a $(\Cat{D}, \Cat{E})$-bimodule category since the bimodule constraints follow directly by  applying the 2-functor $\widehat{(-)}$ to the corresponding 
  diagrams for $\DMC \times \CNE$. 

Next we show that $\mathsf{B}: \DMC \times \CNE \rightarrow \DMC \Box \CNE$ is a balanced bimodule functor. By definition of the left $\Cat{D}$-module structure on $\DM \Box \Cat{N}$, we obtain balanced natural 
isomorphisms $\varphi_{d}: \mathsf{B} L_{d} \rightarrow \widehat{L_{d}} \mathsf{B}$ for all $d \in \Cat{D}$, that are compatible with the compositions $\widehat{L_{d}} \circ \widehat{L_{d'}}$ according to
 Proposition \ref{proposition:tensor-functors}\refitem{item:third}. This shows that $\mathsf{B}$ is a left $\Cat{D}$-module functor, and by the analogous argument, a right $\Cat{E}$-module functor. The compatibility between these two module functor structures follows from Proposition \ref{proposition:tensor-functors}, since $L_{d}R_{e}= R_{e}L_{d}$ as functors $\Cat{M} \times \Cat{N} \rightarrow \Cat{M} \times \Cat{N}$ for objects $d \in \Cat{D}$ and $e \in \Cat{E}$. 
Hence $\mathsf{B}$ is a bimodule functor, and since the module constraints are balanced natural isomorphisms, it is also a balanced bimodule functor according to Lemma \ref{lemma:characterize-bal-module}.

  In the next step we show that the functor $\Psi$ from Definition \ref{definition:tensor-prod} restricts to a functor
  \begin{equation*}
    \Psi:  \Funball{\Cat{D},\Cat{E}}{\DMC \times \CNE, \DAE}\rightarrow \Funl{\Cat{D},\Cat{E}}{\DMC \Box \CNE, \DAE}.
  \end{equation*}
  Let $\mathsf{G} \in \Funball{\Cat{D},\Cat{E}}{\DMC \times \CNE, \DAE}$ be a balanced bimodule functor. The left $\Cat{D}$-module functor structure  on $\mathsf{G}$ is given by 
  $\Cat{C}$-balanced natural isomorphisms $\phi^{\mathsf{G}}_{d}:\mathsf{G} \circ L_{d}^{\Cat{M} \times \Cat{N}} \rightarrow L^{\Cat{A}}_{d} \circ \mathsf{G}$ for all $d \in \Cat{D}$ according to Lemma \ref{lemma:characterize-bal-module}. 
  Hence we can apply the 2-functor $\widehat{(-)}$ and obtain natural isomorphisms  $\widehat{\phi^{\mathsf{G}}_{d}}: \widehat{\mathsf{G}} \widehat{L^{\Cat{M} \times \Cat{N}}_{d}} \rightarrow L_{d}^{\Cat{A}} \widehat{\mathsf{G}}$.
  Furthermore, applying  $\widehat{(-)}$  to the module constraint diagram for $\mathsf{G}$ yields the module constraint diagram for $\widehat{\mathsf{G}}$. Hence we deduce that $\widehat{\mathsf{G}}$ is a left $\Cat{D}$-module functor. 
  The proof that   $\widehat{\mathsf{G}}$ is a right $\Cat{E}$-module functor is analogous.  The compatibility between left and right module actions of $\widehat{\mathsf{G}}$ follows by applying the functor $\widehat{(-)}$ to 
  the corresponding compatibility diagram of $\mathsf{G}$. Hence $\widehat{\mathsf{G}}$ is a bimodule functor. 

  If $\eta: \mathsf{G} \rightarrow \mathsf{F}$ is a balanced bimodule natural transformation between balanced bimodule functors $\mathsf{F}$ and $\mathsf{G}$, it follows again by applying the 2-functor $\widehat{(-)}$ that $\widehat{\eta}: 
  \widehat{\mathsf{G}} \rightarrow \widehat{\mathsf{F}}$ is a bimodule natural transformation. 

  It remains to show that for all balanced bimodule functors  $\mathsf{F} \in \Funball{\Cat{D},\Cat{E}}{\DMC \times \CNE, \DAE}$, the natural isomorphism $\varphi(\mathsf{G}): \mathsf{G} \rightarrow \widehat{\mathsf{G}}\mathsf{B}$ is a balanced bimodule natural 
  isomorphism and for all bimodule functors $\mathsf{G}:  \Cat{M} \Box \Cat{N} \rightarrow \Cat{A}$, the natural isomorphism $\kappa(\mathsf{G}): \mathsf{G} \rightarrow \widehat{\mathsf{G}\mathsf{B}}$ is a bimodule natural isomorphism. 
  The first statement follows directly from the definition of the bimodule structure of $\widehat{\mathsf{G}}$. 
  For the second statement, we show that the lower rectangle in the diagram
  \begin{equation}
    \label{eq:61}
    \begin{tikzcd}[column sep=large]
      \mathsf{G}\widehat{L_{d}}\mathsf{B}  \ar{r}{\phi_{d}^{\mathsf{G}}\mathsf{B}}\ar{d}{1 \varphi_{L_{d}}^{-1}} \ar[bend right=50]{dddd}{1} & L_{d}\mathsf{G}\mathsf{B} \ar{ddd}{1 \varphi{\mathsf{G}\mathsf{B}}} \ar[bend left=50]{dddd}{1}\\
      \mathsf{G}\mathsf{B}L_{d} \ar{d}{\varphi(\mathsf{G}\mathsf{B})1} & \\
      \widehat{\mathsf{G}\mathsf{B}}\mathsf{B} L_{d} \ar{d}{1\varphi_{L_{d}}}& \\
      \widehat{\mathsf{G}\mathsf{B}}\widehat{L_{d}}\mathsf{B} \ar{r}{\widehat{\phi_{d}^{\mathsf{G}}}1}  \ar{d}{\kappa^{-1}(\mathsf{G})1} & L_{d} \widehat{\mathsf{G}\mathsf{B}} \mathsf{B} \ar{d}[left]{1\kappa^{-1}(\mathsf{G})1}\\
      \mathsf{G} \widehat{L_{d}}\mathsf{B} \ar{r}{\phi^{\mathsf{G}}_{d}1} & L_{d}\mathsf{G}\mathsf{B} 
    \end{tikzcd}
  \end{equation}
  commutes. Because  $\Phi$ is fully faithful, $\kappa(\mathsf{G})$ is then a bimodule natural isomorphism. 
  The big diagram in the middle commutes by definition of $\widehat{\phi^{\mathsf{G}}_{d}}$. The diagram on the right commutes since $\kappa$ and $\varphi$ satisfy the snake identity. The diagram on the left 
  commutes also by the snake identity for $\kappa$ and $\varphi$ after applying once the interchange law for functors and natural transformations. 
\end{proof}

We further generalize the results of the previous section. First we unify balanced bimodule functors and bimodule functors in one 2-category. 
The next statement follows directly from the obvious version of  Lemma \ref{lemma:balancing-and-composition} 
for balanced bimodule functors.

\begin{proposition}
  \label{proposition:2-cat-extend-bimod-bimod}
  For every finite tensor category $\Cat{C}$ and every pair of finite tensor categories $(\Cat{D},\Cat{E})$, the following data define a 2-category $\BimodbC(\Cat{D},\Cat{E})$. 
  \begin{propositionlist}
  \item  The objects of $\BimodbC(\Cat{D},\Cat{E})$ are $(\Cat{D},\Cat{E})$-bimodule
 categories $\DMC \times \CNE$ and  $(\Cat{D},\Cat{E})$-bimodule categories  $\DAE$.
  \item The following defines the categories of 1- and 2-morphisms between the objects:
    \begin{propositionlist}
    \item The category $\BimodbC(\DMC \times \CNE, \DMpC \times \CNpE)$  
for two  $(\Cat{D},\Cat{E})$-bimodule categories  $\DMC \times \CNE$ and  $\DMpC \times \CNpE$
   is   the category $\Funlmul{\Cat{D},\Cat{C}}{\Cat{C},\Cat{E}}{\DMC \times \CNE, \DMpC \times \CNpE}$ of $(\Cat{D} \times \rev{C} \times \Cat{C} \times \Cat{E})$-module functors 
      and $(\Cat{D} \times \rev{C} \times \Cat{C} \times \Cat{E})$-module natural transformations between them.
    \item The category $\BimodbC(\DMC \times \CNE,\DAE)$ for  bimodule categories $\DMC \times \CNE$  and $\DAE$  
      is the category $\Funball{\Cat{D},\Cat{E}}{\DMC \times \CNE, \DAE}$ of balanced bimodule functors and balanced bimodule natural transformations between them. 
    \item The category  $ \BimodbC(\DAE,\DBE)$  for two bimodule categories $\DAE$, and $\DBE$
      is the category $\BimCat(\DAE,\DBE)$ of bimodule functors and bimodule natural transformations between them.
    \item There is just the zero morphism from a bimodule category $\DAE$ to a bimodule category  $\DMC \times \CNE$.
    \end{propositionlist}
  \item The compositions are induced by the horizontal composition  of functors and the vertical composition of natural transformations.
  \end{propositionlist}
\end{proposition}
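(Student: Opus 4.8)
The plan is to proceed exactly as in the proof of Proposition \ref{proposition:2-cat-extend-bimod}: the content of the statement is entirely that the four prescribed hom-categories are closed under the horizontal composition of functors and the vertical composition of natural transformations, together with the existence of the appropriate identities; once that is checked, strict associativity and strict unitality are automatic because all the structure in sight is literally composition of functors and of natural transformations. So first I would isolate the closure statement and treat it as the bimodule analogue of Lemma \ref{lemma:balancing-and-composition}, as indicated in the remark preceding the proposition.

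Concretely, I would verify: (i) if $\mathsf{F}\colon\DMC\times\CNE\to\DMpC\times\CNpE$ is a $(\Cat{D}\times\rev{C}\times\Cat{C}\times\Cat{E})$-module functor and $\mathsf{G}\colon\DMpC\times\CNpE\to\DAE$ is a balanced bimodule functor in the sense of Definition \ref{definition:balanced-module}, then $\mathsf{G}\circ\mathsf{F}$ is again a balanced bimodule functor; (ii) if $\mathsf{G}\colon\DMC\times\CNE\to\DAE$ is a balanced bimodule functor and $\mathsf{H}\colon\DAE\to\DBE$ is a bimodule functor, then $\mathsf{H}\circ\mathsf{G}$ is a balanced bimodule functor; and the analogous closure for bimodule, balanced-bimodule, and $(\Cat{D}\times\rev{C}\times\Cat{C}\times\Cat{E})$-module natural transformations. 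In each case the balancing constraint of the composite is the evident pasting of the constraints of the factors, the left $\Cat{D}$- and right $\Cat{E}$-module constraints compose as usual, and the compatibility axiom (\ref{eq:balanced-module-diag}) for the composite is obtained by stacking the two compatibility diagrams and inserting naturality squares; for the mixed compositions with a plain bimodule functor one additionally uses that its constraints are honest module-functor constraints so that the intervening squares commute. This diagram-pasting is the step I expect to be the most tedious, although it is routine and introduces nothing conceptually beyond Lemma \ref{lemma:balancing-and-composition}. A cleaner route that avoids re-examining (\ref{eq:balanced-module-diag}) by hand is to invoke Lemma \ref{lemma:characterize-bal-module}: a balanced bimodule functor out of $\DMC\times\CNE$ is the same datum as a balanced functor whose $\Cat{D}$- and $\Cat{E}$-module constraints $\phi^{\mathsf{F}}_{d}$, $\phi^{\mathsf{F}}_{e}$ are balanced natural isomorphisms, and then the closure statements follow immediately by applying Lemma \ref{lemma:balancing-and-composition} to these constraint families.

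With closure established, the identity functors on the various objects are, respectively, $(\Cat{D}\times\rev{C}\times\Cat{C}\times\Cat{E})$-module functors, balanced bimodule functors, and bimodule functors (all with identity constraints), and the identity natural transformations lie in the corresponding categories; the only morphisms from a bimodule category $\DAE$ to a product $\DMC\times\CNE$ are the zero morphisms, which compose trivially on either side, so there is nothing further to check there. Finally, horizontal composition of functors and vertical composition of natural transformations are strictly associative and strictly unital, and the induced constraints compose strictly associatively as well, since each is built from the constraints of its factors by plain composition with no coherence cells intervening; this is verified by the same direct computation as in Proposition \ref{proposition:2-cat-extend-bimod}. This yields the 2-category $\BimodbC(\Cat{D},\Cat{E})$.
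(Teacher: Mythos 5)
Your proposal is correct and matches the paper's approach: the paper proves this by noting it follows directly from the obvious analogue of Lemma \ref{lemma:balancing-and-composition} for balanced bimodule functors, with strict associativity and unitality automatic since all compositions are literal compositions of functors and natural transformations, exactly as in Proposition \ref{proposition:2-cat-extend-bimod}. Your extra detail (including the shortcut via Lemma \ref{lemma:characterize-bal-module}) merely fills in what the paper leaves as routine.
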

If we restrict to the case where $\Cat{D}=\Cat{E}=\Vect$, we recover the 2-category from Proposition \ref{proposition:2-cat-extend-bimod}, i.e. $\BimodbC(\Vect, \Vect)= \ModbC$.
\begin{proposition}
  \label{proposition:Box-2-functor-bimod}
  The tensor product of bimodule categories defines a 2-functor
  \begin{equation}
    \label{eq:14}
    \widehat{(-)}: \BimodbC(\Cat{D},\Cat{E}) \rightarrow \BimCat(\Cat{D},\Cat{E}).
  \end{equation}
  In particular, it induces a 2-functor 
  \begin{equation}
    \label{eq:Box-prod-erst}
    \Box: \BimCat(\Cat{D},\Cat{E}) \times \BimCat(\Cat{C},\Cat{D}) \rr \BimCat(\Cat{C},\Cat{E}).
  \end{equation}
\end{proposition}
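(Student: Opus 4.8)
The plan is to promote the 2-functor $\widehat{(-)}:\ModbC\to\Catlin$ of Proposition~\ref{proposition:Tensor-extends-2-fun} to one that additionally remembers left $\Cat{D}$- and right $\Cat{E}$-module structures, the key being that Proposition~\ref{proposition:tensor-again-module} already carries out this promotion on objects and on the universal-property data of one fixed tensor product. On objects, $\widehat{(-)}$ sends $\DMC\times\CNE$ to the $(\Cat{D},\Cat{E})$-bimodule category $\DMC\Box\CNE$ of Proposition~\ref{proposition:tensor-again-module} and $\DAE$ to itself. A $1$-morphism $\mathsf{F}:\DMC\times\CNE\to\DMpC\times\CNpE$ of $\BimodbC(\Cat{D},\Cat{E})$ is by definition a $(\Cat{D}\times\rev{C}\times\Cat{C}\times\Cat{E})$-module functor; its underlying bimodule functor (forgetting the $\Cat{D}$- and $\Cat{E}$-actions) already yields, via Proposition~\ref{proposition:tensor-functors}\refitem{item:first}, a functor $\widehat{\mathsf{F}}:\DMC\Box\CNE\to\DMpC\Box\CNpE$ that carries the balancing, so it remains only to equip it with a left $\Cat{D}$- and a right $\Cat{E}$-module structure. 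For this one takes the left $\Cat{D}$-module constraint of $\mathsf{F}$, a family of $(\Cat{C},\Cat{C})$-bimodule natural isomorphisms $\phi^{\mathsf{F}}_{d}:\mathsf{F}\circ L_{d}\to L_{d}\circ\mathsf{F}$, and pushes it through $\widehat{(-)}:\ModbC\to\Catlin$, conjugating by the coherence isomorphisms $\phi_{\mathsf{F},L_{d}}$, $\phi_{L_{d},\mathsf{F}}$, to obtain $\widehat{\phi^{\mathsf{F}}_{d}}:\widehat{\mathsf{F}}\,\widehat{L_{d}}\to\widehat{L_{d}}\,\widehat{\mathsf{F}}$; this is the left $\Cat{D}$-module constraint of $\widehat{\mathsf{F}}$, and its pentagon and triangle axioms follow by applying $\widehat{(-)}$ to the corresponding diagrams for $\mathsf{F}$, just as in Proposition~\ref{proposition:tensor-again-module}. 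The right $\Cat{E}$-module constraint is built identically, and its compatibility with the $\Cat{D}$-side follows from $L_{d}R_{e}=R_{e}L_{d}$. The remaining two types of $1$-morphism need nothing new: a balanced bimodule functor $\mathsf{G}:\DMC\times\CNE\to\DAE$ goes to $\widehat{\mathsf{G}}=\Psi(\mathsf{G})$, already shown in Proposition~\ref{proposition:tensor-again-module} to be a bimodule functor, and a bimodule functor $\mathsf{H}:\DAE\to\DBE$ goes to itself; on $2$-morphisms $\widehat{(-)}$ is the assignment of Proposition~\ref{proposition:tensor-functors}\refitem{item:sec} (resp. the identity), and one checks it preserves the bimodule-transformation axiom by feeding that axiom through $\widehat{(-)}$.

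Next I would address the coherence data of the refined $\widehat{(-)}$, namely the natural isomorphisms $\kappa_{\Cat{M},\Cat{N}}(1):\widehat{1}\to 1$ and $\phi_{\mathsf{G},\mathsf{F}}:\widehat{\mathsf{G}}\,\widehat{\mathsf{F}}\to\widehat{\mathsf{G}\mathsf{F}}$. These are the same natural isomorphisms as in Propositions~\ref{proposition:tensor-functors} and~\ref{proposition:Tensor-extends-2-fun}; the only genuinely new point is that they are \emph{bimodule} natural isomorphisms. This is verified exactly as the analogous statement inside Proposition~\ref{proposition:tensor-again-module}: one writes out the diagram expressing compatibility of $\phi_{\mathsf{G},\mathsf{F}}$ with the $\Cat{D}$-module constraints after composing with $\mathsf{B}$ --- the evident enlargement of diagram~(\ref{eq:61}) --- notes that each of its cells is an instance either of the defining diagram of one of the natural isomorphisms $\widehat{\phi^{\mathsf{F}}_{d}}$, $\widehat{\phi^{\mathsf{G}}_{d}}$, of the defining diagram of $\phi_{\mathsf{G},\mathsf{F}}$, or of a snake identity for the adjoint equivalence $(\varphi,\kappa)$, and then uses that $\Phi$ is fully faithful on bimodule functors (Proposition~\ref{proposition:tensor-again-module}); the $\Cat{E}$-side is symmetric. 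With all structure in hand, the $2$-functor axioms --- the coherence square for three composable $1$-morphisms and the two unit axioms --- hold because they hold after composing with $\mathsf{B}$, which was checked in Proposition~\ref{proposition:tensor-functors}\refitem{item:forth} and~\refitem{item:fith} and in Proposition~\ref{proposition:Tensor-extends-2-fun}, and their bimodule refinements follow at once from the full faithfulness of $\Phi$.

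Finally, to get the $2$-functor $\Box:\BimCat(\Cat{D},\Cat{E})\times\BimCat(\Cat{C},\Cat{D})\to\BimCat(\Cat{C},\Cat{E})$ of~(\ref{eq:Box-prod-erst}) I would precompose the instance of~(\ref{eq:14}) with $\Cat{D}$ as the balancing tensor category and $(\Cat{C},\Cat{E})$ as the outer pair with the evident \emph{strict} $2$-functor $\BimCat(\Cat{C},\Cat{D})\times\BimCat(\Cat{D},\Cat{E})\to\mathsf{Bimod}^{\mathrm{bal}}_{\Cat{D}}(\Cat{C},\Cat{E})$ that takes a pair of bimodule categories $(\Cat{M},\Cat{N})$ to the object $\Cat{M}\times\Cat{N}$, a pair of bimodule functors to their Cartesian product, and a pair of bimodule natural transformations to their product; strict $2$-functoriality of this embedding is immediate from Proposition~\ref{proposition:2-cat-extend-bimod-bimod}, and a composite of $2$-functors is a $2$-functor.

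The step I expect to be the main obstacle is the middle one: checking that $\phi_{\mathsf{G},\mathsf{F}}$ and $\kappa(1)$ are simultaneously compatible with \emph{all} the module structures at play --- left $\Cat{D}$, right $\Cat{E}$, and the balancing implicit in the $\rev{C}$- and $\Cat{C}$-actions. No idea beyond the full faithfulness of $\Phi$ and the $2$-functoriality of $\widehat{(-)}:\ModbC\to\Catlin$ is required, but the multi-sided analogue of diagram~(\ref{eq:61}) that one must chase is sizeable and has to be assembled carefully.
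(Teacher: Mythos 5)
Your proposal is correct and follows essentially the same route as the paper: the paper's proof likewise invokes Proposition \ref{proposition:tensor-again-module} for the compatibility of $\Phi$, $\Psi$, $\varphi$, $\kappa$ with the outer module structures and then asserts that the bimodule analogues of Propositions \ref{proposition:tensor-functors} and \ref{proposition:Tensor-extends-2-fun} hold, with all coherence data being bimodule natural isomorphisms. You merely spell out the details (transporting the constraints $\phi^{\mathsf{F}}_{d}$ through $\widehat{(-)}$, the diagram chase modelled on (\ref{eq:61}) with full faithfulness of $\Phi$, and the Cartesian-product embedding giving (\ref{eq:Box-prod-erst})) that the paper declares straightforward.
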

\begin{proof}
  Proposition \ref{proposition:tensor-again-module} shows that the functors $\Phi$, $\Psi$ and the natural transformations $\varphi$ and $\kappa$ that appear in the definition 
  of the tensor product, are compatible with  the $(\Cat{C},\Cat{E})$ bimodule structure of 
  a bimodule category $\CMD \times \DNE$. It is straightforward to see that 
  the analogue of Proposition \ref{proposition:tensor-functors} and Proposition \ref{proposition:Tensor-extends-2-fun}  hold for bimodule categories. In particular, all coherence structures of the 2-functor $\widehat{(-)}$ from 
  Proposition \ref{proposition:Tensor-extends-2-fun} are bimodule natural isomorphisms. 
\end{proof}

\subsection{Multi-module categories}
\label{sec:tric-multi-module}
In the following  we consider also multiple tensor 
products of the form $  (\HKE \Box \END) \Box \DMC$. This requires an extension of 
the notion of balanced functors to so-called multi-balanced functors from 
$ (\KE \times \END) \times \DM$ to a linear category $\Cat{A}$. An example is the  functor 
\begin{equation}
  \label{eq:34}
  \begin{tikzcd}
    (\HKE \times \END) \times \DMC \ar{r}{\mathsf{B}_{\Cat{K},\Cat{N}} \times 1} & (\HKE \Box \END) \times \DMC 
    \ar{r}{\mathsf{B}_{\Cat{K} \Box \Cat{N}, \Cat{M}}}&   (\HKE \Box \END) \Box \DMC.
  \end{tikzcd}
\end{equation}
We then group these multi-balanced functors into a suitable bicategory, such that (\ref{eq:34})
is a composition in this bicategory.
Note, however, that the functor $\mathsf{B}_{\Cat{K},\Cat{N}} \times 1$ is balanced with respect to the first two categories, 
but it is a bimodule functor (the identity) with respect to the third. 
Therefore we need to extend the notion of  multi-balanced functors even 
further to so-called multi-balanced module functors, in order to guarantee that the functor $\mathsf{B}_{\Cat{K},\Cat{N}} \times 1$ is in this bicategory. 
The multi-balanced module functors will play an essential role in the proof 
that  bimodule categories forms a tricategory. 

In order to define  the associator in this tricategory, we will be careful and  distinguish the two categories $(\Cat{M} \times \Cat{N}) \times \Cat{K}$ and $\Cat{M} \times (\Cat{N} \times \Cat{K})$ for three categories $\Cat{M}$, $\Cat{N}$ and $\Cat{K}$. The relation between these categories will then finally lead to the associator in the tricategory of bimodule categories. Hence we  say that 
a \emph{bracketing} $b$ of a string $\underline{X}=(X_{1}, \ldots, X_{n})$ of letters $X_{i}$ is a choice of parenthesis  that uniquely specifies a sequence of pairings like e.g. $(X_{1}(X_{2}X_{3}))X_{4}$.

For a functor $\mathsf{F}: (\Cat{M} \times \Cat{N}) \times \Cat{K} \rightarrow \Cat{A}$, we denote the functor on objects just by $\mathsf{F}(m \times n \times k)$, if the bracketing is clear from the context. 
Recall from Remark \ref{remark:clash-biadditve}, that for a module category $\CM$ and a finite  linear category $\Cat{N}$, we consider the category  $\CM \times \Cat{N}$  
again as  module category with $\Cat{C}$-module action $\act \times \id_{\Cat{N}}$. In the following it is always understood that 
the Cartesian product of module  categories is equipped with this module action.  We call two bimodule categories $\Cat{M}$ and $\Cat{N}$ composable, if the category that acts from the left on $\Cat{N}$ coincides 
with the category that acts from the right on $\Cat{M}$. 
\begin{definition}[{{\cite[Def 3.4]{Green}}}] 
  \label{definition:multi-balanced}
  \begin{definitionlist}
  \item A multi-module category  $(\underline{\Cat{M}},b)$ from $\Cat{C}$ to $\Cat{D}$ is a finite  string of composable bimodule categories $\Cat{M}^{j}$ for $j \in \{1, \ldots, n\}$ with  $n \in \N$, 
    where $\Cat{M}^{n}$ is a $\Cat{C}$-right module category and $\Cat{M}^{1}$ is a $\Cat{D}$-left module category, 
    together with a bracketing $b$ of  $\underline{\Cat{M}}$.
    We denote by $ev(\underline{\Cat{M}},b)$ the Cartesian product of the categories $\Cat{M}_{j}$, in the  order that corresponds to the bracketing $b$.
  \item \label{item:multi-balanced}
    A multi-balanced functor $\mathsf{F}: (\underline{\Cat{M}},b) \rightarrow \Cat{A}$, from a $(\Vect,\Vect)$ multi-module category $(\underline{\Cat{M},b})$ to a linear category $\Cat{A}$ is a functor $\mathsf{F}: ev(\underline{\Cat{M}},b) \rightarrow \Cat{A}$, 
  that is balanced in each argument, i.e. it is equipped with  with natural isomorphisms 
    \begin{equation*}
      \begin{split}
   b^{\mathsf{F}}_{m_{1}, \ldots, m_{i},c, m_{i+1}, \ldots, m_{n}}: & \mathsf{F}(m_{1} \times \ldots \times m_{i}\ract c \times m_{i+1}\times \ldots m_{n}) \\
\rightarrow & \mathsf{F}(m_{1} \times \ldots \times m_{i}\times c \act m_{i+1}\times \ldots m_{n}),
      \end{split}
      \end{equation*}
    for each string of objects $\underline{m} \in ev(\underline{\Cat{M}},b)$, each $i \in J$ and $c \in \Cat{C}$, such that the natural isomorphisms $b^{\mathsf{F}}$ satisfy the diagram (\ref{eq:balanced-functor}) in each entry $i \in J$.
    In the sequel we will abbreviate  $b^{\mathsf{F}}_{m_{1}, \ldots, m_{i},c, m_{i+1}, \ldots, m_{n}}$  by $b_{\ldots, m_{i},c,m_{i+1}, \ldots}$ whenever it is unambiguous.

    Additionally, these isomorphisms are required to be compatible with the bimodule category structures, i.e. the diagram
    \begin{equation*}
      \begin{tikzcd}[column sep=large, font=\small] 
        \mathsf{F}(\ldots  m_{i-1}\ract c \times m_{i}\ract d \times m_{i+1}  \ldots) \ar{r}{b_{\ldots m_{i},d, m_{i+1} \ldots}} \ar{d}{b_{ \ldots m_{i-1},c,m_{i \ract d}\ldots }}& \mathsf{F}(\ldots m_{i-1} \ract c \times m_{i} \times d \act m_{i+1} \ldots ) 
        \ar{dd}{b_{\ldots m_{i-1},c,m_{i}\ldots}}\\
        \mathsf{F}(\ldots  m_{i-1} \times c\act (m_{i}\ract d) \times m_{i+1} \ldots) \ar{d}{\gamma^{-1}_{c,m_{i},d}} & \\
        \mathsf{F}(\ldots  m_{i-1} \times (c \act m_{i} )\ract d \times m_{i+1}  \ldots ) \ar{r}{b_{\ldots,c,d\act m_{i},m_{i+1},\ldots}} & \mathsf{F}(\ldots  m_{i-1}\times c \act m_{i} \times d \act m_{i+1} \ldots),
      \end{tikzcd}
    \end{equation*}
    commutes for  each possible entry $i \in J$ and for all possible objects. Here the argument of the functor $\mathsf{F}$ is abbreviated and only the relevant part of the string $\underline{m}$ is shown.
 \item 
    A multi-balanced natural transformation  $\eta: \mathsf{F} \rightarrow \mathsf{G}$ between multi-balanced functors $\mathsf{F},\mathsf{G}: (\underline{\Cat{M}},b) \rightarrow \Cat{A}$ is a natural transformation $\eta$ 
    that is balanced in each entry, i.e. it satisfies diagram (\ref{eq:balanced-nat}) for all entries of a string of objects $\underline{m}$ in $\underline{\Cat{M}}$.
  \item 
    For every multi-module category  $(\underline{\Cat{M}},b)$, there is a corresponding string of finite tensor categories $S(\underline{\Cat{M}'},b)$,
    that is given by the finite tensor categories acting on the bimodule categories in $(\underline{\Cat{M}},b)$ such that 
    for the string $(\HKE,\ENF,\ldots,\DMC)$, the corresponding string of finite tensor categories is $S(\HKE,\ENF, \ldots, \DMC)=(\Cat{H},\Cat{E},\Cat{F},\ldots, \Cat{D},\Cat{C})$.
    Note that by definition  $S(\underline{\Cat{M}},b)= S(\underline{\Cat{M}},b')$ is independent of the bracketing $b$ and just called $S(\underline{\Cat{M}})$ in the sequel. 
   \end{definitionlist}
\end{definition}
It is clear, that for each linear category $\Cat{A}$, the multi-balanced functors and multi-balanced natural transformations from $(\underline{\Cat{M}},b)$ to $\Cat{A}$ 
form a category denoted  $\Funbal((\underline{\Cat{M}},b), \Cat{A})$.

Next we consider multi-module functors. 
\begin{definition}
  \label{definition:muli-module-functor}
  \begin{definitionlist}
  \item A multi-module functor $\mathsf{F}: (\underline{\Cat{M}},b) \rightarrow (\underline{\Cat{M}'},b')$ between multi-module categories $ (\underline{\Cat{M}},b)$ and $(\underline{\Cat{M}'},b')$  with $S(\underline{\Cat{M}})=S(\underline{\Cat{M}'})$
    is a functor $\mathsf{F}: ev(\underline{\Cat{M}},b) \rightarrow ev(\underline{\Cat{M}'},b')$ together with
    a family of natural isomorphisms 
    \begin{equation*}
      \begin{split}
          \Phi^{\mathsf{F},l,i}_{m_{1}, \ldots, m_{i},d, m_{i+1}, \ldots, m_{n}}: &\mathsf{F}(m_{1} \times \ldots \times m_{i}\ract d \times m_{i+1}\times \ldots m_{n}) \\
\rightarrow & \mathsf{F}(m_{1} \times \ldots \times m_{i}\times  m_{i+1}\times \ldots m_{n}) \ract^{i} d,
      \end{split}
       \end{equation*}
    for each $\underline{m} \in \underline{\Cat{M}}$ and each $i \in J$, where $\ract^{i} : \underline{\Cat{M}} \times \Cat{D} \rightarrow \underline{\Cat{M}}$ denotes the action of $\Cat{D}$ on 
    $\CMD^{i}$. 
    Similarly we require that there exists a family of natural isomorphisms 
    \begin{equation*}
      \Phi^{\mathsf{F},r,i}_{m_{1}, \ldots,c, m_{i}, \ldots, m_{n}}: \mathsf{F}(m_{1} \times \ldots \times c\act  m_{i} \times \ldots m_{n}) \simeq  c \act^{i} \mathsf{F}(m_{1} \times \ldots \times m_{i}\times \ldots m_{n}),
    \end{equation*}
    where $\act^{i}: \Cat{C} \times \underline{\Cat{M}} \rightarrow \underline{\Cat{M}}$ is induced by the left action of $\Cat{C}$ on $\CMD^{i}$.
    The isomorphisms $ \Phi^{\mathsf{F},l,i}$ and $ \Phi^{\mathsf{F},r,i}$ are required to satisfy the bimodule constraint (\ref{eq:bimodule-functor-charact}) for each $i \in J$.
  \item A multi-module natural transformation $\eta:\mathsf{F} \rightarrow \mathsf{G}$ between multi-module functors $\mathsf{F}$ and $\mathsf{G}$ is a natural transformation that 
    satisfies equation
    (\ref{eq:module-nat-transf}) in each entry.
  \end{definitionlist}
\end{definition}

\begin{example}
  \begin{examplelist}
  \item For two bimodule functors $\mathsf{F}: \DMC \rightarrow \DMpC$ and 
$\mathsf{G}: \CNE \rightarrow \CNpE$, the functor $ \mathsf{G} \times \mathsf{F}: \Cat{N} \times \Cat{M} \rightarrow \Cat{N}' \times \Cat{M}'$ is a multi-module functor.
  \item  For three bimodule categories a multi-module functor $\alpha: (\HKE \times \END) \times \DMC \rightarrow \HKE \times ( \END \times \DMC)$  is given by $\alpha((h \times n) \times m)= h \times ( n \times m)$
    on objects and morphisms $((h \times n) \times m)$ in $(\HKE \times \END) \times \DMC$.
  \end{examplelist}
\end{example}
Next we consider multi-balanced module functors.
\begin{definition}
  \label{definition:multi-balanced-mod}
  \begin{definitionlist}
  \item For a string $(X_{1}, \ldots X_{n})$, a reduced string $(X'_{1}, \ldots X_{m}')$ is a string that is obtained from $(X_{1}, \ldots X_{n})$ by erasing entries as follows. 
    It is required that there exists an injective map $f: \{1, \ldots,m\} \rightarrow \{1, \ldots,n \}$ with $f(1)=1$ and $f(m)=n$ and $f(j) > f(i)$ for all $j>i$ in $\{1, \ldots,m\}$ and $X'_{i}= X_{f(i)}$ for 
    all $i \in \{1, \ldots,m\}$. An entry $X_{j}$  is called erased in   $(X_{1}, \ldots X_{n})$ if  $j \in   \{1, \ldots,n \}$  is not in the image of $f$. 
  \item Let $(\underline{\Cat{M}},b)$ and $(\underline{\Cat{M}'},b')$ be multi-module categories, such that 
    the string $S(\underline{\Cat{M}'},b')$ of finite tensor categories is obtained by  reducing the string $S(\underline{\Cat{M}},b)$. A  multi-balanced module functor  $\mathsf{F}: (\underline{\Cat{M}},b) \rightarrow (\underline{\Cat{M}'},b')$
    is a functor $\mathsf{F}:ev(\underline{\Cat{M}},b) \rightarrow ev(\underline{\Cat{M}'},b')$  that is balanced in each erased entry of $S(\underline{\Cat{M}},b)$ and is a  multi-module functor in  each other entry.
We furthermore require that at entries where  $\mathsf{F}$  is balanced, it is compatible with the bimodule category structures. That means that with each entry next to it, $\mathsf{F}$ satisfies either the diagram of Definition \ref{definition:multi-balanced}\refitem{item:multi-balanced}, if $\mathsf{F}$ is also balanced at the neighboring entry, or
 the diagram  (\ref{eq:balanced-module-diag}), if $\mathsf{F}$ if the neighboring entry is not erased.
  \item A multi-balanced module natural transformation $\eta: \mathsf{F}\rightarrow \mathsf{G}$ between multi-balanced module functors $\mathsf{F}$ and $\mathsf{G}$ is a natural transformation $\eta:\mathsf{F} \rightarrow \mathsf{G}$
    that is balanced in each erased entry in the target of $\mathsf{F}$ and $\mathsf{G}$ and a bimodule natural transformation in all other entries.
  \end{definitionlist}
\end{definition}
\begin{example}
  \begin{examplelist}
  \item Every multi-module functor and every multi-balanced functor is also a multi-balanced module functor. 
  \item For three composable bimodule categories, the functor
    $ (\mathsf{B}_{\Cat{K},\Cat{N}} \times 1): (\Cat{K} \times \Cat{N}) \times \Cat{M} \rightarrow (\Cat{K} \Box \Cat{N}) \times \Cat{M}$ is a multi-balanced module functor.
  \end{examplelist}
\end{example}
It follows directly from the definitions, that if a string of multi-module categories  $(\underline{\Cat{M}''},b'')$ is reduced 
  from a string  $(\underline{\Cat{M}'},b')$ and  $(\underline{\Cat{M}'},b')$ is reduced from  $(\underline{\Cat{M}},b)$, then the composite  $\mathsf{G}\mathsf{F}$ of multi-balanced module functors 
  $\mathsf{F}:(\underline{\Cat{M}},b) \rightarrow  (\underline{\Cat{M}'},b')$ and $\mathsf{G}: (\underline{\Cat{M}'},b') \rightarrow (\underline{\Cat{M}''},b'')$ is a multi-balanced module functor. 
We can therefore  generalize Proposition \ref{proposition:2-cat-extend-bimod-bimod} and follows.

\begin{proposition}
  \label{proposition:multi-2-cat}
  For every pair of finite tensor categories $(\Cat{C},\Cat{D})$, the following data define a 2-category $\Bm(\Cat{C},\Cat{D})$. 
  \begin{propositionlist}
  \item Objects are  multi-module categories $(\underline{\Cat{M}},b)$,  $(\underline{\Cat{M}'},b')$ from $\Cat{C}$ to $\Cat{D}$.
  \item  1-morphisms 
    between objects $(\underline{\Cat{M}},b)$ and $(\underline{\Cat{M}'},b')$ are multi-balanced module functors $\mathsf{F}: (\underline{\Cat{M}},b) \rightarrow  (\underline{\Cat{M}'},b')$ if 
    the string  $(\underline{\Cat{M}'},b')$ is reduced from $(\underline{\Cat{M}},b)$. Otherwise, the set of 1-morphisms  from  $(\underline{\Cat{M}},b)$ to $(\underline{\Cat{M}'},b')$ contains just 
the zero morphism. 
  \item  2-morphisms between multi-balanced module functors  $\mathsf{F},\mathsf{G}: (\underline{\Cat{M}},b) \rightarrow  (\underline{\Cat{M}'},b')$ are
    multi-balanced module natural transformations $\eta: \mathsf{F} \rightarrow \mathsf{G}$. 
  \item The compositions are induced by the horizontal composition  of functors and the vertical composition of natural transformations.
  \end{propositionlist}
\end{proposition}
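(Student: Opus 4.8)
The plan is to verify the axioms of a strict $2$-category directly, building on the closure statement recorded immediately before the proposition. The underlying datum of each $1$-morphism is an ordinary functor $ev(\underline{\Cat{M}},b)\to ev(\underline{\Cat{M}'},b')$, and of each $2$-morphism an ordinary natural transformation; the prescribed compositions are just composition of functors together with horizontal and vertical composition of natural transformations, which are already strictly associative and strictly unital and satisfy the interchange law. So the entire content of the proposition is that (a) these compositions preserve the multi-balanced module structure together with its coherence data, and (b) the bookkeeping with reductions and with the zero morphisms is internally consistent.

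For (b): the relation ``$(\underline{\Cat{M}'},b')$ is a reduction of $(\underline{\Cat{M}},b)$'' is reflexive (take the index map $f=\id$) and transitive (compose the monotone index maps), so for genuine $1$-morphisms $\mathsf{F}\colon(\underline{\Cat{M}},b)\to(\underline{\Cat{M}'},b')$ and $\mathsf{G}\colon(\underline{\Cat{M}'},b')\to(\underline{\Cat{M}''},b'')$ the composite target is again a reduction of the source and $\mathsf{G}\mathsf{F}$ lands in a non-trivial Hom-category; any composite involving a zero morphism is a zero morphism, so composition is everywhere defined and trivially associative on the zero part. The identity $1$-morphism on $(\underline{\Cat{M}},b)$ is $1_{ev(\underline{\Cat{M}},b)}$ equipped with all balancing and module constraints equal to identities; it is a multi-module functor, hence a multi-balanced module functor, and satisfies every coherence diagram trivially, and the identity $2$-morphisms are the ordinary identity natural transformations.

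For (a): the constraint of $\mathsf{G}\mathsf{F}$ at an entry $i$ of the string $S(\underline{\Cat{M}},b)$ is obtained by pasting the constraints of $\mathsf{F}$ and $\mathsf{G}$, exactly in the spirit of Proposition \ref{proposition:tensor-functors}\refitem{item:third}: if $i$ is already erased by the first reduction, take the balancing of $\mathsf{F}$ followed by naturality of $\mathsf{G}$; if $i$ survives the first reduction but is erased by $\mathsf{G}$, take the module constraint of $\mathsf{F}$ followed by the balancing of $\mathsf{G}$; if $i$ survives both, compose the two module constraints. One then checks that this composite satisfies, in each entry, the appropriate coherence diagram -- the pentagon of Definition \ref{definition:multi-balanced}\refitem{item:multi-balanced} at a balanced entry, the bimodule pentagon \eqref{eq:bimodule-functor-charact} at a module entry -- and, for each pair of neighboring entries, the relevant compatibility: the mixed-balancing square of Definition \ref{definition:multi-balanced}\refitem{item:multi-balanced} when both are balanced, or the diagram \eqref{eq:balanced-module-diag} when one is balanced and the other is not. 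For $2$-morphisms the analogous checks are that the squares \eqref{eq:balanced-nat} and \eqref{eq:module-nat-transf} are stable under vertical and horizontal composition, which is immediate from pasting; and the associativity of the above pasting of constraints over triple composites is the analogue of Proposition \ref{proposition:tensor-functors}\refitem{item:forth} and goes through verbatim.

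The main, and only genuine, obstacle is confirming that the pasted constraint of a composite satisfies all of these coherence diagrams in every entry, now complicated by the three flavors of neighbor-compatibility (balanced--balanced, module--module, balanced--module). Each such diagram, however, decomposes into two copies of the corresponding diagram for $\mathsf{F}$ and for $\mathsf{G}$ glued along naturality squares, so it commutes; this is routine but lengthy, and is entirely parallel to the verifications already carried out for $\ModbC$ in Proposition \ref{proposition:2-cat-extend-bimod} and for $\BimodbC(\Cat{D},\Cat{E})$ in Proposition \ref{proposition:2-cat-extend-bimod-bimod}.
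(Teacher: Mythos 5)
Your proposal is correct and follows essentially the same route as the paper: the paper records just before the proposition that composites of multi-balanced module functors are again multi-balanced module functors ``directly from the definitions'' and then, as in Propositions \ref{proposition:2-cat-extend-bimod} and \ref{proposition:2-cat-extend-bimod-bimod}, takes strict associativity and unitality to be a direct computation, which is exactly what you verify, only in more detail (reflexivity/transitivity of reduction, identities, zero morphisms, and pasting of the balancing and module constraints). The only nitpick is wording: at an entry erased by $\mathsf{F}$ the constraint of $\mathsf{G}\mathsf{F}$ is $\mathsf{G}$ applied to the balancing of $\mathsf{F}$ (functoriality of $\mathsf{G}$), not a naturality square, but this does not affect the argument.
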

\begin{remark}
  For all finite tensor categories $\Cat{C}$ the 2-categories $\BimodbC(\Cat{E},\Cat{D})$ from Proposition \ref{proposition:2-cat-extend-bimod-bimod} are full 2-subcategories of $\Bm(\Cat{E},\Cat{D})$ 
  whose objects  are  bimodule categories $\DMC \times \CNE$ and $\DAE$. 
\end{remark}

For a multi-module category  $(\underline{\Cat{M}},b)$ we already defined the category  $ev(\underline{\Cat{M}},b)$ that is obtained from the  Cartesian product of the elements in the string. 
Now, let  $ev_{\Box}(\underline{\Cat{M}},b)$  denote the category that is obtained by the tensor product of the bimodule categories in the string $(\underline{\Cat{M}},b)$ in the order that 
corresponds to the bracketing $b$. We call $ev_{\Box}(\underline{\Cat{M}},b)$ the tensor product of the multi-module category  $(\underline{\Cat{M}},b)$.

\begin{lemma}
  Let $(\underline{\Cat{M}},b)$ be a multi-module category from $\Cat{C}$ to $\Cat{D}$.
  Then the tensor product 
  $ev_{\Box}(\underline{\Cat{M}},b)$ of  $(\underline{\Cat{M}},b)$ is a $(\Cat{D},\Cat{C})$-bimodule category and it is 
  equipped  with
  \begin{enumerate}
  \item  a multi-balanced $(\Cat{D},\Cat{C})$-bimodule functor $\mathsf{B}_{\underline{\Cat{M}}}: (\underline{\Cat{M}},b) \rightarrow ev_{\Box}(\underline{\Cat{M}},b)$,
  \item for every bimodule category $\DAC$ with a functor 
    \begin{equation}
      \label{eq:Psi-multi-bal}
      \Psi_{\underline{\Cat{M}}}: \Bm((\underline{\Cat{M}},b), \DAC) \rightarrow  \Funl{\Cat{D},\Cat{C}}{ev_{\Box}(\underline{\Cat{M}},b), \DAC},
    \end{equation}
  \item  an adjoint equivalence between 
    the functor $\Psi_{\underline{\Cat{M}}}$ and the functor 
    \begin{equation}
      \label{eq:univer-prop-Box-muli}
      \begin{split}
        \Phi_{\underline{\Cat{M}}}:  \Funl{\Cat{D},\Cat{C}}{ev_{\Box}(\underline{\Cat{M}},b), \DAC} &\rightarrow  \Bm((\underline{\Cat{M}},b), \DAC) \\
        \mathsf{G} &\mapsto \mathsf{G} \circ B_{\underline{\Cat{M}}}. 
      \end{split}
    \end{equation}
  \end{enumerate}  
\end{lemma}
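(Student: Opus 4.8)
The plan is to argue by induction on the length $n$ of the string $\underline{\Cat{M}}=(\Cat{M}^{1},\ldots,\Cat{M}^{n})$, reducing everything to the binary case already handled in Definition~\ref{definition:tensor-prod} and Proposition~\ref{proposition:tensor-again-module}. For $n=1$ there is nothing to do: $ev_{\Box}(\underline{\Cat{M}},b)=\Cat{M}^{1}$ is already a $(\Cat{D},\Cat{C})$-bimodule category, $\mathsf{B}_{\underline{\Cat{M}}}$ is the identity, and $\Bm((\underline{\Cat{M}},b),\DAC)=\BimCat(\Cat{M}^{1},\DAC)$, so $\Phi_{\underline{\Cat{M}}}$ and $\Psi_{\underline{\Cat{M}}}$ may be taken to be identity functors with the trivial adjoint equivalence. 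For $n=2$, a multi-balanced module functor from the string to $\DAC$ is by Definition~\ref{definition:multi-balanced-mod} exactly a balanced bimodule functor in $\Funball{\Cat{D},\Cat{C}}{\Cat{M}^{1}\times\Cat{M}^{2},\DAC}$ — the unique interior tensor category is erased — and $ev_{\Box}(\underline{\Cat{M}},b)=\Cat{M}^{1}\Box\Cat{M}^{2}$, so the whole claim is precisely Definition~\ref{definition:tensor-prod} as extended in Proposition~\ref{proposition:tensor-again-module}.

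For the inductive step let $b=(b_{1})(b_{2})$ be the outermost split of the bracketing, say at position $k$ with $1\le k<n$, and let $\Cat{E}$ be the finite tensor category acting on the right of $\Cat{M}^{k}$ and on the left of $\Cat{M}^{k+1}$. Then $(\underline{\Cat{M}}_{\le k},b_{1})$ is a multi-module category from $\Cat{E}$ to $\Cat{D}$ and $(\underline{\Cat{M}}_{>k},b_{2})$ one from $\Cat{C}$ to $\Cat{E}$, both of length $<n$, so by the induction hypothesis $\Cat{N}_{1}:=ev_{\Box}(\underline{\Cat{M}}_{\le k},b_{1})$ is a $(\Cat{D},\Cat{E})$-bimodule category and $\Cat{N}_{2}:=ev_{\Box}(\underline{\Cat{M}}_{>k},b_{2})$ is an $(\Cat{E},\Cat{C})$-bimodule category, each equipped with its universal multi-balanced bimodule functor $\mathsf{B}_{1}$, $\mathsf{B}_{2}$ and an adjoint equivalence $(\Phi_{i},\Psi_{i},\varphi_{i},\kappa_{i})$. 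By construction $ev_{\Box}(\underline{\Cat{M}},b)=\Cat{N}_{1}\Box\Cat{N}_{2}$, which carries a canonical $(\Cat{D},\Cat{C})$-bimodule structure by Proposition~\ref{proposition:tensor-again-module}; this settles the first assertion. Set $\mathsf{B}_{\underline{\Cat{M}}}:=\mathsf{B}_{\Cat{N}_{1},\Cat{N}_{2}}\circ(\mathsf{B}_{1}\times\mathsf{B}_{2})$. Since $\mathsf{B}_{1}\times\mathsf{B}_{2}$ is balanced in all interior entries of the two blocks, a multi-module functor with respect to $\Cat{E}$ at the $k$-th boundary, and a $(\Cat{D},\Cat{C})$-bimodule functor at the two ends, while $\mathsf{B}_{\Cat{N}_{1},\Cat{N}_{2}}$ is an $\Cat{E}$-balanced $(\Cat{D},\Cat{C})$-bimodule functor (Proposition~\ref{proposition:tensor-again-module}), the composite is $\Cat{E}$-balanced at the $k$-th boundary and keeps the balancing, resp.\ module, structure at every other entry; the compatibility diagrams of Definitions~\ref{definition:multi-balanced} and~\ref{definition:multi-balanced-mod} follow by pasting those for $\mathsf{B}_{1}$, $\mathsf{B}_{2}$ and $\mathsf{B}_{\Cat{N}_{1},\Cat{N}_{2}}$. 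Hence $\mathsf{B}_{\underline{\Cat{M}}}$ is a multi-balanced $(\Cat{D},\Cat{C})$-bimodule functor.

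It remains to produce $\Psi_{\underline{\Cat{M}}}$ and the adjoint equivalence. The identity $\mathsf{G}\circ\mathsf{B}_{\underline{\Cat{M}}}=(\mathsf{G}\circ\mathsf{B}_{\Cat{N}_{1},\Cat{N}_{2}})\circ(\mathsf{B}_{1}\times\mathsf{B}_{2})$ exhibits $\Phi_{\underline{\Cat{M}}}$ as the composite of $\Phi_{\Cat{N}_{1},\Cat{N}_{2}}\colon\Funl{\Cat{D},\Cat{C}}{\Cat{N}_{1}\Box\Cat{N}_{2},\DAC}\to\Funball{\Cat{D},\Cat{C}}{\Cat{N}_{1}\times\Cat{N}_{2},\DAC}$, an equivalence by Proposition~\ref{proposition:tensor-again-module}, with the precomposition functor $(-)\circ(\mathsf{B}_{1}\times\mathsf{B}_{2})\colon\Funball{\Cat{D},\Cat{C}}{\Cat{N}_{1}\times\Cat{N}_{2},\DAC}\to\Bm((\underline{\Cat{M}},b),\DAC)$. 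I would show the latter is an equivalence by applying the universal properties of $\Cat{N}_{1}$ and $\Cat{N}_{2}$ one slot at a time: writing $\mathsf{B}_{1}\times\mathsf{B}_{2}$ as a composite of $\mathsf{B}_{1}\times 1_{\Cat{N}_{2}}$ and $1_{ev(\underline{\Cat{M}}_{\le k})}\times\mathsf{B}_{2}$, precomposition with $\mathsf{B}_{1}\times 1$ is inverted by $\Psi_{1}$ applied with the second block held fixed, and symmetrically for $\mathsf{B}_{2}$, the balancings at the interior entries, the $\Cat{E}$-module structure at the $k$-th boundary, the $\Cat{D}$- and $\Cat{C}$-module structures at the ends and all compatibility diagrams transporting along these equivalences. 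Pasting the three adjoint equivalences $(\varphi_{1},\kappa_{1})$, $(\varphi_{2},\kappa_{2})$ and $(\varphi_{\Cat{N}_{1},\Cat{N}_{2}},\kappa_{\Cat{N}_{1},\Cat{N}_{2}})$ then yields a functor $\Psi_{\underline{\Cat{M}}}$ together with an adjoint equivalence relating it to $\Phi_{\underline{\Cat{M}}}$, completing the induction.

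The step I expect to be the main obstacle is the ``one slot at a time'' argument just sketched: one must upgrade the binary universal property of Proposition~\ref{proposition:tensor-again-module} to a version with a parameter, i.e.\ check that precomposition with $\mathsf{B}_{1}\times 1_{\Cat{N}_{2}}$ is genuinely an equivalence onto the category of functors balanced only in the $b_{1}$-entries, and — more delicately — that \emph{every} piece of structure carried by a multi-balanced module functor (all interior balancings, the $\Cat{E}$-module structure at the $k$-th boundary, the $\Cat{D}$- and $\Cat{C}$-module structures at the ends, and every compatibility hexagon among them) is matched bijectively under this equivalence. This is handled by the same adjoint-equivalence bookkeeping used throughout Section~\ref{sec:tricat-bimod}, essentially by applying the $2$-functor $\widehat{(-)}$ of Propositions~\ref{proposition:Tensor-extends-2-fun} and~\ref{proposition:Box-2-functor-bimod} in each slot; the remaining verifications are routine diagram chases and are therefore omitted.
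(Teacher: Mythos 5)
Your proposal is correct in outline and rests on the same key input as the paper, namely the binary universal property of Proposition \ref{proposition:tensor-again-module}, but it organizes the recursion differently. You do a structural induction splitting at the \emph{outermost} bracket, write $ev_{\Box}(\underline{\Cat{M}},b)=\Cat{N}_{1}\Box\Cat{N}_{2}$, and then factor $\Phi_{\underline{\Cat{M}}}$ through $\Phi_{\Cat{N}_{1},\Cat{N}_{2}}$ followed by precomposition with $\mathsf{B}_{1}\times\mathsf{B}_{2}$; this forces you to prove, as a standalone statement, that precomposition with $\mathsf{B}_{1}\times 1_{\Cat{N}_{2}}$ is an equivalence onto a category of functors balanced only in the $b_{1}$-entries, i.e.\ a ``universal property with a parameter'' that is not literally what your inductive hypothesis delivers (the hypothesis only covers targets which are bimodule categories, with no spectator Cartesian factor in the source). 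The paper instead iterates from the \emph{innermost} bracket on a given multi-balanced module functor $\mathsf{F}:(\underline{\Cat{M}},b)\to\DAC$: it uses that $\mathsf{F}$ is balanced in the innermost pair to induce a functor out of the shortened string, observes that this is again a multi-balanced module functor, and repeats; $\mathsf{B}_{\underline{\Cat{M}}}$ is the same iterated composite in both treatments, and the adjoint equivalence is likewise obtained by composing the binary ones. The practical difference is where the glossed step lands: the paper's ordering keeps every intermediate object inside $\Bm$, so the parametrized statement never has to be named as an equivalence of auxiliary ``partly balanced'' functor categories (it appears only as the ``clearly/straightforward'' contraction step), whereas your route exposes it explicitly and then defers it to bookkeeping. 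That deferral is at the same level of rigor as the paper's own proof, so I would not call it a gap, but if you keep your outermost-split induction you should at least state the parametrized universal property you are invoking (precomposition with $\mathsf{B}_{1}\times 1$ being an equivalence onto functors in $\Bm$ balanced in the $b_{1}$-slots), since it is the only point where your argument goes beyond a formal composition of the equivalences $\Phi_{i},\Psi_{i}$ and $\Phi_{\Cat{N}_{1},\Cat{N}_{2}},\Psi_{\Cat{N}_{1},\Cat{N}_{2}}$.
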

\begin{proof}
  It follows by repeated use of Proposition \ref{proposition:tensor-again-module}, starting with the inner most bracketing of $(\underline{\Cat{M}},b)$, that 
$ev_{\Box}(\underline{\Cat{M}},b)$ is a $(\Cat{D}, \Cat{C})$-bimodule category. 
  The functor  $\mathsf{B}_{\underline{\Cat{M}}}: (\underline{\Cat{M}},b) \rightarrow ev_{\Box}(\underline{\Cat{M}},b)$ is defined iteratively as indicated in equation (\ref{eq:34}) for a string of three bimodule categories. 
  It is shown in  Proposition \ref{proposition:tensor-again-module} that $\mathsf{B}: \DMC \times \CNE \rightarrow \DMC \Box \CNE$ is a multi-balanced module functor and hence $\mathsf{B}_{\underline{\Cat{M}}}$ is a multi-balanced module functor 
  as it is the composition of multi-balanced module functors.
Hence the first part is proven. To show the second statement, let $\mathsf{F}: (\underline{\Cat{M}},b) \rightarrow  \DAC  $ be a multi-balanced module functor from the multi-module category  $(\underline{M},b)=(\Cat{M}_{1}, \ldots, (\Cat{M}_{i}, \Cat{M}_{i+1}), \ldots \Cat{M}_{n})$ to a bimodule category $\DAC$. 
Assume that $\Cat{M}_{i}$ and $\Cat{M}_{i+1}$ are $\Cat{E}$- left, respectively right, module categories. 
$\mathsf{F}$ is clearly a $\Cat{E}$-balanced bimodule functor and hence induces a bimodule functor 
$\mathsf{F}_{1}: (\Cat{M}_{1}, \ldots, \Cat{M}_{i-1}, \Cat{M}_{i} \Box \Cat{M}_{i+1}, \ldots, \Cat{M}_{n}) \rightarrow \DAC$, by Proposition \ref{proposition:Box-2-functor-bimod}.  It is straightforward to see that $\mathsf{F}$ is again 
a multi-balanced module functor and we continue iteratively to obtain a bimodule functor $\widehat{\mathsf{F}}: ev_{\Box}(\underline{\Cat{M}},b) \rightarrow \DAC$. Defined in the analogous way for multi-balanced module natural transformations, this yields the functor  $\Psi_{\underline{\Cat{M}}}$.   Furthermore, it is clear that by construction, $\Psi_{\underline{\Cat{M}}}$ and  $ \Phi_{\underline{\Cat{M}}}$ form an adjoint equivalence since both functors are obtained by composing the  corresponding functors 
  from the constituents of the string $ (\underline{\Cat{M}},b)$  which from adjoint equivalences by Proposition \ref{proposition:tensor-again-module}.
\end{proof}
By using this lemma and by  repeated use of the 2-functor $\widehat{(-)}$ from Proposition \ref{proposition:Box-2-functor-bimod}, we can extend the tensor product to a 2-functor as follows. 
\begin{proposition}
  \label{proposition:widetlide-2-functor}
  For all pairs of finite tensor categories $\Cat{C}$ and $\Cat{D}$, the tensor product defines a  2-functor 
  \begin{equation}
    \label{eq:widetilde-2-fun}
    \widehat{(-)}: \Bm(\Cat{C},\Cat{D}) \rightarrow \BimCat(\Cat{C},\Cat{D}).
  \end{equation}
\end{proposition}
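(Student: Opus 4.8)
The plan is to build $\widehat{(-)}$ by iterating the 2-functor $\widehat{(-)}\colon \BimodbC(\Cat{D},\Cat{E}) \to \BimCat(\Cat{D},\Cat{E})$ of Proposition~\ref{proposition:Box-2-functor-bimod} one $\Box$ at a time, and then deducing the 2-functor axioms from the universal property recorded in the Lemma immediately preceding this Proposition. On objects we set $\widehat{(\underline{\Cat{M}},b)} := ev_{\Box}(\underline{\Cat{M}},b)$, which is a bimodule category by that Lemma; a length-one string is already a bimodule category, fixed by $ev_{\Box}$. Given a multi-balanced module functor $\mathsf{F}\colon(\underline{\Cat{M}},b)\to(\underline{\Cat{M}'},b')$, the composite $\mathsf{B}_{\underline{\Cat{M}'}}\circ\mathsf{F}$ is again a multi-balanced module functor, now from $(\underline{\Cat{M}},b)$ into the length-one multi-module category $ev_{\Box}(\underline{\Cat{M}'},b')$ (whose string is obtained by reducing $S(\underline{\Cat{M}})$ to its two endpoints), by Lemma~\ref{lemma:balancing-and-composition} in its multi-variant. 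We therefore define
\[
\widehat{\mathsf{F}} := \Psi_{\underline{\Cat{M}}}\bigl(\mathsf{B}_{\underline{\Cat{M}'}}\circ\mathsf{F}\bigr)\colon ev_{\Box}(\underline{\Cat{M}},b)\longrightarrow ev_{\Box}(\underline{\Cat{M}'},b'),
\]
a bimodule functor, together with the balanced-bimodule natural isomorphism $\varphi_{\mathsf{F}}\colon \mathsf{B}_{\underline{\Cat{M}'}}\circ\mathsf{F} \Rightarrow \widehat{\mathsf{F}}\circ\mathsf{B}_{\underline{\Cat{M}}}$ from the adjoint equivalence of the Lemma. On 2-morphisms we set $\widehat{\eta} := \Psi_{\underline{\Cat{M}}}(\mathsf{B}_{\underline{\Cat{M}'}}\cdot\eta)$.

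The coherence data are produced exactly as in Propositions~\ref{proposition:tensor-functors} and~\ref{proposition:Box-2-functor-bimod}. The unitor $\widehat{1}\Rightarrow 1_{ev_{\Box}(\underline{\Cat{M}},b)}$ is $\kappa_{\underline{\Cat{M}}}$ evaluated on the identity. For composable $\mathsf{F}\colon(\underline{\Cat{M}},b)\to(\underline{\Cat{M}'},b')$ and $\mathsf{G}\colon(\underline{\Cat{M}'},b')\to(\underline{\Cat{M}''},b'')$ the compositor $\phi_{\mathsf{G},\mathsf{F}}\colon\widehat{\mathsf{G}}\widehat{\mathsf{F}}\Rightarrow\widehat{\mathsf{G}\mathsf{F}}$ is the unique 2-cell characterized, as in Proposition~\ref{proposition:tensor-functors}\refitem{item:third}, by compatibility with $\varphi_{\mathsf{F}}$, $\varphi_{\mathsf{G}}$ and $\varphi_{\mathsf{G}\mathsf{F}}$; existence and uniqueness follow from the fully-faithfulness of $\Phi_{\underline{\Cat{M}}}$. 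The two 2-functor axioms — the associativity coherence for three composable 1-morphisms and the unit coherence — are then checked verbatim as in Proposition~\ref{proposition:tensor-functors}\refitem{item:forth} and~\refitem{item:fith}: after whiskering with $\mathsf{B}_{\underline{\Cat{M}}}$ the relevant diagrams of $\varphi$'s and $\kappa$'s commute by the interchange law and the triangle/snake identities of the adjoint equivalences, and fully-faithfulness of $\Phi_{\underline{\Cat{M}}}$ yields the desired identities among the $\phi$'s. (Alternatively, one simply iterates the already-proven 2-functor of Proposition~\ref{proposition:Box-2-functor-bimod}, contracting the bracketing $b$ from the innermost pair outward, and appeals to its coherence at each step.)

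The one point that is genuinely new relative to the single-$\Box$ case is the combinatorial bookkeeping: a multi-balanced module functor may erase several tensor-category slots and rebracket, and $ev_{\Box}$ contracts the $\Box$'s in the order prescribed by $b$, so one must verify that $\widehat{\mathsf{F}}$, $\widehat{\eta}$ and the coherence cells are independent of the order in which the contractions are carried out. I would dispose of this exactly as above: any two contraction orders within a fixed bracketing are related by a canonical natural isomorphism assembled from the $\varphi$'s of Proposition~\ref{proposition:tensor-again-module}, and any such isomorphism is unique by the fully-faithfulness of the restriction functors, so the iterated construction is well defined and is strictly associative and unital wherever $\Bm(\Cat{C},\Cat{D})$ is. This organizational step, rather than any individual diagram chase, is where I expect the bulk of the effort to go; once it is in place, 2-functoriality is a formal consequence of the universal property of the preceding Lemma, mirroring Propositions~\ref{proposition:Tensor-extends-2-fun} and~\ref{proposition:Box-2-functor-bimod}.
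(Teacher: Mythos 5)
Your proposal is correct and follows essentially the same route as the paper, which gives no separate proof but simply asserts the result "by using this lemma and by repeated use of the 2-functor $\widehat{(-)}$ from Proposition \ref{proposition:Box-2-functor-bimod}": you define $\widehat{(-)}$ via $ev_{\Box}$ and $\Psi_{\underline{\Cat{M}}}$ and transport the coherence data and axioms verbatim from Propositions \ref{proposition:tensor-functors}, \ref{proposition:Tensor-extends-2-fun} and \ref{proposition:Box-2-functor-bimod}, exactly as intended. Your extra remark on independence of the contraction order, settled by uniqueness from the adjoint equivalences, is a sensible elaboration of detail the paper leaves implicit rather than a deviation in method.
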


We are going to apply this 2-functor to diagrams of (horizontally and vertically) composable 2-morphisms. Such diagrams are called pasting diagrams and are defined with more precision in \cite{Benabou}, see also \cite{KS}.

\begin{corollary}
  \label{corollary:pasting-diag-2-fun}
  For every pasting diagram $D$ in $\BimCat^{multi}$, the 2-functor $\widehat{(-)}$ yields a pasting diagram $\widehat{D}$ with the same underlying graph in which 
 all 1-morphisms $\mathsf{F}$ are replaced by $\widehat{\mathsf{F}}$ and   all 2-morphisms $\rho$ are replaced by a composite of  $\widehat{\rho}$ with  coherence morphisms of the 2-functor $\widehat{(-)}$.
If two pasting diagrams $D$, $D'$ in $\BimCat^{multi}$ with the same 1-morphisms one the outer arrows evaluate to the same 2-morphism, then also $\widehat{D}$ and $\widehat{D'}$ evaluate to the 
same 2-morphisms. 
\end{corollary}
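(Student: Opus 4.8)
The plan is to deduce the statement from the general principle that a weak 2-functor preserves pasting composites, applied to the 2-functor $\widehat{(-)}\colon \Bm(\Cat{C},\Cat{D}) \to \BimCat(\Cat{C},\Cat{D})$ of Proposition \ref{proposition:widetlide-2-functor}. First I would recall from \cite{Benabou, KS} the precise description of a pasting diagram $D$ (a planar pasting scheme whose edges are labelled by composable 1-morphisms and whose faces are labelled by 2-morphisms, the boundary of a face being the pair of 1-morphism composites read off its two sides) and of its value $\llbracket D\rrbracket$, namely the unique 2-morphism obtained by any legal sequence of whiskerings and vertical compositions of the face labels; uniqueness is the pasting theorem. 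This reduces the problem to two elementary compatibilities of $\widehat{(-)}$ with these two operations.

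Next I would define $\widehat{D}$ explicitly: replace each object $\Cat{A}$ by $\widehat{\Cat{A}}$, each edge $\mathsf{F}$ by $\widehat{\mathsf{F}}$, and each face labelled by $\rho\colon \mathsf{F}_{k}\circ\cdots\circ\mathsf{F}_{1} \Rightarrow \mathsf{G}_{\ell}\circ\cdots\circ\mathsf{G}_{1}$ by the composite $\theta_{\underline{\mathsf{G}}}^{-1}\cdot\widehat{\rho}\cdot\theta_{\underline{\mathsf{F}}}$, where $\theta_{\underline{\mathsf{F}}}\colon \widehat{\mathsf{F}_{k}}\circ\cdots\circ\widehat{\mathsf{F}_{1}} \to \widehat{\mathsf{F}_{k}\circ\cdots\circ\mathsf{F}_{1}}$ is the iterated coherence isomorphism built from the constraints $\phi_{-,-}$ and, for a length-zero boundary path, the unit constraint $\kappa(1)$ of $\widehat{(-)}$. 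This $\theta$ is well defined precisely because the hexagon identity \eqref{eq:49} and the unit identities \eqref{eq:52} are the coherence axioms of a 2-functor; by construction $\widehat{D}$ has the same underlying graph as $D$ and its faces are labelled by 2-morphisms of the asserted shape.

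To evaluate $\widehat{D}$ I would induct on the number of faces of $D$. For a single face there is nothing to prove. In the inductive step, cut $D$ along an interior edge into $D_{1}$ and $D_{2}$, so that $\llbracket D\rrbracket$ is the vertical composite of a whiskering of $\llbracket D_{1}\rrbracket$ with a whiskering of $\llbracket D_{2}\rrbracket$. Two facts about $\widehat{(-)}$ then close the induction: since $\widehat{(-)}$ is a functor on each hom-category, $\widehat{\sigma\cdot\rho}=\widehat{\sigma}\cdot\widehat{\rho}$; and the naturality of $\phi_{-,-}$ with respect to 2-morphisms, which follows from its uniqueness characterization in Proposition \ref{proposition:tensor-functors}\refitem{item:third}, says exactly that $\widehat{(-)}$ sends a whiskering to the corresponding whiskering conjugated by the constraints $\theta$. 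Combining these, the constraints inserted along the cut edge appear as a pair $\theta^{-1}\cdot\theta$ and cancel, so $\llbracket\widehat{D}\rrbracket$ is the pasting of the modified faces; coherence of the surviving constraints along the outer boundary follows from \eqref{eq:49} and \eqref{eq:52} in the same way as in the proof of Proposition \ref{proposition:tensor-functors}\refitem{item:forth}. The upshot is the identity $\llbracket\widehat{D}\rrbracket = \theta_{t}^{-1}\cdot\widehat{\llbracket D\rrbracket}\cdot\theta_{s}$, where $\theta_{s}$ and $\theta_{t}$ are the constraint isomorphisms attached to the source and target boundary paths of $D$. The final assertion is then immediate: if $D$ and $D'$ have the same 1-morphisms on their outer boundary and $\llbracket D\rrbracket = \llbracket D'\rrbracket$, then $\theta_{s}=\theta'_{s}$ and $\theta_{t}=\theta'_{t}$, whence $\llbracket\widehat{D}\rrbracket = \theta_{t}^{-1}\cdot\widehat{\llbracket D\rrbracket}\cdot\theta_{s} = \theta_{t}^{-1}\cdot\widehat{\llbracket D'\rrbracket}\cdot\theta_{s} = \llbracket\widehat{D'}\rrbracket$.

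The main obstacle is making the cancellation bookkeeping in the inductive step fully rigorous, which is in essence a \emph{coherence statement} for weak 2-functors applied to pasting schemes; the cleanest route is either to invoke the machinery of \cite{KS} directly or to spell out the elementary whiskering compatibility carefully, since the proof of Proposition \ref{proposition:widetlide-2-functor} only asserts it. A lesser point worth a remark is that every 1-morphism composite occurring inside $D$ is a legal composite in $\Bm(\Cat{C},\Cat{D})$ --- i.e. the relevant strings of finite tensor categories reduce correctly --- but this is built into the hypothesis that $D$ is a pasting diagram in $\Bm(\Cat{C},\Cat{D})$ and costs nothing.
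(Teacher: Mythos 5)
Your argument is correct in substance, but it follows a genuinely different route from the paper. You prove the statement ``by hand'': you define $\widehat{D}$ with faces $\theta^{-1}\cdot\widehat{\rho}\cdot\theta$, and establish by induction on the faces (cutting along an interior edge, using functoriality of $\widehat{(-)}$ on hom-categories for vertical composites and the naturality of the constraints $\phi_{-,-}$ for whiskerings) the identity $\llbracket\widehat{D}\rrbracket=\theta_{t}^{-1}\cdot\widehat{\llbracket D\rrbracket}\cdot\theta_{s}$, from which both assertions follow; this is essentially a self-contained proof that weak 2-functors preserve pasting composites up to their constraint cells, and it works here because both $\Bm(\Cat{C},\Cat{D})$ and $\BimCat(\Cat{C},\Cat{D})$ are strict 2-categories, so the $\theta$'s depend only on the boundary strings (via the axioms (\ref{eq:49}) and (\ref{eq:52})). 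The paper instead short-circuits this bookkeeping: it observes that the statement is immediate for a \emph{strict} 2-functor between strict 2-categories, and then transfers it to arbitrary 2-functors by the strictification/coherence result for 2-functors cited from Gurski (Chapter 2). What your approach buys is an explicit formula relating the evaluations of $D$ and $\widehat{D}$ and independence from an external coherence theorem, at the cost of precisely the cancellation bookkeeping you flag as the main obstacle; the paper's approach buys brevity by delegating exactly that bookkeeping to the general strictification machinery. Both are valid proofs of the corollary, and your reliance on naturality of $\phi_{-,-}$ is legitimate since it is part of the data of the 2-functor established in Proposition \ref{proposition:widetlide-2-functor}.
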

\begin{proof}
These statements hold for general 2-functors. 
 Assume that  $\mathsf{H}: \Cat{B}\rightarrow \Cat{R}$
is a strict 2-functor between strict 2-categories. Then it is clear that 
$\mathsf{H}$ applied to a  pasting diagrams in $\Cat{B}$ yields a pasting diagram in $\Cat{R}$. 
By the  strictification result for general 2-functors, see e.g. \cite[Chapter 2]{Gurski}, 
any 2-functor   $\mathsf{H}: \Cat{B}\rightarrow \Cat{R}$  between (not necessarily strict) 
bicategories  applied to a  pasting diagrams in $\Cat{B}$ yields a pasting diagram in $\Cat{R}$. 
The last statement follows directly for strict 2-functors and hence again for general 2-functors as wells.  
\end{proof}
Note however, that if $D$ is a commutative diagram of 1-morphisms in $\BimCat^{multi}$, the corresponding diagram $\widehat{D}$ commutes in general only up to natural 2-isomorphisms, that is built from the 
coherence structure of the 2-functor  $\widehat{(-)}$.

We now consider structures in the collection of the bicategories $\Bm(\Cat{C},\Cat{D})$ for different $\Cat{C}$,$\Cat{D}$. 
These structures are the main tool in the construction of the tricategory of bimodule categories. 
\begin{proposition}
  \label{proposition:Bimcatm-almost-3-cat}
  The family of  bicategories $\Bm(\Cat{C},\Cat{D})$ for finite tensor categories $\Cat{C}$ and $\Cat{D}$ is equipped with  the following additional structures.
  \begin{propositionlist}
  \item The Cartesian product of module categories defines  2-functors 
    \begin{equation}
      \label{eq:times-prod-m}
      \tm: \Bm(\Cat{D},\Cat{E}) \times \Bm(\Cat{C},\Cat{D}) \rr \Bm(\Cat{C},\Cat{E}).
    \end{equation}
  \item The tensor  product of module categories defines  2-functors 
    \begin{equation}
      \label{eq:Box-prod-m}
      \Box: \Bm(\Cat{D},\Cat{E}) \times \Bm(\Cat{C},\Cat{D}) \rr \Bm(\Cat{C},\Cat{E}).
    \end{equation}
  \item \label{item:B-ps-nat} The universal balanced functors in the definition of a tensor product of module categories yield a pseudo-natural transformation $\mathsf{B}: \tm \rightarrow \Box $
  \item \label{item:units-m} The canonical bimodule category $\CCC$ defines the (strict) unit 2-functors $I_{\Cat{C}}:I \rr \Bm(\Cat{C},\Cat{C})$, where $I$ denotes the 
    unit 2-category. 
  \item \label{item:associator-m} For four  finite tensor categories  $\Cat{C}$, $\Cat{D}$, $\Cat{E}$, $\Cat{F}$ there is an adjoint equivalence, where we abbreviated $\Bm$ with $\Bms$
    \begin{equation}
      \label{eq:adj-equiv-tm}
      \begin{tikzcd}[column sep=huge]
        \Bms(\Cat{E}, \Cat{F}) \times \Bms(\Cat{D}, \Cat{E}) \times \Bms(\Cat{C},\Cat{D}) \ar{r}[name=A]{(\tm) \times 1}  \ar{d}{1 \times \tm}  &\Bms(\Cat{D}, \Cat{F}) \times \Bms(\Cat{C}, \Cat{D}) \ar{d}{\tm}  \\
        \Bms(\Cat{E}, \Cat{F}) \times \Bms(\Cat{C}, \Cat{E}) \ar{r}[name=B]{\tm} & \Bms(\Cat{C}, \Cat{F}),
        \arrow[shorten <= 7pt, shorten >=7pt, Rightarrow,to path=(A) -- (B)\tikztonodes]{}{\alpha}
      \end{tikzcd}
    \end{equation}
    more precisely,  $\alpha: \tm \circ (\tm \times 1) \rightarrow \tm \circ ( 1 \times \tm)$ is a pseudo-natural transformation and there exists a pseudo-natural transformation 
$\alpha^{-}: \tm \circ ( 1 \times \tm) \rightarrow  \tm \circ (\tm \times 1) $, such that $\alpha$ and  $\alpha^{-}$ form an adjoint equivalence. 
  \item \label{item:unit-2-cells-m}For finite tensor categories $\Cat{C}$, $\Cat{D}$ there   are  pseudo-natural transformations
    \begin{equation}
      \label{eq:unitsleft-m}
      \begin{tikzcd}[column sep=scriptsize]
        {}  &\Bm(\Cat{D}, \Cat{D}) \times \Bm(\Cat{C},\Cat{D})  \ar[shorten <= 7pt, shorten >=7pt,Rightarrow]{d}{\act}
        \ar{dr}{\tm} & \\ 
        \Bm(\Cat{C},\Cat{D})  \ar{rr}[below, name=B]{1}   \ar{ur}{I_{\Cat{D}} \times 1} & {}  & \Bm(\Cat{C},\Cat{D})
      \end{tikzcd}
    \end{equation}
    and 
    \begin{equation}
      \label{eq:unitsright-m}
      \begin{tikzcd}[column sep=scriptsize]
        {}   &\Bm(\Cat{C}, \Cat{D}) \times \Bm(\Cat{C},\Cat{C})  \ar{dr}{\tm}  \ar[shorten <= 7pt, shorten >=7pt,Rightarrow]{d}{\ract}& \\
        \Bm(\Cat{C},\Cat{D})  \ar{rr}[below, name=B]{1} \ar{ur}{1 \times I_{\Cat{C}} }   & {}& \Bm(\Cat{C},\Cat{D}).
      \end{tikzcd}
    \end{equation}
  \item \label{item:mu-m} For all bimodule categories $\DMC$ and $\CNE$, the balancing constraint of $\mathsf{B}$ defines  an invertible modification $\beta $   with components
    \begin{equation}
      \label{eq:modific-mu-m}
      \begin{tikzcd}
        (\Cat{M} \times \Cat{C}) \times  \Cat{N}\ar{rr}[name=A]{\alpha} \ar{d}{\ract_{\Cat{M}} \times 1}  && \Cat{M} \times (\Cat{C}  \times \Cat{N}) \ar{d}{1 \times \act_{\Cat{N}}}    \\
        \Cat{M} \times \Cat{N}\ar{dr}{\mathsf{B}} \arrow[shorten <= 50pt, shorten >=50pt, Rightarrow ]{rr}{\beta}  && \Cat{M} \times \Cat{N} \ar{dl}{\mathsf{B}}  \\
        & \Cat{M} \Box \Cat{N}. &
      \end{tikzcd}
    \end{equation}
     \item \label{item:lambda-m} For all bimodule categories $\DMC$ and $\CNE$, the following  diagrams  of pseudo-natural transformations commute     
    \begin{equation}
      \label{eq:modific-lambda}
      \begin{tikzcd}
        (\Cat{C}\times \Cat{M})\times \Cat{N} \ar{dr}[below]{\alpha} \ar{rr}[name=A]{ \act_{\Cat{M}} \times 1}  && \Cat{M} \times \Cat{N} \\
        &\Cat{C}\times (\Cat{M} \times \Cat{N}) \ar{ur}[right, xshift=3pt, yshift=-2pt]{\act_{\Cat{M} \times \Cat{N}}},
      \end{tikzcd}
    \end{equation}
    \begin{equation}
      \label{eq:modific-rho-m}
      \begin{tikzcd}
        \Cat{M} \times (\Cat{N} \times \Cat{E}) \ar{dr}[below]{\alpha} \ar{rr}[name=A]{1 \times \ract_{\Cat{N}}}  && \Cat{M} \times \Cat{N} \\
        &(\Cat{M} \times \Cat{N}) \times \Cat{E}.\ar{ur}[right, xshift=2pt, yshift=-1pt]{\ract_{\Cat{M}\times \Cat{N}}}
      \end{tikzcd}
    \end{equation}
 \item \label{item:pi-m}
    For all composable bimodule categories $\Cat{K}$,$\Cat{N}$,$\Cat{M}$, $\Cat{L}$, the following diagram  of pseudo-natural transformations commutes
    \begin{equation}
      \label{eq:pi-m}
      \begin{tikzcd}
        {} & ((\Cat{K} \times \Cat{N})\times \Cat{M})\times \Cat{L} \ar{dr}{\alpha \times 1} \ar{dl}{\alpha} & \\
        (\Cat{K} \times \Cat{N}) \times( \Cat{M} \times \Cat{L}) \ar{d}{\alpha} && (\Cat{K} \times (\Cat{N} \times \Cat{M}) \times \Cat{L} \ar{d}{\alpha}\\
        \Cat{K} \times (\Cat{N} \times (\Cat{M} \times \Cat{L})) && \Cat{K} \times ((\Cat{N} \times \Cat{M} ) \times \Cat{L}).   \ar{ll}{1 \times \alpha}
      \end{tikzcd}
    \end{equation}
 \end{propositionlist}
  The following axioms are satisfied, where we denoted $\Cat{M} \times \Cat{N}$ by $\Cat{M}\Cat{N}$ for better legibility. 
 \begin{equation}
     \label{eq:Axiom2-alg-tricat-mulit}
  \begin{tikzpicture}[scale=0.9]
       \tikzstyle{every node}=[font=\small]
        \node (a) at (0,0) {$((\Cat{M}\Cat{C})\Cat{N})\Cat{K}$};
        \node (b) at (10,0) {$(\Cat{M}(\Cat{C}\Cat{N}))\Cat{K}$};
        \node (c) at (10,-2.9) {$(\Cat{M}\Cat{N})\Cat{K}$};
        \node (d) at (5,-5.5) {$\Cat{M}(\Cat{N}\Cat{K})$};
        \node (e) at (0,-2.9)  {$(\Cat{M}\Cat{N})\Cat{K}$};
         \node (f2) at (2.3,-4.5)  {$ \Cat{M}(\Cat{N}\Cat{K})$};
         \node (h2) at (7.7,-4.5) {$\Cat{M}(\Cat{N}\Cat{K})$};
    \node (d2) at (5,-7.4) {$\Cat{M} \Box (\Cat{N}\Cat{K})$};
          \node (f) at (2.3,-1.6)  {$(\Cat{M}\Cat{C})(\Cat{N}\Cat{K})$};
        \node (g) at (5,-2.6)  {$\Cat{M}(\Cat{C}(\Cat{N}\Cat{K}))$};
        \node (h) at (7.7,-1.6) {$\Cat{M}((\Cat{C}\Cat{N})\Cat{K})$};
   \node (i) at (2,-9.85) {$((\Cat{M}\Cat{C})\Cat{N})\Cat{K}$};
        \node (j) at (8,-9.85) {$(\Cat{M}(\Cat{C}\Cat{N}))\Cat{K}$};
        \node (k) at (8,-11.7) {$(\Cat{M}\Cat{N})\Cat{K}$};
        \node (l) at (5,-14.1) {$\Cat{M} \Box (\Cat{N}\Cat{K})$};
        \node (m) at (2,-11.7)  {$(\Cat{M}\Cat{N})\Cat{K}$};
 \node (f2u) at (3.2,-12.8)  {$ \Cat{M}(\Cat{N}\Cat{K})$};
         \node (h2u) at (6.8,-12.8) {$\Cat{M}  (\Cat{N}\Cat{K})$};
 \node (z) at (5,-9.85)  {};

%oberes Bild Vertices mit 3-morphismsn
 \node (a1) at (5,-1.2) {$\circlearrowleft $};
 \node (a2) at (6.4,-4.1) {$\circlearrowleft$};
 \node (a3) at (3.6,-4.1) {$\Rightarrow  \beta $};
       \node (a4) at (1.1,-2.3) {$\circlearrowleft$};
  \node (a5) at (8.9,-2.3) {$\circlearrowleft$};

%unteres Bild Vertices mit 3-morphismsn
  \node (b2) at (5,-11.5) {$\Rightarrow \beta$};

% %oberes Bild Kanten
         \path[->,font=\scriptsize,>=angle 90]
      
        (a) edge node[auto] {$\alpha 1$}  (b)
        (b) edge node[auto]  {$(1\act)1$}(c)
        (c) edge node[below]  {$\alpha$}(h2)
        (e) edge node[below] {$\alpha$}(f2)
        (a) edge node[left] {$(\ract1)1$}(e)
        (a) edge node[auto] {$\alpha$}(f)
        (f) edge node[auto] {$\alpha$}(g)
       
        (g) edge node[auto] {$1\act$}(d)
        (h) edge node[above] {$1\alpha$}(g)
       
        (b) edge node[above] {$\alpha$}(h)
           (d) edge node[right] {$\mathsf{B}$}(d2)  
   (f) edge node[left] {$\ract (11)$}(f2)
  (h) edge node[right] {$1(\act 1)$}(h2)
 (f2) edge node[above] {$\mathsf{B}$}(d2)
 (h2) edge node[above] {$\mathsf{B}$}(d2)
%unteres Bild Kanten 
         (i) edge node[auto] {$\alpha1$}  (j)
        (j) edge node[auto]  {$(1\act)1$}(k)
        (k) edge node[auto]  {$\alpha$}(h2u)
        (m) edge node[auto] {$\alpha$}(f2u)
        (i) edge node[left] {$(\ract 1)1$}(m)
       (f2u) edge node[above] {$\mathsf{B}$}(l)
        (h2u) edge node[above] {$\mathsf{B}$}(l);

   \draw[double,double equal sign distance,-,shorten <= 14pt, shorten >=16pt] (d2) to node[below] {} (z); 
      \end{tikzpicture}
  \end{equation}
 \begin{equation}
     \label{eq:Axiom3-alg-tricat-mulit}
  \begin{tikzpicture}[scale=0.9]
       \tikzstyle{every node}=[font=\small]
%oberes Bild Vertices aussen
        \node (a) at (0,0){$\Cat{M}((\Cat{N} \Cat{D})\Cat{K})$};
        \node (b) at (10,0)  {$\Cat{M}(\Cat{N} (\Cat{D}\Cat{K}))$};
        \node (c) at (10,-2.9) {$\Cat{M}(\Cat{N}\Cat{K})$};
        \node (d) at (5,-5.5) {$(\Cat{M}\Cat{N})\Cat{K}$};
        \node (e) at (0,-2.9)   {$\Cat{M}(\Cat{N}\Cat{K})$};
         \node (f2) at (2.3,-4.5)   {$(\Cat{M}\Cat{N})\Cat{K}$};
         \node (h2) at (7.7,-4.5)  {$(\Cat{M}\Cat{N})\Cat{K}$};
    \node (d2) at (5,-7.4) {$(\Cat{M}\Cat{N})\Box \Cat{K}$};
%oberes Bild vertices innen
          \node (f) at (2.3,-1.6)   {$(\Cat{M}(\Cat{N}\Cat{D}))\Cat{K}$};
        \node (g) at (5,-2.6)   {$((\Cat{M}\Cat{N})\Cat{D})\Cat{K}$};
        \node (h) at (7.7,-1.6) {$(\Cat{M}\Cat{N})(\Cat{D}\Cat{K})$};
 
% %unteres Bild vertices aussen
%         %  \node (i) at (0,-7.5) {$((\Cat{M}\Cat{C})\Cat{N})\Cat{K}$};
%         % \node (j) at (10,-7.5) {$(\Cat{M}(\Cat{C}\Cat{N}))\Cat{K}$};
%         % \node (k) at (9.3,-11.5) {$(\Cat{M}\Cat{N})\Cat{K}$};
%         % \node (l) at (5,-13) {$\Cat{M}(\Cat{N}\Cat{K})$};
%         % \node (m) at (0.7,-11.5)  {$(\Cat{M}\Cat{N})\Cat{K}$};
%scaled unteres Bild
   \node (i) at (2,-9.85) {$\Cat{M}((\Cat{N} \Cat{D})\Cat{K})$};
        \node (j) at (8,-9.85) {$\Cat{M}(\Cat{N} (\Cat{D}\Cat{K}))$};
        \node (k) at (8,-11.7)  {$\Cat{M}(\Cat{N}\Cat{K})$};
        \node (l) at (5,-14.1)  {$(\Cat{M}\Cat{N})\Box \Cat{K}$};
        \node (m) at (2,-11.7)    {$\Cat{M}(\Cat{N}\Cat{K})$};
 \node (f2u) at (3.2,-12.8)  {$ (\Cat{M}\Cat{N})\Cat{K}$};
         \node (h2u) at (6.8,-12.8) {$(\Cat{M}\Cat{N}) \Cat{K}$};
   % \node (d2) at (5,-7.4) {$\Cat{M} \Box (\Cat{N}\Cat{K})$};         
 \node (z) at (5,-9.85)  {};

%oberes Bild Vertices mit 3-morphismsn
 \node (a1) at (5,-1.2) {$\circlearrowleft $};
 \node (a2) at (6.4,-4.1) {$\Leftarrow \beta$};
 \node (a3) at (3.6,-4.1) {$\circlearrowleft$}; 
       \node (a4) at (1.1,-2.3) {$\circlearrowleft$};
  \node (a5) at (8.9,-2.3) {$\circlearrowleft$};

%unteres Bild Vertices mit 3-morphismsn
  \node (b2) at (5,-11.5) {$\Leftarrow \beta$};

% %oberes Bild Kanten
         \path[->,font=\scriptsize,>=angle 90]
      
        (a) edge node[auto] {$1 \alpha $}  (b)
        (b) edge node[auto]  {$1(1\act)$}(c)
        (h2) edge node[below]  {$\alpha$}(c)
        (f2) edge node[below] {$\alpha$}(e)
        (a) edge node[left] {$1(\ract 1)$}(e)
        (f) edge node[auto] {$\alpha$}(a)
        (g) edge node[auto] {$\alpha$}(f)
       
        (g) edge node[auto] {$\ract 1$}(d)
        (g) edge node[above] {$\alpha 1$}(h)
       
        (h) edge node[above] {$\alpha$}(b)
           (d) edge node[right] {$\mathsf{B}$}(d2)  
   (f) edge node[left] {$(1 \ract )1$}(f2)
  (h) edge node[right] {$(11) \act $}(h2)
 (f2) edge node[above] {$\mathsf{B}$}(d2)
 (h2) edge node[above] {$\mathsf{B}$}(d2)
%unteres Bild Kanten 
        
 (i) edge node[auto] {$1\alpha$}  (j)
        (j) edge node[auto]  {$1(1\act)$}(k)
        (h2u) edge node[auto]  {$\alpha$}(k)
        (f2u) edge node[auto] {$\alpha$}(m)
        (i) edge node[left] {$1(\ract 1)$}(m)
       (f2u) edge node[above] {$\mathsf{B}$}(l)
        (h2u) edge node[above] {$\mathsf{B}$}(l);

   \draw[double,double equal sign distance,-,shorten <= 14pt, shorten >=16pt] (d2) to node[below] {} (z); 
      \end{tikzpicture}
  \end{equation}
\end{proposition}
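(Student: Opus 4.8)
The plan is to assemble each listed item from structures that have already been built, so that this proposition supplies the data (apart from the unit and associativity $2$-cells for $\Box$, treated next) for the tricategory $\BimCat$; no genuinely new computation is needed, only careful organisation.

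\emph{The Cartesian-product data.} Since the Cartesian product of $\Bbbk$-linear categories is strictly functorial, (\ref{eq:times-prod-m}) defines a $2$-functor, and it lands in $\Bm$ because the Cartesian product and the concatenation of multi-balanced module functors are again multi-balanced, re-bracketing and concatenation leaving the module actions and the balancings untouched, by Proposition \ref{proposition:multi-2-cat}. For any three strings the re-bracketing functor $\alpha$ of item \refitem{item:associator-m} is literally an isomorphism of categories with all structure isomorphisms the identity; hence it is a multi-module functor, so a $1$-morphism of $\Bm$, $\alpha^{-}$ is its strict inverse, and the adjoint-equivalence data is trivial. The unit $2$-functor $I_{\Cat{C}}$ of item \refitem{item:units-m} is the choice of $\CCC$, the unitors of item \refitem{item:unit-2-cells-m} are the canonical unit isomorphisms of the Cartesian product, and the diagrams (\ref{eq:modific-lambda}), (\ref{eq:modific-rho-m}) and the pentagon of item \refitem{item:pi-m} then commute by direct inspection of iterated Cartesian products together with the interchange law.

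\emph{The tensor-product data.} On objects $\Box$ of (\ref{eq:Box-prod-m}) is $ev_{\Box}$ of the concatenated string, and on $1$- and $2$-morphisms it is $\widehat{(-)}$ applied after $\tm$; hence it is a $2$-functor by Propositions \ref{proposition:widetlide-2-functor} and \ref{proposition:Box-2-functor-bimod}. The component of the pseudo-natural transformation $\mathsf{B}\colon\tm\to\Box$ of item \refitem{item:B-ps-nat} at a pair is the universal multi-balanced module functor of the concatenated string; its naturality $2$-cells are the isomorphisms $\varphi$ of Definition \ref{definition:tensor-prod} and Proposition \ref{proposition:tensor-functors}\refitem{item:first}, and the pseudo-naturality coherences, compatibility with composition and with units, are Proposition \ref{proposition:tensor-functors}\refitem{item:third} and \refitem{item:fith}. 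The invertible modification $\beta$ of item \refitem{item:mu-m} is the balancing constraint of $\mathsf{B}$ repackaged as the $2$-cell filling (\ref{eq:modific-mu-m}), and its modification axioms are precisely the balanced-module-functor diagram (\ref{eq:balanced-module-diag}) for $\mathsf{B}$, which was established in Proposition \ref{proposition:tensor-again-module}.

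\emph{The two axioms and the expected obstacle.} For (\ref{eq:Axiom2-alg-tricat-mulit}) and (\ref{eq:Axiom3-alg-tricat-mulit}) I would argue as follows. Each side of each axiom is a pasting composite of $2$-cells built from $\alpha$, the module actions, $\mathsf{B}$ and $\beta$, and both sides funnel through a single occurrence of the universal functor $\mathsf{B}$ into a $\Box$-object. Because the functor $\Phi$ of Definition \ref{definition:tensor-prod} is an equivalence, the asserted equality then reduces to an identity of $2$-cells between functors out of the underlying Cartesian product, in which $\beta$ has become the balancing constraint of $\mathsf{B}$; this reduced identity follows from the coherence of $\alpha$ for Cartesian products, the interchange law, and (\ref{eq:balanced-module-diag}). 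Equivalently, one recognises both sides, via Corollary \ref{corollary:pasting-diag-2-fun}, as images under $\widehat{(-)}$ of the corresponding coherence diagram between Cartesian objects in $\Bm$. The obstacle I anticipate is organisational rather than conceptual: one must pin down the iterative, bracketing-respecting definitions of $ev_{\Box}$ and of $\widehat{(-)}$ so that $\widehat{(-)}$ carries every re-bracketing multi-module functor to the matching $\alpha$ and carries the balancing constraint of $\mathsf{B}_{\Cat{M},\Cat{C}}\tm 1$ to $\beta$, and then keep track of the coherence cells $\phi_{-,-}$ and $\kappa$ of $\widehat{(-)}$ so that the image pasting diagram is literally (\ref{eq:Axiom2-alg-tricat-mulit}) and (\ref{eq:Axiom3-alg-tricat-mulit}); this is a long but routine diagram chase of the same character as the proof of Proposition \ref{proposition:tensor-functors}\refitem{item:forth}.
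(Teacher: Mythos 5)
Your assembly of the data in items \emph{i)}--\emph{ix)} matches the paper's own (terse) proof point for point: the Cartesian structures, units and associators are strict and immediate from the definitions; the $2$-functor $\Box$ is Proposition \ref{proposition:widetlide-2-functor}; the pseudo-naturality of $\mathsf{B}$ is built from the isomorphisms $\varphi$ and the coherences of Proposition \ref{proposition:tensor-functors} (the paper argues exactly this way, first for bimodule categories and then by iteration for multi-module categories); the unitor $2$-cells come from the compatibility of the module actions with bimodule functors and transformations; and $\beta$ is the balancing constraint of $\mathsf{B}$ with its axioms supplied by Proposition \ref{proposition:tensor-again-module}. So for these items you are on the paper's route.

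The one place you drift is the treatment of the axioms (\ref{eq:Axiom2-alg-tricat-mulit}) and (\ref{eq:Axiom3-alg-tricat-mulit}), where the extra machinery you invoke is both unnecessary and not applicable as stated. The adjoint equivalence $\Phi$ of Definition \ref{definition:tensor-prod} governs functors \emph{out of} $\Cat{M}\Box\Cat{N}$ (precomposition with $\mathsf{B}$); in the axiom diagrams the compared $2$-cells run between composites whose source is a Cartesian product and in which $\mathsf{B}$ sits at the very end, and $\beta$ is the balancing constraint of $\mathsf{B}$ itself rather than a whiskering of some $2$-cell by $\mathsf{B}$ --- so there is nothing to ``cancel'' through the universal property. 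Similarly, these diagrams are not images under $\widehat{(-)}$: the whole point of this proposition is to verify them inside $\Bm$, and transporting them along $\widehat{(-)}$ via Corollary \ref{corollary:pasting-diag-2-fun} is precisely the subsequent step in the proof of Theorem \ref{theorem:Bimcat-3-cat}; your ``expected obstacle'' about matching $ev_{\Box}$, $\phi_{-,-}$ and $\kappa$ belongs to that later argument, not here. Your fallback justification, however, is the correct one and is the paper's: every unlabelled cell is a strictly commuting square of Cartesian re-bracketings and module actions, so each side is literally the balancing constraint of $\mathsf{B}$ whiskered by strict re-bracketing functors, and the two pasting composites coincide on the nose --- this is what the paper means by the axioms following ``directly from the properties of the Cartesian product of categories''. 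With the detour through $\Phi$ and $\widehat{(-)}$ deleted, your proof is essentially the paper's.
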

\begin{proof}
  The first part follows from the definitions, the second part is shown in Proposition \ref{proposition:widetlide-2-functor}. For the third part 
we first show that $\mathsf{B}$ defines a pseudo-natural transformation between the 2-functors $\times, \Box: \BimCat(\Cat{D},\Cat{E}) \times \BimCat(\Cat{C}, \Cat{D}) \rightarrow \BimCat(\Cat{C}, \Cat{E})$.
By Proposition \ref{proposition:Box-2-functor-bimod}, the bimodule natural isomorphisms $\varphi_{\mathsf{F} \times \mathsf{G}}: \mathsf{B}(\mathsf{F} \times \mathsf{G}) \rightarrow (\mathsf{F} \Box \mathsf{G}) \mathsf{B}$
for bimodule functors $\mathsf{F} \times \mathsf{G}: \END \times \DMC \rightarrow \ENpD \times \DMpC$ are compatible with the composition of bimodule functors. The compatibility of the natural isomorphisms $\varphi_{\mathsf{F} \times \mathsf{G}}$ with bimodule natural transformations follows also directly from the 2-functorial properties of the tensor product. 
Since we used only the 2-functoriality of the tensor product, this argument extends 
 first to bimodule categories and then by repeated application also to multi-module categories. This shows the third part. 
 Parts \refitem{item:units-m} and \refitem{item:associator-m} are clear. The 
properties of a pseudo-natural transformation for the module action in part \refitem{item:unit-2-cells-m} follow from the compatibility conditions  between  module actions and  bimodule functors and bimodule natural transformations.

Parts \refitem{item:mu-m}- \refitem{item:pi-m} are clear from the definitions. 
The axioms in equations (\ref{eq:Axiom2-alg-tricat-mulit}) and (\ref{eq:Axiom3-alg-tricat-mulit})  follow directly from the properties of the Cartesian product of categories.  
\end{proof}

\subsection{The tricategory of bimodule categories}

We finally show that bimodule categories define an algebraic tricategory according to Definition \ref{definition:tricategory}, that is a slight modification of  \cite[Def. 3.1.2]{Gurski}.
\begin{theorem}
  \label{theorem:Bimcat-3-cat}
  Finite tensor categories, finite bimodule categories, right exact 
bimodule functors and bimodule natural transformations from  an algebraic tricategory $\BimCat$ in the sense of Definition \ref{definition:tricategory}.
The composition $\Box$ is given by the tensor product of bimodule categories, the horizontal composition $\circ$ is given by the composition of functors and the vertical composition is defined by the 
vertical composition of natural transformations. 
\end{theorem}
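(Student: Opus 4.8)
The plan is to obtain every piece of tricategorical data for $\BimCat$ by transporting, along the family of $2$-functors $\widehat{(-)}\colon \Bm(\Cat{C},\Cat{D}) \rightarrow \BimCat(\Cat{C},\Cat{D})$ of Proposition \ref{proposition:widetlide-2-functor}, the corresponding data assembled for the bicategories $\Bm(\Cat{C},\Cat{D})$ in Proposition \ref{proposition:Bimcatm-almost-3-cat}. The crucial observation is that on the multi-module side the composition is the Cartesian product $\tm$, which is strictly associative and strictly unital; hence the structure recorded in Proposition \ref{proposition:Bimcatm-almost-3-cat} is maximally strict: the associator $\alpha$ and the unit transformations come from the associativity and unit isomorphisms of the Cartesian product of categories (coherent by Mac Lane's theorem), the invertible modification $\beta$ from the balancing of $\mathsf{B}$, the remaining would-be modifications from identities, and the coherence axioms of an algebraic tricategory hold there — the equations (\ref{eq:Axiom2-alg-tricat-mulit}) and (\ref{eq:Axiom3-alg-tricat-mulit}) displayed in that proposition, together with the remaining ($K_{5}$/pentagon) axiom for the pentagonator, which for the Cartesian product is merely an identity of associativity cells. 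Since the hom-bicategories $\BimCat(\Cat{C},\Cat{D})$ are actually $2$-categories (Proposition \ref{proposition:multi-2-cat} and its single-string restriction), it suffices to produce the interchange data $a,l,r,\pi,\mu,\lambda,\rho$ of Definition \ref{definition:tricategory} and to verify its coherence axioms.

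Concretely, I would assemble the data as follows. Objects are finite tensor categories; hom-bicategories are the $2$-categories $\BimCat(\Cat{C},\Cat{D})$; the composition $2$-functor is $\Box$ (Proposition \ref{proposition:Box-2-functor-bimod} and Proposition \ref{proposition:Bimcatm-almost-3-cat}\,\refitem{item:associator-m}) and the unit $2$-functor picks out $\CCC$. Because $\Box$ is by construction $\widehat{(-)}$ applied to a Cartesian product, the iterated composites $(\Cat{M}\Box\Cat{N})\Box\Cat{K}$ and $\Cat{M}\Box(\Cat{N}\Box\Cat{K})$ are canonically identified, through the coherence isomorphisms $\phi$ of the $2$-functor $\widehat{(-)}$ (Proposition \ref{proposition:tensor-functors}\,\refitem{item:third}, Proposition \ref{proposition:Tensor-extends-2-fun}, Proposition \ref{proposition:2-cat-extend-bimod-bimod}), with $\widehat{(\Cat{M}\tm\Cat{N})\tm\Cat{K}}$ and $\widehat{\Cat{M}\tm(\Cat{N}\tm\Cat{K})}$ respectively; the associator adjoint equivalence $a$ is then $\widehat{\alpha}$ pre- and post-composed with these $\phi$'s, where $\alpha$ is the Cartesian associator of Proposition \ref{proposition:Bimcatm-almost-3-cat}\,\refitem{item:associator-m}. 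A $2$-functor carries an adjoint equivalence to an adjoint equivalence, so $a$ is one, with pseudo-inverse built from $\alpha^{-}$. The unitors $l,r$ arise the same way from the transformations (\ref{eq:unitsleft-m}) and (\ref{eq:unitsright-m}), while the pentagonator $\pi$ and the unit modifications $\mu,\lambda,\rho$ are obtained by applying $\widehat{(-)}$ to the commuting diagrams (\ref{eq:pi-m}), (\ref{eq:modific-lambda}), (\ref{eq:modific-rho-m}) and to the modification $\beta$ of (\ref{eq:modific-mu-m}): although these diagrams commute in $\Bm$, their $\widehat{(-)}$-images commute only up to the coherence $2$-cells of $\widehat{(-)}$, and precisely these $2$-cells, pasted in the prescribed pattern, constitute $\pi,\mu,\lambda,\rho$; invertibility is automatic since $\widehat{(-)}$ preserves invertible $2$-cells. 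That each is genuinely a modification — compatible with the $2$-morphisms of the hom-$2$-categories — follows from the $2$-functoriality of $\widehat{(-)}$ together with the pseudo-naturality of $\mathsf{B}\colon\tm\to\Box$ (Proposition \ref{proposition:Bimcatm-almost-3-cat}\,\refitem{item:B-ps-nat}).

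It remains to verify the coherence axioms of Definition \ref{definition:tricategory}: the non-abelian cocycle ($K_{5}$) axiom for $\pi$ and the unit axiom relating $\pi$ to $\mu,\lambda,\rho$. Each is an equality of two pasting diagrams of $2$-cells in a single hom-$2$-category $\BimCat(\Cat{C},\Cat{D})$, every $1$- and $2$-cell of which is either a $\widehat{(-)}$-image of data on the $\Bm$ side or a coherence $2$-cell of $\widehat{(-)}$. Using Corollary \ref{corollary:pasting-diag-2-fun}, both sides rewrite as $\widehat{(-)}$ applied to the corresponding pasting diagram in $\Bm$, decorated with a fixed arrangement of coherence cells; the underlying diagrams in $\Bm$ agree by equations (\ref{eq:Axiom2-alg-tricat-mulit}), (\ref{eq:Axiom3-alg-tricat-mulit}) and the Cartesian pentagon identity, and the two decorations agree by the coherence of the $2$-functor $\widehat{(-)}$ — equivalently, by the uniqueness clauses of Proposition \ref{proposition:tensor-functors}\,\refitem{item:third} and Proposition \ref{proposition:Tensor-extends-2-fun}, which pin down every composite of $\phi$'s.

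I expect the main obstacle to be bookkeeping rather than conceptual content. Because $\Box$ is only weakly associative whereas $\tm$ is genuinely strict, the isomorphisms $\phi_{\mathsf{G},\mathsf{F}}$ must be inserted at every nested occurrence of a tensor product, and the delicate part is organizing these insertions so that the two cocycle axioms collapse cleanly onto the identities already proven in $\Bm$. The safeguard is that $\widehat{(-)}$ is an honest $2$-functor on each $\BimCat(\Cat{C},\Cat{D})$ and the $\phi$'s are uniquely characterized, so no genuine coherence can fail; the labour lies solely in drawing the pasting diagrams in enough detail to exhibit this.
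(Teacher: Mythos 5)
Your overall strategy is the paper's: transport the structure of $\Bm$ along the $2$-functor $\widehat{(-)}$, take $a=\widehat{\alpha}$ (an adjoint equivalence by Lemma \ref{lemma:functor-presevers-adj-equiv}), define $\mu,\lambda,\rho,\pi$ by applying $\widehat{(-)}$ to the diagrams of Proposition \ref{proposition:Bimcatm-almost-3-cat}, and reduce the three tricategory axioms via Corollary \ref{corollary:pasting-diag-2-fun} to the identities (\ref{eq:Axiom2-alg-tricat-mulit}), (\ref{eq:Axiom3-alg-tricat-mulit}) and the strict pentagon for the Cartesian associator. That part is sound and matches the paper.

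The genuine gap is your treatment of the unit constraints. You assert that $l$ and $r$ ``arise the same way'' as the associator from (\ref{eq:unitsleft-m}) and (\ref{eq:unitsright-m}), i.e.\ by transport. But Proposition \ref{proposition:Bimcatm-almost-3-cat}\,\refitem{item:unit-2-cells-m} only provides the \emph{pseudo-natural transformations} $\act$ and $\ract$, not adjoint equivalences, and they cannot be adjoint equivalences in $\Bm$: by the very definition of $\Bm$ (Proposition \ref{proposition:multi-2-cat}) the only $1$-morphism from the shorter string $\Cat{M}$ to the longer string $\Cat{D}\times\Cat{M}$ is the zero morphism, so there is no candidate pseudo-inverse to transport. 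Hence the adjoint-equivalence property required in Definition \ref{definition:tricategory}\,\refitem{item:unit-2-cells-bimod} is not a formal consequence of $2$-functoriality; it has to be established directly at the level of $\BimCat$. This is exactly where the paper does its real work: one must define $l_{\Cat{M}}^{-}=\mathsf{B}_{\Cat{D},\Cat{M}}\circ\iota$ with $\iota(m)=\unit_{\Cat{D}}\times m$ (a functor that does not even live in $\Bm$), show that $l$ and $l^{-}$ are pseudo-natural (Lemma \ref{lemma:lM-psnat}), build the $2$-cells $\alpha^{l}$ from $\varphi$ and the unit constraint $\lambda^{\Cat{M}}$ and $\beta^{l}$ from the balancing of $\mathsf{B}$, check the modification conditions (which come down to the module-functor triangle (\ref{eq:mod-functor-triangle}) and the balancedness of $\varphi_{\Cat{D}\times\mathsf{F}}$), and verify the snake identities using the balanced-unit axiom (\ref{eq:balanced-unit}) (Proposition \ref{proposition:adjoint-unit-equival}). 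Without this construction your definitions of $\mu,\lambda,\rho$ (which involve $l$, $r$, $l^{-}$, $r^{-}$) and the unit axioms are not yet grounded, so the proposal as written does not close.
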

By unpacking  Definition \ref{definition:tricategory}, one finds that the claim of the theorem follows from the following. 
\begin{enumerate}
\item  \label{item:2-fun-Bimcat}$\BimCat(\Cat{C},\Cat{D})$ is a strict 2-category  with the composition  of functors as horizontal composition $\circ$ and  the composition  of natural transformations
  as vertical composition $\cdot$. 
\item For any three finite tensor categories $\Cat{C}$,$\Cat{D}$,$\Cat{E}$, 
the tensor product of module categories defines  a  2-functors 
  \begin{equation}
    \label{eq:Box-prod-BimCat}
    \Box: \BimCat(\Cat{D},\Cat{E}) \times \BimCat(\Cat{C},\Cat{D}) \rr \BimCat(\Cat{C},\Cat{E}).
  \end{equation}
\item \label{item:units-bimod}The  bimodule category $\CCC$ defines for each object $\Cat{C}$ a (strict) unit 2-functor $I_{\Cat{C}}:I \rr \BimCat(\Cat{C},\Cat{C})$, where $I$ denotes the 
  unit 2-category with one object $1$, one 1-morphism $1_{1}$ and one 2-morphism $1_{1_{1}}$.
\item \label{item:associator-bimod}For any four objects $\Cat{C}$, $\Cat{D}$, $\Cat{E}$, $\Cat{F}$ there is an adjoint equivalence
  $a: \Box (\Box \times 1) \Rightarrow \Box( 1 \times \Box)$, called  associator in the following. More precisely,  $a$ consists of a  pseudo-natural transformation
  \begin{equation}
    \label{eq:adj-equiv-asso}
    \begin{xy}
      \xymatrix{
        \BimCat(\Cat{E}, \Cat{F}) \times \BimCat(\Cat{D}, \Cat{E}) \times \BimCat(\Cat{C},\Cat{D}) \ar[rr]^{\Box \times 1}  \ar[d]_{1 \times \Box} \rrtwocell<\omit>{<4>a} &&\BimCat(\Cat{D}, \Cat{F}) \times \BimCat(\Cat{C}, \Cat{D}) \ar[d]^{\Box}  \\
        \BimCat(\Cat{E}, \Cat{F}) \times \BimCat(\Cat{C}, \Cat{E}) \ar[rr]_{\Box} && \BimCat(\Cat{C}, \Cat{F}),
      }
    \end{xy}
  \end{equation}
  and there is a pseudo-natural transformation  $a^{-}:\Box( 1 \times \Box)\rightarrow \Box (\Box \times 1) $, such that $a$ and $a^{-}$ form an adjoint equivalence, see Definition \ref{definition:adj-equiv}.
\item \label{item:unit-2-cells}For any two objects $\Cat{C}$, $\Cat{D}$, there are adjoint equivalences $l: \Box(I_{\Cat{D}} \times 1)\Rightarrow 1$ and $r: \Box (1\times I_{\Cat{C}}) \Rightarrow 1$, called the unit 2-morphisms, 
  \begin{equation}
    \label{eq:unitsleft-bimod}
    \begin{tikzcd}
      {}         &\BimCat(\Cat{D}, \Cat{D}) \times \BimCat(\Cat{C},\Cat{D})  \ar{dr}{\Box}  \ar[shorten <= 7pt, shorten >=7pt,Rightarrow]{d}{l} \\
      \BimCat(\Cat{C},\Cat{D})  \ar{rr}[below]{1} \ar{ur}{I_{\Cat{D}} \times 1}   & {}& \BimCat(\Cat{C},\Cat{D})
    \end{tikzcd}
  \end{equation}
  and 
  \begin{equation}
    \label{eq:unitsright-bimod}
    \begin{tikzcd}
      {}   &\BimCat(\Cat{C}, \Cat{D}) \times \BimCat(\Cat{C},\Cat{C})  \ar{dr}{\Box}  \ar[shorten <= 7pt, shorten >=7pt,Rightarrow]{d}{r}& \\
      \BimCat(\Cat{C},\Cat{D})  \ar{rr}[below, name=B]{1} \ar{ur}{1 \times I_{\Cat{C}} }   & {}& \BimCat(\Cat{C},\Cat{D}).
    \end{tikzcd}
  \end{equation}
  By definition of an adjoint equivalence, $l$ and $r$ are pseudo-natural transformations. Furthermore there are  corresponding pseudo-natural transformations $l^{-}: 1 \Rightarrow \Box(I_{\Cat{D}} \times 1)$ and
  $r^{-}:1\Rightarrow \Box (1\times I_{\Cat{C}})$.
\item \label{item:mu-bimod} For all bimodule categories $\DMC$ and $\CNE$ there is an invertible modification $\mu$ with component 3-morphisms
  \begin{equation}
    \label{eq:modific-mu}
    \begin{tikzcd}
      (\Cat{M} \Box \Cat{C}) \Box \Cat{N}\ar{rr}[name=A]{a} \ar{dr}[below,yshift=-3pt]{r_{\Cat{M}} \Box 1}  && \Cat{M} \Box (\Cat{C}  \Box \Cat{N}) \ar{dl}{1 \Box l_{\Cat{N}}}    \\
      &\Cat{M} \Box \Cat{N}. \arrow[shorten <= 5pt, shorten >=5pt, Leftarrow,to path=-- (A) \tikztonodes]{}{\mu} &   
    \end{tikzcd}
  \end{equation}
  
\item \label{item:lambda-bimod}  For all bimodule categories $\CMD$ and $\DNE$ there is an invertible modification $\lambda$ with component 3-morphisms
  \begin{equation}
    \label{eq:modific-lambda-bimod}
    \begin{tikzcd}
      (\Cat{C}\Box \Cat{M})\Box \Cat{N} \ar{dr}[below]{a} \ar{rr}[name=A]{l_{\Cat{M}} \Box 1}  && \Cat{M} \Box \Cat{N} \\
      &\Cat{C}\Box (\Cat{M} \Box \Cat{N}). \ar{ur}[below,xshift=9pt]{l_{\Cat{M} \Box \Cat{N}}}
      \arrow[shorten <= 5pt, shorten >=5pt, Leftarrow,to path=-- (A) \tikztonodes]{}{\lambda} &   
    \end{tikzcd}
  \end{equation}
\item \label{item:rho-bimod}  For all bimodule categories $\DMC$ and $\CNE$ there is an invertible modification $\rho$ with component 3-morphisms
  \begin{equation}
    \label{eq:modific-rho}
    \begin{tikzcd}
      \Cat{M} \Box (\Cat{N} \Box \Cat{E}) \ar{dr}[below]{a} \ar{rr}[name=A]{1 \Box r_{\Cat{N}}}  && \Cat{M} \Box \Cat{N} \\
      &(\Cat{M} \Box \Cat{N}) \Box \Cat{E}\ar{ur}[below,xshift=9pt]{r_{\Cat{M}\Box \Cat{N}}}
      \arrow[shorten <= 5pt, shorten >=5pt, Leftarrow,to path=-- (A) \tikztonodes]{}{\rho} &   
    \end{tikzcd}
  \end{equation}
  
\item \label{item:pi-bimod}
  For all composable bimodule categories $\Cat{K}$,$\Cat{N}$,$\Cat{M}$ and $\Cat{L}$, there is an invertible modification $\pi$ with component 3-morphisms
  \begin{equation}
    \label{eq:pi}
    \begin{tikzcd}
      {} & ((\Cat{K} \Box \Cat{N})\Box \Cat{M})\Box \Cat{L} \ar{dr}{a \Box 1} \ar{dl}{a} & \\
      (\Cat{K} \Box \Cat{N}) \Box \Cat{M} \Box \Cat{L}) \ar{d}{a} && (\Cat{K} \Box (\Cat{N} \Box \Cat{M}) \Box \Cat{L} \ar{d}{a}\\
      \Cat{K} \Box (\Cat{N} \Box (\Cat{M} \Box \Cat{L})) && \Cat{K} \Box ((\Cat{N} \Box \Cat{M} ) \Box \Cat{L})   \ar{ll}{1 \Box a}
      \ar[shorten <= 50pt, shorten >=50pt, Rightarrow]{llu}{\pi}
    \end{tikzcd}
  \end{equation}
 
\item The following three axioms are satisfied. In the first axiom, the unmarked isomorphisms are isomorphisms from the naturality of $a$.

 \begin{equation}
      \label{eq:Axiom1-alg-tricat-bimod}
      \begin{tikzpicture}
        \tikzstyle{every node}=[font=\small]
%oberes Bild Vertices aussen
        \node (a) at (0,-2) {$((\Cat{K}(\Cat{L}\Cat{M})\Cat{N})\Cat{R}$};
        \node (b) at (2.5,-1) {$(\Cat{K}((\Cat{L}\Cat{M})\Cat{N}))\Cat{R}$};
        \node (c) at (5,0) {$(\Cat{K}(\Cat{L}(\Cat{M}\Cat{N})))\Cat{R}$};
        \node (d) at (7.5,-1) {$\Cat{K}((\Cat{L}(\Cat{M}\Cat{N}))\Cat{R})$};
        \node (e) at (10,-2) {$\Cat{K}(\Cat{L}((\Cat{M}\Cat{N})\Cat{R}))$};
        \node (f) at (10,-4) {$\Cat{K}(\Cat{L}(\Cat{M}(\Cat{N}\Cat{R})))$};
        \node (g) at (6.66,-6) {$(\Cat{K}\Cat{L})(\Cat{M}(\Cat{N}\Cat{R}))$};
        \node (h) at (3.33,-6) {$((\Cat{K}\Cat{L})\Cat{M})(\Cat{N}\Cat{R})$};
        \node (i) at (0,-4) {$(((\Cat{K}\Cat{L})\Cat{M})\Cat{N})\Cat{R}$};
%oberes Bild vertices innen
        \node (k) at (3,-2.7)  {$((\Cat{K}\Cat{L})(\Cat{M}\Cat{N}))\Cat{R}$};
        \node (l) at (6,-3.3) {$(\Cat{K}\Cat{L})((\Cat{M}\Cat{N})\Cat{R})$};
              \node (z) at (5,-6){};

%unteres Bild vertices aussen
        \node (m) at (0,-10) {$((\Cat{K}(\Cat{L}\Cat{M})\Cat{N})\Cat{R}$};
        \node (n) at (2.5,-9)  {$(\Cat{K}((\Cat{L}\Cat{M})\Cat{N}))\Cat{R}$};
        \node (o) at (5,-8) {$(\Cat{K}(\Cat{L}(\Cat{M}\Cat{N})))\Cat{R}$};
        \node (p) at (7.5,-9) {$\Cat{K}((\Cat{L}(\Cat{M}\Cat{N}))\Cat{R})$};
        \node (q) at (10,-10){$\Cat{K}(\Cat{L}((\Cat{M}\Cat{N})\Cat{R}))$};
        \node (r) at (10,-12){$\Cat{K}(\Cat{L}(\Cat{M}(\Cat{N}\Cat{R})))$};
        \node (s) at (6.66,-14)  {$(\Cat{K}\Cat{L})(\Cat{M}(\Cat{N}\Cat{R}))$};
        \node (t) at (3.33,-14) {$((\Cat{K}\Cat{L})\Cat{M})(\Cat{N}\Cat{R})$};
        \node (u) at (0,-12){$(((\Cat{K}\Cat{L})\Cat{M})\Cat{N})\Cat{R}$};
%unteres Bild vertices innen

        \node (w) at (3.33,-12)  {$(\Cat{K}(\Cat{L}\Cat{M}))(\Cat{N}\Cat{R})$};
        \node (x) at (6.66,-12) {$\Cat{K}((\Cat{L}\Cat{M})(\Cat{N}\Cat{R}))$};
        \node (y) at (5,-10) {$\Cat{K}(((\Cat{L}\Cat{M})\Cat{N})\Cat{R})$};

%oberes Bild Vertices mit 3-morphismsn
 \node (a1) at (2.4,-1.8) {$\Downarrow \pi 1$};
 \node (a2) at (6.7,-1.9) {$\Downarrow \pi$};
 \node (a3) at (8.2,-3.7) {$\simeq$};
          \node (a5) at (3.7,-4.5) {$\Downarrow \pi$};

%unteres Bild Vertices mit 3-morphismsn

 \node (b1) at (5,-9) {$\simeq $};
 \node (b2) at (3.2,-10.4) {$\Downarrow \pi$};
 \node (b3) at (7.7,-10.3) {$\Downarrow 1 \pi$};
 \node (b4) at (1.65,-12) {$\simeq$};
 \node (b5) at (5.3,-13) {$\Downarrow \pi$};

%oberes Bild Kanten
         \path[->,font=\scriptsize,>=angle 90]
      
        (a) edge node[auto] {$1a$}  (b)
        (b) edge node[auto]  {$(1a)1$}(c)
        (c) edge node[auto]  {$a$}(d)
        (d) edge node[auto] {$1a$}(e)
        (e) edge node[auto] {$1(1a)$}(f)
        (g) edge node[auto] {$a$}(f)
        (h) edge node[auto] {$a$}(g)
        (i) edge node[auto] {$a$}(h)
        (i) edge node[auto] {$(a1)1$}(a)
        (i) edge node[auto] {$a1$}(k)
        (k) edge node[right] {$a1$}(c)
        (k) edge node[auto] {$a$}(l)
        (l) edge node[auto]  {$ a $}  (e)
        (l) edge node[auto] {$(11)a$}(g)
%unteres Bild Kanten 
        (m) edge node[auto] {$a 1$}  (n)
        (n) edge node[auto]  {$(1a)1$}(o)
        (o) edge node[auto]  {$a$}(p)
        (p) edge node[auto] {$1a$}(q)
        (q) edge node[auto] {$1(1a)$}(r)
        (u) edge node[auto] {$(a1)1$}(m)
        (u) edge node[auto] {$a$}(t)
        (t) edge node[auto] {$a$}(s)
        (s) edge node[auto] {$a$}(r)
        (m) edge node[auto] {$a$}(w)
        (t) edge node[auto] {$a(11)$}(w)
        (w) edge node[auto] {$1a$}(x)
        (x) edge node[auto] {$1a$}(r)
        (n) edge node[auto]   {$ a $}  (y)
        (y) edge node[below, xshift=8.3pt, yshift=3pt] {$1(a1) $}(p)
         (y) edge node[auto] {$1a $}(x);
 \draw[double,double equal sign distance,-,shorten <= 8pt, shorten >=8pt] (z) to node[below] {} (o); 
      \end{tikzpicture}
    \end{equation}
  \begin{equation}
     \label{eq:Axiom2-alg-tricat-bimod}
  \begin{tikzpicture}[scale=0.9]
       \tikzstyle{every node}=[font=\small]
%oberes Bild Vertices aussen
        \node (a) at (0,0) {$((\Cat{M}\Cat{C})\Cat{N})\Cat{K}$};
        \node (b) at (10,0) {$(\Cat{M}(\Cat{C}\Cat{N}))\Cat{K}$};
        \node (c) at (9.3,-4) {$(\Cat{M}\Cat{N})\Cat{K}$};
        \node (d) at (5,-5.5) {$\Cat{M}(\Cat{N}\Cat{K})$};
        \node (e) at (0.7,-4)  {$(\Cat{M}\Cat{N})\Cat{K}$};
      
%oberes Bild vertices innen
  \node (f) at (2.3,-1.6)  {$(\Cat{M}\Cat{C})(\Cat{N}\Cat{K})$};
        \node (g) at (5,-2.6)  {$\Cat{M}(\Cat{C}(\Cat{N}\Cat{K}))$};
        \node (h) at (7.7,-1.6) {$\Cat{M}((\Cat{C}\Cat{N})\Cat{K})$};
 
%unteres Bild vertices aussen
        %  \node (i) at (0,-7.5) {$((\Cat{M}\Cat{C})\Cat{N})\Cat{K}$};
        % \node (j) at (10,-7.5) {$(\Cat{M}(\Cat{C}\Cat{N}))\Cat{K}$};
        % \node (k) at (9.3,-11.5) {$(\Cat{M}\Cat{N})\Cat{K}$};
        % \node (l) at (5,-13) {$\Cat{M}(\Cat{N}\Cat{K})$};
        % \node (m) at (0.7,-11.5)  {$(\Cat{M}\Cat{N})\Cat{K}$};
%scaled
   \node (i) at (2.5,-7.85) {$((\Cat{M}\Cat{C})\Cat{N})\Cat{K}$};
        \node (j) at (7.5,-7.85) {$(\Cat{M}(\Cat{C}\Cat{N}))\Cat{K}$};
        \node (k) at (7.15,-9.85) {$(\Cat{M}\Cat{N})\Cat{K}$};
        \node (l) at (5,-10.6) {$\Cat{M}(\Cat{N}\Cat{K})$};
        \node (m) at (2.85,-9.85)  {$(\Cat{M}\Cat{N})\Cat{K}$};
          \node (z) at (5,-7.85)  {};

%oberes Bild Vertices mit 3-morphismsn
 \node (a1) at (5,-1.2) {$\Downarrow \pi $};
 \node (a2) at (5.9,-3.1) {$\Rightarrow 1\lambda$};
 \node (a3) at (4.1,-3.1) {$\Leftarrow \mu$};
       \node (a4) at (2,-3) {$\simeq$};
  \node (a5) at (8,-3) {$\simeq$};

%unteres Bild Vertices mit 3-morphismsn
 \node (b1) at (4.1,-8.5) {$\Rightarrow \mu 1$};
  \node (b2) at (5,-9.5) {$\simeq $};

% %oberes Bild Kanten
         \path[->,font=\scriptsize,>=angle 90]
      
        (a) edge node[auto] {$a1$}  (b)
        (b) edge node[auto]  {$(1l)1$}(c)
        (c) edge node[above]  {$a$}(d)
        (e) edge node[auto] {$a$}(d)
        (a) edge node[left] {$(r1)1$}(e)
        (a) edge node[auto] {$a$}(f)
        (f) edge node[auto] {$a$}(g)
        (f) edge node[left] {$r(11)$}(d)
        (g) edge node[auto] {$1l$}(d)
        (h) edge node[above] {$1a$}(g)
        (h) edge node[right] {$1(l1)$}(d)
        (b) edge node[above] {$a$}(h)
%
% %unteres Bild Kanten 
         (i) edge node[auto] {$a1$}  (j)
        (j) edge node[auto]  {$(1l)1$}(k)
        (k) edge node[above]  {$a$}(l)
        (m) edge node[auto] {$a$}(l)
        (i) edge node[left] {$(r1)1$}(m)
        (j) edge node[auto] {$(1l)1$}(m);

  \draw[double,double equal sign distance,-,shorten <= 14pt, shorten >=16pt] (d) to node[below] {} (z); 
      \end{tikzpicture}
  \end{equation}

 \begin{equation}
     \label{eq:Axiom3-alg-tricat-bimod}
  \begin{tikzpicture}[scale=0.9]
       \tikzstyle{every node}=[font=\small]
%oberes Bild Vertices aussen
        \node (a) at (0,0) {$\Cat{M}((\Cat{N} \Cat{D})\Cat{K})$};
        \node (b) at (10,0) {$\Cat{M}(\Cat{N} (\Cat{D}\Cat{K}))$};
        \node (c) at (9.3,-4) {$\Cat{M}(\Cat{N}\Cat{K})$};
        \node (d) at (5,-5.5) {$(\Cat{M}\Cat{N})\Cat{K}$};
        \node (e) at (0.7,-4)  {$\Cat{M}(\Cat{N}\Cat{K})$};
      
%oberes Bild vertices innen
  \node (f) at (2.3,-1.6)  {$(\Cat{M}(\Cat{N}\Cat{D}))\Cat{K}$};
        \node (g) at (5,-2.6)  {$((\Cat{M}\Cat{N})\Cat{D})\Cat{K}$};
        \node (h) at (7.7,-1.6) {$(\Cat{M}\Cat{N})(\Cat{D}\Cat{K})$};
 
%unteres Bild vertices aussen
        %  \node (i) at (0,-7.5) {$((\Cat{M})\Cat{N})\Cat{K}$};
        % \node (j) at (10,-7.5) {$(\Cat{M}(\Cat{N}))\Cat{K}$};
        % \node (k) at (9.3,-11.5) {$(\Cat{M}\Cat{N})\Cat{K}$};
        % \node (l) at (5,-13) {$\Cat{M}(\Cat{N}\Cat{K})$};
        % \node (m) at (0.7,-11.5)  {$(\Cat{M}\Cat{N})\Cat{K}$};
%scaled unteres bild vertices
   \node (i) at (2.5,-7.85){$\Cat{M}((\Cat{N} \Cat{D})\Cat{K})$};
        \node (j) at (7.5,-7.85)  {$\Cat{M}(\Cat{N} (\Cat{D}\Cat{K}))$};
        \node (k) at (7.15,-9.85) {$\Cat{M}(\Cat{N}\Cat{K})$};
        \node (l) at (5,-10.6)  {$(\Cat{M}\Cat{N})\Cat{K}$};
        \node (m) at (2.85,-9.85)   {$\Cat{M}(\Cat{N}\Cat{K})$};
  \node (z) at (5,-7.85)  {};

%oberes Bild Vertices mit 3-morphismsn
 \node (a1) at (5,-1.2) {$\Downarrow \pi $};
 \node (a2) at (5.9,-3.1) {$\Rightarrow \mu$};
 \node (a3) at (4.1,-3.1) {$\Leftarrow \rho 1$};
       \node (a4) at (2,-3) {$\simeq$};
  \node (a5) at (8,-3) {$\simeq$};

%unteres Bild Vertices mit 3-morphismsn
 \node (b1) at (4.1,-8.5) {$\Rightarrow 1 \mu $};
  \node (b2) at (5,-9.5) {$\simeq $};

% %oberes Bild Kanten
         \path[->,font=\scriptsize,>=angle 90]
      
        (a) edge node[auto] {$1a$}  (b)
        (b) edge node[auto]  {$1(1l)$}(c)
        (d) edge node[above]  {$a$}(c)
        (d) edge node[auto] {$a$}(e)
        (a) edge node[left] {$1(r1)$}(e)
        (f) edge node[auto] {$a$}(a)
        (g) edge node[auto] {$a1$}(f)
        (f) edge node[left] {$(1r)1$}(d)
        (g) edge node[auto] {$r1$}(d)
        (g) edge node[above] {$a$}(h)
        (h) edge node[right] {$(11)l$}(d)
        (h) edge node[above] {$a$}(b)
%
% %unteres Bild Kanten 
         (i) edge node[auto] {$1a$}  (j)
        (j) edge node[auto]  {$1(1l)$}(k)
        (l) edge node[above]  {$a$}(k)
        (l) edge node[auto] {$a$}(m)
        (i) edge node[left] {$1(1r)$}(m)
        (j) edge node[auto] {$1(1r)$}(m);
      
  \draw[double,double equal sign distance,-,shorten <= 14pt, shorten >=16pt] (d) to node[below] {} (z); 
      \end{tikzpicture}
  \end{equation}
\end{enumerate}
\begin{proof}
Note first that our conventions regarding an algebraic tricategory differ slightly from the conventions in \cite[Definition 3.12]{Gurski}, see Remark \ref{remark:tricategory-convent}.
 The reason for our convention will become clear 
from the construction of the pseudo-natural transformations $l$ and $r$ in the proof. 

  The remainder of this section is concerned with the proof of   Theorem \ref{theorem:Bimcat-3-cat}. The basic idea is to apply the 2-functor  $\widehat{(-)}$ from Proposition \ref{proposition:widetlide-2-functor} 
  to the structures and axioms of $\Bm$ in Proposition \ref{proposition:Bimcatm-almost-3-cat} to obtain the corresponding structures and axioms for $\BimCat$.
  \begin{enumerate}
  \item We already remarked in Section \ref{sec:Prelim}, that  $\BimCat(\Cat{C},\Cat{D})$ is a strict 2-category.
  \item  The tensor product defines a 2-functor  $\Box: \BimCat(\Cat{D},\Cat{E}) \times \BimCat(\Cat{C},\Cat{D}) \rr \BimCat(\Cat{C},\Cat{E})$  according to  Proposition \ref{proposition:Box-2-functor-bimod}.  
  \item  The unit bimodule categories $\CCC$, the identity bimodule functor $\id_{\Cat{C}}: \CCC \rr \CCC$ and the identity bimodule natural transformation $\id_{\id_{\Cat{C}}}: \id_{\Cat{C}} \rr \id_{\Cat{C}}$ define 
    the strict 2-functor $I_{\Cat{C}}: 1 \rr \BimCat(\Cat{C},\Cat{C})$ from  \refitem{item:units-bimod}, where $1$ denotes the unit bicategory.
  \item %The associator
    We now define the structures in  \refitem{item:associator-bimod}. 
    Let $\Cat{M}$,$\Cat{N}$ and $\Cat{K}$ be composable bimodule categories. 
    The  2-functor $\widehat{(-)}$ applied to $\alpha: (\Cat{M} \times \Cat{N}) \times \Cat{K} \rightarrow \Cat{M} \times (\Cat{N} \times \Cat{K})$ from Proposition 
    \ref{proposition:Bimcatm-almost-3-cat}, 
    defines a functor 
    \begin{equation}
      \label{eq:def-a}
      a= \widehat{\alpha}: (\Cat{M} \Box \Cat{N}) \Box \Cat{K} \rightarrow \Cat{M} \Box (\Cat{N} \Box \Cat{K}).
    \end{equation}
    Since $a$ is the composite of a 2-functor with the pseudo-natural transformation $\alpha$, $a$ is also a pseudo-natural transformation. 
    Analogously, the multi-module functor  $\alpha^{-}: \Cat{M} \times (\Cat{N} \times \Cat{K}) \rightarrow (\Cat{M} \times \Cat{N}) \times \Cat{K}$ defines a 2-transformation $a^{-}$, and it
    follows from Lemma \ref{lemma:functor-presevers-adj-equiv} that $a$ and $a^{-}$ form an adjoint equivalence. 
  \item
    We construct  the  adjoint equivalence of bimodule categories  $\DDD \Box \DMC \rr \DMC$. 
    Let $\DMC$ be a bimodule category. Recall that the action $\act: \Cat{D} \times \Cat{M} \rightarrow \Cat{M}$ is a balanced bimodule functor.
 \begin{lemma}
      \label{lemma:lM-psnat}
      The   bimodule functor $l_{\Cat{M}}=\widehat{\act}: \Cat{D} \Box \Cat{M} \rightarrow \Cat{M}$  induced  by the balanced bimodule functor $\act: \Cat{D} \times \DMC \rightarrow \DMC$
      defines a pseudo-natural transformation  
      \begin{equation}
  \label{eq:unitsleft-cite}
         \begin{tikzcd}
      {}         &\BimCat(\Cat{D}, \Cat{D}) \times \BimCat(\Cat{C},\Cat{D})  \ar{dr}{\Box}  \ar[shorten <= 7pt, shorten >=7pt,Rightarrow]{d}{l} \\
      \BimCat(\Cat{C},\Cat{D})  \ar{rr}[below]{1} \ar{ur}{I_{\Cat{D}} \times 1}   & {}& \BimCat(\Cat{C},\Cat{D})
    \end{tikzcd}
  \end{equation}
      \end{lemma}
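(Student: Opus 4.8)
The plan is to obtain $l$ by transporting the pseudo-natural transformation $\act$ of Proposition~\ref{proposition:Bimcatm-almost-3-cat}\refitem{item:unit-2-cells-m} along the $2$-functor $\widehat{(-)}$ of Proposition~\ref{proposition:widetlide-2-functor}. First I would recall that for a bimodule category $\DMC$ the left action $\act\colon\DDD\times\DMC\to\DMC$ is a balanced bimodule functor (as noted after Definition~\ref{definition:balanced-module}), hence a $1$-morphism of $\Bm(\Cat{C},\Cat{D})$, and that by the diagram~(\ref{eq:unitsleft-m}) these actions are exactly the component $1$-morphisms of a pseudo-natural transformation $\act$ of $2$-endofunctors of $\Bm(\Cat{C},\Cat{D})$. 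Its naturality $2$-cell at a bimodule functor $\mathsf{F}\colon\DMC\to\DNC$ is the module constraint $\phi^{\mathsf{F}}$, which is a balanced bimodule natural isomorphism by Lemma~\ref{lemma:characterize-bal-module}, and its two pseudo-naturality axioms hold by the Cartesian-product bookkeeping carried out in Proposition~\ref{proposition:Bimcatm-almost-3-cat}.

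Next I would record that $\widehat{(-)}$ restricts to the identity $2$-functor on the full sub-$2$-category of $\Bm(\Cat{C},\Cat{D})$ spanned by the single bimodule categories, bimodule functors and bimodule natural transformations: this is immediate from Propositions~\ref{proposition:Tensor-extends-2-fun} and \ref{proposition:Box-2-functor-bimod}, where $\widehat{(-)}$ is the identity on objects, $1$-morphisms and $2$-morphisms and the structure isomorphisms $\phi_{\mathsf{G},\mathsf{F}}$ of Proposition~\ref{proposition:tensor-functors}\refitem{item:third} reduce to identities there by their uniqueness. On a length-two string $\Cat{D}\times\Cat{M}$, by contrast, $\widehat{(-)}$ returns $\Cat{D}\Box\Cat{M}$ compatibly with composition of bimodule functors. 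Hence restricting $\act$ to this sub-$2$-category and whiskering by $\widehat{(-)}$ yields a pseudo-natural transformation from $\widehat{(-)}\circ\tm\circ(I_{\Cat{D}}\times 1)=\Box\circ(I_{\Cat{D}}\times 1)$ to $\widehat{(-)}\circ 1$, i.e.\ to the identity $2$-functor of $\BimCat(\Cat{C},\Cat{D})$; this is the desired $l$. It is pseudo-natural because whiskering a pseudo-natural transformation by $2$-functors produces a pseudo-natural transformation (the same argument used above for the associator $a=\widehat{\alpha}$). By construction its component at $\DMC$ is $\widehat{\act}=l_{\Cat{M}}$, and its naturality $2$-cell at $\mathsf{F}$ is $\widehat{\phi^{\mathsf{F}}}$ composed with coherence isomorphisms of $\widehat{(-)}$; together these give precisely the invertible $2$-morphism displayed in~(\ref{eq:unitsleft-cite}).

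The two pseudo-naturality axioms for $l$ are then obtained by applying $\widehat{(-)}$ to the corresponding pasting diagrams for $\act$ and invoking Corollary~\ref{corollary:pasting-diag-2-fun}. I expect the only real difficulty to be organisational, namely checking that the identification $\widehat{(-)}\circ\tm\circ(I_{\Cat{D}}\times 1)=\Box\circ(I_{\Cat{D}}\times 1)$ is strict, so that no residual coherence isomorphism of $\widehat{(-)}$ obstructs the identification of source and target; once this bookkeeping is settled the statement is formal $2$-category theory.
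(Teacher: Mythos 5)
Your argument is correct and produces the same data as the paper: the component $l_{\Cat{M}}=\widehat{\act}$ and the naturality 2-cell $\widehat{\phi^{\mathsf{F}}}$ up to coherence cells of $\widehat{(-)}$. The packaging, however, differs from the paper's own proof of Lemma \ref{lemma:lM-psnat}. You take as input that the actions $\act$ already form a pseudo-natural transformation at the multi-module level, Proposition \ref{proposition:Bimcatm-almost-3-cat}\refitem{item:unit-2-cells-m}, and then whisker with the 2-functor $\widehat{(-)}$ of Proposition \ref{proposition:widetlide-2-functor}, i.e.\ exactly the mechanism the paper uses for the associator $a=\widehat{\alpha}$. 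The paper instead argues directly: it obtains $l_{\mathsf{F}}$ by applying $\widehat{(-)}$ to the square (\ref{eq:from-funcotr-lm}) built from the module constraint $\phi^{\mathsf{F}}$, and it verifies compatibility with a bimodule natural transformation $\rho$ by applying $\widehat{(-)}$ to the identity (\ref{eq:16}), which holds because $\rho$ is a module natural transformation, thereby deducing (\ref{eq:15}). Your route is shorter and uniform with the treatment of the associator, but it shifts the actual content to two points you only flag: the tersely proved multi-level statement \refitem{item:unit-2-cells-m}, whose pseudo-naturality axioms are precisely the compatibilities the paper checks explicitly here, and the strictness bookkeeping identifying $\widehat{(-)}\circ\tm\circ(I_{\Cat{D}}\times 1)$, restricted to single bimodule categories, with $\Box\circ(I_{\Cat{D}}\times 1)$, and $\widehat{(-)}$ with the identity on that sub-2-category. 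Both identifications do hold by the definitions in Propositions \ref{proposition:Tensor-extends-2-fun} and \ref{proposition:Box-2-functor-bimod} (where $\widehat{(-)}$ is the identity on single (bi)module categories and $\Box$ on 1-morphisms is defined as $\widehat{(-)}$ of the product functor), so this is a difference of exposition rather than a gap; the paper's hands-on computation has the advantage of exhibiting the naturality-in-$\rho$ condition as nothing more than module-naturality of $\rho$ pushed through $\widehat{(-)}$, while yours makes the unit constraints formally parallel to the associator.
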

 \begin{proof}
      Let $\mathsf{F}: \DMC \rightarrow \DMpC$ be a module functor. Then the module constraint $\phi^{\mathsf{F}}$ yields the diagram 
      \begin{equation}
        \label{eq:from-funcotr-lm}
        \begin{tikzcd}
          \Cat{D} \times \Cat{M} \ar{r}[name=A]{1 \times \mathsf{F}} \ar{d}{\act} & \Cat{D} \times \Cat{M}'\ar{d}{\act} \\
          \Cat{M} \ar{r}[name=B]{\mathsf{F}} & \Cat{M}'
          \arrow[shorten <= 7pt, shorten >=7pt, Rightarrow,to path=(A) -- (B)\tikztonodes]{}{\phi^{\mathsf{F}}}.
        \end{tikzcd}
      \end{equation}
      This  defines  a bimodule natural isomorphism, where we use the abbreviation $\Cat{D} \Box \mathsf{F}= 1_{\Cat{D}} \Box \mathsf{F}$, 
      \begin{equation}
        \label{eq:lF}
        l_{\mathsf{F}}: l_{\Cat{M}'} (\Cat{D}\Box \mathsf{F}) \rightarrow \mathsf{F} l_{\Cat{M}}
      \end{equation}
      between the bimodule functors $l_{\Cat{M}'} (\Cat{D}\Box \mathsf{F})$ and $\mathsf{F} l_{\Cat{M}}$.
      We show that the isomorphisms $l_{\mathsf{F}}$ are natural in $\mathsf{F}$.
      If $\mathsf{G}: \DMC \rightarrow \DMpC$ is another bimodule  functor and $\rho: \mathsf{F} \rightarrow \mathsf{G}$ is a bimodule natural transformation, 
      we have to prove  that the following   natural transformations are equal:
 \begin{equation}
        \label{eq:15}
        \begin{tikzcd}[baseline=-0.65ex,scale=0.5, column sep=large, row sep=large]
          \Cat{D} \Box \Cat{M} \ar{d}{l_{\Cat{M}}} \ar[bend left]{r}[name=A]{\Cat{D} \Box \mathsf{F}} \ar[bend right]{r}[ name=B]{\Cat{D} \Box \mathsf{G}}   & \Cat{D} \Box \Cat{N} \ar{d}{l_{N}} \\
          \Cat{M} \ar{r}[name=C]{\mathsf{G}} & \Cat{N} 
          \arrow[shorten <= 2pt, shorten >=2pt, Rightarrow,to path=(A) -- (B)\tikztonodes]{}{\Cat{D} \Box \rho}
          \arrow[shorten <= 3pt, shorten >=3pt, Rightarrow,to path=(B) -- (C)\tikztonodes]{}{l_{\mathsf{G}}},
        \end{tikzcd}
        =
        \begin{tikzcd}[baseline=-0.65ex,scale=0.5,  column sep=large, row sep=large]
          \Cat{D} \Box \Cat{M} \ar{d}{l_{\Cat{M}}}  \ar{r}[name=C]{ \Cat{D} \Box \mathsf{F}} & \Cat{D} \Box \Cat{N} \ar{d}{l_{N}} \\
          \Cat{M}  \ar[bend left]{r}[name=A]{ \mathsf{F}} \ar[bend right]{r}[ name=B]{ \mathsf{G}}  & \Cat{N} 
          \arrow[shorten <= 2pt, shorten >=2pt, Rightarrow,to path=(A) -- (B)\tikztonodes]{}{ \rho}
          \arrow[shorten <= 3pt, shorten >=3pt, Rightarrow,to path=(C) -- (A)\tikztonodes]{}{l_{\mathsf{F}}}.
        \end{tikzcd}
      \end{equation}
      Since $\rho$ is a bimodule natural transformation, one has
 \begin{equation}
        \label{eq:16}
        \begin{tikzcd}[baseline=-0.65ex,scale=0.5, column sep=large, row sep=large]
          \Cat{D} \times \Cat{M} \ar{d}{\act_{\Cat{M}}} \ar[bend left]{r}[name=A]{\Cat{D} \times \mathsf{F}} \ar[bend right]{r}[ name=B]{\Cat{D} \times \mathsf{G}}   & \Cat{D} \times \Cat{N} \ar{d}{\act_{N}} \\
          \Cat{M} \ar{r}[name=C]{\mathsf{G}} & \Cat{N} 
          \arrow[shorten <= 2pt, shorten >=2pt, Rightarrow,to path=(A) -- (B)\tikztonodes]{}{\Cat{D} \times \rho}
          \arrow[shorten <= 3pt, shorten >=3pt, Rightarrow,to path=(B) -- (C)\tikztonodes]{}{\Phi^{\mathsf{G}}},
        \end{tikzcd}
        =
        \begin{tikzcd}[baseline=-0.65ex,scale=0.5,  column sep=large, row sep=large]
          \Cat{D} \times \Cat{M} \ar{d}{\act_{\Cat{M}}}  \ar{r}[name=C]{ \Cat{D} \times \mathsf{F}} & \Cat{D} \times \Cat{N} \ar{d}{\act_{N}} \\
          \Cat{M}  \ar[bend left]{r}[name=A]{ \mathsf{F}} \ar[bend right]{r}[ name=B]{ \mathsf{G}}  & \Cat{N} 
          \arrow[shorten <= 2pt, shorten >=2pt, Rightarrow,to path=(A) -- (B)\tikztonodes]{}{ \rho}
          \arrow[shorten <= 3pt, shorten >=3pt, Rightarrow,to path=(C) -- (A)\tikztonodes]{}{\Phi^{\mathsf{F}}}.
        \end{tikzcd}
      \end{equation}
      By applying the 2-functor $\widehat{(-)}: \Bm(\Cat{C},\Cat{D})\rightarrow \Bm(\Cat{C},\Cat{D})$ to (\ref{eq:16}), one obtains (\ref{eq:15}).
      This proves that $l_{\Cat{M}}:\Cat{D} \Box \Cat{M} \rightarrow \Cat{M}$ is a pseudo-natural transformation.
    \end{proof}
    To define the bimodule functor $l_{\Cat{M}}^{-}: \Cat{M} \rightarrow \Cat{D} \Box \Cat{M}$, denote by  $\iota : \Cat{M} \rr \Cat{D} \times \Cat{M}$ 
    the canonical embedding functor that is defined by $\iota(x)=1 \times x$ for objects and morphisms $x$ in $\Cat{M}$. 
    Clearly, $\iota$ is a right $\Cat{C}$-module functor. We define:
    \begin{equation}
      \label{eq:unit-pseudo-inverse}
      l^{-}_{\Cat{M}}= \mathsf{B}_{\Cat{D},\Cat{M}} \circ \iota: \DMC \rr \DDD \Box \DMC.
    \end{equation}
    Then $l^{-}_{\Cat{M}}$ inherits a left module functor  structure from the balancing constraint of $\mathsf{B}_{\Cat{D},\Cat{M}}$ according to Proposition \ref{proposition:tensor-again-module}
    and we have the following result.

    \begin{proposition}
      \label{proposition:adjoint-unit-equival}
      The functor  $l^{-}_{\Cat{M}}$ defines a pseudo-natural transformation and together with the functor $l_{\Cat{M}}$, it forms  an adjoint equivalence of the bimodule categories  $\DDD \Box \DMC$ and $\DMC$.
    \end{proposition}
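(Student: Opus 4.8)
The plan is to prove the two assertions of the proposition separately: first that the assignment $\Cat{M}\mapsto l^{-}_{\Cat{M}}$ is a pseudo-natural transformation, and then that, for a fixed bimodule category $\DMC$, the pair $(l_{\Cat{M}},l^{-}_{\Cat{M}})$ forms an adjoint equivalence in the $2$-category $\BimCat(\Cat{D},\Cat{C})$ in the sense of Definition \ref{definition:adj-equiv}.

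For pseudo-naturality of $l^{-}_{\Cat{M}}$ I would mimic the proof of Lemma \ref{lemma:lM-psnat}. The embedding $\iota_{\Cat{M}}\colon\Cat{M}\rightarrow\Cat{D}\times\Cat{M}$, $x\mapsto\unit_{\Cat{D}}\times x$, is a right $\Cat{C}$-module functor that is \emph{strictly} natural in $\Cat{M}$: for a bimodule functor $\mathsf{F}\colon\DMC\rightarrow\DMpC$ one has $(1_{\Cat{D}}\times\mathsf{F})\circ\iota_{\Cat{M}}=\iota_{\Cat{M}'}\circ\mathsf{F}$ on the nose. Since $l^{-}_{\Cat{M}}=\mathsf{B}_{\Cat{D},\Cat{M}}\circ\iota_{\Cat{M}}$ and $\mathsf{B}$ is pseudo-natural (Proposition \ref{proposition:Bimcatm-almost-3-cat}\refitem{item:B-ps-nat}), whiskering the bimodule natural isomorphism $\varphi_{1_{\Cat{D}}\times\mathsf{F}}\colon\mathsf{B}_{\Cat{D},\Cat{M}'}(1_{\Cat{D}}\times\mathsf{F})\rightarrow(1_{\Cat{D}}\Box\mathsf{F})\mathsf{B}_{\Cat{D},\Cat{M}}$ with $\iota_{\Cat{M}}$ produces the structure isomorphism $l^{-}_{\mathsf{F}}\colon(1_{\Cat{D}}\Box\mathsf{F})\,l^{-}_{\Cat{M}}\rightarrow l^{-}_{\Cat{M}'}\mathsf{F}$, and its naturality in $\mathsf{F}$ (the analogue of $(\ref{eq:15})$) follows from the $2$-functoriality of $\widehat{(-)}$ exactly as there. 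All isomorphisms involved are bimodule natural isomorphisms by Proposition \ref{proposition:Box-2-functor-bimod}, so $l^{-}_{\Cat{M}}$ is a pseudo-natural transformation.

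For the adjoint equivalence the key point is that $l_{\Cat{M}}=\widehat{\act}$ is already an equivalence of abelian categories. Indeed the $\Cat{D}$-balanced bimodule functor $\act\colon\Cat{D}\times\DMC\rightarrow\DMC$, together with $\iota_{\Cat{M}}$, satisfies the universal property of Definition \ref{definition:tensor-prod} for $\DDD\Box\DMC$: the assignments $\mathsf{G}\mapsto\mathsf{G}\circ\act$ and $\mathsf{H}\mapsto\mathsf{H}\circ\iota_{\Cat{M}}$ are mutually quasi-inverse between $\Fun(\Cat{M},\Cat{A})$ and $\Funbal(\Cat{D}\times\Cat{M},\Cat{A})$, the natural isomorphisms $(\mathsf{G}\circ\act)\circ\iota_{\Cat{M}}\simeq\mathsf{G}$ and $\mathsf{H}\simeq(\mathsf{H}\circ\iota_{\Cat{M}})\circ\act$ being assembled from $\lambda^{\Cat{M}}$, from $\beta^{\mathsf{H}}_{\unit_{\Cat{D}},d,m}$ and from the unit constraint of $\Cat{D}$ (equivalently, one may appeal to Theorem \ref{theorem:existence-tensor} after presenting $\Cat{M}$ as a category of modules over an algebra in $\Cat{D}$). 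By the uniqueness of the tensor product, $l_{\Cat{M}}$ is therefore a right exact bimodule functor which is an equivalence, so by Proposition \ref{proposition:adj-ex} it admits a right adjoint $l^{r}_{\Cat{M}}\colon\DMC\rightarrow\DDD\Box\DMC$ that is a bimodule functor and whose unit and counit are bimodule natural isomorphisms; $l_{\Cat{M}}$ being an equivalence, $(l_{\Cat{M}},l^{r}_{\Cat{M}})$ is an adjoint equivalence. To identify $l^{r}_{\Cat{M}}$ with $l^{-}_{\Cat{M}}$ I would exhibit the counit directly: from $l_{\Cat{M}}\circ l^{-}_{\Cat{M}}=\widehat{\act}\circ\mathsf{B}_{\Cat{D},\Cat{M}}\circ\iota_{\Cat{M}}$, composing the inverse of $\varphi(\act)\colon\act\rightarrow\widehat{\act}\circ\mathsf{B}_{\Cat{D},\Cat{M}}$ whiskered with $\iota_{\Cat{M}}$ and then with $\lambda^{\Cat{M}}\colon\act\circ\iota_{\Cat{M}}=\unit_{\Cat{D}}\act(-)\rightarrow 1_{\Cat{M}}$ gives a natural isomorphism $\varepsilon_{\Cat{M}}\colon l_{\Cat{M}}\circ l^{-}_{\Cat{M}}\rightarrow 1_{\Cat{M}}$; since $l_{\Cat{M}}$ is an equivalence, $\varepsilon_{\Cat{M}}$ identifies $l^{-}_{\Cat{M}}$ as a quasi-inverse of $l_{\Cat{M}}$, hence as isomorphic to $l^{r}_{\Cat{M}}$ via a unique bimodule natural isomorphism. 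Transporting the adjunction data along this isomorphism makes $(l_{\Cat{M}},l^{-}_{\Cat{M}})$ into an adjoint equivalence whose unit and counit are bimodule natural isomorphisms, with counit $\varepsilon_{\Cat{M}}$. (If one prefers an explicit unit, construct $\eta_{\Cat{M}}\colon 1_{\Cat{D}\Box\Cat{M}}\rightarrow l^{-}_{\Cat{M}}\circ l_{\Cat{M}}$ as the image under $\Psi_{\Cat{D},\Cat{M}}$ of the balanced bimodule natural isomorphism $\mathsf{B}_{\Cat{D},\Cat{M}}\rightarrow\mathsf{B}_{\Cat{D},\Cat{M}}\circ\iota_{\Cat{M}}\circ\act$ built from $\beta^{\mathsf{B}}$ and the unit constraint of $\Cat{D}$, and verify the two triangle identities using $(\ref{eq:balanced-functor})$, $(\ref{eq:balanced-unit})$ for $\mathsf{B}$ and the module triangle $(\ref{eq:triangle})$.)

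The main obstacle is the coherence bookkeeping behind the claim that $(\act,\iota_{\Cat{M}})$ models the tensor product $\DDD\Box\DMC$ — equivalently, that the comparison isomorphism $\mathsf{H}\simeq(\mathsf{H}\circ\iota_{\Cat{M}})\circ\act$ is $\Cat{D}$-balanced, natural in $\mathsf{H}$, and compatible with the $(\Cat{D},\Cat{C})$-bimodule structures in the sense of Lemma \ref{lemma:characterize-bal-module} and Proposition \ref{proposition:tensor-again-module} — and, if one takes the explicit route, the two triangle identities for $(\eta_{\Cat{M}},\varepsilon_{\Cat{M}})$, which reduce to the pentagon $(\ref{eq:balanced-functor})$ and triangle $(\ref{eq:balanced-unit})$ for the balancing of $\mathsf{B}$ together with the module-category triangle $(\ref{eq:triangle})$. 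Everything else — pseudo-naturality of $l^{-}$, the formula for $\varepsilon_{\Cat{M}}$, and the upgrade of an equivalence of bimodule categories to an adjoint equivalence with bimodule natural isomorphism unit and counit — is formal given Propositions \ref{proposition:tensor-again-module}, \ref{proposition:Box-2-functor-bimod} and \ref{proposition:adj-ex}.
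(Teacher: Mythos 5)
Your treatment of the pseudo-naturality of $l^{-}_{\Cat{M}}$ coincides with the paper's: both obtain the 2-cells $l^{-}_{\mathsf{F}}$ by whiskering the pseudo-naturality isomorphism $\varphi_{1_{\Cat{D}}\times\mathsf{F}}$ of $\mathsf{B}$ with the strictly natural embedding $\iota$. For the adjoint equivalence, however, you take a genuinely different route. The paper constructs the unit and counit explicitly -- $\alpha^{l}_{\Cat{M}}$ from $\varphi_{l}^{-1}\iota$ and $\lambda^{\Cat{M}}$ (this is your $\varepsilon_{\Cat{M}}$), and $(\beta^{l}_{\Cat{M}})^{-1}$ by applying $\widehat{(-)}$ to the balanced isomorphism $\mathsf{B}\,\iota\,\act\rightarrow\mathsf{B}$ coming from the balancing of $\mathsf{B}$ -- and then verifies the two snake identities by hand, reducing them to the balancing axioms (\ref{eq:balanced-functor}), (\ref{eq:balanced-unit}) and the module triangle (\ref{eq:mod-functor-triangle}). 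You instead first argue that $(\Cat{M},\act,\iota)$ itself satisfies the universal property of $\DDD\Box\DMC$ (equivalently, invoke Theorem \ref{theorem:existence-tensor} with $A=\unit_{\Cat{D}}$), conclude that $l_{\Cat{M}}=\widehat{\act}$ is an equivalence, upgrade it to an adjoint equivalence via Proposition \ref{proposition:adj-ex}, and finally identify the abstract adjoint with $l^{-}_{\Cat{M}}$ by means of $\varepsilon_{\Cat{M}}$. This is a legitimate and somewhat slicker argument for the statement as literally phrased; its cost is that the coherence bookkeeping you defer (balancedness and bimodule-compatibility of the comparison $\mathsf{H}\simeq(\mathsf{H}\iota)\act$, and the fact that $\varepsilon_{\Cat{M}}$ is a bimodule natural isomorphism so that the transported adjunction data stay inside $\BimCat(\Cat{C},\Cat{D})$) is of the same nature and size as the checks the paper performs directly, so little work is actually saved, while the explicit route yields sharper information.

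That sharper information matters later: in the tricategory construction (item \refitem{item:unit-2-cells} of Theorem \ref{theorem:Bimcat-3-cat}) the proposition is used in the stronger form that $l$ and $l^{-}$ are pseudo-natural transformations whose adjunction data consist of invertible \emph{modifications}, i.e. the families $\alpha^{l}_{\Cat{M}}$ and $\beta^{l}_{\Cat{M}}$ must be compatible with the 2-cells $l_{\mathsf{F}}$ and $l^{-}_{\mathsf{F}}$ for every bimodule functor $\mathsf{F}$; this is precisely what the paper verifies in (\ref{eq:cond-alpha-mod}) and (\ref{eq:cond-alpha-mod-explicit}). Transporting the adjunction along the isomorphism $l^{-}_{\Cat{M}}\simeq l^{r}_{\Cat{M}}$ produces unit and counit only objectwise, and their modification property is not automatic; to obtain it you would have to carry out your parenthetical explicit construction of $\eta_{\Cat{M}}$ and check both the triangle identities and the naturality in $\Cat{M}$ -- at which point you have essentially reproduced the paper's proof. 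For the statement as quoted, your argument suffices.
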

    \begin{proof}
      Let $\mathsf{F}: \DMC \rightarrow \DMpC$ be a bimodule functor. Then the diagram 
      \begin{equation}
        \label{eq:l-psnat}
        \begin{tikzcd}[column sep=large]
          \Cat{M} \ar{r}{\mathsf{F}} \ar{d}{\iota} &  \Cat{N} \ar{d}{\iota} \\
          \Cat{D} \times \Cat{M} \ar{r}[name=A]{\Cat{D} \times \mathsf{F}}\ar{d}{B} & \Cat{D} \times \Cat{M}\ar{d}{\mathsf{B}} \\
          \Cat{D} \Box \Cat{M} \ar{r}[name=B]{\Cat{D} \Box \mathsf{F}}& \Cat{D} \Box \Cat{N}
          \arrow[shorten <= 2pt, shorten >=2pt, Rightarrow,to path=(A) -- (B)\tikztonodes]{}{ \varphi_{\mathsf{F}}}
        \end{tikzcd}
      \end{equation}
      defines the bimodule natural transformations $l_{\mathsf{F}}^{-}: l_{\Cat{N}}^{-} \circ \mathsf{F} \rightarrow (\Cat{D} \Box \mathsf{F}) \circ l_{\Cat{M}}^{-}$. It follows directly from the properties of the natural isomorphisms 
      $\varphi_{\mathsf{F}}$, that $l_{\mathsf{F}}^{-}$ is natural in $\mathsf{F}$ and compatible with the composition of bimodule functors. Hence $l_{\Cat{M}}^{-}$ is a pseudo-natural transformation.

      We now show that $l_{\Cat{M}}$ and $l^{-}_{\Cat{M}}$ form an adjoint equivalence. 
      For all bimodule categories $\DMC$ there exists a  natural isomorphism $\alpha^{l}: l_{\Cat{M}} \circ l_{\Cat{M}}^{-} \rightarrow 1_{\Cat{M}}$ defined as the composite
      \begin{equation}
        \label{eq:def-alpha}
        \begin{tikzcd}
          \alpha^{l}_{\Cat{M}}:  l_{\Cat{M}} \circ l_{\Cat{M}}^{-} =l_{\Cat{M}} \circ \mathsf{B} \circ \iota \ar{r}{ \varphi_{l}^{-1} \circ \iota}  & \act \circ \iota \ar{r}{\lambda^{\Cat{M}}} & 1_{\Cat{M}},
        \end{tikzcd}
      \end{equation}
      where $\varphi_{l}^{-1}$ is the bimodule natural transformation from Definition \ref{definition:tensor-prod} and $\lambda^{\Cat{M}}$ is the natural isomorphism from Definition \ref{definition:mod-cat}
      with component morphisms $\lambda^{\Cat{M}}_{m}: \unit_{\Cat{C}} \act m\rightarrow m$. If we equip the functor $\act \circ \iota: \Cat{M} \rightarrow \Cat{M}$ with the canonical bimodule functor structure, it follows from the axioms 
      of a module category, that
      $\lambda^{\Cat{M}}$ is a bimodule natural isomorphism. Hence the natural transformations $\alpha^{l}_{\Cat{M}}$ are bimodule natural isomorphisms. 
      Next we show that they define a  modification $\alpha^{l}$. Consider a bimodule functor $\mathsf{F}: \DMC \rightarrow \DNC$. 
      We have to show that the following two diagrams are equal
      \begin{equation}
        \label{eq:cond-alpha-mod}
        \begin{tikzcd}[baseline=-0.65ex,scale=0.5,  column sep=large, row sep=large]
          \Cat{M} \ar{r}[name=B]{l_{\Cat{M}}^{-}} \ar{d}{\mathsf{F}} & \Cat{D} \Box \Cat{M} \ar{r}[name=C]{l_{\Cat{M}}} \ar{d}{\Cat{D}\Box \mathsf{F}}  & \Cat{M} \ar{d}{\mathsf{F}} \\
          \Cat{N} \ar{r}[name=D]{l_{\Cat{N}}^{-}} \ar[bend right]{rr}[name=A, below]{1} & \Cat{D} \Box \Cat{N} \ar[Rightarrow, shorten <= 1pt, shorten >=1pt, to path= -- (A) \tikztonodes]{}{\alpha} \ar{r}[name=E]{l_{\Cat{N}}} & \Cat{N}
          \ar[Rightarrow, shorten <= 7pt, shorten >=7pt, to path=(B) -- (D) \tikztonodes]{}{l_{\mathsf{F}}^{-}}
          \ar[Rightarrow, shorten <= 7pt, shorten >=7pt, to path= (C)-- (E) \tikztonodes]{}{l_{\mathsf{F}}}
        \end{tikzcd}
        =
        \begin{tikzcd}[baseline=-0.65ex,scale=0.5,  column sep=large, row sep=large]
          \Cat{M}  \ar{r}{l_{\Cat{N}}^{-}} \ar[bend right]{rr}[name=A, below]{1} \ar{d}{\mathsf{F}} & \Cat{D} \Box \Cat{M} \ar[Rightarrow, shorten <= 1pt, shorten >=1pt, to path= -- (A) \tikztonodes]{}{\alpha}  \ar{r}{l_{\Cat{M}}}   & \Cat{M} \ar{d}{\mathsf{F}} \\
          \Cat{N} \ar{rr}{1}&  & \Cat{N}.
        \end{tikzcd}
      \end{equation}
      If we insert the  corresponding definition of the arrows in these diagrams, it is easy to see that  equation (\ref{eq:cond-alpha-mod})
      is equivalent to the equation
      \begin{equation}
        \label{eq:cond-alpha-mod-equ}
        \begin{tikzcd}[baseline=-0.65ex,scale=0.5, row sep=large]
          \Cat{M} \ar{r}{\iota} \ar{d}{\mathsf{F}} &  \Cat{D} \times \Cat{M} \ar{d}{\Cat{D} \times \mathsf{F}} \ar{r}[name=C, below]{\act}    
         & \Cat{M} \ar{d}{\mathsf{F}} \\
          \Cat{N} \ar{r}{\iota} \ar[bend right]{rr}[name=A, below]{1} &  \Cat{D} \times \Cat{N}  \ar[Rightarrow, xshift=0pt, shorten <= 1pt, shorten >=1pt, to path= -- (A) \tikztonodes]{}{\alpha}  \ar{r}[name=E]{\act}     & \Cat{N}
 \ar[Rightarrow, yshift=0pt, xshift=0pt, shorten <= 7pt, shorten >=7pt, to path=(C) -- (E) \tikztonodes]{}[right]{\phi^{\mathsf{F}}}
        \end{tikzcd}
        =
        \begin{tikzcd}[baseline=-0.65ex,scale=0.5, row sep=large]
          \Cat{M} \ar{r}{\iota}\ar{d}{\mathsf{F}} \ar[bend right=40]{rr}[name=A, below]{1} & \Cat{D} \times \Cat{M} 
  \ar{r}[name=B,below]{\act}  \ar[Rightarrow, yshift=0pt, xshift=0pt, shorten <= 2pt, shorten >=5pt, to path= -- (A) \tikztonodes]{}[right]{\lambda^{\Cat{M}}}
           & \Cat{M} \ar{d}{\mathsf{F}} \\
          \Cat{N} \ar{rr}[name=C,below]{1}&& \Cat{N}, 
        \end{tikzcd}
      \end{equation}
where $\phi^{\mathsf{F}}$ is the module functor constraint of the functor $\mathsf{F}$. 
     The commutativity of this diagram corresponds directly to the identity (\ref{eq:mod-functor-triangle}) for the module functor $\mathsf{F}$.

      To define bimodule natural isomorphisms 
      \begin{equation}
        \label{eq:beta}
        (\beta^{l}_{\Cat{M}})^{-1}: l_{\Cat{M}}^{-} \circ l_{\Cat{M}} \rightarrow 1_{\Cat{D} \Box \Cat{M}},
      \end{equation}
      note that the balancing structure of $\mathsf{B}$ provides a natural balanced isomorphism $\mathsf{B} \circ \iota \circ \act \rightarrow \mathsf{B}$ for the two balanced module functors $\mathsf{B} \circ \iota \circ \act,  \mathsf{B}: \Cat{D} \times \Cat{M} \rightarrow 
      \Cat{D} \Box \Cat{M}$. By applying the 2-functor $\widehat{(-)}$ we obtain the bimodule natural isomorphism $(\beta^{l}_{\Cat{M}})^{-1}$.
      To show that these natural isomorphisms define a modification $(\beta^{l})^{-1}$, we have to prove  the equation
      
      \begin{equation}
        \label{eq:cond-alpha-mod-explicit}
        \begin{tikzcd}[baseline=-0.65ex,scale=0.5,  column sep=large, row sep=large]
          \Cat{D} \Box  \Cat{M} \ar{r}[name=B]{l_{\Cat{M}}} \ar{d}{\Cat{D} \Box \mathsf{F}} &  \Cat{M} \ar{r}[name=C]{l_{\Cat{M}}^{-}} \ar{d}{ \mathsf{F}}  & \Cat{D} \Box \Cat{M} \ar{d}{\Cat{D} \Box \mathsf{F}} \\
          \Cat{D} \Box \Cat{N} \ar{r}[name=D]{l_{\Cat{N}}} \ar[bend right]{rr}[name=A, below]{1} & \Cat{N} \ar[Rightarrow, shorten <= 1pt, shorten >=1pt, to path= -- (A) \tikztonodes]{}{\beta^{-1}} \ar{r}[name=E]{l_{\Cat{N}}^{-}} & \Cat{D} \Box \Cat{N}
          \ar[Rightarrow, shorten <= 7pt, shorten >=7pt, to path=(B) -- (D) \tikztonodes]{}{l_{\mathsf{F}}}
          \ar[Rightarrow, shorten <= 7pt, shorten >=7pt, to path= (C)-- (E) \tikztonodes]{}{l_{\mathsf{F}}^{-}}
        \end{tikzcd}
        =
        \begin{tikzcd}[baseline=-0.65ex,scale=0.5,  column sep=large, row sep=large]
          \Cat{D} \Box  \Cat{M}  \ar{r}{l_{\Cat{N}}} \ar[bend right]{rr}[name=A, below]{1} \ar{d}{\Cat{D} \Box \mathsf{F}} &\Cat{M} \ar[Rightarrow, shorten <= 1pt, shorten >=1pt, to path= -- (A) \tikztonodes]{}{\beta^{-1}} 
          \ar{r}{l_{\Cat{M}}^{-}}   & \Cat{D} \Box \Cat{M} \ar{d}{\Cat{D} \Box \mathsf{F}} \\
          \Cat{D} \Box \Cat{N} \ar{rr}{1}&  & \Cat{D} \Box \Cat{N}
        \end{tikzcd}
      \end{equation}
      for all bimodule functors $\mathsf{F}: \DMC \rightarrow \DNC$. 
      Inserting the definitions, one finds that this is equivalent to the condition that  the following two diagrams are equal.
      \begin{equation}
        \label{eq:18}
        \begin{tikzcd}
          \Cat{D} \times \Cat{M} \ar{r}[name=A]{\act} \ar{d}{\Cat{D} \times \mathsf{F}} & \Cat{M} \ar{r}{\iota}  \ar{d}{\mathsf{F}} & 
          \Cat{D} \times \Cat{M} \ar{r}[name=C]{B} \ar{d}{\Cat{D} \times \mathsf{F}} & \Cat{D} \Box \Cat{M} \ar{d}{\Cat{D} \Box \mathsf{F}}  \\
          \Cat{D} \times \Cat{N} \ar{r}[name=G]{\act} \ar[bend right]{rrr}[name=F, below]{B} & \Cat{N} \ar{r}[name=X]{\iota} & \Cat{D} \times \Cat{N}  \ar{r}[name=H]{B} & \Cat{D} \Box \Cat{N},
          \ar[Rightarrow, shorten <= 7pt, shorten >=7pt, to path=(A) -- (G) \tikztonodes]{}{\phi^{\mathsf{F}}}
          \ar[Rightarrow, shorten <= 7pt, shorten >=7pt, to path= (C)-- (H) \tikztonodes]{}{\varphi_{\Cat{D} \times \mathsf{F}}}
          \ar[Rightarrow, shorten <= 7pt, shorten >=7pt,  to path= (X)-- (F) \tikztonodes]{}{\beta}
        \end{tikzcd}
      \end{equation}
      \begin{equation}
        \label{eq:19}
        \begin{tikzcd}[row sep=large]
          \Cat{D} \times \Cat{M} \ar{d}{\Cat{D} \times \mathsf{F}} \ar[bend right=20]{rrr}[name=A]{\mathsf{B}} \ar{r}{\act}  & \Cat{M} \ar{r}[name=B]{\iota} & \Cat{D} \times \Cat{M} \ar{r}{\mathsf{B}} & \Cat{D} \Box \Cat{M} \ar{d}{\Cat{D} \Box \mathsf{F}}\\
          \Cat{D} \times \Cat{N} \ar{rrr}[name=C, below]{\mathsf{B}} &&& \Cat{D} \Box \Cat{N}.
          \ar[Rightarrow, xshift=2.5pt, shorten <= 3pt, shorten >=3pt, to path= (B)-- (A) \tikztonodes]{}[below]{\beta}
          \ar[Rightarrow, shorten <= 7pt, shorten >=7pt, to path= (A)-- (C) \tikztonodes]{}{\varphi_{\Cat{D} \times \mathsf{F}}}
        \end{tikzcd}
      \end{equation}
      We compute both sides on objects.  When evaluated on objects $d \in \Cat{D}$ and $m \in \Cat{M}$, the first diagram yields the  morphism 
      \begin{equation}
        \label{eq:20}
        \begin{tikzcd}[row sep=1 ex, column sep=large]
          (\Cat{D} \Box \mathsf{F})\mathsf{B}(\unit \times d \act m) \ar{rr}{\varphi_{\Cat{D} \times \mathsf{F}}(\unit \times d\act m)} && \mathsf{B}(\unit \times \mathsf{F}(d \act m)) \ar{r}{\phi^{\mathsf{F}}_{d,m}}   & \mathsf{B}(\unit \times d \act \mathsf{F}(m))\\
& {}& {} \ar[shorten <= 60pt]{r}[xshift=30pt]{\beta} & \mathsf{B}(d \times \mathsf{F}(m)), 
        \end{tikzcd}
      \end{equation}
      while the other diagram corresponds to 
      \begin{equation}
        \label{eq:21}
        \begin{tikzcd}%[column sep=large]
          (\Cat{D} \Box \mathsf{F})\mathsf{B}(\unit \times d \act m) \ar{r}{b} &(\Cat{D} \Box \mathsf{F})\mathsf{B}(d \times m) \ar{rr}{\varphi_{\Cat{D} \times \mathsf{F}}(d \times m)} && \mathsf{B}(d \times \mathsf{F}(m)).
        \end{tikzcd}
      \end{equation}
      These two morphisms are equal since $\varphi_{\Cat{D} \times \mathsf{F}}$ is a balanced natural isomorphism.

      It remains to prove that the natural isomorphisms $\alpha^{l}$ and $\beta^{l}$ define an adjoint equivalence according to Definition \ref{definition:adj-equiv}.
      We have to show that the composites
      \begin{equation}
        \label{eq:22}
        \begin{tikzcd}
          l_{\Cat{M}} \ar{r}{l_{\Cat{M}} \beta^{l}} & l_{\Cat{M}} l_{\Cat{M}}^{-} l_{\Cat{M}} \ar{r}{\alpha^{l} l_{\Cat{M}}}  &l_{\Cat{M}}, &  l_{\Cat{M}}^{-} \ar{r}{\beta^{l} l_{\Cat{M}}^{-}} & l_{\Cat{M}}^{-} l_{\Cat{M}} l_{\Cat{M}}^{-} \ar{r} {l_{\Cat{M}}^{-} \alpha^{l}}  &l_{\Cat{M}}^{-}
        \end{tikzcd}
      \end{equation}
      are the respective identities.
      In the first case this is equivalent to  the commutativity of the diagram 
      \begin{equation}
        \label{eq:24}
        \begin{tikzcd}
          l_{\Cat{M}} l_{\Cat{M}}^{-} l_{\Cat{M}}= l_{\Cat{M}}\mathsf{B} \iota l_{\Cat{M}} \ar{r}{\varphi^{l}} \ar[bend right]{rr}{\alpha^{l} l_{\Cat{M}}} &   \act \iota l_{\Cat{M}}  \ar{r}{\lambda^{\Cat{M}}}  &1 l_{\Cat{M}}=l_{\Cat{M}}.
        \end{tikzcd}
      \end{equation}
      By  definition of $\alpha^{l}$, this is equivalent to the commutativity of the diagram
      \begin{equation}
        \label{eq:25}
        \begin{tikzcd}
          l_{\Cat{M}}\mathsf{B} \ar{r}{b} \ar[bend right]{rrr}{\varphi^{l}} &   l_{\Cat{M}} \mathsf{B} \iota \act  \ar{r}{\varphi^{l}}  & \act \iota \act \ar{r}{\lambda^{\Cat{M}}} & \act
        \end{tikzcd}
      \end{equation}
      Evaluated on objects, this diagram takes the form
      \begin{equation}
        \label{eq:25-eval}
        \begin{tikzcd}
          l_{\Cat{M}}\mathsf{B}(d \times m) \ar{r}{b} \ar[bend right]{rrrr}{\varphi_{l}(d \times m)} &   l_{\Cat{M}} \mathsf{B}(\unit \times  d\act m)  \ar{rr}{\varphi^{l} (\unit \times d \act m)}  && \unit \act (d \act m) \ar{r}[xshift=-1pt]{\lambda^{\Cat{M}}_{d \act m}} & d \act m.
        \end{tikzcd}
      \end{equation}
      This last diagram commutes since $\varphi$ is a balanced natural isomorphism.

      In the second case, the requirement  that morphism (\ref{eq:23}) is the identity is equivalent to the condition that   
      \begin{equation}
        \label{eq:23}
        \begin{tikzcd}  
          \mathsf{B} \iota \ar{r}{(\beta^{l})^{-1}}  & \mathsf{B} \iota \act \iota \ar{r}{\mathsf{B} \iota \lambda^{\Cat{M}}} & \mathsf{B} \iota
        \end{tikzcd}
      \end{equation}
      is the identity natural transformation on $\mathsf{B} \iota$.
      Evaluated on objects, this yields 
      \begin{equation}
        \label{eq:17}
        \begin{tikzcd}
          \mathsf{B}(\unit \times m) \ar{r}{\beta} & \mathsf{B}(\unit \times \unit \act m) \ar{r}{\mathsf{B} \iota \lambda^{\Cat{M}}} & \mathsf{B}(\unit \times m),
        \end{tikzcd}            
      \end{equation}
      which is the identity on the object $\mathsf{B}(\unit \times m)$, by equation (\ref{eq:balanced-unit}).
    \end{proof}
    The bimodule functors $r_{\Cat{M}}: \DMC \Box \CCC \rr \DMC$ and $r_{\Cat{M}}^{-}: \DMC \rr \DMC \Box \CCC$ are defined analogously using the right action of $\Cat{C}$ on $\DMC$ 
    and the proof that they define an adjoint equivalence is similar.

  \item %\paragraph{The modification $\mu$}

    The modification $\mu$,
    from \ref{theorem:Bimcat-3-cat}, \refitem{item:mu-bimod} is defined by applying the functor $\widehat{(-)}$ to the diagram (\ref{eq:modific-mu-m}) in $\Bm$.
    It follows directly that $\mu$ is a modification,  since it is the composite of a 2-functor with a modification.
  \item %\paragraph{The modifications $\lambda$ and $\rho$}
    The modification $\lambda$ is obtained by applying $\widehat{(-)}$ to the diagrams (\ref{eq:modific-lambda}).
  \item The modification
    $\rho$ is obtained analogously by  applying $\widehat{(-)}$ to the diagram (\ref{eq:modific-rho-m}).
  \item %\paragraph{The modification $\pi$}
    Applying $\widehat{(-)}$ to the diagram (\ref{eq:pi-m}) defines  the modification $\pi$.

  \item %\paragraph{The tricategory axioms}

    To complete the proof that $\BimCat$ is a tricategory, it remains to verify the three axioms in Definition \ref{definition:tricategory}.
    All the structures of $\BimCat$ are defined in terms of structures in $\BimCat^{multi}$ and every axiom for $\BimCat$ is a pasting diagram  that is 
    obtained from a pasting diagram in $\BimCat^{multi}$ according to Corollary \ref{corollary:pasting-diag-2-fun}. Hence  Corollary \ref{corollary:pasting-diag-2-fun} reduces   the proof of the axioms to the commutativity of  
     the corresponding pasting diagrams in $\BimCat^{multi}$. 
    The first axiom in  Definition \ref{definition:tricategory} is the so-called Stasheff 5-polytope, the higher analogue of the pentagon axiom for monoidal categories. 
    This axiom is trivial in $\BimCat^{multi}$, since the associator $\alpha$ in $\BimCat^{multi}$ already satisfies the pentagon axiom and hence the corresponding modification $\pi$ is the identity. 
    The remaining axioms follow by applying the 2-functor $\widehat{(-)}$ to the diagram (\ref{eq:Axiom2-alg-tricat-mulit}) and to  diagram (\ref{eq:Axiom3-alg-tricat-mulit}).
  \end{enumerate}
\end{proof}

\section{Tricategories with duals and the example of $\BimCat$}
\label{sec:bimod-categ-as}

In this section we develop the notions of a tricategory with duals and of a pivotal tricategory.  We 
provide  useful techniques for  computations involving the inner homs for bimodule categories and use these to construct duals for bimodule categories in the tricategory $\BimCat$. 
 This is achieved by  constructing for each  bimodule category $\DMC$  2-morphisms in the tricategory $\BimCat$ (recall the dual categories $\CMDld$ and $\CMDrd$ from Section  \ref{sec:Prelim})
\begin{equation}
  \label{eq:duality-for-bimodules}
  \coev{\Cat{M}}:\DDD \rr \DMC \Box \CMDld \quad \text{and}\quad \ev{\Cat{M}}:\CMDld \Box \DMC \rr \CCC, 
\end{equation}
such that the snake identities (\ref{eq:duality-snake-right}) and (\ref{eq:snake2-right}) are satisfied up to a 3-isomorphism in $\BimCat$. 

\paragraph{The example of $\Vect$-bimodule categories}

To illustrate the idea behind the construction of the dualities for bimodule categories, we first consider the  example of  finite semisimple  categories regarded as  $\Vect$-bimodule categories.  
Let $\Cat{M}$ be a finite semisimple category. Choose a finite set  $\{m_{i} \}_{i \in I}$ of  representatives of the simple objects of $\Cat{M}$.
\begin{enumerate}
\item The object ${R}^{\Cat{M}}= \oplus_{i \in I} m_{i} \boxtimes m_{i} \in \Cat{M} \boxtimes \op{M}$  represents the $\Hom_{}$-functor, i.e. there is a natural isomorphism 
\begin{equation}
    \label{eq:representing-object}
  \Hom_{\Cat{M} \boxtimes \op{M}}( m \boxtimes m',{R}^{\Cat{M}})\simeq \oplus_{i}\Hom_{\Cat{M}}(m, m_{i}) \otimes \Hom_{\op{M}}(m',m_{i})  \rightarrow \Hom_{\Cat{M}}(m, m'),
  \end{equation}
using the semisimplicity of $\Cat{M}$. Clearly the object ${R}^{\Cat{M}}$ defines a $\Vect$-bimodule functor 
$$\coevvect{\Cat{M}}: \Vect \ni V \mapsto V \act {R}^{\Cat{M}}  \in \Cat{M} \boxtimes \op{M}.$$
\item The $\Hom_{}$-functor of $\Cat{M}$ defines a $\Vect$-bimodule functor
\begin{equation}
  \label{eq:evvect}
  \evvect{\Cat{M}}: \op{M} \boxtimes \Cat{M} \ni m' \boxtimes m \mapsto \Hom_{\Cat{M}}(m',m) \in \Vect.
\end{equation}
\item 
By composing we obtain the $\Vect$-bimodule functor 
\begin{equation*}
 \Phi_{\Cat{M}}^{\Vect}= r_{\Cat{M}} \circ (\Cat{M} \boxtimes \evvect{\Cat{M}})  \circ  (\coevvect{\Cat{M}} \boxtimes \Cat{M}) \circ l_{\Cat{M}}^{-}: \Cat{M} \rightarrow \Cat{M}.  
\end{equation*}
\end{enumerate}
\begin{proposition}
There exists a canonical $\Vect$-bimodule natural isomorphism 
\begin{equation}
    \label{eq:Vect-triangulator}
    T_{\Cat{M}}^{\Vect}: \Phi_{\Cat{M}}^{\Vect} \rightarrow  1_{\Cat{M}}.
  \end{equation}
\end{proposition}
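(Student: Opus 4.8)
The plan is to compute the composite $\Phi^{\Vect}_{\Cat{M}}$ explicitly, to recognise the answer as the canonical decomposition of $m$ into isotypic components (this is where semisimplicity enters), and then to check that the resulting isomorphism respects the $\Vect$-bimodule structure.

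First I would unwind the composite. By Theorem~\ref{theorem:existence-tensor} the unitors $l^{-}_{\Cat{M}}\colon\Cat{M}\to\Vect\Box\Cat{M}$, $r_{\Cat{M}}\colon\Cat{M}\Box\Vect\to\Cat{M}$ and the associator $a\colon(\Cat{M}\boxtimes\op{M})\Box\Cat{M}\to\Cat{M}\Box(\op{M}\boxtimes\Cat{M})$ are the canonical equivalences, so up to canonical natural isomorphism one may simply track an object $m\in\Cat{M}$ through the chain
\[
  m\ \longmapsto\ \Bbbk\boxtimes m\ \longmapsto\ R^{\Cat{M}}\boxtimes m\ \longmapsto\ \bigl(\Cat{M}\boxtimes\evvect{\Cat{M}}\bigr)\bigl(R^{\Cat{M}}\boxtimes m\bigr)\ \longmapsto\ r_{\Cat{M}}\bigl(\cdots\bigr).
\]
Inserting $R^{\Cat{M}}=\bigoplus_{i\in I}m_i\boxtimes m_i$ and the definition~\eqref{eq:evvect} of $\evvect{\Cat{M}}$ yields a natural isomorphism
\[
  \Phi^{\Vect}_{\Cat{M}}(m)\ \cong\ \bigoplus_{i\in I}\Hom_{\Cat{M}}(m_i,m)\otimes m_i ,
\]
and this identification is, more invariantly, the one induced by the representing property~\eqref{eq:representing-object} of $R^{\Cat{M}}$, so it does not depend on the chosen set of representatives.

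Next I would produce the triangulator. For a finite semisimple $\Cat{M}$ the functor $-\otimes m_i\colon\Vect\to\Cat{M}$ is left adjoint to $\Hom_{\Cat{M}}(m_i,-)$, and the counits of these adjunctions assemble into a morphism $\varepsilon_m\colon\bigoplus_{i\in I}\Hom_{\Cat{M}}(m_i,m)\otimes m_i\to m$, natural in $m$, which is an isomorphism precisely because $\Cat{M}$ is semisimple (it exhibits $m$ as the sum of its isotypic components). I define $T^{\Vect}_{\Cat{M}}$ to have component at $m$ the composite of the isomorphism of the previous paragraph with $\varepsilon_m$; it is then automatically a natural isomorphism $\Phi^{\Vect}_{\Cat{M}}\to 1_{\Cat{M}}$. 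It remains to verify that $T^{\Vect}_{\Cat{M}}$ satisfies diagram~\eqref{eq:module-nat-transf} for both the left and the right $\Vect$-action. Since $\coevvect{\Cat{M}}$, $\evvect{\Cat{M}}$, $l^{-}_{\Cat{M}}$, $r_{\Cat{M}}$ and $a$ are $\Vect$-bimodule functors, the $\Vect$-module constraints of $\Phi^{\Vect}_{\Cat{M}}$ are the canonical ones inherited from them, while those of $1_{\Cat{M}}$ are the identities; the required square commutes because tensoring with $V\in\Vect$ distributes over the direct sum and over the $\Hom$-spaces and identifies $\id_V\otimes\varepsilon_m$ with $\varepsilon_{V\otimes m}$. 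Canonicity of $T^{\Vect}_{\Cat{M}}$ then follows because $R^{\Cat{M}}$ is determined up to unique isomorphism by~\eqref{eq:representing-object} and the adjoint equivalences $l^{-}_{\Cat{M}},r_{\Cat{M}},a$ are unique up to unique bimodule natural isomorphism.

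I expect the main obstacle to be the bookkeeping in the first step: carrying all the coherence isomorphisms for the unitors, the associator, and the universal balanced functor $\mathsf{B}$ through the composite so that the identification of $\Phi^{\Vect}_{\Cat{M}}(m)$ with $\bigoplus_{i}\Hom_{\Cat{M}}(m_i,m)\otimes m_i$ — and hence the fact that $\varepsilon_m$ genuinely has $\Phi^{\Vect}_{\Cat{M}}(m)$ as its source — is rigorously justified. Organising this via the coend description $\Phi^{\Vect}_{\Cat{M}}(m)\cong\int^{x}\Hom_{\Cat{M}}(x,m)\otimes x$ and the density formula makes the naturality and $\Vect$-linearity of $\varepsilon$ transparent, at the price of having to match the coend counit with the explicit composite built from $\coevvect{\Cat{M}}$ and $\evvect{\Cat{M}}$.
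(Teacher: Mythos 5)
Your proof is correct and takes essentially the same approach as the paper: both compute $\Phi^{\Vect}_{\Cat{M}}(m)\simeq\bigoplus_{i} m_{i}\otimes\Hom_{\Cat{M}}(m_{i},m)$ by tracking $m$ through the composite and using the representing object $R^{\Cat{M}}$, and then identify this with $m$ using semisimplicity. The paper gets the isomorphism by applying the Yoneda lemma to the chain $\Hom_{\Cat{M}}(n,\Phi^{\Vect}_{\Cat{M}}(m))\simeq\Hom_{\Cat{M}}(n,m)$, which produces exactly the evaluation (counit) map you construct directly, so the two arguments coincide up to presentation.
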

\begin{proof}
Inserting the definitions, it follows immediately, that $\Phi_{\Cat{M}}^{\Vect}(m)= \oplus_{i} m_{i} \otimes \Hom_{}(m_{i},m)$ on objects $m \in \Cat{M}$.
For all  $n \in \Cat{M}$, the defining property of ${R}$ thus yields a natural isomorphism 
\begin{equation}
  \label{eq:compute-R-vect}
  \begin{split}
      \Hom_{}(n, \Phi^{\Vect}_{\Cat{M}}(m)) &= \Hom(n,\oplus_{i} m_{i}  \otimes \Hom_{}(m_{i},m)) = \oplus_{i}\Hom_{}(n, m_{i}) \otimes \Hom_{}(m_{i},m) \\
 &\simeq \Hom_{}(n,m).
 \end{split}
\end{equation}
By the Yoneda-lemma, this defines an natural isomorphism $    T_{\Cat{M}}^{\Vect}: \Phi_{\Cat{M}}^{\Vect} \rightarrow  1_{\Cat{M}}$.
\end{proof}
In the next subsections we will show that analogous statements hold in the case of general finite bimodule category. We will thereby use the inner hom instead of the $\Hom_{}$-spaces in $\Vect$. Therefore we require module versions of 
the Yoneda lemma and of representations of module functors. 

\subsection{Tricategories with duals and pivotal structure}

The definition of duals in tricategories that we present in this subsection is  inspired by the  duals in higher categories in \cite{Lurie}  and makes use of the notion of duals in bicategories, see Definition 
\ref{definition:duality-bicat}.
To this end we briefly recall the 
 delooping procedure to obtain a $(n-1)$-category $\Cath{X}$ from a $n$-category $\Cat{X}$. We will use the following statement only for $n \leq 3$, but it is expected to hold for all reasonable models of higher categories. 
 \begin{lemma}
   Let $\Cat{X}$ be a $n$-category. The following defines a $(n-1)$-category $\Cath{X}$. 
The objects and $i$-morphisms for $i=1, \ldots (n-2)$ of $\Cath{X}$ are the same as in $\Cat{X}$. The $(n-1)$-morphisms of $\Cath{X}$ are the  isomorphism classes of $(n-1)$-morphisms in $\Cat{X}$. 
The compositions and coherence structures in $\Cath{X}$ are induced from the compositions and coherence structures in $\Cat{X}$.
 \end{lemma}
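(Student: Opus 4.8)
The plan is to recognise this as the standard truncation (``homotopy category'') construction and to verify it by direct inspection in the only cases we use, $n\le 3$, where the underlying definitions are completely explicit: ordinary categories, bicategories in the sense of \cite{Benabou}, and algebraic tricategories in the sense of \cite{Gurski} (see Definition~\ref{definition:tricategory}). The organising principle is uniform: once one passes to isomorphism classes of the top-dimensional cells, every coherence \emph{isomorphism} one dimension below collapses to an identity (there is only one isomorphism class of it), and every coherence \emph{axiom} two dimensions below becomes vacuous, since it was already witnessed by an invertible cell of exactly the dimension that has just been quotiented out. The assertion that this works for all reasonable models is only a heuristic; I would restrict the proof to $n\le 3$.

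For $n=1$ the statement is degenerate: $\Cath{X}$ is the set of isomorphism classes of objects of the category $\Cat{X}$, and there is nothing to check. For $n=2$, with $\Cat{X}=\Cat{B}$ a bicategory, $\Cath{B}$ is the classical homotopy category: same objects, and morphisms the isomorphism classes of $1$-morphisms. First I would check that horizontal composition descends; this holds because composing $1$-morphisms with invertible $2$-cells produces invertible $2$-cells, so $f\cong f'$ and $g\cong g'$ force $g\circ f\cong g'\circ f'$. Strict associativity and unitality of $\Cath{B}$ then follow at once, since the associator and unit constraints of $\Cat{B}$ are invertible $2$-morphisms and hence witness \emph{equalities} of isomorphism classes. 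This is well known and could simply be cited.

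For $n=3$, let $\Cat{X}=\Cat{T}$ be an algebraic tricategory. I would keep the objects and $1$-morphisms of $\Cat{T}$ and replace each hom-bicategory $\Cat{T}(a,b)$ by its homotopy category $\Cath{T}(a,b)$ (obtained from the $n=2$ case, with the $3$-isomorphism classes of $2$-morphisms as morphisms), which is an ordinary category. The composition pseudofunctor $\Box$ descends to a functor $\Cath{T}(b,c)\times\Cath{T}(a,b)\rightarrow\Cath{T}(a,c)$, because its pseudofunctoriality constraints are invertible $2$-cells and thus become identities, so the quotient of $\Box$ is strictly functorial. The associator adjoint equivalence and the unit adjoint equivalences of $\Cat{T}$ have component $2$-morphisms that are \emph{equivalences} in the relevant hom-bicategories; the key observation is that passing to isomorphism classes turns an equivalence in a bicategory into an \emph{isomorphism} in its homotopy category (the witnessing invertible $2$-cells become equalities of classes), so these components furnish the associator and unit constraints of the bicategory $\Cath{T}$, and their naturality follows from the pseudonaturality of $a,l,r$. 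Finally, the pentagon and triangle axioms for $\Cath{T}$ hold on the nose, because in $\Cat{T}$ they are witnessed by the invertible modifications $\pi$ and $\mu,\lambda,\rho$, whose component $3$-isomorphisms pass to equalities of isomorphism classes. Hence $\Cath{T}$ is a bicategory, with all structure induced from $\Cat{T}$.

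The only real work, and thus the main obstacle, is the bookkeeping: verifying that each of the several composition operations of $\Cat{T}$ (vertical and horizontal composition of $3$-cells, whiskering, $\Box$) respects the top-dimensional isomorphism relation, and that each structural cell of $\Cat{T}$ carries exactly the invertibility needed for its class to play the corresponding role in $\Cath{T}$ — in particular the passage from ``equivalence'' to ``isomorphism'' noted above. None of this requires any idea beyond the homotopy-category construction, which is why I would present it as a routine verification rather than a computation, and why the statement is asserted, not proved in detail, for general $n$.
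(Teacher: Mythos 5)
Your proposal is correct and is exactly the routine truncation argument the paper relies on: the paper in fact states this lemma \emph{without proof}, remarking only that it will be used for $n\leq 3$ and is expected to hold for all reasonable models, so there is no written proof to diverge from. Your case-by-case verification for $n\leq 3$ — compositions descend to isomorphism classes, the coherence constraints one level down become identities (for the pseudofunctoriality constraints of $\Box$) or isomorphisms (for the adjoint-equivalence components of $a$, $l$, $r$), and the pentagon and triangle axioms of the quotient are witnessed by the invertible modifications $\pi$, $\mu$, $\lambda$, $\rho$ — supplies precisely the details the paper leaves implicit.
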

For a tricategory $\Cat{T}$, the delooping is  thus  a bicategory $\Cath{T}$.
\begin{definition}
  \label{definition:tricat-duals}
Let $\Cat{T}$ be a tricategory. 
\begin{definitionlist}
  \item  We say that $\Cat{T}$ has $*$-duals, if for objects $b, c \in \Cat{T}$, the bicategory $\Cat{T}(b,c)$ has both left and right duals according to Definition \ref{definition:duality-bicat}. 
\item  $\Cat{T}$ has $\#$-duals if the  bicategory $\Cath{T}$ is a bicategory with  left and right duals. 
\item $\Cat{T}$ is called a tricategory with duals if it has $*$-duals and $\#$-duals.
\end{definitionlist}
\end{definition}
This definition means that  for every 2-morphism $\varphi: F \Rightarrow G$ in  a tricategory with $*$-duals, there exists a 2-morphism $\varphi^{*}: G \Rightarrow F$ and 
duality 3-morphisms $\ev{\varphi}: \varphi^{*} \circ \varphi \Rrightarrow 1_{F} $ and  $\coev{\varphi}: 1_{G} \Rrightarrow \varphi \circ \varphi^{*}$, that satisfy the snake identities (\ref{eq:duality-snake-right}) and 
(\ref{eq:snake2-right}). 

The duality on $\Cath{T}$ for a tricategory with duals is denoted $\#$, hence for every 1-morphism $M: a \rightarrow b$ in a tricategory $\Cat{T}$ with $\#$-duals, there exists a 1-morphism $M^{\#}: b \rightarrow a$ in $\Cat{T}$ together with 
2-morphisms $\ev{M}: M^{\#} \Box M \Rightarrow 1_{a} $ and  $\coev{M}: 1_{b} \Rightarrow M \Box M^{\#}$, such that the snake identity holds in $\Cath{T}$. 

The following is shown in  \cite[Rem. 3.4.22]{Lurie}, \cite[Lemma 2.4.4]{DSS}. 
\begin{proposition}
\label{proposition:serre-ex}
 Let $\Cat{T}$ be a tricategory with $*$-duals such that the  bicategory $\Cath{T}$ has right duals. Then the right duals in $\Cath{T}$ are also left duals and thus $\Cat{T}$ is a tricategory with duals. 
In particular, left and right $\#$-duals in a tricategory with duals are equivalent. 
\end{proposition}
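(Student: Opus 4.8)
\textbf{Proof proposal for Proposition \ref{proposition:serre-ex}.}

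The plan is to reduce the statement to a purely bicategorical fact about the delooping $\Cath{T}$, using the $*$-duals of $\Cat{T}$ to supply the additional adjunction data needed to upgrade right duals to left duals. Recall that a $1$-morphism $M\colon a\to b$ having a right dual $M^{\#}$ means we have $2$-morphisms $\coev{M}\colon 1_{b}\Rightarrow M\Box M^{\#}$ and $\ev{M}\colon M^{\#}\Box M\Rightarrow 1_{a}$ satisfying the two snake identities (in $\Cath{T}$, i.e.\ up to the coherence $2$-isomorphisms of $\Cat{T}$ but with the resulting $3$-isomorphisms ignored). To show $M^{\#}$ is also a \emph{left} dual of $M$, I must produce $2$-morphisms $\coev{M}'\colon 1_{a}\Rightarrow M^{\#}\Box M$ and $\ev{M}'\colon M\Box M^{\#}\Rightarrow 1_{b}$ satisfying the opposite snake identities. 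The key observation is that $\coev{M}$ and $\ev{M}$ are themselves $2$-morphisms in $\Cat{T}$, hence by the $*$-duality hypothesis they admit $*$-duals $\coev{M}^{*}\colon M\Box M^{\#}\Rightarrow 1_{b}$ and $\ev{M}^{*}\colon 1_{a}\Rightarrow M^{\#}\Box M$, each equipped with invertible $3$-morphism units and counits satisfying the snake identities \eqref{eq:duality-snake-right}, \eqref{eq:snake2-right}. The candidates are then $\ev{M}':=\coev{M}^{*}$ and $\coev{M}':=\ev{M}^{*}$.

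First I would set up the bicategory $\Cath{T}$ and record that, since $3$-morphisms have been quotiented to isomorphism classes, any $2$-morphism of $\Cat{T}$ that becomes invertible in $\Cath{T}$ is precisely one admitting a $*$-dual whose unit and counit $3$-morphisms are invertible — so the $*$-duality structure on $\Cat{T}$ exactly expresses that $\Cath{T}$ has ``all $2$-morphisms invertible up to the discarded $3$-cells''; more carefully, $\ev{M}$ and $\coev{M}$ need not be invertible in $\Cath{T}$, but the ambient $2$-category $\Cath{T}$ is one in which we can adjust a chosen right-adjoint datum to a left-adjoint one. Then I would invoke the standard fact (this is what \cite[Rem.\ 3.4.22]{Lurie} and \cite[Lemma 2.4.4]{DSS} provide) that in a bicategory where every $1$-morphism that is part of an adjunction is part of an \emph{ambidextrous} adjunction — equivalently, where the unit and counit $2$-cells of \emph{every} adjunction are themselves invertible in the relevant hom-category after passing to $\Cath{T}$ — a right dual is automatically a left dual. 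The $*$-duals of $\Cat{T}$ are exactly the input that guarantees this: they make the $2$-cells $\ev{M}$, $\coev{M}$ ``invertible enough'' (they have $*$-adjoints with invertible structure $3$-cells in $\Cat{T}$, which descend to genuine inverses in $\Cath{T}$) so that the Lurie/DSS argument applies verbatim.

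Concretely, the key steps are: (i) take a right dual datum $(M^{\#},\coev{M},\ev{M})$ in $\Cath{T}$; (ii) use $*$-duality in $\Cat{T}$ to form $\ev{M}^{*}$ and $\coev{M}^{*}$ together with their invertible unit/counit $3$-morphisms, and note these $3$-morphisms become equalities in $\Cath{T}$, so that $\ev{M}^{*}$ and $\coev{M}^{*}$ are honest two-sided inverses-up-to-coherence of $\ev{M}$ and $\coev{M}$ viewed as $2$-cells in $\Cath{T}$; (iii) feed these into the bicategorical lemma of \cite{Lurie, DSS} to obtain that $(M^{\#},\ev{M}^{*},\coev{M}^{*})$ — suitably whiskered with the coherence constraints of $\Cath{T}$ — is a left dual datum; (iv) conclude that $\Cath{T}$ has left duals, hence by Definition \ref{definition:tricat-duals} that $\Cat{T}$ has $\#$-duals and therefore is a tricategory with duals; (v) finally, uniqueness of adjoints in a bicategory gives that the left and right $\#$-duals of any $1$-morphism are equivalent, establishing the last sentence.

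I expect the main obstacle to be step (ii)–(iii): one must be careful that the $*$-dual of a $2$-morphism, which a priori lives in $\Cat{T}$ with its own $3$-dimensional coherence data, genuinely furnishes the $2$-cell needed to run the bicategorical argument \emph{inside} $\Cath{T}$, where those $3$-cells have been forgotten — in other words, checking that the snake $3$-isomorphisms of the $*$-duality, once passed to isomorphism classes, assemble into the identities required by Definition \ref{definition:duality-bicat} for a left dual in $\Cath{T}$. This is essentially a bookkeeping verification that the delooping functor $\Cat{T}\mapsto\Cath{T}$ sends ``$*$-dual with invertible structure $3$-cells'' to ``genuine adjoint equivalence of $2$-cells,'' after which the cited bicategorical results close the argument with no further work.
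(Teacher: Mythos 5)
Your candidate data is the right one ($\ev{M}':=\coev{M}^{*}$, $\coev{M}':=\ev{M}^{*}$), and in fairness the paper itself offers no more of a proof than a citation to \cite[Rem. 3.4.22]{Lurie} and \cite[Lemma 2.4.4]{DSS}; but the mechanism you propose for verifying the left-dual snake identities is genuinely wrong. A $*$-dual in the sense of Definition \ref{definition:tricat-duals} is an \emph{adjunction}, not an adjoint equivalence: the duality 3-morphisms $\ev{\varphi}\colon\varphi^{*}\circ\varphi\Rrightarrow 1$ and $\coev{\varphi}\colon 1\Rrightarrow\varphi\circ\varphi^{*}$ are only required to satisfy the snake identities (see Definition \ref{definition:duality-bicat}), not to be invertible, and passing to $\Cath{T}$ only turns \emph{invertible} 3-cells into equalities. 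So your claims that the $*$-duals come "equipped with invertible 3-morphism units and counits", that these "become equalities in $\Cath{T}$", and hence that $\ev{M}^{*},\coev{M}^{*}$ are two-sided inverses of $\ev{M},\coev{M}$ in $\Cath{T}$, all fail; in the intended application $\BimCat^{sep}$ the $*$-dual of a bimodule functor is its adjoint, whose unit and counit are almost never invertible, yet the proposition must apply there. Correspondingly, your paraphrase of the cited lemma (an ambient 2-category in which units and counits of adjunctions are invertible) is not its hypothesis --- the only input in \cite{Lurie,DSS} is that 2-morphisms admit adjoints; if $\ev{M}$ and $\coev{M}$ were actually invertible no lemma would be needed at all.

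The verification that your candidate data works goes by taking mates, not inverses: apply $(-)^{*}$ to the given snake 3-isomorphisms of the right-dual datum. Since taking $*$-duals reverses vertical composition, sends identities to identities up to isomorphism, sends isomorphic 2-morphisms to isomorphic 2-morphisms, and commutes with whiskering because $M\Box-$ and $-\Box M$ are 2-functors on hom-bicategories and 2-functors preserve dual pairs (Lemma \ref{lemma:functor-respect-dual}), the $*$-dual of the isomorphism $(1_{M}\Box\ev{M})\cdot(\coev{M}\Box 1_{M})\cong 1_{M}$ is an isomorphism $(\coev{M}^{*}\Box 1_{M})\cdot(1_{M}\Box\ev{M}^{*})\cong 1_{M}$, and likewise for the other snake; these are exactly the left-dual identities for $(M^{\#},\coev{M}^{*},\ev{M}^{*})$, with no invertibility used anywhere. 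With this replacement of your steps (ii)--(iii), your steps (iv) and (v) (existence of $\#$-duals, and equivalence of left and right $\#$-duals by uniqueness of adjoints) go through as you state.
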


Next we turn to pivotal structure on tricategories. 
The notions of pivotal structures for bicategories and pivotal 2-functors is given in Definitions \ref{definition:pivotal-str-bicat} and \ref{definition:pivotal-2-fun}.

\begin{definition}
\label{definition:pivotal-tricat}
Let $\Cat{T}$ be a tricategory with $*$-duals.
  \begin{definitionlist}
    \item \label{item:star-fun} A pivotal structure for $\Cat{T}$ consists of a pivotal structure in  the bicategory $\Cat{T}(b,c)$
such that 
for all 1-morphisms $M: c \rightarrow d$, the 2-functors 
    \begin{equation}
      \label{eq:pivotal-2-fun-from-1morph}
      M\Box -: \Cat{T}(b,c) \rightarrow \Cat{T}(b,d) \quad \text{and}\quad -\Box M: \Cat{T}(d,e) \rightarrow \Cat{T}(c,e),
    \end{equation}
    are pivotal 2-functors for all objects $c,d,e$. A tricategory with $*$-duals together
 with a pivotal structure is called a pivotal 
tricategory.
 \item \label{item:hash} $\Cat{T}$ is a pivotal tricategory with duals, if it is a pivotal tricategory and 
    the bicategory $\Cath{T}$ is a bicategory with right duals. 
 \end{definitionlist}
\end{definition}

Concretely, in a pivotal tricategory for every pair of objects $b, c \in \Cat{T}$,
$\Cat{T}(c,b)$ is a  pivotal bicategory with duality $*$ and pivotal structure $a$.  The pivotal structure 
defines invertible 3-morphisms $a_{\varphi}:  \varphi \Rrightarrow \varphi^{**} $ for all 2-morphisms $\varphi$.

\begin{remark}
  In \cite{Schaum} it is shown that the notion of a pivotal tricategory with duals (this structure is 
called tricategory with weak duals  in \cite{Schaum})
is stable under triequivalences 
of tricategories, i.e. if $\Cat{T} \simeq \Cat{G}$ as tricategories and $\Cat{T}$ is a pivotal tricategory with 
duals, then $\Cat{G}$ is canonically a pivotal tricategory.
Using this result it is possible to strictify a pivotal tricategory with duals to a Gray category with duals in 
the sense of \cite{GrayDuals}, see \cite[Thm. 7.21]{Schaum}. 
\end{remark}

\subsection{The calculus with the inner hom and the dual categories}

In this section we provide the technical tools that will be used to construct $\#$-duals in the tricategory 
$\BimCat$. Therefore we first consider the dual bimodule categories and functors between them in more detail. 
Next we discuss various compatibilities between the inner homs, the dual categories and the tensor product of bimodule categories. Most importantly we prove a Rieffel-induction type formula that allows to compute the inner homs of 
a tensor product $\DMC \Box \CNE$ in terms of the inner homs of $\Cat{M}$ and $\Cat{N}$. Finally we discuss module versions of the Yoneda-lemma and of the notion of representations of functors.

\paragraph{Dual categories}

Recall from Section \ref{sec:Prelim}, that for every bimodule category $\DMC$, there are bimodule categories 
$\CMDrd$ and $\CMDld$. 
These so-called dual categories are compatible with the tensor product as follows. 
\begin{lemma}
  \label{lemma:dual-and-tensor}
Let $\DMC$ and $\CNE$ be bimodule categories.  There are canonical equivalences of bimodule categories 
\begin{lemmalist}
 \item \label{item:double-dual}  \begin{equation}
    \label{eq:left-of-right-id}
    \leftidx{^{\scriptscriptstyle{\#}}}{(\CMDrd)}{} \simeq  \DMC, \quad (\CMDld)^{\scriptscriptstyle{\#}}\simeq \DMC. 
  \end{equation}
\item
  \begin{equation}
    \label{eq:dual-tensor-comp}
 (\DMC \Box \CNE)^{\scriptscriptstyle{\#}} \simeq \ENCrd \Box \CMDrd,  \quad     \leftidx{^{\scriptscriptstyle{\#}}}{(\DMC \Box \CNE)}{} \simeq  \ENCld \Box \CMDld.
  \end{equation}
\end{lemmalist}
\end{lemma}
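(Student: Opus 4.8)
The plan is to prove both parts by exhibiting explicit equivalences of the underlying categories and checking that they intertwine the module actions, using the explicit descriptions of the dual categories from \eqref{eq:rd-action} and \eqref{eq:ld-action} together with the functor-category description of the tensor product in Theorem \ref{theorem:existence-tensor}.

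For part \refitem{item:double-dual}: both $\CMDrd$ and $\CMDld$ have underlying category $\op{M}$, so both $\leftidx{^{\scriptscriptstyle{\#}}}{(\CMDrd)}{}$ and $(\CMDld)^{\scriptscriptstyle{\#}}$ have underlying category $\Cat{M}$ again, and the claimed equivalences should be (isomorphic to) the identity. First I would compute the action. Take $m \in \leftidx{^{\scriptscriptstyle{\#}}}{(\CMDrd)}{}$; by the left-dual version of \eqref{eq:ld-action} (adapted to the $(\Cat{C},\Cat{D})$-bimodule $\CMDrd$, whose left action is $\actrd$ and right action is $\ractrd$), we have $d \act m \ract c = c^{*} \actrd m \ractrd \leftidx{^*}{d}{}$ in $\CMDrd$, which by \eqref{eq:rd-action} equals $\leftidx{^*}{(\leftidx{^*}{d}{})}{} \act m \ract \leftidx{^*}{(c^{*})}{}$ computed in $\DMC$. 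Here $\leftidx{^*}{(\leftidx{^*}{d}{})}{}$ and $\leftidx{^*}{(c^{*})}{}$ are the canonical double-dual objects; the coherence isomorphisms of the pivotal-free tensor category (the canonical natural isomorphisms $\leftidx{^*}{(\leftidx{^*}{x}{})}{}\simeq x$ and $\leftidx{^*}{(x^{*})}{}\simeq x$ coming from the duality structure, i.e.\ from $**\simeq \id$ in the sense of left-then-left and left-then-right duals) then furnish a bimodule equivalence with $\DMC$. The second equivalence in \refitem{item:double-dual} is entirely analogous, using $d^{*}$ and $c^{*}$ in place of $\leftidx{^*}{d}{}$, $\leftidx{^*}{c}{}$.

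For part \refitem{item:double-dual} $\Rightarrow$ part \refitem{item:dual-tensor-comp}, the cleanest route is to use the defining universal property. By Proposition \ref{proposition:serre-ex} and Definition \ref{definition:tricat-duals}, $(\DMC \Box \CNE)^{\scriptscriptstyle{\#}}$ is characterized up to equivalence as the $(\Cat{E},\Cat{D})$-bimodule category right-dual to $\DMC \Box \CNE$ in the tricategory $\BimCat$, equivalently (by \eqref{eq:left-of-right-id}) as the unique bimodule category $\Cat{X}$ with $\leftidx{^{\scriptscriptstyle{\#}}}{\Cat{X}}{}\simeq \DMC\Box\CNE$. So it suffices to verify $\leftidx{^{\scriptscriptstyle{\#}}}{(\ENCrd \Box \CMDrd)}{} \simeq \DMC \Box \CNE$. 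Applying part \refitem{item:double-dual} twice would handle this if $\#$ distributed over $\Box$ with reversal, which is circular; instead I would argue directly via Theorem \ref{theorem:existence-tensor}\textit{(i)}: $\ENCrd \Box \CMDrd \simeq \Funl{\Cat{C}}{(\ENCrd)^{\scriptscriptstyle{\#}}_{\ldots}, \CMDrd}$, and using \refitem{item:double-dual} to simplify $(\ENCrd)^{\scriptscriptstyle{\#}}\simeq\Cat{N}$ (appropriately as a one-sided dual), one is reduced to identifying $\Funl{\Cat{C}}{\CNld, \MC}$-type descriptions with each other — which is exactly the content of the two equivalent descriptions in Theorem \ref{theorem:existence-tensor}\textit{(i)} together with the symmetry $\Funl{\Cat{C}}{\Cat{A},\Cat{B}}\simeq\Funl{\Cat{C}}{\Cat{B}^{\scriptscriptstyle{\#}},\Cat{A}^{\scriptscriptstyle{\#}}}$ obtained by taking adjoints (Proposition \ref{proposition:adj-ex}). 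Matching up the bimodule structures under these equivalences is then a routine bookkeeping exercise with the actions \eqref{eq:rd-action}, \eqref{eq:ld-action} and the definitions from Proposition \ref{proposition:tensor-again-module}.

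The main obstacle I expect is precisely this bookkeeping: keeping straight which of the four possible one-sided duals ($\leftidx{^*}{(-)}{}$ vs $(-)^{*}$, and on which side of which tensor category) appears where, and checking that the canonical duality coherence isomorphisms in $\Cat{C},\Cat{D},\Cat{E}$ assemble into genuine \emph{bimodule} natural isomorphisms rather than merely linear ones — i.e.\ that they satisfy the hexagon/pentagon constraints of Lemma \ref{lemma:convent-inner-hom}-style diagrams. A secondary subtlety is that the equivalences in \eqref{eq:dual-tensor-comp} should be stated only up to (non-canonical) equivalence of bimodule categories, since the tensor product itself is only defined up to equivalence; so I would be careful to phrase the proof as producing \emph{a} bimodule equivalence, invoking the uniqueness of duals in a tricategory (Proposition \ref{proposition:serre-ex}) to see that any two choices agree. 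I do not expect to need the pivotal hypothesis anywhere in this lemma — only the bare duality structure of the finite tensor categories involved.
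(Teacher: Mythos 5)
Your treatment of part \refitem{item:double-dual} is essentially the paper's argument (direct identification of the composed actions, using the canonical duality isomorphisms), but there is a slip in the computation that you should fix: applying the left-dual convention \eqref{eq:ld-action} to the $(\Cat{C},\Cat{D})$-bimodule $\CMDrd$ gives $d \actld m \ractld c = c^{*} \actrd m \ractrd d^{*}$, not $c^{*}\actrd m \ractrd \leftidx{^*}{d}{}$, and hence the composite action is $\leftidx{^*}{(d^{*})}{}\act m \ract \leftidx{^*}{(c^{*})}{}$, which is identified with $d\act m\ract c$ via the genuinely canonical isomorphisms $\leftidx{^*}{(x^{*})}{}\simeq x\simeq(\leftidx{^*}{x}{})^{*}$. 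As written, your version produces the double left dual and invokes ``$\leftidx{^*}{(\leftidx{^*}{x}{})}{}\simeq x$ coming from the duality structure'' --- in a non-pivotal finite tensor category there is no such canonical isomorphism, so that step would fail; it is only a bookkeeping error, but the distinction between the two double duals is exactly the point of this part.

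For part \emph{(ii)} there is a genuine gap. Your primary characterization of $(\DMC\Box\CNE)^{\scriptscriptstyle{\#}}$ as ``the right dual in the tricategory, unique by Proposition \ref{proposition:serre-ex}'' is circular at this stage of the development: the evaluation/coevaluation functors and the fact that $\BimCat$ has $\#$-duals are established only later (Corollary \ref{corollary:kappa-und-Box}, Theorem \ref{theorem:hash-duals-bimcat}), and they use this very lemma, so uniqueness of duals is not available. Your fallback through Theorem \ref{theorem:existence-tensor}\textit{(i)} is not actually carried out, and its key ingredient is misstated: passing to adjoints gives a \emph{contravariant} equivalence $\Funl{\Cat{C}}{\Cat{A},\Cat{B}}^{\mathrm{op}}\simeq\Funl{\Cat{C}}{\Cat{B},\Cat{A}}$ which moreover exchanges right exact with left exact module functors, so for general (not exact) finite bimodule categories one does not stay inside the functor categories of Theorem \ref{theorem:existence-tensor} without further argument; the covariant ``symmetry'' $\Funl{\Cat{C}}{\Cat{A},\Cat{B}}\simeq\Funl{\Cat{C}}{\Cat{B}^{\scriptscriptstyle{\#}},\Cat{A}^{\scriptscriptstyle{\#}}}$ you cite is not what adjunction provides. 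The paper avoids all of this with a short universal-property argument that you could adopt: define $\tau\colon \ENCrd\boxtimes\CMDrd\rightarrow(\DMC\Box\CNE)^{\scriptscriptstyle{\#}}$ by $\tau(n\boxtimes m)=\mathsf{B}(m\boxtimes n)$, check that $\tau$ is a balanced bimodule functor, and verify that $(\DMC\Box\CNE)^{\scriptscriptstyle{\#}}$ together with $\tau$ satisfies the universal property of Definition \ref{definition:tensor-prod} for the pair $(\ENCrd,\CMDrd)$; uniqueness of the tensor product then yields $(\DMC\Box\CNE)^{\scriptscriptstyle{\#}}\simeq\ENCrd\Box\CMDrd$, and the left-dual statement is proved in the same way.
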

\begin{proof}
  The equivalences for the first part are obtained directly from the identifications $(\op{M})^{\opp}\simeq \Cat{M}$ and $\leftidx{^*}{(c^{*})}{} \simeq (\leftidx{^*}{c}{})^{*} \simeq c$, 
 $\leftidx{^*}{(d^{*})}{} \simeq (\leftidx{^*}{d}{})^{*} \simeq d$,  for $c \in \Cat{C}$ and $d \in \Cat{D}$. 
For the second part we define the functors $\tau: \Nrd \boxtimes \Mrd \rightarrow (\Cat{M} \boxtimes \Cat{N})^{\scriptscriptstyle{\#}} \rightarrow (\Cat{M} \Box \Cat{N})^{\scriptscriptstyle{\#}}$ by 
$\tau(n \boxtimes m)= B(m \boxtimes n)$ for $n \boxtimes m \in \Nrd \boxtimes \Mrd$. It is straightforward to see that $\tau$ is a balanced bimodule functor and moreover, that 
$ (\Cat{M} \Box \Cat{N})^{\scriptscriptstyle{\#}}$ together with $\tau$ is a tensor product of $\Nrd$ and $\Mrd$. Thus by universality of the tensor product, $\tau$ induces an equivalence 
of bimodule categories $(\DMC \Box \CNE)^{\scriptscriptstyle{\#}} \simeq \ENCrd \Box \CMDrd$. The analogous argument applies to the left duals.
\end{proof}

The dual bimodule categories  extend to functors as follows. 
For each bimodule functor $\mathsf{F}: \DMC \rightarrow \DNC$, there are corresponding bimodule functors $\Funrd{F}:\CMDrd \rightarrow \CNDrd$, and 
$\Funld{F}: \CMDld \rightarrow \CNDld$ that are just the obvious functors $\op{\mathsf{F}}: \op{M} \rightarrow \op{N}$ as linear functors, with bimodule structures induced from the bimodule structures of $\mathsf{F}$.  
 Furthermore, each bimodule natural transformation $\eta: \mathsf{F} \rightarrow \mathsf{G}$ 
between two such bimodule functors defines canonical bimodule natural transformations $\eta^{\#}: \Funrd{F}  \rightarrow \Funrd{G}$ and 
$^{\#}\eta: {}^{\#}\mathsf{F} \rightarrow  {}^{\#}\mathsf{G}$.  In total, we obtain  2-functors $(-)^{\#}, ^{\#}(-): \BimCat(\Cat{D}, \Cat{C}) \rightarrow \BimCat(\Cat{C}, \Cat{D})$.

Next we consider the duals of the unit bimodule categories. 
 The following statement follows directly from the definitions. 
\begin{lemma}
  \label{lemma:bimodule-functors-unit}
  \begin{lemmalist}
    \item The right dual functor is an equivalence of bimodule categories $(-)^{*}: \DDDrd \rightarrow \DDD$ with quasi-inverse $\leftidx{^*}{(-)}{}: \DDD \rightarrow \DDDrd$. 
\item The left dual functor is an equivalence of bimodule categories $\leftidx{^*}{(-)}{}: \DDDld \rightarrow \DDD$ with quasi-inverse $(-)^{*}: \DDD \rightarrow \DDDld$. 
  \end{lemmalist}
\end{lemma}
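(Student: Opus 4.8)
The statement concerns the dual bimodule categories $\DDDrd$ and $\DDDld$ of the unit bimodule category $\CCC$ in the case $\Cat{C}=\Cat{D}$, and asserts that the duality functors $(-)^{*}$ and $\leftidx{^*}{(-)}{}$ implement equivalences of bimodule categories between these and $\DDD$. The plan is to unpack the definitions of the actions on $\DDDrd$ and $\DDDld$ from equations (\ref{eq:rd-action}) and (\ref{eq:ld-action}), apply them to the unit bimodule category $\CCC$ (whose underlying category is $\Cat{D}$ with actions given by the tensor product), and verify directly that the duality functor intertwines the actions.

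First I would treat part \emph{i)}. By definition, $\DDDrd$ has underlying category $\op{D}$ with action $c \actrd m \ractrd d = \leftidx{^*}{d}{} \act m \ract \leftidx{^*}{c}{} = \leftidx{^*}{d}{} \otimes m \otimes \leftidx{^*}{c}{}$ for $m \in \op{D}$. Now consider the left dual functor $(-)^{*}: \op{D} \to \Cat{D}$; since $(-)^{*}$ is an (anti)equivalence it is an equivalence of linear categories $\DDDrd \to \DDD$ on the underlying categories. To see it is a bimodule functor I would compute: $(c \actrd m \ractrd d)^{*} = (\leftidx{^*}{d}{} \otimes m \otimes \leftidx{^*}{c}{})^{*} \simeq (\leftidx{^*}{c}{})^{*} \otimes m^{*} \otimes (\leftidx{^*}{d}{})^{*} \simeq c \otimes m^{*} \otimes d$, using the canonical isomorphisms $(x \otimes y)^{*} \simeq y^{*} \otimes x^{*}$ and $(\leftidx{^*}{x}{})^{*} \simeq x$ in the rigid category $\Cat{D}$ and the pivotal-type coherence already invoked in Lemma \ref{lemma:dual-and-tensor}. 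Thus $(-)^{*}$ sends the $\DDDrd$-bimodule action to the standard $\DDD$-bimodule action, and the module constraints are the coherence isomorphisms of the duality; that these satisfy the pentagon and triangle axioms of a bimodule functor (\ref{eq:bimodule-functor-charact}) is a routine diagram chase using the standard coherences of rigid monoidal categories. The quasi-inverse is $\leftidx{^*}{(-)}{}: \DDD \to \DDDrd$ with the analogous argument, and the unit/counit isomorphisms of the adjoint equivalence $\leftidx{^*}{(-)}{} \dashv (-)^{*}$ in $\Cat{D}$ upgrade to bimodule natural isomorphisms.

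Part \emph{ii)} is entirely parallel: $\DDDld$ has action $c \actld m \ractld d = d^{*} \act m \ract c^{*} = d^{*} \otimes m \otimes c^{*}$, and applying $\leftidx{^*}{(-)}{}: \op{D} \to \Cat{D}$ gives $\leftidx{^*}{(d^{*} \otimes m \otimes c^{*})}{} \simeq \leftidx{^*}{(c^{*})}{} \otimes \leftidx{^*}{m}{} \otimes \leftidx{^*}{(d^{*})}{} \simeq c \otimes \leftidx{^*}{m}{} \otimes d$, so $\leftidx{^*}{(-)}{}$ is a bimodule equivalence $\DDDld \to \DDD$ with quasi-inverse $(-)^{*}$. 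Since everything reduces to the standard coherence of dualities in a finite tensor category, there is no genuine obstacle here; the only point requiring a little care is checking that the chosen coherence isomorphisms assemble into a \emph{bimodule} functor structure (compatibility of $\phi^{l}$ and $\phi^{r}$ via diagram (\ref{eq:bimodule-functor-charact})), which is the mildly tedious but standard part of the verification. I would simply state that this follows "directly from the definitions" as the lemma claims, indicating the coherences used.
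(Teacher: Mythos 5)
Your proposal is correct and coincides with the paper's intent: the paper gives no argument beyond stating that the lemma "follows directly from the definitions," and your explicit unpacking of the actions (\ref{eq:rd-action}) and (\ref{eq:ld-action}) for the unit bimodule, combined with the canonical isomorphisms $(x\otimes y)^{*}\simeq y^{*}\otimes x^{*}$ and $({}^{*}x)^{*}\simeq x$ (and their mirror versions), is exactly that direct verification, including the correct identification of the quasi-inverses and the upgrade of the duality unit/counit to bimodule natural isomorphisms. The only minor imprecision is calling these coherences "pivotal-type": they are canonical in any rigid category (as in Lemma \ref{lemma:dual-and-tensor}) and no pivotal structure is involved, but this does not affect the argument.
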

Let  $\mathsf{F}: \DDD \rightarrow \DMD$ and $\mathsf{G}: \DMD \rightarrow \DDD$ be bimodule functors.
The equivalences from Lemma \ref{lemma:bimodule-functors-unit} induce bimodule functors
\begin{equation}
  \label{eq:induce-dual-unit}
  \begin{split}
    \Funltd{\mathsf{F}} &: \DDD \simeq \DDDld \stackrel{\Funld{F}}{\longrightarrow} \DMDld , \quad   \Funrtd{\mathsf{F}} : \DDD \simeq \DDDrd \stackrel{\Funrd{F}}{\longrightarrow} \DMDrd \\
   \Funltd{\mathsf{G}} &: \DMDld \stackrel{\Funld{G}}{\longrightarrow} \DDDld \simeq \DDD, \quad  \Funrtd{\mathsf{G}} : \DMDrd \stackrel{\Funrd{G}}{\longrightarrow} \DDDrd \simeq \DDD,
  \end{split}
\end{equation}
Hence on objects $d \in \Cat{D}$ and $m \in \Cat{M}$, these functors take the following values:
\begin{equation}
  \label{eq:values-semi-hash}
  \Funltd{\mathsf{F}}(d)= \mathsf{F}(x^{*}), \quad \Funrtd{\mathsf{F}}(d)= \mathsf{F}(\leftidx{^*}{d}{}), \quad \Funltd(\mathsf{G})(m)= \leftidx{^*}{\mathsf{G}(m)}{}, \quad \Funrtd{\mathsf{G}}(m)= \mathsf{G}(m)^{*}.
\end{equation}

It follows from Lemma \ref{lemma:dual-and-tensor} that 
$({}^{\widetilde{\#}}\mathsf{F})^{\widetilde{\#}} \simeq \mathsf{F} \simeq {}^{\widetilde{\#}}(\Funrtd{\mathsf{F}})$ and similarly for $\mathsf{G}$. 
These constructions will be needed for the coevaluation functor for bimodule categories.

\paragraph{Representation of module functors}
Our next goal is to obtain a module version of the Yoneda-Lemma and a notion of representation of module functors. 
First we extend the notion of balanced functors such that it includes the $\Hom_{}$-functor of 
a bimodule category. 

\begin{definition}
\label{definition:multi-bal-to-Vect}
  A functor $\mathsf{F}: \DM \boxtimes \ND \rightarrow \Cat{A}$ between the product of  two module categories and a linear category $\Cat{A}$ is called $\Cat{D}$-balanced, if it is equipped with natural isomorphisms 
  \begin{equation}
    \label{eq:outer-bal}
    \mathsf{F}(d\act m \boxtimes n) \simeq \mathsf{F}(m \boxtimes n \ract d^{**}), 
  \end{equation}
for all $d \in \Cat{D}$, $m \in \Cat{M}$ and $n \in \Cat{N}$, that satisfy the usual pentagon axiom  with respect to the tensor product of $\Cat{D}$ and the triangle axiom with respect to the unit of $\Cat{D}$.
Balanced natural transformation between two balanced functors of this type are defined as natural transformation, such that the obvious  analogue of the diagram (\ref{eq:balanced-nat}) commutes. 

We say that a functor $\mathsf{F}: \DMC \boxtimes \CND \rightarrow \Cat{A}$ is multi-balanced if it is $\Cat{C}$-balanced, $\Cat{D}$-balanced in the given sense and furthermore both structures are compatible as 
in Definition \ref{definition:multi-balanced}. 
\end{definition}
It turns out  that the $\Hom_{}$-functor is balanced in this sense: 
\begin{example}
\label{example:hom-mb}
  The functor $\Hom_{\Cat{M}}:  \CMDld \boxtimes \DMC \rightarrow \Vect$ is multi-balanced. It is clearly $\Cat{C}$-balanced, and the $\Cat{D}$-balancing structure is obtained from the isomorphisms
  \begin{equation*}
    \Hom_{\Cat{M}}(c \actld \widetilde{m}, m) = \Hom_{\Cat{M}}(\widetilde{m} \ract c^{*},m) \simeq \Hom_{\Cat{M}}(\widetilde{m},m \ract c^{**}).
  \end{equation*}
It is easy to see that both balancing structures are compatible. 
If we compose the $\Hom_{}$-functor with the inner hom functor $\idm{-,-}: \DMC \boxtimes \CMDld \rightarrow \DDD$, we see  
that $\Hom_{\Cat{D}}(m, d \act \widetilde{m})$ and $\Hom_{\Cat{M}}( \idm{m,\widetilde{m}},d)$ define  multi-balanced functors $\DMClld \boxtimes \CMDld \boxtimes \DDD \rightarrow \Vect$. 

Moreover, the 
inner hom defines a multi-balanced natural isomorphism $\Hom_{\Cat{D}}(m, d \act \widetilde{m}) \simeq \Hom_{\Cat{M}}( \idm{m,\widetilde{m}},d)$.
\end{example}

Now we turn to the module version of the  Yoneda-Lemma.

\begin{lemma}
  \label{lemma:module-yoneda}
  \begin{lemmalist}
    \item \label{item:Yoneda-bal-mod} Let $\mathsf{F}, \mathsf{G}: \DMC \rightarrow \DNC$ be bimodule functors. The set of bimodule natural transformations from $\mathsf{F}$ to $\mathsf{G}$ is in bijection 
with the set of multi-balanced natural transformations 
\begin{equation}
  \label{eq:bal-Vect-yoneda}
  \Hom_{\Cat{N}}(-, \mathsf{F}(-)) \rightarrow \Hom_{\Cat{N}}(-, \mathsf{G}(-)),
\end{equation}
between multi-balanced functors  $\CNDld \boxtimes \DMC \rightarrow \Vect$.
\item \label{item:Yoneda-inner-hom}
Let $\mathsf{F}, \mathsf{G}: \DMC \rightarrow \DNC$ be bimodule functors. The set of bimodule natural transformations from $\mathsf{F}$ to $\mathsf{G}$ is in bijection 
with the set of balanced bimodule transformations 
\begin{equation}
  \label{eq:bal-inner-hom-yoned}
  \idn{\mathsf{F}(-),-} \rightarrow \idn{\mathsf{G}(-),-} 
\end{equation}
between balanced bimodule functors  $ \DMC \boxtimes \CNDld \rightarrow \DDD$.
  \end{lemmalist}
There are the analogue statements  switching the entries of the $\Hom_{}$-functors and replacing covariant with contravariant. 
\end{lemma}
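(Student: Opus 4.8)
The plan is to reduce the module-Yoneda statement to the ordinary Yoneda lemma for linear categories, using the representability of the relevant $\Hom$-functors via the inner hom, and then to track the compatibility with the module and balancing structures. First I would treat part \emph{\ref{item:Yoneda-bal-mod}}. Given a bimodule natural transformation $\eta \colon \mathsf{F}\rightarrow\mathsf{G}$, post-composition yields for all $\widetilde{n}, m$ a linear map $\Hom_{\Cat{N}}(\widetilde{n},\mathsf{F}(m))\rightarrow\Hom_{\Cat{N}}(\widetilde{n},\mathsf{G}(m))$, $f\mapsto \eta_m\circ f$, and this is clearly natural in $\widetilde{n}$ and $m$; the point is to check it is multi-balanced in the sense of Definition \ref{definition:multi-bal-to-Vect}, i.e.\ that it intertwines the $\Cat{C}$-balancing (coming from $\widetilde{n}\ract c$ versus $c\act m$, using the bimodule-functor constraints $\phi^{r}$, $\phi^{l}$ of $\mathsf{F}$ and $\mathsf{G}$) and the $\Cat{D}$-balancing (coming from $d\act m$ versus $\widetilde{n}\ract d^{**}$). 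Both follow directly from the defining diagram \eqref{eq:module-nat-transf} of a module natural transformation together with the naturality of $\eta$, expressed once for the left and once for the right module structure (recall, as noted after \eqref{eq:bimodule-functor-charact}, that a bimodule natural transformation is just a left and right module natural transformation). Conversely, given a multi-balanced natural transformation $\Theta$ between the two $\Hom$-functors $\CNDld\boxtimes\DMC\rightarrow\Vect$, for each fixed $m$ the family $\Theta_{-,m}$ is a natural transformation of functors $\CNDld\rightarrow\Vect$ of the form $\Hom_{\Cat{N}}(-,\mathsf{F}(m))\rightarrow\Hom_{\Cat{N}}(-,\mathsf{G}(m))$, hence by the ordinary Yoneda lemma is given by composition with a unique morphism $\eta_m\colon\mathsf{F}(m)\rightarrow\mathsf{G}(m)$; naturality of $\Theta$ in $m$ gives naturality of $\eta$, and the $\Cat{C}$- and $\Cat{D}$-balancing of $\Theta$ translate, via the same two diagrams, into the compatibility of $\eta$ with the left $\Cat{D}$- and right $\Cat{C}$-module structures, so $\eta$ is a bimodule natural transformation. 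These two assignments are mutually inverse by uniqueness in the Yoneda lemma.

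For part \emph{\ref{item:Yoneda-inner-hom}} I would compose with the inner hom. By Definition \ref{definition:inner-hom} and Lemma \ref{lemma:convent-inner-hom} there is, for the bimodule category $\DNC$ regarded as a left $\Cat{D}$-module category, a natural isomorphism $\Hom_{\Cat{N}}(\widetilde{n}, d\act n)\simeq\Hom_{\Cat{D}}(\idn{\widetilde{n},n},d)$, and by Example \ref{example:hom-mb} this isomorphism is multi-balanced when all the structures are in play. Hence a multi-balanced natural transformation $\Hom_{\Cat{N}}(-,\mathsf{F}(-))\rightarrow\Hom_{\Cat{N}}(-,\mathsf{G}(-))$ corresponds bijectively, after a single application of the enriched Yoneda lemma in $\Cat{D}$ (i.e.\ representability: a natural transformation $\Hom_{\Cat{D}}(\idn{\mathsf{F}(m),\widetilde{n}},-)\rightarrow\Hom_{\Cat{D}}(\idn{\mathsf{G}(m),\widetilde{n}},-)$ is the same as a morphism $\idn{\mathsf{G}(m),\widetilde{n}}\rightarrow\idn{\mathsf{F}(m),\widetilde{n}}$ in $\Cat{D}$, in the direction dictated by the variance), to a natural transformation $\idn{\mathsf{F}(-),-}\rightarrow\idn{\mathsf{G}(-),-}$; the multi-balanced structure on the former translates, under Proposition \ref{proposition:properties-inner-hom}, into the balanced bimodule structure on the latter between the balanced bimodule functors $\DMC\boxtimes\CNDld\rightarrow\DDD$ of \eqref{eq:idm-D-module}--\eqref{eq:idm-C-bal}. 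Chaining this with part \emph{\ref{item:Yoneda-bal-mod}} gives the claimed bijection with bimodule natural transformations $\mathsf{F}\rightarrow\mathsf{G}$. The analogous statements with the entries of $\Hom$ switched and covariance replaced by contravariance are proven the same way, using instead the right inner hom \eqref{eq:nat-iso-inner-right} and Proposition \ref{proposition:properties-inner-hom}\,\emph{ii)}.

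The routine part is checking that the various balancing diagrams correspond under post-composition with a module natural transformation; this is a direct diagram chase using \eqref{eq:module-nat-transf}, \eqref{eq:bimodule-functor-charact} and the naturality squares, and I would not spell out every instance. The main obstacle, and the step requiring the most care, is the bookkeeping of \emph{which} of the two one-sided structures (left $\Cat{D}$ or right $\Cat{C}$) is being absorbed into the inner hom and which remains as a balancing: the inner hom $\idn{-,-}$ converts the left $\Cat{D}$-action on $\Cat{N}$ into the regular action on $\DDD$ while turning the right $\Cat{C}$-structure of $\Cat{N}$ into a $\Cat{C}$-balancing that must be matched with the $\Cat{C}$-balancing coming from $\mathsf{F},\mathsf{G}$ on the $\Cat{M}$-side; getting the variances and the dual categories $\CNDld$ right in the statement (so that the functors $\DMC\boxtimes\CNDld\rightarrow\DDD$ really are balanced bimodule functors) is exactly the content of Proposition \ref{proposition:properties-inner-hom} and is where one must be attentive rather than mechanical.
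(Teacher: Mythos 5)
Your proposal is correct and follows essentially the same route as the paper: for part \emph{i)} it is the standard Yoneda evaluation at the identity combined with the check that the balancing of the $\Hom$-transformation translates into the module compatibility of $\eta$, which is precisely the paper's diagram chase. For part \emph{ii)} you reduce to part \emph{i)} via the defining adjunction of the inner hom (representability/Yoneda in $\Cat{D}$), which is what the paper does by composing with $\Hom_{\Cat{D}}(-,d)$ and restricting to $d=\unit_{\Cat{D}}$ before invoking the contravariant version of part \emph{i)}; the variance bookkeeping you flag is resolved there in the same way.
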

\begin{proof}Let $\Phi_{n,m}: \Hom_{\Cat{N}}(n, \mathsf{F}(m)) \rightarrow \Hom_{\Cat{N}}(n, \mathsf{G}(m))$ be a multi-balanced natural transformation. As in the usual Yoneda-lemma we 
define the corresponding natural transformation $\eta: \mathsf{F}\rightarrow \mathsf{G}$ by $\eta_{m}=\Phi_{\mathsf{F}(m),m}(\id_{\mathsf{F}(m)}): \mathsf{F}(m) \rightarrow \mathsf{G}(m)$. 
The following diagram commutes since $\Phi$ is $\Cat{D}$-balanced:
\begin{equation}
  \label{eq:comm-diag-mod-yon}
  \begin{tikzcd}[column sep=large]
    \begin{array}{r}
     \Hom_{}(d^{*} \act d \act \mathsf{F}(m), \mathsf{F}(m))  \simeq  \\
 \Hom_{}(d \act \mathsf{F}(m), d \act \mathsf{F}(m)) 
    \end{array}
    \ar{r}{\Phi} \ar{d}{\simeq} &  
    \begin{array}{r}
     \Hom_{}(d^{*} \act d \act \mathsf{F}(m), \mathsf{G}(m)) \simeq\\
 \Hom_{}(d \act \mathsf{F}(m), d \act \mathsf{G}(m))
    \end{array}
  \ar{d}{\simeq} \\
\Hom_{}(\mathsf{F}(d\act m), \mathsf{F}(d\act m)) \ar{r}{\Phi}& \Hom_{}(\mathsf{F}(d\act m), \mathsf{G}(d \act m))
  \end{tikzcd}
\end{equation}
If we consider the identity in the upper left $\Hom_{}$-space, it gets mapped to $d \act \eta_{m}$ by the upper arrow and to $\eta_{d \act m}$ in the lower left $\Hom_{}$-space by the lower arrow. 
Hence it follows that $\eta$ is $\Cat{D}$-balanced. The proof that $\eta$ is $\Cat{C}$-balanced is analogous. 
The converse and the contravariant versions of the statements follow analogously. 

For the second part, assume that we are given a balanced bimodule natural transformation $\Phi:   \idn{\mathsf{F}(-),-} \rightarrow \idn{\mathsf{G}(-),-}$. It is straightforward to see that 
$\Phi$ induces a multi-balanced module natural transformation $\Hom_{\Cat{D}}(\idm{\mathsf{F}(m),n},d)\simeq \Hom_{\Cat{D}}(\idn{\mathsf{G}(n),m},d)$ of multi-balanced functors  $\DMClld \boxtimes \CMDld \boxtimes \DDD \rightarrow \Vect$.
 Restricting to $d = \unit_{\Cat{D}}$, this yields a balanced natural transformation $\Hom_{}(\mathsf{F}(m),n) \rightarrow \Hom_{}(\mathsf{G}(m),n)$, 
which defines a bimodule natural transformation $\mathsf{F} \rightarrow \mathsf{G}$ by the contravariant version of the first part. 
The remaining statements follow analogously. 
 \end{proof}
Next we consider the module version of representable functors. 

\begin{proposition}
\label{proposition:D-represenations-functors}
  \begin{propositionlist}
    \item   Every right exact module functor $\mathsf{F}: \DM \rightarrow \LDD$ is (left) $\Cat{D}$-representable by $\mathsf{F}^{r}(\unit_{\Cat{D}})$, i.e. there exists a module natural isomorphism 
      \begin{equation}
        \label{eq:D-repres}
        \mathsf{F}(x) \simeq \idm{x,\mathsf{F}^{r}(\unit_{\Cat{D}})}.
      \end{equation}
This isomorphism extends to an equivalence of bimodule functors 
\begin{equation}
  \label{eq:equiva-short}
  \idm{-, \Funltd{(\mathsf{F}^{r})}(-)}  \simeq \mathsf{F}(-) \otimes (-),
\end{equation}
from  $\DM \boxtimes \RDD $ to $\DDD$. 
\item \label{item:repres-bimod-func}Let $\mathsf{F}: \DMD \rightarrow \DDD$ be a right exact bimodule functor. 
Then the equivalence (\ref{eq:equiva-short}) is an equivalence of balanced bimodule functors $\DMD \boxtimes \DDD \rightarrow \DDD$. 
Furthermore, $\mathsf{F}$ is right $\Cat{D}$-representable by $\Funrtd{(\mathsf{F}^{r})}$, i.e. there is a bimodule natural isomorphism
\begin{equation}
  \label{eq:short-D-bimod}
  \imd{\Funrtd{(\mathsf{F}^{r})}(-), -} \simeq (-) \otimes \mathsf{F}(-)
\end{equation}
between balanced bimodule functors $\DDD \boxtimes \DMD \rightarrow \DDD$.
  \end{propositionlist}
\end{proposition}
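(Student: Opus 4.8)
The plan is to deduce both parts from the defining adjunction of the inner hom (Definition~\ref{definition:inner-hom}), the coherence isomorphisms of Proposition~\ref{proposition:properties-inner-hom}, and the module Yoneda Lemma~\ref{lemma:module-yoneda}, with the right adjoint supplied by Proposition~\ref{proposition:adj-ex}. In each case one first produces the asserted isomorphism objectwise from a short chain of natural isomorphisms of $\Hom$-spaces, and then invokes the appropriate (covariant, contravariant or balanced) form of the module Yoneda lemma to upgrade it to an equivalence of (balanced bi)module functors.

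For part~(i): since $\mathsf{F}$ is right exact, Proposition~\ref{proposition:adj-ex} gives a right adjoint $\mathsf{F}^{r}\colon\LDD\to\DM$ which is a module functor with unit and counit module natural isomorphisms. For $x\in\Cat{M}$ and $d\in\Cat{D}$ I would form the chain $\Hom_{}(\mathsf{F}(x),d)\simeq\Hom_{}(x,\mathsf{F}^{r}(d))\simeq\Hom_{}(x,d\act\mathsf{F}^{r}(\unit_{\Cat{D}}))\simeq\Hom_{}(\idm{x,\mathsf{F}^{r}(\unit_{\Cat{D}})},d)$, using the adjunction, the module constraint of $\mathsf{F}^{r}$ applied to $d\simeq d\act\unit_{\Cat{D}}$, and Definition~\ref{definition:inner-hom}. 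All three isomorphisms are natural in $x$ and in $d$, so the ordinary Yoneda lemma yields $\mathsf{F}(x)\simeq\idm{x,\mathsf{F}^{r}(\unit_{\Cat{D}})}$; that this is a \emph{module} natural isomorphism follows by transporting the module constraints through the three steps, each of which is module natural by construction. To extend it to an equivalence of bimodule functors $\DM\boxtimes\RDD\to\DDD$, I would tensor on the right, $\mathsf{F}(m)\otimes d'\simeq\idm{m,\mathsf{F}^{r}(\unit_{\Cat{D}})}\otimes d'$, use the coherence isomorphism~(\ref{eq:idm-D-module}) to move $d'$ into the second slot of the inner hom, and identify the resulting object with $\Funltd{(\mathsf{F}^{r})}(d')$ by unwinding the definitions~(\ref{eq:ld-action}) and~(\ref{eq:induce-dual-unit})--(\ref{eq:values-semi-hash}) together with the module constraint of $\mathsf{F}^{r}$; the module Yoneda Lemma~\ref{lemma:module-yoneda} then promotes this objectwise isomorphism to an equivalence of bimodule functors.

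For part~(ii): now $\mathsf{F}\colon\DMD\to\DDD$ is a bimodule functor, so $\mathsf{F}^{r}$ is a bimodule functor by Proposition~\ref{proposition:adj-ex}, and $\idm{-,-}\colon\DMD\times\DMDld\to\DDD$ is a $\Cat{D}$-balanced bimodule functor by Proposition~\ref{proposition:properties-inner-hom}. The functor $\mathsf{F}(-)\otimes(-)\colon\DMD\times\DDD\to\DDD$ carries a $\Cat{D}$-balancing built from $\mathsf{F}(m\ract d)\simeq\mathsf{F}(m)\otimes d$ and the associativity of $\otimes$, and a diagram chase with the coherences of Proposition~\ref{proposition:properties-inner-hom} shows the isomorphism from part~(i) is compatible with these balancings; the balanced-bimodule form of Lemma~\ref{lemma:module-yoneda} then gives the asserted equivalence of balanced bimodule functors. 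The right $\Cat{D}$-representability is the mirror argument: from the chain $\Hom_{}(\mathsf{F}(m),d)\simeq\Hom_{}(m,\mathsf{F}^{r}(d))\simeq\Hom_{}(m,\mathsf{F}^{r}(\unit_{\Cat{D}})\ract d)\simeq\Hom_{}(\imd{\mathsf{F}^{r}(\unit_{\Cat{D}}),m},d)$ (using the right module constraint of $\mathsf{F}^{r}$) one gets $\mathsf{F}(m)\simeq\imd{\mathsf{F}^{r}(\unit_{\Cat{D}}),m}$; tensoring on the left and using~(\ref{eq:imc-C-module}) gives $d'\otimes\mathsf{F}(m)\simeq\imd{d'\actrd\mathsf{F}^{r}(\unit_{\Cat{D}}),m}$, and $d'\actrd\mathsf{F}^{r}(\unit_{\Cat{D}})$ is identified with $\Funrtd{(\mathsf{F}^{r})}(d')$ via~(\ref{eq:rd-action}) and~(\ref{eq:values-semi-hash}). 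A final application of Lemma~\ref{lemma:module-yoneda} finishes it.

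The hard part will not be any single isomorphism but the coherence bookkeeping: one has to verify that the objectwise isomorphisms built from the adjunction, the module structure of $\mathsf{F}^{r}$, and the inner-hom coherences are simultaneously compatible with all of the left, right and balanced module constraints, so that they really do satisfy the hypotheses of Lemma~\ref{lemma:module-yoneda} in each of its forms; and one must track the duality twists ($\actld$ versus $\actrd$, $\Funltd$ versus $\Funrtd$) precisely enough that the representing objects come out as $\Funltd{(\mathsf{F}^{r})}(-)$ and $\Funrtd{(\mathsf{F}^{r})}(-)$ exactly, rather than as some dual of them.
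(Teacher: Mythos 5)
Your proposal is correct and follows essentially the same route as the paper: obtain $\mathsf{F}^{r}$ from Proposition \ref{proposition:adj-ex}, build a chain of natural $\Hom$-isomorphisms from the adjunction, the module constraint of $\mathsf{F}^{r}$, the defining property of the inner hom and the duality coherences, and then invoke the module Yoneda Lemma \ref{lemma:module-yoneda}. The only cosmetic difference is that the paper runs the chain with the second argument $d$ kept general (computing $\Hom_{\Cat{D}}(\idm{m,\Funltd{(\mathsf{F}^{r})}(d)},x)\simeq\Hom(\mathsf{F}(m)\otimes d,x)$ in one pass), whereas you first specialize to $d=\unit_{\Cat{D}}$ and then extend by tensoring with the inner-hom coherences — the same argument in a slightly different order.
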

\begin{proof}
Since $\mathsf{F}$ is right exact,  it admits a right adjoint, see Proposition \ref{proposition:adj-ex}. For the   first part we first compute for all $m \in \Cat{M}$, $x, d \in \Cat{D}$
  \begin{equation}
    \label{eq:inner-hom-rep}
    \begin{split}
        \Hom_{\Cat{D}}(\idm{m, (\Funltd{\mathsf{F}^{r}})(d)},x) &\simeq \Hom_{\Cat{M}}(m, x \act \mathsf{F}^{r}(d^{*})) \simeq \Hom_{\Cat{M}}(m, \mathsf{F}^{r}(x \otimes d^{*})) \\ 
&\simeq\Hom_{\Cat{D}}(\mathsf{F}(m), x \otimes d^{*}) \simeq \Hom_{} (\mathsf{F}(m) \otimes d,x).
    \end{split}
    \end{equation}
 All isomorphisms are multi-balanced natural isomorphisms, hence the statement follows from the module Yoneda-Lemma \ref{lemma:module-yoneda}\refitem{item:Yoneda-bal-mod}.
If $\mathsf{F}$ is a bimodule functor as in the second statement, it follows that $\mathsf{F}^{r}: \DDD \rightarrow \DMD$ is also a bimodule functor. 
Then   equation (\ref{eq:inner-hom-rep}) consists of multi-balanced natural isomorphisms and thus (\ref{eq:equiva-short}) is a bimodule natural isomorphism. We additionally have a natural multi-balanced isomorphism
\begin{equation}
  \label{eq:right-D-rep}
 \Hom_{\Cat{D}}(\imd{ \Funrtd{(\mathsf{F}^{r})}(d),m },x) \simeq \Hom_{\Cat{M}}(d \otimes \mathsf{F}(m),x),
\end{equation}
that is constructed in a similar way to (\ref{eq:inner-hom-rep}). 
This yields  by the module Yoneda-Lemma,  
the claimed second bimodule natural isomorphism.
\end{proof}
The next statement follows using the same techniques as in the previous proposition. 
\begin{lemma}
  \label{lemma:adjoint-inner-hom}
Let $\mathsf{F}: \DMC \rightarrow \DNC$  be a right exact bimodule functor. Then there exist balanced bimodule natural isomorphisms 
\begin{equation}
  \label{eq:bal-iso-adj}
  \idn{\mathsf{F}(m),n} \simeq \idm{m, \mathsf{F}^{r}(n)} \quad \text{and} \quad \inc{n, \mathsf{F}(m)} \simeq \imc{\mathsf{F}^{r}(n),m}
\end{equation}
for $m \in \Cat{M}$ and $n \in \Cat{N}$. The analogous statement holds for left exact bimodule functors and 
the left adjoint functor. 
\end{lemma}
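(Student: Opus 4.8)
The plan is to prove Lemma \ref{lemma:adjoint-inner-hom} by the same Yoneda-style argument that was used in Proposition \ref{proposition:D-represenations-functors}, namely by exhibiting a chain of multi-balanced natural isomorphisms between the relevant $\Hom_{}$-functors and then invoking the module Yoneda Lemma \ref{lemma:module-yoneda}. First I would fix a right exact bimodule functor $\mathsf{F}: \DMC \rightarrow \DNC$; by Proposition \ref{proposition:adj-ex} it admits a right adjoint $\mathsf{F}^{r}: \DNC \rightarrow \DMC$ which is again a bimodule functor and for which the adjunction isomorphisms are bimodule natural. For objects $m \in \Cat{M}$, $n \in \Cat{N}$ and a test object $d \in \Cat{D}$, I would compute
\begin{equation*}
  \Hom_{\Cat{D}}(\idn{\mathsf{F}(m),n},d) \simeq \Hom_{\Cat{N}}(\mathsf{F}(m), d \act n) \simeq \Hom_{\Cat{M}}(m, \mathsf{F}^{r}(d \act n)) \simeq \Hom_{\Cat{M}}(m, d \act \mathsf{F}^{r}(n)) \simeq \Hom_{\Cat{D}}(\idm{m,\mathsf{F}^{r}(n)},d),
\end{equation*}
where the first and last isomorphisms come from the defining property of the inner hom (Definition \ref{definition:inner-hom}), the second from the adjunction $\mathsf{F} \dashv \mathsf{F}^{r}$, and the third from the left $\Cat{D}$-module functor structure of $\mathsf{F}^{r}$. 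By the Yoneda Lemma this induces a natural isomorphism $\idn{\mathsf{F}(m),n} \simeq \idm{m,\mathsf{F}^{r}(n)}$ in $\Cat{D}$, natural in $m$ and $n$.

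Next I would verify that this isomorphism is a \emph{balanced bimodule} natural isomorphism between the functors $\DMC \boxtimes \CNDld \rightarrow \DDD$ given by $(m,n) \mapsto \idn{\mathsf{F}(m),n}$ and $(m,n)\mapsto \idm{m,\mathsf{F}^{r}(n)}$; here I would use that both inner homs are balanced bimodule functors by Proposition \ref{proposition:properties-inner-hom}, that the adjunction isomorphisms for $\mathsf{F} \dashv \mathsf{F}^{r}$ are bimodule natural, and that all the isomorphisms in the displayed chain are multi-balanced in the sense of Definition \ref{definition:multi-bal-to-Vect} and Example \ref{example:hom-mb}. Concretely, the left $\Cat{D}$-module structure of $\idn{-,-}$ in the first argument, combined with the $\Cat{D}$-module structure of $\mathsf{F}$ and the compatibility of $\phi^{\mathsf{F}}$ with $\phi^{\mathsf{F}^{r}}$ under the adjunction, produces the coherence of the isomorphism with the left $\Cat{D}$-actions; the right $\Cat{C}$-module structures and the $\Cat{C}$-balancing match up in the analogous way using the right module functor structures of $\mathsf{F}$ and $\mathsf{F}^{r}$. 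Invoking Lemma \ref{lemma:module-yoneda}\refitem{item:Yoneda-inner-hom} (or its evident variant with the arguments of the inner hom swapped) upgrades the mere natural isomorphism to one of balanced bimodule functors.

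For the $\Cat{C}$-valued inner hom I would run the completely parallel computation
\begin{equation*}
  \Hom_{\Cat{C}}(c, \inc{n,\mathsf{F}(m)}) \simeq \Hom_{\Cat{N}}(n \ract c, \mathsf{F}(m)) \simeq \Hom_{\Cat{M}}(\mathsf{F}^{r}(n) \ract c, m) \simeq \Hom_{\Cat{C}}(c, \imc{\mathsf{F}^{r}(n),m}),
\end{equation*}
using Lemma \ref{lemma:convent-inner-hom} for the right inner hom together with the adjunction and the right $\Cat{C}$-module structure of $\mathsf{F}^{r}$, and then again the module Yoneda Lemma. The statement about left exact bimodule functors $\mathsf{F}$ and the left adjoint $\mathsf{F}^{l}$ follows by the dual argument, using the adjunction $\mathsf{F}^{l} \dashv \mathsf{F}$ in place of $\mathsf{F} \dashv \mathsf{F}^{r}$. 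The main obstacle — though it is bookkeeping rather than a genuine difficulty — is keeping track of all the balancing and module-functor coherence squares simultaneously, i.e. checking that each link in the displayed chains of isomorphisms respects \emph{all} of the (left $\Cat{D}$, right $\Cat{C}$, $\Cat{C}$-balanced) structures at once, so that the composite indeed lands in the category of balanced bimodule natural transformations to which Lemma \ref{lemma:module-yoneda} applies; the reference to "the same techniques as in the previous proposition" is meant to absorb exactly this routine verification.
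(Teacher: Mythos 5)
Your overall strategy is exactly the paper's: the paper proves this lemma by "the same techniques as the previous proposition", i.e.\ a chain of multi-balanced natural isomorphisms between $\Hom$-functors (inner-hom adjunction, the adjunction $\mathsf{F}\dashv\mathsf{F}^{r}$, the module functor structure of $\mathsf{F}^{r}$) followed by the module Yoneda Lemma \ref{lemma:module-yoneda}, and your first chain for the $\Cat{D}$-valued inner hom carries this out correctly. However, your second chain contains a step that fails. The isomorphism $\Hom_{\Cat{N}}(n \ract c, \mathsf{F}(m)) \simeq \Hom_{\Cat{M}}(\mathsf{F}^{r}(n) \ract c, m)$ is not an instance of the adjunction $\mathsf{F}\dashv\mathsf{F}^{r}$: that adjunction only removes $\mathsf{F}$ from the \emph{first} (source) argument, $\Hom_{\Cat{N}}(\mathsf{F}(x),y)\simeq\Hom_{\Cat{M}}(x,\mathsf{F}^{r}(y))$. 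To extract $\mathsf{F}$ from the second argument you would need the \emph{left} adjoint $\mathsf{F}^{l}$, which need not exist for a merely right exact $\mathsf{F}$, and which would in any case produce $\mathsf{F}^{l}$ rather than the claimed $\mathsf{F}^{r}$. Moreover, your starting point $\Hom_{\Cat{C}}(c,\inc{n,\mathsf{F}(m)})\simeq\Hom_{\Cat{N}}(n\ract c,\mathsf{F}(m))$ uses the wrong variance for the paper's conventions: by (\ref{eq:nat-iso-inner-right}) the object $\inc{\widetilde{n},n}$ is characterized by $\Hom_{\Cat{C}}(\inc{\widetilde{n},n},c)\simeq\Hom_{\Cat{N}}(n,\widetilde{n}\ract c)$, i.e.\ contravariantly in the slot where you put $c$; the covariant characterization you invoke from Lemma \ref{lemma:convent-inner-hom} defines $\IHom_{\Cat{C}}$, which differs from $\inc{-,-}$ by a dual and a swap of arguments, so even with a valid middle step the chain would identify the wrong objects.

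The repair is immediate and keeps you exactly parallel to your first chain: using the defining property (\ref{eq:nat-iso-inner-right}) contravariantly in $c$ one has
\begin{equation*}
  \Hom_{\Cat{C}}(\inc{n,\mathsf{F}(m)},c)\simeq\Hom_{\Cat{N}}(\mathsf{F}(m),n\ract c)\simeq\Hom_{\Cat{M}}(m,\mathsf{F}^{r}(n\ract c))\simeq\Hom_{\Cat{M}}(m,\mathsf{F}^{r}(n)\ract c)\simeq\Hom_{\Cat{C}}(\imc{\mathsf{F}^{r}(n),m},c),
\end{equation*}
where now the adjunction is applied with the correct variance and the third isomorphism uses the right $\Cat{C}$-module functor structure of $\mathsf{F}^{r}$. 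All links are multi-balanced, so the (contravariant variant of the) module Yoneda Lemma yields the balanced bimodule isomorphism $\inc{n,\mathsf{F}(m)}\simeq\imc{\mathsf{F}^{r}(n),m}$, and the rest of your argument (balancedness bookkeeping, and the dual statement for left exact $\mathsf{F}$ with $\mathsf{F}^{l}$) goes through as you describe.
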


\paragraph{Compatibilities inner hom, dual categories and tensor product}

The inner hom and the dual categories are compatible in the following sense. Here and in the sequel we use the equivalences from Lemma \ref{lemma:dual-and-tensor} without mentioning. Recall the operation $\widetilde{\#}$ from 
equation (\ref{eq:values-semi-hash}).
\begin{lemma}
\label{lemma:inner-hom-and-duals}
\begin{lemmalist}
  \item \label{item:canon-bal-ldd}There is a canonical balanced bimodule natural isomorphism
between the  balanced bimodule functors 
\begin{equation}
  \label{eq:bal-d-val-inn}
  \idm{-,-}, \imldd{-,-}: \DMC \boxtimes \CMDld \rightarrow \DDD.
\end{equation} 
as well as between the balanced bimodule functors 
  \begin{equation}
    \label{eq:C-val-inn-equ}
 \imc{-,-},    \icmrd{-,-}: \CMDrd \boxtimes \DMC \rightarrow \CCC.
  \end{equation}
\item  There are canonical balanced bimodule natural isomorphism
between the  balanced bimodule functors 
\begin{equation}
  \label{eq:double-dual-inn-hom}
  \Funrrtd{(\idm{-,-})}, \imrdd{-,-}: \DMCrrd \boxtimes \CMDrd \rightarrow \DDD.
\end{equation}
and 
\begin{equation}
  \label{eq:double-left-inn-hom}
  \Funlltd{(\imc{-,-})}, \icmld{-,-}: \CMDld \boxtimes \DMClld. 
\end{equation}
\end{lemmalist}
\end{lemma}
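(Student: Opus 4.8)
The plan is to obtain all four isomorphisms by one routine: evaluate morphisms out of the two inner-hom objects into an arbitrary test object of the ambient unit tensor category, unwind both sides using only the defining adjunctions of the inner homs (Definition~\ref{definition:inner-hom} and eq.~(\ref{eq:nat-iso-inner-right})), the explicit actions on the dual bimodule categories (eqs.~(\ref{eq:rd-action}) and~(\ref{eq:ld-action})), the values of $\widetilde{\#}$ on inner-hom functors recorded in~(\ref{eq:values-semi-hash}), and the standard duality adjunctions of $\Cat{C}$ and $\Cat{D}$; then observe that every elementary isomorphism in the chain is multi-balanced and natural, and conclude with the module Yoneda Lemma~\ref{lemma:module-yoneda}, which upgrades the resulting object-wise isomorphism to a balanced bimodule natural isomorphism of functors.

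For the first part, $\Cat{D}$-valued version, I would fix $m\in\Cat{M}$, $\widetilde m\in\CMDld$, $d\in\Cat{D}$ and compute $\Hom_{\Cat{D}}(\imldd{m,\widetilde m},d)\simeq\Hom_{\CMDld}(\widetilde m,m\ractld d)=\Hom_{\Cat{M}}(d^{*}\act m,\widetilde m)\simeq\Hom_{\Cat{M}}(m,d\act\widetilde m)\simeq\Hom_{\Cat{D}}(\idm{m,\widetilde m},d)$: the first step is the defining adjunction of the $\Cat{D}$-valued inner hom of $\CMDld$ as a right $\Cat{D}$-module category, the equality is $m\ractld d=d^{*}\act m$ together with passage to $\op{M}$, the third step combines $\leftidx{^*}{(d^{*})}{}\simeq d$ with the module adjunction $\Hom_{\Cat{M}}(e\act m,\widetilde m)\simeq\Hom_{\Cat{M}}(m,\leftidx{^*}{e}{}\act\widetilde m)$, and the last is Definition~\ref{definition:inner-hom}. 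The $\Cat{C}$-valued version is the mirror computation with~(\ref{eq:rd-action}) in place of~(\ref{eq:ld-action}). Every $\Hom$-functor appearing is multi-balanced by Example~\ref{example:hom-mb}, and the inner-hom adjunctions are multi-balanced by Proposition~\ref{proposition:properties-inner-hom}, so the composite is a multi-balanced natural isomorphism and Lemma~\ref{lemma:module-yoneda}\refitem{item:Yoneda-inner-hom} delivers the asserted balanced bimodule natural isomorphism $\DMC\boxtimes\CMDld\rightarrow\DDD$. (One could instead expand both sides through $\IHom$ by part~(iii) of Lemma~\ref{lemma:convent-inner-hom}, but the direct computation keeps the balancings visible.)

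For the second part the same scheme applies, now starting from the $\widetilde{\#}$-twisted inner homs. Using the values in~(\ref{eq:values-semi-hash}) (for instance $\Funrtd{\mathsf{G}}(m)=\mathsf{G}(m)^{*}$ and $\Funltd{\mathsf{F}}(d)=\mathsf{F}(d^{*})$) I would write $\Funrrtd{(\idm{-,-})}$ and $\Funlltd{(\imc{-,-})}$ objectwise, compute their morphisms into a test object, and unwind the double duals via $(\leftidx{^*}{e}{})^{*}\simeq e\simeq\leftidx{^*}{(e^{*})}{}$ and the matching $\Cat{C}$- and $\Cat{D}$-module adjunctions; the right-hand sides $\imrdd{-,-}$ and $\icmld{-,-}$ are by definition the inner homs of the dual bimodule categories $\CMDrd$ and $\CMDld$, whose adjunctions unwind via~(\ref{eq:rd-action}) and~(\ref{eq:ld-action}). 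Matching the two computations and invoking Lemma~\ref{lemma:module-yoneda} again gives the balanced bimodule natural isomorphisms into $\DDD$ and $\CCC$ respectively. Consistency of the two successive passages through $\widetilde{\#}$ is guaranteed by the identity $({}^{\widetilde{\#}}\mathsf{F})^{\widetilde{\#}}\simeq\mathsf{F}$ noted before the lemma, together with Lemma~\ref{lemma:dual-and-tensor}\refitem{item:double-dual}.

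I expect the real content to be bookkeeping rather than ideas: one has to check that every elementary isomorphism in these chains respects \emph{all} the balancing and module constraints carried along, not merely one of them, so that the module Yoneda lemma applies in its balanced-bimodule form rather than yielding a bare natural isomorphism. For the second part there is the extra subtlety that the two occurrences of $\widetilde{\#}$ introduce the (in general non-trivial) double-dual functors of $\Cat{C}$ and $\Cat{D}$, and since no pivotal structure is yet available these must be threaded through honestly; getting the coherence of that threading right will be the main obstacle.
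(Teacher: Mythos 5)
Your proposal is correct and follows essentially the same route as the paper: the identical chain $\Hom_{\Cat{D}}(\imldd{m,\widetilde m},d)\simeq\Hom_{\Cat{M}}(d^{*}\act m,\widetilde m)\simeq\Hom_{\Cat{M}}(m,d\act\widetilde m)\simeq\Hom_{\Cat{D}}(\idm{m,\widetilde m},d)$ for part (i), an analogous chain producing $(\idm{\widetilde m,m})^{**}$ via $\leftidx{^{**}}{d}{}$ for part (ii) (no pivotal structure needed, exactly as you anticipate), and the module Yoneda Lemma \ref{lemma:module-yoneda} to upgrade the multi-balanced isomorphisms to balanced bimodule natural isomorphisms. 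The only cosmetic difference is that the paper obtains the mirrored $\Cat{C}$-valued statements from the $\Cat{D}$-valued ones via Lemma \ref{lemma:dual-and-tensor}\refitem{item:double-dual} rather than repeating the computation.
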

\begin{proof}
Using the definitions of the inner homs we compute
  \begin{equation*}
    \begin{split}
      \Hom_{\Cat{D}}(\imldd{m,\widetilde{m}},d) &\simeq \Hom_{^{\#\!}\Cat{M}_{\Cat{D}}}(\widetilde{m}, m \ractld d) =\Hom_{\Cat{M}}(d^{*} \act m, \widetilde{m}) \\
& \simeq \Hom_{\Cat{M}}(m, d \act \widetilde{m}) = \Hom_{\Cat{D}}(\idm{m,\widetilde{m}},d). 
    \end{split}
  \end{equation*}
 Lemma \ref{lemma:module-yoneda} thus yields 
a balanced bimodule natural isomorphism   $\idm{m,\widetilde{m}} \simeq \imldd{m,\widetilde{m}}$.
 The argument for the $\Cat{C}$-valued inner hom follows directly from the first statement and Lemma \ref{lemma:dual-and-tensor}\refitem{item:double-dual}.
 For the second part, let $d \in \Cat{D}$, $m, \widetilde{m} \in \Cat{M}$. By the definition of the inner hom and the duality in $\Cat{D}$, we have the following chain of natural isomorphisms. 
  \begin{equation}
    \label{eq:teck-chain}
    \begin{split}
      \Hom_{\Cat{D}}(\imrdd{\widetilde{m},m}, d) \simeq & \Hom_{\Cat{M}^{\sss{\#}}}(m, \widetilde{m} \ractrd d) \simeq \Hom_{\Cat{M}}( \leftidx{^*}{d}{} \act \widetilde{m},m) \\
&\simeq \Hom_{\Cat{M}}(\widetilde{m}, \leftidx{^{**}}{d}{} \act m) \simeq \Hom_{\Cat{D}}(\idm{\widetilde{m},m}, \leftidx{^{**}}{d}{} )\\
& \simeq \Hom_{\Cat{D}}((\idm{\widetilde{m},m})^{**},d).
    \end{split}
  \end{equation}
All isomorphisms are multi-balanced bimodule natural isomorphisms and induce the required balanced bimodule natural isomorphism. The last statement follows again from the previous one and Lemma  \ref{lemma:dual-and-tensor}\refitem{item:double-dual}.
\end{proof}

Finally we discuss the compatibility of the inner hom and the tensor product of module categories. 

\begin{proposition}
\label{proposition:Rieffel}
  \begin{propositionlist}
  \item  The functor
 \begin{equation}
    \label{eq:functor-Verschachtelt-inner-hom}
    \begin{split}
      \Lambda: \DMC \boxtimes \CNE  \boxtimes \ENCld \boxtimes \CMDld & \rr \DDD \\
      m \boxtimes n\boxtimes \widetilde{n} \boxtimes \widetilde{m} & \rr \idm{ m  \ract \icn{n, \widetilde{n} }, \widetilde{m}}
    \end{split}
  \end{equation}
  is a  multi-balanced  $(\Cat{D},\Cat{D})$-bimodule functor. 
\item \label{item:Rieffel-ind}  The following diagram of multi-balanced module functors commutes up to a canonical multi-balanced module natural isomorphism
  \begin{equation}
\label{eq:Lambda-defines-inner-hom}
    \begin{tikzcd}
    \DMC \boxtimes \CNE  \boxtimes \ENCld \boxtimes \CMDld  \ar{d}{\mathsf{B} \boxtimes \Funld{B} } \ar{drr}[name=A]{\Lambda}&&\\
       \DMC \Box \CNE  \boxtimes \ENCld \Box \CMDld  \ar{rr}[below]{\idmn{-,-}} 
&  & \DDD.
    \end{tikzcd} 
  \end{equation}
\item Analogously, there exists a multi-balanced module natural isomorphism 
 \begin{equation}
\label{eq:right-inner-hom-box}
    \imne{\widetilde{m}\Box \widetilde{n}, m \Box n } \simeq \ine{\widetilde{n}, \inc{\widetilde{m},m} \act n}.
  \end{equation}
  \end{propositionlist}
\end{proposition}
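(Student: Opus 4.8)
The plan is to reduce each of the three assertions to a chain of duality adjunctions by means of the module Yoneda lemma (Lemma~\ref{lemma:module-yoneda}), after first pushing the multi-balanced functors in question down to the tensor products via the universal property. I would begin with part~\textit{i)}. The functor $\Lambda$ is a composite of three functors that are each already (multi-)balanced: the $\Cat{C}$-valued inner hom $\icn{-,-}$ of $\CNE$, the right action $\ract\colon\DMC\boxtimes\Cat{C}\rightarrow\DMC$, and the $\Cat{D}$-valued inner hom $\idm{-,-}$ of $\DMC$. By Proposition~\ref{proposition:properties-inner-hom} (and its evident left/right analogues) the first is an $\Cat{E}$-balanced $(\Cat{C},\Cat{C})$-bimodule functor on $\CNE\boxtimes\ENCld$, the second a balanced bimodule functor, and the third a $\Cat{C}$-balanced $(\Cat{D},\Cat{D})$-bimodule functor on $\DMC\boxtimes\CMDld$. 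Substituting the output of $\icn{-,-}$ into the $\Cat{C}$-slot of $\ract$ and the result into the first slot of $\idm{-,-}$, and invoking Lemma~\ref{lemma:balancing-and-composition} together with the closure of multi-balanced module functors under composition, one gets on $\DMC\boxtimes\CNE\boxtimes\ENCld\boxtimes\CMDld$ mutually compatible balancing constraints in the two intermediate $\Cat{C}$-variables and in the $\Cat{E}$-variable, together with the outer $(\Cat{D},\Cat{D})$-bimodule structure; this is exactly the claim that $\Lambda$ is a multi-balanced $(\Cat{D},\Cat{D})$-bimodule functor.

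For part~\textit{ii)} I would argue as follows. Using the identification $\ENCld\Box\CMDld\simeq\leftidx{^{\scriptscriptstyle{\#}}}{(\DMC\Box\CNE)}{}$ of Lemma~\ref{lemma:dual-and-tensor}, the functor $\idmn{-,-}\circ(\mathsf{B}\boxtimes\Funld{B})$ is the composite of the universal balanced functors $\mathsf{B}$ and $\Funld{B}$ with the $\Cat{D}$-valued inner hom $\idmn{-,-}$ of $\DMC\Box\CNE$, and by Proposition~\ref{proposition:properties-inner-hom} applied to the bimodule category $\DMC\Box\CNE$ (whose left $\Cat{D}$-module structure is given by Proposition~\ref{proposition:tensor-again-module}) this composite is multi-balanced of the same type as $\Lambda$. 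Hence $\Lambda$ factors, up to canonical isomorphism, through $\mathsf{B}\boxtimes\Funld{B}$ to give an $\Cat{E}$-balanced $(\Cat{D},\Cat{D})$-bimodule functor $\widehat{\Lambda}$ on $(\DMC\Box\CNE)\boxtimes\leftidx{^{\scriptscriptstyle{\#}}}{(\DMC\Box\CNE)}{}$, and it suffices to produce an isomorphism $\widehat{\Lambda}\simeq\idmn{-,-}$. By the module Yoneda lemma (Lemma~\ref{lemma:module-yoneda}\refitem{item:Yoneda-inner-hom}) this amounts to a multi-balanced isomorphism $\Hom_{\Cat{D}}(\widehat{\Lambda}(X\boxtimes Y),d)\simeq\Hom_{\Cat{M}\Box\Cat{N}}(X,d\act Y)$ natural in $X$, $Y$, $d$; since pairs of the form $(\mathsf{B}(m\boxtimes n),\Funld{B}(\widetilde n\boxtimes\widetilde m))$ generate, it is enough to check this for such objects, where $\widehat{\Lambda}(X\boxtimes Y)=\idm{m\ract\icn{n,\widetilde n},\widetilde m}$. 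On the left one unwinds the defining adjunction of $\idm{-,-}$ for $\DMC$; on the right one uses that the $\Cat{D}$-action on $\DMC\Box\CNE$ acts through the first tensor factor, so that $d\act\Funld{B}(\widetilde n\boxtimes\widetilde m)$ becomes $\mathsf{B}((d\act\widetilde m)\boxtimes\widetilde n)$, and then the description $\DMC\Box\CNE\simeq\Funl{\Cat{C}}{\ENCld,\DMC}$ of Theorem~\ref{theorem:existence-tensor}, under which $\mathsf{B}(m\boxtimes n)$ corresponds to the module functor $\widetilde n'\mapsto m\ract\icn{n,\widetilde n'}$. A short Yoneda computation then identifies both sides with $\Hom_{\Cat{M}}(m\ract\icn{n,\widetilde n},\ d\act\widetilde m)$, and checking compatibility of every isomorphism in the chain with the $\Cat{C}$-, $\Cat{D}$- and $\Cat{E}$-balancings yields $\widehat{\Lambda}\simeq\idmn{-,-}$ and hence the isomorphism of diagram~(\ref{eq:Lambda-defines-inner-hom}).

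Part~\textit{iii)} is the mirror image of part~\textit{ii)}: the $\Cat{E}$-valued right inner hom of $\DMC\Box\CNE$ is governed by the adjunction $\Hom_{\Cat{E}}(\imne{X,Y},e)\simeq\Hom_{\Cat{M}\Box\Cat{N}}(Y,X\ract e)$, and running the argument above with the roles of the left and right factors and of the $\Cat{C}$-balancing exchanged — equivalently, applying part~\textit{ii)} to the reversed bimodule categories $\rev{(-)}$ and using Lemma~\ref{lemma:dual-and-tensor}\refitem{item:double-dual} — produces the multi-balanced module natural isomorphism $\imne{\widetilde m\Box\widetilde n,m\Box n}\simeq\ine{\widetilde n,\imc{\widetilde m,m}\act n}$ of~(\ref{eq:right-inner-hom-box}).

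The main obstacle throughout is bookkeeping rather than substance. Each object-level computation is a short chain of duality isomorphisms, but one must carry the $(-)^{*}$ and $\leftidx{^{*}}{(-)}{}$ twists hidden in the dual categories $\CMDld$, $\ENCld$, $\CMDrd$ through consistently, keep track of which of the several module actions is being used at each slot, and verify that the resulting Hom-space isomorphisms are simultaneously compatible with all of the $\Cat{C}$-, $\Cat{D}$- and $\Cat{E}$-balancings — which is precisely what the multi-balanced form of the Yoneda lemma in Lemma~\ref{lemma:module-yoneda} is designed to control, so that the verification is routine once set up carefully.
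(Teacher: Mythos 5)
Parts \textit{i)} and \textit{iii)} of your argument are essentially the paper's: \textit{i)} is the observation that $\Lambda$ is built from the balanced bimodule functors of Proposition \ref{proposition:properties-inner-hom} (closed under composition by Lemma \ref{lemma:balancing-and-composition}), and \textit{iii)} is obtained, as in the paper, by applying \textit{ii)} to the dual of the tensor product together with Lemma \ref{lemma:inner-hom-and-duals}.

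The gap is in part \textit{ii)}. First, the step ``since pairs of the form $(\mathsf{B}(m\boxtimes n),\Funld{B}(\widetilde n\boxtimes\widetilde m))$ generate, it is enough to check this for such objects'' is not a valid reduction: an isomorphism of $\Hom$-spaces specified only at such objects does not yield a natural (let alone multi-balanced) isomorphism on $\DMC\Box\CNE$. What the universal property actually tells you is that it suffices to compare the two composites with $\mathsf{B}\boxtimes\Funld{B}$ as multi-balanced functors -- but that comparison \emph{is} diagram (\ref{eq:Lambda-defines-inner-hom}), so as a reduction your step is circular, and the entire burden falls on your ``short Yoneda computation''. Second, and more substantially, the two inputs of that computation are precisely the content of the proposition and are not available in the paper: Theorem \ref{theorem:existence-tensor} asserts the equivalence $\DMC\Box\CNE\simeq\Funl{\Cat{C}}{\ENCld,\DMC}$ but gives no description of the image of $\mathsf{B}(m\boxtimes n)$ under it, so your formula $\widetilde n'\mapsto m\ract\icn{n,\widetilde n'}$ is an unproved assertion; and even granting it, identifying $\Hom_{\Cat{M}\Box\Cat{N}}(\mathsf{B}(m\boxtimes n),\, d\act\mathsf{B}(\widetilde m\boxtimes\widetilde n))$ -- i.e.\ the space of module natural transformations between two such functors -- with $\Hom_{\Cat{M}}(m\ract\icn{n,\widetilde n},\, d\act\widetilde m)$ is a nontrivial categorified Frobenius-reciprocity statement of the same order of difficulty as (\ref{eq:Lambda-defines-inner-hom}) itself, not routine bookkeeping. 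The paper circumvents exactly this by a two-step argument you do not have: it first reduces to $\Cat{D}=\CMstar$ via the exact action functor $\mathsf{F}:\Cat{D}\rightarrow\CMstar$ and the identity $\idm{m,\widetilde m}\simeq\mathsf{F}^{l}(\icdualm{m,\widetilde m})$, and then treats the case $\Cat{D}=\CMstar$ in the algebra model $\MC\simeq\AModC$, $\CN\simeq\ModCB$, where the second description in Theorem \ref{theorem:existence-tensor} identifies $\Cat{M}\Box\Cat{N}\simeq\AModCB$ with $\mathsf{B}=\otimes$ and the inner homs have the explicit form of Example \ref{example:inner-homs-algebra}, so both sides can be computed and compared directly; the general case then follows from Theorem \ref{theorem:all-alg}. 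To repair your proof you would either have to actually prove the two functor-category facts you assert, or switch to this concrete computation.
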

\begin{proof}
  It follows directly from the properties of the inner hom functors, see Proposition \ref{proposition:properties-inner-hom},  that the functor $\Lambda$ has the structure of a multi-balanced bimodule functor. 
For the second statement, note that the functor $\mathsf{B}: \Cat{M} \boxtimes \Cat{N} \rightarrow \Cat{M} \Box \Cat{N}$ and the functor $\idmn{-,-}$ are multi-balanced, hence 
the composite of the functors on the lower arrows in (\ref{eq:Lambda-defines-inner-hom})
defines as well a multi-balanced bimodule functor.
To construct the multi-balanced natural isomorphism between these two functors, we proceed in two steps. First we proof that it suffices to show the result for $\Cat{D}= \CMstar=\Funl{\Cat{C}}{\MC,\MC}$.
Let $\mathsf{F}: \Cat{D} \rightarrow \CMstar$ be the tensor functor given by the action of $\Cat{D}$ on $\Cat{M}$, i.e. $d \act m = \mathsf{F}(m)$. This functor is exact, since the functor 
$\act: \Cat{D} \times \Cat{M} \rightarrow \Cat{M}$ is biexact. Hence the adjoint functors to $\mathsf{F}$ exist. The equation
\begin{equation*}
  \Hom_{\Cat{D}}(\idm{m,\widetilde{m}},d)= \Hom_{\Cat{M}}(m, d \act \widetilde{m}) = \Hom_{\Cat{C}_{\Cat{M}}^{*}}(\icdualm{m,\widetilde{m}},\mathsf{F}(d))
\end{equation*}
shows that $\idm{m,\widetilde{m}}= \mathsf{F}^{l}(\icdualm{m,\widetilde{m}})$. 

Assume now that the statement is proven for $\CMstar$, then 
\begin{equation}
  \label{eq:Cmdual-then-D}
  \begin{split}
      \idmn{m \Box n, \widetilde{m}\Box \widetilde{n}}&= \mathsf{F}^{l}(\icdualmn{m \Box n, \widetilde{m}\Box n}) \simeq \mathsf{F}^{l}(\icdualm{m \ract \icn{n,\widetilde{n}}, \widetilde{m}})\\
  &= \idm{m \ract \icn{n,\widetilde{n}},\widetilde{m}}
 \end{split}
\end{equation}
shows that the statement follows for $\Cat{D}$. 

Next we prove the assertion for $\Cat{D}= \CMstar$ in the case that $\MC=\AModC$ and $ \CN = \ModCB$ 
for algebras $A,B \in \Cat{C}$. Recall from Example \ref{example:inner-homs-algebra} that in this case $\CMstar=\AModCA$. By Theorem \ref{theorem:existence-tensor}, the tensor product $\MC \Box \CN$ is given by the 
category of $(A,B)$-bimodules in $\Cat{C}$ with universal balancing functor given by the tensor product in $\Cat{C}$. Let $m, \widetilde{m} \in \AModC$ and $n, \widetilde{n} \in \ModCB$ and $x \in \AModCA$.  
\begin{equation}
  \label{eq:inner-hom-mn-left}
  \begin{split}
      \Hom_{_{A\!} \mathsf{Mod}(\Cat{C})_{B}}(\icdualmn{m \otimes n, \widetilde{m} \otimes \widetilde{n}}, x) &=\Hom_{_{A\!} \mathsf{Mod}(\Cat{C})_{B}}(m \otimes n , (x \tensor{A} \widetilde{m})\otimes \widetilde{ n}) \\
&=   \Hom_{_{A\!} \mathsf{Mod}(\Cat{C})} (m \otimes ( n \tensor{B} \leftidx{^*}{\widetilde{n}}{}), x \tensor{A} \widetilde{m}) \\
&=   \Hom_{_{A\!} \mathsf{Mod}(\Cat{C})_{A}} (A, x \tensor{A} \widetilde{m} \otimes (m \otimes (n \tensor{B} \leftidx{^*}{\widetilde{n}}{}))^{*})\\
&=  \Hom_{_{A\!} \mathsf{Mod}(\Cat{C})_{A}} (\leftidx{^*}{(\widetilde{m} \otimes (m \otimes (n \tensor{B} \leftidx{^*}{n}{}))^{*})  }{},x)
  \end{split}
\end{equation}
 The description of the inner hom objects from Example \ref{example:inner-homs-algebra} shows that on the other side 
 \begin{equation}
   \label{eq:inner-hom-MN-alg}
   \icdualm{m \ract \icn{n,\widetilde{n}}, \widetilde{m}}= \icdualm{m \ract (n \tensor{B} \leftidx{^*}{n}{}), \widetilde{m}}= \leftidx{^*}{(\widetilde{m} \otimes (m \otimes (n \tensor{B} \leftidx{^*}{n}{}))^{*} ) }{}
 \end{equation}
This completes the second part in the case that $\MC= \AModC$. Since every module category is equivalent to one of this type, see Theorem \ref{theorem:all-alg}, the statement holds in general.   The third statement follows directly 
by applying the second part  to the $\Cat{E}$-valued inner hom of $(\DMC \Box \CNE)^{\scriptscriptstyle{\#}}$ and using Lemma \ref{lemma:inner-hom-and-duals}.
\end{proof}
As we remarked in Section \ref{sec:Prelim}, the inner hom can be regarded as a categorification of an algebra valued  inner product except one compatibility with the $*$-involution. 
In view of this analogy,  the functor (\ref{eq:Lambda-defines-inner-hom}) can be regarded as a categorification of the Rieffel induction formula (\ref{eq:Rieffel-ind-inn}).

\subsection{$\#$-duals for bimodule categories using inner homs}

We finally show using the inner hom functors, that the dual categories are indeed $\#$-duals in the tricategory $\BimCat$.

\paragraph{The evaluation functors}
Recall from Proposition \ref{proposition:properties-inner-hom}, that the inner hom functors for a bimodule category $\DMC$ are  right exact balanced bimodule functors
\begin{equation}
  \label{eq:inner-homs}
  \idm{-,-}: \DMC \times \CMDld \rightarrow \DDD, \quad \imc{-,-}: \CMDrd \times \DMC \rightarrow \CCC.
\end{equation}
Using the universal property of the tensor product of bimodule categories we obtain the following functors. 
\begin{definition}
 Let $\DMC$ be a bimodule category. The inner hom functors induce  bimodule functors
  \begin{equation}
    \label{eq:eval-bimod}
    \ev{_{\Cat{D\!\!}}\Cat{M}}: \DMC \Box \CMDld \rightarrow \DDD, \quad \ev{\Cat{M}_{\Cat{C}}}: \CMDrd \Box \DMC \rightarrow \CCC,
  \end{equation}
that are  called the left, respectively right evaluation functors. 
\end{definition}
From the definition of the tensor product it follows that the evaluation functors are right exact.

By applying the universal property of the tensor product to Lemma \ref{lemma:inner-hom-and-duals} and  Proposition \ref{proposition:Rieffel}, 
we obtain the following compatibilities of the evaluation functors with   the dual categories and the tensor product. 
\begin{corollary}
   \label{corollary:kappa-und-Box}
 Let  $\DMC$ and $\CNE$ be two  bimodule categories.  
 \begin{corollarylist}
\item  \label{item:ev-duals-comp}There are bimodule natural isomorphisms between the bimodule functors 
\begin{equation}
  \label{eq:eval-duals}
  \ev{_{\Cat{D\!\!}}\Cat{M}}, \ev{^{\scriptscriptstyle{\#}\!}\Cat{M}_{\Cat{D}}}: \DMC \Box \CMDld \rightarrow \DDD \quad \text{and} \quad \ev{\Cat{M}_{\Cat{C}}},
\ev{_{\Cat{C}\!\!}\Cat{M}^{\scriptscriptstyle{\#}}}: \CMDrd \Box \DMC \rightarrow \CCC.
\end{equation}
   \item The bimodule functor $\ev{_{\Cat{D\!\!}}\Cat{M} \Box \Cat{N}}: \DMC\Box \CNE \Box ^{\scriptscriptstyle{\#}}(\DMC \Box \CNE) \rightarrow \DDD$ is equivalent as a bimodule functor to the composite
     \begin{equation}
       \label{eq:evl-and-box}
       \begin{tikzcd}
       \Cat{M} \Box \Cat{N} \Box  ^{\scriptscriptstyle{\#}}(\Cat{M} \Box \Cat{N}) \simeq  \Cat{M} \Box \Cat{N} \Box \Nld \Box \Mld 
\ar{r}{1 \Box \ev{_{\Cat{C\!\!}}\Cat{N}} \Box 1} & \Cat{M} \Box \Cat{C} \Box \Mld \ar{r}{r_{\Cat{M}} \Box 1} & \Cat{M} \Box \Mld \ar{r}{\ev{_{\Cat{D\!\!}}\Cat{M}}} &\Cat{D}. 
       \end{tikzcd}
     \end{equation}
\item The bimodule functor $\ev{\Cat{M} \Box \NE}: (\Cat{M} \Box \Cat{N})^{\scriptscriptstyle{\#}} \Box \Cat{M} \Box \Cat{N} \rightarrow \Cat{E}$ is equivalent as a bimodule functor to the composite
  \begin{equation}
    \label{eq:evr-box}
    \begin{tikzcd}
     (\Cat{M} \Box \Cat{N})^{\scriptscriptstyle{\#}} \Box \Cat{M} \Box \Cat{N}  \simeq  \Nrd \Box \Mrd \Box \Cat{M} \Box \Cat{N}   \ar{r}{1 \Box \ev{\Cat{M}_{\Cat{C}}} \Box 1} & 
 \Nrd \Box \Cat{C} \Box  \Cat{N}  \ar{r}{1 \Box l_{\Cat{N}}} &  \Nrd         \Box \Cat{N}         \ar{r}{\ev{\Cat{N}_{\Cat{E}}}} & \Cat{E}. 
    \end{tikzcd}
  \end{equation}
 \end{corollarylist}
In these formulas we suppressed the associativity functors of the tricategory $\BimCat$.
\end{corollary}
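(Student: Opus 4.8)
The plan is to prove all three statements in Corollary~\ref{corollary:kappa-und-Box} by transporting the corresponding identities for the inner-hom functors (Lemma~\ref{lemma:inner-hom-and-duals} and Proposition~\ref{proposition:Rieffel}) across the universal property of the tensor product. The key point is that each evaluation functor is, by definition, the functor $\widehat{(-)}$ applied to the relevant inner-hom functor, so a balanced bimodule natural isomorphism between inner-hom functors induces a bimodule natural isomorphism between the corresponding evaluation functors, and any factorization of an inner-hom functor through a composite of balanced module functors induces the analogous factorization of the evaluation functor after applying $\widehat{(-)}$.

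First I would prove part~\refitem{item:ev-duals-comp}: by Lemma~\ref{lemma:inner-hom-and-duals}\refitem{item:canon-bal-ldd} there is a canonical balanced bimodule natural isomorphism $\idm{-,-} \simeq \imldd{-,-}$ of balanced bimodule functors $\DMC \boxtimes \CMDld \rightarrow \DDD$, and the second functor is precisely the $\Cat{D}$-valued inner hom of the bimodule category $\Funltd{\Cat{M}}_{\Cat{D}} = {}^{\scriptscriptstyle{\#}}\!\Cat{M}_{\Cat{D}}$. Applying the 2-functor $\widehat{(-)}$ from Proposition~\ref{proposition:Box-2-functor-bimod} (which sends the balanced bimodule functor $\idm{-,-}$ to $\ev{_{\Cat{D\!\!}}\Cat{M}}$ and $\imldd{-,-}$ to $\ev{^{\scriptscriptstyle{\#}\!}\Cat{M}_{\Cat{D}}}$) yields the desired bimodule natural isomorphism; the right-hand statement about $\ev{\Cat{M}_{\Cat{C}}}$ and $\ev{_{\Cat{C}\!\!}\Cat{M}^{\scriptscriptstyle{\#}}}$ follows by the same argument applied to the $\Cat{C}$-valued inner hom and the equivalence~\eqref{eq:C-val-inn-equ}.

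Next, for part~ii) and part~iii), the input is Proposition~\ref{proposition:Rieffel}\refitem{item:Rieffel-ind}, which gives a multi-balanced module natural isomorphism fitting the functor $\idmn{-,-} \circ (\mathsf{B} \boxtimes \Funld{\mathsf{B}})$ with the nested functor $\Lambda(m \boxtimes n \boxtimes \widetilde{n}\boxtimes\widetilde{m}) = \idm{m \ract \icn{n,\widetilde{n}},\widetilde{m}}$. I would rewrite $\Lambda$ as a composite of balanced module functors: it factors through $1 \boxtimes \imc[\Cat{N}]{-,-}$ acting on the middle two variables $n \boxtimes \widetilde{n}$ (producing an object of $\Cat{C}$), followed by the right action $\ract$ of $\Cat{C}$ on $\Cat{M}$, the unit constraint $r_{\Cat{M}}$, and finally $\idm{-,-}$. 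Applying $\widehat{(-)}$ to this factorization — and using the identification $(\DMC \Box \CNE)^{\scriptscriptstyle{\#}} \simeq \ENCrd \Box \CMDrd$ from Lemma~\ref{lemma:dual-and-tensor}\refitem{item:double-dual} to recognize $\widehat{\Lambda}$ as $\ev{_{\Cat{D\!\!}}\Cat{M}\Box\Cat{N}}$ on the one side and the composite~\eqref{eq:evl-and-box} on the other — produces the claimed bimodule equivalence; here the suppression of associators is justified because the coherence isomorphisms of the 2-functor $\widehat{(-)}$ together with $\mu,\lambda,\rho,\pi$ of $\BimCat$ are all invertible 3-morphisms. Part~iii) is then obtained by feeding~\eqref{eq:right-inner-hom-box}, i.e.\ the dual/right version of Rieffel induction, through the same machine, or equivalently by applying part~ii) to the $\Cat{E}$-valued inner hom of $(\DMC \Box \CNE)^{\scriptscriptstyle{\#}}$ and unwinding via Lemma~\ref{lemma:inner-hom-and-duals}.

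The main obstacle I anticipate is bookkeeping rather than conceptual: one must check that every intermediate functor in the factorization of $\Lambda$ is genuinely a \emph{multi-balanced module} functor in the precise sense of Definition~\ref{definition:multi-balanced-mod} (so that $\widehat{(-)}$ applies and produces a bimodule functor, not merely a linear one), and that the dual-category identifications of Lemma~\ref{lemma:dual-and-tensor} are compatible with the balancing and module constraints in the way needed to identify $\widehat{\Lambda}$ with $\ev{_{\Cat{D\!\!}}\Cat{M}\Box\Cat{N}}$. Both are routine given the earlier results, but they are where the care is needed; the naturality and coherence of the resulting isomorphism then come for free from the 2-functoriality of $\widehat{(-)}$ and Corollary~\ref{corollary:pasting-diag-2-fun}.
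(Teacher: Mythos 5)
Your argument is correct and is essentially the paper's own proof: the corollary is obtained exactly by applying the universal property of the tensor product (the 2-functor $\widehat{(-)}$) to the balanced isomorphisms of Lemma \ref{lemma:inner-hom-and-duals} for part i) and to the Rieffel-induction isomorphism of Proposition \ref{proposition:Rieffel} for parts ii) and iii), using Lemma \ref{lemma:dual-and-tensor} to identify the duals of the tensor product. Your extra bookkeeping about the factorization of $\Lambda$ through $1\boxtimes\icn{-,-}\boxtimes 1$ and the multi-balanced module structure is exactly the routine verification the paper leaves implicit.
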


Furthermore we have the following version of the Yoneda-lemma and the representations of module functors by applying the tensor product to Lemma \ref{lemma:module-yoneda}\refitem{item:Yoneda-inner-hom}
and to Proposition \ref{proposition:D-represenations-functors}\refitem{item:repres-bimod-func}.
\begin{lemma}
  \label{lemma:ev-representablility}
 Let $\DMC$, $\DNC$ and $\DKD$ be  bimodule categories. 
  \begin{lemmalist}
    \item \label{item:ev-yoneda} For bimodule functors $\mathsf{F}, \mathsf{G}: \DMC \rightarrow \DNC$ the set of bimodule natural transformations from $\mathsf{F}$ to $\mathsf{G}$ is in bijection 
with the set of balanced bimodule transformations 
\begin{equation}
  \label{eq:bal-inner-hom-eval-yoned}
  \ev{_{\Cat{D}\!\!}\Cat{N}}\circ (\mathsf{F} \Box 1) \rightarrow \ev{_{\Cat{D}\!\!}\Cat{N}} \circ (\mathsf{G} \Box 1)  \quad \text{between functors} \quad \DMC \Box \CNDld \rightarrow \DDD,
\end{equation}
\item \label{item:repres-ev} Let $\mathsf{F}: \DKD \rightarrow \DDD$ be a right exact bimodule functor. Then $\mathsf{F}$ is equivalent as a bimodule functor to the composites 
  \begin{equation}
    \label{eq:comp-fundd}
    \begin{tikzcd}
      \DKD \simeq \DKD \Box \DDD \ar{r}{\Funrtd{(\mathsf{F}^{r})}} & \DKD \Box \DKDld \ar{r}{\ev{\Cat{K}_{\Cat{D}}}} & \DDD,
    \end{tikzcd}
  \end{equation}
as well as to 
\begin{equation}
  \label{eq:comp-fundd-sec}
  \begin{tikzcd}
    \DKD \simeq \DDD \Box \DKD \ar{r}{\Funltd{(\mathsf{F}^{r})}} & \DKDrd \Box \DDD \ar{r}{\ev{_{\Cat{D}\!}\Cat{K}}} & \DDD.
  \end{tikzcd}
\end{equation}
  \end{lemmalist}
\end{lemma}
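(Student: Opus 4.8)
The statement is Lemma \ref{lemma:ev-representablility}, which asserts two things about the evaluation functors: a Yoneda-type bijection for bimodule natural transformations (part \refitem{item:ev-yoneda}), and a representability statement for right exact bimodule functors into $\DDD$ (part \refitem{item:repres-ev}). In both cases the strategy is to transport the corresponding statement for the \emph{inner hom} functors -- which was already established in Lemma \ref{lemma:module-yoneda}\refitem{item:Yoneda-inner-hom} and Proposition \ref{proposition:D-represenations-functors}\refitem{item:repres-bimod-func} -- across the universal property of the tensor product $\Box$. The bridge is the Rieffel-type identification: by construction $\ev{_{\Cat{D}\!\!}\Cat{N}} \circ \mathsf{B} \simeq \idn{-,-}$ as balanced bimodule functors on $\DMC \boxtimes \CNDld$ (this is the defining property of $\ev{_{\Cat{D}\!\!}\Cat{N}}$ via $\Psi$ in the Definition preceding the Corollary), together with Proposition \ref{proposition:Rieffel}\refitem{item:Rieffel-ind} for the multi-object case.

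\textbf{Part \refitem{item:ev-yoneda}.} First I would observe that, by Proposition \ref{proposition:tensor-again-module} (the adjoint equivalence $\Phi$, $\Psi$ restricted to balanced bimodule functors), precomposition with $\mathsf{B}: \DMC \boxtimes \CNDld \rightarrow \DMC \Box \CNDld$ gives an equivalence of categories between balanced bimodule functors $\DMC \Box \CNDld \rightarrow \DDD$ and balanced bimodule functors $\DMC \boxtimes \CNDld \rightarrow \DDD$, and correspondingly a bijection on balanced bimodule natural transformations. Under this equivalence $\ev{_{\Cat{D}\!\!}\Cat{N}} \circ (\mathsf{F} \Box 1)$ corresponds to $\ev{_{\Cat{D}\!\!}\Cat{N}} \circ \mathsf{B} \circ (\mathsf{F} \boxtimes 1) \simeq \idn{\mathsf{F}(-),-}$, using that $\mathsf{B} \circ (\mathsf{F}\boxtimes 1) \simeq (\mathsf{F}\Box 1)\circ \mathsf{B}$ up to the coherence isomorphism $\varphi$ of the 2-functor $\widehat{(-)}$. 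Hence the set of balanced bimodule natural transformations $\ev{_{\Cat{D}\!\!}\Cat{N}}\circ(\mathsf{F}\Box 1) \rightarrow \ev{_{\Cat{D}\!\!}\Cat{N}}\circ(\mathsf{G}\Box 1)$ is in bijection with the set of balanced bimodule natural transformations $\idn{\mathsf{F}(-),-} \rightarrow \idn{\mathsf{G}(-),-}$, and the latter is in bijection with bimodule natural transformations $\mathsf{F} \rightarrow \mathsf{G}$ by Lemma \ref{lemma:module-yoneda}\refitem{item:Yoneda-inner-hom}. Composing the two bijections proves the claim; one should check (routine) that the composite bijection is the natural one, i.e. sends $\eta$ to the balanced bimodule transformation induced on evaluations.

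\textbf{Part \refitem{item:repres-ev}.} Given a right exact bimodule functor $\mathsf{F}: \DKD \rightarrow \DDD$, Proposition \ref{proposition:adj-ex} provides a right adjoint $\mathsf{F}^{r}: \DDD \rightarrow \DKD$ which is automatically a bimodule functor, and Proposition \ref{proposition:D-represenations-functors}\refitem{item:repres-bimod-func} gives the bimodule natural isomorphism $\imd{\Funrtd{(\mathsf{F}^{r})}(-),-} \simeq (-)\otimes\mathsf{F}(-)$ of balanced bimodule functors $\DDD \boxtimes \DKD \rightarrow \DDD$; setting the left slot to $\unit_{\Cat{D}}$ (and using $r_{\Cat{D}}$) yields $\imd{\Funrtd{(\mathsf{F}^{r})}(d),-} \simeq \mathsf{F}(-)$ after the appropriate identification. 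I would then invoke the universal property of $\Box$ once more: the composite $\ev{\Cat{K}_{\Cat{D}}} \circ (\Funrtd{(\mathsf{F}^{r})}\Box 1)$ corresponds under $\Phi$ to $\ev{\Cat{K}_{\Cat{D}}}\circ \mathsf{B}\circ(\Funrtd{(\mathsf{F}^{r})}\boxtimes 1) \simeq \imd{\Funrtd{(\mathsf{F}^{r})}(-),-}$ evaluated with the $\DDD$-slot filled from the $\DDD\Box$-factor; tracing through the equivalence $\DKD \simeq \DKD\Box\DDD$ (which is $r_{\Cat{K}}^{-}$ followed by $1\Box(\text{unit})$) this composite is exactly $\mathsf{F}$ up to bimodule natural isomorphism. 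The second composite, into $\DDD\Box\DKD$, is obtained symmetrically from the first isomorphism of Proposition \ref{proposition:D-represenations-functors}\refitem{item:repres-bimod-func} together with Lemma \ref{lemma:inner-hom-and-duals} to pass between $\idm{-,-}$ and $\imc{-,-}$ on the dual category.

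\textbf{Main obstacle.} The genuine content is entirely imported; the delicate part is bookkeeping the coherence isomorphisms of the 2-functor $\widehat{(-)}$ when one slides $\mathsf{B}$ past $\mathsf{F}\boxtimes 1$ and when one suppresses the associators $a$, $r$, $r^{-}$ of $\BimCat$ in the composites of Lemma \ref{lemma:ev-representablility}. In other words, the hard part will be confirming that the chain of equivalences one writes down is a chain of \emph{bimodule} natural isomorphisms compatible with all the suppressed structure -- but this is exactly the kind of check that Corollary \ref{corollary:pasting-diag-2-fun} and the 2-functoriality of $\widehat{(-)}$ are designed to make mechanical, so no new idea is required.
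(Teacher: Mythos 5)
Your proposal is correct and follows the paper's own route: the paper obtains this lemma precisely by applying the universal property of the tensor product (the adjoint equivalence $\Phi$, $\Psi$ of Proposition \ref{proposition:tensor-again-module}) to Lemma \ref{lemma:module-yoneda}\refitem{item:Yoneda-inner-hom} for part \refitem{item:ev-yoneda} and to Proposition \ref{proposition:D-represenations-functors}\refitem{item:repres-bimod-func} for part \refitem{item:repres-ev}, exactly as you do. Your elaboration of the coherence bookkeeping is a faithful expansion of what the paper leaves implicit.
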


\paragraph{The coevaluation functors}

In analogy to the case of module categories over $\Vect$ in the introduction to this section, we  
define the coevaluation functor using a representing object for the inner hom functors. Recall that the evaluation functors are right exact, hence there exists a right adjoint. 
\begin{definition}
  Let $\DMC$ be an  bimodule category. The left coevaluation functor is defined by
  \begin{equation}
    \label{eq:coev-left}
    \coev{_{\Cat{D}\!\!}\Cat{M}}= \Funrtd{( \ev{_{\Cat{D\!\!}}\Cat{M}}^{r})}: \DDD  \longrightarrow (\DMC \Box \CMDld)^{\scriptscriptstyle{\#}} \simeq \DMC \Box \CMDrd, 
  \end{equation}
while the right coevaluation functor is defined by
 \begin{equation}
    \label{eq:coev-right}
    \coev{\Cat{M}_{\Cat{C}}}= \Funltd{( \ev{\Cat{M}_{\Cat{C}}}^{r}  )}: \CCC  \longrightarrow  ^{\scriptscriptstyle{\#} \!\!}(\CMDrd \Box \DMC) \simeq  \CMDld \Box \DMC.
  \end{equation}
In these formulas we used the equivalences from Lemma \ref{lemma:dual-and-tensor}. 
\end{definition}

First we clarify the compatibility of the coevaluation functors and the dual bimodule categories.
\begin{lemma}
  \label{lemma:comp-dual-coev}
For a bimodule category  $\DMC$ the bimodule functors 
$$\coev{_{\Cat{D}\!}\Cat{M}},  \coev{\Cat{M}^{\sss{\#}}_{\Cat{D}}}: \Cat{D} \rightarrow \DM \Box \MDrd, \quad \text{and}\quad \coev{\Cat{M}_{\Cat{C}}}, \coev{_{\Cat{C} \!}^{\sss{\#}\!}\Cat{M}}:\Cat{C} \rightarrow \CMld \Box \MC   $$ 
are equivalent as bimodule functors. 
\end{lemma}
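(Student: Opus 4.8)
The plan is to deduce the claimed equivalences from the corresponding statement for the evaluation functors, namely Corollary~\ref{corollary:kappa-und-Box}\refitem{item:ev-duals-comp}, together with the uniqueness of adjoints. Recall that by definition
$\coev{_{\Cat{D}\!}\Cat{M}}= \Funrtd{(\ev{_{\Cat{D\!\!}}\Cat{M}}^{r})}$, while $\coev{\Cat{M}^{\sss{\#}}_{\Cat{D}}}$ is, by the same definition applied to the bimodule category $\CMDld$ in place of $\DMC$, the functor $\Funrtd{((\ev{^{\sss{\#}\!}\Cat{M}_{\Cat{D}}})^{r})}$ (after identifying $\leftidx{^{\sss{\#}}}{(\CMDld)}{}\simeq\DMC$ and $(\CMDld)^{\sss{\#}}\simeq\DMC$ via Lemma~\ref{lemma:dual-and-tensor}\refitem{item:double-dual}). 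So the first step is to unravel these definitions carefully and check that both functors have the same source $\Cat{D}$ and the same target $\DM \Box \MDrd$ once the identifications of Lemma~\ref{lemma:dual-and-tensor} are inserted.

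Next I would invoke Corollary~\ref{corollary:kappa-und-Box}\refitem{item:ev-duals-comp}, which provides a bimodule natural isomorphism $\ev{_{\Cat{D\!\!}}\Cat{M}} \simeq \ev{^{\sss{\#}\!}\Cat{M}_{\Cat{D}}}$ between bimodule functors $\DMC \Box \CMDld \rightarrow \DDD$. Both of these functors are right exact (the evaluation functors are right exact by construction, as noted after the definition of $\ev{}$), hence each admits a right adjoint, and the right adjoint is unique up to unique bimodule natural isomorphism by Proposition~\ref{proposition:adj-ex}. A natural isomorphism of functors induces a natural isomorphism of their right adjoints, so we obtain $\ev{_{\Cat{D\!\!}}\Cat{M}}^{r} \simeq (\ev{^{\sss{\#}\!}\Cat{M}_{\Cat{D}}})^{r}$ as bimodule functors $\DDD \rightarrow \DMC \Box \CMDld$. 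Finally, applying the 2-functor $\Funrtd{(-)}$ (which sends bimodule natural isomorphisms to bimodule natural isomorphisms, as discussed in the paragraph on dual categories extending to functors) to this isomorphism yields the desired bimodule natural isomorphism $\coev{_{\Cat{D}\!}\Cat{M}}\simeq \coev{\Cat{M}^{\sss{\#}}_{\Cat{D}}}$. The argument for the right coevaluation functors $\coev{\Cat{M}_{\Cat{C}}}$ and $\coev{_{\Cat{C} \!}^{\sss{\#}\!}\Cat{M}}$ is entirely analogous, using the second isomorphism of Corollary~\ref{corollary:kappa-und-Box}\refitem{item:ev-duals-comp}, the right exactness and adjoint uniqueness from Proposition~\ref{proposition:adj-ex}, and the 2-functor $\Funltd{(-)}$.

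The main obstacle I anticipate is bookkeeping rather than mathematical depth: one must verify that the equivalences from Lemma~\ref{lemma:dual-and-tensor} used implicitly in the definitions of the two coevaluation functors are compatible, i.e.\ that passing from $\ev{^{\sss{\#}\!}\Cat{M}_{\Cat{D}}}$ to its right adjoint and then to $\Funrtd{}$ genuinely lands in the same category $\DM\Box\MDrd$ to which $\coev{_{\Cat{D}\!}\Cat{M}}$ maps, and that the double-dual identifications $((-)^{\sss{\#}})$ applied twice are coherent with the $\widetilde{\#}$-operation (cf.\ the remark that $({}^{\widetilde{\#}}\mathsf{F})^{\widetilde{\#}} \simeq \mathsf{F}$). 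Once these identifications are fixed consistently — as the paper has agreed to use them without mention — the rest is a formal consequence of Corollary~\ref{corollary:kappa-und-Box} and uniqueness of adjoints, so no further computation is needed.
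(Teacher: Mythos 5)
There is a genuine gap, and it sits exactly where you wave it off as bookkeeping. Your identification of $\coev{\Cat{M}^{\sss{\#}}_{\Cat{D}}}$ with $\Funrtd{\bigl((\ev{^{\sss{\#}\!}\Cat{M}_{\Cat{D}}})^{r}\bigr)}$ is not an instance of the definition of the coevaluation functors. By definition, $\coev{\Cat{M}^{\sss{\#}}_{\Cat{D}}}$ is the \emph{right} coevaluation of the $(\Cat{C},\Cat{D})$-bimodule category $\CMDrd$, i.e.\ $\Funltd{\bigl((\ev{\Cat{M}^{\sss{\#}}_{\Cat{D}}})^{r}\bigr)}$, where $\ev{\Cat{M}^{\sss{\#}}_{\Cat{D}}}$ is induced by the $\Cat{D}$-valued inner hom $\imrdd{-,-}$ of $\CMDrd$ as a right $\Cat{D}$-module and has source $\DMCrrd \Box \CMDrd$; it involves the double right dual of $\Cat{M}$, not $\CMDld$, and the twist is $\Funltd{(-)}$, not $\Funrtd{(-)}$. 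Corollary~\ref{corollary:kappa-und-Box}\refitem{item:ev-duals-comp} compares $\ev{_{\Cat{D}\!}\Cat{M}}$ with $\ev{^{\sss{\#}\!}\Cat{M}_{\Cat{D}}}$, the right evaluation of the \emph{left} dual, so your chain (isomorphic right exact functors have isomorphic right adjoints, then apply $\Funrtd{(-)}$) only produces $\coev{_{\Cat{D}\!}\Cat{M}} \simeq \Funrtd{\bigl((\ev{^{\sss{\#}\!}\Cat{M}_{\Cat{D}}})^{r}\bigr)}$, a functor that is nowhere related to $\coev{\Cat{M}^{\sss{\#}}_{\Cat{D}}}$ in your argument. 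Trading ${}^{\sss{\#}\!}\Cat{M}$ for $\Cat{M}^{\sss{\#}}$ and $\Funrtd{(-)}$ for $\Funltd{(-)}$ is precisely the content of the lemma, not a coherence formality.

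What is needed to close the gap is what the paper actually does: reduce the claim to a bimodule isomorphism $\Funrrtd{(\ev{_{\Cat{D}\!}\Cat{M}}^{r})} \simeq (\ev{\Cat{M}^{\sss{\#}}_{\Cat{D}}})^{r}$, which yields the statement after applying the relations $\Funrtd{(\Funltd{\mathsf{F}})}\simeq \mathsf{F} \simeq \Funltd{(\Funrtd{\mathsf{F}})}$ coming from Lemma~\ref{lemma:dual-and-tensor}. This uses two inputs your proposal does not supply: that $\Funrrtd{(-)}$ commutes with passing to right adjoints up to bimodule isomorphism, and the double-dual compatibility of the inner homs, $\Funrrtd{(\idm{-,-})}\simeq \imrdd{-,-}$, from Lemma~\ref{lemma:inner-hom-and-duals} (whose proof is the explicit adjunction computation with $\leftidx{^{**}}{d}{}$), which is what relates $\ev{_{\Cat{D}\!}\Cat{M}}$ to $\ev{\Cat{M}^{\sss{\#}}_{\Cat{D}}}$ in the first place. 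Your intermediate steps (uniqueness of right adjoints via Proposition~\ref{proposition:adj-ex}, and that the $\widetilde{\#}$-operations preserve bimodule isomorphisms) are fine, but without the double-dual inner-hom comparison the argument never reaches the functor named in the statement; the same issue recurs verbatim for the pair $\coev{\Cat{M}_{\Cat{C}}}$, $\coev{_{\Cat{C}\!}^{\sss{\#}\!}\Cat{M}}$.
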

\begin{proof} 
To construct the first natural isomorphism  it is sufficient to show that 
$\Funrrtd{(\ev{_{\Cat{D}\!}\Cat{M}}^{r})} \simeq (\ev{\Cat{M}^{\sss{\#}}_{\Cat{D}}})^{r}$ as bimodule functors. It is straightforward to see that taking the right adjoint and the operation 
$\Funrrtd{(-)}$ commute up to a natural bimodule isomorphism. 
Hence it suffices to show that $\Funrrtd{(\ev{_{\Cat{D}\!}\Cat{M}})} \simeq \ev{\Cat{M}^{\sss{\#}}_{\Cat{D}}}$. This follows from Lemma \ref{lemma:inner-hom-and-duals} \refitem{item:canon-bal-ldd}. The remaining isomorphism for the $\Cat{C}$-valued inner hom follows directly from the first statement using
the equivalence $(\CMld)^{\sss{\#}} \simeq \MC$. 
\end{proof}

\begin{lemma}
  \label{lemma:representing-inner-hom}
  \begin{lemmalist}
    \item The coevaluation functors represent the inner hom functors, i.e. there are natural bimodule isomorphism 
\begin{equation}
  \label{eq:inner-hom-repres}
  \idmmld{-, \coev{_{\Cat{D}\!\!}\Cat{M}}(\unit)} \simeq \ev{_{\Cat{D\!\!}}\Cat{M}}(-) \simeq \idmmrd{\coev{_{\Cat{D}\!\!}\Cat{M}}(\unit), -},
\end{equation}
between bimodule functors $\DMC \Box \CMDld \rightarrow \DDD$.
Furthermore there are natural bimodule isomorphisms 
\begin{equation}
  \label{eq:inner-homC-rep}
  \icmrdm{-, \coev{\Cat{M}_{\Cat{C}}}(\unit)} \simeq \ev{\Cat{M}_{\Cat{C}}} \simeq \imrdmc{\coev{\Cat{M}_{\Cat{C}}}(\unit), -}
\end{equation}
between bimodule functors $\DMC \Box \CMDld \rightarrow \DDD$.
\item \label{item:bimodule-coev-ev} The composite  bimodule functor
  \begin{equation}
    \label{eq:composite-coev-ev}
    \begin{tikzcd}[column sep=large]
          \DMC \Box \CMDld \simeq \Cat{D} \Box \Cat{M} \Box \Mld  \ar{r}{\coev{_{\Cat{D}\!\!}\Cat{M}}\Box 1} & ( \Cat{M} \Box \Mld)^{\scriptscriptstyle{\#}} \Box \Cat{M} \Box \Mld \ar{r}[yshift=1pt]{\ev{\Cat{M} \Box ^{\scriptscriptstyle{\#}\!}\Cat{M}_{\Cat{D}}}} & \Cat{D}
    \end{tikzcd}
  \end{equation}
is equivalent to the evaluation functor $\ev{_{\Cat{D\!\!}}\Cat{M}}$ as bimodule functor.
  \end{lemmalist}
The composite bimodule functor 
\begin{equation}
  \label{eq:comp-C-coev}
  \begin{tikzcd}[column sep=large]
    \CMDrd \Box \DMC \simeq  \Mrd \Box \Cat{M} \Box \Cat{C} \ar{r}[yshift=1pt]{1 \Box \coev{\Cat{M}_{\Cat{C}}}} & \Mrd \Box \Cat{M} \Box ^{\scriptscriptstyle{\#}\!\!}(\Mrd \Box \Cat{M})  
\ar{r}{\ev{_{\Cat{C}\!\!}^{\scriptscriptstyle{\#}\!\!}\Cat{M} \Box \Cat{M}}} &\Cat{C}
  \end{tikzcd} 
\end{equation}
is equivalent to $\ev{\Cat{M}_{\Cat{C}}}$ as a  bimodule functor. 
\end{lemma}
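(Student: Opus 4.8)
The plan is to deduce both parts formally from the representability statements already established, applied to the evaluation functors regarded as right exact bimodule functors into the unit bimodule categories.

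For the first part the key observation is that, by the definition $\coev{_{\Cat{D}\!\!}\Cat{M}}=\Funrtd{(\ev{_{\Cat{D\!\!}}\Cat{M}}^{r})}$ together with \eqref{eq:values-semi-hash}, the object $\coev{_{\Cat{D}\!\!}\Cat{M}}(\unit_{\Cat{D}})$ is nothing but $\ev{_{\Cat{D\!\!}}\Cat{M}}^{r}(\unit_{\Cat{D}})$, the value at the unit of a right adjoint of $\ev{_{\Cat{D\!\!}}\Cat{M}}$, which exists by Proposition~\ref{proposition:adj-ex} since the evaluation is right exact. Regarding $\Cat{K}:=\DMC\Box\CMDld$ as a $(\Cat{D},\Cat{D})$-bimodule category and using that $\ev{_{\Cat{D\!\!}}\Cat{M}}^{r}$ is again a bimodule functor, one computes for $x\in\Cat{K}$, $d\in\Cat{D}$ a natural chain $\Hom_{\Cat{D}}(\idmmrd{\coev{_{\Cat{D}\!\!}\Cat{M}}(\unit),x},d)\simeq\Hom_{\Cat{K}}(x,\ev{_{\Cat{D\!\!}}\Cat{M}}^{r}(d))\simeq\Hom_{\Cat{D}}(\ev{_{\Cat{D\!\!}}\Cat{M}}(x),d)$, using Definition~\ref{definition:inner-hom} and the evaluation adjunction; since all these isomorphisms are multi-balanced bimodule natural isomorphisms, the module Yoneda lemma (Lemma~\ref{lemma:module-yoneda}) yields the right-hand isomorphism of \eqref{eq:inner-hom-repres}. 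The left-hand isomorphism of \eqref{eq:inner-hom-repres} follows either from the entirely parallel computation with the left inner hom in place of the right one, or---more economically---by combining the canonical balanced bimodule isomorphism $\idm{-,-}\simeq\imldd{-,-}$ of Lemma~\ref{lemma:inner-hom-and-duals}\refitem{item:canon-bal-ldd} (applied to $\Cat{K}$) with the equivalence of the two dual coevaluations from Lemma~\ref{lemma:comp-dual-coev} and the canonical identification of the dual categories of $\Cat{K}$ from Lemma~\ref{lemma:dual-and-tensor}\refitem{item:double-dual}. The $\Cat{C}$-valued isomorphisms \eqref{eq:inner-homC-rep} are obtained in the same way with $\ev{\Cat{M}_{\Cat{C}}}:\CMDrd\Box\DMC\to\CCC$ and $\coev{\Cat{M}_{\Cat{C}}}=\Funltd{(\ev{\Cat{M}_{\Cat{C}}}^{r})}$, or deduced from the $\Cat{D}$-valued case by passing to the dual bimodule category and invoking Lemma~\ref{lemma:dual-and-tensor} and Corollary~\ref{corollary:kappa-und-Box}\refitem{item:ev-duals-comp}.

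For the second part, the composite in \eqref{eq:composite-coev-ev} and the trailing $\Cat{C}$-valued composite \eqref{eq:comp-C-coev} are precisely the two representation-of-a-module-functor factorizations of Lemma~\ref{lemma:ev-representablility}\refitem{item:repres-ev}, applied to the right exact bimodule functors $\ev{_{\Cat{D\!\!}}\Cat{M}}:\DMC\Box\CMDld\to\DDD$ and $\ev{\Cat{M}_{\Cat{C}}}:\CMDrd\Box\DMC\to\CCC$ (legitimate, since $\DMC\Box\CMDld$ and $\CMDrd\Box\DMC$ are $(\Cat{D},\Cat{D})$- and $(\Cat{C},\Cat{C})$-bimodule categories). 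In that lemma $\Funrtd{(\ev{_{\Cat{D\!\!}}\Cat{M}}^{r})}$ and $\Funltd{(\ev{\Cat{M}_{\Cat{C}}}^{r})}$ are by definition $\coev{_{\Cat{D}\!\!}\Cat{M}}$ and $\coev{\Cat{M}_{\Cat{C}}}$, and the evaluation functors of the composite bimodule categories appearing there coincide, after the canonical identifications of Lemma~\ref{lemma:dual-and-tensor} and Lemma~\ref{lemma:inner-hom-and-duals}, with the two functors displayed in \eqref{eq:composite-coev-ev} and \eqref{eq:comp-C-coev}. If one wishes to exhibit those evaluations in the stated nested form, one substitutes $\Cat{N}=\CMDld$ (resp.\ $\Cat{N}=\DMC$) into the tensor-product factorizations of evaluation functors from Corollary~\ref{corollary:kappa-und-Box}, but this refinement is not needed for the claim as stated.

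The only genuine work I expect is bookkeeping: keeping straight which of the two duality conventions is in force at each step---the operation $\widetilde{\#}$ built from $(-)^{*}$ versus $\leftidx{^*}{(-)}{}$, the dual categories $\CMDld$ versus $\CMDrd$, and the left versus the right evaluation---and checking that every isomorphism produced along the chains above is a \emph{balanced bimodule} natural isomorphism rather than a merely linear one. The latter is automatic, since each ingredient used (the module Yoneda lemma of Lemma~\ref{lemma:module-yoneda}, the adjunction isomorphisms of Proposition~\ref{proposition:adj-ex}, the duality isomorphisms of Lemma~\ref{lemma:inner-hom-and-duals}, and the compatibility of $\coev$ with the dual categories in Lemma~\ref{lemma:comp-dual-coev}) is already known to respect the balancing and bimodule structures; so once these tools are in hand the lemma is purely formal and requires no further construction.
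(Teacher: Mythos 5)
Your argument is correct and follows essentially the same route as the paper: part (i) is the definition $\coev{_{\Cat{D}\!\!}\Cat{M}}=\Funrtd{(\ev{_{\Cat{D\!\!}}\Cat{M}}^{r})}$ combined with the representability statement of Proposition \ref{proposition:D-represenations-functors} (your Hom-chain plus module Yoneda is just that proposition's proof specialized at the unit object), and part (ii) is exactly the application of Lemma \ref{lemma:ev-representablility}\refitem{item:repres-ev} to $\ev{_{\Cat{D\!\!}}\Cat{M}}$ and $\ev{\Cat{M}_{\Cat{C}}}$ that the paper invokes. No gaps; the bookkeeping you flag about which duality convention is in force is indeed the only care needed.
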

\begin{proof}
  The first part  follows  directly from the  definition of the coevaluation functor  and from  Proposition \ref{proposition:D-represenations-functors}. The second part
 follows from applying Lemma \ref{lemma:ev-representablility}\refitem{item:repres-ev}.
\end{proof}

\paragraph{The triangulators}
We finally show that the evaluation and coevaluation functors satisfy the snake identities up to a bimodule natural isomorphism. This is shown in \cite[Proposition 4.2.1]{DSS} using 
the description of the tensor product as functor category.  Our method of proof is a  generalization of the semisimple case considered in  \cite{Schaum}. 
\begin{proposition}
\label{proposition:triang}
Let $\DMC$ be a  bimodule category.  There exists  a  bimodule natural isomorphism between the composite
 \begin{equation}
  \label{eq:definition-Phi-erst}
  \begin{tikzcd}[column sep=large]
  \Phi_{\Cat{M}}:  \Cat{M}\simeq \Cat{D} \Box \Cat{M} \ar{r}{\coev{_{\Cat{D}\!\!}\Cat{M}} \Box 1} & 
\Cat{M} \Box \Mrd \Box \Cat{M} \ar{r}{1 \Box\ev{\Cat{M}_{\Cat{C}}}} &\Cat{M} \Box \Cat{C}\simeq \Cat{M},
  \end{tikzcd}
\end{equation}
and the identity functor on $\DMC$. Similarly, the bimodule functor 
\begin{equation}
  \label{eq:bimod-other}
  \begin{tikzcd}[column sep=large]
    \Cat{M} \simeq  \Cat{M} \Box \Cat{C} \ar{r}[yshift=1pt]{1 \Box \coev{\Cat{M}_{\Cat{C}}}} & \Cat{M} \Box \Mld \Box \Cat{M} \ar{r}{\ev{\Cat{D}\!\!\Cat{M}} \Box 1} & \Cat{D} \Box \Cat{M} \simeq \Cat{M},
  \end{tikzcd}
\end{equation}
is equivalent to the identity bimodule functor $\DMC$. These natural isomorphisms are  called left and right triangulators, respectively. 
\end{proposition}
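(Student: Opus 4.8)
The plan is to construct the left triangulator $T_{\Cat{M}}: \Phi_{\Cat{M}} \rightarrow 1_{\Cat{M}}$ by exhibiting $\Phi_{\Cat{M}}$ as a right exact bimodule functor $\DMC \rightarrow \DMC$ and then computing it explicitly on inner homs, in complete analogy with the $\Vect$-case treated in the introduction to this section, but with $\Vect$-valued $\Hom$-spaces replaced by inner homs and the Yoneda lemma replaced by its module version, Lemma \ref{lemma:module-yoneda}. First I would observe that $\Phi_{\Cat{M}}$ is right exact, being a composite of right exact bimodule functors (the coevaluation functor is right exact since it is a $\widetilde{\#}$-twist of a right adjoint of a right exact functor; the evaluation functor is right exact by construction; the coherence equivalences $r_{\Cat{M}}, l_{\Cat{M}}^{-}$ are equivalences). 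Hence by Lemma \ref{lemma:ev-representablility}\refitem{item:ev-yoneda} it suffices to produce a balanced bimodule natural isomorphism between the functors $\DMC \Box \CMDld \rightarrow \DDD$ given by $\ev{_{\Cat{D}\!\!}\Cat{M}} \circ (\Phi_{\Cat{M}} \Box 1)$ and $\ev{_{\Cat{D}\!\!}\Cat{M}}$ itself; equivalently, using Lemma \ref{lemma:representing-inner-hom}, a natural isomorphism of the form $\idm{\Phi_{\Cat{M}}(m),\widetilde{m}} \simeq \idm{m,\widetilde{m}}$ that is balanced and bimodular in all arguments.

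The key computation is then to unwind $\idm{\Phi_{\Cat{M}}(m),\widetilde{m}}$. By the representability of the evaluation functors through the coevaluation object (Lemma \ref{lemma:representing-inner-hom}\refitem{item:bimodule-coev-ev} and the Rieffel-induction formula of Proposition \ref{proposition:Rieffel}), one has $\ev{\Cat{M}_{\Cat{C}}}(\coev{_{\Cat{D}\!\!}\Cat{M}}(\unit) \Box m \ldots) $ expressible through the $\Cat{C}$-valued inner hom of the representing object, and dually $\ev{_{\Cat{D}\!\!}\Cat{M}}$ through the $\Cat{D}$-valued inner hom; combining these via the compatibility of inner homs with the dual categories (Lemma \ref{lemma:inner-hom-and-duals}) and the adjunction isomorphisms (Lemma \ref{lemma:adjoint-inner-hom}), the composite $\Phi_{\Cat{M}}$ should compute to $\Phi_{\Cat{M}}(m) \simeq $ the object representing $\idm{-,\widetilde{m}} \mapsto \idm{m,\widetilde{m}}$. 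Concretely, I would mimic equation \eqref{eq:compute-R-vect}: for all $n \in \Cat{M}$ and $d \in \Cat{D}$ compute
\begin{equation*}
  \Hom_{\Cat{D}}(\idm{n,\Phi_{\Cat{M}}(m)},d) \simeq \Hom_{\Cat{M}}(n, d \act \Phi_{\Cat{M}}(m)) \simeq \ldots \simeq \Hom_{\Cat{M}}(n, d \act m) \simeq \Hom_{\Cat{D}}(\idm{n,m},d),
\end{equation*}
where the middle isomorphisms use the defining adjunction of $\ev{\Cat{M}_{\Cat{C}}}^{r}$ hidden in $\coev{_{\Cat{D}\!\!}\Cat{M}}$, the Rieffel-induction isomorphism of Proposition \ref{proposition:Rieffel}\refitem{item:Rieffel-ind} to collapse the nested inner homs, the inner-hom descriptions $\icgen{x,\widetilde{x}} = x \otimes \leftidx{^*}{\widetilde{x}}{}$ for the unit bimodule category, and the snake-type cancellation $\leftidx{^*}{(c^{*})}{} \simeq c$. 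Checking that every isomorphism in this chain is natural and multi-balanced — so that the module Yoneda lemma applies — then yields $T_{\Cat{M}}$; its bimodularity is automatic from the bimodularity of all the constituents. The right triangulator is obtained by the mirror-image argument using $\coev{\Cat{M}_{\Cat{C}}}$, $\ev{_{\Cat{D}\!\!}\Cat{M}}$ and the $\Cat{D}$-side representability.

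The main obstacle I anticipate is the bookkeeping in the chain of isomorphisms: one must carefully track which inner hom ($\Cat{C}$-valued vs. $\Cat{D}$-valued), which dual category ($\CMDld$ vs. $\CMDrd$), and which coherence equivalence ($r_{\Cat{M}}$, $l_{\Cat{M}}^{-}$, and the duality equivalences of Lemma \ref{lemma:dual-and-tensor}) is being used at each step, and verify that the Rieffel-induction isomorphism of Proposition \ref{proposition:Rieffel} is being applied in the correct variance — the coevaluation functor introduces a $\widetilde{\#}$-twist via $\Funrtd{(-)}$, and matching this twist against the left/right-dual conventions \eqref{eq:rd-action}–\eqref{eq:ld-action} is where sign-errors (in the categorical sense: a spurious $**$ or $\leftidx{^*}{(-)}{}$) would creep in. The resolution is that the double-dual discrepancies cancel precisely because of Lemma \ref{lemma:inner-hom-and-duals}\refitem{item:canon-bal-ldd}, which identifies $\idm{-,-}$ with $\imldd{-,-}$, so that the $\widetilde{\#}$-twist in $\coev{_{\Cat{D}\!\!}\Cat{M}}$ is exactly what is needed to land back in $\Cat{M}$ rather than in a double dual. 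Once this identification is in place, the rest is routine naturality checking, which I would not spell out in full.
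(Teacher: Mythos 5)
Your proposal is correct and follows essentially the same route as the paper: reduce via the evaluation-Yoneda lemma (Lemma \ref{lemma:ev-representablility}) to producing a bimodule natural isomorphism $\ev{_{\Cat{D}\!\!}\Cat{M}} \circ (\Phi_{\Cat{M}} \Box 1) \simeq \ev{_{\Cat{D}\!\!}\Cat{M}}$, and then obtain it from the representability of the evaluation by the coevaluation object (Lemma \ref{lemma:representing-inner-hom}) combined with the compatibility of the evaluation functors with the tensor product and the dual categories (Corollary \ref{corollary:kappa-und-Box}, which packages the Rieffel-induction and Lemma \ref{lemma:inner-hom-and-duals} you invoke). The only difference is cosmetic: the paper keeps the middle step at the level of these functorial lemmas instead of unwinding an explicit $\Hom$-space chain, which is advisable anyway since the coevaluation object is not a sum of simple tensors outside the semisimple case.
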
 
\begin{proof}
  We show that there exists a bimodule natural isomorphisms between $\ev{_{\Cat{D\!\!}}\Cat{M}} \circ (\Phi_{\Cat{M}} \Box 1) $ and $\ev{_{\Cat{D\!\!}}\Cat{M}}$ as 
bimodule functors from $\DMC \Box \CMDld$ to $\DDD$. Then the statement will follow by Lemma \ref{lemma:ev-representablility} \refitem{item:ev-yoneda}.
According to Corollary \ref{corollary:kappa-und-Box}, the functor $\ev{\Cat{M} \Box ^{\scriptscriptstyle{\#}\!}\Cat{M}_{\Cat{D}} }: \Cat{M} \Box \Mrd \Box \Cat{M} \Box \Mrd \rightarrow \Cat{D}$ is isomorphic 
as bimodule functor to $ \ev{^{\scriptscriptstyle{\#}\!}\Cat{M}_{\Cat{D}}} \circ(1 \Box \ev{\Cat{M}_{\Cat{C}}} \Box 1)$, and hence according to equation (\ref{eq:eval-duals}) to $\ev{_{\Cat{D\!\!}}\Cat{M}} \circ (1 \Box \ev{\Cat{M}_{\Cat{C}}} \Box 1)$.
In these formulas we suppressed the unit bimodule functors for simplicity. 
This implies that  Lemma \ref{lemma:representing-inner-hom}\refitem{item:bimodule-coev-ev} gives a bimodule natural isomorphism  $\ev{_{\Cat{D\!\!}}\Cat{M}} \circ (\Phi_{\Cat{M}} \Box 1) \rightarrow \ev{_{\Cat{D\!\!}}\Cat{M}}$.
The second natural bimodule isomorphism is constructed analogously. 
\end{proof}
With the help of the evaluation and coevaluation functors we obtain $\#$-duals for $\BimCat$.
\begin{theorem}
\label{theorem:hash-duals-bimcat}
   Bimodule categories over finite tensor categories form a tricategory with  $\#$-duals.
\end{theorem}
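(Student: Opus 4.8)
The plan is to assemble the $\#$-duals from the structures already built in Section~\ref{sec:bimod-categ-as}. By Definition~\ref{definition:tricat-duals}, showing $\BimCat$ has $\#$-duals amounts to producing, for each $1$-morphism $\DMC$, a candidate dual $1$-morphism together with evaluation and coevaluation $2$-morphisms such that the two snake identities hold in the delooping bicategory $\Cath{\BimCat}$ --- that is, up to $3$-isomorphism in $\BimCat$. The candidate dual of $\DMC$ is the bimodule category $\CMDld$ (equivalently $\CMDrd$, by Lemma~\ref{lemma:comp-dual-coev} and Lemma~\ref{lemma:inner-hom-and-duals}), with $\ev{\Cat{M}}$ and $\coev{\Cat{M}}$ the evaluation and coevaluation bimodule functors constructed from the inner homs. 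First I would fix conventions: take $\CMDld$ as the dual, $\ev{_{\Cat{D}\!\!}\Cat{M}}\colon \DMC\Box\CMDld\rightarrow\DDD$ as the (left) coevaluation direction after dualizing and $\ev{\Cat{M}_{\Cat{C}}}\colon\CMDrd\Box\DMC\rightarrow\CCC$ as the (right) evaluation, using the equivalence $\CMDrd\simeq\CMDld$ to line up source and target; the coevaluation functors $\coev{_{\Cat{D}\!\!}\Cat{M}}$ and $\coev{\Cat{M}_{\Cat{C}}}$ of the previous subsection then supply the remaining $2$-morphisms.

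The heart of the argument is already in place: Proposition~\ref{proposition:triang} produces bimodule natural isomorphisms between the two ``zig-zag'' composites $\Phi_{\Cat{M}}$, $\Psi_{\Cat{M}}$ and the identity functor on $\DMC$. These are precisely the two triangulators, i.e.\ the $3$-isomorphisms witnessing the snake identities
\[
(1_{\Cat{M}}\Box\ev{\Cat{M}_{\Cat{C}}})\circ(\coev{_{\Cat{D}\!\!}\Cat{M}}\Box 1_{\Cat{M}})\;\simeq\;1_{\Cat{M}},\qquad
(\ev{_{\Cat{D\!\!}}\Cat{M}}\Box 1_{\Cat{M}})\circ(1_{\Cat{M}}\Box\coev{\Cat{M}_{\Cat{C}}})\;\simeq\;1_{\Cat{M}},
\]
where I suppress the associators $a$ and unitors $l,r$ of $\BimCat$ from Theorem~\ref{theorem:Bimcat-3-cat}. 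So the plan is: (i) quote Theorem~\ref{theorem:Bimcat-3-cat} that $\BimCat$ is a tricategory, hence has a well-defined delooping bicategory $\Cath{\BimCat}$; (ii) identify $\ev{\Cat{M}}$ and $\coev{\Cat{M}}$ as the required $2$-morphisms in $\Cath{\BimCat}$; (iii) invoke Proposition~\ref{proposition:triang} to conclude that the snake equations hold in $\Cath{\BimCat}$, since in the delooping one only needs equality of isomorphism classes of $3$-morphisms, and a bimodule natural isomorphism is exactly such a $3$-isomorphism; (iv) by Definition~\ref{definition:duality-bicat}, this exhibits $\CMDld$ as a right dual (and the symmetric construction a left dual) of $\DMC$ in $\Cath{\BimCat}$, which is Definition~\ref{definition:tricat-duals}\emph{ii}) applied to the bicategory $\Cath{\BimCat}$.

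One technical point worth spelling out is bookkeeping of the coherence data of $\BimCat$: the composites in Proposition~\ref{proposition:triang} are literally $(\coev{_{\Cat{D}\!\!}\Cat{M}}\Box 1)$ followed by $(1\Box\ev{\Cat{M}_{\Cat{C}}})$ together with the equivalences $\Cat{M}\simeq\Cat{D}\Box\Cat{M}$ and $\Cat{M}\Box\Cat{C}\simeq\Cat{M}$ coming from $l_{\Cat{M}}^{-}$ and $r_{\Cat{M}}$, and one must check these match the canonical zig-zag $2$-morphism built from the tricategorical unitors when passed to $\Cath{\BimCat}$. Here I would use Corollary~\ref{corollary:kappa-und-Box} and Lemma~\ref{lemma:representing-inner-hom}\refitem{item:bimodule-coev-ev}, exactly as in the proof of Proposition~\ref{proposition:triang}, to reorganize the composite and absorb the unitor discrepancies into bimodule natural isomorphisms; in $\Cath{\BimCat}$ these discrepancies vanish. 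The main obstacle is thus not any new computation but the care needed to confirm that the ``suppressed associativity functors'' in Corollary~\ref{corollary:kappa-und-Box} and Proposition~\ref{proposition:triang} really do assemble into the coherence isomorphisms required by Definition~\ref{definition:duality-bicat} rather than some other composite; once that is checked, the theorem follows immediately from the already-established Proposition~\ref{proposition:triang}.
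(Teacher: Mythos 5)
Your plan correctly reduces the theorem to producing evaluation/coevaluation $2$-morphisms and snake identities in the delooping $\Cath{\BimCat}$, but there is a genuine gap at step (iii): the two composites you display are \emph{not} the two snake identities of a single duality. Both statements of Proposition \ref{proposition:triang} are zigzags whose source and target is $\Cat{M}$: the first is the snake (\ref{eq:duality-snake-right}) for the pair $(\coev{_{\Cat{D}\!\!}\Cat{M}},\ev{\Cat{M}_{\Cat{C}}})$ with candidate dual $\CMDrd$, and the second is the analogous snake on $\Cat{M}$ for the other pair $(\coev{\Cat{M}_{\Cat{C}}},\ev{_{\Cat{D\!\!}}\Cat{M}})$ with candidate dual $\CMDld$. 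Definition \ref{definition:duality-bicat} requires for each duality a \emph{second} identity, namely (\ref{eq:snake2-right}), whose zigzag has source and target the dual category $\CMDrd$ (respectively $\CMDld$), e.g.
\[
\CMDrd \simeq \Mrd \Box \Cat{D} \xrightarrow{\;1 \Box \coev{_{\Cat{D}\!}\Cat{M}}\;} \Mrd \Box \Cat{M} \Box \Mrd \xrightarrow{\;\ev{\Cat{M}_{\Cat{C}}} \Box 1\;} \Cat{C} \Box \Mrd \simeq \CMDrd ,
\]
and nothing in your proposal shows this is isomorphic to the identity. This is not a formality one can wave away: with only the first snake established (even strictly, after absorbing the triangulator into the coevaluation), the composite above is in general only an idempotent $2$-morphism in $\Cath{\BimCat}$, so the duality is not yet exhibited. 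Having the first snake for \emph{both} candidate duals does not substitute for the missing second snakes.

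This missing step is exactly the nontrivial content of the paper's proof beyond quoting Proposition \ref{proposition:triang}: using Lemma \ref{lemma:comp-dual-coev} (which identifies $\coev{_{\Cat{D}\!}\Cat{M}}$ with $\coev{\Cat{M}^{\sss{\#}}_{\Cat{D}}}$) and Corollary \ref{corollary:kappa-und-Box}\refitem{item:ev-duals-comp} (which identifies $\ev{\Cat{M}_{\Cat{C}}}$ with $\ev{_{\Cat{C}\!\!}\Cat{M}^{\sss{\#}}}$), the zigzag on $\CMDrd$ displayed above is rewritten as a zigzag of the form (\ref{eq:bimod-other}) for the bimodule category $\CMDrd$ itself, and then Proposition \ref{proposition:triang} is applied to $\CMDrd$ rather than to $\Cat{M}$. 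You would need to add this argument (and its mirror for $\CMDld$, or an appeal to Proposition \ref{proposition:serre-ex} once one full duality is in place) to close the proof. Your final paragraph's worry about matching the suppressed associators and unitors with the coherence data of $\BimCat$ is legitimate but is indeed handled as you suggest, by working with isomorphism classes in the delooping; the real obstacle is the absent second snake identity.
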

\begin{proof}
 By Proposition \ref{proposition:triang}, the dual bimodule categories 
with the evaluation and coevaluation functors  satisfy the first snake identity  (\ref{eq:duality-snake-right})
 for the  $\#$-duals up to the triangulator. The remaining triangulator for $\CMDrd$ is constructed as follows. 
By Lemma \ref{lemma:comp-dual-coev} and Corollary \ref{corollary:kappa-und-Box}\refitem{item:ev-duals-comp} the 
functor  
\begin{equation}
  \label{eq:rem-snake}
  \begin{tikzcd}[column sep=large]
    \CMDrd \simeq \Mrd \Box \Cat{D} \ar{r}{1 \Box \coev{_{\Cat{D}\!}\Cat{M}}} & \Mrd \Box \Cat{M} \Box \Mrd \ar{r}{\ev{\Cat{M}_{\Cat{C}}} \Box 1} & \Cat{C} \Box \Mrd \simeq \CMDrd,
  \end{tikzcd}
\end{equation}
is equivalent as bimodule functor to the composite
\begin{equation}
  \label{eq:rem-snake-sec}
  \begin{tikzcd}[column sep=large]
    \CMDrd \simeq \Mrd \Box \Cat{D} \ar{r}{1 \Box \coev{\Cat{M}_{\Cat{D}}^{\sss{\#}}}} & \Mrd \Box \Cat{M} \Box \Mrd \ar{r}{\ev{_{\Cat{C}\!}\Cat{M}^{\sss{\#}}} \Box 1} & \Cat{C} \Box \Mrd \simeq \CMDrd.
  \end{tikzcd}
\end{equation}
 Equation (\ref{eq:bimod-other}) in  Proposition \ref{proposition:triang}  applied to $\CMDrd$  shows that  this bimodule functor is equivalent to the identity. 
\end{proof}

\subsection{Separable bimodule categories and  the Serre equivalence}
First we show that separable bimodule categories form a tricategory with duals. This implies that the left and right dual of a bimodule category are equivalent by Proposition \ref{proposition:serre-ex}.  We characterize 
these so called Serre equivalences using the inner homs. This might be important for applications to TFTs, since the Serre equivalences encode the homotopy action corresponding to the framing change in a framed TFT \cite{Lurie, DSS}.

\paragraph{The tricategory of separable bimodule categories}

 It follows almost directly from results in \cite{DSS} that separable bimodule categories form a tricategory with duals. 
We just need to proof that separable bimodule categories are biexact and hence the 
adjoints of all module functors exists.  
\begin{definition}
  \label{definition:separable}
Let $\Cat{C}$, $\Cat{D}$ be finite tensor categories. 
  \begin{definitionlist}
    \item An algebra $A$ in  $\Cat{C}$ is called separable, if the multiplication $m: A \otimes A \rightarrow A$ splits as a map of $(A,A)$-bimodules, i.e. 
if there exists a bimodule morphism $s: A \rightarrow A \otimes A$ such that $m \circ s= 1_{A}$. 
\item A module category $\DM$ over  $\Cat{D}$ is called separable if there exists a separable algebra $A \in \Cat{D}$ such that $\DM \simeq \ModDA$ as module categories. 
\item A bimodule category $\DMC$ is called separable if it is separable as $\Cat{D}$-left and also as $\Cat{C}$-right module category. 
   \end{definitionlist}
\end{definition}

\begin{lemma}
  \label{lemma:sep-ex}
If $\DM$ is a separable module category then it is also an exact module category. 
\end{lemma}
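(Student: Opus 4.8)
The plan is to exploit the description of a separable module category as the category of modules over a separable algebra, and to reduce the exactness statement to the behaviour of the free module functor on projective objects.

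First I would fix, using Definition \ref{definition:separable}, a separable algebra $A\in\Cat{D}$ together with an equivalence $\DM\simeq\ModDA$ of $\Cat{D}$-module categories. Since the defining property of an exact module category ($P$ projective, $m$ arbitrary $\Rightarrow P\act m$ projective) is preserved under equivalences of module categories, it suffices to prove that $\ModDA$ is an exact $\Cat{D}$-module category; recall that here the $\Cat{D}$-action is $X\act M=X\otimes M$ with the $A$-module structure induced from that of $M$. I would set up the free/forgetful adjunction $F\dashv U$, with $F\colon\Cat{D}\to\ModDA$ the free functor $F(X)=X\otimes A$ and $U\colon\ModDA\to\Cat{D}$ the forgetful functor. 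Two standard facts about the finite tensor category $\Cat{D}$ enter: (i) $U$ is exact (a sequence of $A$-modules is exact iff it is exact in $\Cat{D}$, and $\otimes$ is biexact), hence its left adjoint $F$ carries projective objects to projective objects; and (ii) tensoring with a projective object preserves projectivity, i.e. $P\otimes X$ is projective whenever $P$ is, which follows from rigidity since $\Hom_{\Cat{D}}(P\otimes X,-)\simeq\Hom_{\Cat{D}}(P,-\otimes X^{*})$ is then exact. Both may be cited from \cite{FinTen}.

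The key point is that separability of $A$ exhibits every object $M\in\ModDA$ as a direct summand of the free module $F(U(M))=M\otimes A$ in $\ModDA$. Indeed, let $s\colon A\to A\otimes A$ be the $(A,A)$-bimodule map with $m\circ s=1_{A}$ provided by Definition \ref{definition:separable}. Then the composite
\begin{equation*}
  \sigma_{M}\colon\ M\ \simeq\ M\tensor{A}A\ \xrightarrow{\ 1_{M}\tensor{A}s\ }\ M\tensor{A}(A\otimes A)\ \simeq\ M\otimes A
\end{equation*}
is a morphism of right $A$-modules, and composing $\sigma_{M}$ with the action $M\otimes A\to M$ yields $1_{M}\tensor{A}(m\circ s)=1_{M}$; thus $\sigma_{M}$ realizes $M$ as a retract of $F(U(M))$. (In element notation: if $s(1_{A})=\textstyle\sum_{i}a_{i}\otimes b_{i}$ with $\sum_{i}a_{i}b_{i}=1$ and $\sum_{i}xa_{i}\otimes b_{i}=\sum_{i}a_{i}\otimes b_{i}x$, then $\sigma_{M}(y)=\sum_{i}ya_{i}\otimes b_{i}$.)

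Combining these ingredients, for a projective object $P\in\Cat{D}$ and any $M\in\ModDA$ I would argue that $P\act M=P\otimes M$ is a direct summand, in $\ModDA$, of
\begin{equation*}
  P\otimes F(U(M))\ =\ P\otimes\bigl(U(M)\otimes A\bigr)\ \simeq\ \bigl(P\otimes U(M)\bigr)\otimes A\ =\ F\bigl(P\otimes U(M)\bigr);
\end{equation*}
by (ii) the object $P\otimes U(M)$ is projective in $\Cat{D}$, so by (i) the module $F(P\otimes U(M))$ is projective in $\ModDA$, and hence so is its retract $P\act M$. This proves that $\ModDA$, and therefore $\DM$, is exact. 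The step requiring the most care is the separability splitting together with checking that the displayed identifications ($M\tensor{A}A\simeq M$, $(P\otimes Q)\otimes A\simeq P\otimes(Q\otimes A)$, and the interplay of $\otimes$ with $\tensor{A}$) are genuine isomorphisms of $A$-modules, coherent with the associativity constraints; the two ambient facts about finite tensor categories are routine but do need to be invoked.
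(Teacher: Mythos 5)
Your proposal is correct and follows essentially the same route as the paper: identify $\DM$ with $\ModDA$ for a separable algebra $A$, use the splitting $s$ to exhibit $P\act M$ as a retract of the free module $(P\otimes U(M))\otimes A$, and conclude projectivity since free modules on projective objects of $\Cat{D}$ are projective in $\ModDA$. The only difference is presentational: you spell out the free/forgetful adjunction and split $M$ itself before tensoring with $P$, while the paper applies the same separability retraction directly to $P\otimes m$.
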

\begin{proof}
  First we choose a separable algebra $A$ and an equivalence $ \DM \simeq \ModDA$ as module categories. To show that $\ModDA$ is exact we need to show that for all projective $P \in \Cat{D}$ and all $m \in \ModDA$, the object $P \otimes m $
is projective in $\ModDA$. Clearly $ P \otimes m$ is projective in $\Cat{D}$. Now consider 
the following morphism  in $\ModDA$.
\begin{equation}
  \label{eq:morph-sep-ex}
  P \otimes m \simeq P \otimes m \tensor{A} A \stackrel{1 \tensor{A} s}{\rightarrow} P \otimes m \tensor{A}(A \otimes A) \simeq P \otimes m \otimes A.
\end{equation}
This establishes $P \otimes m$ as retract of $P \otimes m \otimes A$.
Since the object $P \otimes m \otimes A$ is projective in $\ModDA$, it follows that also $P \otimes m$ is projective in $\ModDA$.
\end{proof}
\begin{proposition}
  \label{proposition:sep-and-tenosr}
Let $\DMC$ and $\CNE$ be separable bimodule categories. Then the $(\Cat{D},\Cat{E})$-bimodule category $\DMC \Box \CNE$ is separable. 
\end{proposition}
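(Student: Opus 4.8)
The plan is to reduce the statement to a computation with algebras, using the explicit description of the tensor product from Theorem~\ref{theorem:existence-tensor}. First I would pick separable algebras witnessing separability of the two bimodule categories. Since $\DMC$ is separable as a left $\Cat{D}$-module and as a right $\Cat{C}$-module, and $\CNE$ is separable as a left $\Cat{C}$-module and as a right $\Cat{E}$-module, the key input to exploit is that $\DMC \Box \CNE$ must be shown separable both as a left $\Cat{D}$-module category and as a right $\Cat{E}$-module category. By symmetry it suffices to establish the left $\Cat{D}$-module statement; the right $\Cat{E}$-statement follows by the analogous argument (or by passing to the dual category via Lemma~\ref{lemma:dual-and-tensor} and Proposition~\ref{proposition:sep-and-tenosr} applied to the duals, once the left statement is known in general).

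For the left $\Cat{D}$-module statement, I would first reduce to a situation where $\Cat{C}$-modules are realised by algebras. Realise $\DMC \simeq \Bimod{}{A}{\mathsf{Mod}(\Cat{C})}{}{} $-type data: concretely, choose an algebra $A \in \Cat{C}$ with $\MC \simeq \ModCA$ (Theorem~\ref{theorem:all-alg}) so that $\DMC$ corresponds to a $\Cat{D}$-left action on $\ModCA$, and choose an algebra $B \in \Cat{C}$ with $\CN \simeq \ModCB$ where we view $\CNE$ via its left $\Cat{C}$-structure; but the crucial point is the right $\Cat{E}$-structure on $\Cat{N}$. Since $\CNE$ is separable as a right $\Cat{E}$-module, there is a separable algebra $B' \in \Cat{E}$ with $\NE \simeq \ModEBp$. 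By Theorem~\ref{theorem:existence-tensor}\refitem{}, writing $\MC \simeq \AModC$ and $\CN \simeq \ModCB$ we get $\DMC \Box \CNE \simeq \AModCB$, the category of $(A,B)$-bimodules in $\Cat{C}$, with its $\Cat{D}$-left and $\Cat{E}$-right actions. Now the left $\Cat{D}$-action on $\Cat{M}$ is separable: there is a separable algebra $R \in \Cat{D}$ with $\DM \simeq \ModDR$. The plan is then to produce from $R$ (separable in $\Cat{D}$) a separable algebra in $\Cat{D}$ representing $\DMC \Box \CNE$ as a left $\Cat{D}$-module category, the natural candidate being the inner-hom-type algebra $\End_{\Cat{D}}$ of a suitable generator, and to show its multiplication splits as a bimodule map because the splitting for $R$ transports along the construction.

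More precisely, the cleanest route I would take is: (1) use Lemma~\ref{lemma:sep-ex} to observe that separable bimodule categories are biexact, so by Lemma~\ref{lemma:biex-then-ex} exact, hence Proposition~\ref{proposition:adj-ex} applies and all bimodule functors between them have adjoints; (2) by Theorem~\ref{theorem:all-alg} applied to $\DMC \Box \CNE$ as a left $\Cat{D} \boxtimes \rev{\Cat{C}}$-module — or better, directly as a left $\Cat{D}$-module — there is an algebra $\widetilde{A} \in \Cat{D}$ with $\DMC \Box \CNE \simeq \Bimod{}{}{\mathsf{Mod}(\Cat{D})}{}{\widetilde{A}}$ as left $\Cat{D}$-modules; (3) show $\widetilde{A}$ is separable. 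The natural model for $\widetilde{A}$ is $\idgen{x,x}^{\Cat{D}}$-style, i.e. $\widetilde{A} = \IHom_{\Cat{D}}(g,g)$ for a generator $g \in \Cat{M} \Box \Cat{N}$; using Proposition~\ref{proposition:Rieffel} one can compute this inner hom in terms of the inner homs of $\Cat{M}$ and $\Cat{N}$, and the separability splitting for the $\Cat{D}$-side algebra of $\Cat{M}$ (the one coming from $\DM \simeq \ModDR$ with $R$ separable) induces, via the Rieffel-induction formula, a bimodule splitting of the multiplication of $\widetilde{A}$. The $\Cat{C}$-balanced structure in $\Lambda$ is exactly what allows the splitting $s\colon R \to R \otimes R$ to descend.

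The main obstacle I expect is step~(3): checking that the splitting $s$ of the multiplication of the $\Cat{D}$-side algebra of $\Cat{M}$, after being combined via Rieffel induction with the ($\Cat{E}$-side, separable) algebra structure coming from $\Cat{N}$, really does produce a splitting of the multiplication of the representing algebra $\widetilde{A}$ of $\DMC \Box \CNE$ \emph{as a map of $(\widetilde{A},\widetilde{A})$-bimodules} in $\Cat{D}$, not merely as a left or right module map. This is where one must be careful: the bimodule-splitting condition is not automatic, and one needs the compatibility of the balancing constraint of $\mathsf{B}$ (the modification $\beta$) with the algebra structures. A cleaner alternative, which I would try first to sidestep the bookkeeping, is to invoke the characterisation of separability in terms of the relevant multiplication functor being (split) surjective onto the diagonal — equivalently, the bimodule category being a retract of a ``free'' one in the appropriate 2-categorical sense — and then use that $\Box$ is a $2$-functor (Proposition~\ref{proposition:Box-2-functor-bimod}), so it preserves retracts and split idempotents; separability of $\DMC$ and $\CNE$ expresses each as such a retract and $\Box$ carries the retraction data to exhibit $\DMC \Box \CNE$ as a retract of a separable (indeed ``free'') bimodule category over $\Cat{D}$ and over $\Cat{E}$. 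Either way, the statement reduces to transporting splitting idempotents through a $2$-functor plus the explicit algebra description, with no genuinely new ingredient beyond what is already assembled in the preceding sections.
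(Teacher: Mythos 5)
There is a genuine gap: what you have written is a programme rather than a proof, and the decisive step is exactly the one you yourself flag as ``the main obstacle''. Producing the candidate algebra $\widetilde{A}=\IHom_{\Cat{D}}(g,g)$ for a generator $g$ of $\Cat{M}\Box\Cat{N}$ is routine; the entire content of the proposition is to show that its multiplication splits as a map of $(\widetilde{A},\widetilde{A})$-bimodules in $\Cat{D}$. The Rieffel-induction formula (Proposition \ref{proposition:Rieffel}) only computes the inner hom of $\Cat{M}\Box\Cat{N}$ in terms of those of $\Cat{M}$ and $\Cat{N}$; it does not by itself transport the splitting $s\colon R\to R\otimes R$ of the $\Cat{D}$-side algebra of $\Cat{M}$ to a bimodule splitting of the multiplication of $\widetilde{A}$, and you give no construction of such a splitting nor a verification of the bimodule property. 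Your fallback argument is equally unsubstantiated: the paper nowhere characterises separability of a module category as being a retract of a ``free'' one, and even granting such a characterisation you would still need (a) that the relevant free objects tensor under $\Box$ to something separable, and (b) that a retract, in the 2-category of $\Cat{D}$-module categories, of a separable module category is again separable. Neither is argued, so invoking the 2-functoriality of $\Box$ does not close the argument.

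For comparison, the paper does not reprove the statement at all: it observes that the result is \cite[Thm.\ 3.5.5]{DSS} in the semisimple case and that the proof given there never uses semisimplicity, so it applies verbatim to finite tensor categories. If you want a self-contained argument along your lines, the honest core is the following concrete statement, which you would have to prove: writing $\Cat{N}\simeq$ modules over a separable algebra $B\in\Cat{C}$, one has $\Cat{M}\Box\Cat{N}\simeq$ the category of $B$-modules internal to $\Cat{M}$, and one must show that internal $B$-modules over a separable $\Cat{D}$-module category again form a separable $\Cat{D}$-module category (e.g.\ by exhibiting a separable algebra in $\Cat{D}$, such as the inner endomorphism algebra of $g\otimes B$ for a suitable generator $g$ of $\Cat{M}$, together with an explicit bimodule splitting built from $s$ and the splitting of $B$). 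As it stands, that step is missing, so the proposal does not yet constitute a proof.
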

\begin{proof}
  This is shown in \cite[Thm. 3.5.5]{DSS} in the case that the tensor categories $\Cat{D}$, $\Cat{C}$ and $\Cat{E}$ are semisimple. The proof does not require this assumption and thus the result follows 
in the general case in exactly the same way. 
\end{proof}
\begin{theorem}
\label{theorem:sep-tric-duals}
  \begin{theoremlist}
  \item The following defines a tricategory $\BimCat^{sep}$. Objects are finite tensor categories, 1-morphisms are separable bimodule categories, 2-morphisms are bimodule functors, 3-morphisms are bimodule natural transformations. 
The tricategory structures are induced from $\BimCat$.
\item \label{item:duals-functors-sep}$\BimCat^{sep}$ is a tricategory with duals, where the duals of a 1-morphism $\DMC$ are the bimodule categories $\CMDrd$ and $\CMDld$. 
The duals of bimodule functors are the left and right adjoint functors.   
  \end{theoremlist}
  \end{theorem}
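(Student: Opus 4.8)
The plan is to realise $\BimCat^{sep}$ as a sub-tricategory of the tricategory $\BimCat$ of Theorem~\ref{theorem:Bimcat-3-cat}, and then to obtain both kinds of duals by combining the exactness properties of separable bimodule categories established above with the general $\#$-dual construction of Theorem~\ref{theorem:hash-duals-bimcat} and the Serre argument of Proposition~\ref{proposition:serre-ex}. Almost every step is a restriction of structure that is already available; the only point that requires genuine work is that the dual categories $\CMDrd$ and $\CMDld$ of a separable bimodule category are again separable.

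For the first statement I would first observe that the unit bimodule category $\CCC$ is separable: the multiplication $\unit_{\Cat{C}}\otimes\unit_{\Cat{C}}\to\unit_{\Cat{C}}$ is an isomorphism, hence a split bimodule map, so $\unit_{\Cat{C}}$ is a separable algebra and $\CCC$ is its category of modules on both sides. Together with Proposition~\ref{proposition:sep-and-tenosr}, which says that $\Box$ preserves separability, this shows that every bimodule category occurring in the associator, the unitors and the component $1$-morphisms of the modifications $\mu,\lambda,\rho,\pi$ of $\BimCat$ is separable as soon as the given $1$-morphisms are. Since by definition the $2$- and $3$-morphisms of $\BimCat^{sep}$ are \emph{all} bimodule functors, respectively bimodule natural transformations, between separable bimodule categories, every structure map, every coherence choice and every axiom of $\BimCat$ restricts verbatim to $\BimCat^{sep}$, which is therefore an algebraic tricategory in the sense of Definition~\ref{definition:tricategory} with structure inherited from $\BimCat$.

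For the $*$-duals I would argue as follows. By Lemma~\ref{lemma:sep-ex} a separable module category is exact, so a separable bimodule category $\DMC$ is biexact (Definition~\ref{definition:exact-bimodule-cat}), and by Lemma~\ref{lemma:biex-then-ex} it is then an exact $\Cat{D}\boxtimes\rev{C}$-module category. By Proposition~\ref{proposition:adj-ex} every bimodule functor between exact bimodule categories is exact, and an exact bimodule functor admits both a left and a right adjoint, again a bimodule functor, with the adjunctions compatible with the bimodule structures and the triangle identities holding on the nose; moreover, as recorded after Proposition~\ref{proposition:adj-ex}, adjoints are functorial in bimodule natural transformations. This exhibits left and right duals in the $2$-category $\BimCat^{sep}(\Cat{C},\Cat{D})$ for every pair of objects, so $\BimCat^{sep}$ has $*$-duals, the dual of a bimodule functor being its left, respectively right, adjoint.

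Finally, for the $\#$-duals: the evaluation and coevaluation functors of the preceding subsection together with the triangulators of Proposition~\ref{proposition:triang} exhibit $\CMDld$ and $\CMDrd$ as $\#$-duals of $\DMC$ in the delooping bicategory, exactly as in the proof of Theorem~\ref{theorem:hash-duals-bimcat} and Definition~\ref{definition:duality-bicat}; the only additional input needed is that $\CMDrd$ and $\CMDld$ again lie in $\BimCat^{sep}$. Granting this, the delooping bicategory of $\BimCat^{sep}$ has right duals, and since $\BimCat^{sep}$ has $*$-duals, Proposition~\ref{proposition:serre-ex} upgrades them to two-sided $\#$-duals; hence $\BimCat^{sep}$ is a tricategory with duals, the $\#$-dual of $\DMC$ being $\CMDrd$ (resp.\ $\CMDld$) and the $*$-dual of a bimodule functor its left (resp.\ right) adjoint. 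The remaining point, which I expect to be the main obstacle, is precisely the separability of $\CMDrd$ and $\CMDld$: I would reduce this to the algebra level, presenting $\DMC$ by Definition~\ref{definition:separable} as a category of modules over separable algebras on both sides, so that (using equations~(\ref{eq:rd-action}) and~(\ref{eq:ld-action})) the dual categories become module categories over an algebra obtained from these by combining the duality functors of $\Cat{C}$ and $\Cat{D}$ with passage to the opposite algebra, all of which preserve separability of algebras; alternatively one invokes the closure of separable module categories under the dual-category construction from \cite{DSS}.
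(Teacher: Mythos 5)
Your proposal is correct and takes essentially the same route as the paper: separability of the unit and of tensor products (Proposition~\ref{proposition:sep-and-tenosr}) gives the tricategory, Lemma~\ref{lemma:sep-ex} together with Lemma~\ref{lemma:biex-then-ex} and Proposition~\ref{proposition:adj-ex} yields the $*$-duals via adjoints, and the separability of $\CMDrd$ and $\CMDld$ --- which the paper likewise settles at the algebra level, using that $A^{**}$ and $\leftidx{^{**}}{A}{}$ are separable and the description of dual module categories in \cite{DSS} --- lets Theorem~\ref{theorem:hash-duals-bimcat} supply the $\#$-duals. The only cosmetic difference is your additional appeal to Proposition~\ref{proposition:serre-ex}, which is not needed since Theorem~\ref{theorem:hash-duals-bimcat} already produces two-sided $\#$-duals.
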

  \begin{proof}
    We have to show that the tricategory structures of $\BimCat$ are well defined on $\BimCat^{sep}$. The unit bimodule category $\CCC$ is a separable bimodule category, since we can choose  the tensor unit as separable algebra 
for the left and right module structures. According to Proposition \ref{proposition:sep-and-tenosr}, the tensor product is well defined for $\BimCat^{sep}$. Hence Theorem \ref{theorem:Bimcat-3-cat} implies that $\BimCat^{sep}$
is a tricategory. For the second part note that if $A$ is a separable algebra in a finite tensor category $\Cat{D}$, then also $A^{**}$ and $\leftidx{^{**}}{A}{}$ are canonically separable algebras.  
According to \cite[Cor. 3.4.14]{DSS}, if $\DM$ is equivalent to $\ModDA$, then $\DMrd \simeq \AddModD$ and $\DMld \simeq \AModD$ and analogous for right module categories. It thus follows that $\CMDrd$ and $\CMDld$ are separable 
bimodule categories if $\DMC$ is. Thus it follows from Theorem \ref{theorem:hash-duals-bimcat} that $\BimCat^{sep}$ is a tricategory with $\#$-duals. 
From  Lemma \ref{lemma:sep-ex} and it follows that a separable bimodule category is biexact and hence exact according to Lemma  \ref{lemma:biex-then-ex}. 
Thus every module functor between separable bimodule categories is exact and has a left and right adjoint. 
  \end{proof}

\paragraph{The Serre equivalence}

It follows from Theorem \ref{theorem:hash-duals-bimcat} and the general Proposition  \ref{proposition:serre-ex} that for a separable bimodule category $\DMC$, there is an equivalence $\mathsf{S}_{\Cat{M}}: \CMDrd \rightarrow \CMDld$ of bimodule categories. We apply the calculus of the inner hom to construct this equivalence more explicitly. 

First we  consider the duals of the inner hom and their properties. By applying the operation $\widetilde{\#}$ from Equation (\ref{eq:induce-dual-unit}) to the inner hom functor $\idm{-,-}$, we obtain  balanced 
bimodule functors 

\begin{equation}
  \label{eq:dual-inn-hom}
  \begin{split}
      \Funrtd{(\idm{-,-})}:& \DMC \boxtimes \CMDrd \ni m \boxtimes \widetilde{m} \mapsto \idmstar{\widetilde{m},m} \in \DDD, \\
 \Funltd{(\imc{-,-})}: & \CMDld \boxtimes \DMC \ni \widetilde{m} \boxtimes m \mapsto \imcstar{m,\widetilde{m}} \in \CCC.
  \end{split}
\end{equation} 
The following is the analogue of Proposition \ref{proposition:D-represenations-functors}.
\begin{proposition}
\label{proposition:ext-D-left-reps}
\begin{propositionlist}
  \item Let $\mathsf{F}: \DM \rightarrow \LDD$ be an left exact module functor. There exists a module natural isomorphism
\begin{equation}
  \label{eq:module-dual-rep}
 \idm{\mathsf{F}^{l}(\unit_{\Cat{D}}),x} \simeq  \leftidx{^*}{\mathsf{F}(x)}{},
\end{equation}
for all $x \in \DM$. 
This extends to an equivalence of bimodule functors 
\begin{equation}
\label{eq:equiv-Funl-inn}
  \idm{\mathsf{F}^{l}(-),-} \simeq (-) \otimes \Funltd{\mathsf{F}}: \LDD \boxtimes \MDld \rightarrow \DDD.
\end{equation}
\item 
Let $\mathsf{F}: \DMD \rightarrow \DDD$ be a left exact bimodule functor. The equivalence from equation (\ref{eq:equiv-Funl-inn}) is an equivalence of bimodule functors $\DDD \boxtimes \DMDld \rightarrow \DDD$. 
Furthermore, there is an
 equivalence of balanced bimodule functors 
 \begin{equation}
   \imd{-, \mathsf{F}^{l}(-)} \simeq \Funrtd{\mathsf{F}}(-)\otimes (-): \DMDrd \boxtimes \DDD \rightarrow \DDD. 
 \end{equation}
\end{propositionlist}
\end{proposition}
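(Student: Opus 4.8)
The plan is to mimic the proof of Proposition~\ref{proposition:D-represenations-functors}, replacing the right adjoint $\mathsf{F}^{r}$ used there by the left adjoint $\mathsf{F}^{l}$, which exists by Proposition~\ref{proposition:adj-ex} because $\mathsf{F}$ is left exact, and invoking the module Yoneda Lemma~\ref{lemma:module-yoneda} in place of the ordinary Yoneda lemma. For part~(i), I would fix the adjunction $\mathsf{F}^{l}\dashv\mathsf{F}$ and, for $y\in\Cat{D}$, $x\in\Cat{M}$ and $d\in\Cat{D}$, write down the chain
$\Hom_{\Cat{D}}(\idm{\mathsf{F}^{l}(y),x},d)\simeq\Hom_{\Cat{M}}(\mathsf{F}^{l}(y),d\act x)\simeq\Hom_{\Cat{D}}(y,\mathsf{F}(d\act x))\simeq\Hom_{\Cat{D}}(y,d\otimes\mathsf{F}(x))\simeq\Hom_{\Cat{D}}(\idgen{y,\mathsf{F}(x)},d)$,
using in order the defining isomorphism of the inner hom, the adjunction $\mathsf{F}^{l}\dashv\mathsf{F}$, the left module constraint $\phi^{\mathsf{F}}$, and the description $\idgen{y,w}\simeq y\otimes\leftidx{^{*}}{w}{}$ of the inner hom of the regular left module recorded after Proposition~\ref{proposition:properties-inner-hom}. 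Since $\Funltd{\mathsf{F}}(x)=\leftidx{^{*}}{\mathsf{F}(x)}{}$ on objects (cf.~(\ref{eq:values-semi-hash})), the module Yoneda Lemma~\ref{lemma:module-yoneda} would then give $\idm{\mathsf{F}^{l}(y),x}\simeq y\otimes\Funltd{\mathsf{F}}(x)$, the case $y=\unit_{\Cat{D}}$ being the stated object-level isomorphism, and the fact that every step is not just natural but multi-balanced (Definition~\ref{definition:multi-bal-to-Vect}) would promote this to the claimed equivalence of balanced bimodule functors $\LDD\boxtimes\MDld\rightarrow\DDD$, i.e.~(\ref{eq:equiv-Funl-inn}). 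As in the proof of Lemma~\ref{lemma:adjoint-inner-hom}, this argument uses only that $\mathsf{F}$ is a left $\Cat{D}$-module functor.

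For part~(ii), assuming moreover that $\mathsf{F}\colon\DMD\rightarrow\DDD$ is a bimodule functor, I would first note that $\mathsf{F}^{l}\colon\DDD\rightarrow\DMD$ carries a canonical bimodule structure for which unit and counit are bimodule natural (Proposition~\ref{proposition:adj-ex}), so that every isomorphism in the chain above becomes multi-balanced for the full bimodule structure; hence~(\ref{eq:equiv-Funl-inn}) upgrades to an equivalence of bimodule functors $\DDD\boxtimes\DMDld\rightarrow\DDD$. For the remaining isomorphism I would run the mirror chain built from the right module structure and the $\Cat{D}$-valued inner hom $\imd{-,-}$: for $\widetilde{m}\in\Cat{M}$ and $y,x\in\Cat{D}$,
$\Hom_{\Cat{D}}(\imd{\widetilde{m},\mathsf{F}^{l}(y)},x)\simeq\Hom_{\Cat{M}}(\mathsf{F}^{l}(y),\widetilde{m}\ract x)\simeq\Hom_{\Cat{D}}(y,\mathsf{F}(\widetilde{m}\ract x))\simeq\Hom_{\Cat{D}}(y,\mathsf{F}(\widetilde{m})\otimes x)\simeq\Hom_{\Cat{D}}(\igend{\mathsf{F}(\widetilde{m}),y},x)$,
now using the right module constraint of $\mathsf{F}$ and $\igend{\widetilde{x},x}\simeq\widetilde{x}^{*}\otimes x$; since $\Funrtd{\mathsf{F}}(\widetilde{m})=\mathsf{F}(\widetilde{m})^{*}$, the module Yoneda lemma would yield $\imd{\widetilde{m},\mathsf{F}^{l}(y)}\simeq\Funrtd{\mathsf{F}}(\widetilde{m})\otimes y$ as balanced bimodule functors $\DMDrd\boxtimes\DDD\rightarrow\DDD$. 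Equivalently, both isomorphisms can be obtained directly from the left exact analogue of Lemma~\ref{lemma:adjoint-inner-hom}, with the role of $\Cat{N}$ played by the regular $\Cat{D}$-module, combined with the explicit inner homs of the unit bimodule category.

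The computations themselves are routine; I expect the real work to be twofold. First, one has to keep careful track of the twisted module structures on the dual categories $\MDld$, $\DMDld$ and $\DMDrd$, so that the correct dual ($\leftidx{^{*}}{(-)}{}$ versus $(-)^{*}$) lands in the correct slot and the functors in~(\ref{eq:equiv-Funl-inn}) and in the second displayed isomorphism genuinely carry the asserted (balanced) bimodule structure; this is where Lemmas~\ref{lemma:dual-and-tensor} and~\ref{lemma:inner-hom-and-duals} and the conventions of Section~\ref{sec:Prelim} are needed, and it is the step most prone to sign-type errors. Second, one must verify that each isomorphism in the two chains is multi-balanced in the sense of Definition~\ref{definition:multi-bal-to-Vect} and compatible with the leftover module actions, since this is precisely what licenses the use of the module Yoneda Lemma~\ref{lemma:module-yoneda} and, in part~(ii), of its $\#$-twisted consequences; this mirrors the corresponding verification in the proof of Proposition~\ref{proposition:D-represenations-functors} and should present no essentially new difficulty.
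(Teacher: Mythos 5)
Your proposal is correct and follows essentially the same route as the paper: the paper's proof also runs the chain $\Hom_{\Cat{D}}(\idm{\mathsf{F}^{l}(d),\widetilde{m}},x)\simeq\Hom_{\Cat{M}}(\mathsf{F}^{l}(d),x\act\widetilde{m})\simeq\Hom_{\Cat{D}}(d\otimes\Funltd{\mathsf{F}}(\widetilde{m}),x)$ (compressing your adjunction, module-constraint and duality steps into one), observes that all isomorphisms are balanced, and invokes the module Yoneda Lemma, treating part (ii) as "analogous" exactly as your mirror chain makes explicit.
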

\begin{proof}
  We compute for $d, x \in \Cat{D}$ and $\widetilde{m} \in \MDrd$
  \begin{equation}
    \label{eq:comp-dual-rep}
    \begin{split}
    \Hom_{\Cat{D}}(\idm{\mathsf{F}^{l}(d), \widetilde{m}},x) \simeq \Hom_{\Cat{M}}(\mathsf{F}^{l}(d), x \act \widetilde{m}) \simeq \Hom_{\Cat{D}}(d \otimes \Funltd{\mathsf{F}}(\widetilde{m}), x).
    \end{split}
  \end{equation}
All isomorphisms are balanced natural isomorphisms between functors and hence the chain of isomorphisms induces the claimed module natural isomorphism by the module Yoneda lemma \ref{lemma:module-yoneda}. 
The second part is shown analogously. 
\end{proof}
According to Theorem \ref{theorem:sep-tric-duals}\refitem{item:duals-functors-sep}, the  left adjoint of the evaluation functor exists for separable bimodule categories. 
\begin{definition}
  \label{definition:Serre-fun}
Let $\DMC$ be an separable bimodule category. The $\Cat{D}$-Serre bimodule functor  $\mathsf{S}_{ _{\Cat{D}\!}\Cat{M}}: \CMDrd \rightarrow \CMDld$ is defined as the composite
\begin{equation}
  \label{eq:Serre-bimodule}
  \begin{tikzcd}
  \mathsf{S}_{ _{\Cat{D}\!}\Cat{M}}:  \CMDrd  \simeq \Mrd \Box \Cat{D} \ar{r}{1 \Box \ev{_{\Cat{D\!\!}}\Cat{M}}^{l}}  &  \Mrd \Box \Cat{M} \Box \Mld \ar{r}{\ev{\Cat{M}_{\Cat{C}}} \Box 1} &
   \Cat{C}  \Box \Mld \simeq \CMDld. 
   \end{tikzcd}
\end{equation}
The $\Cat{C}$-Serre bimodule functor $\mathsf{S}_{\Cat{M}_{\Cat{C}}}: \CMDld \rightarrow \CMDrd$ is the composite 
\begin{equation}
  \label{eq:serre-c}
  \begin{tikzcd}
    \mathsf{S}_{\Cat{M}_{\Cat{C}}}: \CMDld \simeq \Cat{C} \Box \Mld \ar{r}{ \ev{\Cat{M}_{\Cat{C}}}^{l} \Box 1 } & \Mrd \Box \Cat{M} \Box \Mld \ar{r}{1 \Box \ev{ _{\Cat{D}\!}\Cat{M}}} & \Mrd \Box \Cat{D} \simeq \CMDrd .
  \end{tikzcd}
\end{equation}
\end{definition}

\begin{proposition}
\begin{propositionlist}
  \item The Serre bimodule functors from Definition \ref{definition:Serre-fun} are equivalences of bimodule categories that are defined up to unique bimodule natural isomorphisms by the following properties. 
\item  For $m \boxtimes \widetilde{m} \in \DMC \boxtimes \CMDrd$ there is a canonical balanced bimodule natural isomorphism
  \begin{equation}
    \label{eq:Serre-prop}
    \idmstar{\widetilde{m},m} \simeq \idm{m, \mathsf{S}_{ _{\Cat{D}\!}\Cat{M}}(\widetilde{m})},
  \end{equation}
between balanced bimodule functors $\DMC \boxtimes \CMDrd \rightarrow \DDD$.
\item  There exists a balanced bimodule natural isomorphism 
\begin{equation}
  \label{eq:Serre-C-prop}
  \begin{tikzcd}
    \imcstar{m, \widetilde{m}} \simeq \imc{ \mathsf{S}_{\Cat{M}_{\Cat{C}}}(\widetilde{m}),m}
  \end{tikzcd}
\end{equation}
between balanced bimodule functors $\CMDld \boxtimes \DMC \rightarrow \CCC$. 
\end{propositionlist}
 \end{proposition}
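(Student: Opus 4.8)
The plan is to establish the three claims of the proposition in parallel, since each is the ``dualized'' analogue of a corresponding statement already proven for the ordinary inner hom. The organizing principle is the same one used for the evaluation and coevaluation functors in Proposition \ref{proposition:triang} and Theorem \ref{theorem:hash-duals-bimcat}: rather than compute with the functor-category description of the tensor product, we use the representability of module functors (Proposition \ref{proposition:D-represenations-functors}, Proposition \ref{proposition:ext-D-left-reps}) together with the module Yoneda lemma (Lemma \ref{lemma:module-yoneda}, Lemma \ref{lemma:ev-representablility}) to reduce every identity to a chain of natural isomorphisms of $\Hom$-spaces.

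First I would unwind the definition of $\mathsf{S}_{\sss{\Cat{D}}\!\Cat{M}}$ in \eqref{eq:Serre-bimodule}. Since $\ev{\sss{\Cat{D}}\!\!\Cat{M}}$ is right exact, its left adjoint exists by Theorem \ref{theorem:sep-tric-duals}\refitem{item:duals-functors-sep}, and by Proposition \ref{proposition:ext-D-left-reps} applied to the bimodule functor $\ev{\sss{\Cat{D}}\!\!\Cat{M}}: \DMC \Box \CMDld \to \DDD$ we obtain $\idmstar{\widetilde m, m} \simeq \idm{m, -} \otimes \ev{\sss{\Cat{D}}\!\!\Cat{M}}^{l}(-)$-type expressions. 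Composing with $\ev{\Cat{M}_{\Cat{C}}} \Box 1$ and using the Rieffel-induction formula of Proposition \ref{proposition:Rieffel}\refitem{item:Rieffel-ind} to identify $\idmn{-,-}$ on $\DMC\Box\CMDld$ in terms of $\idm{-,-}$ and $\imc{-,-}$, one reads off that $\idm{m, \mathsf{S}_{\sss{\Cat{D}}\!\Cat{M}}(\widetilde m)} \simeq \idmstar{\widetilde m, m}$ as balanced bimodule functors $\DMC \boxtimes \CMDrd \to \DDD$. This is precisely claim $ii)$, and the module Yoneda Lemma \ref{lemma:ev-representablility}\refitem{item:ev-yoneda} then shows that this isomorphism characterizes $\mathsf{S}_{\sss{\Cat{D}}\!\Cat{M}}$ up to a unique bimodule natural isomorphism, giving the characterization asserted in $i)$. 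The statement $iii)$ for the $\Cat{C}$-Serre functor follows by the same computation with the roles of $\Cat{C}$ and $\Cat{D}$, the left and right dual categories, and $\idm{-,-}$ and $\imc{-,-}$ interchanged, or alternatively by passing to $(\DMC)^{\sss{\#}}$ and invoking Lemma \ref{lemma:inner-hom-and-duals} and Lemma \ref{lemma:dual-and-tensor}.

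To see that $\mathsf{S}_{\sss{\Cat{D}}\!\Cat{M}}$ and $\mathsf{S}_{\Cat{M}_{\Cat{C}}}$ are mutually quasi-inverse equivalences of bimodule categories, I would combine the characterizing property \eqref{eq:Serre-prop} with its $\Cat{C}$-analogue \eqref{eq:Serre-C-prop}: applying one after the other yields $\idm{m, \mathsf{S}_{\Cat{M}_{\Cat{C}}}\mathsf{S}_{\sss{\Cat{D}}\!\Cat{M}}(\widetilde m)} \simeq \idmstar{\mathsf{S}_{\sss{\Cat{D}}\!\Cat{M}}(\widetilde m), m}$ (now regarding $\widetilde m$ in the appropriate dual), and iterating the two Serre isomorphisms together with the double-dual identifications of Lemma \ref{lemma:inner-hom-and-duals}\refitem{item:canon-bal-ldd} collapses this to $\idm{m,\widetilde m}$, so that $\mathsf{S}_{\Cat{M}_{\Cat{C}}}\mathsf{S}_{\sss{\Cat{D}}\!\Cat{M}} \simeq 1$ by module Yoneda; the other composite is symmetric. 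Alternatively, invertibility is automatic from Theorem \ref{theorem:sep-tric-duals} and Proposition \ref{proposition:serre-ex}, so the only real content is to verify that the explicit composite \eqref{eq:Serre-bimodule} \emph{is} the abstract Serre equivalence, which again amounts to matching \eqref{eq:Serre-prop} against the snake identities of Proposition \ref{proposition:triang}.

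The main obstacle I anticipate is bookkeeping the adjoints and the $\widetilde{\#}$-operations correctly: the formula for $\mathsf{S}_{\sss{\Cat{D}}\!\Cat{M}}$ mixes a left adjoint $\ev{\sss{\Cat{D}}\!\!\Cat{M}}^{l}$ with the ordinary functor $\ev{\Cat{M}_{\Cat{C}}}$, and Proposition \ref{proposition:ext-D-left-reps} produces the left dual $\leftidx{^*}{\mathsf{F}(x)}{}$ rather than a right dual, so one must track carefully which of $(-)^{*}$, $\leftidx{^*}{(-)}{}$, and $(-)^{**}$ appears at each stage and confirm that they cancel to give exactly the double-dual-free isomorphisms \eqref{eq:Serre-prop} and \eqref{eq:Serre-C-prop}. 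The pivotal-structure-free formulation means these duals genuinely do not all coincide, so the coherence check is where the argument has to be done with care; everything else is a routine assembly of the representability and Yoneda lemmas already established.
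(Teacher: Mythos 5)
Your handling of claims \textit{ii)}, \textit{iii)} and of the uniqueness statement is essentially the paper's own argument: the paper also proves (\ref{eq:Serre-prop}) by showing that $\ev{_{\Cat{D}\!\!}\Cat{M}}\circ(1\Box \mathsf{S}_{ _{\Cat{D}\!}\Cat{M}})$ is equivalent to $\Funrtd{(\ev{_{\Cat{D}\!\!}\Cat{M}})}$, using Corollary \ref{corollary:kappa-und-Box} (itself a consequence of the Rieffel formula of Proposition \ref{proposition:Rieffel}) together with the left-adjoint representability of Proposition \ref{proposition:ext-D-left-reps}, and then deduces (\ref{eq:Serre-prop}), its $\Cat{C}$-analogue, and the uniqueness from the universality of the tensor product and the module Yoneda lemma. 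On these points your proposal and the paper coincide.

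The gap is in part \textit{i)}, the claim that the Serre functors are \emph{equivalences}. Your main route composes (\ref{eq:Serre-prop}) and (\ref{eq:Serre-C-prop}) to obtain $\idm{m,\mathsf{S}_{\Cat{M}_{\Cat{C}}}\mathsf{S}_{ _{\Cat{D}\!}\Cat{M}}(\widetilde m)}\simeq \idmstar{\mathsf{S}_{ _{\Cat{D}\!}\Cat{M}}(\widetilde m),m}$, but this step is not licensed by the two stated properties: (\ref{eq:Serre-prop}) governs how $\mathsf{S}_{ _{\Cat{D}\!}\Cat{M}}$ interacts with the $\Cat{D}$-valued inner hom, while (\ref{eq:Serre-C-prop}) only governs how $\mathsf{S}_{\Cat{M}_{\Cat{C}}}$ interacts with the $\Cat{C}$-valued inner hom $\imc{-,-}$; no compatibility of $\mathsf{S}_{\Cat{M}_{\Cat{C}}}$ with $\idm{-,-}$ is available, so the telescoping does not compose (note also that $\mathsf{S}_{\Cat{M}_{\Cat{C}}}\mathsf{S}_{ _{\Cat{D}\!}\Cat{M}}(\widetilde m)$ lives in $\CMDrd$, not in the slot $\CMDld$ that $\idm{-,-}$ expects). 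In fact the proposition never asserts that $\mathsf{S}_{\Cat{M}_{\Cat{C}}}$ is quasi-inverse to $\mathsf{S}_{ _{\Cat{D}\!}\Cat{M}}$; the paper instead exhibits a different quasi-inverse, the composite (\ref{eq:quasi-inv-DSerr}) built from $\coev{ _{\Cat{D}\!}\Cat{M}}$ and $\coev{\Cat{M}_{\Cat{C}}}^{l}$, and verifies the quasi-inverse property using the coherence of $\BimCat$ and two applications of the triangulators of Proposition \ref{proposition:triang}. Your fallback via Theorem \ref{theorem:sep-tric-duals} and Proposition \ref{proposition:serre-ex} is likewise incomplete: those results give the existence of \emph{some} equivalence $\CMDrd\simeq\CMDld$, but say nothing about the particular composite (\ref{eq:Serre-bimodule}); identifying that composite with an abstract Serre equivalence is exactly the work to be done, and it is what the explicit quasi-inverse plus triangulator argument accomplishes. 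To close part \textit{i)} you should either reproduce that construction or prove the extra compatibility of $\mathsf{S}_{\Cat{M}_{\Cat{C}}}$ with the $\Cat{D}$-valued inner hom, which is genuine additional content rather than a formal consequence of (\ref{eq:Serre-prop}) and (\ref{eq:Serre-C-prop}).
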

\begin{proof}
To show that the Serre functors are equivalences, we argue that the quasi-inverse of the $\Cat{D}$-Serre functor is given by the composite 
\begin{equation}
  \label{eq:quasi-inv-DSerr}
  \begin{tikzcd}
    \CMDld \simeq \Mld \Box \Cat{D} \ar{r}{1 \Box \coev{ _{\Cat{D}\!}\Cat{M}}} & \Mld \Box \Cat{M} \Box \Mrd \ar{r}{\coev{\Cat{M}_{\Cat{C}}}^{l}\Box 1} & \Cat{C} \Box \Mrd \simeq \CMDrd.
  \end{tikzcd}
\end{equation}
To see that this functor is quasi-inverse to $\mathsf{S}_{ _{\Cat{D}\!}\Cat{M}}$, one uses first the coherence structure 
in the tricategory $\BimCat$, then twice the triangulator from Proposition \ref{proposition:triang}. The quasi-inverse of $\mathsf{S}_{\Cat{M}_{\Cat{C}}}$ is 
constructed similarly. 
The proof of the remaining statements is  analogous to the proof of Proposition \ref{proposition:triang}:  To show equation (\ref{eq:Serre-prop}), first note that the composite 
 \begin{equation}
    \label{eq:composite-coev-ev-serre}
    \begin{tikzcd}
          \DMC \Box \CMDrd \simeq \Cat{M} \Box \Mld \Box \Cat{D} \ar{r}{ 1 \Box \ev{_{\Cat{D}\!\!}\Cat{M}}^{l}} & ( \Cat{M} \Box \Mld)^{\scriptscriptstyle{\#}} \Box \Cat{M} \Box \Mld \ar{r}{\ev{\Cat{M} \Box ^{\scriptscriptstyle{\#}\!}\Cat{M}_{\Cat{D}}}} & \Cat{D}
    \end{tikzcd}
  \end{equation}
is equivalent to the bimodule functor $\Funrtd{(\ev{_{\Cat{D}\!\!}\Cat{M}})}$ by Proposition \ref{proposition:ext-D-left-reps}.  
This induces an equivalence of bimodule functors $\ev{_{\Cat{D}\!}\Cat{M}} \circ ( 1 \Box S_{\Cat{M}}) \simeq \Funrtd{(\ev{_{\Cat{D}\!\!}\Cat{M}})}$. Thus the statement follows by  using the universality of the tensor product. The proof of (\ref{eq:Serre-C-prop}) is analogous. By the module Yoneda-lemma, these properties characterize the Serre functors up to a unique equivalence of bimodule functors. 
 \end{proof}  
\section{Inner-product bimodule categories as pivotal tricategory}
\label{sec:inner-prod}
For the rest of this article we consider bimodule categories over pivotal finite tensor categories. 
We  develop the notion of inner-product bimodule category over pivotal finite tensor categories and show that it is compatible with the tensor product and the duality operations $\#$.  In this way we obtain a tricategory
$\BimCat^{\theta}$ of inner-product bimodule categories with objects pivotal finite tensor categories, 1-morphisms inner-product bimodule categories, 2- and 3-morphisms as in  $\BimCat$.
Furthermore the structure of inner-product bimodule categories induces pivotal structures on the categories of functors $\BimCat^{\theta}$. This implies that  inner-product bimodule categories
 over pivotal finite tensor categories form a pivotal tricategory and thus exhibit structures that we expect from defects for oriented TFTs.

\subsection{Inner-product bimodule categories and the tensor product}

We define inner-product bimodule categories over pivotal finite tensor categories, investigate the interaction with the tensor product and show that this structure is essentially unique for indecomposable module categories if it exists. 
 Furthermore we relate this structure to the Serre bimodule functors.

First we consider the dual bimodule categories for bimodule categories over pivotal tensor categories $\Cat{C}$ and $\Cat{D}$. Recall that a  pivotal structure on a tensor category is a monoidal natural isomorphism $a: \id \rightarrow (-)^{**}$, see Definition \ref{definition:pivotal-str}.  
For a  bimodule category $\DMC$, the pivotal structures of $\Cat{C}$ and $\Cat{D}$ define canonical 
equivalences of bimodule categories $A_{\Cat{M}}: \CMDld \rightarrow \CMDrd$ and $A_{\Cat{M}}^{-}: \CMDrd \rightarrow \CMDld$ as follows. 
As linear functors, $A_{\Cat{M}}$ and $A_{\Cat{M}}^{-}$  are the identities on $\op{M}$. The module structures are constructed in the obvious way from the pivotal structures. 
Similarly, there are equivalences of bimodule categories $\DMCrrd \simeq \DMC$ and $\DMClld \simeq \DMC$. Precomposing these equivalences $A_{\Cat{M}}$ and $A_{\Cat{M}}^{-}$ with the bimodule functors in equation (\ref{eq:dual-inn-hom}),
 we obtain the following. 
\begin{lemma}
  Let $\Cat{C}$, $\Cat{D}$ be pivotal finite tensor categories and $\DMC$ a bimodule category. The functors 
  \begin{equation}
    \label{eq:inner-hom-star}
    \begin{split}
       \DMC \boxtimes \CMDld \ni m\boxtimes \widetilde{m} &\mapsto \idmstar{\widetilde{m},m} \in \Cat{D}, \\
 \CMDrd \boxtimes \DMC \ni \widetilde{m} \boxtimes m& \mapsto \imcstar{m,\widetilde{m}} \in \Cat{C},
    \end{split}
     \end{equation}
are balanced bimodule functors with respect to the pivotal structures of $\Cat{C}$ and $\Cat{D}$. 
\end{lemma}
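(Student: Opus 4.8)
The plan is to reduce the claimed balancing structure on the starred inner hom functors to the already established balancing structures coming from the plain inner hom functors, transported along the canonical equivalences $A_{\Cat{M}}$ and $A_{\Cat{M}}^{-}$ that the pivotal structures provide. Concretely, recall from Equation (\ref{eq:dual-inn-hom}) that $\Funrtd{(\idm{-,-})}\colon \DMC \boxtimes \CMDrd \to \DDD$ and $\Funltd{(\imc{-,-})}\colon \CMDld \boxtimes \DMC \to \CCC$ are balanced bimodule functors by the results of the previous section (these are instances of the operation $\widetilde\#$ applied to the inner hom, hence balanced bimodule functors by Lemma \ref{lemma:inner-hom-and-duals} and the discussion preceding Proposition \ref{proposition:ext-D-left-reps}). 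First I would precompose $\Funrtd{(\idm{-,-})}$ with the equivalence $1 \boxtimes A_{\Cat{M}}\colon \DMC \boxtimes \CMDld \to \DMC \boxtimes \CMDrd$ and likewise precompose $\Funltd{(\imc{-,-})}$ with $A_{\Cat{M}}^{-} \boxtimes 1\colon \CMDrd \boxtimes \DMC \to \CMDld \boxtimes \DMC$; on objects these composites are exactly $m \boxtimes \widetilde{m} \mapsto \idmstar{\widetilde{m},m}$ and $\widetilde{m}\boxtimes m \mapsto \imcstar{m,\widetilde{m}}$ as in (\ref{eq:inner-hom-star}).

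The key point is then that $A_{\Cat{M}}$ and $A_{\Cat{M}}^{-}$ are not merely linear equivalences but equivalences of bimodule categories, with module constraints built from the pivotal structures $a_{\Cat{C}}$, $a_{\Cat{D}}$ (which are monoidal natural isomorphisms, so they intertwine the $(-)^{*}$ and $\leftidx{^{*}}{(-)}{}$ actions defining $\CMDld$ and $\CMDrd$ in a coherent way). Hence the composite of a balanced bimodule functor with a bimodule equivalence is again a balanced bimodule functor: the balancing constraint and the bimodule constraints of the composite are obtained by pasting those of $\Funrtd{(\idm{-,-})}$ (resp. $\Funltd{(\imc{-,-})}$) with the module constraints of $1\boxtimes A_{\Cat{M}}$ (resp. $A_{\Cat{M}}^{-}\boxtimes 1$), and the pentagon/triangle axioms of Definition \ref{definition:balanced-module} and Definition \ref{definition:multi-balanced} for the composite follow from those for the constituents together with the monoidality of $a_{\Cat{C}}$ and $a_{\Cat{D}}$. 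This is the content of the phrase ``balanced bimodule functors with respect to the pivotal structures of $\Cat{C}$ and $\Cat{D}$'': the balancing is the canonical one induced by the pivotal structures.

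I would also note explicitly, for the $\Cat{C}$-valued statement, that one uses in addition the bimodule equivalence $\DMClld \simeq \DMC$ coming from the pivotal structures, so that $\Funltd{(\imc{-,-})}$ — whose natural target domain involves $\DMClld$ after applying $\widetilde\#$ — is first identified with a functor on $\CMDld \boxtimes \DMC$ before precomposing with $A_{\Cat{M}}^{-}\boxtimes 1$; the coherence of this identification is again guaranteed by monoidality of the pivotal isomorphism. The main obstacle, such as it is, is purely bookkeeping: one must check that the various canonical equivalences ($A_{\Cat{M}}$, $\DMCrrd \simeq \DMC$, $\DMClld \simeq \DMC$) are mutually compatible so that the balancing constraint one writes down is well-defined and independent of the order in which the pivotal isomorphisms are inserted. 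This follows from the coherence theorem for pivotal tensor categories (equivalently, from the fact that $a$ is monoidal, which forces $a_{x^{*}} = (a_{x}^{*})^{-1}$ and the analogous identity for $\leftidx{^{*}}{x}{}$), so no genuinely new argument is needed beyond what is already assembled in Lemma \ref{lemma:inner-hom-and-duals} and the construction of $A_{\Cat{M}}$, $A_{\Cat{M}}^{-}$.
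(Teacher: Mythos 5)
Your proposal is correct and is essentially the paper's own (implicit) argument: the lemma is obtained there by precomposing the balanced bimodule functors of equation (\ref{eq:dual-inn-hom}) with the bimodule equivalences $A_{\Cat{M}}$ and $A_{\Cat{M}}^{-}$ induced by the pivotal structures, exactly as you do. Your additional appeal to a pivotal identification $\DMClld \simeq \DMC$ is superfluous---the domain of $\Funltd{(\imc{-,-})}$ in (\ref{eq:dual-inn-hom}) is already $\CMDld \boxtimes \DMC$ via the pivotal-free equivalence $\leftidx{^{\scriptscriptstyle{\#}}}{(\CMDrd)}{} \simeq \DMC$ of Lemma \ref{lemma:dual-and-tensor}---but this does not affect the correctness of your argument.
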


\begin{definition}
  \label{definition:inner-prod-bimodule}
Let $\Cat{C}$, $\Cat{D}$ be pivotal finite tensor categories. 
  \begin{definitionlist}
\item An inner-product module category over  $\Cat{D}$ is a  module category $\DM$ together with a bimodule natural isomorphism 
  \begin{equation}
    \label{eq:inner-prod-module}
    I_{m,\widetilde{m}}^{\Cat{M}}: \idm{m, \widetilde{m}} \simeq \idmstar{\widetilde{m},m}, 
  \end{equation}
of bimodule functors $\DM \boxtimes \MDld \rightarrow \DDD$. 
 \item A  $\Cat{D}$-inner-product bimodule category is a bimodule category $\DMC$ together with a $\Cat{C}$-balanced $\Cat{D}$-bimodule natural isomorphism 
\begin{equation}
    \label{eq:inner-prod-module-bimodule-left}
   I^{_{\Cat{D}\!\!}\Cat{M}}_{m,\widetilde{m}}:\idm{m, \widetilde{m}} \simeq \idmstar{\widetilde{m},m} 
  \end{equation}
of $\Cat{C}$-balanced $\Cat{D}$-bimodule functors $\DMC \boxtimes \CMDld \rightarrow \DDD$. 
\item A  $\Cat{C}$-inner-product bimodule category is a bimodule category $\DMC$ together with a $\Cat{D}$-balanced $\Cat{C}$-bimodule natural isomorphism 
\begin{equation}
    \label{eq:inner-prod-module-bimodule-right}
   I^{\Cat{M}_{\Cat{C}}}_{\widetilde{m},m}:\imc{ \widetilde{m},m} \simeq \imcstar{m,\widetilde{m}}, 
 \end{equation}
of $\Cat{D}$-balanced $\Cat{C}$-bimodule functors $ \CMDrd \boxtimes \DMC \rightarrow \CCC$. 
\item  An inner-product  $(\Cat{D}, \Cat{C})$-bimodule category is a  bimodule category $\DMC$ together with the structures of a 
$\Cat{D}$- and $\Cat{C}$-inner-product bimodule category. 
  \end{definitionlist}
\end{definition}

\begin{remark}
\label{remark:inner-prod-ex}
Let $\DMC$ be a separable bimodule category. 
  \begin{remarklist}
    \item  By the module Yoneda-Lemma \ref{lemma:module-yoneda} it follows that the structure of a $\Cat{D}$-inner-product bimodule category on $\DMC$ is the same as a bimodule natural isomorphism 
from $A_{\Cat{M}} \circ \mathsf{S}_{ _{\Cat{D}\!}\Cat{M}}: \CMDrd \rightarrow \CMDrd$ to the identity functor on $\CMDrd$. Similarly, the structure of a $\Cat{C}$-inner-product bimodule category on $\DMC$
is the same as a bimodule natural isomorphism from the bimodule functor $A_{\Cat{M}} \circ S_{\Cat{M}_{\Cat{C}}}$ to the identity on $\CMDrd$. 
\item It is clear that for an inner-product module category $\DM$ the inner hom functor is exact. In the proof of \cite[Prop. 3.16]{FinTen} it is shown that a module category is exact if and only if the inner hom functor is exact. 
Thus an inner-product module category is necessarily exact.  
  \end{remarklist}
 \end{remark}
It follows directly from the definition, that by passing to the Grothendieck group, an inner-product bimodule category defines an inner-product bimodule 
over the Grothendieck ring of the tensor categories in the sense of Definition \ref{definition:inner-prod-starald}.

\begin{example}
  \label{example:inner-product}
  \begin{examplelist}
    \item \label{item:can-inn-prod} Let $\Cat{C}$ be a pivotal finite tensor category with pivotal structure $a: \id_{\Cat{C}} \rightarrow (-)^{**}$.
 Then the bimodule category $\CCC$ has the structure of an inner-product bimodule category induced by the pivotal structure. 
Indeed, for $\widetilde{x},x \in \Cat{C}$  the left inner-product  structure is defined by
$$\icgenstar{\widetilde{x},x}=   x \otimes \widetilde{x}^{*} \stackrel{1\otimes \leftidx{^*}{a_{\widetilde{x}}}{}}{\simeq} x \otimes  \leftidx{^*}{\widetilde{x}}{}=  \icgen{x,\widetilde{x}}, $$
 while  the right inner-product  structure is induced by
$$\icstarl{x, \widetilde{x}} = \leftidx{^*}{\widetilde{x}}{} \otimes x \stackrel{ a_{\leftidx{^*}{\widetilde{x}}{}} \otimes 1}{\simeq} \widetilde{x}^{*} \otimes x= \igenc{\widetilde{x},x}.$$
Moreover, if $a$ and $b$ are two pivotal structures for $\Cat{C}$, it is easy to see 
that $\CaCCb$ has a structure of and inner-product bimodule category if and only if $a=b$. 
\item \label{item:dual-inn-prod} Let $\DMC$ be an inner-product bimodule category. Then the dual categories $\CMDld$ and $\CMDrd$ have a natural structure of  inner-product bimodule categories: 
It is shown 
in Lemma \ref{lemma:inner-hom-and-duals} that the $\Cat{C}$-valued inner hom of $\CMDrd$ and the 
$\Cat{D}$-valued inner hom of $\CMDld$ are given by inner homs of $\DMC$.
Using the pivotal structure of $\Cat{D}$, every bimodule functor $\mathsf{F}: \DMD \rightarrow \DDD$ is equivalent to the composite
\begin{equation}
  \label{eq:comp-equiv-bimod}
  \DMD \simeq \DMDrrd \stackrel{\Funrrtd{\mathsf{F}}}{\simeq} \DDD.
\end{equation}
It thus follows from  Lemma \ref{lemma:inner-hom-and-duals} that the $\Cat{D}$-valued inner hom of $\CMDrd$ is equivalent as balanced bimodule functor to the composite
\begin{equation}
  \label{eq:D-val-rd}
  \DMCrrd \boxtimes \CMDrd \simeq \DMC \boxtimes \CMDld \stackrel{\idm{-,-}}{\longrightarrow} \DDD. 
\end{equation}
Analogously, the $\Cat{C}$-valued inner hom of $\CMDld$ can be expressed by the $\Cat{C}$-valued inner hom of $\DMC$. 

Thus it follows that the inner-product bimodule category structure of $\DMC$ induces the structures of inner-product bimodule categories on $\CMDld$ and $\CMDrd$.
 \end{examplelist} 
\end{example} 
The following shows that the structure of an inner-product (bi)-module category is essentially unique if it exists. 
\begin{proposition}
  \label{proposition:uniquen-inn-prod}
Let $\DM$ be an indecomposable exact  module  category over a pivotal finite tensor category $\Cat{D}$ with and inner-product module structure $I$.
 For any other balanced  natural isomorphism $I':  \idm{m, \widetilde{m}} \simeq \idmstar{\widetilde{m},m}$, there exists 
a scalar $\lambda \in \Bbbk^{\times}$ such that $I'= \lambda \cdot I$. 
\end{proposition}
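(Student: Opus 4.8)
The plan is to reduce the statement to an automorphism computation via the module Yoneda lemma. Both $I$ and $I'$ are balanced natural isomorphisms between the balanced bimodule functors $\idm{-,-}$ and $\Funrtd{(\idm{-,-})}: \DM \boxtimes \MDld \rightarrow \DDD$; hence $I' \circ I^{-1}$ is a balanced (bi)module natural automorphism of the single functor $\idm{-,-}$. So it suffices to show that the space of balanced module natural automorphisms of $\idm{-,-}$ is one-dimensional, i.e. equals $\Bbbk^{\times} \cdot \id$. Equivalently, by Lemma \ref{lemma:module-yoneda}\refitem{item:Yoneda-inner-hom} applied with $\mathsf{F}=\mathsf{G}=\id_{\Cat{M}}$, the balanced bimodule natural transformations $\idm{-,-} \rightarrow \idm{-,-}$ are in bijection with bimodule natural transformations of the identity functor $\id_{\Cat{M}}: \DM \rightarrow \DM$; so I must show $\End_{\BimCat(\Cat{D},\Cat{D})}(\id_{\Cat{M}}) = \Bbbk$.

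First I would make the Yoneda reduction precise: by Remark \ref{remark:inner-prod-ex}\emph{i)} (after noting that the separability hypothesis in that remark can be replaced by exactness, which we have here, since only the existence of the adjoint $\ev{}^{l}$ is used), the datum of an inner-product module structure is the same as a bimodule natural isomorphism $A_{\Cat{M}}\circ \mathsf{S}_{_{\Cat{D}\!}\Cat{M}} \Rightarrow 1$; then $I' \circ I^{-1}$ is an element of $\Aut(1_{\CMDrd})$, which is identified with $\Aut(1_{\Cat{M}})$ under the equivalence $\op{M}\simeq \Cat{M}$ of underlying linear categories. Alternatively, and more directly, I would just observe that $I'\circ I^{-1}$ is a balanced module automorphism of $\idm{-,-}$ and feed this into Lemma \ref{lemma:module-yoneda}\refitem{item:Yoneda-inner-hom}. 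Either route lands at: $\Aut$ of the identity endofunctor of $\Cat{M}$ as a $\Cat{D}$-module category.

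Next I would compute $\End(1_{\Cat{M}})$ in the module category $\Funl{\Cat{D}}{\Cat{M},\Cat{M}}$. A module natural endomorphism $\eta$ of $1_{\Cat{M}}$ assigns to each $m$ an endomorphism $\eta_m \in \End_{\Cat{M}}(m)$, natural in $m$ and compatible with the action: $\eta_{d\act m} = \id_d \act \eta_m$. Naturality already says $\eta$ is an element of the center of $\Cat{M}$ in the sense of $\End(1_{\Cat{M}})$ as a plain category; since $\Cat{M}$ is indecomposable as an abelian category — which follows from indecomposability as a module category together with exactness, or can be arranged by Theorem \ref{theorem:all-alg} writing $\Cat{M}\simeq \ModDA$ for a connected algebra $A$ — the abelian-category center $\End(1_{\Cat{M}})$ is already $\Bbbk$ (a natural endomorphism of the identity acts by a scalar on each indecomposable summand, and connectedness forces a single scalar). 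The module-compatibility condition is then automatically satisfied by the scalar, and conversely every scalar gives a module natural automorphism. Hence $\End(1_{\Cat{M}}) = \Bbbk$, so $I' \circ I^{-1} = \lambda \cdot \id$ for some $\lambda \in \Bbbk^{\times}$, giving $I' = \lambda \cdot I$.

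The main obstacle is the bookkeeping in the first two paragraphs: making sure the chain of identifications (balanced module automorphisms of $\idm{-,-}$ $\leftrightarrow$ bimodule natural endomorphisms of $1_{\Cat{M}}$ $\leftrightarrow$ $\End(1_{\Cat{M}})$) is exactly the one supplied by Lemma \ref{lemma:module-yoneda}\refitem{item:Yoneda-inner-hom} and does not secretly require separability, and checking that ``indecomposable as an exact module category'' does force ``indecomposable as an abelian category'' so that the scalar-on-the-identity argument applies. Once that is in place the scalar computation itself is routine. One mild subtlety worth a sentence: one should confirm that the isomorphism $I'\circ I^{-1}$, a priori only a natural automorphism, is forced to be invertible as a scalar, i.e. $\lambda \neq 0$ — but this is immediate since $I,I'$ are isomorphisms.
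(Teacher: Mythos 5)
Your opening reduction is exactly the paper's: the composite of $I'$ with $I^{-1}$ is a balanced module natural automorphism of $\idm{-,-}$, and Lemma \ref{lemma:module-yoneda} turns it into a module natural automorphism of $1_{\Cat{M}}$, i.e.\ an automorphism of the unit object of $\DMstar=\Funl{\Cat{D}}{\DM,\DM}$ (the detour through Remark \ref{remark:inner-prod-ex} and separability is unnecessary). The gap is in your last paragraph, where you try to conclude by showing that already the \emph{plain} natural endomorphisms of $\id_{\Cat{M}}$ are scalars. Both claims you use there fail. First, indecomposability of $\DM$ as a module category does not imply indecomposability of $\Cat{M}$ as an abelian category: the regular module category $\Cat{M}=\Cat{D}$ is exact and indecomposable over any finite tensor category $\Cat{D}$, but decomposes as an abelian category as soon as $\Cat{D}$ has more than one block. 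Second, even for an abelian-indecomposable finite category, a natural endomorphism of the identity does not act by scalars on indecomposable objects outside the semisimple case: for $\Cat{M}\simeq \Mod_{A}$ one has $\End(\id_{\Cat{M}})\cong Z(A)$, and e.g.\ $A=\Bbbk[x]/(x^{2})$ is connected while multiplication by $x$ is a non-scalar natural endomorphism of the identity. Concretely within the hypotheses of the proposition, take $\Cat{D}=\Rep_{\Bbbk}(G)$ in modular characteristic and $\Cat{M}=\Cat{D}$ the regular module: this is exact and indecomposable, yet the plain natural endomorphisms of $\id_{\Cat{M}}$ form the centre of $\Bbbk[G]$, which is larger than $\Bbbk$.

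So the module-compatibility of the transformation produced by the Yoneda lemma is not a harmless afterthought; it is the decisive constraint, and your argument never uses it. The correct finish, and the one the paper takes, is to invoke that for an exact indecomposable module category $\DM$ the category $\Funl{\Cat{D}}{\DM,\DM}$ is a finite \emph{tensor} category (\cite[Lemma 3.24]{FinTen}), so its unit $1_{\Cat{M}}$ is absolutely simple and the space of \emph{module} natural endomorphisms of $1_{\Cat{M}}$ is exactly $\Bbbk$. With that replacement for your third paragraph, the rest of your argument (including the trivial observation that $\lambda\neq 0$) goes through and coincides with the paper's proof.
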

\begin{proof}
  Assume $I'$ provides another inner-product module structure on $\DM$. Then the 
natural isomorphism 
\begin{equation}
  \label{eq:comp-I-Ip}
   \idm{m, \widetilde{m}} \stackrel{I'}{\simeq} \idmstar{\widetilde{m},m} \stackrel{I^{-1}}{\simeq} \idm{m, \widetilde{m}},
\end{equation}
is balanced and defines by Lemma \ref{lemma:module-yoneda} a module natural isomorphism  $1_{\Cat{M}} \rightarrow 1_{\Cat{M}}$, which is 
an endomorphism of the tensor unit  in the category $\DMstar= \Funl{\Cat{D}}{\DM,\DM}$. According to \cite[Lemma 3.24]{FinTen}, this category is a finite tensor category. In 
particular the unit object is absolutely simple. Hence the module natural isomorphism  $1_{\Cat{M}} \rightarrow 1_{\Cat{M}}$ is a multiple of the identity and thus the statement follows. 
\end{proof}
For indecomposable  bimodule categories, the structure of an inner-product bimodule category is thus unique up to simultaneous scaling of both balanced natural isomorphisms with independent scalars. 

Next we consider the compatibility of inner-product bimodule categories with the tensor product. 
\begin{proposition}
  \label{proposition:inner-prod-bimod-tensor}
Let $\Cat{C}$, $\Cat{D}$, $\Cat{E}$ be pivotal finite tensor categories and $\DMC$, $\CNE$ inner-product bimodule categories. There is an induced structure of an inner-product bimodule category on $\DMC \Box \CNE$.
\end{proposition}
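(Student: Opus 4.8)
The plan is to equip the $(\Cat{D},\Cat{E})$-bimodule category $\DMC \Box \CNE$ with the two balanced bimodule natural isomorphisms demanded by Definition~\ref{definition:inner-prod-bimodule}: a $\Cat{D}$-inner-product structure $\idmn{p,\widetilde{p}} \simeq (\idmn{\widetilde{p},p})^{\widetilde{\#}}$ and an $\Cat{E}$-inner-product structure $\imne{\widetilde{p},p} \simeq (\imne{p,\widetilde{p}})^{\widetilde{\#}}$. The essential input is the categorified Rieffel induction of Proposition~\ref{proposition:Rieffel}, which computes the $\Cat{D}$- and $\Cat{E}$-valued inner homs of $\DMC \Box \CNE$ from the inner homs of $\Cat{M}$ and $\Cat{N}$, together with the inner-product isomorphisms of $\Cat{M}$ and $\Cat{N}$ and the structural $\Cat{C}$-balancing constraints of the inner hom functors from Proposition~\ref{proposition:properties-inner-hom}.

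First I would reduce to objects in the image of the universal balanced functor. By the module Yoneda-lemma (Lemma~\ref{lemma:module-yoneda}) and the universal property of the tensor product (Definition~\ref{definition:tensor-prod}), a balanced bimodule natural isomorphism between bimodule functors on $\DMC \Box \CNE \boxtimes {}^{\#}(\DMC \Box \CNE)$ is the same datum as a multi-balanced module natural isomorphism between their restrictions along $\mathsf{B} \boxtimes \Funld{B}$; here I use Lemma~\ref{lemma:dual-and-tensor} to rewrite ${}^{\#}(\DMC \Box \CNE) \simeq \ENCld \Box \CMDld$ and Lemma~\ref{lemma:inner-hom-and-duals} to express its inner hom through the inner homs of $\Cat{M}$ and $\Cat{N}$. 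By Proposition~\ref{proposition:Rieffel}\refitem{item:Rieffel-ind} the restriction of $\idmn{-,-}$ along $\mathsf{B} \boxtimes \Funld{B}$ is, up to canonical multi-balanced isomorphism, the functor $\Lambda$ of (\ref{eq:Lambda-defines-inner-hom}), $m \boxtimes n \boxtimes \widetilde{n} \boxtimes \widetilde{m} \mapsto \idm{m \ract \icn{n,\widetilde{n}}, \widetilde{m}}$, and the restriction of its $\widetilde{\#}$-dual is the analogous functor with $m,n$ interchanged with $\widetilde{m},\widetilde{n}$ and post-composed with the $\Cat{D}$-duality.

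The isomorphism realizing the $\Cat{D}$-inner-product structure is then the following zig-zag, which I would check is natural and multi-balanced in all four variables: the Rieffel isomorphism $\idmn{m \Box n, \widetilde{m} \Box \widetilde{n}} \simeq \idm{m \ract \icn{n,\widetilde{n}}, \widetilde{m}}$; the $\Cat{C}$-balancing constraint (\ref{eq:idm-C-bal}), $\simeq \idm{m,\icn{n,\widetilde{n}} \actld \widetilde{m}}$; the identification $\icn{n,\widetilde{n}} \actld \widetilde{m} = \widetilde{m} \ract (\icn{n,\widetilde{n}})^{*} \simeq \widetilde{m} \ract \icn{\widetilde{n},n}$ obtained from (\ref{eq:ld-action}), the inner-product isomorphism $I^{_{\Cat{C}\!\!}\Cat{N}}$ of $\Cat{N}$ and the canonical duality isomorphism $({}^{*}c)^{*} \simeq c$ in $\Cat{C}$; the inner-product isomorphism $I^{_{\Cat{D}\!\!}\Cat{M}}$ of $\Cat{M}$ (read backwards), identifying $\idm{m,\widetilde{m} \ract \icn{\widetilde{n},n}}$ with the $\Cat{D}$-dual of $\idm{\widetilde{m} \ract \icn{\widetilde{n},n},m}$; and finally the $\widetilde{\#}$-image of the Rieffel isomorphism, identifying the latter with the $\Cat{D}$-dual of $\idmn{\widetilde{m} \Box \widetilde{n}, m \Box n}$. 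The $\Cat{E}$-inner-product structure is built by the mirror argument starting from the second Rieffel formula $\imne{\widetilde{m} \Box \widetilde{n}, m \Box n} \simeq \ine{\widetilde{n}, \inc{\widetilde{m},m} \act n}$ of (\ref{eq:right-inner-hom-box}), now using the $\Cat{C}$-balancing of the $\Cat{E}$-valued inner hom together with the inner-product isomorphisms $I^{\Cat{M}_{\Cat{C}}}$ of $\Cat{M}$ and $I^{\Cat{N}_{\Cat{E}}}$ of $\Cat{N}$. Since Definition~\ref{definition:inner-prod-bimodule} imposes no joint compatibility between the two structures, having both at once gives the desired inner-product $(\Cat{D},\Cat{E})$-bimodule category structure on $\DMC \Box \CNE$.

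The main work — and the step I expect to be the real obstacle — is the bookkeeping: one must verify that each arrow in the zig-zag above is multi-balanced in every entry and compatible with the bimodule constraints, so that the composite genuinely descends along $\mathsf{B} \boxtimes \Funld{B}$ to a well-defined balanced bimodule natural isomorphism, and one must keep the occurrences of $(-)^{*}$, ${}^{*}(-)$ and $(-)^{**}$ in order, which is where Lemma~\ref{lemma:inner-hom-and-duals} and the pivotal equivalences $A_{\Cat{M}}$, $A_{\Cat{N}}$ of $\Cat{C}$ and $\Cat{D}$ enter in an essential way. I would organize this exactly as in the proof of the triangulators (Proposition~\ref{proposition:triang}): transport all identities through the evaluation functors so that they become identities of balanced bimodule functors into $\Cat{D}$ (respectively $\Cat{E}$), which can then be checked after restricting along $\mathsf{B}$, using the module Yoneda-lemma at the end. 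Finally, by Proposition~\ref{proposition:uniquen-inn-prod} (and Remark~\ref{remark:inner-prod-ex}), when $\Cat{M}$ and $\Cat{N}$ are indecomposable this induced structure is independent, up to independent rescaling of the two inner products, of the auxiliary choices entering the tensor product, so the construction is canonical in the relevant sense.
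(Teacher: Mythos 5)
Your proposal is correct and follows essentially the same route as the paper: it descends, via the universal property of the tensor product, a multi-balanced natural isomorphism on $\DMC \boxtimes \CNE \boxtimes \ENCld \boxtimes \CMDld$ built from the Rieffel formula of Proposition \ref{proposition:Rieffel}, the $\Cat{C}$-balancing of the $\Cat{D}$-valued inner hom, and the inner-product isomorphisms $I^{_{\Cat{D}\!\!}\Cat{M}}$ and $I^{\Cat{N}_{\Cat{C}}}$, with the $\Cat{E}$-structure obtained by the mirror argument. Your chain of isomorphisms is the paper's composite (\ref{eq:composite-inner-mn}) read in the opposite direction, so the two arguments coincide.
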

\begin{proof}
  In order to show that $\DMC \Box \CNE$ has the structure of an $\Cat{D}$-inner-product bimodule category, we construct for $m,\widetilde{m} \in \Cat{M}$ and $n,\widetilde{n} \in \Cat{N}$ 
a multi-balanced $\Cat{D}$-bimodule natural isomorphism 
$$(\idmn{B(\widetilde{m} \boxtimes \widetilde{n}, B(m \boxtimes n)})^{*} \simeq \idmn{B(m \boxtimes n), B( \widetilde{m} \boxtimes \widetilde{n})}$$ of multi-balanced bimodule functors 
$\DMC \boxtimes \CNE \boxtimes \ENCld \boxtimes \CMDld \rightarrow \DDD$. 
Note that both functors are $\Cat{C}$-balanced in two arguments and $\Cat{E}$-balanced in the middle argument. By universality of the tensor product, such a multi-balanced isomorphism induces 
the structure of a $\Cat{D}$-inner-product bimodule category on $\DMC \Box \CNE$.
The claimed isomorphism is defined as the composite
\begin{equation}
  \label{eq:composite-inner-mn}
  \begin{split}
      \idstar{B(\widetilde{m} \boxtimes \widetilde{n}), B(m \boxtimes n)} &\simeq \idmstar{\widetilde{m} \ract \icn{\widetilde{n},n}, m} \\
&\stackrel{I^{_{\Cat{C}\!\!}\Cat{M}}}{\simeq }  \idm{m, \widetilde{m }\ract \icn{\widetilde{n},n}} \\
&\simeq \idm{m \ract \icnstarl{\widetilde{n},n}, \widetilde{m}} \\
&\stackrel{I^{_{\Cat{C}\!\!}\Cat{N}}}{\simeq } \idm{m \ract \icn{n,\widetilde{n}}, \widetilde{m}} \\
&\simeq  \idmn{B(m \boxtimes n), B( \widetilde{m} \boxtimes \widetilde{n})}.
  \end{split}
\end{equation}
By definition, the isomorphisms in step two and four are multi-balanced bimodule natural isomorphism. The first and the last  isomorphism are multi -balanced bimodule isomorphism obtained from 
Proposition \ref{proposition:Rieffel} \refitem{item:Rieffel-ind}. 
The remaining isomorphism in the third step is induced by the duality of $\Cat{C}$ and clearly is a   multi-balanced bimodule isomorphism. 

The proof that $\DMC \Box \CNE$ is an $\Cat{E}$-inner-product bimodule category is analogous. 
\end{proof}

\subsection{The pivotal tricategory of  inner-product bimodule categories}
We finally prove that inner-product bimodule categories yield a pivotal tricategory with duals. To this end we first show that there is a natural way to identify a bimodule functor 
between inner-product bimodule categories with its double left adjoint bimodule functor. These natural bimodule isomorphisms are then shown to constitute a pivotal structure on 
 the following 2-categories. 
\begin{definition}
  Let $\Cat{C}$ and $\Cat{D}$ be pivotal finite tensor categories. The 2-categories of  $(\Cat{C},\Cat{D})$ inner-product bimodule categories together with bimodule functors and 
bimodule natural transformations  are denoted $\BimCattet(\Cat{C},\Cat{D})$.
\end{definition}
A  pivotal structure on  the 2-categories $\BimCattet(\Cat{C},\Cat{D})$ is constructed  by generalizing   \cite[Thm. 4.5]{Moduletr} in the semisimple case. 
\begin{theorem}
  \label{theorem:module-functors-ambidextrous}
  Let  $\DMC, \DNC, \DYC \in \BimCattet$. For all  module functors $\mathsf{F}:\DMC \rr \DNC$,
  the $\Cat{D}$-inner product structures on $\Cat{M}$ and $\Cat{N}$ induce a 
bimodule  natural  isomorphism $a_{\mathsf{F}}: \mathsf{F} \rr \mathsf{F}^{ll}$ 
 from $\mathsf{F}$ to the double left adjoint module functor of $\mathsf{F}$. 
  \begin{theoremlist}
  \item \label{item:natural-aF} The natural isomorphisms $a_{\mathsf{F}}$ are natural with respect to bimodule natural transformations, i.e.~for any bimodule functor $\mathsf{G}: \DMC \rr \DNC$  and 
    any bimodule natural transformation $\rho: \mathsf{F} \rr \mathsf{G}$, the following diagram commutes
    \begin{equation*}
      \begin{xy}
        \xymatrix{ 
        \mathsf{F} \ar[r]^{a_{\mathsf{F}}} \ar[d]_{\rho}& \mathsf{F}^{ll} \ar[d]^{\rho^{ll}} \\
        \mathsf{G}   \ar[r]^{a_{\mathsf{G}}}   &   \mathsf{G}^{ll}. 
      }
      \end{xy}
    \end{equation*}
  \item  \label{item:comp-with-comp-aF}For all bimodule functors $\mathsf{F}:\DMC \rightarrow \DNC$ 
and $\mathsf{K}: \DNC \rightarrow \DYC$,
    \begin{equation}
      \label{eq:compat-of-adj-comp}
      a_{\mathsf{K}\mathsf{F}}=a_{\mathsf{K}}\circ a_{\mathsf{F}}: \mathsf{K}\circ \mathsf{F} \rr (\mathsf{K} \circ \mathsf{F})^{ll}.
    \end{equation}
\item \label{item:comp-a-unit} For the identity bimodule functor $1_{M}: \DMC \rightarrow \DMC$, the 
natural isomorphism is given by $a_{1_{\Cat{M}}}=\id_{1_{M}}$.
 \item \label{item:together-piv-2-cat} The 2-categories  $\BimCattet(\Cat{C},\Cat{D})$ equipped with these natural isomorphisms are pivotal 2-categories. 
 \end{theoremlist}
\end{theorem}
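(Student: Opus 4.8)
The plan is to construct the isomorphism $a_{\mathsf{F}}$ by means of the module Yoneda lemma and the inner-hom calculus, and then to read off parts \refitem{item:natural-aF}--\refitem{item:together-piv-2-cat} from the construction; the whole argument is the non-semisimple analogue of \cite[Thm.~4.5]{Moduletr}, with inner homs replacing the module-trace pairings. Since the bimodule categories underlying objects of $\BimCattet$ are exact (Remark \ref{remark:inner-prod-ex}), every bimodule functor between them is exact and hence, by Proposition \ref{proposition:adj-ex}, has both a left and a right adjoint; in particular $\mathsf{F}^{l},\mathsf{F}^{ll},\mathsf{F}^{r}$ all exist and are bimodule functors, and $\BimCattet(\Cat{C},\Cat{D})$ is a bicategory with duals on $1$-morphisms. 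To build $a_{\mathsf{F}}\colon\mathsf{F}\to\mathsf{F}^{ll}$ I would first use Lemma \ref{lemma:module-yoneda}\refitem{item:Yoneda-inner-hom} to reduce the problem to producing a balanced bimodule natural isomorphism between the balanced bimodule functors $\idn{\mathsf{F}(-),-}$ and $\idn{\mathsf{F}^{ll}(-),-}$ from $\DMC\boxtimes\CNDld$ to $\DDD$. Such an isomorphism is assembled from three ingredients: the adjunction identities of Lemma \ref{lemma:adjoint-inner-hom} (which rewrite $\idn{\mathsf{F}(m),n}$ through $\mathsf{F}\dashv\mathsf{F}^{r}$ as $\idm{m,\mathsf{F}^{r}(n)}$, the left-exact analogue which rewrites inner homs through $\mathsf{F}^{l}$, and the corresponding identity for $\mathsf{F}^{ll}\dashv\mathsf{F}^{l}$ giving $\idn{\mathsf{F}^{ll}(m),n}\simeq\idm{m,\mathsf{F}^{l}(n)}$), the self-duality isomorphisms of the inner homs (Lemma \ref{lemma:inner-hom-and-duals}) together with the equivalences $A_{\Cat{M}},A_{\Cat{N}}$ coming from the pivotal structures, and --- the only non-canonical input --- the inner-product isomorphisms $I^{_{\Cat{D}\!}\Cat{M}}$ and $I^{_{\Cat{D}\!}\Cat{N}}$. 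Concatenating these, the inner homs of $\Cat{M}$ built from $\mathsf{F}^{r}$ and from $\mathsf{F}^{l}$ get matched, which by Lemma \ref{lemma:adjoint-inner-hom} is exactly the datum of a bimodule natural isomorphism $\mathsf{F}^{l}\simeq\mathsf{F}^{r}$; equivalently $a_{\mathsf{F}}$ is the composite $\mathsf{F}\simeq(\mathsf{F}^{l})^{r}\simeq(\mathsf{F}^{l})^{l}=\mathsf{F}^{ll}$, the first equivalence being canonical (a right adjoint of $\mathsf{F}^{l}$ is $\mathsf{F}$) and the second the ambidexterity isomorphism for $\mathsf{F}^{l}$. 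Well-definedness is automatic: adjoints are unique up to unique bimodule isomorphism and the chain is built entirely from natural isomorphisms.

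Parts \refitem{item:natural-aF}--\refitem{item:comp-a-unit} then follow by unwinding this construction. For \refitem{item:natural-aF}, given $\rho\colon\mathsf{F}\to\mathsf{G}$ one translates the square through Lemma \ref{lemma:module-yoneda}\refitem{item:Yoneda-inner-hom}: $\rho$ corresponds to $\idn{\rho(-),-}$, $\rho^{ll}$ to its doubly mated transformation, and commutativity reduces to the compatibility of the mate operation (Proposition \ref{proposition:adj-ex} and the paragraph following it) with the identifications of Lemma \ref{lemma:adjoint-inner-hom}, together with the naturality in both arguments of $I^{_{\Cat{D}\!}\Cat{M}}$ and $I^{_{\Cat{D}\!}\Cat{N}}$. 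For \refitem{item:comp-with-comp-aF}, using $(\mathsf{K}\mathsf{F})^{l}=\mathsf{F}^{l}\mathsf{K}^{l}$ and $(\mathsf{K}\mathsf{F})^{ll}=\mathsf{K}^{ll}\mathsf{F}^{ll}$, one writes the chain defining $a_{\mathsf{K}\mathsf{F}}$ on the $\Cat{D}$-valued inner hom $\langle\mathsf{K}\mathsf{F}(m),y\rangle$ of $\Cat{Y}$; the adjunction identities split it as $\langle\mathsf{K}\mathsf{F}(m),y\rangle\simeq\idn{\mathsf{F}(m),\mathsf{K}^{l}(y)}\simeq\cdots$, and the inner-product isomorphism on the intermediate category $\Cat{N}$ that appears in the middle is precisely the one inserted when splicing the chains for $a_{\mathsf{F}}$ and $a_{\mathsf{K}}$, so $a_{\mathsf{K}\mathsf{F}}$ telescopes to $a_{\mathsf{K}}\ast a_{\mathsf{F}}$. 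Part \refitem{item:comp-a-unit} is the special case $\mathsf{F}=1_{\Cat{M}}$: here $\mathsf{F}^{l}=\mathsf{F}^{ll}=1_{\Cat{M}}$ canonically, the adjunction identities of Lemma \ref{lemma:adjoint-inner-hom} become identities, and the chain collapses to $I^{_{\Cat{D}\!}\Cat{M}}$ followed by $(I^{_{\Cat{D}\!}\Cat{M}})^{-1}$, i.e.\ the identity.

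For \refitem{item:together-piv-2-cat}, the previous parts say that $\mathsf{F}\mapsto a_{\mathsf{F}}$ is a natural, multiplicative, and unital assignment of an invertible $2$-morphism $\mathsf{F}\xrightarrow{\sim}\mathsf{F}^{ll}$ in the bicategory-with-duals $\BimCattet(\Cat{C},\Cat{D})$; what remains for a pivotal structure in the sense of Definition \ref{definition:pivotal-str-bicat} is the coherence axiom relating $a_{\mathsf{F}}$ to the evaluation and coevaluation $2$-morphisms of the adjunctions --- concretely, that $a_{\mathsf{F}}$ intertwines the $\mathsf{F}\dashv\mathsf{F}^{l}$ data with the $\mathsf{F}^{l}\dashv\mathsf{F}^{ll}$ data, equivalently that $a_{\mathsf{F}^{l}}$ and $(a_{\mathsf{F}}^{-1})^{l}$ agree up to the canonical $2$-isomorphism $(\mathsf{F}^{l})^{ll}\simeq(\mathsf{F}^{ll})^{l}$. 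I would verify this axiom by translating it, once more through the module Yoneda lemma, into an identity of balanced bimodule natural transformations of inner-hom functors, where it reduces to the compatibility of the adjunction isomorphisms of Lemma \ref{lemma:adjoint-inner-hom} with their $\widetilde{\#}$-duals (Lemma \ref{lemma:inner-hom-and-duals}) together with the fact that $I^{_{\Cat{D}\!}\Cat{M}}$ is a bimodule natural isomorphism compatible with $A_{\Cat{M}}$. I expect this last verification to be the main obstacle, since it forces one to track four structures at once --- the two inner products, the adjunction units and counits, and the $\#$/$\widetilde{\#}$ dualities --- through a diagram chase; as a fallback one can instead transport the pivotal structure along a triequivalence using \cite[Thm.~7.21]{Schaum}, but a direct inner-hom argument should go through.
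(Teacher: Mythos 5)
Your construction of $a_{\mathsf{F}}$ is the paper's: the paper defines $a_{\mathsf{F}}$ via the module Yoneda lemma (Lemma \ref{lemma:module-yoneda}\refitem{item:Yoneda-inner-hom}) from the chain
$\idn{\mathsf{F}(m),n}\stackrel{I^{\Cat{N}}}{\simeq}\idnstar{n,\mathsf{F}(m)}\simeq\idmstar{\mathsf{F}^{l}(n),m}\stackrel{(I^{\Cat{M}})^{-1}}{\simeq}\idm{m,\mathsf{F}^{l}(n)}\simeq\idn{\mathsf{F}^{ll}(m),n}$,
built from the two inner-product isomorphisms and the adjunction identities of Lemma \ref{lemma:adjoint-inner-hom}, and it proves \refitem{item:natural-aF} and \refitem{item:comp-with-comp-aF} by exactly the inner-hom diagram chases you sketch; in particular for \refitem{item:comp-with-comp-aF} the point is, as you say, that the $I^{\Cat{N}}$ appearing when the chains for $a_{\mathsf{F}}$ and $a_{\mathsf{K}}$ are spliced cancels after naturality of the adjunction isomorphisms, since it does not occur in the chain defining $a_{\mathsf{K}\mathsf{F}}$. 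The only place you deviate is \refitem{item:together-piv-2-cat}: the extra coherence you propose to verify there (that $a_{\mathsf{F}}$ intertwines the $\mathsf{F}\dashv\mathsf{F}^{l}$ and $\mathsf{F}^{l}\dashv\mathsf{F}^{ll}$ data, equivalently a compatibility of $a_{\mathsf{F}^{l}}$ with $(a_{\mathsf{F}}^{-1})^{l}$) is not required by Definition \ref{definition:pivotal-str-bicat}. A pivotal structure there is just a natural $2$-isomorphism $\id\rightarrow(-)^{**}$, and by Definition \ref{definition:nat2transformations} its axioms are precisely naturality in $2$-morphisms, compatibility with horizontal composition, and compatibility with units, i.e. parts \refitem{item:natural-aF}--\refitem{item:comp-a-unit}; so \refitem{item:together-piv-2-cat} is immediate once those are done, which is how the paper concludes. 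Hence what you flag as ``the main obstacle'' is superfluous (such an axiom belongs to stricter notions, e.g. Gray categories with duals), and note that your fallback of transporting the pivotal structure along a triequivalence via \cite[Thm.~7.21]{Schaum} would be circular, since it presupposes a pivotal structure on one side.
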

\begin{proof}
  For a bimodule functor $\mathsf{F}: \DMC \rightarrow \DNC$, consider the following composite of balanced bimodule natural isomorphism of functors $\DMC \boxtimes \CMDld \rightarrow \DDD$: 
  \begin{equation}
    \label{eq:def-pivotal}
   \idn{\mathsf{F}(m),n}  \stackrel{I^{_{\Cat{D}\!\!}\Cat{N}}}{\simeq}  \idnstar{n,\mathsf{F}(m)} \simeq    \idmstar{\mathsf{F}^{l}(n),m}   \stackrel{(I^{_{\Cat{D}\!\!}\Cat{M}})^{-1}}{\simeq} \idm{m,\mathsf{F}^{l}(n)} 
\simeq \idn{\mathsf{F}^{ll}(m),n}.
  \end{equation}
Here the first and third natural  isomorphisms are obtained from Lemma \ref{lemma:adjoint-inner-hom}. 
By Lemma \ref{lemma:module-yoneda}\refitem{item:Yoneda-inner-hom}, the composite induces a bimodule natural 
isomorphism  $a_{\mathsf{F}}: \mathsf{F} \rr \mathsf{F}^{ll}$.  Let $\rho: \mathsf{F} \rr \mathsf{G}$ be a bimodule natural isomorphism between bimodule functors from $\DMC$ to $\DNC$ in $\BimCattet(\Cat{C},\Cat{D})$. Then consider the following diagram:
 \begin{equation}
    \begin{xy}
      \xymatrix{
       \igen{\mathsf{F}m,n}  \ar[d]^{I^{\Cat{N}}}            \ar@/_4pc/[dddd]_{ \igen{a_{\mathsf{F}},1}}                             \ar[rr]^{\igen{\rho m,n}}       &             &  \igen{\mathsf{G}m,n}  \ar[d]_{I^{\Cat{N}}}   \ar@/^4pc/[dddd]^{ \igen{a_{\mathsf{G}},1}}  \\
  \igenstar{n,\mathsf{F}m}\ar[d]^{\simeq}   \ar[rr]^{\igenstar{n,\rho m}}    &                  &  \igenstar{n,\mathsf{G}m}  \ar[d]_{\simeq}  \\
    \igenstar{\mathsf{F}^{l}n,m}  \ar[d]^{(I^{\Cat{M}})^{-1}}           \ar[rr]^{\igenstar{\rho^{l}m,n}}                                &  &    \igenstar{\mathsf{G}^{l}n,m}   \ar[d]_{(I^{\Cat{M}})^{-1}}        \\
   \igen{m,\mathsf{F}^{l}n}      \ar[d]^{\simeq}  \ar[rr]^{\igen{n,\rho^{l}m}}                                &    &    \igen{m,\mathsf{G}^{l}n}        \ar[d]_{\simeq}     \\
        \igen{\mathsf{F}^{ll}m,n}         \ar[rr]^{\igen{\rho^{ll}m,n}}          &    &    \igen{\mathsf{G}^{ll} m,n}   .
       }
    \end{xy}
  \end{equation}
It is straightforward to see that all small diagrams in this diagram commute. Thus the outer diagram commutes as well which shows part \refitem{item:natural-aF}.
  For the second part consider additionally a bimodule functor $\mathsf{K}: \DNC \rightarrow \DYC$ in $\BimCattet(\Cat{C},\Cat{D})$. 
  It is enough to prove that the following diagram commutes:
  \begin{equation} 
    \label{eq:adjoint-functor-comp}
    \begin{xy}
      \xymatrix{
        \igen{\mathsf{K}\mathsf{F}m,y}  \ar[d]^{I^{\Cat{Y}}}            \ar[ddrrr]^{\igen{a_{\mathsf{K}\mathsf{F}},1}} \ar@/_6pc/[dddd]^{ \igen{a_{\mathsf{K}}\mathsf{F} ,1} }  &                  &                 &               \\
        \igenstar{y, \mathsf{K}\mathsf{F}m}        \ar[d]^{\simeq}&                    &                         \\
        \igenstar{\mathsf{K}^{l} y, \mathsf{F}m}  \ar[d]^{(I^{\Cat{N}})^{-1}}  \ar[r]^{\simeq} &      \igenstar{\mathsf{F}^{l} \mathsf{K}^{l} y,m}  \ar[r]^{(I^{\Cat{M}})^{-1}}                             &    \igen{m,\mathsf{F}^{l} \mathsf{K}^{l} y}      \ar[r]^{\simeq}      & \igen{\mathsf{K}^{ll} \mathsf{F}^{ll} m,y}           \\
        \igen{\mathsf{F} m, \mathsf{K}^{l}y}    \ar[d]^{\simeq}       &                          &                       & \\
        \igen{\mathsf{K}^{ll} \mathsf{F}m ,y }    \ar[uurrr]_{\igen{\mathsf{K}^{ll} a_{\mathsf{F}},1}}.     &                           &                            &    \\  %@/_4pc/
        &                            &               &
      }
    \end{xy}
  \end{equation}
  The upper triangle and the lower subdiagram commute due to the definition of $a_{\mathsf{K}\mathsf{F}}$ and $a_{\mathsf{K}}$, respectively. The remaining diagram commutes due to the naturality of the adjunctions. 
The  part \refitem{item:together-piv-2-cat} is just the collection of the previous statements according to Definition \ref{definition:pivotal-str-bicat} of a pivotal 2-category.
\end{proof}

\begin{remark}
If we consider $\LCC$ as an inner-product module category according to Example \ref{example:inner-product}\refitem{item:can-inn-prod}, the induced pivotal structure 
on $\Funl{\Cat{C}}{\LCC,\LCC} \simeq \Cat{C}$ coincides with the pivotal structure of $\Cat{C}$.

By replacing the $\Cat{D}$-valued with the  $\Cat{C}$-valued inner product, the analogue of equation (\ref{eq:def-pivotal}) defines another pivotal structure on $\BimCattet(\Cat{C},\Cat{D})$ that will be in general   different than the one considered, in particular since we can scale each of the two isomorphisms $  I^{_{\Cat{D}\!\!}\Cat{M}}$ and $I^{\Cat{M}_{\Cat{C}}}$ independently.  
\end{remark}

The following statement follows directly from Theorem \ref{theorem:module-functors-ambidextrous} and the explicit description of the induced pivotal structure in equation (\ref{eq:def-pivotal}).
\begin{corollary}
Let $\DM$ be an inner-product module category over a pivotal finite tensor category.  By Theorem \ref{theorem:module-functors-ambidextrous}, the (multi-)tensor category $\DMstar$ acquires a pivotal structure. 
With respect to this pivotal structure, the natural isomorphism $I^{\Cat{M}}$ from $\DM$ is $(\Cat{D}, \DMstar)$-balanced. 
 \end{corollary}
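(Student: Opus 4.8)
The corollary asserts two things: that $\DMstar = \Funl{\Cat{D}}{\DM,\DM}$ inherits a pivotal structure, and that with respect to this pivotal structure on $\DMstar$ (and the given pivotal structure on $\Cat{D}$), the inner-product isomorphism $I^{\Cat{M}}$ is a $(\Cat{D},\DMstar)$-balanced natural isomorphism. The plan is to read off both assertions from the explicit formula \eqref{eq:def-pivotal} for the pivotal structure $a_{\mathsf{F}}$, applied in the special case $\Cat{N}=\Cat{M}$, together with the identification of $\DMstar$ as endofunctors of the inner-product module category $\DM$.

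First I would recall that $\DM$, viewed as a bimodule category $\DMstar \Cat{M}_{\Cat{D}}$ over the pair $(\DMstar, \Cat{D})$, is an inner-product bimodule category: the left $\DMstar$-action is the evaluation action, and the two inner-product isomorphisms are the identity on the $\DMstar$-side (tautologically, $\DMstar$ acts by functors and $\ICMstar$-valued inner hom is already $\idm{-,-}$ via Example~\ref{example:inner-homs-algebra} and Lemma~\ref{lemma:inner-hom-and-duals}) and $I^{\Cat{M}}$ on the $\Cat{D}$-side. Then Theorem~\ref{theorem:module-functors-ambidextrous}, part~\refitem{item:together-piv-2-cat}, applied to $\BimCattet(\DMstar^{\opp}\!,\Cat{D})$ — more precisely to the one-object sub-2-category on $\DM$ — equips the monoidal category $\Funl{\Cat{D}}{\DM,\DM}=\DMstar$ with a pivotal structure, since the endomorphism category of an object in a pivotal 2-category is a pivotal monoidal category by Definition~\ref{definition:pivotal-str-bicat}. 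This gives the first assertion; the main thing to check is that $\DMstar$ being a \emph{finite tensor category} (known from \cite[Lemma 3.24]{FinTen}, and used already in Proposition~\ref{proposition:uniquen-inn-prod}) is compatible with the construction, which is automatic since $a_{\mathsf{F}}:\mathsf{F}\rightarrow\mathsf{F}^{ll}$ and for exact module categories $\mathsf{F}^{ll}$ is the double dual in $\DMstar$ by Proposition~\ref{proposition:adj-ex}.

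For the second assertion I would argue as follows. Balancedness of $I^{\Cat{M}}$ with respect to the $\Cat{D}$-action is part of its definition (Definition~\ref{definition:inner-prod-bimodule}, as a $\Cat{D}$-bimodule natural isomorphism $\DM\boxtimes\MDld\rightarrow\DDD$). What must be shown is that $I^{\Cat{M}}$ is balanced for the \emph{left} $\DMstar$-action, i.e. that for all $\mathsf{F}\in\DMstar$, $m,\widetilde m\in\DM$ the square relating $I^{\Cat{M}}_{\mathsf{F}(m),\widetilde m}$ and $I^{\Cat{M}}_{m,\widetilde m}$ through the module constraints $\idm{\mathsf{F}(m),\widetilde m}\simeq \mathsf{F}'\!\cdot\idm{m,\widetilde m}$ (from Proposition~\ref{proposition:D-represenations-functors}, where $\mathsf{F}'$ denotes the corresponding object of $\DMstar$ acting on $\Cat{D}$) commutes — but where the $\DMstar$-side uses the \emph{pivotal-twisted} action, namely $\mathsf{F}$ acting on the second slot via $(\Funltd{(\mathsf{F}^{r})})^{}$ gets matched under the pivotal structure $a$. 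Concretely, $I^{\Cat{M}}$ being $(\Cat{D},\DMstar)$-balanced unwinds to exactly the compatibility of $I^{\Cat{M}}$ with the composite \eqref{eq:def-pivotal} defining $a_{\mathsf{F}}$, specialized to $\mathsf{F}:\DM\rightarrow\DM$. Thus I would insert the definitions and observe that the required coherence diagram is, after unraveling the module structures of the inner hom (Proposition~\ref{proposition:properties-inner-hom}) and Lemma~\ref{lemma:adjoint-inner-hom}, literally the diagram defining $a_{\mathsf{F}}$; hence it commutes by construction.

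The main obstacle is bookkeeping: one must carefully identify the left $\DMstar$-module structure on the $\Cat{D}$-valued inner hom functor $\idm{-,-}$ — which by Proposition~\ref{proposition:D-represenations-functors}\refitem{item:repres-bimod-func} and Lemma~\ref{lemma:adjoint-inner-hom} sends $\mathsf{F}$ to precomposition with $\mathsf{F}^{r}$ in the first argument and the twisted action $\Funltd{(\mathsf{F}^{r})}$ in the second — and then verify that $I^{\Cat{M}}$ intertwines these up to the pivotal isomorphism $a$ of $\DMstar$, rather than up to the identity. Once the correct (pivotal-twisted) module structures are pinned down, commutativity is forced by the very definition of $a_{\mathsf{F}}$ in \eqref{eq:def-pivotal} and the naturality statement Theorem~\ref{theorem:module-functors-ambidextrous}\refitem{item:natural-aF}, so no genuinely new computation is needed — the content is entirely in matching conventions.
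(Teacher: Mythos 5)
Your proposal is correct and follows essentially the same route as the paper, which states only that the corollary ``follows directly from Theorem \ref{theorem:module-functors-ambidextrous} and the explicit description of the induced pivotal structure in equation (\ref{eq:def-pivotal})''; your unwinding — obtaining the pivotal structure on $\DMstar$ from the pivotal 2-category statement restricted to endofunctors of $\DM$, and then observing that the $(\Cat{D},\DMstar)$-balancedness of $I^{\Cat{M}}$ is, after matching the (pivotal-twisted) module structures on the two inner-hom functors, exactly the commuting diagram that defines $a_{\mathsf{F}}$ in (\ref{eq:def-pivotal}) specialized to $\Cat{N}=\Cat{M}$ — is precisely the intended argument, just made explicit.
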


Next we consider the compatibility of these pivotal structures with the tensor product of inner-product bimodule categories.

\begin{lemma}
  \label{lemma:left-adj-tensor}
Let $\mathsf{F}: \DMC \rightarrow \DMpC$ and $\mathsf{G}: \CNE \rightarrow \CNpE$ be bimodule functors. The left inner hom
induces a bimodule natural isomorphism $\xi_{\mathsf{F},\mathsf{G}}: (\mathsf{F} \Box \mathsf{G})^{l} \rightarrow \mathsf{F}^{l} \Box \mathsf{G}^{l}$.
\end{lemma}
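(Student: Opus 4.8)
The plan is to exploit the representability results for the tensor product together with the module Yoneda lemma, rather than to manipulate adjunctions directly. The key observation is that all the functors in sight are characterized, up to unique bimodule natural isomorphism, by the inner homs they induce after composing with evaluation functors. So the strategy is: (i) identify $(\mathsf{F}\Box \mathsf{G})^{l}$ via an inner-hom formula, (ii) identify $\mathsf{F}^{l}\Box \mathsf{G}^{l}$ via an inner-hom formula, (iii) exhibit a canonical chain of balanced bimodule natural isomorphisms between these two formulas, and (iv) invoke Lemma \ref{lemma:ev-representablility}\refitem{item:ev-yoneda} (the module Yoneda lemma for evaluation functors) to conclude that the two bimodule functors are equivalent.

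First I would recall that by Lemma \ref{lemma:adjoint-inner-hom}, applied to the right exact bimodule functor $\mathsf{F}\Box \mathsf{G}: \DMC\Box\CNE \rightarrow \DMpC\Box\CNpE$, the left adjoint $(\mathsf{F}\Box\mathsf{G})^{l}$ is characterized by balanced bimodule natural isomorphisms of the form $\imne{(\mathsf{F}\Box\mathsf{G})^{l}(m'\Box n'),\,m\Box n}\simeq \imne{m'\Box n',\,(\mathsf{F}\Box\mathsf{G})(m\Box n)}$, and analogously for the $\Cat{D}$-valued inner hom. On the other side, $\mathsf{F}^{l}\Box\mathsf{G}^{l}$ is a bimodule functor $\DMpC\Box\CNpE \rightarrow \DMC\Box\CNE$ whose induced inner homs can be computed using the Rieffel-induction formula of Proposition \ref{proposition:Rieffel}: one reduces $\imne{\mathsf{F}^{l}(m')\Box\mathsf{G}^{l}(n'),\,m\Box n}$ to $\ine{\mathsf{G}^{l}(n'),\,\inc{\mathsf{F}^{l}(m'),m}\act n}$, then uses Lemma \ref{lemma:adjoint-inner-hom} twice (once for $\mathsf{F}^{l}$ against the $\Cat{C}$-valued inner hom, once for $\mathsf{G}^{l}$ against the $\Cat{E}$-valued inner hom) to move the adjoints across. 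The other direction of Rieffel induction expresses $\imne{m'\Box n',\,\mathsf{F}(m)\Box\mathsf{G}(n)}$ in the same nested form $\ine{n',\,\inc{m',\mathsf{F}(m)}\act \mathsf{G}(n)}$, and then Lemma \ref{lemma:adjoint-inner-hom} for $\mathsf{F}$ and $\mathsf{G}$ matches the two up. Concatenating these canonical isomorphisms produces a balanced bimodule natural isomorphism between the inner homs induced by $(\mathsf{F}\Box\mathsf{G})^{l}$ and by $\mathsf{F}^{l}\Box\mathsf{G}^{l}$, and the module Yoneda lemma then yields the desired $\xi_{\mathsf{F},\mathsf{G}}$.

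The main obstacle I expect is bookkeeping of the \emph{balancing} data: the functors $\mathsf{F}^{l}\Box\mathsf{G}^{l}$ and $(\mathsf{F}\Box\mathsf{G})^{l}$ are obtained from quite different universal constructions, and to apply Proposition \ref{proposition:Rieffel} one must carefully track that the intermediate nested-inner-hom expressions are multi-balanced with respect to all the intervening tensor categories ($\Cat{C}$ and $\Cat{E}$ in the middle, $\Cat{D}$ on the left). One must verify that every isomorphism in the chain is not merely a natural isomorphism of $\Vect$-valued functors but genuinely a balanced bimodule natural isomorphism of the appropriate multi-balanced functors, so that the conclusion of Lemma \ref{lemma:ev-representablility}\refitem{item:ev-yoneda} (equivalently Lemma \ref{lemma:module-yoneda}\refitem{item:Yoneda-inner-hom}) actually applies. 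A secondary point is checking that $\xi_{\mathsf{F},\mathsf{G}}$ is canonical enough to be used coherently later (e.g. that it is natural in $\mathsf{F}$ and $\mathsf{G}$ and compatible with composition and with the duals of Lemma \ref{lemma:dual-and-tensor}); this follows from the uniqueness clause in the module Yoneda lemma, but should be remarked on. Apart from these coherence verifications, each individual step is a direct application of a result already established in the excerpt.
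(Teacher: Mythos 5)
Your overall strategy is the paper's: characterize the two functors by the inner homs they induce, bridge them using Lemma \ref{lemma:adjoint-inner-hom} and the Rieffel formula of Proposition \ref{proposition:Rieffel}, and conclude by the module Yoneda lemma. But the chain you actually spell out is set up with the wrong variance, and the step that anchors it would fail. For the right-valued inner homs (the $\Cat{C}$- and $\Cat{E}$-valued ones, with $\Hom_{\Cat{E}}(\ine{\widetilde{n},n},e)\simeq \Hom_{\Cat{N}}(n,\widetilde{n}\ract e)$), it is the \emph{right} adjoint that moves into the contravariant first slot: $\ine{\mathsf{H}^{r}(x),y}\simeq \langle x,\mathsf{H}(y)\rangle$. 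Your displayed characterization $\imne{(\mathsf{F}\Box\mathsf{G})^{l}(m'\Box n'),\,m\Box n}\simeq \imne{m'\Box n',\,(\mathsf{F}\Box\mathsf{G})(m\Box n)}$ puts the \emph{left} adjoint there; evaluating $\Hom_{\Cat{E}}(-,e)$ on the left gives $\Hom(m\Box n,\,(\mathsf{F}\Box\mathsf{G})^{l}((m'\Box n')\ract e))$, which the adjunction $(\mathsf{F}\Box\mathsf{G})^{l}\dashv \mathsf{F}\Box\mathsf{G}$ does not reach, while the right-hand side is the characterization of $(\mathsf{F}\Box\mathsf{G})^{r}$. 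The same misplacement recurs when you try to move $\mathsf{F}^{l}$ out of $\inc{\mathsf{F}^{l}(m'),m}$ and $\mathsf{G}^{l}$ out of the outer $\Cat{E}$-valued inner hom: Lemma \ref{lemma:adjoint-inner-hom} there produces $\mathsf{F}^{r}$, $\mathsf{G}^{r}$ (or, pushed through honestly, $\mathsf{F}^{ll}$, $\mathsf{G}^{ll}$), so your two nested expressions match only if left and right adjoints coincide — which is exactly the pivotal/inner-product datum that this lemma is meant to feed into, not something available here. Run correctly, the $\Cat{E}$-valued route proves $(\mathsf{F}\Box\mathsf{G})^{r}\simeq \mathsf{F}^{r}\Box\mathsf{G}^{r}$, not the stated claim.

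The repair is the route you mention only in passing: use the left ($\Cat{D}$-valued) inner hom, where the left adjoint sits in the covariant first argument, $\idm{\mathsf{K}^{l}(x),m}\simeq \langle x,\mathsf{K}(m)\rangle$, together with the left Rieffel formula (\ref{eq:Lambda-defines-inner-hom}). This gives the chain $\idmn{(\mathsf{F}\Box\mathsf{G})^{l}(m'\Box n'),m\Box n}\simeq \idmpnp{m'\Box n',\mathsf{F}(m)\Box\mathsf{G}(n)}\simeq \idmp{m'\ract \langle n',\mathsf{G}(n)\rangle,\mathsf{F}(m)}\simeq \idm{\mathsf{F}^{l}(m')\ract \idn{\mathsf{G}^{l}(n'),n},m}\simeq \idmn{(\mathsf{F}^{l}\Box\mathsf{G}^{l})(m'\Box n'),m\Box n}$, a composite of balanced bimodule natural isomorphisms, and Lemma \ref{lemma:module-yoneda} then yields $\xi_{\mathsf{F},\mathsf{G}}$ — which is precisely the paper's proof. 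Your remarks on tracking the balancing data and on the uniqueness and naturality of $\xi_{\mathsf{F},\mathsf{G}}$ are apt, but the variance issue above is the point that must be fixed before the Yoneda step applies.
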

\begin{proof}
  Consider the following composite of balanced bimodule natural isomorphisms
  \begin{equation}
    \label{eq:composite-xi}
    \begin{split}
      \idmn{(\mathsf{F} \Box \mathsf{G})^{l} (m' \Box n'), m \Box n} &\simeq \idmpnp{m' \Box n', \mathsf{F}(m)\Box \mathsf{G}(n)}\\
& \simeq \idmp{m'\ract \idnp{n', \mathsf{G}(n)}, \mathsf{F}(m)} \\
&\simeq \idm{\mathsf{F}^{l}(m') \ract \idn{\mathsf{G}^{l}(n'),n}, m}\\
&\simeq \idmn{(\mathsf{F}^{l} \Box \mathsf{G}^{l})(m' \Box n'),m \Box n}.
    \end{split}
  \end{equation}
By Lemma \ref{lemma:module-yoneda} this induces the claimed bimodule natural isomorphism. 
\end{proof}

\begin{proposition}
\label{proposition:piv-2-fun-bimod}
Let   $\mathsf{F}: \DMC \rightarrow \DMpC$ and $\mathsf{G}: \CNE \rightarrow \CNpE$ be bimodule functors 
between inner-product bimodule categories. The following diagram of bimodule natural isomorphisms commutes
  \begin{equation}
    \label{eq:Box-piv-fun}
    \begin{tikzcd}
      \mathsf{F} \Box \mathsf{G} \ar{r}{a_{\mathsf{F} \Box \mathsf{G}}}  \ar{d}[left]{a_{\mathsf{F} } \Box a_{\mathsf{G}}}& (\mathsf{F} \Box \mathsf{G})^{ll} \\
\mathsf{F}^{ll} \Box \mathsf{G}^{ll} & (\mathsf{F}^{l} \Box \mathsf{G}^{l})^{l} \ar{l}{\xi_{\mathsf{F}^{l},\mathsf{G}^{l}}} \ar{u}[right]{(\xi_{\mathsf{F},\mathsf{G}})^{l}}.
    \end{tikzcd}
  \end{equation} 
\end{proposition}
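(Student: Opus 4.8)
The plan is to reduce the claim, by the module Yoneda lemma, to an identity of balanced bimodule natural isomorphisms between inner hom functors, and then to verify that identity by decomposing both sides into elementary pieces. First I would record that inner-product bimodule categories are exact (Remark \ref{remark:inner-prod-ex}), so by Proposition \ref{proposition:adj-ex} all four bimodule functors in the square have both adjoints, $\mathsf{F}^{l}$, $\mathsf{G}^{l}$, $(\mathsf{F}\Box\mathsf{G})^{l}$ are bimodule functors, and the mates $(\xi_{\mathsf{F},\mathsf{G}})^{l}$ and $\xi_{\mathsf{F}^{l},\mathsf{G}^{l}}$ as well as $(\mathsf{F}\Box\mathsf{G})^{ll}$ and $\mathsf{F}^{ll}\Box\mathsf{G}^{ll}$ are defined. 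By Lemma \ref{lemma:ev-representablility}\refitem{item:ev-yoneda} it then suffices to show that the two composites $\mathsf{F}\Box\mathsf{G}\rightrightarrows\mathsf{F}^{ll}\Box\mathsf{G}^{ll}$ in \eqref{eq:Box-piv-fun} induce the same balanced bimodule natural transformation
\begin{equation*}
  \idmn{(\mathsf{F}\Box\mathsf{G})(m\Box n),\, m'\Box n'} \;\longrightarrow\; \idmn{(\mathsf{F}^{ll}\Box\mathsf{G}^{ll})(m\Box n),\, m'\Box n'}
\end{equation*}
of balanced bimodule functors valued in $\DDD$.

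Second, I would translate every arrow occurring here through the Rieffel induction isomorphism of Proposition \ref{proposition:Rieffel}\refitem{item:Rieffel-ind}, which identifies $\idmn{B(m\boxtimes n),B(\widetilde m\boxtimes\widetilde n)}\simeq\idmp{m\ract\icn{n,\widetilde n},\widetilde m}$ and likewise with the $\Cat{C}$- and $\Cat{E}$-valued inner homs. Under this identification: $a_{\mathsf{F}\Box\mathsf{G}}$ unwinds, using its definition \eqref{eq:def-pivotal} together with the description \eqref{eq:composite-inner-mn} of the inner-product structure on $\DMC\Box\CNE$ from the proof of Proposition \ref{proposition:inner-prod-bimod-tensor}, into a composite of the isomorphisms $I^{_{\Cat{D}\!}\Cat{M}'}$, $I^{_{\Cat{D}\!}\Cat{N}'}$, the adjunction isomorphisms of Lemma \ref{lemma:adjoint-inner-hom} for $\mathsf{F}$ and $\mathsf{G}$, the isomorphisms $(I^{_{\Cat{D}\!}\Cat{M}})^{-1}$, $(I^{_{\Cat{D}\!}\Cat{N}})^{-1}$, and duality isomorphisms in $\Cat{C}$ relating $\icn{n,\widetilde n}$ with $\icnstarl{\widetilde n,n}$; the mates $(\xi_{\mathsf{F},\mathsf{G}})^{l}$ and $\xi_{\mathsf{F}^{l},\mathsf{G}^{l}}$ unwind, via the defining composite \eqref{eq:composite-xi} of $\xi$, into adjunction isomorphisms and Rieffel induction isomorphisms only; and $a_{\mathsf{F}}\Box a_{\mathsf{G}}$ unwinds into the ``parallel'' composite built from the same inner-product and adjunction isomorphisms, acting separately in the $\Cat{M}$- and $\Cat{N}$-slots.

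Third, with everything rewritten in this way the required equality becomes a diagram of elementary isomorphisms --- inner-product isomorphisms $I$, the adjunction isomorphisms of Lemma \ref{lemma:adjoint-inner-hom}, the Rieffel induction isomorphisms of Proposition \ref{proposition:Rieffel}, and duality isomorphisms in $\Cat{C}$ --- which I would fill in by small commuting cells: naturality of Rieffel induction in each of its four arguments; the $2$-functoriality of $\Box$ from Proposition \ref{proposition:Box-2-functor-bimod} applied to the adjunction unit and counit (this is exactly the statement that $\xi$ intertwines the adjunction isomorphisms); the defining compatibility \eqref{eq:composite-inner-mn} of $I^{_{\Cat{D}\!}(\Cat{M}\Box\Cat{N})}$ with $I^{_{\Cat{D}\!}\Cat{M}}$ and $I^{_{\Cat{D}\!}\Cat{N}}$; and the observation that the duality-in-$\Cat{C}$ isomorphisms in $a_{\mathsf{F}\Box\mathsf{G}}$ are the same ones bridging $\icn{n,\widetilde n}$ and $\icnstarl{\widetilde n,n}$ that occur inside $\xi$ and inside \eqref{eq:composite-inner-mn}. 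In effect this verifies that $\Box\colon\BimCattet(\Cat{D},\Cat{C})\times\BimCattet(\Cat{C},\Cat{E})\rightarrow\BimCattet(\Cat{D},\Cat{E})$, with compositor data $\xi$, satisfies the single axiom of a pivotal $2$-functor in the sense of Definition \ref{definition:pivotal-2-fun}. The analogous statement for the $\Cat{E}$-inner-product structures is proven in the same way.

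The main obstacle will be the bookkeeping in the third step: $a_{\mathsf{F}\Box\mathsf{G}}$, $a_{\mathsf{F}}\Box a_{\mathsf{G}}$ and the two instances of $\xi$ are all assembled from composites of the same four species of elementary isomorphism but threaded in different orders, so the real work is to choose a subdivision of the interpolating diagram into cells each of which is manifestly one of the listed coherences, while keeping careful track of the balanced structures so that Lemma \ref{lemma:ev-representablility}\refitem{item:ev-yoneda} genuinely applies at the end. No individual cell is difficult; the difficulty is purely organizational, exactly as in the semisimple prototype \cite[Thm.~4.5]{Moduletr} and in the proof of Theorem \ref{theorem:module-functors-ambidextrous}.
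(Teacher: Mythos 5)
Your proposal follows essentially the same route as the paper: reduce the square to the corresponding diagram of inner-hom isomorphisms via the module-Yoneda argument, and then verify it by unwinding $a_{\mathsf{F}\Box\mathsf{G}}$, $a_{\mathsf{F}}\Box a_{\mathsf{G}}$ and the two instances of $\xi$ through their defining composites \eqref{eq:def-pivotal}, \eqref{eq:composite-xi}, the Rieffel induction of Proposition \ref{proposition:Rieffel} and the inner-product structure \eqref{eq:composite-inner-mn} on $\Cat{M}\Box\Cat{N}$ --- exactly the ingredients the paper invokes for its ``lengthy but straightforward computation''. The only nitpick is notational: the inner homs in your reduced diagram should be those of $\DMpC\Box\CNpE$ (i.e.\ $\idmpnp{-,-}$), since $\mathsf{F}\Box\mathsf{G}$ and $m'\Box n'$ live in the target.
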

\begin{proof}
  It is sufficient to 
show that the corresponding 
diagram inside the inner homs commutes: 
 \begin{equation}
    \label{eq:Box-piv-fun-inner-hom}
    \begin{tikzcd}
    \idmpnp{  \mathsf{F} \Box \mathsf{G}(m\Box n),m' \Box n'} \ar{r}{a_{\mathsf{F} \Box \mathsf{G}}}  \ar{d}[left]{a_{\mathsf{F} } \Box a_{\mathsf{G}}}&\idmpnp{ (\mathsf{F} \Box \mathsf{G})^{ll}(m\Box n),m' \Box n'} \\
\idmpnp{ \mathsf{F}^{ll} \Box \mathsf{G}^{ll}(m\Box n),m' \Box n'} & \idmpnp{(\mathsf{F}^{l} \Box \mathsf{G}^{l})^{l}(m\Box n),m' \Box n'} \ar{l}{\xi_{\mathsf{F}^{l},\mathsf{G}^{l}}} \ar{u}[right]{(\xi_{\mathsf{F},\mathsf{G}})^{l}}.
    \end{tikzcd}
  \end{equation} 
This follows from a lengthy but straightforward computation using the 
definitions of the pivotal structure in Theorem \ref{theorem:module-functors-ambidextrous} and Lemma \ref{lemma:left-adj-tensor} as well as Proposition \ref{proposition:Rieffel} and the Definition of the inner-product structure on $\Cat{M} \Box \Cat{N}$ by Proposition \ref{proposition:inner-prod-bimod-tensor}. 
\end{proof}
Combining the results of the previous subsections together we obtain the following.
\begin{theorem}
\label{theorem:piv-tri-bimtet}
Finite pivotal tensor categories, inner-product bimodule categories, bimodule functors and bimodule natural transformations form a tricategory $\BimCattet$.
 With the pivotal structure induced by Theorem \ref{theorem:module-functors-ambidextrous}, the tricategory $\BimCattet$ is a pivotal tricategory duals according to Definition \ref{definition:pivotal-tricat}.
\end{theorem}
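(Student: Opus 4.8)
The plan is to obtain $\BimCattet$ by restricting the tricategory $\BimCat$ of Theorem~\ref{theorem:Bimcat-3-cat} to inner-product bimodule categories, and then to verify the three conditions of Definition~\ref{definition:pivotal-tricat} in turn. First I would note that all the tricategory data of $\BimCat$ restricts: the composition $\Box$ sends inner-product bimodule categories to inner-product bimodule categories by Proposition~\ref{proposition:inner-prod-bimod-tensor}, the unit bimodule category $\CCC$ carries the canonical inner-product structure of Example~\ref{example:inner-product}\refitem{item:can-inn-prod}, and the associator, unit equivalences and the modifications $\pi,\mu,\lambda,\rho$ of Theorem~\ref{theorem:Bimcat-3-cat} are built only from bimodule functors and bimodule natural transformations, hence already lie in $\BimCattet$. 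The three tricategory axioms are inherited verbatim, so $\BimCattet$ is a tricategory with the stated $1$-, $2$- and $3$-morphisms.

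Next I would establish the two kinds of duals. By Remark~\ref{remark:inner-prod-ex} an inner-product bimodule category is exact, hence biexact and, by Lemma~\ref{lemma:biex-then-ex}, exact as a $\Cat{D}\boxtimes\rev{C}$-module category; Proposition~\ref{proposition:adj-ex} then shows every bimodule functor between objects of $\BimCattet$ is exact and has left and right adjoints whose units and counits are bimodule natural isomorphisms. Thus each hom-bicategory $\BimCattet(\Cat{C},\Cat{D})$ has left and right duals, i.e.\ $\BimCattet$ has $*$-duals. For $\#$-duals I would invoke Theorem~\ref{theorem:hash-duals-bimcat}: the evaluation and coevaluation bimodule functors together with the triangulators of Proposition~\ref{proposition:triang} exhibit $\CMDld$ and $\CMDrd$ as $\#$-duals of $\DMC$ in $\BimCat$, and by Example~\ref{example:inner-product}\refitem{item:dual-inn-prod} the dual categories $\CMDld$, $\CMDrd$ are themselves inner-product bimodule categories, so the same evaluation/coevaluation data realizes the $\#$-duals inside $\BimCattet$. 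Hence the delooping bicategory of $\BimCattet$ has left and right duals.

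It remains to produce the pivotal structure. Theorem~\ref{theorem:module-functors-ambidextrous}\refitem{item:together-piv-2-cat} already makes each $\BimCattet(\Cat{C},\Cat{D})$ into a pivotal $2$-category via the bimodule natural isomorphisms $a_{\mathsf{F}}\colon\mathsf{F}\to\mathsf{F}^{ll}$ of~(\ref{eq:def-pivotal}), which are natural in bimodule natural transformations, multiplicative under composition, and trivial on identities. By Definition~\ref{definition:pivotal-tricat}\refitem{item:star-fun} the only additional point is that for every inner-product bimodule category $\Cat{K}$ the $2$-functors $\Cat{K}\Box-$ and $-\Box\Cat{K}$ are pivotal $2$-functors. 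The coherence of these $2$-functors with the duality $*$ is supplied by the isomorphisms $\xi_{\mathsf{F},\mathsf{G}}\colon(\mathsf{F}\Box\mathsf{G})^{l}\to\mathsf{F}^{l}\Box\mathsf{G}^{l}$ of Lemma~\ref{lemma:left-adj-tensor} with one factor an identity, and the compatibility of this coherence with the pivotal isomorphisms $a$ is precisely Proposition~\ref{proposition:piv-2-fun-bimod} specialized to $\mathsf{F}=1_{\Cat{K}}$ (respectively $\mathsf{G}=1_{\Cat{K}}$), using $a_{1_{\Cat{K}}}=\id$ from Theorem~\ref{theorem:module-functors-ambidextrous}\refitem{item:comp-a-unit}. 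Combining the $*$-duals, the $\#$-duals, the pivotal $2$-categories and this last compatibility yields that $\BimCattet$ is a pivotal tricategory with duals in the sense of Definition~\ref{definition:pivotal-tricat}\refitem{item:hash}.

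I expect the genuine work to lie in checking that the $\xi_{\mathsf{F},\mathsf{G}}$ satisfy the coherence axioms of a duality-preserving $2$-functor — naturality in bimodule natural transformations and compatibility with composites $\Cat{K}'\Box(\Cat{K}\Box-)$ — so that $\Cat{K}\Box-$ is a pivotal $2$-functor in the full bicategorical sense of Definition~\ref{definition:pivotal-2-fun}, not merely on the level of a single square. This reduces, via the module Yoneda lemma (Lemma~\ref{lemma:module-yoneda}) and the Rieffel-type formula of Proposition~\ref{proposition:Rieffel}, to bookkeeping among the inner-hom isomorphisms of Lemma~\ref{lemma:inner-hom-and-duals}, Lemma~\ref{lemma:adjoint-inner-hom} and the construction in Proposition~\ref{proposition:inner-prod-bimod-tensor}; the one delicate ingredient is keeping track of the double-dual identifications $\DMCrrd\simeq\DMC$ and $\DMClld\simeq\DMC$ coming from the pivotal structures of $\Cat{C}$ and $\Cat{D}$, since these appear both in the definition of $a_{\mathsf{F}}$ and in the identification of the inner homs of the dual categories in Example~\ref{example:inner-product}\refitem{item:dual-inn-prod}.
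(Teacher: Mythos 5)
Your proposal follows essentially the same route as the paper's proof: the tricategory structure of $\BimCat$ is restricted using Proposition \ref{proposition:inner-prod-bimod-tensor} and Example \ref{example:inner-product}, the $*$-duals come from exactness (Remark \ref{remark:inner-prod-ex}) and adjoints, the $\#$-duals from Theorem \ref{theorem:hash-duals-bimcat} together with the inner-product structure on the dual categories, and pivotality of $\Cat{K}\Box-$ and $-\Box\Cat{K}$ from Proposition \ref{proposition:piv-2-fun-bimod} specialized to an identity factor. Your closing remark about the remaining coherence checks for $\xi_{\mathsf{F},\mathsf{G}}$ is a fair observation, but the paper handles that point at the same level of detail, so no substantive difference remains.
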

\begin{proof}
It is shown in Example \ref{example:inner-product}\refitem{item:can-inn-prod}, that  for a pivotal tensor category $\Cat{C}$ the 
unit bimodule category $\CCC$ is in $\BimCattet$. Furthermore, the 
tensor product of two inner-product bimodule categories is again an inner-product bimodule category due to Proposition \ref{proposition:inner-prod-bimod-tensor}. Hence $\BimCattet$ 
forms a tricategory with  the tricategory structure induced from $\BimCat$.

 Inner-product module categories are exact, see Remark \ref{remark:inner-prod-ex}, hence the adjoints of bimodule functors exist and provide   $*$-duals for $\BimCattet$. The pivotal structure on the bicategories $\BimCattet(\Cat{C},\Cat{D})$ is 
defined in Theorem \ref{theorem:module-functors-ambidextrous}. It follows directly from Proposition \ref{proposition:piv-2-fun-bimod} by restricting to the case $\mathsf{F}=\id$, that 
the 2-functors 
\begin{equation*}
  \EKD \Box -: \BimCattet(\Cat{C},\Cat{D}) \rightarrow \BimCattet(\Cat{C},\Cat{E})
\end{equation*}
are pivotal 2-functors for all finite pivotal tensor categories $\Cat{C}$, $\Cat{D}$, $\Cat{E}$ and all 
$\EKD \in \BimCattet$. Analogously it follows 
that $- \Box \EKD$ are pivotal 2-functors. Hence $\BimCattet$ is a pivotal tricategory. The $\#$-duals in $\BimCattet$ are 
defined in Theorem \ref{theorem:hash-duals-bimcat}. The duals of inner-product bimodule categories are again in
 $\BimCattet$ according to Example \ref{example:inner-product}  \refitem{item:dual-inn-prod}.
\end{proof}
\begin{remark}
  One might also consider a generalization of inner-product bimodule categories to bimodule categories with two tensor category-valued inner products that are not necessarily given by the inner homs. 
If the inner products have properties analogous to the inner homs, i.e. right exact balanced bimodule functors with balanced bimodule natural isomorphisms as in (\ref{eq:inner-prod-module-bimodule-left}), (\ref{eq:inner-prod-module-bimodule-right}), then the Rieffel-formula (\ref{eq:functor-Verschachtelt-inner-hom}) defines inner products on the same type on the tensor product of two such generalized inner-product bimodule categories. 
This leads to a tricategory of generalized inner-product bimodule categories. However, this is not investigated further in this article. 
\end{remark}
For finite tensor categories, the 3-groupoid that is induced from $\BimCat$ is called Brauer-Picard 3-groupoid. In the semisimple case this has been investigated in \cite{ENOfuhom}.
As a more refined 3-groupoid for  the theory of pivotal finite tensor categories we propose the following. 
Recall the notion of invertible bimodule category from \cite[Sec.4]{ENOfuhom}.
\begin{definition}
  \label{definition:Star-BrPic} 
The $*$-Brauer-Picard 3-groupoid $\underline{\underline{\mathsf{BrPic}^{*}}}$ 
of pivotal finite tensor categories is the following 3-groupoid. Objects are finite pivotal tensor categories, 1-morphisms invertible 
inner-product bimodule categories, 2-morphisms equivalences of bimodule categories and  3-morphisms bimodule natural isomorphisms. 
\end{definition}

This defines the notion of  $*$-Morita equivalence for pivotal finite tensor categories: $\Cat{C}$ and $\Cat{D}$
are called $*$-Morita equivalent if there exists an invertible inner-product bimodule category $\DMC$.
This notion of equivalence allows to distinguish different pivotal structures:
\begin{corollary}
   If $\DMC$ is a 1-morphism in  $\underline{\underline{\mathsf{BrPic}^{*}}}$, then the pivotal structure of $\Cat{C}$ is uniquely determined by the 
pivotal structure of $\Cat{D}$ and the $\Cat{D}$-valued inner-product module structure of $\DM$.
\end{corollary}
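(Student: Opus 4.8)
The plan is to use invertibility of $\DMC$ to realise $\Cat{C}$ as (the reverse of) the dual tensor category $\DMstar := \Funl{\Cat{D}}{\DM,\DM}$, and then to show that the $\Cat{C}$-balancedness axiom of Definition~\ref{definition:inner-prod-bimodule}~(ii) forces the pivotal structure $a^{\Cat{C}}$ of $\Cat{C}$ to coincide with the pivotal structure canonically produced on $\DMstar$ by the $\Cat{D}$-valued inner-product module structure of $\DM$ via Theorem~\ref{theorem:module-functors-ambidextrous}. First I would record that since $\DMC$ is invertible, the right action $R\colon\rev{C}\rightarrow\DMstar$, $c\mapsto(-\ract c)$, is an equivalence of tensor categories; under $R$ the left and double-left adjoints in $\DMstar$ correspond to the left and double-left duals in $\rev{C}$ (hence to $c^{*}$, $c^{**}$), and the $\DMstar$-action on $\Cat{M}$ pulls back to the $\Cat{C}$-action. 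Forgetting the right $\Cat{C}$-structure, the pair $(\DM,I^{_{\Cat{D}\!\!}\Cat{M}})$ is an inner-product module category over $\Cat{D}$ in the sense of Definition~\ref{definition:inner-prod-bimodule}~(i) — a datum that involves only $\Cat{D}$, its pivotal structure, and $I^{_{\Cat{D}\!\!}\Cat{M}}$. By Theorem~\ref{theorem:module-functors-ambidextrous} and its corollary this endows $\DMstar$ with a canonical pivotal structure $\widehat{a}$, with respect to which $I^{\Cat{M}}$ is $(\Cat{D},\DMstar)$-balanced; transporting $\widehat{a}$ along $R$ gives a pivotal structure $a'$ on $\Cat{C}$ depending only on the underlying bimodule category $\Cat{M}$, the pivotal structure of $\Cat{D}$, and $I^{_{\Cat{D}\!\!}\Cat{M}}$. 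It then suffices to prove $a'=a^{\Cat{C}}$.

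To identify $a'$ with $a^{\Cat{C}}$, unwind the equivalence $R$: the $(\Cat{D},\DMstar)$-balancing of $I^{\Cat{M}}$ is precisely a $\Cat{C}$-balancing of $I^{_{\Cat{D}\!\!}\Cat{M}}$ exhibiting it as a $\Cat{C}$-balanced $\Cat{D}$-bimodule natural isomorphism of functors $\DMC\boxtimes\CMDld\rightarrow\DDD$ exactly as in Definition~\ref{definition:inner-prod-bimodule}~(ii), but computed with $(a',a^{\Cat{D}})$ rather than $(a^{\Cat{C}},a^{\Cat{D}})$. Now the balancing of the source functor $\idm{-,-}$ (equation~(\ref{eq:idm-C-bal})) refers to no pivotal structure, whereas the balancing of the target functor $m\boxtimes\widetilde{m}\mapsto\idmstar{\widetilde{m},m}$ depends on the pivotal structure of $\Cat{C}$ only through the identification $A_{\Cat{M}}\colon\CMDld\simeq\CMDrd$ together with Lemma~\ref{lemma:inner-hom-and-duals}. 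Since $I^{_{\Cat{D}\!\!}\Cat{M}}$ is an isomorphism, the requirement that it be $\Cat{C}$-balanced forces the target balancing to equal $I^{_{\Cat{D}\!\!}\Cat{M}}\circ(\text{source balancing})\circ(I^{_{\Cat{D}\!\!}\Cat{M}})^{-1}$, which does not mention $a^{\Cat{C}}$; hence the target balancings induced by $a'$ and by $a^{\Cat{C}}$ agree. Finally, two pivotal structures on $\DMstar$ differ by a character of $\Aut(\id_{\DMstar})$, and such a character changes the induced $\DMstar$-balancing of $\idmstar{\widetilde{m},m}$ accordingly, so a pivotal structure is determined by the balancing it induces; transporting back along $R$ yields $a'=a^{\Cat{C}}$. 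The special case $\DM=\LCC$ is the Remark following Theorem~\ref{theorem:piv-tri-bimtet}. Since $a'$ was built purely from the pivotal structure of $\Cat{D}$, the structure $I^{_{\Cat{D}\!\!}\Cat{M}}$, and the bimodule category $\Cat{M}$ (all part of the datum of the $1$-morphism), the pivotal structure $a^{\Cat{C}}$ is uniquely determined, proving the corollary.

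The main obstacle is the bookkeeping in the second paragraph: tracking how the left and double-left adjoints in $\DMstar$, the two dual bimodule categories $\CMDld$ and $\CMDrd$, and the pivotal structures $a^{\Cat{C}}$, $a^{\Cat{D}}$ all interact under the equivalence $R$ (including left/right and $\mathrm{rev}$ conventions), so as to be certain that the $(\Cat{D},\DMstar)$-balancing of $I^{\Cat{M}}$ delivered by Theorem~\ref{theorem:module-functors-ambidextrous} is literally the same structure as the $\Cat{C}$-balancing demanded by Definition~\ref{definition:inner-prod-bimodule}~(ii). It is precisely at this point that invertibility is indispensable, since without it there is no way to move between pivotal structures on $\Cat{C}$ and pivotal structures on $\DMstar$.
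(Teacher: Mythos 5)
Your proposal is correct and follows essentially the same route as the paper's proof: invertibility identifies $\Cat{C}$ with $\Funl{\Cat{D}}{\DM,\DM}$, the $\Cat{D}$-inner-product datum induces a pivotal structure there via formula (\ref{eq:def-pivotal}) (Theorem \ref{theorem:module-functors-ambidextrous} and its corollary), and the $\Cat{C}$-balancing of $I^{_{\Cat{D}\!\!}\Cat{M}}$ forces this induced pivotal structure to agree with that of $\Cat{C}$. You merely make explicit the final uniqueness step (that the balancing of the target functor pins down the pivotal structure, using faithfulness of the action coming from invertibility), which the paper asserts in a single sentence.
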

\begin{proof}
  Note that Formula (\ref{eq:def-pivotal}) defines a pivotal structure for $\Cat{C} \simeq \Funl{\Cat{D}}{\DM,\DM}$.  
Now the balancing of the isomorphism  $I^{_{\Cat{D}\!\!}\Cat{M}}$ in the inner-product module structure of $\DM$ demands that this pivotal structure agrees with the pivotal structure of $\Cat{C}$. 
\end{proof}

\subsection{Inner-product module categories from Frobenius algebras}

We show that special symmetric  Frobenius algebras in pivotal finite tensor categories provide examples of inner-product module categories.

First recall the definition of a special symmetric normalized Frobenius algebra. In the following we use the graphical calculus for tensor categories, where objects are presented by strings, tensor product is 
presented by juxtaposition and  diagrams are read from up to down.
\begin{definition}
Let $\Cat{C}$ be a pivotal tensor category.
 \begin{definitionlist}
   \item A Frobenius algebra $A \in \Cat{C}$ is a algebra $A$ that is also a coalgebra with 
\begin{equation}
\text{multiplication} \; 
\ifx\du\undefined
  \newlength{\du}
\fi
\setlength{\du}{10\unitlength}
\begin{tikzpicture}[baseline]
\pgftransformxscale{1.000000}
\pgftransformyscale{-1.000000}
\definecolor{dialinecolor}{rgb}{0.000000, 0.000000, 0.000000}
\pgfsetstrokecolor{dialinecolor}
\definecolor{dialinecolor}{rgb}{1.000000, 1.000000, 1.000000}
\pgfsetfillcolor{dialinecolor}
\pgfsetlinewidth{0.060000\du}
\pgfsetdash{}{0pt}
\pgfsetdash{}{0pt}
\pgfsetbuttcap
{
\definecolor{dialinecolor}{rgb}{0.000000, 0.000000, 0.000000}
\pgfsetfillcolor{dialinecolor}
% was here!!!
\definecolor{dialinecolor}{rgb}{0.000000, 0.000000, 0.000000}
\pgfsetstrokecolor{dialinecolor}
\draw (0.002589\du,-0.084342\du)--(0.003125\du,1.552188\du);
}
\pgfsetlinewidth{0.060000\du}
\pgfsetdash{}{0pt}
\pgfsetdash{}{0pt}
\pgfsetmiterjoin
\pgfsetbuttcap
{
\definecolor{dialinecolor}{rgb}{0.000000, 0.000000, 0.000000}
\pgfsetfillcolor{dialinecolor}
% was here!!!
\definecolor{dialinecolor}{rgb}{0.000000, 0.000000, 0.000000}
\pgfsetstrokecolor{dialinecolor}
\pgfpathmoveto{\pgfpoint{-0.876256\du}{-1.502252\du}}
\pgfpathcurveto{\pgfpoint{-0.876256\du}{0.397748\du}}{\pgfpoint{0.923744\du}{0.397748\du}}{\pgfpoint{0.923744\du}{-1.502252\du}}
\pgfusepath{stroke}
}
\end{tikzpicture}
, \quad \text{unit} \; 
\ifx\du\undefined
  \newlength{\du}
\fi
\setlength{\du}{10\unitlength}
\begin{tikzpicture}[baseline]
\pgftransformxscale{1.000000}
\pgftransformyscale{-1.000000}
\definecolor{dialinecolor}{rgb}{0.000000, 0.000000, 0.000000}
\pgfsetstrokecolor{dialinecolor}
\definecolor{dialinecolor}{rgb}{1.000000, 1.000000, 1.000000}
\pgfsetfillcolor{dialinecolor}
\pgfsetlinewidth{0.060000\du}
\pgfsetdash{}{0pt}
\pgfsetdash{}{0pt}
\pgfsetbuttcap
{
\definecolor{dialinecolor}{rgb}{0.000000, 0.000000, 0.000000}
\pgfsetfillcolor{dialinecolor}
% was here!!!
\definecolor{dialinecolor}{rgb}{0.000000, 0.000000, 0.000000}
\pgfsetstrokecolor{dialinecolor}
\draw (0.002735\du,-0.975563\du)--(0.002735\du,1.182760\du);
}
\definecolor{dialinecolor}{rgb}{0.000000, 0.000000, 0.000000}
\pgfsetfillcolor{dialinecolor}
\pgfpathellipse{\pgfpoint{0.002735\du}{-1.075560\du}}{\pgfpoint{0.100000\du}{0\du}}{\pgfpoint{0\du}{0.100000\du}}
\pgfusepath{fill}
\pgfsetlinewidth{0.060000\du}
\pgfsetdash{}{0pt}
\pgfsetdash{}{0pt}
\definecolor{dialinecolor}{rgb}{0.000000, 0.000000, 0.000000}
\pgfsetstrokecolor{dialinecolor}
\pgfpathellipse{\pgfpoint{0.002735\du}{-1.075560\du}}{\pgfpoint{0.100000\du}{0\du}}{\pgfpoint{0\du}{0.100000\du}}
\pgfusepath{stroke}
\definecolor{dialinecolor}{rgb}{0.000000, 0.000000, 0.000000}
\pgfsetfillcolor{dialinecolor}
\pgfpathellipse{\pgfpoint{-0.006504\du}{-1.097285\du}}{\pgfpoint{0.155571\du}{0\du}}{\pgfpoint{0\du}{0.155571\du}}
\pgfusepath{fill}
\pgfsetlinewidth{0.060000\du}
\pgfsetdash{}{0pt}
\pgfsetdash{}{0pt}
\definecolor{dialinecolor}{rgb}{0.000000, 0.000000, 0.000000}
\pgfsetstrokecolor{dialinecolor}
\pgfpathellipse{\pgfpoint{-0.006504\du}{-1.097285\du}}{\pgfpoint{0.155571\du}{0\du}}{\pgfpoint{0\du}{0.155571\du}}
\pgfusepath{stroke}
\end{tikzpicture}
, \quad \text{comultiplication} \; 
\ifx\du\undefined
  \newlength{\du}
\fi
\setlength{\du}{10\unitlength}
\begin{tikzpicture}[baseline]
\pgftransformxscale{1.000000}
\pgftransformyscale{-1.000000}
\definecolor{dialinecolor}{rgb}{0.000000, 0.000000, 0.000000}
\pgfsetstrokecolor{dialinecolor}
\definecolor{dialinecolor}{rgb}{1.000000, 1.000000, 1.000000}
\pgfsetfillcolor{dialinecolor}
\pgfsetlinewidth{0.060000\du}
\pgfsetdash{}{0pt}
\pgfsetdash{}{0pt}
\pgfsetbuttcap
{
\definecolor{dialinecolor}{rgb}{0.000000, 0.000000, 0.000000}
\pgfsetfillcolor{dialinecolor}
% was here!!!
\definecolor{dialinecolor}{rgb}{0.000000, 0.000000, 0.000000}
\pgfsetstrokecolor{dialinecolor}
\draw (0.002589\du,-1.525074\du)--(0.003125\du,0.111458\du);
}
\pgfsetlinewidth{0.060000\du}
\pgfsetdash{}{0pt}
\pgfsetdash{}{0pt}
\pgfsetmiterjoin
\pgfsetbuttcap
{
\definecolor{dialinecolor}{rgb}{0.000000, 0.000000, 0.000000}
\pgfsetfillcolor{dialinecolor}
% was here!!!
\definecolor{dialinecolor}{rgb}{0.000000, 0.000000, 0.000000}
\pgfsetstrokecolor{dialinecolor}
\pgfpathmoveto{\pgfpoint{-0.900000\du}{1.506066\du}}
\pgfpathcurveto{\pgfpoint{-0.900000\du}{-0.393934\du}}{\pgfpoint{0.900000\du}{-0.393934\du}}{\pgfpoint{0.900000\du}{1.506066\du}}
\pgfusepath{stroke}
}
\end{tikzpicture}
\;, \quad  \text{counit morphism} \;
\ifx\du\undefined
  \newlength{\du}
\fi
\setlength{\du}{10\unitlength}
\begin{tikzpicture}[baseline]
\pgftransformxscale{1.000000}
\pgftransformyscale{-1.000000}
\definecolor{dialinecolor}{rgb}{0.000000, 0.000000, 0.000000}
\pgfsetstrokecolor{dialinecolor}
\definecolor{dialinecolor}{rgb}{1.000000, 1.000000, 1.000000}
\pgfsetfillcolor{dialinecolor}
\pgfsetlinewidth{0.060000\du}
\pgfsetdash{}{0pt}
\pgfsetdash{}{0pt}
\pgfsetbuttcap
{
\definecolor{dialinecolor}{rgb}{0.000000, 0.000000, 0.000000}
\pgfsetfillcolor{dialinecolor}
% was here!!!
\definecolor{dialinecolor}{rgb}{0.000000, 0.000000, 0.000000}
\pgfsetstrokecolor{dialinecolor}
\draw (-0.009375\du,-1.171870\du)--(-0.009375\du,0.859271\du);
}
\definecolor{dialinecolor}{rgb}{1.000000, 1.000000, 1.000000}
\pgfsetfillcolor{dialinecolor}
\pgfpathellipse{\pgfpoint{-0.015424\du}{1.004540\du}}{\pgfpoint{0.155571\du}{0\du}}{\pgfpoint{0\du}{0.155571\du}}
\pgfusepath{fill}
\pgfsetlinewidth{0.060000\du}
\pgfsetdash{}{0pt}
\pgfsetdash{}{0pt}
\definecolor{dialinecolor}{rgb}{0.000000, 0.000000, 0.000000}
\pgfsetstrokecolor{dialinecolor}
\pgfpathellipse{\pgfpoint{-0.015424\du}{1.004540\du}}{\pgfpoint{0.155571\du}{0\du}}{\pgfpoint{0\du}{0.155571\du}}
\pgfusepath{stroke}
\end{tikzpicture}
\;,
  \end{equation}
 such that 
\begin{equation}
  \label{eq:Frob-equ-graphical}
\ifx\du\undefined
  \newlength{\du}
\fi
\setlength{\du}{10\unitlength}
\begin{tikzpicture}[baseline]
\pgftransformxscale{1.000000}
\pgftransformyscale{-1.000000}
\definecolor{dialinecolor}{rgb}{0.000000, 0.000000, 0.000000}
\pgfsetstrokecolor{dialinecolor}
\definecolor{dialinecolor}{rgb}{1.000000, 1.000000, 1.000000}
\pgfsetfillcolor{dialinecolor}
\pgfsetlinewidth{0.060000\du}
\pgfsetdash{}{0pt}
\pgfsetdash{}{0pt}
\pgfsetbuttcap
{
\definecolor{dialinecolor}{rgb}{0.000000, 0.000000, 0.000000}
\pgfsetfillcolor{dialinecolor}
\definecolor{dialinecolor}{rgb}{0.000000, 0.000000, 0.000000}
\pgfsetstrokecolor{dialinecolor}
\draw (3.000000\du,0.803293\du)--(3.000000\du,-0.796707\du);
}
\pgfsetlinewidth{0.060000\du}
\pgfsetdash{}{0pt}
\pgfsetdash{}{0pt}
\pgfsetmiterjoin
\pgfsetbuttcap
{
\definecolor{dialinecolor}{rgb}{0.000000, 0.000000, 0.000000}
\pgfsetfillcolor{dialinecolor}
% was here!!!
\definecolor{dialinecolor}{rgb}{0.000000, 0.000000, 0.000000}
\pgfsetstrokecolor{dialinecolor}
\pgfpathmoveto{\pgfpoint{2.100000\du}{-2.196707\du}}
\pgfpathcurveto{\pgfpoint{2.100000\du}{-0.296707\du}}{\pgfpoint{3.900000\du}{-0.296707\du}}{\pgfpoint{3.900000\du}{-2.196707\du}}
\pgfusepath{stroke}
}
\pgfsetlinewidth{0.060000\du}
\pgfsetdash{}{0pt}
\pgfsetdash{}{0pt}
\pgfsetmiterjoin
\pgfsetbuttcap
{
\definecolor{dialinecolor}{rgb}{0.000000, 0.000000, 0.000000}
\pgfsetfillcolor{dialinecolor}
% was here!!!
\definecolor{dialinecolor}{rgb}{0.000000, 0.000000, 0.000000}
\pgfsetstrokecolor{dialinecolor}
\pgfpathmoveto{\pgfpoint{2.100000\du}{2.203293\du}}
\pgfpathcurveto{\pgfpoint{2.100000\du}{0.303293\du}}{\pgfpoint{3.900000\du}{0.303293\du}}{\pgfpoint{3.900000\du}{2.203293\du}}
\pgfusepath{stroke}
}
\end{tikzpicture}
=
\ifx\du\undefined
  \newlength{\du}
\fi
\setlength{\du}{10\unitlength}
\begin{tikzpicture}[baseline]
\pgftransformxscale{1.000000}
\pgftransformyscale{-1.000000}
\definecolor{dialinecolor}{rgb}{0.000000, 0.000000, 0.000000}
\pgfsetstrokecolor{dialinecolor}
\definecolor{dialinecolor}{rgb}{1.000000, 1.000000, 1.000000}
\pgfsetfillcolor{dialinecolor}
\pgfsetlinewidth{0.060000\du}
\pgfsetdash{}{0pt}
\pgfsetdash{}{0pt}
\pgfsetbuttcap
{
\definecolor{dialinecolor}{rgb}{0.000000, 0.000000, 0.000000}
\pgfsetfillcolor{dialinecolor}
% was here!!!
\definecolor{dialinecolor}{rgb}{0.000000, 0.000000, 0.000000}
\pgfsetstrokecolor{dialinecolor}
\draw (-7.600000\du,-2.200000\du)--(-7.600000\du,-1.400000\du);
}
\pgfsetlinewidth{0.060000\du}
\pgfsetdash{}{0pt}
\pgfsetdash{}{0pt}
\pgfsetmiterjoin
\pgfsetbuttcap
{
\definecolor{dialinecolor}{rgb}{0.000000, 0.000000, 0.000000}
\pgfsetfillcolor{dialinecolor}
% was here!!!
\definecolor{dialinecolor}{rgb}{0.000000, 0.000000, 0.000000}
\pgfsetstrokecolor{dialinecolor}
\pgfpathmoveto{\pgfpoint{-10.300000\du}{-0.000000\du}}
\pgfpathcurveto{\pgfpoint{-10.300000\du}{1.900000\du}}{\pgfpoint{-8.500000\du}{1.900000\du}}{\pgfpoint{-8.500000\du}{-0.000000\du}}
\pgfusepath{stroke}
}
\pgfsetlinewidth{0.060000\du}
\pgfsetdash{}{0pt}
\pgfsetdash{}{0pt}
\pgfsetmiterjoin
\pgfsetbuttcap
{
\definecolor{dialinecolor}{rgb}{0.000000, 0.000000, 0.000000}
\pgfsetfillcolor{dialinecolor}
% was here!!!
\definecolor{dialinecolor}{rgb}{0.000000, 0.000000, 0.000000}
\pgfsetstrokecolor{dialinecolor}
\pgfpathmoveto{\pgfpoint{-8.500000\du}{-0.000000\du}}
\pgfpathcurveto{\pgfpoint{-8.500000\du}{-1.900000\du}}{\pgfpoint{-6.700000\du}{-1.900000\du}}{\pgfpoint{-6.700000\du}{-0.000000\du}}
\pgfusepath{stroke}
}
\pgfsetlinewidth{0.060000\du}
\pgfsetdash{}{0pt}
\pgfsetdash{}{0pt}
\pgfsetbuttcap
{
\definecolor{dialinecolor}{rgb}{0.000000, 0.000000, 0.000000}
\pgfsetfillcolor{dialinecolor}
% was here!!!
\definecolor{dialinecolor}{rgb}{0.000000, 0.000000, 0.000000}
\pgfsetstrokecolor{dialinecolor}
\draw (-9.400000\du,1.400000\du)--(-9.400000\du,2.200000\du);
}
\pgfsetlinewidth{0.060000\du}
\pgfsetdash{}{0pt}
\pgfsetdash{}{0pt}
\pgfsetbuttcap
{
\definecolor{dialinecolor}{rgb}{0.000000, 0.000000, 0.000000}
\pgfsetfillcolor{dialinecolor}
% was here!!!
\definecolor{dialinecolor}{rgb}{0.000000, 0.000000, 0.000000}
\pgfsetstrokecolor{dialinecolor}
\draw (-6.700000\du,0.000000\du)--(-6.700000\du,2.200000\du);
}
\pgfsetlinewidth{0.060000\du}
\pgfsetdash{}{0pt}
\pgfsetdash{}{0pt}
\pgfsetbuttcap
{
\definecolor{dialinecolor}{rgb}{0.000000, 0.000000, 0.000000}
\pgfsetfillcolor{dialinecolor}
% was here!!!
\definecolor{dialinecolor}{rgb}{0.000000, 0.000000, 0.000000}
\pgfsetstrokecolor{dialinecolor}
\draw (-10.300000\du,-2.200000\du)--(-10.300000\du,0.000000\du);
}
\end{tikzpicture}
=
\ifx\du\undefined
  \newlength{\du}
\fi
\setlength{\du}{10\unitlength}
\begin{tikzpicture}[baseline]
\pgftransformxscale{1.000000}
\pgftransformyscale{-1.000000}
\definecolor{dialinecolor}{rgb}{0.000000, 0.000000, 0.000000}
\pgfsetstrokecolor{dialinecolor}
\definecolor{dialinecolor}{rgb}{1.000000, 1.000000, 1.000000}
\pgfsetfillcolor{dialinecolor}
\pgfsetlinewidth{0.060000\du}
\pgfsetdash{}{0pt}
\pgfsetdash{}{0pt}
\pgfsetbuttcap
{
\definecolor{dialinecolor}{rgb}{0.000000, 0.000000, 0.000000}
\pgfsetfillcolor{dialinecolor}
% was here!!!
\definecolor{dialinecolor}{rgb}{0.000000, 0.000000, 0.000000}
\pgfsetstrokecolor{dialinecolor}
\draw (-2.330672\du,-2.199480\du)--(-2.330672\du,-1.399480\du);
}
\pgfsetlinewidth{0.060000\du}
\pgfsetdash{}{0pt}
\pgfsetdash{}{0pt}
\pgfsetmiterjoin
\pgfsetbuttcap
{
\definecolor{dialinecolor}{rgb}{0.000000, 0.000000, 0.000000}
\pgfsetfillcolor{dialinecolor}
% was here!!!
\definecolor{dialinecolor}{rgb}{0.000000, 0.000000, 0.000000}
\pgfsetstrokecolor{dialinecolor}
\pgfpathmoveto{\pgfpoint{-1.430672\du}{0.000520\du}}
\pgfpathcurveto{\pgfpoint{-1.430672\du}{1.900520\du}}{\pgfpoint{0.369328\du}{1.900520\du}}{\pgfpoint{0.369328\du}{0.000520\du}}
\pgfusepath{stroke}
}
\pgfsetlinewidth{0.060000\du}
\pgfsetdash{}{0pt}
\pgfsetdash{}{0pt}
\pgfsetmiterjoin
\pgfsetbuttcap
{
\definecolor{dialinecolor}{rgb}{0.000000, 0.000000, 0.000000}
\pgfsetfillcolor{dialinecolor}
% was here!!!
\definecolor{dialinecolor}{rgb}{0.000000, 0.000000, 0.000000}
\pgfsetstrokecolor{dialinecolor}
\pgfpathmoveto{\pgfpoint{-3.230672\du}{0.000520\du}}
\pgfpathcurveto{\pgfpoint{-3.230672\du}{-1.899480\du}}{\pgfpoint{-1.430672\du}{-1.899480\du}}{\pgfpoint{-1.430672\du}{0.000520\du}}
\pgfusepath{stroke}
}
\pgfsetlinewidth{0.060000\du}
\pgfsetdash{}{0pt}
\pgfsetdash{}{0pt}
\pgfsetbuttcap
{
\definecolor{dialinecolor}{rgb}{0.000000, 0.000000, 0.000000}
\pgfsetfillcolor{dialinecolor}
% was here!!!
\definecolor{dialinecolor}{rgb}{0.000000, 0.000000, 0.000000}
\pgfsetstrokecolor{dialinecolor}
\draw (-0.533164\du,1.409918\du)--(-0.533164\du,2.209918\du);
}
\pgfsetlinewidth{0.060000\du}
\pgfsetdash{}{0pt}
\pgfsetdash{}{0pt}
\pgfsetbuttcap
{
\definecolor{dialinecolor}{rgb}{0.000000, 0.000000, 0.000000}
\pgfsetfillcolor{dialinecolor}
% was here!!!
\definecolor{dialinecolor}{rgb}{0.000000, 0.000000, 0.000000}
\pgfsetstrokecolor{dialinecolor}
\draw (0.369328\du,-2.199480\du)--(0.369328\du,0.000520\du);
}
\pgfsetlinewidth{0.060000\du}
\pgfsetdash{}{0pt}
\pgfsetdash{}{0pt}
\pgfsetbuttcap
{
\definecolor{dialinecolor}{rgb}{0.000000, 0.000000, 0.000000}
\pgfsetfillcolor{dialinecolor}
% was here!!!
\definecolor{dialinecolor}{rgb}{0.000000, 0.000000, 0.000000}
\pgfsetstrokecolor{dialinecolor}
\draw (-3.230672\du,0.000520\du)--(-3.230672\du,2.200520\du);
}
\end{tikzpicture}
\;. 
\end{equation}
 \item A Frobenius algebra $A \in \Cat{C}$ is called special  if there exist $\beta_{1},\beta_{A} \in \Bbbk \setminus\{0\}$ such that
 \begin{equation}
      \label{special-diagram}
\ifx\du\undefined
  \newlength{\du}
\fi
\setlength{\du}{10\unitlength}
\begin{tikzpicture}[baseline]
\pgftransformxscale{1.000000}
\pgftransformyscale{-1.000000}
\definecolor{dialinecolor}{rgb}{0.000000, 0.000000, 0.000000}
\pgfsetstrokecolor{dialinecolor}
\definecolor{dialinecolor}{rgb}{1.000000, 1.000000, 1.000000}
\pgfsetfillcolor{dialinecolor}
\pgfsetlinewidth{0.060000\du}
\pgfsetdash{}{0pt}
\pgfsetdash{}{0pt}
\pgfsetbuttcap
{
\definecolor{dialinecolor}{rgb}{0.000000, 0.000000, 0.000000}
\pgfsetfillcolor{dialinecolor}
% was here!!!
\definecolor{dialinecolor}{rgb}{0.000000, 0.000000, 0.000000}
\pgfsetstrokecolor{dialinecolor}
\draw (0.002735\du,-1.047378\du)--(0.002735\du,1.182760\du);
}
\definecolor{dialinecolor}{rgb}{0.000000, 0.000000, 0.000000}
\pgfsetfillcolor{dialinecolor}
\pgfpathellipse{\pgfpoint{-0.007800\du}{-1.149612\du}}{\pgfpoint{0.155571\du}{0\du}}{\pgfpoint{0\du}{0.155571\du}}
\pgfusepath{fill}
\pgfsetlinewidth{0.060000\du}
\pgfsetdash{}{0pt}
\pgfsetdash{}{0pt}
\definecolor{dialinecolor}{rgb}{0.000000, 0.000000, 0.000000}
\pgfsetstrokecolor{dialinecolor}
\pgfpathellipse{\pgfpoint{-0.007800\du}{-1.149612\du}}{\pgfpoint{0.155571\du}{0\du}}{\pgfpoint{0\du}{0.155571\du}}
\pgfusepath{stroke}
\definecolor{dialinecolor}{rgb}{1.000000, 1.000000, 1.000000}
\pgfsetfillcolor{dialinecolor}
\pgfpathellipse{\pgfpoint{0.005831\du}{1.188151\du}}{\pgfpoint{0.155571\du}{0\du}}{\pgfpoint{0\du}{0.155571\du}}
\pgfusepath{fill}
\pgfsetlinewidth{0.060000\du}
\pgfsetdash{}{0pt}
\pgfsetdash{}{0pt}
\definecolor{dialinecolor}{rgb}{0.000000, 0.000000, 0.000000}
\pgfsetstrokecolor{dialinecolor}
\pgfpathellipse{\pgfpoint{0.005831\du}{1.188151\du}}{\pgfpoint{0.155571\du}{0\du}}{\pgfpoint{0\du}{0.155571\du}}
\pgfusepath{stroke}
\end{tikzpicture}
= \beta_{1}, \quad \text{and} \qquad
\ifx\du\undefined
  \newlength{\du} 
\fi
\setlength{\du}{10\unitlength}
\begin{tikzpicture}[baseline]
\pgftransformxscale{1.000000}
\pgftransformyscale{-1.000000}
\definecolor{dialinecolor}{rgb}{0.000000, 0.000000, 0.000000}
\pgfsetstrokecolor{dialinecolor}
\definecolor{dialinecolor}{rgb}{1.000000, 1.000000, 1.000000}
\pgfsetfillcolor{dialinecolor}
\pgfsetlinewidth{0.060000\du}
\pgfsetdash{}{0pt}
\pgfsetdash{}{0pt}
\pgfsetbuttcap
{
\definecolor{dialinecolor}{rgb}{0.000000, 0.000000, 0.000000}
\pgfsetfillcolor{dialinecolor}
% was here!!!
\definecolor{dialinecolor}{rgb}{0.000000, 0.000000, 0.000000}
\pgfsetstrokecolor{dialinecolor}
\draw (0.093750\du,-3.506250\du)--(0.105000\du,-1.554378\du);
}
\pgfsetlinewidth{0.060000\du}
\pgfsetdash{}{0pt}
\pgfsetdash{}{0pt}
\pgfsetbuttcap
{
\definecolor{dialinecolor}{rgb}{0.000000, 0.000000, 0.000000}
\pgfsetfillcolor{dialinecolor}
% was here!!!
\definecolor{dialinecolor}{rgb}{0.000000, 0.000000, 0.000000}
\pgfsetstrokecolor{dialinecolor}
\draw (0.105000\du,1.593750\du)--(0.093750\du,3.568750\du);
}
\definecolor{dialinecolor}{rgb}{1.000000, 1.000000, 1.000000}
\pgfsetfillcolor{dialinecolor}
\pgfpathellipse{\pgfpoint{0.105000\du}{0.019686\du}}{\pgfpoint{1.375000\du}{0\du}}{\pgfpoint{0\du}{1.574064\du}}
\pgfusepath{fill}
\pgfsetlinewidth{0.060000\du}
\pgfsetdash{}{0pt}
\pgfsetdash{}{0pt}
\definecolor{dialinecolor}{rgb}{0.000000, 0.000000, 0.000000}
\pgfsetstrokecolor{dialinecolor}
\pgfpathellipse{\pgfpoint{0.105000\du}{0.019686\du}}{\pgfpoint{1.375000\du}{0\du}}{\pgfpoint{0\du}{1.574064\du}}
\pgfusepath{stroke}
\end{tikzpicture}
=
\ifx\du\undefined
  \newlength{\du}
\fi
\setlength{\du}{10\unitlength}
\begin{tikzpicture}[baseline]
\pgftransformxscale{1.000000}
\pgftransformyscale{-1.000000}
\definecolor{dialinecolor}{rgb}{0.000000, 0.000000, 0.000000}
\pgfsetstrokecolor{dialinecolor}
\definecolor{dialinecolor}{rgb}{1.000000, 1.000000, 1.000000}
\pgfsetfillcolor{dialinecolor}
\pgfsetlinewidth{0.060000\du}
\pgfsetdash{}{0pt}
\pgfsetdash{}{0pt} 
\pgfsetbuttcap
{
\definecolor{dialinecolor}{rgb}{0.000000, 0.000000, 0.000000}
\pgfsetfillcolor{dialinecolor}
% was here!!!
\definecolor{dialinecolor}{rgb}{0.000000, 0.000000, 0.000000}
\pgfsetstrokecolor{dialinecolor}
\draw (0.031250\du,-3.481250\du)--(0.018750\du,3.581250\du);
}
% setfont left to latex
\definecolor{dialinecolor}{rgb}{0.000000, 0.000000, 0.000000}
\pgfsetstrokecolor{dialinecolor}
\node[anchor=west] at (-2.415000\du,0.070000\du){$\beta_A \cdot$};
\end{tikzpicture}
\;.
    \end{equation}
A special Frobenius algebra  $A$ is called normalized if $\beta_{A}=1$.
\item A Frobenius algebra $A$ is called symmetric, if 
 \begin{equation}
    \label{symmetric-then-inverse}
\ifx\du\undefined
  \newlength{\du}
\fi
\setlength{\du}{10\unitlength}
\begin{tikzpicture}[baseline,yscale=-1]
\pgftransformxscale{1.000000}
\pgftransformyscale{-1.000000}
\definecolor{dialinecolor}{rgb}{0.000000, 0.000000, 0.000000}
\pgfsetstrokecolor{dialinecolor}
\definecolor{dialinecolor}{rgb}{1.000000, 1.000000, 1.000000}
\pgfsetfillcolor{dialinecolor}
\pgfsetlinewidth{0.060000\du}
\pgfsetdash{}{0pt}
\pgfsetdash{}{0pt}
\pgfsetbuttcap
{
\definecolor{dialinecolor}{rgb}{0.000000, 0.000000, 0.000000}
\pgfsetfillcolor{dialinecolor}
% was here!!!
\definecolor{dialinecolor}{rgb}{0.000000, 0.000000, 0.000000}
\pgfsetstrokecolor{dialinecolor}
\draw (1.000000\du,0.729289\du)--(1.000000\du,1.629290\du);
}
\definecolor{dialinecolor}{rgb}{0.000000, 0.000000, 0.000000}
\pgfsetfillcolor{dialinecolor}
\pgfpathellipse{\pgfpoint{1.000000\du}{1.729290\du}}{\pgfpoint{0.100000\du}{0\du}}{\pgfpoint{0\du}{0.100000\du}}
\pgfusepath{fill}
\pgfsetlinewidth{0.060000\du}
\pgfsetdash{}{0pt}
\pgfsetdash{}{0pt}
\definecolor{dialinecolor}{rgb}{0.000000, 0.000000, 0.000000}
\pgfsetstrokecolor{dialinecolor}
\pgfpathellipse{\pgfpoint{1.000000\du}{1.729290\du}}{\pgfpoint{0.100000\du}{0\du}}{\pgfpoint{0\du}{0.100000\du}}
\pgfusepath{stroke}
\pgfsetlinewidth{0.060000\du}
\pgfsetdash{}{0pt}
\pgfsetdash{}{0pt}
\pgfsetbuttcap
{
\definecolor{dialinecolor}{rgb}{0.000000, 0.000000, 0.000000}
\pgfsetfillcolor{dialinecolor}
% was here!!!
\definecolor{dialinecolor}{rgb}{0.000000, 0.000000, 0.000000}
\pgfsetstrokecolor{dialinecolor}
\draw (-2.100000\du,-0.570711\du)--(-2.100000\du,2.129290\du);
}
\pgfsetlinewidth{0.060000\du}
\pgfsetdash{}{0pt}
\pgfsetdash{}{0pt}
\pgfsetmiterjoin
\pgfsetbuttcap
{
\definecolor{dialinecolor}{rgb}{0.000000, 0.000000, 0.000000}
\pgfsetfillcolor{dialinecolor}
% was here!!!
\definecolor{dialinecolor}{rgb}{0.000000, 0.000000, 0.000000}
\pgfsetstrokecolor{dialinecolor}
\pgfpathmoveto{\pgfpoint{0.100000\du}{-0.670711\du}}
\pgfpathcurveto{\pgfpoint{0.100000\du}{1.229290\du}}{\pgfpoint{1.900000\du}{1.229290\du}}{\pgfpoint{1.900000\du}{-0.670711\du}}
\pgfusepath{stroke}
}
\pgfsetlinewidth{0.060000\du}
\pgfsetdash{}{0pt}
\pgfsetdash{}{0pt}
\pgfsetmiterjoin
\pgfsetbuttcap
{
\definecolor{dialinecolor}{rgb}{0.000000, 0.000000, 0.000000}
\pgfsetfillcolor{dialinecolor}
% was here!!!
\definecolor{dialinecolor}{rgb}{0.000000, 0.000000, 0.000000}
\pgfsetstrokecolor{dialinecolor}
\pgfpathmoveto{\pgfpoint{-2.100000\du}{-0.570711\du}}
\pgfpathcurveto{\pgfpoint{-2.061250\du}{-2.537590\du}}{\pgfpoint{0.056250\du}{-2.655080\du}}{\pgfpoint{0.100000\du}{-0.670711\du}}
\pgfusepath{stroke}
}
\pgfsetlinewidth{0.060000\du}
\pgfsetdash{}{0pt}
\pgfsetdash{}{0pt}
\pgfsetbuttcap
{
\definecolor{dialinecolor}{rgb}{0.000000, 0.000000, 0.000000}
\pgfsetfillcolor{dialinecolor}
% was here!!!
\definecolor{dialinecolor}{rgb}{0.000000, 0.000000, 0.000000}
\pgfsetstrokecolor{dialinecolor}
\draw (1.900000\du,-2.070710\du)--(1.898020\du,-0.667412\du);
}
% setfont left to latex
\definecolor{dialinecolor}{rgb}{0.000000, 0.000000, 0.000000}
\pgfsetstrokecolor{dialinecolor}
\node[anchor=west] at (-3.730787\du,0.157804\du){$A^{*}$};
\end{tikzpicture}
\quad = \quad 
\ifx\du\undefined
  \newlength{\du}
\fi
\setlength{\du}{10\unitlength}
\begin{tikzpicture}[baseline,yscale=-1]
\pgftransformxscale{1.000000}
\pgftransformyscale{-1.000000}
\definecolor{dialinecolor}{rgb}{0.000000, 0.000000, 0.000000}
\pgfsetstrokecolor{dialinecolor}
\definecolor{dialinecolor}{rgb}{1.000000, 1.000000, 1.000000}
\pgfsetfillcolor{dialinecolor}
\pgfsetlinewidth{0.060000\du}
\pgfsetdash{}{0pt}
\pgfsetdash{}{0pt}
\pgfsetbuttcap
{
\definecolor{dialinecolor}{rgb}{0.000000, 0.000000, 0.000000}
\pgfsetfillcolor{dialinecolor}
% was here!!!
\definecolor{dialinecolor}{rgb}{0.000000, 0.000000, 0.000000}
\pgfsetstrokecolor{dialinecolor}
\draw (1.000000\du,0.729289\du)--(1.000000\du,1.629290\du);
}
\definecolor{dialinecolor}{rgb}{0.000000, 0.000000, 0.000000}
\pgfsetfillcolor{dialinecolor}
\pgfpathellipse{\pgfpoint{1.000000\du}{1.729290\du}}{\pgfpoint{0.100000\du}{0\du}}{\pgfpoint{0\du}{0.100000\du}}
\pgfusepath{fill}
\pgfsetlinewidth{0.060000\du}
\pgfsetdash{}{0pt}
\pgfsetdash{}{0pt}
\definecolor{dialinecolor}{rgb}{0.000000, 0.000000, 0.000000}
\pgfsetstrokecolor{dialinecolor}
\pgfpathellipse{\pgfpoint{1.000000\du}{1.729290\du}}{\pgfpoint{0.100000\du}{0\du}}{\pgfpoint{0\du}{0.100000\du}}
\pgfusepath{stroke}
\pgfsetlinewidth{0.060000\du}
\pgfsetdash{}{0pt}
\pgfsetdash{}{0pt}
\pgfsetbuttcap
{
\definecolor{dialinecolor}{rgb}{0.000000, 0.000000, 0.000000}
\pgfsetfillcolor{dialinecolor}
% was here!!!
\definecolor{dialinecolor}{rgb}{0.000000, 0.000000, 0.000000}
\pgfsetstrokecolor{dialinecolor}
\draw (4.100000\du,-0.800000\du)--(4.100000\du,1.900000\du);
}
\pgfsetlinewidth{0.060000\du}
\pgfsetdash{}{0pt}
\pgfsetdash{}{0pt}
\pgfsetmiterjoin
\pgfsetbuttcap
{
\definecolor{dialinecolor}{rgb}{0.000000, 0.000000, 0.000000}
\pgfsetfillcolor{dialinecolor}
% was here!!!
\definecolor{dialinecolor}{rgb}{0.000000, 0.000000, 0.000000}
\pgfsetstrokecolor{dialinecolor}
\pgfpathmoveto{\pgfpoint{0.100000\du}{-0.700000\du}}
\pgfpathcurveto{\pgfpoint{0.100000\du}{1.200000\du}}{\pgfpoint{1.900000\du}{1.200000\du}}{\pgfpoint{1.900000\du}{-0.700000\du}}
\pgfusepath{stroke}
}
\pgfsetlinewidth{0.060000\du}
\pgfsetdash{}{0pt}
\pgfsetdash{}{0pt}
\pgfsetmiterjoin
\pgfsetbuttcap
{
\definecolor{dialinecolor}{rgb}{0.000000, 0.000000, 0.000000}
\pgfsetfillcolor{dialinecolor}
% was here!!!
\definecolor{dialinecolor}{rgb}{0.000000, 0.000000, 0.000000}
\pgfsetstrokecolor{dialinecolor}
\pgfpathmoveto{\pgfpoint{1.900000\du}{-0.700000\du}}
\pgfpathcurveto{\pgfpoint{1.938750\du}{-2.666880\du}}{\pgfpoint{4.056250\du}{-2.784370\du}}{\pgfpoint{4.100000\du}{-0.800000\du}}
\pgfusepath{stroke}
}
\pgfsetlinewidth{0.060000\du}
\pgfsetdash{}{0pt}
\pgfsetdash{}{0pt}
\pgfsetbuttcap
{
\definecolor{dialinecolor}{rgb}{0.000000, 0.000000, 0.000000}
\pgfsetfillcolor{dialinecolor}
% was here!!!
\definecolor{dialinecolor}{rgb}{0.000000, 0.000000, 0.000000}
\pgfsetstrokecolor{dialinecolor}
\draw (0.104419\du,-2.182320\du)--(0.102436\du,-0.679024\du);
}
% setfont left to latex
\definecolor{dialinecolor}{rgb}{0.000000, 0.000000, 0.000000}
\pgfsetstrokecolor{dialinecolor}
\node[anchor=west] at (4.094341\du,1.480845\du){$A^{*}$};
% setfont left to latex
\definecolor{dialinecolor}{rgb}{0.000000, 0.000000, 0.000000}
\pgfsetstrokecolor{dialinecolor}
\node[anchor=west] at (3.968254\du,-1.178456\du){$^{*}A$};
\pgfsetlinewidth{0.060000\du}
\pgfsetdash{}{0pt}
\pgfsetdash{}{0pt}
\pgfsetbuttcap
\pgfsetmiterjoin
\pgfsetlinewidth{0.060000\du}
\pgfsetbuttcap
\pgfsetmiterjoin
\pgfsetdash{}{0pt}
\definecolor{dialinecolor}{rgb}{1.000000, 1.000000, 1.000000}
\pgfsetfillcolor{dialinecolor}
\fill (3.757541\du,-0.539893\du)--(3.757541\du,0.360107\du)--(4.628509\du,0.360107\du)--(4.628509\du,-0.539893\du)--cycle;
\definecolor{dialinecolor}{rgb}{0.000000, 0.000000, 0.000000}
\pgfsetstrokecolor{dialinecolor}
\draw (3.757541\du,-0.539893\du)--(3.757541\du,0.360107\du)--(4.628509\du,0.360107\du)--(4.628509\du,-0.539893\du)--cycle;
\pgfsetbuttcap
\pgfsetmiterjoin
\pgfsetdash{}{0pt}
\definecolor{dialinecolor}{rgb}{0.000000, 0.000000, 0.000000}
\pgfsetstrokecolor{dialinecolor}
\draw (3.757541\du,-0.539893\du)--(3.757541\du,0.360107\du)--(4.628509\du,0.360107\du)--(4.628509\du,-0.539893\du)--cycle;
% setfont left to latex
\definecolor{dialinecolor}{rgb}{0.000000, 0.000000, 0.000000}
\pgfsetstrokecolor{dialinecolor}
\node[anchor=west] at (4.620470\du,0.085762\du){$^{*}a_A$};
\end{tikzpicture}
\;.
  \end{equation}
  \end{definitionlist}
\end{definition}
A special Frobenius algebra can always be normalized by an appropriate scaling of $\Delta$ and $\epsilon$. 
If $A$ is a  special symmetric normalized Frobenius algebra, there exists a projector onto $ m \tensor{A} \leftidx{^*}{\widetilde{m}}{}$ for all $m,\widetilde{m} \in \ModCA$, that has the following graphical 
description, where we use the obvious picture to present the module multiplication $m \otimes A \rightarrow m$: 
\begin{equation}\label{equation:projector1}
  P_{m,\widetilde{m}}=
\ifx\du\undefined
  \newlength{\du}
\fi
\setlength{\du}{10\unitlength}
\begin{tikzpicture}[baseline]
\pgftransformxscale{1.000000}
\pgftransformyscale{-1.000000}
\definecolor{dialinecolor}{rgb}{0.000000, 0.000000, 0.000000}
\pgfsetstrokecolor{dialinecolor}
\definecolor{dialinecolor}{rgb}{1.000000, 1.000000, 1.000000}
\pgfsetfillcolor{dialinecolor}
\pgfsetlinewidth{0.060000\du}
\pgfsetdash{}{0pt}
\pgfsetdash{}{0pt}
\pgfsetbuttcap
{
\definecolor{dialinecolor}{rgb}{0.000000, 0.000000, 0.000000}
\pgfsetfillcolor{dialinecolor}
% was here!!!
\definecolor{dialinecolor}{rgb}{0.000000, 0.000000, 0.000000}
\pgfsetstrokecolor{dialinecolor}
\draw (2.980610\du,-3.293930\du)--(2.980610\du,3.306070\du);
}
\pgfsetlinewidth{0.060000\du}
\pgfsetdash{}{0pt}
\pgfsetdash{}{0pt}
\pgfsetbuttcap
{
\definecolor{dialinecolor}{rgb}{0.000000, 0.000000, 0.000000}
\pgfsetfillcolor{dialinecolor}
% was here!!!
\definecolor{dialinecolor}{rgb}{0.000000, 0.000000, 0.000000}
\pgfsetstrokecolor{dialinecolor}
\draw (-2.219390\du,-3.293930\du)--(-2.219390\du,3.306070\du);
}
% setfont left to latex
\definecolor{dialinecolor}{rgb}{0.000000, 0.000000, 0.000000}
\pgfsetstrokecolor{dialinecolor}
\node[anchor=west] at (-3.796167\du,-0.020097\du){$m$};
\pgfsetlinewidth{0.060000\du}
\pgfsetdash{}{0pt}
\pgfsetdash{}{0pt}
\pgfsetbuttcap
{
\definecolor{dialinecolor}{rgb}{0.000000, 0.000000, 0.000000}
\pgfsetfillcolor{dialinecolor}
% was here!!!
\definecolor{dialinecolor}{rgb}{0.000000, 0.000000, 0.000000}
\pgfsetstrokecolor{dialinecolor}
\draw (0.387500\du,-1.968750\du)--(0.389363\du,-0.995170\du);
}
\definecolor{dialinecolor}{rgb}{0.000000, 0.000000, 0.000000}
\pgfsetfillcolor{dialinecolor}
\pgfpathellipse{\pgfpoint{0.395241\du}{-2.131709\du}}{\pgfpoint{0.171450\du}{0\du}}{\pgfpoint{0\du}{0.155571\du}}
\pgfusepath{fill}
\pgfsetlinewidth{0.060000\du}
\pgfsetdash{}{0pt}
\pgfsetdash{}{0pt}
\definecolor{dialinecolor}{rgb}{0.000000, 0.000000, 0.000000}
\pgfsetstrokecolor{dialinecolor}
\pgfpathellipse{\pgfpoint{0.395241\du}{-2.131709\du}}{\pgfpoint{0.171450\du}{0\du}}{\pgfpoint{0\du}{0.155571\du}}
\pgfusepath{stroke}
% setfont left to latex
\definecolor{dialinecolor}{rgb}{0.000000, 0.000000, 0.000000}
\pgfsetstrokecolor{dialinecolor}
\node[anchor=west] at (2.980610\du,0.006070\du){$\leftidx{^*}{\widetilde{m}}{}$};
\pgfsetlinewidth{0.060000\du}
\pgfsetdash{}{0pt}
\pgfsetdash{}{0pt}
\pgfsetmiterjoin
\pgfsetbuttcap
{
\definecolor{dialinecolor}{rgb}{0.000000, 0.000000, 0.000000}
\pgfsetfillcolor{dialinecolor}
% was here!!!
\definecolor{dialinecolor}{rgb}{0.000000, 0.000000, 0.000000}
\pgfsetstrokecolor{dialinecolor}
\pgfpathmoveto{\pgfpoint{-0.517500\du}{0.430003\du}}
\pgfpathcurveto{\pgfpoint{-0.517500\du}{-1.470001\du}}{\pgfpoint{1.282500\du}{-1.470001\du}}{\pgfpoint{1.282500\du}{0.430003\du}}
\pgfusepath{stroke}
}
\pgfsetlinewidth{0.060000\du}
\pgfsetdash{}{0pt}
\pgfsetdash{}{0pt}
\pgfsetmiterjoin
\pgfsetbuttcap
{
\definecolor{dialinecolor}{rgb}{0.000000, 0.000000, 0.000000}
\pgfsetfillcolor{dialinecolor}
% was here!!!
\definecolor{dialinecolor}{rgb}{0.000000, 0.000000, 0.000000}
\pgfsetstrokecolor{dialinecolor}
\pgfpathmoveto{\pgfpoint{2.982741\du}{2.530791\du}}
\pgfpathcurveto{\pgfpoint{2.169689\du}{1.725061\du}}{\pgfpoint{1.270241\du}{1.518291\du}}{\pgfpoint{1.270241\du}{0.237041\du}}
\pgfusepath{stroke}
}
\pgfsetlinewidth{0.060000\du}
\pgfsetdash{}{0pt}
\pgfsetdash{}{0pt}
\pgfsetmiterjoin
\pgfsetbuttcap
{
\definecolor{dialinecolor}{rgb}{0.000000, 0.000000, 0.000000}
\pgfsetfillcolor{dialinecolor}
% was here!!!
\definecolor{dialinecolor}{rgb}{0.000000, 0.000000, 0.000000}
\pgfsetstrokecolor{dialinecolor}
\pgfpathmoveto{\pgfpoint{-2.198509\du}{2.505791\du}}
\pgfpathcurveto{\pgfpoint{-1.348509\du}{1.724541\du}}{\pgfpoint{-0.505311\du}{1.402741\du}}{\pgfpoint{-0.498509\du}{0.168291\du}}
\pgfusepath{stroke}
}
\end{tikzpicture}
\;.
\end{equation}
\begin{lemma}
  \label{lemma:techh-frob}
Let $A \in \Cat{C}$ be a special symmetric normalized Frobenius algebra in a pivotal finite tensor category. Then for every $m \in \ModCA$, the following two morphisms in $\Cat{C}$ agree. 
\begin{equation}
  \label{eq:left-right-prof}
\ifx\du\undefined
  \newlength{\du}
\fi
\setlength{\du}{10\unitlength}
\begin{tikzpicture}[baseline]
\pgftransformxscale{1.000000}
\pgftransformyscale{-1.000000}
\definecolor{dialinecolor}{rgb}{0.000000, 0.000000, 0.000000}
\pgfsetstrokecolor{dialinecolor}
\definecolor{dialinecolor}{rgb}{1.000000, 1.000000, 1.000000}
\pgfsetfillcolor{dialinecolor}
% setfont left to latex
\definecolor{dialinecolor}{rgb}{0.000000, 0.000000, 0.000000}
\pgfsetstrokecolor{dialinecolor}
\node[anchor=west] at (3.339426\du,1.652423\du){$A$};
\pgfsetlinewidth{0.060000\du}
\pgfsetdash{}{0pt}
\pgfsetdash{}{0pt}
\pgfsetbuttcap
{
\definecolor{dialinecolor}{rgb}{0.000000, 0.000000, 0.000000}
\pgfsetfillcolor{dialinecolor}
% was here!!!
\definecolor{dialinecolor}{rgb}{0.000000, 0.000000, 0.000000}
\pgfsetstrokecolor{dialinecolor}
\draw (2.532335\du,-2.403694\du)--(2.529802\du,-1.798860\du);
}
\definecolor{dialinecolor}{rgb}{0.000000, 0.000000, 0.000000}
\pgfsetfillcolor{dialinecolor}
\pgfpathellipse{\pgfpoint{2.532874\du}{-2.465055\du}}{\pgfpoint{0.100000\du}{0\du}}{\pgfpoint{0\du}{0.100000\du}}
\pgfusepath{fill}
\pgfsetlinewidth{0.060000\du}
\pgfsetdash{}{0pt}
\pgfsetdash{}{0pt}
\definecolor{dialinecolor}{rgb}{0.000000, 0.000000, 0.000000}
\pgfsetstrokecolor{dialinecolor}
\pgfpathellipse{\pgfpoint{2.532874\du}{-2.465055\du}}{\pgfpoint{0.100000\du}{0\du}}{\pgfpoint{0\du}{0.100000\du}}
\pgfusepath{stroke}
\pgfsetlinewidth{0.060000\du}
\pgfsetdash{}{0pt}
\pgfsetdash{}{0pt}
\pgfsetmiterjoin
\pgfsetbuttcap
{
\definecolor{dialinecolor}{rgb}{0.000000, 0.000000, 0.000000}
\pgfsetfillcolor{dialinecolor}
% was here!!!
\definecolor{dialinecolor}{rgb}{0.000000, 0.000000, 0.000000}
\pgfsetstrokecolor{dialinecolor}
\pgfpathmoveto{\pgfpoint{0.733860\du}{1.858116\du}}
\pgfpathcurveto{\pgfpoint{0.733860\du}{3.858116\du}}{\pgfpoint{-1.856448\du}{3.836825\du}}{\pgfpoint{-1.856448\du}{1.836825\du}}
\pgfusepath{stroke}
}
\pgfsetlinewidth{0.060000\du}
\pgfsetdash{}{0pt}
\pgfsetdash{}{0pt}
\pgfsetmiterjoin
\pgfsetbuttcap
{
\definecolor{dialinecolor}{rgb}{0.000000, 0.000000, 0.000000}
\pgfsetfillcolor{dialinecolor}
% was here!!!
\definecolor{dialinecolor}{rgb}{0.000000, 0.000000, 0.000000}
\pgfsetstrokecolor{dialinecolor}
\pgfpathmoveto{\pgfpoint{1.495875\du}{-1.064814\du}}
\pgfpathcurveto{\pgfpoint{1.447861\du}{-0.036607\du}}{\pgfpoint{0.985721\du}{0.160508\du}}{\pgfpoint{0.750740\du}{0.441339\du}}
\pgfusepath{stroke}
}
\pgfsetlinewidth{0.060000\du}
\pgfsetdash{}{0pt}
\pgfsetdash{}{0pt}
\pgfsetmiterjoin
\pgfsetbuttcap
{
\definecolor{dialinecolor}{rgb}{0.000000, 0.000000, 0.000000}
\pgfsetfillcolor{dialinecolor}
% was here!!!
\definecolor{dialinecolor}{rgb}{0.000000, 0.000000, 0.000000}
\pgfsetstrokecolor{dialinecolor}
\pgfpathmoveto{\pgfpoint{4.958164\du}{-1.951213\du}}
\pgfpathcurveto{\pgfpoint{4.958164\du}{-3.951219\du}}{\pgfpoint{0.726743\du}{-3.743975\du}}{\pgfpoint{0.726743\du}{-1.743979\du}}
\pgfusepath{stroke}
}
\pgfsetlinewidth{0.060000\du}
\pgfsetdash{}{0pt}
\pgfsetdash{}{0pt}
\pgfsetbuttcap
{
\definecolor{dialinecolor}{rgb}{0.000000, 0.000000, 0.000000}
\pgfsetfillcolor{dialinecolor}
% was here!!!
\definecolor{dialinecolor}{rgb}{0.000000, 0.000000, 0.000000}
\pgfsetstrokecolor{dialinecolor}
\draw (3.520762\du,-1.069759\du)--(3.518928\du,3.358539\du);
}
\pgfsetlinewidth{0.060000\du}
\pgfsetdash{}{0pt}
\pgfsetdash{}{0pt}
\pgfsetmiterjoin
\pgfsetbuttcap
{
\definecolor{dialinecolor}{rgb}{0.000000, 0.000000, 0.000000}
\pgfsetfillcolor{dialinecolor}
% was here!!!
\definecolor{dialinecolor}{rgb}{0.000000, 0.000000, 0.000000}
\pgfsetstrokecolor{dialinecolor}
\pgfpathmoveto{\pgfpoint{1.488206\du}{-1.030710\du}}
\pgfpathcurveto{\pgfpoint{1.493216\du}{-2.038101\du}}{\pgfpoint{3.512375\du}{-2.051121\du}}{\pgfpoint{3.512375\du}{-1.051124\du}}
\pgfusepath{stroke}
}
\pgfsetlinewidth{0.060000\du}
\pgfsetdash{}{0pt}
\pgfsetdash{}{0pt}
\pgfsetbuttcap
{
\definecolor{dialinecolor}{rgb}{0.000000, 0.000000, 0.000000}
\pgfsetfillcolor{dialinecolor}
% was here!!!
\definecolor{dialinecolor}{rgb}{0.000000, 0.000000, 0.000000}
\pgfsetstrokecolor{dialinecolor}
\draw (0.726743\du,-1.763257\du)--(0.746021\du,1.870561\du);
}
% setfont left to latex
\definecolor{dialinecolor}{rgb}{0.000000, 0.000000, 0.000000}
\pgfsetstrokecolor{dialinecolor}
\node[anchor=west] at (-4.037603\du,-1.682924\du){$m^{*}$};
% setfont left to latex
\definecolor{dialinecolor}{rgb}{0.000000, 0.000000, 0.000000}
\pgfsetstrokecolor{dialinecolor}
\node[anchor=west] at (4.869850\du,1.697518\du){$^{*}m$};
% setfont left to latex
\definecolor{dialinecolor}{rgb}{0.000000, 0.000000, 0.000000}
\pgfsetstrokecolor{dialinecolor}
\node[anchor=west] at (5.362912\du,0.360642\du){$^{*}a_m$};
\pgfsetlinewidth{0.060000\du}
\pgfsetdash{}{0pt}
\pgfsetdash{}{0pt}
\pgfsetbuttcap
{
\definecolor{dialinecolor}{rgb}{0.000000, 0.000000, 0.000000}
\pgfsetfillcolor{dialinecolor}
% was here!!!
\definecolor{dialinecolor}{rgb}{0.000000, 0.000000, 0.000000}
\pgfsetstrokecolor{dialinecolor}
\draw (-1.862704\du,-3.352740\du)--(-1.855043\du,1.892469\du);
}
\pgfsetlinewidth{0.060000\du}
\pgfsetdash{}{0pt}
\pgfsetdash{}{0pt}
\pgfsetbuttcap
{
\definecolor{dialinecolor}{rgb}{0.000000, 0.000000, 0.000000}
\pgfsetfillcolor{dialinecolor}
% was here!!!
\definecolor{dialinecolor}{rgb}{0.000000, 0.000000, 0.000000}
\pgfsetstrokecolor{dialinecolor}
\draw (4.953698\du,-1.979868\du)--(4.963251\du,3.351220\du);
}
\pgfsetlinewidth{0.060000\du}
\pgfsetdash{}{0pt}
\pgfsetdash{}{0pt}
\pgfsetbuttcap
\pgfsetmiterjoin
\pgfsetlinewidth{0.060000\du}
\pgfsetbuttcap
\pgfsetmiterjoin
\pgfsetdash{}{0pt}
\definecolor{dialinecolor}{rgb}{1.000000, 1.000000, 1.000000}
\pgfsetfillcolor{dialinecolor}
\fill (4.514851\du,-0.177542\du)--(4.514851\du,0.722458\du)--(5.385818\du,0.722458\du)--(5.385818\du,-0.177542\du)--cycle;
\definecolor{dialinecolor}{rgb}{0.000000, 0.000000, 0.000000}
\pgfsetstrokecolor{dialinecolor}
\draw (4.514851\du,-0.177542\du)--(4.514851\du,0.722458\du)--(5.385818\du,0.722458\du)--(5.385818\du,-0.177542\du)--cycle;
\pgfsetbuttcap
\pgfsetmiterjoin
\pgfsetdash{}{0pt}
\definecolor{dialinecolor}{rgb}{0.000000, 0.000000, 0.000000}
\pgfsetstrokecolor{dialinecolor}
\draw (4.514851\du,-0.177542\du)--(4.514851\du,0.722458\du)--(5.385818\du,0.722458\du)--(5.385818\du,-0.177542\du)--cycle;
% setfont left to latex
\definecolor{dialinecolor}{rgb}{0.000000, 0.000000, 0.000000}
\pgfsetstrokecolor{dialinecolor}
\node[anchor=west] at (4.898031\du,-1.222011\du){$m^{*}$};
\end{tikzpicture}
=
\ifx\du\undefined
  \newlength{\du}
\fi
\setlength{\du}{10\unitlength}
\begin{tikzpicture}[baseline]
\pgftransformxscale{1.000000}
\pgftransformyscale{-1.000000}
\definecolor{dialinecolor}{rgb}{0.000000, 0.000000, 0.000000}
\pgfsetstrokecolor{dialinecolor}
\definecolor{dialinecolor}{rgb}{1.000000, 1.000000, 1.000000}
\pgfsetfillcolor{dialinecolor}
% setfont left to latex
\definecolor{dialinecolor}{rgb}{0.000000, 0.000000, 0.000000}
\pgfsetstrokecolor{dialinecolor}
\node[anchor=west] at (-4.444195\du,2.105485\du){$A$};
\pgfsetlinewidth{0.060000\du}
\pgfsetdash{}{0pt}
\pgfsetdash{}{0pt}
\pgfsetbuttcap
{
\definecolor{dialinecolor}{rgb}{0.000000, 0.000000, 0.000000}
\pgfsetfillcolor{dialinecolor}
% was here!!!
\definecolor{dialinecolor}{rgb}{0.000000, 0.000000, 0.000000}
\pgfsetstrokecolor{dialinecolor}
\draw (-0.134770\du,-3.086895\du)--(-0.114361\du,-2.378725\du);
}
\definecolor{dialinecolor}{rgb}{0.000000, 0.000000, 0.000000}
\pgfsetfillcolor{dialinecolor}
\pgfpathellipse{\pgfpoint{-0.134770\du}{-3.186895\du}}{\pgfpoint{0.100000\du}{0\du}}{\pgfpoint{0\du}{0.100000\du}}
\pgfusepath{fill}
\pgfsetlinewidth{0.060000\du}
\pgfsetdash{}{0pt}
\pgfsetdash{}{0pt}
\definecolor{dialinecolor}{rgb}{0.000000, 0.000000, 0.000000}
\pgfsetstrokecolor{dialinecolor}
\pgfpathellipse{\pgfpoint{-0.134770\du}{-3.186895\du}}{\pgfpoint{0.100000\du}{0\du}}{\pgfpoint{0\du}{0.100000\du}}
\pgfusepath{stroke}
\pgfsetlinewidth{0.060000\du}
\pgfsetdash{}{0pt}
\pgfsetdash{}{0pt}
\pgfsetmiterjoin
\pgfsetbuttcap
{
\definecolor{dialinecolor}{rgb}{0.000000, 0.000000, 0.000000}
\pgfsetfillcolor{dialinecolor}
% was here!!!
\definecolor{dialinecolor}{rgb}{0.000000, 0.000000, 0.000000}
\pgfsetstrokecolor{dialinecolor}
\pgfpathmoveto{\pgfpoint{3.207360\du}{1.379075\du}}
\pgfpathcurveto{\pgfpoint{3.207360\du}{3.379075\du}}{\pgfpoint{0.722350\du}{3.371925\du}}{\pgfpoint{0.722350\du}{1.371925\du}}
\pgfusepath{stroke}
}
\pgfsetlinewidth{0.060000\du}
\pgfsetdash{}{0pt}
\pgfsetdash{}{0pt}
\pgfsetmiterjoin
\pgfsetbuttcap
{
\definecolor{dialinecolor}{rgb}{0.000000, 0.000000, 0.000000}
\pgfsetfillcolor{dialinecolor}
% was here!!!
\definecolor{dialinecolor}{rgb}{0.000000, 0.000000, 0.000000}
\pgfsetstrokecolor{dialinecolor}
\pgfpathmoveto{\pgfpoint{-1.137382\du}{-1.631915\du}}
\pgfpathcurveto{\pgfpoint{-1.185400\du}{-0.603707\du}}{\pgfpoint{-3.315570\du}{0.446919\du}}{\pgfpoint{-3.267370\du}{1.786705\du}}
\pgfusepath{stroke}
}
\pgfsetlinewidth{0.060000\du}
\pgfsetdash{}{0pt}
\pgfsetdash{}{0pt}
\pgfsetmiterjoin
\pgfsetbuttcap
{
\definecolor{dialinecolor}{rgb}{0.000000, 0.000000, 0.000000}
\pgfsetfillcolor{dialinecolor}
% was here!!!
\definecolor{dialinecolor}{rgb}{0.000000, 0.000000, 0.000000}
\pgfsetstrokecolor{dialinecolor}
\pgfpathmoveto{\pgfpoint{0.726090\du}{1.378815\du}}
\pgfpathcurveto{\pgfpoint{0.726090\du}{-0.621194\du}}{\pgfpoint{-1.252870\du}{-0.579561\du}}{\pgfpoint{-1.252870\du}{1.420435\du}}
\pgfusepath{stroke}
}
\pgfsetlinewidth{0.060000\du}
\pgfsetdash{}{0pt}
\pgfsetdash{}{0pt}
\pgfsetbuttcap
{
\definecolor{dialinecolor}{rgb}{0.000000, 0.000000, 0.000000}
\pgfsetfillcolor{dialinecolor}
% was here!!!
\definecolor{dialinecolor}{rgb}{0.000000, 0.000000, 0.000000}
\pgfsetstrokecolor{dialinecolor}
\draw (-3.261910\du,1.767495\du)--(-3.248090\du,3.377105\du);
}
\pgfsetlinewidth{0.060000\du}
\pgfsetdash{}{0pt}
\pgfsetdash{}{0pt}
\pgfsetmiterjoin
\pgfsetbuttcap
{
\definecolor{dialinecolor}{rgb}{0.000000, 0.000000, 0.000000}
\pgfsetfillcolor{dialinecolor}
% was here!!!
\definecolor{dialinecolor}{rgb}{0.000000, 0.000000, 0.000000}
\pgfsetstrokecolor{dialinecolor}
\pgfpathmoveto{\pgfpoint{0.752740\du}{1.629205\du}}
\pgfpathcurveto{\pgfpoint{1.240240\du}{1.091705\du}}{\pgfpoint{1.609850\du}{0.630056\du}}{\pgfpoint{1.417070\du}{0.013175\du}}
\pgfpathcurveto{\pgfpoint{1.224300\du}{-0.603707\du}}{\pgfpoint{0.897190\du}{-0.930280\du}}{\pgfpoint{0.886940\du}{-1.683245\du}}
\pgfusepath{stroke}
}
\pgfsetlinewidth{0.060000\du}
\pgfsetdash{}{0pt}
\pgfsetdash{}{0pt}
\pgfsetmiterjoin
\pgfsetbuttcap
{
\definecolor{dialinecolor}{rgb}{0.000000, 0.000000, 0.000000}
\pgfsetfillcolor{dialinecolor}
% was here!!!
\definecolor{dialinecolor}{rgb}{0.000000, 0.000000, 0.000000}
\pgfsetstrokecolor{dialinecolor}
\pgfpathmoveto{\pgfpoint{-1.137203\du}{-1.615775\du}}
\pgfpathcurveto{\pgfpoint{-1.132193\du}{-2.623165\du}}{\pgfpoint{0.896580\du}{-2.635055\du}}{\pgfpoint{0.896580\du}{-1.635055\du}}
\pgfusepath{stroke}
}
\pgfsetlinewidth{0.060000\du}
\pgfsetdash{}{0pt}
\pgfsetdash{}{0pt}
\pgfsetbuttcap
{
\definecolor{dialinecolor}{rgb}{0.000000, 0.000000, 0.000000}
\pgfsetfillcolor{dialinecolor}
% was here!!!
\definecolor{dialinecolor}{rgb}{0.000000, 0.000000, 0.000000}
\pgfsetstrokecolor{dialinecolor}
\draw (-1.240930\du,1.381295\du)--(-1.233590\du,3.357825\du);
}
\pgfsetlinewidth{0.060000\du}
\pgfsetdash{}{0pt}
\pgfsetdash{}{0pt}
\pgfsetbuttcap
{
\definecolor{dialinecolor}{rgb}{0.000000, 0.000000, 0.000000}
\pgfsetfillcolor{dialinecolor}
% was here!!!
\definecolor{dialinecolor}{rgb}{0.000000, 0.000000, 0.000000}
\pgfsetstrokecolor{dialinecolor}
\draw (3.190610\du,-3.172435\du)--(3.207390\du,1.424415\du);
}
% setfont left to latex
\definecolor{dialinecolor}{rgb}{0.000000, 0.000000, 0.000000}
\pgfsetstrokecolor{dialinecolor}
\node[anchor=west] at (3.083987\du,-2.478507\du){$m^{*}$};
% setfont left to latex
\definecolor{dialinecolor}{rgb}{0.000000, 0.000000, 0.000000}
\pgfsetstrokecolor{dialinecolor}
\node[anchor=west] at (3.129047\du,0.530052\du){$^{*}m$};
\pgfsetlinewidth{0.060000\du}
\pgfsetdash{}{0pt}
\pgfsetdash{}{0pt}
\pgfsetbuttcap
\pgfsetmiterjoin
\pgfsetlinewidth{0.060000\du}
\pgfsetbuttcap
\pgfsetmiterjoin
\pgfsetdash{}{0pt}
\definecolor{dialinecolor}{rgb}{1.000000, 1.000000, 1.000000}
\pgfsetfillcolor{dialinecolor}
\fill (2.816740\du,-1.507055\du)--(2.816740\du,-0.607055\du)--(3.687708\du,-0.607055\du)--(3.687708\du,-1.507055\du)--cycle;
\definecolor{dialinecolor}{rgb}{0.000000, 0.000000, 0.000000}
\pgfsetstrokecolor{dialinecolor}
\draw (2.816740\du,-1.507055\du)--(2.816740\du,-0.607055\du)--(3.687708\du,-0.607055\du)--(3.687708\du,-1.507055\du)--cycle;
\pgfsetbuttcap
\pgfsetmiterjoin
\pgfsetdash{}{0pt}
\definecolor{dialinecolor}{rgb}{0.000000, 0.000000, 0.000000}
\pgfsetstrokecolor{dialinecolor}
\draw (2.816740\du,-1.507055\du)--(2.816740\du,-0.607055\du)--(3.687708\du,-0.607055\du)--(3.687708\du,-1.507055\du)--cycle;
% setfont left to latex
\definecolor{dialinecolor}{rgb}{0.000000, 0.000000, 0.000000}
\pgfsetstrokecolor{dialinecolor}
\node[anchor=west] at (3.687708\du,-1.057055\du){$^*a_m$};
% setfont left to latex
\definecolor{dialinecolor}{rgb}{0.000000, 0.000000, 0.000000}
\pgfsetstrokecolor{dialinecolor}
\node[anchor=west] at (-1.368088\du,2.489190\du){$^{*}m$};
\end{tikzpicture}
\end{equation}
\end{lemma}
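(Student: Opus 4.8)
The statement to prove is Lemma~\ref{lemma:techh-frob}: for a special symmetric normalized Frobenius algebra $A$ in a pivotal finite tensor category $\Cat{C}$ and any $m \in \ModCA$, the two displayed morphisms of $\Cat{C}$ agree. Both morphisms are built from the multiplication/comultiplication of $A$, the module multiplication $m \otimes A \to m$, the duality (co)evaluations of $\Cat{C}$, and the pivotal structure $a$. The plan is to verify the identity purely by graphical calculus, reducing both sides to a common normal form using only (i) the Frobenius relations~(\ref{eq:Frob-equ-graphical}), (ii) associativity and the module axiom for the $A$-action on $m$, (iii) the symmetry condition~(\ref{symmetric-then-inverse}), normalization, and specialness~(\ref{special-diagram}), and (iv) naturality and snake identities for the duality, together with the defining monoidal-naturality property of $a$ relating $a_m \colon m \to m^{**}$, $a_A$, and the product and duals.

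First I would introduce names for the two sides, say $L$ and $R$, and rewrite each as an endomorphism factoring through the projector~(\ref{equation:projector1}): the key observation is that both $L$ and $R$ arise from closing up the projector $P_{m,m}$ against the coevaluation $A \to m \tensor{A} \leftidx{^*}{m}{}$ (coming from the Frobenius coalgebra structure) and an evaluation-type map back to $A$, the difference between the two being only \emph{which} leg of $m$ is ``turned around'' using $a_m$ (left dual on one side, right dual on the other). So the assertion is really that the symmetric Frobenius structure makes these two turnarounds coincide. Concretely, I would use the symmetry relation~(\ref{symmetric-then-inverse}) — which says precisely that the natural map $A^* \to {}^*A$ induced by the Frobenius form equals the one induced by ${}^*a_A$ — to move the $A$-cap from one side of the annular diagram to the other, then push the $A$-strand through the module action using the module axiom, and finally use the snake identities to absorb the remaining evaluation/coevaluation pairs. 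The normalization $\beta_A = 1$ is what ensures no stray scalar appears; specialness and the counit are used only insofar as they enter the definition of the projector $P_{m,m}$, which is the same on both sides and hence need not be unwound.

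The main obstacle I expect is bookkeeping of the pivotal structure: the lemma involves $a_m$ and implicitly $a_A$, and the diagrams carry both left duals $\leftidx{^*}{(-)}{}$ and right duals $(-)^*$ of $m$ and $A$. The crucial nontrivial input is that $a$ is a \emph{monoidal} natural isomorphism, so that $a_{A} = {}^{**}(\text{mult}) \circ (a_A \otimes a_A) \circ \dots$ is compatible with the Frobenius product, and likewise $a_m$ is compatible with the $A$-action; combining this with symmetry~(\ref{symmetric-then-inverse}) is where the proof actually ``happens''. I would organize the calculation as a short chain of labeled graphical equalities, each justified by exactly one of the four ingredients above, taking care that in any displayed \texttt{equation} no blank line intervenes. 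Once the two sides are brought to the same closed-diagram normal form, the lemma follows. The remaining steps are routine diagram manipulations and are omitted from this sketch.
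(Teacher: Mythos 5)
Your proposal takes essentially the same route as the paper's (one-line) proof: apply the symmetry relation (\ref{symmetric-then-inverse}) to one side and then use the monoidal naturality of the pivotal structure to trade the resulting right dual of a morphism for its left dual, with the rest being routine naturality and snake-identity bookkeeping. The only quibble is your framing of the two sides as closures of the projector $P_{m,m}$ — the diagrams in the lemma contain only the unit of $A$ and the module action (no comultiplication), so specialness and normalization play no role here, but this does not affect the correctness of the core argument.
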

\begin{proof}
 This follows from a straightforward computation using the diagrammatic calculus: First use equation (\ref{symmetric-then-inverse}) on the left hand side, then the pivotal structure to transform the resulting right dual of a morphism in $\Cat{C}$ to the left dual. 
\end{proof}
Using this lemma, the following proposition is straightforward to show using the diagrammatic calculus. 
\begin{proposition}
  \label{proposition:crucial-Frob}
Let $\Cat{C}$ be a pivotal finite tensor category and $A \in \Cat{C}$ a special symmetric normalized Frobenius algebra.
The following diagram of morphisms in $\Cat{C}$ commutes
\begin{equation}
  \label{eq:comm-diag-proj}
  \begin{tikzcd}[column sep=large]
    \begin{array}{r}
      (\widetilde{m} \otimes \leftidx{^*}{m}{})^{*} \\
\simeq m \otimes \widetilde{m}^{*}
    \end{array}
\ar{r}{P^{*}_{\widetilde{m},m}} \ar{d}{1 \otimes \leftidx{^*}{a_{\widetilde{m}}}{}}  & 
\begin{array}{r}
      (\widetilde{m} \otimes \leftidx{^*}{m}{})^{*} \\
\simeq m \otimes \widetilde{m}^{*}
\end{array}
\ar{d}{1 \otimes \leftidx{^*}{a_{\widetilde{m}}}{}}  \\
m \otimes \leftidx{^*}{\widetilde{m}}{} 
\ar{r}{P_{m,\widetilde{m}}} & m \otimes \leftidx{^*}{ \widetilde{m}}{}.
  \end{tikzcd}
\end{equation}
\end{proposition}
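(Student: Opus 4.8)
The plan is to verify the commutativity of diagram~(\ref{eq:comm-diag-proj}) by a direct computation in the graphical calculus for the pivotal tensor category $\Cat{C}$, using Lemma~\ref{lemma:techh-frob} as the essential input. First I would unfold all the morphisms: the vertical maps $1 \otimes \leftidx{^*}{a_{\widetilde{m}}}{}$ are the obvious isomorphisms coming from the identification $m \otimes \widetilde{m}^{*} \simeq (\widetilde{m} \otimes \leftidx{^*}{m}{})^{*}$ together with the pivotal structure $a$ applied (in dualized form) to $\widetilde{m}$; the horizontal maps are the transpose $P^{*}_{\widetilde{m},m}$ of the projector from~(\ref{equation:projector1}) on the top, and the projector $P_{m,\widetilde{m}}$ itself on the bottom. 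Both projectors are built from the multiplication, comultiplication, unit, and counit of the special symmetric normalized Frobenius algebra $A$, composed with the module action and with cups and caps for the dualities on $m$ and $\widetilde{m}$.

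The key observation is that, up to the canonical identifications $m \otimes \widetilde{m}^* \simeq (\widetilde{m} \otimes \leftidx{^*}{m}{})^*$, taking the dual of the projector $P_{\widetilde{m},m}: \widetilde{m} \otimes \leftidx{^*}{m}{} \to \widetilde{m} \otimes \leftidx{^*}{m}{}$ and then conjugating by the pivotal isomorphism $1 \otimes \leftidx{^*}{a_{\widetilde m}}{}$ produces precisely the projector $P_{m,\widetilde{m}}: m \otimes \leftidx{^*}{\widetilde{m}}{} \to m \otimes \leftidx{^*}{\widetilde{m}}{}$. The graphical content is exactly the identity recorded in Lemma~\ref{lemma:techh-frob}, equation~(\ref{eq:left-right-prof}): that lemma says that the two evident ways of closing off a strand of $A$ around a module object $m$ --- one using $a_m$ and the right-dual structure, one using $\leftidx{^*}{a_m}{}$ and the left-dual structure --- agree for a special symmetric normalized Frobenius algebra. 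So concretely, I would draw $P^*_{\widetilde m,m}$ (reflecting the diagram~(\ref{equation:projector1}) and relabelling the strands as $m^*$ and $\leftidx{^*}{m}{}$), precompose and postcompose with the pivotal isomorphisms $1\otimes \leftidx{^*}{a_{\widetilde m}}{}$, and then invoke Lemma~\ref{lemma:techh-frob} to slide the $a$-box across, which converts the picture strand by strand into the diagram for $P_{m,\widetilde m}$. The symmetry of the Frobenius algebra (equation~(\ref{symmetric-then-inverse})) is what guarantees the coevaluation/evaluation swap in~(\ref{eq:left-right-prof}) has no extra factors, and normalization ($\beta_A = 1$) ensures that no scalar discrepancy appears.

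The main obstacle I anticipate is purely bookkeeping: keeping track of which duals are left and which are right at each strand, and checking that the canonical isomorphism $m \otimes \widetilde{m}^* \simeq (\widetilde{m} \otimes \leftidx{^*}{m}{})^*$ intertwines the transpose operation with the correct cups and caps. In other words, the real work is confirming that $P^*_{\widetilde m,m}$, when read through this identification, literally is the "mirror image" graph of $P_{\widetilde m,m}$, and that the pivotal maps on both sides are the unique coherent isomorphisms forced by the pivotal structure; once that is set up, Lemma~\ref{lemma:techh-frob} closes the argument essentially on the nose. I would therefore present the proof as: (i) write both composites as explicit string diagrams; (ii) apply Lemma~\ref{lemma:techh-frob} to the $A$-loop around $\widetilde m$; (iii) observe that the resulting diagrams coincide with $P_{m,\widetilde m}$ on both paths, using only isotopy and the defining Frobenius relations~(\ref{eq:Frob-equ-graphical}). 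This is exactly the "straightforward computation using the diagrammatic calculus" referred to in the paper, so I would keep the write-up terse and point the reader to the two string-diagram manipulations rather than reproducing every intermediate picture.
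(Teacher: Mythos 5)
Your proposal matches the paper's argument: the paper likewise disposes of this proposition by remarking that it is "straightforward to show using the diagrammatic calculus" once Lemma \ref{lemma:techh-frob} is in hand, which is exactly the reduction you carry out (identify $P^{*}_{\widetilde{m},m}$ through the canonical isomorphism, then use equation (\ref{eq:left-right-prof}) to slide the pivotal box across the $A$-loop). Your bookkeeping of left versus right duals and the role of symmetry and normalization is consistent with the intended computation, so there is nothing to add.
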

This result allows to construct inner-product module categories from Frobenius algebras. 

\begin{theorem}
\label{theorem:Frob-then-inner-prod}
  Let $\Cat{C}$ be a pivotal finite tensor category and  $A \in \Cat{C}$ a  special symmetric normalized Frobenius algebra. Then the pivotal structure of $\Cat{C}$ induces the structure 
of an inner-product module category on $\CM= \ModCA$. 
\end{theorem}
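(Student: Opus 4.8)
The plan is to construct the bimodule natural isomorphism $I^{\Cat{M}}_{m,\widetilde{m}}: \idm{m,\widetilde{m}} \simeq \idmstar{\widetilde{m},m}$ explicitly, using the computation of the inner hom objects for $\ModCA$ from Example \ref{example:inner-homs-algebra}. Recall that for $\Cat{M} = \ModCA$ regarded as a left $\CMstar = \AModCA$-module category, we have $\icdualm{m,\widetilde{m}} = \leftidx{^*}{(\widetilde{m} \otimes m^{*})}{}$, while by Lemma \ref{lemma:convent-inner-hom} and the duality in $\Cat{C}$ one also gets a description of the $\Cat{C}$-valued inner hom $\icm{n,\widetilde n} = n \tensor{A} \leftidx{^*}{\widetilde n}{}$. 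For a special symmetric normalized Frobenius algebra, the projector $P_{m,\widetilde m}$ from equation (\ref{equation:projector1}) realizes $m \tensor{A} \leftidx{^*}{\widetilde m}{}$ as a retract of $m \otimes \leftidx{^*}{\widetilde m}{}$, and dually $P^{*}_{\widetilde m,m}$ realizes $(\widetilde m \otimes \leftidx{^*}{m}{})^{*} \simeq m \otimes \widetilde m^{*}$ — which is the dual inner hom object $\idmstar{\widetilde m, m}$ — as a retract of $m \otimes \widetilde m^{*}$. First I would assemble from Example \ref{example:inner-homs-algebra}, Lemma \ref{lemma:inner-hom-and-duals} and the pivotal structure of $\Cat{C}$ the precise identifications of $\idm{m,\widetilde m}$ and $\idmstar{\widetilde m,m}$ as images of these two projectors on $m \otimes \leftidx{^*}{\widetilde m}{}$ and $m \otimes \widetilde m^{*}$ respectively.

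Next, the key step: Proposition \ref{proposition:crucial-Frob} says exactly that the isomorphism $1 \otimes \leftidx{^*}{a_{\widetilde m}}{}: m \otimes \widetilde m^{*} \to m \otimes \leftidx{^*}{\widetilde m}{}$ (built from the pivotal structure) intertwines $P^{*}_{\widetilde m,m}$ with $P_{m,\widetilde m}$. Therefore it descends to an isomorphism between the respective retracts, which is precisely a candidate for $I^{\Cat{M}}_{m,\widetilde m}: \idm{m,\widetilde m} \xrightarrow{\sim} \idmstar{\widetilde m, m}$. So the construction of the underlying natural isomorphism is essentially immediate from Proposition \ref{proposition:crucial-Frob} once the bookkeeping of identifications is in place. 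I would then check naturality in $m$ and $\widetilde m$ (both entries, covariant/contravariant), which follows because $P_{m,\widetilde m}$, $P^{*}_{\widetilde m,m}$ and the pivotal morphisms $a_{\widetilde m}$ are all natural and the module multiplication morphisms appearing in the projectors intertwine correctly.

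The remaining work is to verify that $I^{\Cat{M}}$ is a \emph{bimodule} natural isomorphism of bimodule functors $\DM \boxtimes \MDld \to \DDD$ in the sense of Definition \ref{definition:inner-prod-bimodule}, i.e. that it is compatible with the left $\Cat{D} = \CMstar$-module structures on both sides (equations (\ref{eq:idm-D-module})) and with the $\Cat{D}$-balancing. Here I would use the diagrammatic calculus: the left $\AModCA$-action on inner homs is given by tensoring with the bimodule $x \in \AModCA$, and both the module constraint isomorphisms and the balancing isomorphisms for $\idm{-,-}$ and $\idmstar{-,-}$ are built from the obvious moves on $x \otimes (-)$; that $1 \otimes \leftidx{^*}{a}{}$ commutes with all of these is a straightforward graphical verification, using naturality and monoidality of the pivotal structure together with Lemma \ref{lemma:techh-frob} to handle the passage between left and right duals. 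Finally, by Remark \ref{remark:inner-prod-ex} an inner-product module category is automatically exact; alternatively this follows since $\ModCA$ for separable (in particular special symmetric Frobenius) $A$ is exact by Lemma \ref{lemma:sep-ex}, so Definition \ref{definition:inner-prod-bimodule} is satisfied and $\CM$ is a $\Cat{C}$-inner-product module category.

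I expect the main obstacle to be precisely this last verification of bimodule-functor compatibility: matching up the $\CMstar$-module and balancing constraints of $\idm{-,-}$ and $\idmstar{-,-}$ through the projector identifications and the pivotal morphism requires careful diagrammatic manipulation, since one must keep track of how the Frobenius (co)multiplication, the module action, and the pivotal coherence isomorphisms interact. The isolated identity doing the real work is Proposition \ref{proposition:crucial-Frob} (which in turn rests on Lemma \ref{lemma:techh-frob}), so the strategy is to reduce everything to repeated application of that proposition together with naturality; once that reduction is set up cleanly, no further genuinely new input is needed.
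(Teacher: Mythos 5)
Your proposal matches the paper's proof essentially step for step: both identify $\icm{m,\widetilde m}=m\tensor{A}\leftidx{^*}{\widetilde m}{}$ via Example \ref{example:inner-homs-algebra}, realize it and its dual as images of the projectors $P_{m,\widetilde m}$ and $P^{*}_{\widetilde m,m}$, invoke Proposition \ref{proposition:crucial-Frob} to descend $1\otimes\leftidx{^*}{a_{\widetilde m}}{}$ to the retracts, and then check naturality and the (bi)module compatibility using monoidality of the pivotal structure. The only slips are cosmetic: the relevant module structure for the verification is the left $\Cat{C}$-action (not the $\CMstar$-action you emphasize), and the ambient object for $P^{*}_{\widetilde m,m}$ is $(\widetilde m\otimes\leftidx{^*}{m}{})^{*}$ with retract $(\widetilde m\tensor{A}\leftidx{^*}{m}{})^{*}=\icmstar{\widetilde m,m}$, but neither affects the argument.
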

\begin{proof}
According to Example \ref{example:inner-homs-algebra}, the inner hom of  $\CM= \ModCA$ is given by $\icm{m,\widetilde{m}}= m \tensor{A} \leftidx{^*}{\widetilde{m}}{}$. Thus we 
have to construct a natural isomorphism $$ \icmstar{\widetilde{m},m}=(\widetilde{m} \tensor{A} \leftidx{^*}{m}{})^{*} \rightarrow m \tensor{A} \leftidx{^*}{\widetilde{m}}{}.$$
  Since $P_{\widetilde{m},m}: \widetilde{m} \otimes \leftidx{^*}{m}{} \rightarrow \widetilde{m} \otimes \leftidx{^*}{m}{}$ is a projector onto $\widetilde{m} \tensor{A} \leftidx{^*}{m}{}$, it follows that 
also its right dual $P_{\widetilde{m},m}^{*}$ is a projector, indeed it projects onto $(\widetilde{m} \tensor{A} \leftidx{^*}{m}{})^{*}$. The commutativity of the diagram (\ref{eq:comm-diag-proj}) shows 
that the pivotal structure of $\Cat{C}$ descends to an isomorphism $$1 \tensor{A} \leftidx{^*}{a_{\widetilde{m}}}{}:(\widetilde{m} \tensor{A} \leftidx{^*}{m}{})^{*} \rightarrow m \tensor{A} \leftidx{^*}{\widetilde{m}}{}.$$
It is clear that this isomorphism is natural in $m$ and $\widetilde{m}$, so that we are left with showing that it is a bimodule natural isomorphism. 
Since it induced from the identity on  $m \in \Cat{M}$, it is clearly a module natural isomorphism with respect to $m$. In the other argument it follows from the fact that the pivotal structure is a monoidal 
natural isomorphism, that $1 \tensor{A} \leftidx{^*}{a_{\widetilde{m}}}{}$ respects the module structure. This implies that $\ModCA$ is an inner-product module category. 
\end{proof}
Examples of special symmetric Frobenius algebras in finite tensor categories are obtained from certain coends in \cite{SchwHopf}. 
\subsection{Inner-product bimodule categories in the  semisimple case}
We finally consider the case of semisimple bimodule categories over pivotal fusion categories.
For a semisimple module category $\CM$ over a pivotal fusion category $\Cat{C}$, a module trace is a $\Cat{C}$-balanced natural isomorphism 
$\Hom_{\Cat{M}}(\widetilde{m},m) \simeq \Hom_{\Cat{M}}(m,\widetilde{m})^{*}$. In \cite{Moduletr} it is shown that there is a one to one correspondence between 
module categories with module traces and special symmetric Frobenius algebras in $\Cat{C}$. 
In view of Theorem \ref{theorem:Frob-then-inner-prod} a module trace thus provides an inner-product module category. 
In the sequel we  clarify the relation of inner-product module categories and module categories with module traces more directly. 

Let $\Cat{C}$ be a pivotal tensor category with pivotal structure $a: \id \rightarrow (-)^{**}$. Then for every endomorphism $f: c\rightarrow c$ of an object $c \in \Cat{C}$ 
there exists the left and right trace $\tr^{L}(f), \tr^{R}(f) \in \End(\unit_{\Cat{C}})$, see Definition \ref{definition:pivotal-str} \refitem{item:traces}.  To emphasis  the dependence of the pivotal structure, 
we sometimes write $\tr^{L,a}$ and $\tr^{R,a}$. 
We recall the existence of conjugate pivotal structure for pivotal fusion categories. 
\begin{proposition}
  \label{proposition:conjugate-piv}
Let $\Cat{C}$ be a pivotal fusion category with pivotal structure $a: \id \rightarrow (-)^{**}$. There exists a pivotal structure $\cc{a}$ for $\Cat{C}$ that is 
uniquely characterized by the property $\tr^{R,\cc{a}}(f)= \tr^{L,a}(f)$ for all $f:c \rightarrow c$ and all $c \in \Cat{C}$. 
\end{proposition}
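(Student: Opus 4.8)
The plan is to construct $\cc{a}$ directly and then verify uniqueness. Since $\Cat{C}$ is a fusion category, every object is a finite direct sum of simple objects, and a monoidal natural isomorphism $\cc{a}\colon \id \rightarrow (-)^{**}$ is determined by its components $\cc{a}_{X}\colon X \rightarrow X^{**}$ on the (finitely many) simple objects $X$, subject to the tensoriality constraint. First I would recall that the set of pivotal structures for a fixed duality is a torsor over the group of monoidal natural automorphisms of $\id_{\Cat{C}}$, i.e.\ over $\Hom(\mathrm{U}(\Cat{C}),\Bbbk^{\times})$ where $\mathrm{U}(\Cat{C})$ is the universal grading group; equivalently, any two pivotal structures differ by a character $\chi$ of the fusion rules with $\chi(X)\in\Bbbk^{\times}$ on simples, multiplicative under tensor product. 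So it suffices to produce the correct character $\chi$ relating $a$ and $\cc{a}$.

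The key computation is the effect of rescaling the pivotal structure on traces. For a pivotal structure $a$ and a scalar assignment $\chi$ on simples, set $a'_{X}=\chi(X)\,a_{X}$. Using the definitions of $\tr^{L,a}$ and $\tr^{R,a}$ from Definition \ref{definition:pivotal-str}\refitem{item:traces} (they are built from $\ev{}$, $\coev{}$ and the pivotal isomorphism, inserted on opposite sides), one finds that $\tr^{L,a'}(f)=\chi(X)^{-1}\tr^{L,a}(f)$ and $\tr^{R,a'}(f)=\chi(X)\,\tr^{R,a}(f)$ for $f\colon X\rightarrow X$ with $X$ simple, hence by additivity for all objects. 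For a simple object $X$, write $d^{L}(X)=\tr^{L,a}(\id_{X})$ and $d^{R}(X)=\tr^{R,a}(\id_{X})$ for the left and right dimensions; in a fusion category these are nonzero. The requirement $\tr^{R,\cc{a}}(f)=\tr^{L,a}(f)$ applied to $f=\id_{X}$ forces $\chi(X)\,d^{R}(X)=d^{L}(X)$, i.e.\ $\chi(X)=d^{L}(X)/d^{R}(X)$. One then checks that this $\chi$ is indeed a character: $d^{L}$ and $d^{R}$ are each multiplicative on simples up to the fusion coefficients in the same way (both are ring homomorphisms $Gr(\Cat{C})\to\Bbbk$), so their ratio is multiplicative, and $\chi(\unit)=1$. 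This defines $\cc{a}:=\chi\cdot a$, a genuine pivotal structure.

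It remains to verify that with this choice $\tr^{R,\cc{a}}(f)=\tr^{L,a}(f)$ holds for \emph{all} $f$, not just identities: by the rescaling formula $\tr^{R,\cc{a}}(f)=\chi(X)\tr^{R,a}(f)$ on a simple $X$, and one must show $\chi(X)\tr^{R,a}(f)=\tr^{L,a}(f)$. Here I would use that for a simple object $X$ the trace of any endomorphism is a scalar multiple of the trace of the identity (since $\End(X)=\Bbbk$), namely $f=\mu\,\id_{X}$ gives $\tr^{R,a}(f)=\mu\, d^{R}(X)$ and $\tr^{L,a}(f)=\mu\, d^{L}(X)$, so the identity reduces to $\chi(X)d^{R}(X)=d^{L}(X)$, which is exactly how $\chi$ was defined; additivity over simples then gives the general case. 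Uniqueness of $\cc{a}$ is immediate: any pivotal structure satisfying the property differs from $a$ by some character $\psi$, and the trace condition on identities of simples forces $\psi(X)=d^{L}(X)/d^{R}(X)=\chi(X)$, so $\psi=\chi$.

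The main obstacle I anticipate is purely bookkeeping: getting the rescaling signs/exponents right in the formulas $\tr^{L,a'}=\chi^{-1}\tr^{L,a}$ and $\tr^{R,a'}=\chi\,\tr^{R,a}$, which depends on the precise convention for how the pivotal isomorphism enters the left versus right trace in Definition \ref{definition:pivotal-str}. Once that asymmetry is pinned down the rest is forced. (Semisimplicity is used twice essentially: to reduce everything to simple objects where $\End$ is one-dimensional and the dimensions are invertible, and to know that the ratio $d^{L}/d^{R}$ genuinely defines a character of the fusion ring.)
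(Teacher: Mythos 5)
Your overall framework — pivotal structures form a torsor over monoidal natural automorphisms of $\id_{\Cat{C}}$, rescaling $a$ by such an automorphism $\chi$ multiplies $\tr^{R}$ by $\chi(X)$ and $\tr^{L}$ by $\chi(X)^{-1}$ on a simple $X$, so the trace condition forces $\chi(X)=d^{L}(X)/d^{R}(X)$ — is correct, and so are the reduction to identities via $\End(X)=\Bbbk$, the nonvanishing of $d^{L},d^{R}$ on simples, and the uniqueness argument. But there is a genuine gap at the one step that carries all the content: the claim that $\chi(X)=d^{L}(X)/d^{R}(X)$ actually defines a monoidal natural automorphism of the identity. Your justification — "$d^{L}$ and $d^{R}$ are both ring homomorphisms $Gr(\Cat{C})\to\Bbbk$, so their ratio is multiplicative" — is a non sequitur. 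A monoidal automorphism of $\id_{\Cat{C}}$ must satisfy $\chi(Z)=\chi(X)\chi(Y)$ for \emph{every} simple constituent $Z$ of $X\otimes Y$; the ring-homomorphism property of $d^{L}=\chi\cdot d^{R}$ and $d^{R}$ only yields the single weighted linear relation $\sum_{Z}N_{XY}^{Z}\bigl(\chi(Z)-\chi(X)\chi(Y)\bigr)d^{R}(Z)=0$, which does not determine the individual $\chi(Z)$. (Ratios of two ring homomorphisms on a fusion ring are in general not grading characters: for the Fibonacci fusion ring the two embeddings into $\Bbbk$ have a non-constant ratio, while the universal grading group is trivial.) Without this step, $\cc{a}:=\chi\cdot a$ is merely a family of scalars on simples and is not known to be natural or monoidal, so existence is not established.

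The paper closes exactly this gap by a different route: it invokes the canonical monoidal natural isomorphism $\delta\colon\id\to(-)^{****}$ that exists for any fusion category (Etingof--Nikshych--Ostrik), and defines $\cc{a}$ essentially as $((-)^{**}(a))^{-1}\circ\delta$, which is automatically a monoidal natural isomorphism $\id\to(-)^{**}$; the trace identity is then a computation (carried out in the cited reference on module traces). In your language, $\delta$ is what guarantees that $a^{**}\circ a$ differs from a canonical reference point by an honest character, and hence that $d^{L}/d^{R}$ is one. So your proof can be repaired, but only by importing this nontrivial theorem (or an equivalent), which your argument currently replaces with an invalid formal manipulation.
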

\begin{proof}
  This follows from the existence of a canonical monoidal natural isomorphism $\id \rightarrow (-)^{****}$ for fusion categories, see \cite{ENOfus}.  Explicitly, the conjugate pivotal structure is constructed in \cite[Sec. 4.3]{Moduletr}. 
The property  $\tr^{R,\cc{a}}(f)= \tr^{L,a}(f)$ is shown in the proof of \cite[Prop. 4.10]{Moduletr}.
\end{proof}
In particular, a pivotal fusion category is spherical if and only if the pivotal structure satisfies $a= \cc{a}$.
If $\Cat{C}$ is a pivotal fusion category, we denote by $\overline{\Cat{C}}$ the fusion category $\Cat{C}$ equipped with the conjugate pivotal structure.

Let $\Cat{C}, \Cat{D}$ be pivotal fusion categories, $\DMC$ a bimodule category. While the $\Hom_{\Cat{M}}$-functor always defines a multi-balanced functor $\Hom_{}: \CMDld \boxtimes \DMC \rightarrow \Vect$ in the sense of Definition \ref{definition:multi-bal-to-Vect}, see Example \ref{example:hom-mb}, a multi-balancing structure of the dual $\Hom_{}$-functor depends on the choice of pivotal structures for $\Cat{C}$ and $\Cat{D}$. 
We will consider the functor $\op{M} \boxtimes \Cat{M} \ni \widetilde{m} \boxtimes m \mapsto \Hom_{\Cat{M}}(m, \widetilde{m})^{*} \in \Vect$ as 
multi-balanced functor  $\CMDcld \boxtimes \DcMC \rightarrow \Vect $ and  $\CcMDld \boxtimes \DMCc \rightarrow \Vect $.

For the unit bimodule category $\CCC$ we obtain the following interpretation of the left and right traces in $\Cat{C}$. 
\begin{lemma} 
  \label{lemma:piv-str-conj-mb}
Let $\Cat{C}$ be a pivotal fusion category. 
\begin{lemmalist}
  \item The left trace $\tr^{L,a}$ defines a multi-balanced
natural isomorphism
\begin{equation}
  \label{eq:left-multi-bal}
  \tr^{L,a}: \Hom_{\Cat{C}}(\widetilde{c},c) \simeq \Hom_{\Cat{C}}(c, \widetilde{c})^{*},
\end{equation}
between multi-balanced functors $ \CcCCld \boxtimes \CCCc  \rightarrow \Vect$. 
\item The right trace $\tr^{R,a}$ defines a multi-balanced
natural isomorphism
\begin{equation} 
  \label{eq:right-multi-bal}
  \tr^{R,a}: \Hom_{\Cat{C}}(\widetilde{c},c) \simeq \Hom_{\Cat{C}}(c, \widetilde{c})^{*},
\end{equation}
between multi-balanced functors $ \CCCcld \boxtimes \CcCC  \rightarrow \Vect$. 
\end{lemmalist}
\end{lemma}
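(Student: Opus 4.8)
The statement is essentially a reformulation of the definition of the left and right traces in a pivotal tensor category, combined with the observation that the trace maps are compatible with the tensor product structure of $\Cat{C}$ acting on itself from both sides. The plan is to treat part \emph{i)} in detail and deduce part \emph{ii)} by the evident symmetric argument (or, better, by passing to $\cc{\Cat{C}}$ and applying Proposition \ref{proposition:conjugate-piv}, which exchanges $\tr^{L,a}$ and $\tr^{R,\cc{a}}$ and hence exchanges the two statements). Recall from Example \ref{example:inner-product}\refitem{item:can-inn-prod} that for $\CCC$ the inner homs are $\icgen{x,\widetilde x}=x\otimes\leftidx{^*}{\widetilde x}{}$ and $\igenc{\widetilde x,x}=\widetilde x^{*}\otimes x$, so that the functor $\Hom_{\Cat{C}}(\widetilde c,c)$ is computed by $\IHom$-adjunction and we are comparing it, after dualizing, with $\Hom_{\Cat{C}}(c,\widetilde c)$. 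The first step is therefore to write down the natural isomorphism (\ref{eq:left-multi-bal}) explicitly: using the pivotal structure $a$, the duality data of $\Cat{C}$, and the definition of $\tr^{L,a}$ as $\tr^L(f)=\ev{\cc{c}}\circ(a_c\otimes 1)\circ(f\otimes 1)\circ\coev{c}$ (Definition \ref{definition:pivotal-str}\refitem{item:traces}), one obtains for each $f\colon c\to\widetilde c$ a linear functional on $\Hom_{\Cat{C}}(c,\widetilde c)$; nondegeneracy and hence invertibility of the induced map follows because $\Cat{C}$ is a fusion category (the trace pairing is nondegenerate on each simple block).

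First I would verify that $\tr^{L,a}$ is a natural transformation of functors $\op{\Cat{C}}\boxtimes\Cat{C}\to\Vect$ — this is the standard cyclicity/naturality of pivotal traces and is immediate from the monoidal naturality of $a$ and the axioms for the duality morphisms $\ev{}$, $\coev{}$. Next comes the heart of the matter: checking the two balancing conditions. The $\Cat{C}$-balancing in the "right" slot (the $\CCCc$ factor) means compatibility with the natural isomorphism $\Hom_{\Cat{C}}(\widetilde c, x\otimes c)\simeq\Hom_{\Cat{C}}(\widetilde c\otimes x^{**}, c)$ coming from the decoration $\cc{\Cat{C}}$ on the right, while the balancing in the "left" slot (the $\CcCC$ factor) involves the conjugate decoration and the isomorphism on $\Hom_{\Cat{C}}(c,\widetilde c)^{*}$. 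Concretely, one must show that the square relating $\tr^{L,a}$ applied to $x\act\widetilde c$ versus $\widetilde c\ractld x$ (i.e.\ $\widetilde c\otimes x^{*}$ on one side, translated through the decoration) commutes. This reduces, after inserting definitions, to the identity $\tr^{L,a}(g\circ h)=\tr^{L,a}(h\circ g)$ for composable morphisms together with the interaction $\tr^{L,a}$ of a morphism $c\otimes x\to \widetilde c\otimes x$ obtained by tensoring with $1_x$ — i.e.\ partial-trace identities — and these in turn are exactly the defining compatibilities of a pivotal structure with $\otimes$. I would carry this out in the graphical calculus for pivotal categories: draw the trace as a closed loop, slide the $x$-strand around using the monoidal naturality of $a$ and the zig-zag relations, and read off the claimed equality.

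The compatibility of the two balancing structures with each other — the "furthermore both structures are compatible as in Definition \ref{definition:multi-balanced}" clause implicit in being \emph{multi}-balanced — is then a third diagram to check, again purely diagrammatic: it amounts to the fact that sliding a strand on the left past a strand on the right commutes, which is the interchange law in the graphical calculus. I expect the main obstacle to be purely bookkeeping: keeping track of which copy of $\Cat{C}$ carries which pivotal decoration ($\Cat{C}$, $\cc{\Cat{C}}$) so that the balancing isomorphisms being compared are the ones dictated by the source and target categories $\CcCCld\boxtimes\CcCC$ respectively $\CCCcld\boxtimes\CcCC$ in the two items, and matching the direction of the double-dual identifications $\leftidx{^{**}}{c}{}\simeq c$ used in the $\widetilde\#$-operation of Definition \ref{definition:multi-bal-to-Vect} against the pivotal structure $a$ versus $\cc{a}$. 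Once the conventions are fixed, every individual square commutes for a structural reason (monoidal naturality of $a$, zig-zag identities, interchange law), and there is no genuine computation beyond unwinding definitions. Part \emph{ii)} follows from part \emph{i)} applied to $\cc{\Cat{C}}$ in place of $\Cat{C}$, since by Proposition \ref{proposition:conjugate-piv} we have $\tr^{R,a}=\tr^{L,\cc{a}}$ and passing to the conjugate pivotal structure interchanges the roles of the decorated factors $\CCCc\leftrightarrow\CcCC$.
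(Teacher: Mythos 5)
Your proposal is correct and follows essentially the same route as the paper: nondegeneracy of the trace pairing from semisimplicity, reduction of each balancing condition to a commuting square of $\Hom$-spaces, and verification of that square in the graphical calculus, with part \emph{ii)} handled symmetrically. The one point to state more sharply is that the commutativity of the balancing square is \emph{not} a generic "partial-trace" compatibility of $a$ with $\otimes$ (which would fail for a non-spherical $a$) but is precisely the identity $\tr^{L,a}=\tr^{R,\cc{a}}$ characterizing the conjugate pivotal structure in Proposition \ref{proposition:conjugate-piv} — you do invoke this, but only explicitly when deducing part \emph{ii)}, whereas it is already the crux of part \emph{i)}.
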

\begin{proof}
  Due to the semisimplicity of $\Cat{C}$, $\tr^{L,a}$ is an isomorphism. To show that $\tr^{L,a}$ is balanced with respect to the action of $\cc{\Cat{C}}$ amounts to the commutativity of 
the following  diagram  for $\widetilde{c},c,x \in \Cat{C}$
\begin{equation}
  \label{eq:comm-diag-bal-piv}
  \begin{tikzcd}
    \Hom_{}(\widetilde{c}, c \otimes x) \ar{r}{\tr^{L,a}} \ar{dd}{\simeq} & \Hom_{}(c \otimes x, \widetilde{c})^{*} \ar{d}{\simeq} \\
& \Hom_{}(c, \widetilde{c} \otimes x^{*}) \ar{d}{\Hom_{}(1, 1 \otimes \leftidx{^*}{\cc{a}_{x}}{})}\\
 \Hom_{}(\widetilde{c} \otimes \leftidx{^*}{x}{},c) \ar{r}{\tr^{L,a}} &\Hom_{}(c, \widetilde{c} \otimes \leftidx{^*}{x}{})^{*}.
  \end{tikzcd}
\end{equation}
This diagram commutes due to the characterization of the conjugate pivotal structure $\cc{a}$ in Proposition \ref{proposition:conjugate-piv}, as can be seen most easily using the diagrammatic calculus for $\Cat{C}$. 
The balancing with respect to $\Cat{C}$ and the second part are shown analogously. 
\end{proof}
Note that in the case of non-semisimple pivotal finite tensor categories, the pairings induced by the pivotal structure are degenerate \cite[Prop. 5.7]{DeligneSes}.

Lemma \ref{lemma:piv-str-conj-mb} allows us to characterize inner-product bimodule categories in the semisimple case. 
\begin{theorem}
\label{theorem:chara-inn-prod-fusion}
  Let $\Cat{C}$, $\Cat{D}$ be pivotal fusion categories and $\DMC$ a semisimple bimodule category. The structure of a inner-product bimodule category on $\DMC$ is equivalent to the collection of the following  
structures: 
\begin{itemize}
\item A multi-balanced natural isomorphism $\eta^{L}: \Hom_{\Cat{M}}(m, \widetilde{m}) \simeq \Hom_{\Cat{M}}(\widetilde{m},m)^{*}$ between 
multi-balanced functors $\CMDcld \boxtimes \DcMC \rightarrow \Vect $. 
\item A multi-balanced natural isomorphism $\eta^{R}: \Hom_{\Cat{M}}(m, \widetilde{m}) \simeq \Hom_{\Cat{M}}(\widetilde{m},m)^{*}$ between 
multi-balanced functors $\CcMDld \boxtimes \DMCc \rightarrow \Vect $. 
\end{itemize}
\end{theorem}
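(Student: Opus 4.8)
The plan is to convert the abstract definition of an inner-product bimodule category -- an isomorphism of \emph{inner-hom} functors -- into the two concrete $\Hom$-level isomorphisms, using the representability of the $\Hom$-functor by the inner hom together with the duality in the tensor categories. The starting point is Definition \ref{definition:inner-prod-bimodule}: a $\Cat{D}$-inner-product bimodule structure on $\DMC$ is a $\Cat{C}$-balanced $\Cat{D}$-bimodule natural isomorphism $I^{_{\Cat{D}\!\!}\Cat{M}}_{m,\widetilde{m}}\colon \idm{m,\widetilde{m}}\simeq\idmstar{\widetilde{m},m}$ of functors $\DMC\boxtimes\CMDld\to\DDD$, and symmetrically a $\Cat{D}$-balanced $\Cat{C}$-bimodule isomorphism $I^{\Cat{M}_{\Cat{C}}}_{\widetilde{m},m}\colon\imc{\widetilde{m},m}\simeq\imcstar{m,\widetilde{m}}$ of functors $\CMDrd\boxtimes\DMC\to\CCC$.

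First I would handle one direction, say starting from $I^{_{\Cat{D}\!\!}\Cat{M}}$. Applying $\Hom_{\Cat{D}}(-,\unit_{\Cat{D}})$ -- or rather using the defining adjunction $\alpha^{\Cat{M}}_{d,m,\widetilde{m}}$ of the inner hom at $d=\unit_{\Cat{D}}$ -- turns $\idm{m,\widetilde{m}}$ into $\Hom_{\Cat{M}}(m,\widetilde{m})$ and, after using the duality $(-)^{*}\colon\Cat{D}\to\op{D}$, turns $\idmstar{\widetilde{m},m}$ into $\Hom_{\Cat{D}}(\idm{\widetilde{m},m},\unit_{\Cat{D}})^{*}\simeq\Hom_{\Cat{M}}(\widetilde{m},m)^{*}$. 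This yields a natural isomorphism $\eta^{L}\colon\Hom_{\Cat{M}}(m,\widetilde{m})\simeq\Hom_{\Cat{M}}(\widetilde{m},m)^{*}$. The content of the theorem is then to track the balancing data: the $\Cat{C}$-balancing of $I^{_{\Cat{D}\!\!}\Cat{M}}$ becomes, after this translation, exactly the $\Cat{C}$-balancing of $\eta^{L}$ in the sense of Definition \ref{definition:multi-bal-to-Vect} (balanced with respect to $\Cat{C}$ acting on the right on $\Cat{M}$), while the \emph{$\Cat{D}$-bimodule} naturality of $I^{_{\Cat{D}\!\!}\Cat{M}}$, combined with the duality $d\mapsto d^{*}$ that intervenes when comparing $\idmstar{\widetilde m,m}$ with an ordinary inner hom, produces the twist $d\act m\boxtimes n\simeq m\boxtimes n\ract d^{**}$ of equation (\ref{eq:outer-bal}); this is precisely why the target category on that side is $\CMDcld\boxtimes\DcMC$ rather than the untwisted product. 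Concretely this is the same bookkeeping as in Example \ref{example:hom-mb} and Lemma \ref{lemma:piv-str-conj-mb}, applied now with $\Cat{M}$ general instead of the unit bimodule category. The symmetric computation starting from $I^{\Cat{M}_{\Cat{C}}}$, now using the right-hand inner hom $\imc{-,-}$ and the duality of $\Cat{C}$, yields $\eta^{R}$ as a multi-balanced isomorphism of functors $\CcMDld\boxtimes\DMCc\to\Vect$.

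For the converse, I would reverse each step: given $\eta^{L}$ with the stated balancing, reconstruct a balanced bimodule natural isomorphism of inner-hom functors by invoking the module Yoneda lemma (Lemma \ref{lemma:module-yoneda}, part \refitem{item:Yoneda-inner-hom}), which says that balanced bimodule transformations between inner-hom-valued functors correspond bijectively to the underlying multi-balanced natural transformations of $\Hom$-functors; semisimplicity is not needed here but the hypothesis that $\Cat{M}$ is semisimple guarantees the $\Hom$-spaces are finite-dimensional so the dual pairing $\Hom(m,\widetilde m)\otimes\Hom(m,\widetilde m)^{*}\to\Bbbk$ is nondegenerate, which is what makes the two reconstructions mutually inverse. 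One then checks that these two passages (inner-hom isomorphism $\leftrightarrow$ $\Hom$-level isomorphism) are inverse to each other, which is immediate from the naturality of the adjunction isomorphisms $\alpha^{\Cat{M}}$ and of the duality functors.

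\textbf{Main obstacle.} The routine-but-delicate part is the compatibility bookkeeping: verifying that the $\Cat{D}$-bimodule naturality of $I^{_{\Cat{D}\!\!}\Cat{M}}$ translates into the \emph{doubly-dualized} balancing (\ref{eq:outer-bal}) with the \emph{conjugate} pivotal twist, and in particular that the two structures $\eta^{L},\eta^{R}$ one obtains are balanced with respect to the categories decorated by $\cc{(-)}$ in exactly the pattern stated (the asymmetry $\CMDcld\boxtimes\DcMC$ versus $\CcMDld\boxtimes\DMCc$). This is the point where the pivotal structures of $\Cat{C}$ and $\Cat{D}$ genuinely enter, and it is the analogue, for a general bimodule category, of the computation behind Lemma \ref{lemma:piv-str-conj-mb}; I expect it to be a somewhat lengthy diagram chase in the graphical calculus but with no conceptual difficulty once the dictionary between inner homs and $\Hom$-spaces is set up.
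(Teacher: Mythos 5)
Your proposal follows essentially the same route as the paper: pass from the inner-hom isomorphism $I^{_{\Cat{D}\!\!}\Cat{M}}$ to a $\Hom$-level isomorphism via the defining adjunction of the inner hom, use the trace pairing of Lemma \ref{lemma:piv-str-conj-mb} to introduce the linear dual and the conjugate pivotal twist, and invert the construction with the module Yoneda lemma; the paper's chain (\ref{eq:chain-tech}) keeps a general object $d\in\Cat{D}$ rather than specializing to $\unit_{\Cat{D}}$, but that is cosmetic.

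One point is misstated and worth correcting. The identification of $\Hom_{\Cat{D}}(\idmstar{\widetilde m,m},\unit)$ with $\Hom_{\Cat{M}}(\widetilde m,m)^{*}$ is \emph{not} produced by ``the duality $(-)^{*}\colon\Cat{D}\to\op{D}$'' alone: the rigidity adjunction only gives $\Hom_{\Cat{D}}((\idm{\widetilde m,m})^{*},\unit)\simeq\Hom_{\Cat{D}}(\unit,\idm{\widetilde m,m})$, and passing from that to the linear dual $\Hom_{\Cat{D}}(\idm{\widetilde m,m},\unit)^{*}$ requires the composition/trace pairing $\Hom(\unit,x)\otimes\Hom(x,\unit)\to\Bbbk$ to be nondegenerate. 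This is exactly the content of Lemma \ref{lemma:piv-str-conj-mb}, it is where the choice of left versus right trace (hence the asymmetric placement of $\cc{\Cat{C}}$ and $\cc{\Cat{D}}$) comes from, and it is the genuine reason semisimplicity is needed: in a non-semisimple finite tensor category the $\Hom$-spaces are still finite-dimensional but this pairing is degenerate (cf.\ the paper's remark citing \cite{DeligneSes}). Your justification that semisimplicity merely ensures ``the dual pairing $\Hom(m,\widetilde m)\otimes\Hom(m,\widetilde m)^{*}\to\Bbbk$ is nondegenerate'' is a tautology about finite-dimensional vector spaces and does not carry this step; since you do invoke Lemma \ref{lemma:piv-str-conj-mb} elsewhere, the fix is only to route this identification explicitly through the trace pairing.
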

\begin{proof}
Assume that $\DMC$ has the structure of a $\Cat{D}$-inner-product bimodule category with balanced bimodule natural isomorphism $I^{\Cat{D}}: \idm{m,\widetilde{m}} \simeq \idmstar{\widetilde{m},m}$. 
Composing with  $\Hom_{}: \CMDld \boxtimes \DMC \rightarrow \Vect$, $I^{\Cat{D}}$ induces a multi-balanced natural isomorphism
$$ \Hom_{\Cat{D}}(\idm{m,\widetilde{m}},d) \simeq 
\Hom_{\Cat{D}}(\idmstar{\widetilde{m},m},d): \DMClld \boxtimes \CMDld \boxtimes \DDD \rightarrow \Vect.$$
 Here we used Lemma \ref{lemma:dual-and-tensor} to regard 
$\widetilde{m} \boxtimes m \boxtimes d$ as object in   $\DMClld \boxtimes \CMDld \boxtimes \DDD$.
Now consider the following chain of natural isomorphisms
\begin{equation}
  \label{eq:chain-tech}
  \begin{split}
      \Hom_{\Cat{M}}(m, d \act \widetilde{m}) &\simeq        \Hom_{\Cat{D}}(\idm{m,\widetilde{m}},d)        \stackrel{(I^{\Cat{D}})^{-1}}{\simeq}   \Hom_{\Cat{D}}(\idmstar{\widetilde{m},m},d) \\
& \simeq  \Hom_{\Cat{D}}(\leftidx{^*}{d}{}, \idm{\widetilde{m},m})\stackrel{\tr^{L,a}}{\simeq}   \Hom_{\Cat{D}}( \idm{\widetilde{m},m}, \leftidx{^*}{d}{})^{*}   \\
&\simeq \Hom_{\Cat{M}}(\widetilde{m}, \leftidx{^*}{d}{} \act m)^{*} \simeq \Hom_{\Cat{M}}(d \act \widetilde{m},m)^{*}. 
  \end{split}
\end{equation}
We claim that this chain consists of multi-balanced natural isomorphisms between multi-balanced functors  $\DcMClld \boxtimes \CMDld \boxtimes \DDDc \rightarrow \Vect$, 
where all functors with dual $\Hom_{}$-spaces use the pivotal structures for the balancing isomorphisms in all arguments as indicated by the indices. 
The forth natural isomorphism is induced by the left trace in $\Cat{D}$, which is a multi-balanced natural isomorphism between functors $\DcDDld \boxtimes \DDDc\rightarrow \Vect$ by
Lemma \ref{lemma:piv-str-conj-mb}. Composing with the balanced bimodule functor $\idm{-,-}$ thus gives the required multi-balanced natural isomorphism in step four. 
The remaining natural isomorphisms are multi-balanced for every choice of pivotal structure for the fusion category between neighboring arguments.  Hence the composition (\ref{eq:chain-tech}) 
is multi-balanced and thus induces 
a multi-balanced natural isomorphism $\Hom_{\Cat{M}}(m,\widetilde{m}) \simeq \Hom_{\Cat{M}}(\widetilde{m},m)^{*}$ between multi-balanced functors $\CMDcld \boxtimes \DcMC \rightarrow \Vect$. 

Conversely, such a  multi-balanced natural isomorphism  $\Hom_{\Cat{M}}(m,\widetilde{m}) \simeq \Hom_{\Cat{M}}(\widetilde{m},m)^{*}$ induces the structure of a $\Cat{D}$-inner-product bimodule category on $\DMC$ by applying this 
argument in the other direction. The equivalence of a $\Cat{C}$-inner-product bimodule category structure and  a multi-balanced natural isomorphism $\eta^{R}$ is shown analogously using the right trace in $\Cat{C}$. 
\end{proof}
The following example, that is taken from \cite[Ex. 3.13]{Moduletr}, shows that not every module category has the structure of an inner-product module category. 
\begin{example}
  Let $G$ be a finite group and $\Cat{C}= \Vect[G]$ the corresponding fusion category of $G$-graded vector spaces. The pivotal structures on $\Cat{C}$ are in bijection with group homomorphisms $\kappa: G \rightarrow \Bbbk^{\times}$. 
A subgroup $H \subset G$ yields a left $\Cat{C}$-module category $\CM = \Vect[H \backslash G]$ given by action of $G$ on the left cosets in $H \backslash G$. The module category $\CM$ has a module trace and 
thus the structure of an inner-product module category
if and only if $\kappa |_{H} =1$. 
\end{example}

For the case of spherical fusion categories there is a simpler way to obtain inner-product bimodule categories. This is of particular importance 
since spherical fusion category define a prominent 3-dimensional oriented topological field theory \cite{TurVir, BarWes}.
\begin{definition}[\cite{Moduletr}]
  A bimodule trace on an  semisimple $(\Cat{D},\Cat{C})$-bimodule category over pivotal fusion categories $\Cat{C}$ and $\Cat{D}$ is a multi-balanced natural isomorphism 
  \begin{equation}
    \label{eq:bimod-trace}
    \eta_{\widetilde{m},m}: \Hom_{\Cat{M}}(\widetilde{m},m) \rightarrow \Hom_{\Cat{M}}(m,\widetilde{m})^{*}.
  \end{equation}
\end{definition}

It follows that this structure is sufficient to define an inner-product bimodule category. 

\begin{corollary}
   Let $\DMC$ be an semisimple bimodule category over spherical fusion categories $\Cat{C}$ and $\Cat{D}$ with bimodule trace. 
Then $\DMC$ is canonically an inner-product bimodule category.
\end{corollary}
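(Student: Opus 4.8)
The plan is to reduce the corollary to Theorem \ref{theorem:chara-inn-prod-fusion} by producing, from a single bimodule trace $\eta$, both of the multi-balanced natural isomorphisms $\eta^{L}$ and $\eta^{R}$ required there. The key observation is that for a spherical fusion category the pivotal structure $a$ satisfies $a=\cc{a}$, so $\tr^{L,a}=\tr^{R,a}$ by Proposition \ref{proposition:conjugate-piv}. Consequently the distinction between the three categories $\CMDld\boxtimes\DMC$, $\CMDcld\boxtimes\DcMC$ and $\CcMDld\boxtimes\DMCc$ that appears in Theorem \ref{theorem:chara-inn-prod-fusion} collapses: the balancing isomorphisms for the dual $\Hom$-functor obtained from $a$ and from $\cc{a}$ coincide on both $\Cat{C}$ and $\Cat{D}$. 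Therefore a multi-balanced natural isomorphism with respect to the plain pivotal structures — which is exactly what a bimodule trace is by its definition — is simultaneously a multi-balanced natural isomorphism of the type needed for $\eta^{L}$ and of the type needed for $\eta^{R}$.

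Concretely, I would proceed as follows. First, recall that $\DMC$ being semisimple over fusion categories is automatically exact (hence separable), so all the inner-hom machinery and the dualities from the previous sections apply. Second, spell out that $\Cat{C}$ and $\Cat{D}$ spherical means $\cc{\Cat{C}}=\Cat{C}$ and $\cc{\Cat{D}}=\Cat{D}$ as pivotal fusion categories, so that the target categories of the multi-balanced functors in Lemma \ref{lemma:piv-str-conj-mb} agree with the untwisted ones; in particular $\tr^{L,a}$ is simultaneously a multi-balanced natural isomorphism $\CcCCld\boxtimes\CCCc\to\Vect$ and $\CCCcld\boxtimes\CcCC\to\Vect$ which, for spherical $\Cat{C}$, are just $\RCC\boxtimes\LCC\to\Vect$. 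Third, set $\eta^{L}:=\eta$ and $\eta^{R}:=\eta$: the bimodule trace $\eta_{\widetilde{m},m}\colon\Hom_{\Cat{M}}(\widetilde{m},m)\to\Hom_{\Cat{M}}(m,\widetilde{m})^{*}$ is multi-balanced with respect to the $\Cat{C}$- and $\Cat{D}$-actions, and since sphericity identifies $\cc{\Cat{C}}$ with $\Cat{C}$ and $\cc{\Cat{D}}$ with $\Cat{D}$, it is a fortiori multi-balanced as a natural transformation of functors $\CMDcld\boxtimes\DcMC\to\Vect$ and of functors $\CcMDld\boxtimes\DMCc\to\Vect$. Fourth, invoke Theorem \ref{theorem:chara-inn-prod-fusion} to conclude that this pair $(\eta^{L},\eta^{R})$ assembles to the structure of an inner-product bimodule category on $\DMC$, and note that it is canonical because it uses only the given bimodule trace and the spherical structures.

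The only point that needs a little care — and the one I expect to be the main obstacle in writing it cleanly — is verifying that the balancing isomorphisms genuinely coincide after imposing $a=\cc{a}$, rather than merely being "morally the same": one must check that the natural isomorphism witnessing $\Cat{D}$-balancing of the dual $\Hom$-functor in Definition \ref{definition:multi-bal-to-Vect}, which is built from $d^{**}$ and hence implicitly from a choice of pivotal structure on $\Cat{D}$, is literally the one used in the $\DcMC$ (resp.\ $\DMCc$) version when $\Cat{D}$ is spherical. This is where the diagrammatic characterization of $\cc{a}$ via $\tr^{R,\cc{a}}=\tr^{L,a}$ from Proposition \ref{proposition:conjugate-piv} is used, exactly as in the proof of Lemma \ref{lemma:piv-str-conj-mb}; for spherical categories this identity becomes $\tr^{R,a}=\tr^{L,a}$ and the two balancing structures are equal on the nose. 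Once this identification is made explicit, the rest is a direct appeal to Theorem \ref{theorem:chara-inn-prod-fusion} with no further computation.
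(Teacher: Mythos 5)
Your proposal is correct and follows essentially the same route as the paper: the whole point is that sphericity forces $a=\cc{a}$, so the twisted balancing structures in Theorem \ref{theorem:chara-inn-prod-fusion} collapse and the single bimodule trace serves as both $\eta^{L}$ and $\eta^{R}$. The paper states this in one sentence; your version merely spells out the identification of the balancing data more explicitly.
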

\begin{proof}
For a spherical structure the identity $a=\cc{a}$ for the spherical structure holds. Thus Theorem \ref{theorem:chara-inn-prod-fusion} shows that a bimodule trace on $\DMC$ defines the 
structure of an inner-product bimodule category on $\Cat{M}$.
\end{proof}
Note however that conversely an inner-product bimodule category over spherical fusion categories might yield two different bimodule traces by the correspondence in Theorem \ref{theorem:chara-inn-prod-fusion}. 

\subsection*{Acknowledgments}

Above all, the author thanks Catherine Meusburger for excellent support during the PhD. During many fruitful discussions with her this project  emerged and developed. 
He furthermore thanks Christopher Schommer-Pries for many useful comments and explanations, as well as Christoph Schweigert and 
J\"{u}rgen Fuchs for support and useful disussions.  The author is grateful to Catherine Meusburger and Alexei Davydov for careful proofreading and comments. 
  
The author thanks the  Mack Planck Institute for Mathematics in Bonn for funding and excellent working conditions, as well as the ESI for funding during 
two stays in February and March 2014. 
The work during the authors PhD was funded by the
research grant ME 3425/1-1 in the Emmy-Noether program of the German Research Foundation.  

The results of this work were presented at the workshop ``Modern trends in TQFT'' at the  ESI, March 2014. 
\appendix
\section{Duals in Bicategories}
In this section we summarize our conventions regarding duals in monoidal categories and in bicategories and  
state some basic results about bicategories.
In the next subsection we  recall the definition of a finite tensor category. 

\subsection{Duals in monoidal categories}
In the following definition we assume for simplicity that all monoidal categories are strict. 
\begin{definition}
  \label{definition:duality-tensor}
  Let $\Cat{C}$ be a monoidal category.
  \begin{definitionlist}

  \item A right dual of an object $x \in \Cat{C}$ is an object $x^{*} \in \Cat{C}$,  together with morphisms $\ev{x}: x^{*} \otimes x \rr 1$ and
    $\coev{x}: 1 \rr x \otimes x^{*}$ satisfying the so-called snake identities 
    \begin{equation}
      \label{eq:duality-snake-right}
      (1_{x} \otimes \ev{x})\cdot (\coev{x} \otimes 1_{x})=1_{x},  
    \end{equation}
    and 
    \begin{equation}
      \label{eq:snake2-right}
      (\ev{x} \otimes 1_{x^{*}}) \cdot (1_{x^{*}} \otimes \coev{x})=1_{x^{*}}.
    \end{equation}
    The morphisms $\ev{x}$ and $\coev{x}$ are called (right) duality morphisms of $x$.
  \item A left dual of an object $x \in \Cat{C}$ is an object $\leftidx{^{*}}{x}{} \in \Cat{C}$,  together with morphisms $\evp{x}: x \otimes \leftidx{^{*}}{x}{} \rr 1$ and
    $\coevp{x}: 1 \rr \leftidx{^{*}}{x}{} \otimes x$ satisfying the identities 
    \begin{equation}
      \label{eq:duality-snake-left}
      ( \evp{x} \otimes 1_{x})\cdot (1_{x} \otimes \coevp{x})=1_{x},  
    \end{equation}
    and 
    \begin{equation}
      \label{eq:snake2-left}
      ( 1_{\leftidx{^*}{x}{}} \otimes \evp{x} ) \cdot ( \coevp{x} \otimes 1_{\leftidx{^*}{x}{}} )=1_{\leftidx{^*}{x}{}}.
    \end{equation}
    The morphisms $\evp{x}$ and $\coevp{x}$ are called (left) duality morphisms of $x$.
  \item A monoidal category $\Cat{C}$ is said to have  right (left) duals if every object of $\Cat{C}$ has a right (left) dual object. In case every object of  $\Cat{C}$ has both a right and a left dual object, 
    $\Cat{C}$ is said to have duals and  $\Cat{C}$  is called rigid.
  \end{definitionlist}
\end{definition}

If $\Cat{C}$ has right duals, the duals are unique up to unique isomorphism and  the double dual functor is canonically a monoidal functor $(-)^{**}: \Cat{C} \rightarrow \Cat{C}$.
\begin{definition}
  \label{definition:pivotal-str}
  Let $\Cat{C}$ be a monoidal category with right duals. 
  \begin{definitionlist}
    \item A pivotal structure $a$ on $\Cat{C}$  is a monoidal natural isomorphism 
  \begin{equation}
    \label{eq:piv-str}
    a:  \id_{\Cat{C}} \rightarrow (-)^{**}.
  \end{equation}
A rigid monoidal category with pivotal structure is called a pivotal category. 
\item \label{item:traces} Let $f \in \Hom_{\Cat{C}}(c,c)$ be a morphism in a pivotal category. The right trace of $f$ is defined as 
  \begin{equation}
    \label{eq:left-trace}
    \tr^{R}(f)= \evp{x} (f \otimes \leftidx{^*}{a_{x}}{}) \coev{x} \in \End(\unit_{\Cat{C}})
  \end{equation} 
and the left trace is defined as 
\begin{equation}
  \label{eq:right-trace}
  \tr^{L}(f)=  \ev{x} (a_{\leftidx{^*}{x}{}} \otimes   f) \coevp{x} \in \End(\unit_{\Cat{C}}).
\end{equation}
The left dimension of an object $x \in \Cat{C}$ is defined as $\dim^{L}=\tr^{L}(1_{x})$ and the right dimension as $\dim^{R}=\tr^{R}(1_{x})$.
\item  \label{item:spherical} A pivotal structure $a$ on $\Cat{C}$ is called spherical  if $\tr^{L}(f)=\tr^{R}(f)$ for all $f \in \Hom_{\Cat{C}}(c,c)$ and all $c \in \Cat{C}$.
In this case $\Cat{C}$ is called a spherical category.
  \end{definitionlist} 
\end{definition}
Recall that an abelian category is called finite if every object has finite length, it has enough projectives and there are only finitely many isomorphism classes of simple objects. 

\begin{definition}[\cite{ENO:Notes}]
  \label{definition:Tensor-cat}
  \begin{definitionlist}
  \item A  tensor category $\Cat{C}$ over $\Bbbk$ is a $\Bbbk$-linear  abelian
    category with bilinear monoidal structure, finite dimensional $\Hom_{}$-spaces and right and left-duals for all objects, for which every object is of finite length and 
 in which the tensor unit $\unit_{\Cat{C}}$ satisfies $\End(\unit_{\Cat{C}})= \Bbbk$. A finite tensor category is a tensor category that is finite as abelian category. 
  \item  A tensor category $\Cat{C}$ is called a fusion category if it is  finite 
    semisimple as abelian category.
\item A pivotal (spherical) fusion category $\Cat{C}$ is a fusion category $\Cat{C}$ that is in addition a pivotal (spherical) category. 
  \end{definitionlist}
\end{definition}

\subsection{Bicategories}
We next recall the definitions of a bicategory, a 2-functor, a 2-natural transformation and a modification. 
\begin{definition}
  \label{definition:bicategories}
  A bicategory $\Cat{B}$ consists of the following data:
  \begin{definitionlist}
  \item a collection of objects  $a,b \in \Obj(\Cat{B})$, 
  \item for any two objects $a,b$ a category $\Cat{B}(a,b)$, whose objects are called 1-morphisms and denoted $F,G: a\rightarrow b$ and whose morphisms are called 2-morphisms and denoted $\eta: F \Rightarrow G$. 
The composition of 2-morphisms in $\Cat{B}(a,b)$ is called vertical composition,
  \item for any three objects $a,b,c$ a functor $\circ: \Cat{B}(b,c) \times \Cat{B}(a,b)\rightarrow \Cat{B}(a,c)$, called horizontal composition, and 
    for any object $b$ a functor $I_{b}: I \rightarrow \Cat{B}(b,b)$, where $I$ is the unit category with one object and one morphism. The image of $I_{b}$ on the object of $I$ is called $1_{b}: b \rightarrow b$ and  
    the image on the morphism is called $1_{1_{b}}:1_{b} \Rightarrow 1_{b}$,
  \item for any  three 1-morphisms $F: c \rightarrow d$, $G: b \rightarrow c$ and $H: a \rightarrow b$,  invertible 2-morphisms $\omega^{\Cat{B}}_{F,G,H}: (F \circ G)\circ H \Rightarrow F\circ (G\circ H)$,
  \item for any 1-morphism  $F:a \rightarrow b$  invertible 2-morphisms $\lambda^{\Cat{B}}_{F}: I_{b} \circ F \Rightarrow F$ and  $\rho_{F}^{\Cat{B}}: F\circ I_{a} \Rightarrow F$,  
  \end{definitionlist}
  such that the 2-morphisms $\omega^{\Cat{B}}_{H,G,F}$,$\lambda_{F}^{\Cat{B}}$ and $\rho^{\Cat{B}}_{F}$ are natural in their arguments and the following diagrams commute for all 1-morphisms where these expressions are defined
  \begin{equation}
    \label{eq:diagramm-tensor-bicat}
    \begin{tikzcd}
      {} & ((F \circ G) \circ H) \circ K   \ar{ld}{\omega^{\Cat{B}}_{F,G,H} \circ 1_{K}}
      \ar{rd}{\omega^{\Cat{B}}_{F\circ G,H,K}}& \\
      (F \circ (G \circ H)) \circ K \ar{d}{\omega^{\Cat{B}}_{F,G\circ H,K}} & & (F
      \circ G ) \circ (H \circ K) \ar{d}{\omega^{\Cat{B}}_{F,G, H\circ K}}\\
      F \circ (( G \circ H) \circ K)\ar{rr}{1_F \circ \omega^{\Cat{B}}_{G,H,K}}& & F \circ (G \circ (H \circ K)),
    \end{tikzcd}   
  \end{equation}
   \begin{equation}
    \label{eq:triangle-bicat}
    \begin{tikzcd}
      (F\circ 1_{a}) \circ  G \ar{rr}{\omega^{\Cat{B}}_{F,1,G}}\ar{dr}{\rho_{F}^{\Cat{B}}  \circ G} & & F \circ (1_{a} \circ  G) \ar{dl}{1_{F} \circ \lambda_{G}}\\
      {} & F \circ G. & 
    \end{tikzcd}
  \end{equation}
  A 2-category $\Cat{B}$ is   a strict bicategory $\Cat{B}$, i.e.  a bicategory, in which 
  all 2-morphisms  $\omega^{\Cat{B}}_{H,G,F}$, $\lambda_{F}^{\Cat{B}}$ and $\rho^{\Cat{B}}_{F}$ are  identities.
\end{definition}

The notion of equivalence of categories can be formulated in a general bicategory as follows. 
\begin{definition}
   \label{definition:adj-equiv}
  Let $\Cat{B}$ be a bicategory. 
  \begin{definitionlist}
    \item \label{item:def-eqiv-inbicat} Two objects $b,c$ in $\Cat{B}$ are called equivalent, if there exist 1-morphism $F:b \rightarrow c$ and $G: c \rightarrow b$ together with 
  invertible 2-morphisms $\eta: F \circ G  \Rightarrow 1_{c}$ and $\rho: G \circ F \Rightarrow 1_{b}$. 
\item An adjoint equivalence  $(f,g,\alpha, \beta  )$ between objects $b$ and $c$ in $\Cat{B}$ consists of 1-morphisms $f: b \rightarrow c$ and 
  $g: c\rightarrow b$, together with isomorphisms $\alpha: fg \rightarrow 1_{c}$ and $\beta :  1_{b} \rightarrow gf$, such that 
  the snake identities (\ref{eq:duality-snake-right}) and (\ref{eq:snake2-right}) hold in the monoidal categories $\Cat{B}(b,b)$ and $\Cat{B}(c,c)$.
  \end{definitionlist}
\end{definition}

\begin{definition}\label{lax2func}
  A 2-functor $\mathsf{F}:\mathcal C\rightarrow \mathcal D$ between bicategories $\mathcal C,\mathcal D$ is given by the following data
  \begin{definitionlist}
  \item A function $\mathsf{F}_0: \Obj(\mathcal C)\rightarrow \Obj(\mathcal D)$.
  \item For all objects $a,b$ of $\mathcal C$, a functor $\mathsf{F}_{a,b}: \mathcal C_{a,b}\rightarrow \mathcal D_{\mathsf{F}_0(a), \mathsf{F}_0(b)}$.
  \item For all objects $a,b,c$ of $\mathcal C$ a natural isomorphism $\Phi_{abc}\colon\circ\;(\mathsf{F}_{b,c}\times \mathsf{F}_{a,b})\to \mathsf{F}_{a,c}\;\circ$. These determine, for all 1-morphisms $H: a\rightarrow b$, $G: b\rightarrow c$, a invertible 2-morphism $\Phi_{G,H}: \mathsf{F}_{b,c}(G)\circ \mathsf{F}_{a,b}(H)\rightarrow \mathsf{F}_{a,c}(G\circ H)$. 
  \item For all objects $a$, an invertible 2-morphism $\Phi_a\colon 1_{\mathsf{F}_0(a)}\to \mathsf{F}_{a,a}(1_a)$.
  \end{definitionlist}
  The function $\mathsf{F}_0$, the functors $\mathsf{F}_{a,b}$ and the 2-morphisms $\Phi_{G,H}$ and $\Phi_a$ are required to satisfy the following consistency conditions
  \begin{definitionlist} \addtocounter{enumi}{4}
  \item
\label{item:lax-2-1}For all  1-morphisms $H: a\rightarrow b$:
    \begin{equation}
      \label{eq:2-fun-ax-unit}
      \begin{tikzcd}
        { \begin{array}{r}
            \mathsf{F}_{a,b}(H)=\mathsf{F}_{a,b}(H) \circ 1_{F_{0}(a)}  \\
                             = 1_{F_{0}(b)}  \circ \mathsf{F}_{a,b}(H)
          \end{array}}  \ar{r}{1_{\mathsf{F}_{a,b}(H)} \circ \Phi_{a}}  \ar{dr}{\id} \ar{d}{\Phi_{b}\circ 1_{\mathsf{F}_{a,b}(H)}} & \mathsf{F}_{a,b}(H) \circ \mathsf{F}_{a,a}(1_{a}) \ar{d}{\Phi_{H,1_{a}}} \\
 \mathsf{F}_{b,b}(1_{b}) \circ \mathsf{F}_{a,b}(H)  \ar{r}{\Phi_{1_{b},H}} &\mathsf{F}_{a,b}(H \circ 1_{a})= \mathsf{F}_{a,b}(1_{b} \circ H)=\mathsf{F}_{a,b}(H)
      \end{tikzcd}
    \end{equation}
  \item \label{item:lax2-2} For all 1-morphisms $H: a\rightarrow b$, $G: b\rightarrow c$, $K: c\rightarrow d$, the following diagram commutes
    \begin{align}
      \label{2-fun-ax-3-morphism}
      \xymatrix{
        \mathsf{F}_{c,d}(K)\circ \mathsf{F}_{b,c}(G)\circ \mathsf{F}_{a,b}(H)\ar[d]^{1\circ \Phi_{G,H}} \ar[r]^{\qquad\Phi_{K,G}\circ 1} & \mathsf{F}_{b,d}(K\circ G)\circ \mathsf{F}_{a,b}(H)\ar[d]^{\Phi_{K\circ G,H}}\\
        \mathsf{F}_{c,d}(K)\circ \mathsf{F}_{a,c}(G\circ H) \ar[r]^{\Phi_{K,G\circ H}} & \mathsf{F}_{a,d}(K\circ G\circ H).
      }
    \end{align}
  \end{definitionlist}
A 2-functor is said to have strict units if the 2-morphisms  $\Phi_a$ are all identities, and it is
  called strict if the 2-morphisms $\Phi_{G,F}$ and $\Phi_a$ are all identities. In this case, one has
  $$\mathsf{F}_{a,c}(G\circ H)=\mathsf{F}_{b,c}(G)\circ \mathsf{F}_{a,b}(H)\qquad 1_{\mathsf{F}_0(a)}=\mathsf{F}_{a,a}(1_a).$$
\end{definition}

The following notion of natural 2-transformation of 2-functors  adopts  the convention of \cite{GPS, Gurski} and is sometimes also referred to as `oplax 2-transformation'.

\begin{definition} \label{definition:nat2transformations}
  \begin{definitionlist}
    \item  A natural 2-transformation $\rho    : \mathsf{F}\to \mathsf{G}$ between  2-functors $\mathsf{F},\mathsf{G}: \Cat{C} \rightarrow \Cat{D}$
   is given by the following data:
  \begin{enumerate}
  \item For all objects $a$ of $\mathcal C$, a 1-morphism $\rho_{a}: \mathsf{F}_0(a)\to \mathsf{G}_0(a)$.
  \item For all objects $a,b$ of $\mathcal C$ a natural transformation $$\rho_{a,b}: (\rho_b\circ -) \mathsf{F}_{a,b} \rightarrow  (-\circ \rho_a) \mathsf{G}_{a,b} ,$$ where  $-\circ \rho_a:\mathcal D_{\mathsf{G}_0(a), \mathsf{G}_0(b)}\to\mathcal D_{\mathsf{F}_0(a), \mathsf{G}_0(b)}$ and  $\rho_b\circ -: \mathcal D_{\mathsf{F}_0(a),\mathsf{F}_0(b)}$ $\to \mathcal D_{\mathsf{F}_0(a), \mathsf{G}_0(b)}$ denote the functors given by pre- and post-composition with $\rho_a$ and $\rho_b$. These natural transformations determine for all 1-morphisms $ H : a\to b$ a 2-morphism
    $\rho_H: \rho_b\circ \mathsf{F}_{a,b}(H) \to  \mathsf{G}_{a,b}(H)\circ \rho_a$.
  \end{enumerate}
  The 1-morphisms $\rho_a$ and 2-morphisms $\rho_{H}$ are required to satisfy  the following consistency conditions:
  \begin{enumerate}
  \item \label{item:ax1}For all 1-morphisms $H: a\to b$ and $K:b\to c$ the following diagram commutes
    \begin{align}
      \xymatrix{\rho_c\circ \mathsf{F}_{b,c}(K)\circ \mathsf{F}_{a,b}(H)  \ar[d]^{1\circ \Phi_{K,H}}  \ar[r]^{\rho_K \circ 1} & \mathsf{G}_{b,c}(K)\circ \rho_b\circ \mathsf{F}_{a,b}(H)
        \ar[d]^{1 \circ \rho_H} \\
        \rho_c\circ \mathsf{F}_{a,c}(K\circ H) \ar[d]^{\rho_{K\circ H}} &  \mathsf{G}_{b,c}(K)\circ \mathsf{G}_{a,b}(H)\circ \rho_a  \ar[ld]^{\Psi_{K,H}\circ 1 }\\
        \mathsf{G}_{a,c}(K\circ H)\circ \rho_a.
      }\nonumber
    \end{align}
  \item \label{item:ax2} For all objects $a$ of $\mathcal C$ the following diagram commutes
    \begin{align}
      \xymatrix{
        1_{\mathsf{G}_0(a)}\circ \rho_a=  \rho_a=\rho_a\circ 1_{\mathsf{F}_0(a)} \ar[d]^{ 1\circ \Phi_a} \ar[rd]^{\Psi_a\circ 1}\\
        \rho_a\circ \mathsf{F}_{a,a}(1_a)  \ar[r]^{\rho_{1_a}} &  \mathsf{G}_{a,a}(1_a)\circ \rho_a .
      }\nonumber
    \end{align}
  \end{enumerate}
\item A pseudo-natural transformation $\rho: \mathsf{F}\to \mathsf{G}$ of 2-functors $\mathsf{F},\mathsf{G}:\mathcal C\to\mathcal D$ is a natural 2-transformation of 2-functors in which all  
  2-morphisms $\rho_{H}: \rho_a\circ \mathsf{F}_{a,b}(H) \to \mathsf{G}_{a,b}(H)\circ \rho_a$ are isomorphisms. 
\item 
  A pseudo-natural transformation $\rho$ is called an equivalence if all the 1-morphisms $\rho_{a}$ are equivalences in the bicategory $\Cat{D}$, see Definition \ref{definition:adj-equiv} \refitem{item:def-eqiv-inbicat}.
\item 
  A 1-identity natural 2-transformation $\rho: \mathsf{F} \rightarrow \mathsf{G}$ between 2-functors $\mathsf{F}$ and $\mathsf{G}$ such that $\mathsf{F}_{0}(a)=\mathsf{F}_{0}(a)$ for all objects $a$ of $\Cat{C}$ is a natural $\mathsf{F}$
  2-transformation $\rho$ such that all 1-morphisms $\rho_{a}$ are the identities for all objects $a$ of $\Cat{C}$. 
\item 
  A natural 2-isomorphism is a pseudo-natural transformation which is a 1-identity natural 2-transformation. 
  \end{definitionlist}
 \end{definition}

\begin{definition}
  \label{modi_gen}
  Let $\rho=(\rho_{a}, \rho_{a,b}): \mathsf{F}\to \mathsf{G}$ and  $\tau=(\tau_a,\tau_{a,b}): \mathsf{F}\to \mathsf{G}$ be natural 2-transformations between 2-functors $\mathsf{F}=(\mathsf{F}_0,\mathsf{F}_{a,b}, \Phi_{H,K}, \Phi_a), \mathsf{G}=(\mathsf{G}_0,\mathsf{G}_{a,b}, \Psi_{H,K}, \Psi_{a}):\mathcal C\to\mathcal D$. A modification $\Psi: \rho\Rightarrow\tau$ is a collection of 2-morphisms $\Psi_a: \rho_a\Rightarrow\tau_a$ for every object $a$ of $\mac \mathsf{G}$ such that for all 1-morphisms $H: a\to b$ 
  $$
  \tau_H\cdot (\Psi_a\circ 1_{\mathsf{F}_{a,b}(H)})= (1_{\mathsf{G}_{a,b}(H)}\circ \Psi_b) \cdot \rho_H
  $$
  A modification is called invertible if all 2-morphisms $\Psi_a$ are invertible.
\end{definition}

\begin{lemma}
  \label{lemma:functor-presevers-adj-equiv}
  Let $\mathsf{F}:\Cat{A}\rightarrow \Cat{B}$ be a 2-functor between bicategories $\Cat{A}$ and $\Cat{B}$. If $(f,g,\alpha,\beta )$ is an adjoint equivalence between two objects $x$ and $y$ in $\Cat{A}$, 
  then $(\mathsf{F}(f),\mathsf{F}(g),\mathsf{F}(\alpha),\mathsf{F}(\beta ))$ is an adjoint equivalence between $\mathsf{F}_{0}(x)$ and $\mathsf{F}_{0}(y)$ in $\Cat{B}$.
\end{lemma}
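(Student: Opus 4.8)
The statement to prove is Lemma \ref{lemma:functor-presevers-adj-equiv}: a 2-functor $\mathsf{F}:\Cat{A}\rightarrow \Cat{B}$ sends an adjoint equivalence $(f,g,\alpha,\beta)$ between objects $x,y$ of $\Cat{A}$ to an adjoint equivalence $(\mathsf{F}(f),\mathsf{F}(g),\mathsf{F}(\alpha),\mathsf{F}(\beta))$ between $\mathsf{F}_0(x),\mathsf{F}_0(y)$ of $\Cat{B}$. Recall from Definition \ref{definition:adj-equiv}\refitem{item:def-eqiv-inbicat} that the data $(f,g,\alpha,\beta)$ consists of $1$-morphisms $f\colon x\to y$, $g\colon y\to x$ together with invertible $2$-morphisms $\alpha\colon fg\to 1_y$ and $\beta\colon 1_x\to gf$ satisfying the two snake identities (\ref{eq:duality-snake-right}) and (\ref{eq:snake2-right}) inside the hom-monoidal-categories $\Cat{A}(x,x)$ and $\Cat{A}(y,y)$ (here the ``tensor product'' is horizontal composition, and ``$1$'' is the identity $1$-morphism). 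So the plan is: apply $\mathsf{F}$ to these two snake identities and track the coherence $2$-cells $\Phi_{G,H}$ and $\Phi_a$ of the $2$-functor.

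First I would write down what the snake identity (\ref{eq:duality-snake-right}) says for the adjoint equivalence in $\Cat{A}$: the composite $f \xrightarrow{\ 1_f\circ \beta\ } f\circ(g\circ f)\xrightarrow{\ \omega^{-1}\ } (f\circ g)\circ f\xrightarrow{\ \alpha\circ 1_f\ } f$ equals $1_f$ (up to the unit constraints $\lambda,\rho$ of the bicategory $\Cat{A}$, which I will suppress exactly as the excerpt's Definition \ref{definition:duality-tensor} does in the strict case — and as Definition \ref{definition:adj-equiv} implicitly does by phrasing the condition in the monoidal category $\Cat{A}(x,x)$). Then I would apply the functor $\mathsf{F}_{x,y}$ to this $2$-cell equation. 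The point is that $\mathsf{F}$ being a $2$-functor means: $\mathsf{F}$ applied to a horizontal composite of $1$-morphisms is isomorphic (via $\Phi$) to the horizontal composite of the images, and these isomorphisms are coherent (axioms (\ref{2-fun-ax-3-morphism}) and (\ref{eq:2-fun-ax-unit})). Concretely, $\mathsf{F}(f\circ g)$ is identified with $\mathsf{F}(f)\circ\mathsf{F}(g)$ via $\Phi_{f,g}$, and $\mathsf{F}(1_x)$ with $1_{\mathsf{F}_0(x)}$ via $\Phi_x$. Conjugating the image $2$-cells $\mathsf{F}(\alpha)$, $\mathsf{F}(\beta)$ by these isomorphisms yields $2$-cells $\widetilde\alpha\colon \mathsf{F}(f)\circ\mathsf{F}(g)\to 1_{\mathsf{F}_0(y)}$ and $\widetilde\beta\colon 1_{\mathsf{F}_0(x)}\to\mathsf{F}(g)\circ\mathsf{F}(f)$, which are exactly what the lemma names $\mathsf{F}(\alpha)$, $\mathsf{F}(\beta)$ (the notation suppresses the coherence cells). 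The snake identity for $\widetilde\alpha,\widetilde\beta$ then follows by applying $\mathsf{F}_{x,y}$ (a functor, so it preserves equality of $2$-cells and commuting diagrams) to the $\Cat{A}$-side snake identity, and inserting the coherence axioms (\ref{2-fun-ax-3-morphism}), (\ref{eq:2-fun-ax-unit}) for $\mathsf{F}$ to commute $\mathsf{F}$ past the horizontal composites. The second snake identity (\ref{eq:snake2-right}) is handled identically, replacing $\Cat{A}(x,x)$ by $\Cat{A}(y,y)$. Invertibility of $\widetilde\alpha,\widetilde\beta$ is immediate: $\mathsf{F}_{x,y}$ preserves invertible $2$-morphisms (functors preserve isomorphisms), and the $\Phi$'s are invertible by definition of a $2$-functor.

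The only genuinely fiddly step is the bookkeeping with the associator $\omega^{\Cat{B}}$ and the coherence cells $\Phi$: one must check that the two $\Phi$-conjugations appearing in the two legs of the snake diagram fit together so that the $\mathsf{F}$-image of $\omega^{\Cat{A}}$ is matched against $\omega^{\Cat{B}}$ correctly. This is precisely the content of $2$-functor axiom (\ref{2-fun-ax-3-morphism}), which relates $\Phi$ applied to a triple composite in the two possible bracketings through the associators of source and target; together with axiom (\ref{eq:2-fun-ax-unit}) for the unit $1$-morphism constraints, this closes the diagram. I expect the cleanest write-up is to note the well-known general fact that a $2$-functor between bicategories preserves (adjoint) equivalences, and then to indicate the above snake-identity chase as the proof, referring to the coherence axioms of Definition \ref{lax2func} and Definition \ref{definition:adj-equiv}; there is no real obstacle, only diagram-pasting, so I would keep this short. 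In fact since the hom-categories and the $2$-functor restrict to monoidal categories and a (non-strict) monoidal functor $\mathsf{F}_{x,y}\colon\Cat{A}(x,x)\to\Cat{B}(\mathsf{F}_0(x),\mathsf{F}_0(x))$ in an evident compatible way, the statement reduces to the elementary fact that a monoidal functor sends a duality datum (an object with chosen (co)evaluations satisfying the snake identities) to a duality datum — which is standard — applied in the two hom-categories $\Cat{A}(x,x)$ and $\Cat{A}(y,y)$ simultaneously.
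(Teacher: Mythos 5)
Your proposal is correct and follows essentially the same route as the paper, whose proof is the one-line observation that the statement combines the argument that monoidal functors preserve duality data (applied to the hom-categories with horizontal composition as tensor, via the coherence cells $\Phi$) with the fact that functors preserve isomorphisms. Your write-up simply expands the same snake-identity chase in detail.
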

\begin{proof}
  The proof of this statement is a combination of the proof that monoidal functors respect duality and the fact that functors respect isomorphisms.
\end{proof}

\subsection{Duals in Bicategories}
In this subsection we discuss duals and pivotal structures in bicategories.

\begin{definition}
  \label{definition:duality-bicat}
  Let $\Cat{X}$ be a bicategory. 
  \begin{definitionlist}
  \item  A  right dual of a 1-morphism $F: c \rightarrow d $ in  $\Cat{X}$ is a 1-morphism $F^{*}: d \rightarrow c$ such that there exist 2-morphisms 
$\ev{F}: F^{*} \circ F \rightarrow 1_{c}$ and $\coev{F}: 1_{d} \rightarrow F \circ F^{*}$ that satisfy the snake identities (\ref{eq:duality-snake-right}) and (\ref{eq:snake2-right}) with the monoidal product replaced by the
horizontal composition.  The 2-morphisms $\ev{F}$ and  $\coev{F}$ are called right duality morphisms. If every 1-morphism in $\Cat{X}$ has a right dual then the bicategory $\Cat{X}$ is said to have right duals.
  \item A left dual of a 1-morphism $F:c \rightarrow d$ is a 1-morphism $\leftidx{^{*}}{F}{}: d \rightarrow c$ such that there exist 2-morphisms 
    $\evp{F}: F \circ \leftidx{^{*}}{F}{} \rightarrow 1_{d} $ and $\coevp{F}: 1_{c} \rightarrow \leftidx{^{*}}{F}{} \circ F$ that satisfy  
the snake identities (\ref{eq:duality-snake-left}) and (\ref{eq:snake2-left}). 
    The 2-morphisms $\evp{F}$ and $\coevp{F}$ are called left duality 2-morphisms.  If every 1-morphism in $\Cat{X}$ has a left dual then the bicategory $\Cat{X}$ is said to have left duals.
  \end{definitionlist}
\end{definition}

As for a monoidal category, duals in a bicategory are unique up to unique isomorphism.

\begin{lemma}
  \label{lemma:functor-respect-dual}
 Let $\mathsf{F}: \Cat{X} \rightarrow \Cat{Y}$ be a  2-functor between bicategories.
\begin{lemmalist} 
  \item For every right dual  $G^{*}:c \rightarrow b$ of a 1-morphism $G: b \rightarrow c$ in $\Cat{X}$, $\mathsf{F}(G^{*})$ is a right 
    dual of $\mathsf{F}(G)$.
  \item Let  $\Cat{X}$ and $\Cat{Y}$ be bicategories with  right duals. 
    There exists a natural 2-isomorphism $\xi^{\mathsf{F}}: (-)^{*} \circ \mathsf{F} \rightarrow \mathsf{F} \circ (-)^{*}$, that is uniquely 
    determined by 
    \begin{equation}
      \label{eq:unique-duali-xiF}
      (1_{\mathsf{F}(G)} \circ \xi^{\mathsf{F}}_{G}) \cdot \coev{\mathsf{F}(G)} =\mathsf{F}( \coev{G})
    \end{equation}
    for all 1-morphisms $G$ in $\Cat{X}$.
 \item \label{item:transport-duals}Let $\mathsf{F}: \Cat{X}\rightarrow \Cat{Y}$ be a biequivalence of bicategories $\Cat{X}$ and $\Cat{Y}$.
    If $\Cat{X}$ has (right) duals, then $\Cat{Y}$ has (right) duals as well. 
  \end{lemmalist}
\end{lemma}

\begin{definition}
  \label{definition:pivotal-str-bicat}
  Let $\Cat{X}$ be a bicategory with right duals. A pivotal structure $a$ on $\Cat{X}$  is a natural 2-isomorphism 
  \begin{equation}
    \label{eq:piv-str-bicat}
    a: \id_{\Cat{X}} \rightarrow (-)^{**}.
  \end{equation}
\end{definition}

\begin{definition}
  \label{definition:pivotal-2-fun}
  A 2-functor $\mathsf{F}: \Cat{X} \rightarrow \Cat{Y}$ between pivotal bicategories is called pivotal, if the diagram
  \begin{equation}
    \label{eq:piv-2-fun}
    \begin{tikzcd}
      \mathsf{F}(H)^{**}    & \mathsf{F}(H)   \ar{l}{a_{\mathsf{F}(H)}^{\Cat{Y}}} \ar{d}{\mathsf{F}(a_{H}^{\Cat{X}})}\\
      \mathsf{F}(H^{*})^{*} \ar{u}{(\xi_{H}^{\mathsf{F}})^{*}} \ar{r}{\xi_{H^{*}}} &  \mathsf{F}(H^{**}) 
    \end{tikzcd}
  \end{equation}
  commutes for all 1-morphisms $H: a \rightarrow b$.
\end{definition}

\section{Tricategories}
 
The following definition is a slight modification from  \cite[Def. 3.1.2]{Gurski}.
\begin{definition}
  \label{definition:tricategory}
 A tricategory $\Cat{T}$  consists of the following data 
\begin{definitionlist}
\item A set of objects $a,b \in \Obj(\Cat{T})$.
\item  \label{item:2-fun} For any two objects $a,b$ a  bicategory  $\Cat{T}(a,b)$  of 1- and 2-morphisms with  horizontal composition $\circ$ and   vertical composition $\cdot$. 
\item For any three objects $a,b,c$,  2-functors 
  \begin{equation}
    \label{eq:Box-prod}
    \Box: \Cat{T}(b,c) \times \Cat{T}(a,b) \rr \Cat{T}(a,c),
  \end{equation}
called $\Box$-product  of 1-morphisms.
\item \label{item:units} For any object $a$  a 2-functor $I_{a}:I \rr \Cat{T}(a,a)$, where $I$ denotes the 
  unit 2-category with one object $1$, one 1-morphism $1_{1}$ and one 2-morphism $1_{1_{1}}$. The image of the functor $I_{a}$ on the object of $I$ is the 1-morphism also denoted $I_{a}:a \rightarrow a$.  
\item \label{item:associator}For any four objects $a,b,c,d$,  an adjoint equivalence
  $a: \Box (\Box \times 1) \Rightarrow \Box( 1 \times \Box)$, called  associator. More precisely,  $a$ consists of a  pseudo-natural transformation
  \begin{equation}
    \label{eq:adj-equiv-asso-bimod}
    \begin{xy}
      \xymatrix{
        \Cat{T}(c,d) \times \Cat{T}(b,c) \times \Cat{T}(a,b) \ar[rr]^{\Box \times 1}  \ar[d]_{1 \times \Box} \rrtwocell<\omit>{<4>a} &&\Cat{T}(b,d) \times \Cat{T}(a,b) \ar[d]^{\Box}  \\
        \Cat{T}(c,d) \times \Cat{T}(a,c) \ar[rr]_{\Box} && \Cat{T}(a,d),
      }
    \end{xy}
  \end{equation}
  and, a pseudo-natural transformation  $a^{-}:\Box( 1 \times \Box)\rightarrow \Box (\Box \times 1) $, such that $a$ and $a^{-}$ form an adjoint equivalence, see Definition \ref{definition:adj-equiv}.
\item \label{item:unit-2-cells-bimod}For any two objects $a,b$, there are adjoint equivalences $l: \Box(I_{b} \times 1)\Rightarrow 1$ and $r: \Box (1\times I_{a}) \Rightarrow 1$, called the unit 2-morphisms, 
  \begin{equation}
    \label{eq:unitsleft}
    \begin{tikzcd}
      {}         &\Cat{T}(b, b) \times \Cat{T}(a,b)  \ar{dr}{\Box}  \ar[shorten <= 7pt, shorten >=7pt,Rightarrow]{d}{l} \\
      \Cat{T}(a,b)  \ar{rr}{1} \ar{ur}{I_{b} \times 1}   & {}& \Cat{T}(a,b)
    \end{tikzcd}
  \end{equation}
  and 
  \begin{equation}
    \label{eq:unitsright}
    \begin{tikzcd}
      {}   &\Cat{T}(a,b) \times \Cat{T}(a,a)  \ar{dr}{\Box}  \ar[shorten <= 7pt, shorten >=7pt,Rightarrow]{d}{r}& \\
      \Cat{T}(a,b)  \ar{rr}[below, name=B]{1} \ar{ur}{1 \times I_{a} }   & {}& \Cat{T}(a,b).
    \end{tikzcd}
  \end{equation}
  By definition of an adjoint equivalence, $l$ and $r$ are pseudo-natural transformations. Furthermore there are  corresponding pseudo-natural transformations $l^{-}: 1 \Rightarrow \Box(I_{b} \times 1)$ and
  $r^{-}:1\Rightarrow \Box (1\times I_{a})$.
\item \label{item:mu} For all objects $a,b,c$, an invertible modification $\mu$
  \begin{equation}
    \label{eq:3}
    \begin{tikzcd}[baseline=-0.65ex, column sep=large, row sep=large]
      \Cat{T}^{2}  \ar{dr}[below, left, name=A]{1}\ar{r}{1 \times I \times 1} & \Cat{T}^{3} \ar{r}{\Box \times 1} \ar{d}{1 \times \Box} 
\arrow[shorten <= 5pt, shorten >=5pt, Rightarrow,to path=-- (A) \tikztonodes]{}{1 \times l}
& \Cat{T}^{2} \ar{d}{\Box} \ar[shorten <= 10pt, shorten >=10pt, Rightarrow]{dl}{a} \\
        {} & \Cat{T}^{2} \ar{r}{\Box} &\Cat{T} 
    \end{tikzcd}
\stackrel{\mu}{\Rrightarrow}
    \begin{tikzcd}[baseline=-0.65ex, column sep=large, row sep=large]
       \Cat{T}^{2}  \ar{dr}[below, left]{1}\ar{r}{1 \times I \times 1} \ar[bend right]{rr}[below, name=B]{1} & \Cat{T}^{3} \ar{r}{\Box \times 1}
 \arrow[shorten <= 2pt, shorten >=2pt, Rightarrow,to path=-- (B) \tikztonodes]{}{ r \times 1}
 & \Cat{T}^{2} \ar{d}{\Box} \\
        {} & \Cat{T}^{2} \ar{r}{\Box} &\Cat{T},
    \end{tikzcd}
  \end{equation}
where we used for example the abbreviation  $\Cat{T}^{3}=  \Cat{T}(b,c) \times  \Cat{T}(b,b) \times\Cat{T}(a,b)$.

\item \label{item:lambda-general}  For all objects $a,b,c$, an invertible modification $\lambda$
\begin{equation}
    \label{eq:lambda-gen}
    \begin{tikzcd}[baseline=-0.65ex, column sep=large, row sep=large]
      {} & \Cat{T}^{3} \ar{dr}{\Box \times 1} \ar[shorten <= 8pt, shorten >=8pt, Rightarrow]{d}{l\times 1} &  {}   \\
   \Cat{T}^{2} \ar{ur}{I \times 1 \times 1} \ar{rr}[below]{1}  \ar{d}{\Box} & {}& \Cat{T}^{2} \ar{d}{\Box}  \\
\Cat{T} \ar{rr}{1}  && \Cat{T}
    \end{tikzcd}
\stackrel{\lambda}{\Rrightarrow}
    \begin{tikzcd}[baseline=-0.65ex, column sep=large, row sep=large]
        {} & \Cat{T}^{3} \ar{d}{1 \times \Box} \ar{dr}[name=A]{\Box \times 1} &     \\
   \Cat{T}^{2}  \ar{d}{\Box}\ar{ur}{I \times 1 \times 1}  & \Cat{T}^{2} \ar{dr}[name=B, below]{\Box} \ar[shorten <= 8pt, shorten >=8pt, Rightarrow]{d}{l}
 & \Cat{T}^{2} \ar{d}{\Box} \\
\Cat{T} \ar{rr}[name=C, below]{1} \ar{ur}{I \times 1} &{}& \Cat{T}
 \arrow[shorten <= 15pt, shorten >=15pt, Rightarrow,to path=(A)-- (B) \tikztonodes]{}{a}
    \end{tikzcd}
  \end{equation}
\item \label{item:rho-general}  For all objects $a,b,c$, an invertible modification $\rho$

  \begin{equation}
    \label{eq:rho-gen}
 \begin{tikzcd}[baseline=-0.65ex, column sep=large, row sep=large] 
     {} & \Cat{T}^{3} \ar{r}{\Box \times 1}  \ar{d}{1 \times \Box}   & \Cat{T }^{2} \ar{d}{\Box}  \ar[shorten <= 10pt, shorten >=10pt, Rightarrow]{dl}{a} \\
    \Cat{T}^{2}    \ar{r}[name=C, xshift=-4pt]{1}    \ar{ur}{1 \times 1 \times I}  &  \Cat{T}^{2} \ar{r}{\Box}    &\Cat{T}
\latearrow{shorten <= 14pt, shorten >=8pt, Rightarrow, to path=-- (C) \tikztonodes}{1-2}{}{1 \times r}
    \end{tikzcd}
\stackrel{\rho}{\Rrightarrow}
    \begin{tikzcd}[baseline=-0.65ex, column sep=large, row sep=large]
     {} & \Cat{T}^{3} \ar{r}{\Box \times 1}  & \Cat{T }^{2} \ar{d}{\Box}  \\
    \Cat{T}^{2} \ar{r}{\Box}\ar{ur}{1 \times 1 \times I} & \Cat{T} \ar{r}[name=D]{1} \ar{ur}{  1 \times I} &\Cat{T} 
\latearrow{shorten <= 10pt, shorten >=9pt, Rightarrow,to path=-- (D) \tikztonodes}{1-3}{}{r}
    \end{tikzcd}
 \end{equation}
\item \label{item:pi-gen}
  For all objects $a,b,c,d,e$, an invertible modification $\pi$ 
  \begin{equation}
    \label{eq:pi-alg-tricat}
    \begin{tikzcd}[baseline=-0.65ex]
      {} &\Cat{T}^{4} \ar{dl}[left, above,  xshift=-8pt]{1 \times 1 \times \Box} \ar{rr}{\Box \times 1 \times 1}  \ar{dr}{1 \times \Box \times 1}&& \Cat{T}^{3}  \ar{dr}{\Box \times 1} 
\ar[shorten <= 8pt, shorten >=8pt, Rightarrow]{dl}{a \times 1}  & \\
 \Cat{T}^{3} \ar{dr}[below, left, xshift=-3pt]{1 \times \Box} & & \Cat{T}^{3}\ar{rr}{\Box \times 1} \ar{dl}{1 \times \Box}  \ar[shorten <= 25pt, shorten >=25pt, Rightarrow]{ll}{1 \times a}
 && \Cat{T}^{2} \ar{dl}{\Box} \ar[shorten <= 47pt, shorten >=47pt, Rightarrow]{dlll}{a} \\
& \Cat{T}^{2} \ar{rr}[below]{\Box} & {} 
\ar[shorten >=5pt,shorten <=5pt, doublearrow]{d}{\pi} 
  \ar[shorten >=5pt,shorten <=5pt, thirdline]{d}{}
 &  \Cat{T} & \\
     {} &\Cat{T}^{4} \ar{dl}[left, above,  xshift=-8pt]{1 \times 1 \times \Box} \ar{rr}{\Box \times 1 \times 1} &{}& \Cat{T}^{3}  \ar{dl}{1 \times \Box} \ar{dr}{\Box \times 1} 
 \ar[shorten <= 50pt, shorten >=50pt, Rightarrow]{dlll}{\id} & \\
 \Cat{T}^{3} \ar{dr}[below, left, xshift=-3pt]{1 \times \Box } \ar{rr}{\Box \times 1} & & \Cat{T}^{2} \ar{dr}{\Box}
\ar[shorten <= 8pt, shorten >=8pt, Rightarrow]{dl}{a}  && \Cat{T}^{2} \ar{dl}{\Box}  \ar[shorten <= 25pt, shorten >=25pt, Rightarrow]{ll}{a} \\
& \Cat{T}^{2} \ar{rr}[below]{\Box} & &  \Cat{T} &
    \end{tikzcd}
  \end{equation}
\end{definitionlist}
 This data is required to satisfy the following three axioms. 
 In the first axiom, the unmarked isomorphisms are isomorphisms induced by the naturality of  the associator $a$.
  \begin{enumerate}
  \item  \begin{equation}
      \label{eq:Axiom1-alg-tricat}
      \begin{tikzpicture}
        \tikzstyle{every node}=[font=\small]
%oberes Bild Vertices aussen
        \node (a) at (0,-2) {$((K(JH)G)F$};
        \node (b) at (2.5,-1) {$(K((JH)G))F$};
        \node (c) at (5,0) {$(K(J(HG)))F$};
        \node (d) at (7.5,-1) {$K((J(HG))F)$};
        \node (e) at (10,-2) {$K(J((HG)F))$};
        \node (f) at (10,-4) {$K(J(H(GF)))$};
        \node (g) at (6.66,-6) {$(KJ)(H(GF))$};
        \node (h) at (3.33,-6) {$((KJ)H)(GF)$};
        \node (i) at (0,-4) {$(((KJ)H)G)F$};
%oberes Bild vertices innen
        \node (k) at (3,-2.7)  {$((KJ)(HG))F$};
        \node (l) at (6,-3.3) {$(KJ)((HG)F)$};
              \node (z) at (5,-6){};

%unteres Bild vertices aussen
        \node (m) at (0,-10) {$((K(JH)G)F$};
        \node (n) at (2.5,-9)  {$(K((JH)G))F$};
        \node (o) at (5,-8) {$(K(J(HG)))F$};
        \node (p) at (7.5,-9) {$K((J(HG))F)$};
        \node (q) at (10,-10){$K(J((HG)F))$};
        \node (r) at (10,-12){$K(J(H(GF)))$};
        \node (s) at (6.66,-14)  {$(KJ)(H(GF))$};
        \node (t) at (3.33,-14) {$((KJ)H)(GF)$};
        \node (u) at (0,-12){$(((KJ)H)G)F$};
%unteres Bild vertices innen

        \node (w) at (3.33,-12)  {$(K(JH))(GF)$};
        \node (x) at (6.66,-12) {$K((JH)(GF))$};
        \node (y) at (5,-10) {$K(((JH)G)F)$};

%oberes Bild Vertices mit 3-morphismsn
 \node (a1) at (2.4,-1.8) {$\Downarrow \pi 1$};
 \node (a2) at (6.7,-1.9) {$\Downarrow \pi$};
 \node (a3) at (8.2,-3.7) {$\simeq$};
          \node (a5) at (3.7,-4.5) {$\Downarrow \pi$};

%unteres Bild Vertices mit 3-morphismsn

 \node (b1) at (5,-9) {$\simeq $};
 \node (b2) at (3.2,-10.4) {$\Downarrow \pi$};
 \node (b3) at (7.7,-10.3) {$\Downarrow 1 \pi$};
 \node (b4) at (1.65,-12) {$\simeq$};
 \node (b5) at (5.3,-13) {$\Downarrow \pi$};

%oberes Bild Kanten
         \path[->,font=\scriptsize,>=angle 90]
      
        (a) edge node[auto] {$1a$}  (b)
        (b) edge node[auto]  {$(1a)1$}(c)
        (c) edge node[auto]  {$a$}(d)
        (d) edge node[auto] {$1a$}(e)
        (e) edge node[auto] {$1(1a)$}(f)
        (g) edge node[auto] {$a$}(f)
        (h) edge node[auto] {$a$}(g)
        (i) edge node[auto] {$a$}(h)
        (i) edge node[auto] {$(a1)1$}(a)
        (i) edge node[auto] {$a1$}(k)
        (k) edge node[right] {$a1$}(c)
        (k) edge node[auto] {$a$}(l)
        (l) edge node[auto]  {$ a $}  (e)
        (l) edge node[auto] {$(11)a$}(g)
%unteres Bild Kanten 
        (m) edge node[auto] {$a 1$}  (n)
        (n) edge node[auto]  {$(1a)1$}(o)
        (o) edge node[auto]  {$a$}(p)
        (p) edge node[auto] {$1a$}(q)
        (q) edge node[auto] {$1(1a)$}(r)
        (u) edge node[auto] {$(a1)1$}(m)
        (u) edge node[auto] {$a$}(t)
        (t) edge node[auto] {$a$}(s)
        (s) edge node[auto] {$a$}(r)
        (m) edge node[auto] {$a$}(w)
        (t) edge node[auto] {$a(11)$}(w)
        (w) edge node[auto] {$1a$}(x)
        (x) edge node[auto] {$1a$}(r)
        (n) edge node[auto]   {$ a $}  (y)
        (y) edge node[below, xshift=8.3pt, yshift=3pt] {$1(a1) $}(p)
         (y) edge node[auto] {$1a $}(x);
 \draw[double,double equal sign distance,-,shorten <= 8pt, shorten >=8pt] (z) to node[below] {} (o); 
 % \path[equality/.style]
 %   (z) edge node[auto] {} (o);
        %\draw[double,double equal sign distance,-implies,shorten <= 40pt, shorten >=40pt] (g) to node[auto] {$b$} (jk);  [shorten >=7pt,shorten <=7pt]
      \end{tikzpicture}
    \end{equation}
\item 
 \begin{equation}
     \label{eq:Axiom2-alg-tricat}
  \begin{tikzpicture}[scale=0.9]
       \tikzstyle{every node}=[font=\small]
%oberes Bild Vertices aussen
        \node (a) at (0,0) {$((HI_{b})G)F$};
        \node (b) at (10,0) {$(H(I_{b}G))F$};
        \node (c) at (9.3,-4) {$(HG)F$};
        \node (d) at (5,-5.5) {$H(GF)$};
        \node (e) at (0.7,-4)  {$(HG)F$};
      
%oberes Bild vertices innen
  \node (f) at (2.3,-1.6)  {$(HI_{b})(GF)$};
        \node (g) at (5,-2.6)  {$H(I_{b}(GF))$};
        \node (h) at (7.7,-1.6) {$H((I_{b}G)F)$};
 
%unteres Bild vertices aussen
        %  \node (i) at (0,-7.5) {$((HI_{b})G)F$};
        % \node (j) at (10,-7.5) {$(H(I_{b}G))F$};
        % \node (k) at (9.3,-11.5) {$(HG)F$};
        % \node (l) at (5,-13) {$H(GF)$};
        % \node (m) at (0.7,-11.5)  {$(HG)F$};
%scaled
   \node (i) at (2.5,-7.85) {$((HI_{b})G)F$};
        \node (j) at (7.5,-7.85) {$(H(I_{b}G))F$};
        \node (k) at (7.15,-9.85) {$(HG)F$};
        \node (l) at (5,-10.6) {$H(GF)$};
        \node (m) at (2.85,-9.85)  {$(HG)F$};
          \node (z) at (5,-7.85)  {};

%oberes Bild Vertices mit 3-morphismsn
 \node (a1) at (5,-1.2) {$\Downarrow \pi $};
 \node (a2) at (5.9,-3.1) {$\Rightarrow 1\lambda$};
 \node (a3) at (4.1,-3.1) {$\Leftarrow \mu$};
       \node (a4) at (2,-3) {$\simeq$};
  \node (a5) at (8,-3) {$\simeq$};

%unteres Bild Vertices mit 3-morphismsn
 \node (b1) at (4.1,-8.5) {$\Rightarrow \mu 1$};
  \node (b2) at (5,-9.5) {$\simeq $};

% %oberes Bild Kanten
         \path[->,font=\scriptsize,>=angle 90]
      
        (a) edge node[auto] {$a1$}  (b)
        (b) edge node[auto]  {$(1l)1$}(c)
        (c) edge node[above]  {$a$}(d)
        (e) edge node[auto] {$a$}(d)
        (a) edge node[left] {$(r1)1$}(e)
        (a) edge node[auto] {$a$}(f)
        (f) edge node[auto] {$a$}(g)
        (f) edge node[left] {$r(11)$}(d)
        (g) edge node[auto] {$1l$}(d)
        (h) edge node[above] {$1a$}(g)
        (h) edge node[right] {$1(l1)$}(d)
        (b) edge node[above] {$a$}(h)
%
% %unteres Bild Kanten 
         (i) edge node[auto] {$a1$}  (j)
        (j) edge node[auto]  {$(1l)1$}(k)
        (k) edge node[above]  {$a$}(l)
        (m) edge node[auto] {$a$}(l)
        (i) edge node[left] {$(r1)1$}(m)
        (j) edge node[auto] {$(1l)1$}(m);

  \draw[double,double equal sign distance,-,shorten <= 14pt, shorten >=16pt] (d) to node[below] {} (z); 
      \end{tikzpicture}
  \end{equation}
\item 
 \begin{equation}
     \label{eq:Axiom3-alg-tricat}
  \begin{tikzpicture}[scale=0.9]
       \tikzstyle{every node}=[font=\small]
%oberes Bild Vertices aussen
        \node (a) at (0,0) {$H((G I_{c})F)$};
        \node (b) at (10,0) {$H(G (I_{c}F))$};
        \node (c) at (9.3,-4) {$H(GF)$};
        \node (d) at (5,-5.5) {$(HG)F$};
        \node (e) at (0.7,-4)  {$H(GF)$};
      
%oberes Bild vertices innen
  \node (f) at (2.3,-1.6)  {$(H(GI_{c}))F$};
        \node (g) at (5,-2.6)  {$((HG)I_{c})F$};
        \node (h) at (7.7,-1.6) {$(HG)(I_{c}F)$};
 
%unteres Bild vertices aussen
        %  \node (i) at (0,-7.5) {$((H)G)F$};
        % \node (j) at (10,-7.5) {$(H(G))F$};
        % \node (k) at (9.3,-11.5) {$(HG)F$};
        % \node (l) at (5,-13) {$H(GF)$};
        % \node (m) at (0.7,-11.5)  {$(HG)F$};
%scaled unteres bild vertices
   \node (i) at (2.5,-7.85){$H((G I_{c})F)$};
        \node (j) at (7.5,-7.85)  {$H(G (I_{c}F))$};
        \node (k) at (7.15,-9.85) {$H(GF)$};
        \node (l) at (5,-10.6)  {$(HG)F$};
        \node (m) at (2.85,-9.85)   {$H(GF)$};
  \node (z) at (5,-7.85)  {};

%oberes Bild Vertices mit 3-morphismsn
 \node (a1) at (5,-1.2) {$\Downarrow \pi $};
 \node (a2) at (5.9,-3.1) {$\Rightarrow \mu$};
 \node (a3) at (4.1,-3.1) {$\Leftarrow \rho 1$};
       \node (a4) at (2,-3) {$\simeq$};
  \node (a5) at (8,-3) {$\simeq$};

%unteres Bild Vertices mit 3-morphismsn
 \node (b1) at (4.1,-8.5) {$\Rightarrow 1 \mu $};
  \node (b2) at (5,-9.5) {$\simeq $};

% %oberes Bild Kanten
         \path[->,font=\scriptsize,>=angle 90]
      
        (a) edge node[auto] {$1a$}  (b)
        (b) edge node[auto]  {$1(1l)$}(c)
        (d) edge node[above]  {$a$}(c)
        (d) edge node[auto] {$a$}(e)
        (a) edge node[left] {$1(r1)$}(e)
        (f) edge node[auto] {$a$}(a)
        (g) edge node[auto] {$a1$}(f)
        (f) edge node[left] {$(1r)1$}(d)
        (g) edge node[auto] {$r1$}(d)
        (g) edge node[above] {$a$}(h)
        (h) edge node[right] {$(11)l$}(d)
        (h) edge node[above] {$a$}(b)
%
% %unteres Bild Kanten 
         (i) edge node[auto] {$1a$}  (j)
        (j) edge node[auto]  {$1(1l)$}(k)
        (l) edge node[above]  {$a$}(k)
        (l) edge node[auto] {$a$}(m)
        (i) edge node[left] {$1(1r)$}(m)
        (j) edge node[auto] {$1(1r)$}(m);
      
  \draw[double,double equal sign distance,-,shorten <= 14pt, shorten >=16pt] (d) to node[below] {} (z); 
      \end{tikzpicture}
  \end{equation}

  \end{enumerate}

\end{definition}
\begin{remark}
\label{remark:tricategory-convent}
  Our definition of a tricategory differs from \cite{Gurski} in that we replaced the arrow of the 
right unit $r^{-}$ in the definition of the pseudo natural transformation $\mu$ in \cite{Gurski} with its adjoint $r$. Consequently the axioms (\ref{eq:Axiom2-alg-tricat}) and (\ref{eq:Axiom3-alg-tricat}) have a different shape. It is straightforward to see that the two definitions are equivalent. 
\end{remark}

\bibliographystyle{hplain}	% (uses file "plain.bst")  
\bibliography{bimod}		
\end{document}